\newcommand{\deltamin}{\delta} %what was previously $\delta_-$
\newcommand{\deltaone}{\delta_-} %what was previously $\delta_1$
\newcommand{\deltatwo}{\delta} %what was previously $\delta_2$
\renewcommand{\epsilon}{\varepsilon}
\newcommand{\supp}{\operatorname{supp}}
\newcommand{\spn}{\operatorname{span}}
\newcommand{\dd}{\, \mathrm{d}}
\renewcommand{\i}{\mathrm i}
\newcommand{\1}{\mathbbm{1}}
\renewcommand{\P}{\mathbb{P}}
\newcommand{\bS}{\mathbf{S}}
\newcommand{\bPi}{\mathbf{\Pi}}
\newcommand{\cC}{\mathcal{C}}
\newcommand{\cD}{\mathcal{D}}
\newcommand{\cE}{\mathcal{E}}
\newcommand{\cF}{\mathcal{F}}
\newcommand{\cG}{\mathcal{G}}
\newcommand{\cH}{\mathcal{H}}
\newcommand{\cJ}{\mathcal{J}}
\newcommand{\cK}{\mathcal{K}}
\newcommand{\cM}{\mathcal{M}}
\newcommand{\cN}{\mathcal{N}}
\newcommand{\cP}{\mathcal{P}}
\newcommand{\cQ}{\mathcal{Q}}
\newcommand{\cR}{\mathcal{R}}
\newcommand{\cS}{\mathcal{S}}
\newcommand{\cT}{\mathcal{T}}
\newcommand{\cU}{\mathcal{U}}
\newcommand{\cV}{\mathcal{V}}
\newcommand{\cZ}{\mathcal{Z}}
\newcommand{\fa}{\mathfrak{a}}
\newcommand{\fb}{\mathfrak{b}}
\newcommand{\fe}{\mathfrak{e}}
\newcommand{\ff}{\mathfrak{f}}
\newcommand{\fm}{\mathfrak{m}}
\newcommand{\fn}{\mathfrak{n}}
\newcommand{\fo}{\mathfrak{o}}
\newcommand{\fs}{\mathfrak{s}}
\newcommand{\ft}{\mathfrak{t}}
\newcommand{\fG}{\mathfrak{G}}
\newcommand{\fI}{\mathfrak{I}}
\newcommand{\fM}{\mathfrak{M}}
\newcommand{\fU}{\mathfrak{U}}
\newcommand{\fV}{\mathfrak{V}}
\newcommand{\fX}{\mathfrak{X}}
\newcommand{\sT}{\mathscr{T}}
\newcommand{\arroweps}{\xrightarrow{\epsilon \downarrow 0}}
\newcommand{\limn}{\lim_{n\rightarrow \infty}}
\newcommand{\limN}{\lim_{N\rightarrow \infty}}
\newcommand{\liminfm}{\liminf_{m\rightarrow \infty}}
\newcommand{\liminfn}{\liminf_{n\rightarrow \infty}}
\newcommand{\supn}{\sup_{n\in \N}}
\newcommand{\tymchange}[1]{#1}
\newcommand{\change}[1]{#1}
\tikzset{
black node/.style={circle,draw=black,inner sep=2pt,fill=black},
blue node/.style={circle,draw=blue,inner sep=2pt,fill=blue},
red node/.style={circle,draw=red,inner sep=2pt,fill=red},
magenta node/.style={circle,draw=magenta,inner sep=2pt,fill=magenta},
orange node/.style={circle,draw=orange,inner sep=2pt,fill=orange},
purple node/.style={circle,draw=purple,inner sep=2pt,fill=purple},
noise node/.style={circle,draw=black,inner sep=2pt,fill=white},
}
\newenvironment{bigdrawtree}
  {
    \begin{tikzpicture}[very thick,baseline={([yshift=-.5ex]current bounding box.center)}]
      \tikzstyle{level 1}=[level distance=10mm,sibling distance=30mm]
      \tikzstyle{level 2}=[level distance=10mm,sibling distance=15mm]
      \tikzstyle{level 3}=[level distance=10mm,sibling distance=8mm]
  }
  {
  \end{tikzpicture}
  }
\newenvironment{drawtree}
  {
    \begin{tikzpicture}[very thick,baseline={([yshift=-.5ex]current bounding box.center)}]
      \tikzstyle{level 1}=[level distance=10mm,sibling distance=15mm]
      \tikzstyle{level 2}=[level distance=10mm,sibling distance=10mm]
      \tikzstyle{level 3}=[level distance=10mm,sibling distance=5mm]
  }
  {
  \end{tikzpicture}
  }
\newenvironment{smalldrawtree}
  {
    \tikzset{
    black node/.style={circle,draw=black,inner sep=1.5pt,fill=black},
    blue node/.style={circle,draw=blue,inner sep=2pt,fill=blue},
    red node/.style={circle,draw=red,inner sep=2pt,fill=red},
    magenta node/.style={circle,draw=magenta,inner sep=2pt,fill=magenta},
    orange node/.style={circle,draw=orange,inner sep=2pt,fill=orange},
    purple node/.style={circle,draw=purple,inner sep=2pt,fill=purple},
    noise node/.style={circle,draw=black,inner sep=1.5pt,fill=white},
    }
    \begin{tikzpicture}[very thick,  baseline={([yshift=-.5ex]current bounding box.center)}]
      \tikzstyle{level 1}=[level distance=5mm,sibling distance=10mm]
      \tikzstyle{level 2}=[level distance=5mm,sibling distance=7mm]
      \tikzstyle{level 3}=[level distance=5mm,sibling distance=3mm]
  }
  {
  \end{tikzpicture}
  }
\newcommand{\zzone}{\text{\resizebox{.7em}{!}{\includegraphics{trees-1.eps}}}}
\newcommand{\zztwo}{\text{\resizebox{.7em}{!}{\includegraphics{trees-2.eps}}}}
\newcommand{\zzthree}{\text{\resizebox{.7em}{!}{\includegraphics{trees-3.eps}}}}
\newcommand{\zzfour}{\text{\resizebox{1em}{!}{\includegraphics{trees-4.eps}}}}
\newcommand{\dir}{\scaleobj{0.8}{\mathrm{D}}}
\newcommand{\neu}{\scaleobj{0.8}{\mathrm{N}}}
\newcommand{\normresconv}{\xrightarrow{\textnormal{NR}}}
\newcommand{\kAH}{\boldsymbol{\fb}}%{k^{\AH}} <-- was this before
\newcommand{\aAH}{\boldsymbol{\fa}}%{a^{\AH}} <-- was this before
\newcommand{\aAHeps}{\boldsymbol{\fa}^\epsilon}
\newcommand{\WAH}{W}
\newcommand{\WAHeps}{W^{\epsilon}}
\newcommand{\XAH}{X}
\newcommand{\XAHeps}{X^{\epsilon}}
\newcommand{\XAHepsnm}{X^{\epsilon_{n_m}}}
\newcommand{\YAH}{Y}
\newcommand{\YAHeps}{Y^{\epsilon}}
\newcommand{\YAHepsnm}{Y^{\epsilon_{n_m}}}
\newcommand{\hatYAH}{\widehat{Y}}
\newcommand{\hatYAHeps}{\widehat{Y}^{\epsilon}}
\newcommand{\cZAH}{\cZ}
\newcommand{\cZAHeps}{\cZ^{\epsilon}}
\newcommand{\hatcZAH}{\widehat{\cZ}}
\newcommand{\hatcZAHeps}{\widehat{\cZ}^{\epsilon}}
\newcommand{\tildecZAH}{\tilde{\cZ}}
\newcommand{\tildecZAHeps}{\tilde{\cZ}^{\epsilon}}
\newcommand{\cZnAH}{\cZ^{\neu}}
\newcommand{\cZnAHeps}{\cZ^{\neu,\epsilon}}
\newcommand{\HAH}{\cH}
\newcommand{\HAHeps}{\cH_\epsilon}
\newcommand{\HAHepsnm}{\cH_{\epsilon_{n_m}}}
\newcommand{\MAH}{M}
\newcommand{\NAH}{\boldsymbol{N}}
\newcommand{\cNAH}{\boldsymbol{\cN}}
\newcommand{\NAHeps}{\boldsymbol{N}_\epsilon}
\newcommand{\NAHepsnm}{\boldsymbol{N}_{\epsilon_{n_m}}}
\newcommand{\cNAHeps}{\boldsymbol{\cN}_\epsilon}
\newcommand{\artNAH}{\overline{\boldsymbol{N}}^{\neu}}
\newcommand{\artNAHeps}{\overline{\boldsymbol{N}}^{\neu}_\epsilon}
\newcommand{\artcNAH}{\overline{\boldsymbol{\cN}}^{\neu}}
\newcommand{\artcNAHeps}{\overline{\boldsymbol{\cN}}^{\neu}_\epsilon}
\newcommand{\artAHU}{\overline{\cH}^{\neu,U}}
\newcommand{\artevk}{\overline{\lambda}^{\neu}_k}
\newcommand{\artevkeps}{\overline{\lambda}^{\neu}_{k;\epsilon}}
\newcommand{\lambdaAHk}{\lambda_k}
\newcommand{\lambdaAHone}{\lambda_1}
\newcommand{\lambdaAHkeps}{\lambda_{k;\epsilon}}
\newcommand{\lambdaAHkepsnm}{\lambda_{k;\epsilon_{n_m}}}
\newcommand{\minjJ}{\min_{j\in\{1,\dots,J\}}}
\newcommand{\ecf}{\boldsymbol{N}}
\newcommand{\w}{w}
\newcommand{\LL}{\mathbb{L}}
\begin{document}
\newcommand*{\TitleFont}{%
      \usefont{\encodingdefault}{\rmdefault}{b}{n}%
      \fontsize{16}{20}%
      \selectfont}

\title{Anderson Hamiltonians with singular potentials}

\author{Toyomu Matsuda\thanks{Independent,
\href{mailto:toyomumatsudawork@gmail.com}{toyomumatsudawork@gmail.com}}
\and Willem van Zuijlen\thanks{WIAS Berlin, Mohrenstra{\ss}e 39, 10117 Berlin, Germany,
\href{mailto:vanzuijlen@wias-berlin.de}{vanzuijlen@wias-berlin.de}}}

\date{}

\maketitle

%\renewcommand{\thefootnote}{\arabic{footnote}} % rest artikel wel gewone nummers
%%%%%%%%%%%%%%%%%%%%%%%%%%%%%%%%%%%%%%%%%%%%%%%%%%%%%
%%%%%%%%%%%%%%%%%%%%%%%%%%%%%%%%%%%%%%%%%%%%%%%%%%%%%

%\centerline{\version}

\begin{abstract}
We construct random Schr\"odinger operators, called Anderson Hamiltonians, with Dirichlet and Neumann boundary conditions for a fairly general class of singular random potentials
on bounded domains.
%The operators are, in a suitable sense, the limits of the operators with regularized potentials of which diverging constants are subtracted.
%Our approach is based on a combination of the theory of symmetric forms and the theory of regularity structures.
%\br
%We also construct Anderson Hamiltonians on bounded Lipschitz domains with Neumann boundary conditions under a stronger assumption. [[[RED MEANS TO BE DELETED]]]
%This assumption allows the 2D white noise but excludes the 3D white noise. \er
Furthermore, we construct the integrated density of states of these Anderson Hamiltonians, and
 we relate the Lifschitz tails (the asymptotics of the left tails of the integrated density of states) to the left tails of the principal eigenvalues.
%\bw The assumptions that we use are more restrictive when we consider Neumann boundary conditions than when we consider Dirichlet boundary conditions. For example, for the Neumann boundary conditions we can consider 2D white noise but not 3D white noise. \ew

\bigskip

\noindent
\emph{Keywords and phrases.}
Anderson Hamiltonian, regularity structures,
integrated density of states.

\noindent
\emph{Acknowledgements.}
We would like to thank Nicolas Perkowski for his support since the beginning of the work.
We are also grateful for Kazuhiro Kuwae for a useful discussion.
Moreover, we are grateful to the referees for their valuable feedback. 
TM was supported by the German Science Foundation (DFG) via the IRTG 2544.

\noindent
\emph{MSC 2020}. {\em Primary.} 60H17, 60H25, 60L40, 82B44. {\em Secondary.} 35J10, 35P15.

%60H17 Singular stochastic partial differential equations
%60H25 Random operators and equations (aspects of stochastic analysis)
%60L40 Paracontrolled distributions and alternative approaches
%82B44 Disordered systems (random Ising models, random Schrodinger operators, etc.) in equilibrium statistical mechanics
%35J10 Schrodinger operator, Schrodinger equation {For ordinary differential equations, see 34L40; for operator theory, see 47D08; for quantum theory, see 81Q05; for statistical mechanics, see 82B44}
%35P15 Estimates of eigenvalues in context of PDEs

%60H25, %Random operators
%60F15, % Strong theorems
%35J10, %Schrödinger operator
%35P15; % Estimation of eigenvalues, upper and lower bounds
%Secondary
%60F10. %Large deviations
%
%\emph{Key words and phrases.} Anderson Hamiltonian, white noise, paracontrolled distributions, operators with Dirichlet boundary conditions.
\end{abstract}

\tableofcontents

\section{Introduction}

In this paper, we consider random Schrödinger operators of the form
\begin{equation}\label{eq:AH}
  -\Delta - \xi,
\end{equation}
where $\Delta = \sum_{i=1}^d \partial_i^2$ is the Laplacian on $\R^d$ and $\xi$ is a random potential.
Such operators are also called \emph{Anderson Hamiltonians}. 
This name is due to the influential work by Anderson \cite{An58}.
We consider the construction of such operators for irregular potentials $\xi$ (also called singular potentials) that do not need to be functions,  hence there is --a priori-- no obvious interpretation of \eqref{eq:AH}.

After constructing the Anderson Hamiltonian it is natural to investigate its spectral properties.
One of the most studied objects in the theory of random Schrödinger operators is the \emph{integrated density of states} (IDS), see for example  \cite[Chapter VI]{carmona1990spectral} and \cite{ids2007} for overviews.
The IDS is a nonrandom, increasing and right-continuous function on $\R$ and is often characterized as the vague limit
of the normalized eigenvalue counting functions.
The left tail asymptotics of the IDS are called \emph{Lifschitz tails}, which capture disorder effect in the operator \eqref{eq:AH}.
Relating the Lifschitz tails to the tail
asymptotics of the principal eigenvalue is a classical result, see for example Kirsch and Martinelli \cite{Kirsch_1982} and
Simon \cite{simon_lifschitz_1985}.

The rest of our introduction is split as follows. 
In Section~\ref{sec:construction_AH} we discuss the previous works on the construction of Anderson Hamiltonians with singular potentials and how our construction relates to these works regarding the assumptions and techniques. 
In Section~\ref{sec:spectral} we discuss the study of the spectral properties of the Anderson Hamiltonians. 
In Section~\ref{subsec:techniques} we discuss the techniques that we use to derive our results.
In Section~\ref{sec:main_results} the main results are presented. 
In Section~\ref{subsec:strategy_ang_techniques} we give the ideas behind the construction of the operator and how we derive the stochastic terms. 
In Section~\ref{sec:outline} we describe the outline of the rest of the paper and in Section~\ref{sec:notation} we give an overview of some notation that is used throughout the paper.

\subsection{Construction of Anderson Hamiltonians with singular potentials}
\label{sec:construction_AH}

The  mathematical study of Anderson Hamiltonians with singular potentials dates back to the work
\cite{fukushima_spectra_1977} by Fukushima and Nakao.
They constructed the Anderson Hamiltonian  with a white noise potential and with Dirichlet boundary conditions
on the one dimensional domain $(-L, L)$,
as the self-adjoint operator associated to the closed symmetric form  on $ H^1_0((-L, L))$, (formally)  given by
\begin{equation*}
 (u, v) \mapsto \int_{(-L, L)} \nabla u \cdot \nabla v
  - \int_{(-L, L)} \xi u v.
\end{equation*}
 For $\xi$ being the white noise one has to make sense of the term $\int_{(-L, L)} \xi u v$. To do so, Fukushima and Nakao replaced it by
\begin{equation*}
  \int_{(-L, L)}  ( u v' + v u' ) B  ,
\end{equation*}
where $B$ is the Brownian motion on $(-L,L)$ (as $\xi$ is the derivative of $B$, this is an integration by parts identity).
In general,
for a bounded open set $U$ in $\R^d$ and a potential $V$ of regularity greater than $-1$,
 it is possible to make sense of
\begin{equation*}
  \int_{U} V u v
\end{equation*}
 for $u, v \in H^1_0(U)$ (we show this in Theorem~\ref{theorem:examples_bd_sym_with_weighted_input}~\ref{item:sym_form_on_domain}). 
Therefore, in that case, one
can construct the Anderson Hamiltonian by considering the associated symmetric form.

However, this approach fails to work if the regularity of $\xi$ is below $-1$.
The treatment of such singular $\xi$ became possible only after
the advent of the theory on \emph{singular stochastic partial differential equations}
(singular SPDEs), most notably the theory of
\emph{regularity structures}
 by Hairer \cite{hairer_theory_2014} 
and the theory of
\emph{paracontrolled distributions} by Gubinelli, Imkeller and Perkowski \cite{gubinelli_paracontrolled_2015}. 

Motivated by the theory of paracontrolled distributions, Allez and Chouk \cite{allez2015continuous} constructed the Anderson Hamiltonian with  white noise on the 2D torus as the limit of
\begin{equation*}
  - \Delta - \xi_{\epsilon} + c_{\epsilon},
\end{equation*}
where $\xi_{\epsilon}$ is a regularized potential and $c_{\epsilon}$ is a suitably chosen number such that $c_\epsilon \uparrow \infty$ as $\epsilon\downarrow 0$.
They obtained an explicit domain of the operator and its action.
Subsequently, Gubinelli, Ugurcan and Zachhuber \cite{gubinelli_semilinear_2020} constructed
the Anderson Hamiltonian with white noise on the 2D and 3D torus
and studied SPDEs whose linear part is given by the Anderson Hamiltonian \eqref{eq:AH}. 
Chouk and  van Zuijlen  \cite{chouk2020asymptotics} constructed the Anderson Hamiltonian with white noise and with either Dirichlet or Neumann boundary conditions on
2D boxes.
Mouzard \cite{mouzard2020weyl} constructed the Anderson Hamiltonian with white noise on 2D compact manifolds, using the theory of higher order paracontrolled distributions \cite{bailleul_bernicot_2019}.
This paper can also be viewed as a generalisation  of \cite{allez2015continuous}.
Additionally, he proved a Weyl law for the Anderson Hamiltonian. We also prove such a Weyl law in Proposition~\ref{prop:weyl}.
 Ugurcan \cite{Ugur} constructed the Anderson Hamiltonian on $\R^2$ using the methods of paracontrolled distributions.

The works \cite{chouk2020asymptotics, gubinelli_semilinear_2020, mouzard2020weyl, Ugur} 
mentioned above use the techniques of the theory of paracontrolled distributions \cite{gubinelli_paracontrolled_2015}.
Labbé \cite{labbe_continuous_2019}  used the theory of
regularity structures to construct the Anderson Hamiltonian with white noise
 on a $d$-dimensional box ($d \leq 3$) %$(-L,L)^d$ 
with Dirichlet or periodic boundary conditions. Instead of directly constructing the operator itself,
he constructed the resolvent operators $G_a = (a-\Delta -\xi)^{-1}$ with Dirichlet boundary conditions for large $a>0$ and defined the Anderson Hamiltonian 
%with Dirichlet boundary conditions 
by $G_a^{-1} -a$. 
%he solved the resolvent equations 
%\begin{equation}\label{eq:resolvent_equation}
%  (a - \Delta - \xi) g = f, \quad f \in L^2(Q)
%\end{equation}
%with Dirichlet boundary conditions for a large $a > 0$, and he defined the operator
%\begin{equation*}
%  G_a f \defby g, \quad \mbox{where $g$ solves \eqref{eq:resolvent_equation}}.
%\end{equation*}
%Then, the Anderson Hamiltonian with Dirichlet boundary conditions
%is defined as $G_a^{-1} - a$. 
Although this approach is robust,
the construction is abstract and the domain of the operator is implicit.

\medskip 

In this work we consider a fairly general class of irregular potentials under the minimal assumption on the regularity of the potential $\xi$, which means that we assume that the regularity of $\xi$ is $-2+\delta$ for some $\delta > 0$.
Typical examples of potentials that are within this regularity regime include the \emph{white noise},  namely the centered Gaussian field with delta correlation, 
in $d$-dimensions with $d \in \{1, 2, 3\}$. However we can go beyound white noise, as we can treat a Gaussian noise $\xi$ whose covariance is formally given by
\begin{equation*}
  \expect[\xi(x) \xi(y)] = c \abs{x - y}^{-\alpha}, \quad c \in (0, \infty), \,\, \alpha \in (0, \min\{d, 4\}).
\end{equation*}
Moreover, instead of working on a box, we consider a bounded domain $U$ in $\R^d$ and construct the Anderson Hamiltonian on $U$ with both Dirichlet as well as Neumann boundary conditions. For the latter, besides that the domain needs also to be Lipschitz, we have to impose more restrictive assumptions on the potential.
For example, these assumptions do not allow us to construct the Anderson Hamiltonian with Neumann boundary conditions for a white noise potential on a three dimensional domain. In order to construct this operator one expects -- due to the work of Hairer and Gerencsér \cite{GeHa2021boundary} for the parabolic Anderson model --  the need to perform an additional renormalisation, but then only on the boundary.

\subsection{Spectral properties of Anderson Hamiltonians}
\label{sec:spectral}

Fukushima and Nakao \cite{fukushima_spectra_1977} studied the integrated density of states (IDS) for the Anderson Hamiltonian with white noise potential in one dimension
and derived the explicit formula that was predicted by physicists.
The IDS for the  Anderson Hamiltonian with white noise potential on two dimensional boxes was constructed by  Matsuda  in \cite{matsuda2020integrated}.

Besides the study of the IDS, quite related are the studies of the asymptotics of the eigenvalues.
Chouk and van Zuijlen \cite{chouk2020asymptotics} showed the asymptotics of the eigenvalues in two dimensions for a white noise potential and Labbé and Hsu \cite{hsu2021asymptotic} extended this to three dimensions.
The asymptotics of the principal eigenvalues plays an important role in the mass asymptotics of the parabolic Anderson model 
\cite{konig2016parabolic,konig2020longtime,ghosal2023fractal}.
Most recently, Bailleul, Dang and Mouzard \cite{BaDaMo}
 studied different properties of the Anderson Hamiltonian and its spectrum, for example the corresponding heat kernel and heat kernel estimates are studied, estimates of the norms of the eigenfunctions in terms of the size of their corresponding eigenvalues are given and a lower estimate on the spectral gap is given.

We remark that in one dimension with white noise, beyond the asymptotics of the eigenvalues and the study of the IDS, more is known about the spectrum properties.

Namely, McKean \cite{mckean94} showed that appropriately shifted and rescaled principal eigenvalues converge,
as the segment size grows to infinity, to the Gumbel distribution in law.
Cambronero and McKean \cite{cambronero99} and Cambronero, Ramírez and Rider \cite{cambronero06} derived
precise tail asymptotics of the principal eigenvalue on the fixed torus.
Dumaz and Labbé investigated the detailed statistics of the eigenvalues and the eigenfunctions in a series of works
\cite{dumaz_localization_2020,Dumaz:2023aa, dumaz2021localization,dumaz2022anderson}.
No analogous results are known for singular potentials other than the white noise in one dimension  (see the conjectures in the introduction of \cite{hsu2021asymptotic}).

\medskip 

In this work we construct
the IDS of the Anderson Hamiltonian with a singular potential
and we relate its left tail to those of the principal eigenvalues. In particular, by applying the
work \cite{hsu2021asymptotic} by Hsu and Labbé, we derive the precise tail behaviour of the IDS for the white noise in
$d$ dimensions, for $d \in \{2, 3\}$.

\subsection{Techniques}
\label{subsec:techniques}

\underline{The techniques to construct the operators.}
Instead of directly constructing the operators themselves, we construct the corresponding symmetric forms.
In fact, we are inspired by Gubinelli, Ugurcan and Zachhuber \cite{gubinelli_semilinear_2020}, where
they figured out that the form domain of the Anderson Hamiltonian for a white noise potential on the 2D or 3D torus  is quite simple. 
The work \cite{kuwae_shioya_2003} by Kuwae and Shioya is important for us as it provides a correct notion of convergence of symmetric forms that are bounded from below.

To construct the symmetric forms, we combine an exponential transformation with an integration by parts formulae, see Section~\ref{subsec:strategy_ang_techniques} for a heuristic description. 
The exponential transformation is now a well-known technique in singular SPDEs. The most notable one is the Cole-Hopf transform of the
KPZ equation as used by Bertini and Giacomin \cite{Bertini97}. Hairer and Labbé \cite{hairer_Labbe_2015}  used
the exponential transformation to simplify the 2D parabolic Anderson model.
Gubinelli, Ugurcan and Zachhuber \cite{gubinelli_semilinear_2020}
used it to construct the Anderson Hamiltonian with
3D white noise.
Recently, Jagannath and Perkowski \cite{jagannath2021simple}
applied it to simplify the construction of the dynamical $\Phi^4_3$ model
and Zachhuber \cite{zachhuber2021finite} applied it to prove global well-posedness of multiplicative stochastic wave equations.
It is interesting to note that, unlike previous works, we can apply the trick of exponential transformation 
for the entire subcritical regime.
A major drawback of the exponential transformation is the lack of robustness. For instance, it does not work
if we replace the Laplacian with a fractional Laplacian.

\underline{The techniques to treat the IDS.}
%Finally, let us discuss Theorem~\ref{thm:main_IDS}. 
There are two standard approaches to construct the IDS:
the path integral approach \cite[Section~VI.1.2]{carmona1990spectral} and the functional analytic approach
\cite[Section~VI.1.3]{carmona1990spectral}. In our framework, we cannot use the path integral approach.
Indeed, it was shown in \cite{matsuda2020integrated} that the 2D white noise is critical for this approach in that
the Laplace transform of the IDS is finite only for small parameters. Therefore, if the regularity of the potential
$\xi$ is lower than that of the 2D white noise, we expect the blow-up of the Laplace transform of the IDS for any parameter.
Hence, in this paper we adopt the functional analytic approach.
This approach, introduced by Kirsch and Martinelli \cite{Kirsch_1982}, is based on the
super-(sub-)additivity of the Dirichlet (Neumann) eigenvalue counting functions and the ergodic theorem
by Akcoglu and Krengel \cite{AkcogluM.A1981Etfs}. 

In order to apply this approach only under the assumptions that guarantee the existence of the Anderson Hamiltonian with Dirichlet boundary conditions, i.e., without assuming the additional Neumann assumption~\ref{assump:all_three_assump}~\ref{item:neumann_assump} needed for the  Anderson Hamiltonian with Neumann boundary conditions, we are introducing an --what we call-- artificial Neumann Anderson Hamiltonian (see Definition~\ref{def:artificial_Neumann_AH}). 
%There is one significant problem in our situation.
%That is, without Assumption~\ref{assump:boundary}, we do not have Neumann Anderson Hamiltonians.
%To solve this problem, we introduce artificial Neumann Anderson Hamiltonians (see Definition~\ref{def:artificial_Neumann_AH}).
For the construction of this artificial operator, we rely on a rather explicit representation of the symmetric form associated to the Anderson Hamiltonian. 

Many technical estimates here are inspired by
Doi, Iwatsuka and Mine \cite{doi_iwatsuka_mine_2001}.

\subsection{Main results}
\label{sec:main_results}

In this section we state our three main results of the paper. These results do not require the introduction of technical notions, however their assumptions do. 
We therefore briefly give an idea about what kind of assumptions we assume below in Remark~\ref{rem:assumptions_brief} and postpone the detailed description of the assumptions to Section~\ref{sec:assumptions}, in which we also discuss in which cases these conditions are fulfilled. 
In Section~\ref{sec:function_spaces} we introduce the necessary notation and recall a few definitions. 
In Section~\ref{subsec:strategy_ang_techniques} we discuss the heuristic idea behind the construction of the operator and the idea on how to proof that these assumptions are fulfilled in general.

\begin{assumption}\label{assump:base}
We fix the dimension parameter $d \in \N \setminus \{1\}$.
 We let $\Omega \defby \S'(\R^d)$,
$\P$ a probability measure on the Borel-$\sigma$-algebra on $\Omega$. 
We define the random variable $\xi$ with values in $\cS'(\R^d)$ by $\xi(\omega) \defby \omega$ for $\omega \in \Omega$.
There exists a $\delta \in (0, 1)$ such that for all $\sigma \in (0,\infty)$ one has $\P(\xi \in \csp^{-2 + \delta, \sigma}(\R^d)) =1$,  where $\csp^{-2 + \delta, \sigma}(\R^d)$ is a weighted Besov-Hölder space,  see Definition~\ref{def:besov_spaces}.
  A smooth, symmetric function $\rho \in \S(\R^d)$  with $\int \rho =1$  is given and we set
  \begin{equation}\label{eq:xi_mollifier}
    \rho_{\epsilon}(x)
  \defby \epsilon^{-d} \rho(\epsilon^{-1} x) \quad \mbox{and} \quad \xi_{\epsilon} \defby \rho_{\epsilon} \conv \xi, \qquad x\in \R^d, \epsilon \in (0,\infty). 
  \end{equation}
\end{assumption}

\begin{remark}
\label{rem:d_not_equal_one}
  We do not allow $d$ to be equal to one, since we will work with Green's functions and for $d=1$ the Green function is not singular and one would need a different treatment. 
  
  However, the interesting potentials for $d = 1$ are given by the derivative of fractional Brownian motions, whose regularity is 
  greater than $-1$. Hence, they can be easily treated in the classical framework of symmetric forms, see Theorem~\ref{theorem:examples_bd_sym_with_weighted_input}~\ref{item:sym_form_on_domain}.
% We do not allow $d$ to be equal to one for the reason that in that case we are not guaranteed to have an admissible kernel for our green kernel $G$ as mentioned in the beginning of  Section~\ref{subsubsection:BPHZ}. However, the use of the theory of regularity structures in this case is an overkill. Moreover, the case $d=1$ has been well-studied (for a white noise potential).
\end{remark}

\change{

\begin{remark}[The main assumptions]
\label{rem:assumptions_brief}
Beside the above Assumption~\ref{assump:base}, we introduce three other assumptions in Assumptions~\ref{assump:all_three_assump}, namely what we call the 
\begin{itemize}
\item \emph{Construction assumption}, Assumption~\ref{assump:all_three_assump}~\ref{item:construction_assump}. This imposes the existence of renormalisation constants $(c_\epsilon)_{\epsilon>0}$ in $\R$ under which (renormalised) stochastic objects converge. 
\item \emph{Neumann assumption}, Assumption~\ref{assump:all_three_assump}~\ref{item:neumann_assump}. This imposes certain convergence of the stochastic terms considering the boundary of the domain. 
\item \emph{Ergodic assumption}, Assumption~\ref{assump:all_three_assump}~\ref{item:ergodic_assump}. This imposes ergodicity on the noise. 
\end{itemize}
\end{remark}

}%endchange

Now we state our three main results of the paper.
By ``domain'' we mean a nonempty open subset of $\R^d$ (remember that we assume $d\in \N \setminus \{1\}$).
%(we mean by ``domain'' an open subset of $\R^d$)

\begin{definition}
\label{def:AH_eps_with_dir_and_neu}
We impose the construction assumption~\ref{assump:all_three_assump}~\ref{item:construction_assump}. 
%Assume I (see Assumption~\ref{assump:convergence_of_BPHZ}).
%Let $\epsilon>0$ and $c_{\epsilon}$ be the constant defined as in  \eqref{eq:def_of_c_epsilon}.
Let $\epsilon>0$. 
\begin{enumerate}
\item For a bounded  domain $U$ we define $\HAHeps^{\dir,U}$  to be the self-adjoint operator on $L^2(U)$,
\begin{align}
\label{eqn:regular_AH}
-\Delta - \xi_{\epsilon} + c_{\epsilon}
\end{align}
with Dirichlet boundary conditions.
\item For a bounded Lipschitz domain $U$ we define $\HAHeps^{\neu,U}$ to be the self-adjoint operator \eqref{eqn:regular_AH} on $L^2(U)$ with Neumann boundary conditions.
\end{enumerate}
%For a bounded (Lipschitz) domain $U$ we define $\HAHeps^{\dir,U}$ ($\HAHeps^{\neu,U}$) to be the self-adjoint operator on $L^2(U)$,
%\begin{align*}
%-\Delta - \xi_{\epsilon} + c_{\epsilon}
%\end{align*}
%with Dirichlet boundary conditions (Neumann boundary conditions).
\end{definition}

\begin{remark}
Actually, in Section~\ref{sec:eigenvalues_of_AH} we first define the operators $\HAHeps^{\dir,U}$ and $\HAHeps^{\neu,U}$ as those that correspond to symmetric forms given in terms of the stochastic terms that we describe in Section~\ref{sec:assumptions}.
Then we show that these equal \eqref{eqn:regular_AH}.
\end{remark}

\begin{definition}\cite[Definition p. 284]{ReSi72}
\label{def:norm_resolvent_convergence}
Let $A,A_1,A_2,\dots$ be self-adjoint operators on a Banach space $\fX$. We say that the sequence $(A_n)_{n\in\N}$ converges in
\emph{norm resolvent sense} and write
\begin{align*}
A_n \normresconv_{n\rightarrow \infty} A
\end{align*}
 if
\begin{equation*}
\limn \norm{(\i + A_n)^{-1} - (\i + A)^{-1}}_{\fX \to \fX} = 0.
\end{equation*}
A sequence converges in norm resolvent sense if and only if the above convergence holds with ``$\i$'' replaced by ``$\lambda$''  for any $\lambda \in \C \setminus \R$, see \cite[Theorem VIII.19]{ReSi72}.
\end{definition}

The first result concerns the construction of Anderson Hamiltonians
on bounded domains, with Dirichlet boundary conditions.

\begin{theorem}[Theorem~\ref{thm:convergence_of_Dirichlet_AH}] \label{thm:main_AH}
Assume the construction assumption~\ref{assump:all_three_assump}~\ref{item:construction_assump}. 
Let $U$ be a bounded domain.
There exists a self-adjoint operator $\HAH^{\dir,U}$ on $L^2(U)$ such that
\begin{align*}
\HAHeps^{\dir,U} \normresconv_{\epsilon\downarrow 0}
\HAH^{\dir,U} \mbox{ in probability}.
\end{align*}
Furthermore,  each of the operators has a countable spectrum of eigenvalues and the eigenvalues of $\HAHeps^{\dir,U}$ converge in probability to those of $\HAH^{\dir,U}$. 
Moreover, there exist choices of eigenfunctions of $\HAHeps^{\dir,U}$ and $\HAH^{\dir,U}$ such that one also has convergence of these eigenfunctions in probability. 

The limit $\HAH^{\dir,U}$ is independent of the mollifier $\rho$.
\end{theorem}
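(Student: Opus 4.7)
The plan is to construct $\HAH^{\dir,U}$ as the self-adjoint operator associated to a closed, semi-bounded symmetric form $\fa$ on $L^2(U)$, obtained as the Kuwae-Shioya limit of the forms $\fa^\epsilon(u,v) = \int_U \nabla u\cdot\nabla v - \int_U \xi_\epsilon\, uv + c_\epsilon \int_U uv$ on $H^1_0(U)$ that correspond to $\HAHeps^{\dir,U}$. As proved in \cite{kuwae_shioya_2003}, Mosco-type convergence of closed bounded-below forms is equivalent to norm resolvent convergence of the associated self-adjoint operators, which delivers the first assertion of the theorem once the convergence of forms is shown in probability.

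The key analytic step is the exponential (``ground-state'') transform. Using the stochastic objects supplied by the construction assumption~\ref{assump:all_three_assump}~\ref{item:construction_assump} (a regularised solution $Y^\epsilon$ of $-\Delta Y^\epsilon = \xi_\epsilon$ together with the iterated objects needed in the subcritical regime $-2+\delta$), I substitute $u = e^{Y^\epsilon}\tilde u$. Integration by parts rewrites
\[
\fa^\epsilon(u,u) = \int_U e^{2Y^\epsilon} |\nabla \tilde u|^2\, \dd x + \int_U R^\epsilon\, \tilde u^2\, \dd x,
\]
where $R^\epsilon$ is a renormalised combination of $|\nabla Y^\epsilon|^2 - \xi_\epsilon + c_\epsilon$ and higher-order Wick products. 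By the construction assumption, once the divergences are absorbed into $c_\epsilon$, the object $R^\epsilon$ converges in probability in a weighted Besov-Hölder space of positive regularity to some $R$, and $e^{Y^\epsilon} \to e^Y$ in a positive-regularity space of continuous functions. This produces a candidate limit form $\fa(u,u) = \int_U e^{2Y}|\nabla\tilde u|^2 + \int_U R\,\tilde u^2$, with $u = e^Y\tilde u$, on the weighted Sobolev space $e^Y\cdot H^1_0(U)$, which is easily seen to be closed and bounded from below.

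Kuwae-Shioya convergence $\fa^\epsilon \to \fa$ then reduces to: (i) a $\liminf$-inequality for any strongly $L^2$-convergent sequence $u^\epsilon \to u$; and (ii) the existence of a recovery sequence $u^\epsilon \to u$ with $\fa^\epsilon(u^\epsilon) \to \fa(u)$. Writing everything in terms of $\tilde u^\epsilon = e^{-Y^\epsilon} u^\epsilon$, and passing to a subsequence along which the stochastic inputs converge almost surely in the relevant Besov-Hölder spaces, statement (i) follows from weak lower semicontinuity of the positive quadratic form in $\tilde u^\epsilon$, and (ii) from fixing $\tilde u \in C_c^\infty(U)$ (dense in the limit form domain) and setting $u^\epsilon = e^{Y^\epsilon}\tilde u$, whose form value converges to $\fa(u,u)$ by continuity of pointwise multiplication in Besov spaces. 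A standard diagonal extraction upgrades this to in-probability convergence of $(\lambda + \HAHeps^{\dir,U})^{-1}$ for some fixed $\lambda$ below the uniform lower bound of the forms, and thus to the claimed $\HAHeps^{\dir,U} \normresconv \HAH^{\dir,U}$ in probability.

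The remaining assertions follow with little additional work. Compactness of the embedding $H^1_0(U)\hookrightarrow L^2(U)$ together with the uniform semiboundedness of $\fa^\epsilon$ makes all resolvents compact, so the spectra are discrete; norm resolvent convergence then yields convergence of eigenvalues with multiplicity and of the associated spectral projections, from which one extracts $L^2$-convergent eigenfunctions after suitably aligning orthonormal bases in each eigenspace. For the mollifier-independence, the limit form $\fa$ is expressed entirely in terms of the $\rho$-independent limits $Y$ and $R$ guaranteed by the construction assumption, and a closed semi-bounded form determines its self-adjoint operator uniquely. The main obstacle in this programme is the bookkeeping needed to certify that the remainder $R^\epsilon$ produced by the exponential substitution is controlled uniformly in $\epsilon$ in a space of positive regularity all the way down to the critical threshold $-2+\delta$ of Assumption~\ref{assump:base}; this is exactly where the iterated exponential ansatz and a careful regularity-structures/paracontrolled-style analysis of the stochastic objects have to do the heavy lifting.
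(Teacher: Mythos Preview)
Your overall architecture---exponential transform followed by Kuwae--Shioya compact convergence of the resulting symmetric forms, then reading off norm-resolvent convergence and spectral consequences---is exactly the paper's strategy (Theorem~\ref{thm:convergence_of_symmetric_forms}, Lemma~\ref{lem:compact_convergence_of_symmetric_forms}). But there is a genuine gap in your claim that the remainder $R$ converges in a space of \emph{positive} regularity. It does not, and it cannot be made to by iterating the exponential ansatz further.

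The construction assumption only gives $Y_N \in \csp^{-1+\delta,\sigma}(\R^d)$, which is of \emph{negative} regularity for every $\delta \in (0,1)$; for 3D white noise, for instance, $\delta$ is just below $\tfrac12$ and the remainder lives at regularity roughly $-\tfrac12$. The bottleneck is always a product of $\nabla G_0*\xi$ (regularity $-1+\delta$) with smoother factors, which inherits the negative exponent regardless of how many $\tau_i$ you feed into $X$. The threshold that iteration \emph{can} reach---and the one that actually matters---is regularity $>-1$, not $>0$ (cf.\ Remark~\ref{remark:finding_w_eps}).

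This matters because several steps you describe as routine are where the work happens once $R$ is only a distribution:
\begin{itemize}
\item The expression $\int_U R\,\tilde u^2$ must be interpreted as a duality pairing; one shows it defines a bounded symmetric form on $H^s_0(U)$ for $s\in(1-\delta,1)$ (Theorem~\ref{theorem:examples_bd_sym_with_weighted_input}~\ref{item:sym_form_on_domain}), not on $L^2$.
\item Closedness and semi-boundedness of the limiting form are not ``easily seen''; they require the interpolation bound $|\cZ(v,v)| \le \eta\int|\nabla v|^2 + C_\eta\|v\|_{L^2}^2$ (Lemma~\ref{lem:estimate_of_Z}) and the Kato--Rellich argument of Proposition~\ref{prop:closedness_of_form}.
\item In the $\Gamma$-liminf step you cannot simply invoke weak lower semicontinuity, since the $R$-term is neither sign-definite nor $L^2$-continuous. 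The paper instead shows $\cZ_n(u_n^\flat,u_n^\flat)\to\cZ(u^\flat,u^\flat)$ by combining $\formnorm{\cZ_n-\cZ}_{H^s_0(U)}\to 0$ with $u_n^\flat\to u^\flat$ in $H^s_0(U)$, the latter obtained by interpolating between strong $L^2$-convergence and $H^1$-boundedness (proof of Theorem~\ref{thm:convergence_of_symmetric_forms}).
\end{itemize}
Once you correct the target regularity from ``positive'' to ``greater than $-1$'' and supply these three ingredients, your outline becomes the paper's proof.
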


The second main result concerns Anderson Hamiltonians on bounded Lipschitz domains with Neumann boundary conditions.
\begin{theorem}[Theorem~\ref{thm:convergence_of_Neumann_AH}] \label{thm:main_Neumann_AH}
We impose the construction and Neumann assumption ~\ref{assump:all_three_assump}~\ref{item:construction_assump} and~\ref{item:neumann_assump}. 
Let $U$ be a bounded Lipschitz domain. There exists   a self-adjoint operator $\HAH^{\neu,U}$ on $L^2(U)$ such that
\begin{align*}
\HAHeps^{\neu,U} \normresconv_{\epsilon\downarrow 0}
\HAH^{\neu,U} \mbox{ in probability}.
\end{align*}
%$\HAHeps^{\neu,U}$ converges in the norm resolvent sense to $\HAH^{\neu,U}$ in probability.
%  \begin{equation*}
% \norm{(\i + \HAH^{\neu,U})^{-1} - (\i + \HAHeps^{\neu,U})^{-1}}_{
%    L^2(U) \to L^2(U)
%    } \arroweps 0 \quad \mbox{in probability}.
%  \end{equation*}
Furthermore, each of the operators has a countable spectrum of eigenvalues and the eigenvalues of $\HAHeps^{\neu,U}$ converge in probability to those of $\HAH^{\neu,U}$.
Moreover, there exist choices of eigenfunctions of $\HAHeps^{\neu,U}$ and $\HAH^{\neu,U}$ such that one also has convergence of these eigenfunctions in probability. 

The limit $\HAH^{\neu,U}$ is independent of the mollifier $\rho$.
\end{theorem}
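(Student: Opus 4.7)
The plan is to proceed in close parallel with the proof of Theorem~\ref{thm:main_AH}, constructing $\HAH^{\neu,U}$ via its associated symmetric form on $H^1(U)$ instead of $H^1_0(U)$, and then transferring convergence of forms to norm resolvent convergence through the Kuwae--Shioya framework \cite{kuwae_shioya_2003}. Formally, the form associated to $\HAHeps^{\neu,U}$ is
\begin{equation*}
  \cE_\epsilon^{\neu,U}(u,v) = \int_U \nabla u \cdot \nabla v - \int_U \xi_\epsilon u v + c_\epsilon \int_U u v,\qquad u,v\in H^1(U),
\end{equation*}
and the first step is to rewrite this form, via the exponential transform $u = \mathrm{e}^{\WAHeps} \tilde u$ for a suitable stochastic function/distribution $\WAHeps$ built from the objects provided by the construction assumption~\ref{assump:all_three_assump}~\ref{item:construction_assump}, so that the counterterm $c_\epsilon$ is absorbed and the singular potential $\xi_\epsilon$ gets replaced, after integration by parts, by the limiting stochastic data plus boundary contributions. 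This is where the Neumann assumption~\ref{assump:all_three_assump}~\ref{item:neumann_assump} enters: unlike the Dirichlet case, the test functions do not vanish on $\partial U$, so the integrations by parts produce boundary integrals involving $\nu \cdot \nabla \WAHeps$ (with $\nu$ the outer normal), and the Neumann assumption is precisely what one needs to control these terms as $\epsilon \downarrow 0$.

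Once a representation of $\cE_\epsilon^{\neu,U}$ of the form ``gradient part $+$ renormalised stochastic terms acting on $u, v$ in $H^1(U)$ $+$ boundary terms'' is obtained, I would establish, with probability one along a subsequence of mollifier parameters (and in probability afterwards), the following two things: (i) uniform lower semiboundedness of $\cE_\epsilon^{\neu,U}$, which amounts to a quantitative estimate showing that the stochastic and boundary perturbations are controlled by an arbitrarily small multiple of the gradient norm plus a constant, using the trace inequality on bounded Lipschitz domains and the regularity statements underlying Theorem~\ref{theorem:examples_bd_sym_with_weighted_input}~\ref{item:sym_form_on_domain}; (ii) Mosco convergence $\cE_\epsilon^{\neu,U} \to \cE^{\neu,U}$ in the Kuwae--Shioya sense, where the limiting form $\cE^{\neu,U}$ is defined in terms of the limiting stochastic data. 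The candidate $\HAH^{\neu,U}$ is then defined as the self-adjoint operator associated to $\cE^{\neu,U}$.

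Since $U$ is a bounded Lipschitz domain, the embedding $H^1(U) \hookrightarrow L^2(U)$ is compact by Rellich, and this compactness is preserved uniformly in $\epsilon$ by the uniform form bound, so Mosco convergence upgrades to norm resolvent convergence via \cite[Theorem 2.5 and Corollary 2.6]{kuwae_shioya_2003} together with the standard argument that Mosco convergence plus compactness of resolvents yields norm resolvent convergence. From this one deduces that the spectra are countable (discrete), that each eigenvalue of $\HAHeps^{\neu,U}$ converges to the corresponding eigenvalue of $\HAH^{\neu,U}$, and that eigenfunctions can be chosen convergent, by the standard perturbation-theoretic argument for compact self-adjoint operators. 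The subsequence and ``in probability'' upgrades are as in Theorem~\ref{thm:main_AH}, via the characterisation of convergence in probability through almost sure convergence along subsequences. Independence of the mollifier $\rho$ follows because the Neumann assumption fixes the limiting stochastic data without reference to $\rho$, so the limit $\cE^{\neu,U}$, and hence $\HAH^{\neu,U}$, is intrinsic.

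The main obstacle, and the genuinely new input compared to the Dirichlet theorem, is the treatment of the boundary terms arising from integration by parts. In the Dirichlet case, $H^1_0(U)$ eliminates them; here one must show that the boundary contributions converge in a sense strong enough for Mosco convergence, and that they are dominated by the gradient part on $H^1(U)$. This is precisely the role of the Neumann assumption~\ref{assump:all_three_assump}~\ref{item:neumann_assump}, but actually exploiting it requires combining the trace inequality $\|u\|_{L^2(\partial U)} \lesssim \|u\|_{H^1(U)}$ on bounded Lipschitz domains with the stochastic estimates it provides, and verifying that the resulting boundary quadratic forms pass to the limit along with the bulk ones. The rest of the proof is a careful but essentially routine adaptation of the Dirichlet argument.
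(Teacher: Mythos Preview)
Your overall architecture is correct and matches the paper: exponential transform, boundary terms from the integration by parts in Lemma~\ref{lem:sym_form_transform_Neumann}, compact convergence of symmetric forms via Theorem~\ref{thm:convergence_of_symmetric_forms}, and then Kuwae--Shioya (Lemma~\ref{lem:compact_convergence_of_symmetric_forms}) to get norm resolvent convergence and convergence of eigenvalues and eigenfunctions. The discreteness of the spectrum comes from Proposition~\ref{prop:eigenvalues_of_AH} via Rellich, exactly as you say.

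The gap is in your handling of the boundary term. The quantity $\int_{\partial U}\cT(v^2)\,e^{2\WAHeps_M}\nabla \WAHeps_M\cdot\dd\bS$ cannot be controlled by the $L^2(\partial U)$ trace inequality you cite, because $\nabla \WAHeps_M$ only converges in $\csp^{\delta-1,\sigma}$ with $\delta<1$, i.e.\ in a space of \emph{negative} regularity; pairing a merely $L^1(\partial U)$ function against this is not well-defined in the limit. The paper resolves this by a further decomposition (see Remark~\ref{rem:neumann_construction} and Definition~\ref{def:cZ_Neumann}): one writes $\WAHeps_M = G_0\conv\xi_\epsilon + \hatYAHeps_M$, so that $\nabla\hatYAHeps_M$ converges in $\csp^{2\delta-1,\sigma}$, which has \emph{positive} regularity since $\delta>\tfrac12$, and this piece ($\widehat\cZ$) is then a genuine bounded form on $H^s(U)$ by Theorem~\ref{theorem:examples_bd_sym_with_weighted_input}\ref{item:sym_form_boundary_hat}. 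The remaining piece is rewritten as a duality pairing $\langle\tilde Y^U_\epsilon,\iota\circ\cR[e^{2\WAHeps_M}(\cT v)^2]\rangle$, where $\tilde Y^U_\epsilon\to\tilde Y^U$ in $B^{-2+\delta}_{p,p}(\R^d)$ is exactly what the Neumann assumption provides; making sense of this pairing requires that $e^{2\WAHeps_M}(\cT v)^2$ be lifted back to $W^{2-\delta}_q(\R^d)$ via fractional trace and right-inverse trace operators (Theorem~\ref{theorem:examples_bd_sym_with_weighted_input}\ref{item:sym_form_boundary_tilde}), and this is where the condition $\delta>\tfrac12$ and $s>\tfrac32-\delta$ genuinely enter. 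Your sketch does not contain this splitting or the fractional-trace machinery, and without it the boundary form is not shown to be bounded on any $H^s(U)$ with $s<1$, so the form-convergence argument does not close.
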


\begin{remark}
The statement of Theorem~\ref{thm:convergence_of_Dirichlet_AH} is actually slightly more general than that of Theorem~\ref{thm:main_AH}. Convergence in probability implies that there exists a subsequence and a set $\Omega_1 \subseteq \Omega$ of probability one such that the subsequence converges everywhere on $\Omega_1$. For the convergence of the Dirichlet operators, this set $\Omega_1$ can be chosen independently from the choice of bounded domain $U$.
%Observe that the convergence in probability in Theorem~\ref{thm:main_Neumann_AH} can be formulated similarly as in Theorem~\ref{thm:main_AH}, though in this case the set $\Omega_1$ may depend on $U$.
\end{remark}

The last main result concerns the integrated density of states (IDS) of Anderson Hamiltonians. For example, we show that the notion of the IDS for Anderson Hamiltonians with smooth potentials can be extended to irregular potentials.

For a bounded domain $U$ and $L\in [1,\infty)$ we write $|U|$ for the Lebesgue measure of $U$ and
\begin{align*}
      U_L \defby LU = \set{x \in \R^d \given L^{-1} x \in U}.
\end{align*}
We recall that for the Anderson Hamiltonian with a smooth ergodic potential $V$ the integrated density of states $\ecf_V$ is given by the right-continuous and increasing function $\R \rightarrow \R$ with $\lim_{\lambda \rightarrow -\infty} \ecf_V(\lambda) =0$ for which, with $(\lambda_k(U,V))_{k\in\N}$ being the eigenvalues of $-\Delta-V$ with Dirichlet boundary conditions on $U$ (counting multiplicities), for any bounded domain $U$ and continuity point $\lambda$ of $\ecf_V$,
\begin{align*}
      \lim_{L \to \infty} \frac{1}{\abs{U_L}} \sum_{k \in \N} \indic_{\{\lambda_k(U_L,V) \leq \lambda\}}
       = \ecf_V(\lambda).
\end{align*}

\begin{theorem}[Theorem~\ref{thm:IDS_general},  Theorem~\ref{thm:IDS_epsilon} and Theorem~\ref{thm:IDS_tails}]\label{thm:main_IDS}
\

\noindent We impose the construction and ergodic assumption~\ref{assump:all_three_assump}~\ref{item:construction_assump} and~\ref{item:ergodic_assump}. 
There exists a (deterministic) right-continuous and increasing  function $\NAH:\R \to \R$ with
\begin{align*}
\lim_{\lambda \to -\infty} \NAH(\lambda) = 0,
\end{align*}
such that the following holds:
  \begin{enumerate}[leftmargin=*]
  \item
  \label{item:IDS_existence}
 For $(\lambdaAHk^{\dir}(U))_{k \in \N}$ being the eigenvalues of $\HAH^{\dir,U}$ as in Theorem~\ref{thm:main_AH} (counting multiplicities), almost surely, one has for every bounded domain $U$ and every continuity point $\lambda$ of $\NAH$
    \begin{equation*}
      \lim_{L \to \infty} \frac{1}{\abs{U_L}} \sum_{k \in \N} \indic_{\{\lambdaAHk^{\dir}(U_L) \leq \lambda\}}
       = \NAH(\lambda).
    \end{equation*}
 ($\NAH$ is called the integrated density of states of the Anderson Hamiltonian with potential $\xi$.)
    \item \label{item:IDS_approx} Let $\NAHeps$ be the integrated density of states of the Anderson Hamiltonian with potential $ \xi_{\epsilon} - c_{\epsilon}$, where $c_{\epsilon}$ is
    the renormalization constant from Assumption~\ref{assump:all_three_assump}~\ref{item:construction_assump}. \\
    Then, $\NAHeps$ converges vaguely to  $\NAH$ (see Definition~\ref{def:vague_convergence}).
    \item  \label{item:IDS_right_tail}One has $\lim_{\lambda \to \infty} \lambda^{-\frac{d}{2}} \NAH(\lambda) = \frac{\abs{B(0, 1)}}{(2 \pi)^d}$.
    \item \label{item:IDS_left_tail} For any bounded domain $U$ and
    $\alpha \in (0, \infty)$, the following identities hold in $[-\infty, 0]$:
    \begin{align*}
      \limsup_{\lambda \to -\infty} (-\lambda)^{-\alpha} \log \NAH(\lambda) &=
      \limsup_{\lambda \to -\infty} (-\lambda)^{-\alpha} \log \P(\lambdaAHone^{\dir}(U) \leq \lambda), \\
      \liminf_{\lambda \to -\infty} (-\lambda)^{-\alpha} \log \NAH(\lambda) &=
      \liminf_{\lambda \to -\infty} (-\lambda)^{-\alpha} \log \P(\lambdaAHone^{\dir}(U) \leq \lambda).
    \end{align*}
    \item \label{item:IDS_Neumann}Impose furthermore the Neumann assumption, Assumption~\ref{assump:all_three_assump}~\ref{item:neumann_assump}. For  $(\lambdaAHk^{\neu}(U))_{k \in \N}$ being the eigenvalues of $\HAH^{\neu,U}$ as in Theorem~\ref{thm:main_Neumann_AH},
    for every bounded Lipschitz domain $U$ and every continuity point $\lambda$ of $\NAH$,
    \begin{equation*}
      \lim_{\substack{ L \to \infty \\ L \in \N} } \frac{1}{\abs{U_L}} \sum_{k \in \N} \indic_{\{\lambdaAHk^{\neu}(U_L) \leq \lambda\}}
      = \NAH(\lambda), \quad \text{almost surely}.
    \end{equation*}
  \end{enumerate}
\end{theorem}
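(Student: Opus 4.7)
The plan is to adopt the functional analytic approach of Kirsch--Martinelli, resting on super/sub-additivity of eigenvalue counting functions and the Akcoglu--Krengel ergodic theorem. Writing $N^{\dir}(U,\lambda) \defby \#\{k \in \N : \lambdaAHk^{\dir}(U) \leq \lambda\}$ and similarly $N^{\neu}$, the essential input for part~\ref{item:IDS_existence} is super-additivity: for disjoint bounded $U_1, U_2$ with $\overline{U_1 \cup U_2} \subset V$ one has $N^{\dir}(U_1,\lambda) + N^{\dir}(U_2,\lambda) \leq N^{\dir}(V,\lambda)$. This follows from the min--max principle and the embedding $H^1_0(U_1) \oplus H^1_0(U_2) \hookrightarrow H^1_0(V)$, provided the form associated to $\HAH^{\dir,V}$ restricted to this direct sum decomposes as the sum of the forms on $U_1$ and $U_2$---for which one relies on the explicit representation of the form discussed in Section~\ref{subsec:techniques}. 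Combined with stationarity and ergodicity of $\xi$ from Assumption~\ref{assump:all_three_assump}~\ref{item:ergodic_assump}, the Akcoglu--Krengel theorem applied to $U \mapsto N^{\dir}(U,\lambda)$ yields a deterministic a.s.\ limit $\NAH(\lambda) \defby \lim_L \abs{U_L}^{-1} N^{\dir}(U_L,\lambda)$ along cubes $U_L = L(0,1)^d$. Monotonicity of $\NAH$ and countability of the rationals upgrade this to a single a.s.\ event valid at all continuity points; the passage from cubes to general bounded $U$ is carried out by Dirichlet bracketing, paving $U$ by small cubes from inside and outside and absorbing the boundary layer via Weyl-type upper bounds.

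For~\ref{item:IDS_approx}, the same super-additivity argument applied to the smooth potential $\xi_\epsilon - c_\epsilon$ produces $\NAHeps$; vague convergence $\NAHeps \to \NAH$ at continuity points is then deduced from the eigenvalue convergence of Theorem~\ref{thm:main_AH} via monotonicity and a diagonal argument interchanging $L \to \infty$ and $\epsilon \downarrow 0$, legitimized by uniform-in-$\epsilon$ counting-function estimates flowing from the tightness of the stochastic terms across $\epsilon$. Part~\ref{item:IDS_right_tail} (the Weyl asymptotics) follows from Dirichlet--Neumann bracketing against the free Laplacian: since the singular $\xi$ is a form perturbation of $-\Delta$ with vanishing relative bound (precisely the structural property that lets the exponential transform work through the whole subcritical range), the leading-order high-$\lambda$ asymptotics of $N^{\dir}$ coincide with the classical free Weyl counting function, giving the stated constant $\abs{B(0,1)}/(2\pi)^d$.

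For the left-tail identities in~\ref{item:IDS_left_tail}, one direction of both identities is immediate: by super-additivity and stationarity, $\NAH(\lambda) \geq \abs{U}^{-1} \P(\lambdaAHone^{\dir}(U) \leq \lambda)$, hence $\limsup(-\lambda)^{-\alpha} \log \NAH(\lambda) \geq \limsup(-\lambda)^{-\alpha} \log \P(\lambdaAHone^{\dir}(U) \leq \lambda)$, and analogously for the liminf. The opposite direction needs a partition argument: cover $U_L$ by $\sim \abs{U_L}/\abs{U}$ disjoint translates of $U$, bound $N^{\dir}(U_L,\lambda)$ above by the sum of the counts on the pieces (again through super-additivity at the form level), and control each summand by $\P(\lambdaAHone^{\dir}(U) \leq \lambda)$ via Markov's inequality; the polynomial prefactor from the number of pieces is absorbed after taking logs and dividing by $(-\lambda)^\alpha$. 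Part~\ref{item:IDS_Neumann} proceeds analogously via the reverse inequality $N^{\neu}(V,\lambda) \leq N^{\neu}(U_1,\lambda) + N^{\neu}(U_2,\lambda)$, arising from the enlarged form domain under Neumann boundary conditions; the resulting sub-additive Akcoglu--Krengel limit is identified with $\NAH$ via the sandwich $\lambdaAHk^{\neu}(U) \leq \lambdaAHk^{\dir}(U)$ applied on integer-dilated domains, which is why the restriction $L \in \N$ is imposed.

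The main obstacle, I expect, is executing the super-additivity at the form level for the singular $\xi$: classical proofs simply restrict the potential pointwise to a subdomain, but here the potential is only a distribution and the ``restricted'' form must be defined through its stochastic-term representation rather than by truncation. Making the boundary contributions from the stochastic terms vanish in the direct-sum decomposition, without the luxury of performing a Neumann boundary renormalization (which is not available under Assumption~\ref{assump:all_three_assump}~\ref{item:construction_assump} alone), is precisely what motivates the auxiliary construction of the \emph{artificial Neumann Anderson Hamiltonian} of Definition~\ref{def:artificial_Neumann_AH}---an operator that provides a sub-additive bracketing tool purely from the Dirichlet construction assumption, enabling the ergodic limit to be identified even when Assumption~\ref{assump:all_three_assump}~\ref{item:neumann_assump} is not in force.
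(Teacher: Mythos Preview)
Your overall architecture matches the paper's closely: Akcoglu--Krengel on the super-additive Dirichlet counting function, bracketing by the artificial Neumann object to identify the limit and pass to general domains, Weyl asymptotics via form-perturbation estimates, and the Dirichlet--Neumann sandwich for part~\ref{item:IDS_Neumann}. Two points deserve correction.

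The substantive gap is in part~\ref{item:IDS_left_tail}, the ``opposite direction''. You write that one bounds $N^{\dir}(U_L,\lambda)$ \emph{above} by the sum over disjoint translates ``again through super-additivity at the form level''. Super-additivity gives precisely the reverse inequality, $\sum_j N^{\dir}(U_j,\lambda)\le N^{\dir}(U_L,\lambda)$; a disjoint Dirichlet decomposition cannot produce an upper bound. The paper obtains the needed reverse inequality via an IMS localization (Lemma~\ref{lem:IMS_trick} and Lemma~\ref{lem:upper_estimates_N}~\ref{item:box_decomposition_dirichlet_reversed}): one uses \emph{overlapping} boxes $kL+[-l,L+l]^d$ together with a smooth partition of unity $\sum_k\eta_k^2=1$, and the IMS formula yields
\[
N^{\dir}([0,nL]^d,\lambda)\le \sum_k N^{\dir}\bigl(kL+[-l,L+l]^d,\ \lambda+K l^{-2}\bigr),
\]
the penalty $K l^{-2}$ coming from $\sum_k|\nabla\eta_k|^2$. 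After taking expectations and letting $n\to\infty$ this gives $\NAH(\lambda)\le\E\bigl[N^{\dir}([0,L+2l]^d,\lambda+Kl^{-2})\bigr]$. The passage to $\P(\lambda_1^{\dir}\le\lambda)$ is then by H\"older (not Markov): one writes $N^{\dir}=N^{\dir}\,\indic_{\{\lambda_1^{\dir}\le\lambda+Kl^{-2}\}}$ and separates the two factors, the $L^q$-moment of $N^{\dir}$ being finite by the Weyl-type bound of Lemma~\ref{lem:upper_bound_of_neumann_N}. The shift $Kl^{-2}$ and the exponent $1/p$ both disappear after taking logarithms, dividing by $(-\lambda)^{\alpha}$, and sending $l\to\infty$, $p\to 1$.

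A smaller point on part~\ref{item:IDS_existence}: when passing from cubes to a general domain, the upper bracketing is not by ``Weyl-type upper bounds'' (which would be circular at this stage) but by the \emph{sub-additivity} of the artificial Neumann counting function $\artNAH$: one sandwiches $\sum_{k\in I_n}N^{\dir}\le N^{\dir}(U_L,\lambda)\le\sum_{k\in J_n}\artNAH$, and the boundary layer is absorbed simply because $(\#J_n-\#I_n)/2^{dn}\to 0$. This is exactly where the artificial Neumann Hamiltonian you flag at the end actually enters the proof.
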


With the above theorem in combination with the tail asymptotics of the principal eigenvalue proven in  \cite[Theorem 2]{hsu2021asymptotic} we obtain the precise tail behaviour of the IDS for the Anderson Hamiltonian with white noise potential in $d$ dimensions.

\begin{corollary}\label{cor:ids_tails_white_noise}
  Let $d \in \{2, 3\}$ and $\xi$ be the $d$-dimensional white noise. Then,
  \begin{equation*}
    \lim_{\lambda \to -\infty} (-\lambda)^{-\frac{4 - d}{2}} \log \NAH(\lambda)
    = -\frac{8}{d^{d/2} (4 - d)^{2- d/2}} \kappa_{d}^{-4},
  \end{equation*}
  where $\kappa_d$ is the best constant of the Gagliardo-Nirenberg inequality
  \begin{equation*}
    \norm{f}_{L^4(\R^d)} \leq C \norm{\nabla f}_{L^2(\R^d)}^{d/4} \norm{f}_{L^2(\R^d)}^{1 - d/4}.
  \end{equation*}
\end{corollary}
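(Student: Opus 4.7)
The plan is to combine Theorem~\ref{thm:main_IDS}\ref{item:IDS_left_tail} with the principal eigenvalue tail asymptotics of Hsu and Labbé \cite[Theorem 2]{hsu2021asymptotic}. Since Theorem~\ref{thm:main_IDS}\ref{item:IDS_left_tail} converts the (logarithmic) left tail of the IDS into the (logarithmic) left tail of the principal eigenvalue of $\HAH^{\dir,U}$ on \emph{any} fixed bounded domain $U$, the corollary reduces to citing an explicit computation of the latter.

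First I would verify that the $d$-dimensional white noise with $d \in \{2,3\}$ falls within our framework. The Besov--Hölder regularity of white noise is $-d/2 - \kappa$ for any $\kappa > 0$, so taking $\delta \in (0, 1/2)$ with $-d/2-\kappa > -2+\delta$ covers $d \in \{2,3\}$, verifying the regularity hypothesis in Assumption~\ref{assump:base}. Translation invariance of white noise gives ergodicity (Assumption~\ref{assump:all_three_assump}\ref{item:ergodic_assump}), and the Construction assumption~\ref{assump:all_three_assump}\ref{item:construction_assump} for white noise will have been established in the relevant examples section of the paper. Hence Theorem~\ref{thm:main_IDS} applies and yields an IDS $\NAH$ satisfying
\begin{equation*}
  \limsup_{\lambda \to -\infty} (-\lambda)^{-\frac{4-d}{2}} \log \NAH(\lambda)
  = \limsup_{\lambda \to -\infty} (-\lambda)^{-\frac{4-d}{2}} \log \P\bigl(\lambdaAHone^{\dir}(U) \leq \lambda\bigr),
\end{equation*}
and the analogous identity for the liminf, with $U$ any bounded domain of our choosing.

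Next, I would invoke \cite[Theorem 2]{hsu2021asymptotic}, which identifies
\begin{equation*}
  \lim_{\lambda \to -\infty} (-\lambda)^{-\frac{4-d}{2}} \log \P\bigl(\lambdaAHone^{\dir}(U) \leq \lambda\bigr)
  = -\frac{8}{d^{d/2} (4-d)^{2-d/2}} \kappa_{d}^{-4}.
\end{equation*}
Substituting into the limsup/liminf identities above forces the upper and lower limits of $(-\lambda)^{-(4-d)/2} \log \NAH(\lambda)$ to coincide with the same constant, yielding the claimed limit.

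The only genuine subtlety is matching the two operator constructions: \cite{hsu2021asymptotic} works with a specific geometry (typically a box), so one must ensure that the Anderson Hamiltonian with Dirichlet boundary conditions on that box, as understood in \cite{hsu2021asymptotic}, coincides with $\HAH^{\dir,U}$ constructed in Theorem~\ref{thm:main_AH}. This follows from the uniqueness of the norm-resolvent limit in Theorem~\ref{thm:main_AH} together with the fact that both constructions use the same renormalisation scheme. Once the operators are identified, the choice of $U$ in Theorem~\ref{thm:main_IDS}\ref{item:IDS_left_tail} can be made to coincide with the domain used in \cite[Theorem 2]{hsu2021asymptotic}, and the combination is immediate.
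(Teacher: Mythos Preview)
Your proposal is correct and follows exactly the paper's approach: the paper's proof is a one-liner citing Theorem~\ref{thm:main_IDS}(d) (equivalently Theorem~\ref{thm:IDS_tails}) together with \cite[Theorem 2]{hsu2021asymptotic}. Your additional discussion of verifying the assumptions for white noise and matching the operator constructions is reasonable extra care, but the core argument is identical.
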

\begin{proof}
  This follows from 
Theorem~\ref{thm:main_IDS}~(d) (see Theorem~\ref{thm:IDS_tails}) and \cite[Theorem 2]{hsu2021asymptotic}.
\end{proof}

\begin{remark}
  The case $d = 1$ is of course known, see \cite{fukushima_spectra_1977}.
  The case $d = 2$ was proved in \cite{matsuda2020integrated}.
  In the physics literature, these tail behaviours have been already expected (e.g., \cite{cardy_electron_1978, Brezin_1980}).
\end{remark}

\subsection{Ideas behind the construction and deriving the stochastic terms}
\label{subsec:strategy_ang_techniques}

In this section we give the general ideas behind the construction of the Anderson Hamiltonians as in Theorem~\ref{thm:main_AH} and Theorem~\ref{thm:main_Neumann_AH}. 
As is common practice when dealing with singular SPDEs, we transform and decompose terms and give a meaning to the appearing stochastic terms by means of a renormalization procedure. 
The heuristics behind how to derive these stochastic terms is described in Remark~\ref{remark:finding_w_eps}. 

\medskip

As mentioned before, we construct the Anderson Hamiltonian by constructing the corresponding symmetric form (for a definition see Definition~\ref{def:symmetric_form}). For a smooth potential $\zeta$, the Anderson Hamiltonian $-\Delta + \zeta$ on a bounded domain $U$ with Dirichlet boundary conditions corresponds to the symmetric form 
\begin{align}
\label{eqn:smooth_pot_sym_form}
(u,v) \mapsto \int_U \nabla u \cdot \nabla v +\int_U \zeta uv,
\end{align}
on the Sobolev space $H_0^1(U)$, i.e., $u,v\in H_0^1(U)$. 
Whereas, if we consider instead Neumann boundary conditions the operator corresponds to \eqref{eqn:smooth_pot_sym_form} on the Sobolev space $H^1(U)$. 
For a discussion on this, we refer to \cite[Section 6 and 7]{davies_1995}. 

Now we want to give a meaning to the symmetric form \eqref{eqn:smooth_pot_sym_form}, basically with ``$\zeta$'' replaced by ``$\xi$''. By the bilinearity of symmetric forms, it suffices to give a meaning on the diagonal, i.e., 
we may consider $u=v$ in the above formula (see also the comment below Definition~\ref{def:symmetric_form}). 
Now we will transform the above formula, in a type of ``partial Cole--Hopf'' transform, by imposing that $u$ is the product of $e^w$ and another function,  for some well-chosen function $w$. We present this in Lemma~\ref{lem:sym_form_transform_Neumann} after introducing some convenient notation in Definition~\ref{def:notation_integral_dot} and recalling Green's formula in Lemma~\ref{lem:integration_by_parts}. For the definition of a Lipschitz domain $U$ and $C^\infty(\overline U)$, see Definition~\ref{def:lipschitz_domain}. 

\begin{definition}
\label{def:notation_integral_dot}
Let $U$ be a bounded Lipschitz domain. 
Let $\nu : \partial U \rightarrow \R^d$ be such that $\nu$ is the outer unit normal on $\partial U$ almost everywhere. 
Let $S$ the $(d-1)$-dimensional Lebesgue measure on $\partial U$. 
Let $g \in C^1(\overline U)$. 
For $x\in \partial U$ we define $\nabla_\nu g(x) \defby \nabla g(x) \cdot \nu(x)$. 
\begin{calc}
Observe that 
  \begin{equation*}
    \nabla_{\nu} g(x) \defby \lim_{h \to 0} \frac{g(x + h \nu(x)) - g(x)}{h}.
  \end{equation*}
\end{calc}
Let $f$ be a measurable function on $\partial U$ such that $f \nabla_\nu g$ is integrable on $\partial U$. 
We write 
  \begin{equation*}
    \int_{\partial U} f \nabla g \cdot \dd \bm{S} \defby \int_{\partial U} f \nabla_{\nu} g \dd S.
  \end{equation*}
\end{definition}

\begin{lemma}[{\cite[Theorem 4.6]{EvansLawrenceCraig2015MTaF}}]\label{lem:integration_by_parts}
\ 
\begin{itemize}
\item  % If $U$ is not necessarily Lipschitz, then
Let $U$ be a bounded domain. Then, for every 
$f \in H^1(U)$ 
and $g \in C^2_c(U)$,
    \begin{equation*}%\label{eq:integration_by_parts}
        \int_U \nabla f \cdot \nabla g  = -\int_U f \Delta g . 
%    \int_U \nabla f(x) \cdot \nabla g(x) \dd x = -\int_U f(x) \Delta g(x) \dd x. 
  \end{equation*}
\item 
  Let $U$ be a bounded Lipschitz domain. Then, for every 
$f\in H^1(U)$ %  $f\in H^1(U)$ 
  and $g\in C^2(\overline U)$, 
  \begin{equation}\label{eq:integration_by_parts}
    \int_U \nabla f \cdot \nabla g  = -\int_U f \Delta g  + \int_{\partial U} \cT(f) \nabla  g \cdot  \dd \bm{S}, 
%        \int_U \nabla f(x) \cdot \nabla g(x) \dd x = -\int_U f(x) \Delta g(x) \dd x + \int_{\partial U} f(x) \nabla  g(x) \cdot  \dd \bm{S}(x). 
  \end{equation}
where $\cT= \cT_{H^1(U)}$ is the trace operator (see Lemma~\ref{lem:extension_and_trace_sobolev}~\ref{item:trace_op_and_inverse}). 
\end{itemize}
\end{lemma}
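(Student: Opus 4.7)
Both statements are classical; the plan is to reduce each to smooth functions and then pass to the limit. For \emph{part 1}, the key observation is that for $g\in C_c^2(U)$ each component $\partial_i g$ lies in $C_c^1(U)$, so the defining property of the weak partial derivative $\partial_i f\in L^2(U)$ of $f\in H^1(U)$ gives
\begin{equation*}
 \int_U (\partial_i f)\, \partial_i g \dd x \;=\; -\int_U f\, \partial_i^2 g \dd x .
\end{equation*}
Summing over $i=1,\dots,d$ yields the claim. No approximation in $f$ is needed because the identity above is exactly the definition of weak derivative applied to the test function $\partial_i g\in C_c^1(U)\subseteq C_c^\infty$-closure (or, if one wants, approximate $\partial_i g$ by $C_c^\infty$-functions using a standard mollification inside the compact support of $g$).

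For \emph{part 2}, I would proceed in two steps. First, fix $g\in C^2(\overline U)$ and consider $f\in C^\infty(\overline U)$. Then the classical divergence theorem on the bounded Lipschitz domain $U$ applied to the vector field $f\nabla g\in C^1(\overline U;\R^d)$ gives
\begin{equation*}
 \int_U \nabla f\cdot \nabla g \dd x + \int_U f\,\Delta g \dd x
 \;=\; \int_U \operatorname{div}(f\nabla g) \dd x
 \;=\; \int_{\partial U} f\, \nabla g\cdot \nu \dd S
 \;=\; \int_{\partial U} f\, \nabla g\cdot \dd \bm{S},
\end{equation*}
using Definition~\ref{def:notation_integral_dot}. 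The validity of the divergence theorem on bounded Lipschitz domains is exactly the content of \cite[Theorem 4.6]{EvansLawrenceCraig2015MTaF}. Second, extend the identity from $f\in C^\infty(\overline U)$ to $f\in H^1(U)$ by density: on a bounded Lipschitz domain $U$, the space $C^\infty(\overline U)$ is dense in $H^1(U)$ (Meyers--Serrin together with extension across a Lipschitz boundary). Pick a sequence $f_n\in C^\infty(\overline U)$ with $f_n\to f$ in $H^1(U)$. Then $\nabla f_n\to \nabla f$ in $L^2(U;\R^d)$, so the left-hand volume integrals converge; and $f_n\to f$ in $L^2(U)$, so $\int_U f_n\Delta g\to \int_U f\Delta g$ since $\Delta g\in L^\infty(U)$. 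Finally, by continuity of the trace operator $\cT=\cT_{H^1(U)}:H^1(U)\to L^2(\partial U)$ (Lemma~\ref{lem:extension_and_trace_sobolev}~\ref{item:trace_op_and_inverse}) and since $\cT(f_n)=f_n|_{\partial U}$ for smooth $f_n$, we get $\cT(f_n)\to \cT(f)$ in $L^2(\partial U)$, and since $\nabla g\cdot\nu$ is bounded on $\partial U$, the boundary integrals converge as well. The identity for $f\in H^1(U)$ follows.

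The only genuinely nontrivial ingredient is the divergence theorem on Lipschitz domains together with the density of $C^\infty(\overline U)$ in $H^1(U)$ and the trace theorem; all of these are standard and can be cited as indicated. No step presents a real obstacle.
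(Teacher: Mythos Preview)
The paper does not give its own proof of this lemma; it is stated with a citation to \cite[Theorem 4.6]{EvansLawrenceCraig2015MTaF} and used as a black box. Your argument is correct and is precisely the standard route one would take to justify the cited statement: the first part is immediate from the definition of weak derivatives (with the small mollification step you note to pass from $C^1_c$ to $C^\infty_c$ test functions), and the second part is the divergence theorem for $C^1$ vector fields on Lipschitz domains followed by the density of $C^\infty(\overline U)$ in $H^1(U)$ and continuity of the trace $\cT_{H^1(U)}:H^1(U)\to H^{1/2}(\partial U)\hookrightarrow L^2(\partial U)$.
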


\begin{lemma}\label{lem:sym_form_transform_Neumann}
  Let $U$ be a bounded Lipschitz domain, $\zeta, w \in C^{\infty}(\overline{U})$ and $u \in H^1(\overline{U})$.
  Set $v \defby e^{-w} u$. Then, one has
  \begin{multline*}
    \int_U (\abs{\nabla u}^2 - \zeta u^2) 
    = \int_U e^{2w} \abs{\nabla v}^2 - \int_U e^{2w} (\zeta + \abs{\nabla w}^2 + \Delta w) v^2
    + \int_{\partial U} \cT(v^2) e^{2w} \nabla w \cdot \dd \bm{S}.
  \end{multline*}
\end{lemma}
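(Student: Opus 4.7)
The plan is to derive the identity by a pointwise gradient expansion together with an application of the divergence theorem to one cross term. Writing $u = e^{w} v$, I compute
\begin{equation*}
\nabla u = e^{w}(\nabla v + v\,\nabla w), \qquad |\nabla u|^2 = e^{2w}|\nabla v|^2 + 2 e^{2w}\, v\,\nabla v\cdot\nabla w + e^{2w}\, v^2 |\nabla w|^2,
\end{equation*}
together with $u^2 = e^{2w} v^2$. The first term on the right reproduces the $\int_U e^{2w}|\nabla v|^2$ part of the claim, while $-\int_U \zeta u^2 = -\int_U e^{2w}\zeta v^2$ contributes the $\zeta$-piece; the only nontrivial work lies in rewriting the cross term $2\int_U e^{2w} v\,\nabla v\cdot\nabla w$.

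For this, using $2v\nabla v = \nabla(v^2)$, I rewrite the cross term as $\int_U \nabla(v^2)\cdot X$ with the smooth vector field $X\defby e^{2w}\nabla w \in C^\infty(\overline U;\R^d)$, whose divergence and boundary flux are
\begin{equation*}
\operatorname{div} X = e^{2w}\bigl(2|\nabla w|^2 + \Delta w\bigr), \qquad X\cdot\nu = e^{2w}\,\nabla_\nu w.
\end{equation*}
Applying the divergence theorem (the vector-field version of Lemma~\ref{lem:integration_by_parts}) gives
\begin{equation*}
\int_U \nabla(v^2)\cdot X = -\int_U v^2\,\operatorname{div} X + \int_{\partial U} \cT(v^2)\, e^{2w}\,\nabla w\cdot \dd\bm{S}.
\end{equation*}
Substituting and collecting, the two $v^2|\nabla w|^2$ contributions combine to $-\int_U e^{2w} v^2 |\nabla w|^2$, which together with $-\int_U e^{2w} v^2 \Delta w$ and the $\zeta$ piece produces exactly the right-hand side of the lemma.

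The main obstacle is the rigorous justification of the integration by parts, since $v \in H^1(U)$ only yields $v^2 \in W^{1,1}(U)$ (with $\nabla(v^2) = 2v\nabla v$ in $L^1$), whereas Lemma~\ref{lem:integration_by_parts} is formulated for $f\in H^1(U)$. I would handle this by approximation: since $U$ is Lipschitz, $C^\infty(\overline U)$ is dense in $H^1(U)$, so pick $u_n\in C^\infty(\overline U)$ with $u_n\to u$ in $H^1(U)$, set $v_n\defby e^{-w} u_n\to v$ in $H^1(U)$ (using that $w$ is smooth on $\overline U$), verify the identity for each $u_n$ by the classical divergence theorem, and pass to the limit. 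All bulk terms converge because $\zeta$, $e^{2w}$, $|\nabla w|^2$ and $\Delta w$ are bounded on $\overline U$ and $v_n\to v$ in $H^1(U)$. For the boundary integral I would use continuity of the trace $\cT\colon W^{1,1}(U)\to L^1(\partial U)$ combined with $v_n^2 \to v^2$ in $W^{1,1}(U)$, the latter being standard from the Cauchy--Schwarz bounds
\begin{equation*}
\|v_n^2 - v^2\|_{L^1(U)} \le \|v_n - v\|_{L^2(U)}\|v_n + v\|_{L^2(U)},
\end{equation*}
\begin{equation*}
\|2 v_n\nabla v_n - 2v\nabla v\|_{L^1(U)} \le 2\|v_n - v\|_{L^2(U)}\|\nabla v_n\|_{L^2(U)} + 2\|v\|_{L^2(U)}\|\nabla v_n - \nabla v\|_{L^2(U)}.
\end{equation*}
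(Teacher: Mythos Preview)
Your proof is correct and follows essentially the same approach as the paper: expand via the product rule and handle the cross term by integration by parts. The paper works in the $u$-variable (expanding $e^{2w}|\nabla v|^2$ and applying Green's formula to $\int_U \nabla w\cdot\nabla(u^2)$ with $g=w$), while you stay in the $v$-variable and apply the divergence theorem to the vector field $e^{2w}\nabla w$; these are trivially equivalent computations. Your approximation argument to justify the integration by parts for $v^2\in W^{1,1}(U)$ is a nice piece of extra care that the paper's brief proof glosses over.
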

\begin{proof}
  One has
  \begin{equation*}
    \int_U e^{2w} \abs{\nabla v}^2 = \int_U e^{2w} \abs{\nabla(e^{-w} u)}^2
    = \int_U \abs{\nabla w}^2 u^2 + \int_U \abs{\nabla u}^2 - \int_U \nabla w \cdot \nabla (u^2)
  \end{equation*}
  and by integration by parts (Lemma~\ref{lem:integration_by_parts}) one has
  \begin{equation*}
    \int_U \nabla w \cdot \nabla(u^2) = \int_{\partial U} \cT(u^2) \nabla w \cdot \dd \bm{S} - \int_U (\Delta w) u^2. \qedhere
  \end{equation*}
\end{proof}

By taking $\zeta = \xi_\epsilon - c_\epsilon$ for some $c_\epsilon \in \R$ and a smooth enough $w^\epsilon$
and by writing 
\begin{align}
y^\epsilon & =  - e^{2 w^{\varepsilon}} ( \xi_\epsilon - c_\epsilon + |\nabla w^\epsilon|^2 + \Delta w^\epsilon ), \label{eq:y_eps_def}\\
u^\flat & = e^{-w^\epsilon} u, \notag
\end{align} 
the Anderson Hamiltonian with potential $\xi_\epsilon - c_\epsilon$, namely, 
\begin{align}
\label{eqn:smooth_anderson_minus_c_eps}
-\Delta - \xi_\epsilon + c_\epsilon,
\end{align}
on $U$ with Dirichlet boundary conditions, corresponds to 
the symmetric form on $H_0^1(U)$ given by, 
\begin{align*}
\int_U |\nabla u|^2 - (\xi_\epsilon - c_\epsilon) u^2
& = \int_U e^{2 w^\epsilon} |\nabla u^\flat|^2
+\int_U y^\epsilon (u^\flat)^2.
\end{align*}
\change{
Similarly, if we instead consider Neumann boundary conditions, it corresponds to the symmetric form on $H^1(U)$  given by, 
\begin{align*}
\int_U |\nabla u|^2 - (\xi_\epsilon - c_\epsilon) u^2
& = \int_U e^{2 w^\epsilon} |\nabla u^\flat|^2
+\int_U y^\epsilon (u^\flat)^2 + \int_{\partial U}  \cT((u^\flat)^2) e^{2w^\epsilon} \nabla w^\epsilon \cdot \dd \bS. 
\end{align*}
As $w^\epsilon$ will be chosen smooth enough such that $H_0^1(U) = e^{w^\epsilon} H_0^1(U)$ and $H^1(U) = e^{w^\epsilon} H^1(U)$, this means that we have to give a meaning to the limit of two, in the case of Dirichlet boundary conditions, and three, in the case of Neumann boundary conditions, symmetric forms, namely 
\begin{align}\label{eq:three_syms}
v \mapsto \int_U e^{2w^\epsilon} |\nabla v|^2, \qquad v\mapsto \int_U y^\epsilon v^2, \qquad 
v\mapsto \int_{\partial U} \cT(v^2) e^{2w^\epsilon} \nabla w^\epsilon \cdot \dd \bS. 
\end{align}
As we will show in more detail in Section~\ref{sec:symmetric_forms}, if there are $w$ and $y$ such that $w^\epsilon \rightarrow w$ in $\cC^\delta$ and $y^\epsilon \rightarrow y$ in $\cC^{-1+\delta}$, then the first two symmetric forms in \eqref{eq:three_syms} converge to the corresponding symmetric forms with $w$ and $y$ in place of $w^{\varepsilon}$ and $y^{\varepsilon}$, respectively. \begin{calc} (Observe that $v\in H^1(U)$ one has $|\nabla v|^2 \in L^1(U)$ and $v^2 \in W^1_1(U)$, where $W^k_p$ is the usual Sobolev space, see Definition~\ref{def:fractional_Sobolev}.) \end{calc} 
This basically reflects the first assumption that we impose and call the \emph{construction assumption}:  Assumption~\ref{assump:all_three_assump}~\ref{item:construction_assump}. 
As $\cT(v^2)$ is in $L^1(\partial U)$ for $v\in H^1(U)$, one would expect the third symmetric form to converge only if $\delta > 1$, as then $\nabla w^\epsilon \rightarrow \nabla w$ in $\cC^{\delta -1}$. 
However, with an additional assumption on a stochastic term, which we call the \emph{Neumann assumption} namely Assumption~\ref{assump:all_three_assump}~\ref{item:neumann_assump},   it suffices to assume that $\delta> \frac12$. 
For this we use a further decomposition of the third symmetric form in \eqref{eq:three_syms}.
We discuss this in more detail in Section~\ref{subsec:stoch_terms_neumann}. }

\bigskip

So as mentioned, the construction and Neumann assumptions impose conditions on the stochastic terms.
Given a noise $\xi$ there is a way on how to find these stochastic terms. We describe the idea in the following remark.

\begin{remark}[Idea behind the derivation of the stochastic term $w^\epsilon$]
\label{remark:finding_w_eps}
Let us present the heuristic idea on how to choose these $w^{\epsilon}$, by forgetting for the moment about the regularization parameter ``$\epsilon$'' and the renormalisation constants ``$c_\epsilon$''. 
%behind the derivation of $W$.
Namely, 
we are going to construct a $\w$ such that 
\begin{align*}
\xi + \abs{\nabla \w}^2 + \Delta \w
\end{align*}
is sufficiently regular.
For convenience of conversation, let us introduce a formal notion ``$\deg(\sigma)$'' which more or less reflects the regularity of an object ``$\sigma$'', namely, we set ($\deg$ coincides with $\abs{\cdot}_+$ as  in Definition~\ref{def:degree})
\begin{align*}
\deg(\xi) = -2 + \delta, & & 
\deg(\partial_i \sigma) = \deg(\sigma) - 1, \\
\deg((-\Delta)^{-1} \sigma) = \deg(\sigma) + 2, & & 
\deg(\sigma_1 \cdot \sigma_2) = \deg(\sigma_1) + \deg(\sigma_2). 
\end{align*}
Now we expect to be able to choose $\w$ with positive regularity, so that the term $|\nabla \w|^2$ has a larger regularity than $\Delta \w$, i.e., $\deg(|\nabla \w|^2)>\deg (\Delta \w)$).
Hence, by ignoring $|\nabla \w|^2$ we try to find a $\w$ such that $\Delta \w$ compensates the irregularity of $\xi$.
The most natural choice for this is
$\w = (-\Delta)^{-1} \xi$. In this case,
\begin{equation*}
  \xi + \abs{\nabla \w}^2 + \Delta \w = \abs{ \nabla (-\Delta)^{-1} \xi}^2 =: \tau_1.
\end{equation*}
%We set $\deg(\xi) = -2 + \delta$, 
%$\deg(\partial_i \sigma) = \deg(\sigma) - 1$, 
%$\deg((-\Delta)^{-1} \sigma) = \deg(\sigma) + 2$ and
%$\deg(\sigma_1 \cdot \sigma_2) = \deg(\sigma_1) + \deg(\sigma_2)$. The degree $\deg(\sigma)$ more or less reflects the regularity of $\sigma$.

Observe that $\deg(\tau_1) = -2 + 2 \delta$, which is greater than $\deg(\xi)=-2 + \delta$.
If the degree $-2 + 2 \delta$ is too small to our taste, then we instead set
\begin{equation*}
  \w \defby (-\Delta)^{-1}(\xi + \tau_1).
\end{equation*}
For this $\w$ we obtain
\begin{equation*}
  \xi + \abs{\nabla \w}^2 + \Delta \w =
 \underbrace{2 \nabla (-\Delta)^{-1} \xi \cdot \nabla (-\Delta)^{-1} \tau_1}_{\tau_2}
  +\underbrace{\abs{\nabla (-\Delta)^{-1} \tau_1}^2}_{\tau_3},
\end{equation*}
where $\deg(\tau_2) = -2 + 3 \delta$ and $\deg(\tau_3) = -2 + 4 \delta$ are both greater than $-2 + 2 \delta$.
One can repeat this argument until
one obtains a sum of terms for $\xi + \abs{\nabla \w}^2 + \Delta \w$ such that
each term has sufficiently large degree.
(As Theorem~\ref{theorem:examples_bd_sym_with_weighted_input}~\ref{item:sym_form_on_domain} shows, ``sufficiently large'' means that the degree is greater than $-1$.)

The above arguments are not yet mathematically rigorous, as for instance, the term $\abs{\nabla (-\Delta)^{-1} \xi}^2$, that is the inner product of $\nabla (-\Delta)^{-1} \xi$ with itself, a priori does not make sense since $\nabla (-\Delta)^{-1} \xi$ is not a function in general.
%To make such argument mathematically rigorous, for instance, one has to make sense of
%$\tau_1 = \abs{\nabla (-\Delta)^{-1} \xi}^2$, as the inner product of $\nabla (-\Delta)^{-1} \xi$ with itself a priori does not make sense.
Moreover, it turns out that
% It turns out that although
$\abs{\nabla (-\Delta)^{-1} \xi_{\epsilon}}^2$ itself does not converge as $\epsilon \downarrow 0$,
but if we take a ``renormalization'' of it, namely
\begin{equation*}
  \abs{\nabla (-\Delta)^{-1} \xi_{\epsilon}}^2 - \expect[\abs{\nabla (-\Delta)^{-1} \xi_{\epsilon}}^2(0)],
\end{equation*}
then it does converge in probability.
Then, we take the limit of it as our definition of $\tau_1$ (instead of the nonrigorous definition $\abs{\nabla (-\Delta)^{-1} \xi}^2$ above).

\medskip

We will discuss the derivation of the stochastic terms in more detail in Section~\ref{sec:assumptions}. 
On the one hand we consider the concrete examples of two and three dimensional white noise, in which case it suffices to take the first and second choice of $w$ as described above, respectively. On the other hand, we mention how to deal with general noises of regularity greater than $-2$ with the use of the theory of regularity structures. 
\end{remark}

\subsection{Outline}
\label{sec:outline}

In Section~\ref{sec:function_spaces}, we introduce  some notation related to the function spaces that we use.
Technical estimates  related to the objects introduced in Section~\ref{sec:function_spaces} are postponed to   Appendix~\ref{subsec:technical_estimates_function_spaces}.
In Section~\ref{sec:assumptions}, we describe the main assumptions and give examples for which these assumptions are satisfied. 
In Section~\ref{sec:symmetric_forms}, we cover  some theory on (deterministic) symmetric forms that will be relevant to our problems.
In Section~\ref{sec:eigenvalues_of_AH}, we give the definition of the Anderson Hamiltonians and prove the main theorems on the construction and the IDS. 
In Appendix~\ref{sec:review_of_reg_str} we review some necessary terminalogies from regularity structures in order to prove in Appendix~\ref{sec:bphz_AH} that under general conditions the construction assumptions are satisfied for subcritical noises. 

\subsection{Notation}
\label{sec:notation}

We set $\N \defby \{1, 2, 3, \ldots\}$ and $\N_0 \defby \{0\} \cup \N$.
We call a subset of $\R^d$ a \emph{domain} if it is
an open subset of $\R^d$.
We denote by $\overline{U}$ the closure of a subset $U$ of $\R^d$.
Given a subset $U$ of $\R^d$, $L \in (0, \infty)$ and $x \in \R^d$, we set
\begin{align*}
U_L \defby  LU =  \set{y \in \R^d \given L^{-1} y \in U},
\end{align*}
$d(x, U) \defby \inf\set{\abs{x-y} \ \ \given y \in U}$,
$B(U, R) \defby \set{y \in \R^d \given d(y, U) \leq R}$ and $B(x, R) \defby B(\{x\}, R)$.
We denote by $\abs{U}$ the Lebesgue measure of a measurable set $U$.

We denote by $\S(\R^d)$ the space of Schwartz functions  equipped with the locally convex topology generated by the Schwartz seminorms,   and, by $\S'(\R^d)$ the space of tempered distributions,
that is, the dual space of $\S(\R^d)$.
We denote by $\supp(f)$ the support of a distribution or a continuous function $f$ in $\R^d$.
Let $k\in \N \cup \{ \infty\}$. For a domain $U$, we write $C^k(U)$ for the $k$ times continuously differentiable functions on $U$ and $C_c^k(U)$ for those functions in $C^k(U)$ with compact support. For a closed set $V\subseteq \R^d$ (we will consider $\overline U$ and $\partial U$ for domains $U$), we define
\begin{align*}
C^k(V) \defby \{ f|_V : f\in C^k(\R^d)\}.
\end{align*}
For a subset $U$ of $\R^d$, either open or closed, we define
\begin{align*}
  \norm{f}_{C^k(U)} &\defby \sup_{x \in U} \sum_{l \in \N_0^d: \abs{l} \leq k} \abs{\partial^k f(x)}
  \quad \text{if } k < \infty
\end{align*}
We denote by $L^p(U)$, $p \in [1, \infty]$, the usual Lebesgue $L^p$-space on $U$.
We denote by $\inp{F}{f}$ the dual pairing of $F \in \S'(\R^d)$ and $f \in \S(\R^d)$ and the dual pairing of
Besov spaces \cite[Theorem 2.17]{sawano2018theory}.
We denote by $f \conv g$ the convolution of $f$ and $g$.
By duality, the convolution $f \conv g$ for $f \in \S(\R^d)$ and $g \in \S'(\R^d)$ is defined and represents a smooth function.

 Let $A,X$ be sets and $f,g: A\times X \rightarrow [0,\infty]$.
We write $f(a,x) \lesssim_a g(a,x)$ if there exists a constant $C\in (0,\infty]$ (possibly) depending on $a$ --for which we also write either $C=C(a)$ or $C=C_a$-- such that $f(a,x) \le C g(a,x)$ for all $x$.
We will not explicitly write
the dependence on the dimension $d$, i.e., we write ``$\lesssim_a$'' instead of ``$\lesssim_{d,a}$''.

\section{Function spaces and Green's functions}\label{sec:function_spaces}
\subsection{Besov spaces on $\R^d$}
Here we describe definitions and important properties of Besov spaces on $\R^d$.
Technical estimates related to Besov spaces will be given in Section~\ref{subsec:estimates_in_Besov}.
\begin{definition}
  The \emph{Fourier transform} of a function $f\in \cS(\R^d)$ is defined by
  \begin{equation*}
    \F f (y) \defby \int_{\R^d} f(x) e^{-2 \pi \i x \cdot y} \dd x. 
  \end{equation*}
We define $\F f$ for $f \in \S'(\R^d)$ by duality:
  $\inp{\F f}{g} \defby \inp{f}{\F g}$ for $g\in \cS(\R^d)$.
\end{definition}
\begin{definition}\label{def:besov_spaces}
  Let $\check{\chi}$, $\chi$  be
  smooth radial functions with values in $[0, 1]$ on $\R^d$ with the following properties:
  \begin{itemize}
    \item $\supp (\check{\chi}) \subseteq B(0, \frac{4}{3})$, $\supp (\chi) \subseteq \set{x \in \R^d \given
    \frac{3}{4} \leq \abs{x} \leq \frac{8}{3}}$.
    \item $\check{\chi}(x) + \sum_{j=0}^{\infty} \chi(2^{-j} x) = 1$ for $x \in \R^d$
    and $\sum_{j  =-\infty}^\infty \chi(2^{-j} x) = 1$ for $x \in \R^d \setminus \{0\}$.
  \end{itemize}
  The existence of such $\check{\chi}$ and $\chi$ is guaranteed by \cite[Proposition 2.10]{Bahouri2011}.
For $f\in \cS'(\R^d)$ we set
\begin{align*}
\Delta_{-1} f = \F^{-1} (\check{\chi} \cF f), \qquad
\Delta_j f = \F^{-1} (\chi(2^{-j} \cdot) \cF f) , \qquad  j\in \N_0.
\end{align*}
  Let $p, q \in [1, \infty]$ and $r \in \R$.
  For $\sigma \in \R$, we set
\begin{align*}
w_{\sigma}(x) \defby (1 + \abs{x}^2)^{-\frac{\sigma}{2}}.
\end{align*}
  The weighted nonhomogeneous Besov space $B_{p, q}^{r, \sigma}(\R^d)$ consists of those distributions $f$ in
    $\S'(\R^d)$ such that $\norm{f}_{B_{p,q}^{r, \sigma}(\R^d)}<\infty$, where
     \begin{align*}
     \norm{f}_{B_{p,q}^{r, \sigma}(\R^d)} \defby
 \Big\| \Big (2^{-rj} \norm{w_{\sigma} \Delta_j f}_{L^p(\R^d)} \Big)_{j=-1}^\infty \Big \|_{\ell^q}
     \end{align*}
Let us mention that the norm actually depends on the choice of $\check \chi$ and $\chi$, though the space does not. See for example \cite[Corollary 2.70]{Bahouri2011}. \cite[Lemma 2.69]{Bahouri2011} implies that different choices of $\check \chi$ and $\chi$ as above give equivalent norms.

  We set $\csp^{r, \sigma}(\R^d) \defby B^{r, \sigma}_{\infty, \infty}(\R^d)$  and write $\csp^{r}(\R^d) \defby \csp^{r,0}(\R^d)$, $B_{p,q}^r(\R^d) = B_{p,q}^{r,0}(\R^d)$.
\end{definition}

\subsection{Sobolev--Slobodeckij spaces on bounded domains}

\change{

\begin{definition}\label{def:lipschitz_domain}
We say that a bounded domain $U$ of $\R^d$ is called a \emph{bounded Lipschitz domain} if its boundary can be
locally approximated by Lipschitz functions in the following sense: 
For each $y \in \partial U$, there exist $r>0$, a Lipschitz function $\gamma : \R^{d-1} \rightarrow \R$ and an bijection (a relabelling) $\sigma: \{1,\dots, d\} \rightarrow \{1,\dots,d\}$ such that 
\begin{align}
\label{eqn:lipschitz_boundary}
U \cap B(y,r) = \{ x\in B(y,r) : x_{\sigma{(d)}} > \gamma( x_{\sigma(1)}, \dots, x_{\sigma(d)} ) \}. 
\end{align}
\begin{calc}
This means in particular 
\begin{align*}
\partial U \cap B(y,r) = \{ x\in B(y,r) : x_{\sigma{(d)}} = \gamma( x_{\sigma(1)}, \dots, x_{\sigma(d)} ) \}. 
\end{align*}
\end{calc}
A function $f: \partial U \rightarrow \R$ is called smooth if for each $y\in \partial U$, additional to the $r$, $\gamma$ and $\sigma$ as above such that \eqref{eqn:lipschitz_boundary} holds, there exists a smooth function $g: \R^{d-1} \rightarrow \R$ such that $f(x) = g(x_{\sigma(1)},\dots, x_{\sigma(d-1)})$ for $x\in \partial U \cap B(y,r)$. 
$C^\infty(\partial U)$ is the space of all smooth functions on $\partial U$. 
\end{definition}

}

\begin{definition}\label{def:fractional_Sobolev}
  Let $U$ be a domain in $\R^d$. Let $p \in [1, \infty]$ and $r \geq 0$.
  \begin{enumerate}
    \item
\label{item:sobolev_space}
    The space $W^r_p(U)$ is the completion of $\set{f|_U \given f \in C^{\infty}(\overline{U}) ,  \norm{f|_U}_{W^r_p(U)}<\infty  }$ with respect to the norm
    \begin{equation*}
      \norm{f}_{W^r_p(U)} \defby \sum_{\alpha \in \N_0^d, \abs{\alpha} \leq r}
      \norm{\partial^{\alpha} f}_{L^p(U)} + \sum_{\alpha \in \N_0^d, \abs{\alpha} = \lfloor r \rfloor}
      [\partial^\alpha f]_{W^{r - \lfloor r \rfloor}_p(U)},
    \end{equation*}
    where $[g]_{W^0_p(U)} \defby 0$ and for $s\in (0,1)$,
    \begin{equation*}
      [g]_{W^s_p(U)} \defby
\begin{cases}
      \Big( \int_{U} \int_U
      \frac{\abs{g(x) - g(y)}^p}{\abs{x - y}^{d + p s}} \dd x\dd y \Big)^{\frac{1}{p}} & p <\infty, \\
 \sup_{x, y \in U, \abs{x - y} \leq 1} \frac{\abs{g(x) - g(y)}}{\abs{x - y}^{s}},  &  p=\infty.
\end{cases}  .
    \end{equation*}
    We set $H^r(U) \defby W_2^r(U)$.
    We denote by $W^r_{p, 0}(U)$ the completion of $C_c^{\infty}(U)$ with respect to the norm
    $\norm{\cdot}_{W^r_p(\R^d)}$ (not $\norm{\cdot}_{W^r_p(U)}$) and we set $H^r_0(U) \defby W^r_{2, 0}(U)$.
    \item
\label{item:sobolev_space_boundary}
    Let $U$ be a bounded Lipschitz domain and $r \in (0, 1)$.
    The space $W^r_p(\partial U)$ is the completion
    of $C^\infty(\partial U)$ with respect to the norm
    \begin{equation*}
      \norm{g \vert_{\partial U}}_{W^r_p(\partial U)} \defby
      \norm{g}_{L^p(\partial U)} +  [g]_{W^r_p(\partial U)}
    \end{equation*}
    where
    \begin{equation*}
      [g]_{W^r_p(\partial U)} \defby
\begin{cases}
      \Big( \int_{\partial U} \int_{\partial U}
      \frac{\abs{g(x) - g(y)}^p}{\abs{x - y}^{d - 1 + p r}} \dd x\dd y \Big)^{\frac{1}{p}} & p < \infty, \\
      \sup_{x, y \in \partial  U, \abs{x - y} \leq 1} \frac{\abs{g(x) - g(y)}}{\abs{x - y}^{ r}} & p = \infty.
      \end{cases}
    \end{equation*}
  \end{enumerate}
\end{definition}

\begin{remark}[Equivalent definitions]
For a bounded domain $U$, let  $\tilde W^r_p(U)$ be the space of $f \in L^p(U)$ such that the distributional derivatives $\partial^\alpha f$ for $|\alpha|\le r$ are in $L^p(U)$ and $\norm{f}_{W^r_p(U)} <\infty$.

Then $W^r_{p,0}(U)$ is the closure of $C_c^\infty(U)$ in $\tilde W^r_p(U)$ and if $U$ is a bounded Lipschitz domain, then $W^r_p(U) = \tilde W^r_p(U)$, see for example \cite[Theorem 1.2]{Mu70} and \cite{MeSe64}.
\end{remark}

\begin{definition}
For $U$ a domain in $\R^d$ and $r\ge 0$ we also write $C^r(U) = W^r_\infty(U)$ and $\norm{f}_{C^r(U)}= \norm{f}_{W^r_\infty(U)}$. 
\end{definition}

The following lemma relates the Sobolev--Slobodeckij spaces $W^r_p$ (and $C^r$) for $U=\R^d$ to the Besov spaces.

\begin{lemma}
\label{lem:equivalence_sob_slobo_and_besov}
Let $s\in (0,\infty) \setminus \N$ and $p \in [1,\infty]$.
Then $W^s_p(\R^d) = B_{p,p}^s(\R^d)$, $C^s(\R^d) = \csp^s(\R^d)$ and the norms $\norm{\cdot}_{W^s_p(\R^d)}$ and $\norm{\cdot}_{B_{p,p}^s(\R^d)}$ are equivalent (hence $\norm{\cdot}_{C^s(\R^d)}$ and $\norm{\cdot}_{\csp^s(\R^d)}$ are equivalent).
\end{lemma}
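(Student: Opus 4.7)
The plan is to reduce the statement to the classical equivalence between the Littlewood--Paley definition of $B_{p,p}^s(\R^d)$ and the intrinsic characterization of $W_p^s(\R^d)$ via differences, as treated in standard references on function spaces.

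First I would handle the range $s \in (0, 1)$. For such $s$ and any $p \in [1, \infty]$ I would show that $\norm{f}_{B_{p,p}^s(\R^d)}$ is equivalent to $\norm{f}_{L^p(\R^d)} + [f]_{W^s_p(\R^d)}$, with $[\cdot]_{W^s_p(\R^d)}$ the Gagliardo seminorm from Definition~\ref{def:fractional_Sobolev}. The ``$\gtrsim$'' direction relies on bounding $\norm{\Delta_j f}_{L^p}$ in terms of the $L^p$-modulus of continuity $\omega_p(f, 2^{-j})$, using the fact that $\F^{-1}\chi$ has vanishing mean. The ``$\lesssim$'' direction is obtained by a dyadic decomposition of $f(\cdot + h) - f(\cdot)$, splitting the sum at the scale $|h| \sim 2^{-j}$ and applying Bernstein's inequality on the high-frequency part. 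The case $p = \infty$ is analogous with suprema replacing integrals and gives the classical Hölder characterization of $\csp^s(\R^d)$.

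Second, for non-integer $s > 1$, I would write $s = k + \theta$ with $k \in \N$ and $\theta \in (0, 1)$, and invoke the lifting property
\begin{equation*}
\norm{f}_{B_{p,p}^s(\R^d)} \sim \norm{f}_{L^p(\R^d)} + \sum_{|\alpha| = k} \norm{\partial^\alpha f}_{B_{p,p}^\theta(\R^d)},
\end{equation*}
which follows from the Littlewood--Paley characterization via $\norm{\Delta_j \partial^\alpha f}_{L^p} \sim 2^{j|\alpha|} \norm{\Delta_j f}_{L^p}$ for $j \geq 0$ combined with $L^p$-boundedness of $\partial^\alpha \Delta_{-1}$. Applying the $s \in (0, 1)$ step to each $\partial^\alpha f$ then matches $\norm{f}_{B_{p,p}^s(\R^d)}$ with $\norm{f}_{W^s_p(\R^d)}$ from Definition~\ref{def:fractional_Sobolev}.

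The main obstacle is the first step --- establishing the equivalence between the Fourier-analytic norm and the Gagliardo seminorm on $\R^d$ --- but since this is entirely classical and well-documented, in practice I would just cite it (e.g.\ \cite[Section 2.7]{Bahouri2011} for the Hölder part). A subtler point worth verifying carefully is that the completion defining $W^s_p(\R^d)$ coincides with the set of all $L^p$ functions of finite Gagliardo seminorm; on $\R^d$ this follows from density of $\cS(\R^d)$ via a standard mollification and cutoff argument.
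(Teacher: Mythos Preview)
Your sketch is correct and follows the standard route: characterize $B_{p,p}^s$ for $s\in(0,1)$ via first differences (equivalently the Gagliardo seminorm), then lift to $s>1$ via $\norm{f}_{B_{p,p}^{k+\theta}} \sim \norm{f}_{L^p} + \sum_{|\alpha|=k}\norm{\partial^\alpha f}_{B_{p,p}^\theta}$. One small point: the $W^s_p$ norm in Definition~\ref{def:fractional_Sobolev} also sums $\norm{\partial^\alpha f}_{L^p}$ over all $|\alpha|\le k$, not just $|\alpha|=k$; you should remark that the intermediate derivatives are controlled by the top-order ones and $\norm{f}_{L^p}$ (e.g.\ by Gagliardo--Nirenberg or by the Besov lifting itself), which is routine.

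The paper's own proof is simply a citation to Triebel's monograph \cite{Triebel1983}, where the identification $W^s_p = \Lambda^s_{p,p} = B^s_{p,p}$ (and the $p=\infty$ analogue) is recorded. So there is no mathematical discrepancy: you are sketching essentially the argument that underlies the cited result, while the paper treats the lemma as folklore and defers to the literature. Your version is more self-contained; the paper's is shorter and points to a precise reference, which is appropriate given that the lemma is used only as a black box.
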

\begin{proof}
This follows by \cite[p.90]{Triebel1983}:
For $p\in [1,\infty)$ one has $W^s_p(\R^d) = B_{p,p}^s(\R^d)$ with equivalent norms, see \cite[p. 90 and p.113]{Triebel1983} ($W^{s,p}(\R^d)$ is written instead of $W^s_p(\R^d)$ and it is shown that  $W^s_p(\R^d)= \Lambda_{p,p}^s(\R^d) = B_{p,p}^s(\R^d)$), for $C^s(\R^d) = \cC^s(\R^d) = B_{\infty,\infty}^s(\R^d)$ with equivalent norms, see \cite[p.90 (9), (6) and p.113]{Triebel1983} (actually, in \cite{Triebel1983} $\cC^s(\R^d)$ is defined differently but shown to be the same as $B^s_{\infty,\infty}(\R^d)$).
\end{proof}

\begin{lemma}\label{lem:extension_and_trace_sobolev}
  Let $U$ be a bounded Lipschitz domain.
  \begin{enumerate}
  \item
\label{item:extension_op}
 Set $\D \defby \cup_{p \in [1, \infty], r \in [0, \infty)} W_p^r(U)$.
There exists an extension operator $\iota: \D \to \S'(\R^d)$ such that
\begin{itemize}
\item  $\iota(f) = f$  as distributions  on $U$ for $f\in \D$,
\item $\norm{\iota(f)}_{W^r_p(\R^d)} \lesssim_{U, p, r} \norm{f}_{W^r_p(U)}$
  for every $p \in [1, \infty]$, $r \in [0, \infty)$ and $f \in \D$,
\item $\iota(f) \in C^\infty(\R^d)$ for all $f\in C^\infty(\overline U)$.
\end{itemize}

  \item
\label{item:trace_op_and_inverse}
 Let $p \in (1, \infty)$ and $r \in (\frac{1}{p}, 1 + \frac{1}{p})$.
  Then, the map $C^{\infty}(\overline{U}) \rightarrow C^\infty(\partial U),   f \mapsto f\vert_{\partial U}$ extends uniquely to a bounded linear
  operator $\cT = \cT_{W^r_p(U)}: W^r_p(U) \to W^{r - \frac{1}{p}}_p(\partial U)$.
Furthermore,
  there exists a bounded linear operator that is the right inverse of $\cT$.
  \end{enumerate}
\end{lemma}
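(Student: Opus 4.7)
The approach is to reduce both statements to classical results on Sobolev spaces, relying on the Lipschitz character of $U$ to make the local straightening of the boundary well defined.

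For part~\ref{item:extension_op}, the plan is to invoke Stein's universal (``total'') extension theorem for Lipschitz domains, which produces a single linear operator $\iota: L^1_{\mathrm{loc}}(U) \to L^1_{\mathrm{loc}}(\R^d)$ that restricts to a bounded map $W^k_p(U) \to W^k_p(\R^d)$ for every $k \in \N_0$ and every $p \in [1,\infty]$, with the same operator independent of $k, p$. To extend this to non-integer $r$, I would use the identification of $W^r_p(U)$ and $W^r_p(\R^d)$ (for $r \notin \N$) as real interpolation spaces between $W^{\lfloor r \rfloor}_p$ and $W^{\lfloor r \rfloor + 1}_p$, and then apply the interpolation property: since $\iota$ is bounded between the endpoint spaces, it is bounded between the interpolants. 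Alternatively one can use the explicit Gagliardo seminorm and the fact that Stein's operator preserves Hölder--Zygmund regularity of any order. The smoothness claim $\iota(C^\infty(\overline U)) \subseteq C^\infty(\R^d)$ is then read off from the construction of Stein's operator: its kernel is built from a partition of unity on $\R^d \setminus U$ together with reflection formulas that involve only smooth cut-offs and the Lipschitz boundary chart, so that if $f$ extends smoothly up to $\overline{U}$ then $\iota(f)$ is smooth across $\partial U$.

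For part~\ref{item:trace_op_and_inverse}, the plan is the standard localisation argument. Cover $\partial U$ by finitely many balls $B(y_j, r_j)$ on which the boundary is the graph of a Lipschitz function $\gamma_j$, and let $(\chi_j)$ be a subordinate smooth partition of unity. For $f \in C^\infty(\overline U)$, straighten the boundary in $B(y_j, r_j)$ via the bi-Lipschitz map $\Phi_j(x) = (x_{\sigma(1)}, \dots, x_{\sigma(d-1)}, x_{\sigma(d)} - \gamma_j(x_{\sigma(1)}, \dots, x_{\sigma(d-1)}))$. The function $(\chi_j f) \circ \Phi_j^{-1}$ then lives in $W^r_p(\R^d_+)$, so I apply the classical half-space trace theorem, which says that the restriction to $\{x_d = 0\}$ extends to a bounded operator $W^r_p(\R^d_+) \to W^{r-1/p}_p(\R^{d-1})$ precisely when $r \in (1/p, 1 + 1/p)$. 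Summing the local contributions gives the global bound, and the pieces glue consistently because the restriction on smooth functions agrees with the pointwise boundary value.

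For the right inverse, the plan is to construct it locally in the same way: on each chart, use Gagliardo's extension (or the harmonic/Poisson extension) to obtain a bounded right inverse $W^{r-1/p}_p(\R^{d-1}) \to W^r_p(\R^d_+)$, transport it back via $\Phi_j$, and glue with the partition of unity $(\chi_j)$. The resulting operator maps into $W^r_p(U)$ and composing with $\cT$ yields the identity on $W^{r-1/p}_p(\partial U)$ because locally it reduces to the half-space case.

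I expect the main obstacle to be technical rather than conceptual: making sure the local charts, the extension $\iota$, and the trace operator are compatible (so that Stein's $\iota$ indeed lands in $W^r_p(\R^d)$ for the full range of $r$ used in the trace theorem, and so that the restriction of $\iota(f)$ to $\partial U$ coincides with $\cT f$ for smooth $f$). The interpolation/Stein bookkeeping for the full range $r \in [0, \infty)$, $p \in [1, \infty]$ is the most delicate part, but it is all available in the literature (Stein; Triebel; Marschall for Lipschitz domains).
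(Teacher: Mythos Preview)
Your proposal is correct and follows essentially the same route as the paper, which simply cites Stein's extension theorem (combined with Triebel's interpolation theory) for part~\ref{item:extension_op} and Marschall's trace theorem for Lipschitz domains for part~\ref{item:trace_op_and_inverse}. Your plan spells out the standard arguments behind those references (Stein's universal operator plus interpolation to non-integer $r$; localisation to half-spaces via bi-Lipschitz charts for the trace and its right inverse), so there is nothing to add.
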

\begin{proof}
For \ref{item:extension_op} see \cite[Chapter 6]{stein_1970} in combination with \cite[Section 4]{Tr78}, or, for $r\in [0,1)$, \cite[Theorem 5.4]{dinezza2011hitchhikers}.
For \ref{item:trace_op_and_inverse}, see   \textnormal{\cite[Theorem 3]{marschall_1987}}.
\end{proof}

%\begin{calc}
%\begin{itemize}
%\item $C^\infty(\overline U)$ is dense in $W^k_p(U)$ for $k\in \N_0$ and $p\in [1,\infty)$ if $\partial U$ is $C^1$: Evans PDE Theorem 3, Section 5.3.3 page 252
%\item Existence of the extension operator with $r\in (0,1)$. See Theorem 5.4 in Hitchhikers guide...
%\item $U$ bounded Lipschitz domain. $p\in [1,\infty)$, $r\in (0,1)$, then $C_c^\infty(U)$ is dense in $W_p^r(U)$: Corollary 5.5 in Hitchhikers guide
%\item Adams Fournier book on Sobolev: $U$ bounded Lipschitz domain (strong local Lipschitz condition), then there exist an extension operator  (total extension operator), which means only on the union of Sobolev spaces with $k\in \N_0$.... Refers to proof in Ch. 6  of Stein \cite{stein_1970}
%\item Proof is in Ch. 6 Theorem 5 of Stein \cite{stein_1970}.
%\item \cite{marschall_1987} refers to \cite{stein_1970} and interpolation of Triebel Chapter 4
%\end{itemize}
%\end{calc}

\begin{definition}
\label{def:universal_extension_op_and_trace}
An extension operator $\iota$ as in Lemma~\ref{lem:extension_and_trace_sobolev}~\ref{item:extension_op} is called a \emph{universal extension operator} from $U$ to $\R^d$.  The operator $\cT$ as in Lemma~\ref{lem:extension_and_trace_sobolev}~\ref{item:trace_op_and_inverse} is called the \emph{trace operator}.
%A bounded linear operator that is the right inverse of $\cT$ is called a \emph{right inverse trace operator}.
\end{definition}

\subsection{Green's functions}
\label{subsec:green_function}

Let $G$ be the Green's function of $-\Delta$ on $\R^d$ (remember that $d\in \N\setminus \{0\}$), which means that $-\Delta G * f = f$ for $f\in \cS(\R^d)$.
That is, $G$ is the distribution which is represented by the function defined for $x\ne 0$ by
\begin{equation*}
  G(x) =
  \begin{cases}
   - \frac{1}{2 \pi} \log \abs{x} & d=2, \\
    \frac{1}{d (d - 2) \omega_d} \abs{x}^{-(d-2)} & d \geq 3,
  \end{cases}
\end{equation*}
where $\omega_d$ is the volume of the unit ball in $\R^d$. %(for $d\geq 3, G= \cF^{-1}(|2\pi \cdot|^{-2})$).
The identity $-\Delta G * f = f$ for $f\in \cS(\R^d)$ implies the formal identity $G= \cF^{-1}(|2\pi \cdot|^{-2})$. 
As $\abs{\cdot}^{-2}$ is singular at the origin, it is convenient to introduce the following variants of $G$.

\begin{definition}\label{def:G_N_and_H_N}
  Let $\check{\chi}$ be the function introduced in Definition~\ref{def:besov_spaces}.
  For $N \in \N_0$, we set
  \begin{equation*}
    G_N \defby \F^{-1}((1 - \check{\chi}(2^{-N} \cdot)) \abs{2 \pi \cdot}^{-2}).
  \end{equation*}
Even though in general one cannot take the convolution of any two tempered distributions, for a tempered distribution $g\in \cS'(\R^d)$ we write 
\begin{align*}
G_N * g := \cF^{-1}(  (1-\check \chi(2^{-N} \cdot)) |2 \pi \cdot|^{-2} \cF g). 
\end{align*}
Observe that the latter is indeed a tempered distribution as the product of the smooth function $(1-\check \chi(2^{-N} \cdot)) |2 \pi \cdot|^{-2}$, which itself and all its derivatives are of at most polynomial growth, and the tempered distribution $\cF g$, is again a tempered distribution. 

Moreover, observe that for any $g\in \cS'(\R^d)$ there exists a $f\in C_c^\infty(\R^d)$ such that 
\begin{align}
\label{eqn:G_N_is_parametrix}
- \Delta G_N * g = g + f. 
\end{align}
\begin{calc}
Namely $f= - \cF^{-1}(\check \chi(2^{-N} \cdot)) * g$: 
\begin{align*}
-\Delta G_N = \cF^{-1} ( (1 - \check{\chi}(2^{-N} \cdot) )) * g = g + f. 
\end{align*}
\end{calc}
\end{definition}

The parameter $N$ is introduced to control the norm of $G_N \conv f$ by letting $N$ large: 

\begin{lemma}
\label{lemma:estimate_W_n_on_U_L}
Let $U$ be a bounded domain, $\sigma \in (0,\infty)$
and $\delta_- \in (0, \delta)$.
Then for $L\ge 1$ and $N\in\N_0$
  \begin{align*}
  \norm{G_N * g}_{C^{\delta_-}(U_L)} \lesssim_{U,\delta_-,\delta,\sigma} L^\sigma 2^{-(\delta - \delta_-)N} \norm{g}_{\csp^{-2+\delta,\sigma}(\R^d)}.
  \end{align*}
Consequently, for $g\in \csp^{-2+\delta,\sigma}(\R^d)$ we have $\limN G_N *g =0$ in $\cC^{\delta_-}(U)$. 
%Consequently, almost surely
%$\limn \norm{G_N *g }_{\cC^{\delta_-}(U)} =0$.
\end{lemma}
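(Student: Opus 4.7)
My plan is first to establish the stronger, global weighted Besov--Hölder estimate
\begin{align}\label{eq:plan_global_bd}
\norm{G_N * g}_{\csp^{\delta_-, \sigma}(\R^d)} \lesssim_{\delta_-, \delta, \sigma} 2^{-(\delta - \delta_-)N} \norm{g}_{\csp^{-2+\delta,\sigma}(\R^d)},
\end{align}
and then to transfer it to $C^{\delta_-}(U_L)$ using that $U_L \subset B(0, R_U L)$, so that $w_\sigma^{-1} \lesssim_U L^\sigma$ on $U_L$ for $L \ge 1$.

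For \eqref{eq:plan_global_bd}, I analyse each Littlewood--Paley block
$\Delta_j(G_N * g) = \cF^{-1}\bigl(\chi(2^{-j}\cdot)(1-\check\chi(2^{-N}\cdot))|2\pi\cdot|^{-2}\cF g\bigr)$.
Since $\chi$ vanishes on $B(0, 3/4)$, the partition-of-unity identity forces $\check\chi \equiv 1$ on $B(0, 3/4)$, so $1-\check\chi(2^{-N}\cdot)$ vanishes on $B(0, \tfrac34 2^N)$. Combined with $\supp\chi(2^{-j}\cdot) \subset \{|\xi| \le \tfrac83 2^j\}$, this forces $\Delta_j(G_N * g) = 0$ for $j \le N - c_0$ with an absolute $c_0 \in \N$. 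For $j \ge N - c_0$ (and $j \ge 0$), I choose a smooth radial $\tilde\chi$ equal to $1$ on $\supp\chi$ and supported in a slightly wider annulus, and write $\Delta_j(G_N * g) = K_{N,j} * \Delta_j g$ with
$K_{N,j} \defby \cF^{-1}\bigl(\tilde\chi(2^{-j}\cdot)(1-\check\chi(2^{-N}\cdot))|2\pi\cdot|^{-2}\bigr)$.
Rescaling $\eta = 2^{-j}\xi$ exposes $K_{N,j}(x) = 2^{(d-2)j} h_{j-N}(2^j x)$, where $h_{j-N}$ is the inverse Fourier transform of a smooth compactly supported function whose Schwartz seminorms are bounded uniformly in $j - N \ge -c_0$. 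A change of variables (using $j \ge 0$) then gives the key weighted-kernel bound
\begin{align*}
\int |K_{N,j}(y)|(1+|y|^2)^{\sigma/2}\dd y \lesssim_\sigma 2^{-2j}.
\end{align*}
The case $j = -1$ is nonzero only when $N = 0$ and is handled trivially, consistent with $2^{-(\delta-\delta_-)\cdot 0} = 1$.

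Combining the kernel bound with Peetre's inequality $(1+|x|^2)^{\sigma/2} \lesssim_\sigma (1+|x-y|^2)^{\sigma/2}(1+|y|^2)^{\sigma/2}$ yields
\begin{align*}
\norm{w_\sigma \Delta_j(G_N * g)}_{L^\infty(\R^d)} \lesssim_\sigma 2^{-2j}\norm{w_\sigma \Delta_j g}_{L^\infty(\R^d)} \le 2^{-\delta j}\norm{g}_{\csp^{-2+\delta,\sigma}(\R^d)}.
\end{align*}
Multiplying by $2^{\delta_- j}$ and taking the supremum over $j \ge N - c_0$, the factor $2^{(\delta_- - \delta)j}$ is maximised at $j = N - c_0$ since $\delta_- < \delta$, giving \eqref{eq:plan_global_bd}.

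Finally, to upgrade \eqref{eq:plan_global_bd} to the estimate on $C^{\delta_-}(U_L)$, I use the standard embedding of $\csp^{\delta_-,\sigma}(\R^d)$ into a weighted Hölder space (valid since $\delta_- \in (0,1)$): summing $|\Delta_j f(x)| \le w_\sigma(x)^{-1} 2^{-\delta_- j}\|f\|_{\csp^{\delta_-,\sigma}}$ at low frequencies and the weighted Bernstein bound $\|w_\sigma \nabla \Delta_j f\|_\infty \lesssim 2^j \|w_\sigma \Delta_j f\|_\infty$ at high frequencies, one obtains $|f(x)| + \sup_{|x-y|\le 1} |f(x)-f(y)|/|x-y|^{\delta_-} \lesssim (w_\sigma(x)^{-1} \vee w_\sigma(y)^{-1}) \|f\|_{\csp^{\delta_-,\sigma}}$ for $x,y$ in a given bounded set; together with $w_\sigma^{-1}\lesssim_U L^\sigma$ on $U_L$ this converts \eqref{eq:plan_global_bd} into the claimed bound. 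The ``consequently'' statement then follows by setting $L = 1$ and noting $2^{-(\delta-\delta_-)N} \to 0$. The main delicate point is the uniform (in $j - N \ge -c_0$) control of the weighted $L^1$ norm of $K_{N,j}$; once the rescaling has put everything on the unit annulus this reduces to uniform Schwartz-seminorm bounds, and the remaining steps are routine.
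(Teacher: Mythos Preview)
Your proof is correct and follows the same two-step structure as the paper: first the global weighted estimate $\norm{G_N * g}_{\csp^{\delta_-,\sigma}(\R^d)} \lesssim 2^{-(\delta-\delta_-)N}\norm{g}_{\csp^{-2+\delta,\sigma}(\R^d)}$, then the localization $\norm{\cdot}_{C^{\delta_-}(U_L)} \lesssim_U L^\sigma \norm{\cdot}_{\csp^{\delta_-,\sigma}(\R^d)}$. The paper's proof is simply a one-line citation of Corollary~\ref{cor:estimates_of_G_N_and_H_N} and Lemma~\ref{lemma:estimate_C_delta_U_by_weighted}, which encode exactly these two steps; you have essentially reproved those auxiliary results from scratch.

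The only noteworthy difference is in the localization step. The paper (Lemma~\ref{lemma:estimate_C_delta_U_by_weighted}) multiplies by a cutoff $\phi(L^{-1}\cdot)$ equal to $1$ on $U_L$, invokes the localization Lemma~\ref{lem:localization} to get $\norm{\phi(L^{-1}\cdot)Z}_{\csp^{\delta_-}(\R^d)} \lesssim L^\sigma \norm{Z}_{\csp^{\delta_-,\sigma}(\R^d)}$, and then uses the norm equivalence $\csp^{\delta_-}(\R^d)\cong C^{\delta_-}(\R^d)$. Your route via a direct high/low-frequency split plus weighted Bernstein is equally valid and arguably more elementary, at the cost of having to track that the intermediate points in the mean-value argument stay in a set where $w_\sigma^{-1}\lesssim L^\sigma$ (which they do, since $|x-y|\le 1$ keeps them in $B(U_L,1)$). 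Either way the outcome is the same.
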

\begin{proof}
This follows by Lemma~\ref{lemma:estimate_C_delta_U_by_weighted} and Corollary~\ref{cor:estimates_of_G_N_and_H_N}. 
%because $\WAH_n = G_n * \XAH$. 
%The consequence follows because $\XAH \in \cC^{-2+\delta,\sigma}(\R^d)$ almost surely.
\end{proof}

%
%\begin{remark}\label{rem:parameter_N_as_control}
%The parameter $N$ is introduced to control the norm of $G_N \conv f$ by letting $N$ large, see Corollary~\ref{cor:estimates_of_G_N_and_H_N}. 
%A similar idea is also used in \cite{labbe_continuous_2019, mouzard2020weyl}.   
%In Remark~\ref{rem:towards_construction_assumptions} we explain where we use this. 
%Observe that $G_N$ is not approximating $G$ as $N\rightarrow \infty$ (but, formally it does as $N\to 0$). 
%\end{remark}
%

\begin{remark}\label{rem:G_N_M}
  For $M, N \in \mathbb{N}_0$, we have $G_N - G_M \in \mathcal{S}(\R^d)$. 
  Furthermore, for $f \in \cS'(\R^d)$ and $N \in \N_0$, we set
  \begin{equation}
  \label{eq:Delta_G_N_min_G}
    [\Delta(G_N - G)] \conv f \defby \cF^{-1}[\check{\chi}(2^{-N} \cdot)] \conv f,
  \end{equation}
  which is a smooth function.
  By considering their Fourier transforms, one observes
  \begin{equation}\label{eq:laplacian_on_G_N}
    \Delta (G_N \conv f) = -f + [\Delta(G_N - G)] \conv f . 
  \end{equation}
\end{remark}

\section{Assumptions on the stochastic terms} \label{sec:assumptions}

We motivate and state the main assumptions in Section~\ref{subsec:main_assump}, 
namely, those that are already mentioned in Remark~\ref{rem:assumptions_brief}: the construction assumption~\ref{assump:all_three_assump}~\ref{item:construction_assump}, the Neumann assumption~\ref{assump:all_three_assump}~\ref{item:neumann_assump} and the ergodic assumption~\ref{assump:all_three_assump}~\ref{item:ergodic_assump}. 
In Section~\ref{subsec:examples} we show that the construction assumption is satisfied for the white noise $\xi$ in two and three dimensions and mentioned that it is satisfied for a very general class of subcritical noises within the framework of regularity structures (the arguments are postponed to the Appendix~\ref{sec:bphz_AH}, since it requires the full-fledged theory of regularity structures, see also Appendix~\ref{sec:review_of_reg_str}). 
In Section~\ref{subsec:stoch_terms_neumann} we consider sufficient conditions and examples for which the Neumann assumption~\ref{assump:all_three_assump}~\ref{item:neumann_assump} is satisfied.

\subsection{Assumptions}\label{subsec:main_assump}

  In addition to Assumption~\ref{assump:base}, we will introduce three more assumptions. 
  In this section we go into more depth on the idea behind the construction as explained in Section~\ref{subsec:strategy_ang_techniques} and motivate the main assumptions, which we formulate in  \ref{assump:all_three_assump}.

\begin{remark}[Towards the construction assumptions]
\label{rem:towards_construction_assumptions}
  Recall from Assumption~\ref{assump:base} that $\xi_\epsilon$ is a mollification of the white noise $\xi$.
  As already motivated in Section~\ref{subsec:strategy_ang_techniques}, for the convergence of the smooth Anderson Hamiltonians, it basically suffices to find functions $w^\epsilon$ and scalars $c_\epsilon$ such that 
  \begin{align}
  \label{eqn:y_eps}
  y^\epsilon & =  - e^{2 w^{\varepsilon}} ( \xi_\epsilon - c_\epsilon + |\nabla w^\epsilon|^2 + \Delta w^\epsilon ),
  \end{align}
  converges in $\cC^{-1+\delta}$. 
  Moreover, in Remark~\ref{remark:finding_w_eps} we have described the heuristics of finding such $w^\epsilon$, or actually their limit, $w$, where (formally)
  \begin{align}
  \label{eqn:formal_formulation_of_w}
  w = (-\Delta)^{-1} X, \quad \mbox{ and,} \quad X= \xi + \tau_1 + \cdots + \tau_k. 
  \end{align}
For the above mentioned convergence of $y^\epsilon$, we have not been precise on what $\cC^{-1+\delta}$ means. 
In fact, for the noise and the stochastic terms, we will not consider them as elements on function spaces on $U$, but as elements of weighted Besov spaces on $\R^d$.   
That is, we consider $\xi$ as an element of $\cC^{-2+\delta, \sigma}(\R^d)$. The $\sigma$ determines the ``amount'' of polynomial growth that it allows for. As $\xi$ is of at most polynomial growth, so is the $w$ as in \eqref{eqn:formal_formulation_of_w}. 
But then $e^{2w}$ could be of exponential growth, so that we cannot guarantee a priori that \eqref{eqn:y_eps} is an element of a space that allows only polynomial growth, which we want in order to show the existence of the integrated density of states. 
For this reason we replace $e^{2w^\epsilon}$ by $F(w^\epsilon)$ in the definition of $y^\epsilon$, where $F$ is a bounded smooth function which equals $t\mapsto -e^{2t}$ around the origin. 
  
Now ``$(-\Delta)^{-1}$'' as in \eqref{eqn:formal_formulation_of_w} is a formal notation, which one could replace by ``$G*$'', where $G$ is the Green's function as in Section~\ref{subsec:green_function}. 
However, due to the singularity of $G$, we will not exactly work with the Green's function itself, but rather a version of it in which ``the Fourier terms around zero'' are cut off, i.e., we will work with $G_N$ instead (Definition~\ref{def:G_N_and_H_N}). 
The extra parameter $N$ will be tuned, i.e., it will be replaced by a random variable $M$ that also depends on the domain $U$ so that instead of \eqref{eqn:formal_formulation_of_w}, we consider
\begin{align*}
w = G_M * X, 
\end{align*}
and $M$ is set in such a way that $F(w) = -e^{2w}$.
\end{remark}  

By the above remark we are now  ready to put the assumptions on the stochastic terms for the construction of the Anderson Hamiltonian with Dirichlet boundary conditions, see Assumptions~\ref{assump:all_three_assump}~\ref{item:construction_assump}. 
In these assumptions we will also give a rigorous meaning to the object $X$ that appeared in \eqref{eqn:formal_formulation_of_w}. 

In Remark~\ref{rem:neumann_construction} we comment on the Neumann assumption \ref{assump:all_three_assump}~\ref{item:neumann_assump}, as it is easier to discuss this given that the reader has read the construction assumption~\ref{assump:all_three_assump}~\ref{item:construction_assump} first. 

In Assumptions~\ref{assump:all_three_assump} we display not only both the construction and the Neumann assumptions, but also the ergodic assumption, which is a rather standard assumption that is made to guarantee the existence of the integrated density of states. 

%  Before stating the main assumption, we need to introduce some notation on the Green's function and its variants.

% As we motivated below Lemma~\ref{lem:sym_form_transform}, for a singular random potential $\xi$, in this section, see Assumption~\ref{assump:main}, we derive random functions $W_\epsilon$ and scalars $c_\epsilon$ such that
% \begin{align*}
% e^{2W_{\epsilon}} (\xi_{\epsilon} - c_\epsilon + \abs{\nabla W_{\epsilon}}^2 + \Delta W_{\epsilon})
% \end{align*}
% converges to some random $Y$ of sufficient regularity.
% In the rest of the section, i.e., in Section~\ref{subsec:reg_str}, we discuss the necessary definitions of the theory of regularity structures such that we can describe our main assumption: Assumption~\ref{assump:convergence_of_BPHZ}.
% The proof of Assumption~\ref{assump:main} needs the full-fledged theory of regularity structures and is therefore postponed to Appendix~\ref{sec:bphz_AH}.
% The $W_\epsilon$ are given in terms of a convolution with the Fourier cutoff of size $N$ of the Green's function.
% We tune this $N$ (randomly) in such a way that we can get desired bounds on $W_{\epsilon}$.
% Therefore, we first introduce some notation on the Green's function and its Fourier cutoff of size $N$.

Recall that $\cC^{\alpha,\sigma}$ and $B_{p,q}^{\alpha}$ are defined in Definition~\ref{def:besov_spaces} and $G_N$ is defined in Definition~\ref{def:G_N_and_H_N}.

\begin{assumptions} 
\label{assump:all_three_assump}
\ 
We abbreviate for $\alpha \in \R$
\begin{align*}
\LL^{\alpha} = \bigcap_{\sigma\in (0,\infty)} \bigcap_{p\in [1,\infty)} L^p(\P, \cC^{\alpha,\sigma}(\R^d)). 
\end{align*}
\begin{enumerate}[label={\normalfont(\Roman*)}]
\item 
\label{item:construction_assump}
(Construction assumption)
There exist $(c_\epsilon)_{\epsilon>0}$ in $\R$; $X$ and $(X^\epsilon)_{\epsilon>0}$ in $\LL^{-2+\delta}$
%$$\bigcap_{\sigma\in (0,\infty)} \bigcap_{p\in [1,\infty)} L^p(\P, \cC^{-2+\delta,\sigma}(\R^d)),$$ 
such that $X-\xi \in \LL^{-2+2\delta} $
%\bigcap_{\sigma\in (0,\infty)} \bigcap_{p\in [1,\infty)} L^p(\P, \cC^{-2+2\delta,\sigma}(\R^d))$
 and for all $\sigma \in (0,\infty)$ and $p \in [1,\infty)$
\begin{align*}%\label{eq:}
      \lim_{\varepsilon \downarrow 0} \norm{X^{\varepsilon} - X}_{L^p(\P, \csp^{-2 + \deltamin, \sigma}(\R^d))} & = 0, \\
      \lim_{\epsilon \downarrow 0}  \norm{\XAHeps - \xi_\epsilon - (\XAH - \xi)}_{L^p(\P,\csp^{-2 + 2 \deltamin, \sigma}(\R^d) )} & =0.
    \end{align*}

Let $F : \mathbb{R}^d \to \mathbb{R}$ be a compactly supported smooth function such that 
    $F(x) = - e^{2 x}$ if $\abs{x} \leq 2$.  
    For $N \in \mathbb{N}_0$, we set $W_N^{\varepsilon} \defby G_N \conv X^{\varepsilon}$ and 
    \begin{equation}\label{eq:definition_Y_eps_N}
      Y^{\varepsilon}_N \defby F(W_N^{\varepsilon})\big( \xi_{\varepsilon} - c_{\varepsilon} + 
      \abs{\nabla W^{\varepsilon}_N}^2 + \Delta W_N^{\varepsilon} \big).
    \end{equation}
	There exists a $Y_N \in \LL^{-1+\delta}$ such that for all $\sigma\in (0,\infty)$ and $p\in [1,\infty)$, 
    \begin{equation*}%\label{eq:}
      \lim_{\varepsilon \downarrow 0} \norm{Y^{\varepsilon}_N - Y_N}_{L^p(\P, \csp^{-1 + \deltamin, \sigma}(\R^d))} = 0.
    \end{equation*}
    Furthermore, 
    there exists an integer $\kAH = \kAH(\deltamin) \in \N$ such that for any $\sigma \in (0, \infty)$ and $p \in [1, \infty)$ we have
    \begin{equation}
    \label{eqn:def_a_AH}
    \aAH \defby \aAH(\deltamin,\sigma) \defby
      \sup_{N \in \N} 2^{-\kAH N} \norm{\YAH_N}_{\csp^{-1+\deltamin, \sigma}(\R^d)} \in L^p(\P).
    \end{equation}    
    Both $X$ and $Y_N$ are independent of the mollifier $\rho$.
    
\item 
\label{item:neumann_assump}
(Neumann assumption)
  We assume that $\delta \in (\frac12,1)$.
  For a bounded Lipschitz domain $U$, let $\tilde Y^{U}_\epsilon$ be the tempered distribution given by (see Definition~\ref{def:notation_integral_dot} for the notation $\int_{\partial U} f \nabla g \cdot \dd \bS$)
  \begin{equation*}
    \varphi \mapsto \int_{\partial U} \varphi \nabla (G_0 \conv \xi_{\epsilon}) \cdot \dd \bm{S}.
  \end{equation*}
  Then there exists a random variable $\tilde Y^{U}$ with values in $\cS'(\R^d)$, independent of the mollifier function $\rho$, such that
  %, with $\tilde Y^{U}_\epsilon$ as in Definition~\ref{def:tilde_Y_U_eps},
  \begin{align}\label{eq:tilde_Y_U_conv}
  \norm{\tilde Y^{U}_\epsilon - \tilde Y^{U} }_{B^{-2 + \delta}_{p, p}(\R^d)} \arroweps 0 \quad \mbox{in probability for all $p \in (2, \infty)$.} 
  \end{align}
%$\norm{\tilde Y^{U}_\epsilon - \tilde Y^{U} }_{B^{-2 + \delta}_{p, p}(\R^d)} \arroweps 0$ in probability.

  In addition, for every $p \in (8 d, \infty)$ and $q \in [1, \infty)$ we have 
  \begin{equation}\label{eq:Y_U_bound_assump}
    \sup_{L \in \mathbb{N}} L^{-\frac{1}{4}} \norm{\tilde{Y}^{U_L}}_{B_{p, p}^{-2 + \delta}(\mathbb{R}^d)} \in L^q(\P).
  \end{equation}
% if we denote the limit by $\inp{\nabla G_0 \conv \xi}{\cdot}_{\partial U}$,
%   for every $m \in (0, \infty)$, there exists a $C_m^{\bdry}>0$ 
% %  (as in Lemma~\ref{lem:convergence_of_indic_U_xi_epsilon}) 
%   such that for all $L \geq 1$ 
%   \begin{equation*}
%     \expect[\norm{\tilde Y^{U_L} }_{B_{p, p}^{-2 + \delta}(\R^d)}^m]
%     \lesssim_{p, \delta,  \sigma, U, m} L^{2 m \sigma} \expect[\norm{\xi}_{\csp^{-2+\delta, \sigma}(\R^d)}^m] + L^d C_m^{\bdry}. 
%   \end{equation*}
\item 
\label{item:ergodic_assump}
(Ergodic assumption)
  Recall that our probability space $\Omega$ is the space $\S'(\R^d)$ of tempered distributions.
  Then, we have maps $T_x: \Omega \rightarrow \Omega$ ($x \in \R^d$) of translations $  \omega \to \omega(\cdot - x)$.
  We assume the probability measure $\P$ to be translation invariant ($\P = \P \circ T_x$ for all $x$) and ergodic with respect to $(T_x)_{x \in \R^d}$.  
\end{enumerate}
\end{assumptions}

\begin{remark}\label{rem:gaussian_ergodic}
  It is well-known (e.g. \cite[Proposition~6.1]{rosati21}) that Assumption~\ref{assump:all_three_assump}~\ref{item:ergodic_assump} is satisfied for the Gaussian noise $\xi$ with 
  covariance $\mathbb{E}[\xi(x) \xi(y)] = \gamma(x-y)$ such that  
  \begin{equation*}%\label{eq:}
    \lim_{\abs{x} \to \infty} \gamma(x) = 0.
  \end{equation*}
  In particular, this assumption is satisfied for the white noise.
\end{remark}
\begin{remark}\label{rem:technical_estimate}
  The technical estimates \eqref{eqn:def_a_AH} and \eqref{eq:Y_U_bound_assump} are necessary only for the construction of the IDS (Section~\ref{subsec:IDS}, in particular Remark~\ref{rem:estimates_on_cZ}).
\end{remark}

\begin{remark}[About the Neumann assumptions]
\label{rem:neumann_construction}
From the discussion in Remark~\ref{rem:towards_construction_assumptions} we know that the $w^\epsilon$ and the $y^\epsilon$ as in Section~\ref{subsec:strategy_ang_techniques} are equal to $W^\epsilon_M$ and $Y^\epsilon_M$ for a well-chosen random variable $M$ (for its definition, see Definition~\ref{def:dirichlet_AH}).  

From the convergences of the stochastic terms $w^\epsilon$ and $y^\epsilon$ that we obtain from the construction assumption, the two first symmetric forms in \eqref{eq:three_syms} converge, as forms on $H^1(U)$. 
We need an additional assumption in order to deal with the third symmetric form in \eqref{eq:three_syms}, which is, 
\begin{align*}
v \mapsto \int_{\partial U} \cT(v^2) e^{2W_M^\epsilon} \nabla W_M^\epsilon \cdot \dd \bS. 
\end{align*}
Set 
\begin{align*}
\hatYAHeps_M &\defby G_M \conv (\XAHeps - \xi_\epsilon) + (G_M - G_0) \conv \xi_\epsilon. 
\end{align*}
We decompose the above symmetric form as the sum of the following symmetric forms 
\begin{align}
\label{eqn:two_neumann_symm_forms}
v\mapsto 
\int_{\partial U} \cT(v^2) e^{2W_M^\epsilon} \nabla (G_0 \conv \xi_{\epsilon}) \cdot \dd \bm{S},
%\langle \tilde Y^U_\epsilon , e^{2W_M^\epsilon} \cT(v^2) \rangle, 
\qquad v \mapsto \int_{\partial U} \cT(v^2) e^{2 W_M^\epsilon} \nabla \widehat Y_M^\epsilon  \cdot \dd \bS. 
\end{align}
Now, by the convergence $\xi_\epsilon \rightarrow \xi$ and the assumed convergences of the $X^\epsilon$ and $X^\epsilon - \xi_\epsilon$ from the construction assumption~\ref{assump:all_three_assump}~\ref{item:construction_assump}, we obtain by estimates on the Green's function, which we have put in Corollary~\ref{cor:estimates_of_G_N_and_H_N}, and estimates of derivatives in Besov spaces, see Lemma~\ref{lem:derivative_and_lifting_in_Besov},
\begin{align}
\label{eqn:W_M_AH_eps_convergence}
& \WAHeps_M \rightarrow \WAH_M
& & \mbox{ in } \cC^{\deltamin,\sigma}(\R^d) , \\
\label{eqn:G_0_xi_eps_convergence}
& G_0 * \xi_\epsilon \rightarrow G_0 * \xi
& &\mbox{ in } \cC^{ \deltamin, \sigma }(\R^d), \\
\label{eqn:G_M_X_AH_eps_min_xi_eps_convergence}
& G_M *(\XAHeps - \xi_\epsilon) \rightarrow G_M *(\XAH - \xi)
& &\mbox{ in } \cC^{2\deltamin,\sigma}(\R^d), \\
\label{eqn:G_M_min_G_0_xi_eps_convergence}
& (G_M - G_0) * \xi_\epsilon \rightarrow (G_M - G_0) * \xi
& &\mbox{ in } \cC^{1+\deltamin, \sigma }(\R^d).
\end{align}
In particular, by combining \eqref{eqn:G_M_X_AH_eps_min_xi_eps_convergence} and \eqref{eqn:G_M_min_G_0_xi_eps_convergence}, for $\delta \in (\frac12,1)$ we have 
\begin{align}
\label{eqn:Y_M_eps_conv}
\widehat Y_M^\epsilon \rightarrow \widehat Y_M \qquad \mbox{in } \cC^{2\delta,\sigma}(\R^d). 
\end{align}
In Section~\ref{sec:symmetric_forms} we will show in more detail that \eqref{eqn:W_M_AH_eps_convergence} and \eqref{eqn:Y_M_eps_conv} imply the convergence of the second symmetric form in \eqref{eqn:two_neumann_symm_forms}. 
This is basically because the function $e^{2W_M^\epsilon} \nabla \widehat Y_M^\epsilon$ converges in a space of positive regularity. 
Now for $\delta > 1$ the function $e^{2W_M^\epsilon} \nabla (G_0 * \xi_\epsilon)$ converges in a space of positive regularity. 
However for $\delta \in (\frac12, 1)$ the convergence in \eqref{eqn:G_0_xi_eps_convergence} does not seem to suffice, due to the integration over the $d-1$ dimensional boundary $\partial U$.
Let us elaborate on this.
We can rewrite the first symmetric form in \eqref{eqn:two_neumann_symm_forms} as  $v\mapsto \langle \tilde Y^U_\epsilon, e^{2\WAHeps_M} v^2 \rangle$, where $\tilde Y^U_\epsilon$ is the distribution 
as in \ref{assump:all_three_assump}~\ref{item:neumann_assump} given by 
  \begin{equation*}
    \varphi \mapsto \int_{\partial U} \varphi \nabla (G_0 \conv \xi_{\epsilon}) \cdot \dd \bm{S}. 
  \end{equation*} 
The distribution $\tilde Y^U_\epsilon$ could be formally interpreted as the product of $ \nabla (G_0 *\xi_\epsilon)$ with the distribution  $\delta_{\partial U}$, given by $\varphi \mapsto \int_{\partial U} \varphi \dd S$.
The distribution  $\delta_{\partial U}$ is of regularity $-1$ (e.g., for $I= [0,1]^{d-1} \times \{0\}$, $\delta_{I} $ is the tensor product of $\1_{[0,1]^{d-1}}$ and $\delta_0$, which are in $\cC^{0}(\R^{d-1})$ and $\cC^{-1}(\R)$, respectively; hence the tensor product is in $\cC^{-1}(\R^d)$, see \cite{SiUl09}).
Therefore  the product of these two, and thus $\tilde Y^U_\epsilon$, converges only  in the space of regularity $-2 + \deltamin$.
As the symmetric form is on $H^1$, by this reasoning, this convergence is sufficient only for $\delta>1$. 

When additionally imposing conditions on the boundary, called the Neumann assumption~\ref{assump:all_three_assump}~\ref{item:neumann_assump} it actually suffices to assume $\delta > \frac12$. 
For the details we refer to  Theorem~\ref{theorem:examples_bd_sym_with_weighted_input}, though we do mention it is basically due to the identity 
\begin{align*}
\int_{\partial U} \cT(v^2) e^{2W_M^\epsilon} \nabla (G_0 \conv \xi_{\epsilon}) \cdot \dd \bm{S} 
= \langle \tilde Y^U_\epsilon , \cR( e^{2\WAHeps_M} \cT(v)^2) \rangle,
\end{align*}
where $\cT$ is the trace operator and $\cR$  is the composition of the right inverse trace operator and the extension operator (see 
 Lemma~\ref{lem:extension_and_trace_sobolev}~\ref{item:trace_op_and_inverse}). 
%The allowance of $\frac12$ less regularity is due to the use of the trace operator $\cT$ and the operator $\cR$, 
%see Theorem~\ref{theorem:examples_bd_sym_with_weighted_input}. 
%observe that $v\mapsto v^2$ forms a map $W_{2q}^\beta(U) \rightarrow W_q^{\beta-\epsilon}(\R^d)$ whereas $v\mapsto \cR[(\cT v)^2]$ forms a map $W_{2q}^\beta(U) \rightarrow W_q^{\beta + \frac{1}{2q}- \epsilon}(\R^d)$.
%\begin{calc}
%$v\mapsto \cR[(\cT v)^2]$ is the composition of $W_{2q}^{\beta}(U) \rightarrow W_{2q}^{\beta-\frac{1}{2q}}(\partial U)$, $v\mapsto \cT v$;
%$W_{2q}^{\beta-\frac{1}{2q}} (\partial U) \rightarrow W_q^{\beta - \frac{1}{2q} - \epsilon}(\partial U)$, $u \mapsto u^2$
%and
%$ W_q^{\beta - \frac{1}{2q} - \epsilon}(\partial U) \rightarrow
% W_q^{\beta + \frac{1}{2q} - \epsilon}(\R^d)$,
%$w \mapsto \cR w$.
%\end{calc}
\end{remark}

\begin{remark}
  To construct the natural Neumann Anderson Hamiltonian without the Neumann condition~\ref{assump:all_three_assump}~\ref{item:neumann_assump}, 
%  Assumption~\ref{assump:boundary}, 
  we conjecture that ``boundary renormalisation'' is necessary.
  
  For instance, if $\xi$ is the 3D white noise, the recent work \cite{GeHa2021boundary} suggests that the
  operators associated to the symmetric forms
  \begin{equation*}
    (u, v) \mapsto \int_{U} \nabla u \cdot \nabla v - (\xi_{\epsilon} - c_{\epsilon}) u v
    + c'_{\epsilon} \int_{\partial U} u v \dd S
  \end{equation*}
  converge, where the constants $c'_{\epsilon}$ diverge logarithmically.
\end{remark}

\subsection{Construction assumption}\label{subsec:examples}
Here we give examples of the noise $\xi$ for which the construction assumption~\ref{assump:all_three_assump}~\ref{item:construction_assump} is satisfied. We recall that the heuristic to find $X$ is outlined in Remark~\ref{remark:finding_w_eps}.
\begin{example}
  \label{ex:2D_white_noise}
  Let $d=2$ and $\xi$ be the white noise on $\mathbb{R}^2$. 
Then $\xi \in L^p(\P, \csp^{-2 + \delta, \sigma}(\mathbb{R}^2) )$ 
%(this follows by a rather standard calculation, using that the $p$-th moment of a Gaussian random variables can be estimated by the $p/2$ power of the second moment, by using $\expect[ |\Delta_j \xi(x)|^2 ]= 2^{jd}$, 
%Besov's embedding and monotonicity of the $L^p$ spaces on a probability space)
\begin{calc}
Let us check this by some calculations: 
First of all, observe that 
\begin{align*}
\expect[ |\Delta_j \xi(x)|^2 ] 
= \|\cF^{-1} [ \chi(2^{-j} \cdot) ] (x-\cdot) \|_{L^2}^2
= \|\chi(2^{-j} \cdot) \|_{L^2}^2 
= 2^{jd}. 
\end{align*}
Now we have 
\begin{align*}
\expect[ \|\xi\|_{B_{p,p}^{r,\sigma}(\R^d)}^p ] 
& = \sum_{j=-1}^\infty 2^{rpj} \expect[\|w_\sigma \Delta_j \xi\|_{L^p}^p ] 
= \sum_{j=-1}^\infty 2^{rpj} 
\int_{\R^d} w_\sigma(x)^p 
\expect[| \Delta_j \xi(x)|^p ] \dd x \\
& \lesssim_p \sum_{j=-1}^\infty 2^{rpj} 
\int_{\R^d} w_\sigma(x)^p 
\expect[| \Delta_j \xi(x)|^2 ]^{\frac{p}{2}} \dd x  = \sum_{j=-1}^\infty 2^{rpj} 2^{jp \frac{d}{2}} \|w_\sigma\|_{L^p}^p, 
\end{align*}
which is finite for $r< - \frac{d}{2}$. With the Besov embedding Lemma~\ref{lem:weighted_besov_p_vs_infty}, \eqref{eqn:estimate_infty_by_p_q_besov} we have for all $\kappa >0$, 
\begin{align*}
\expect[ \|\xi\|_{\cC^{r-\frac{d}{p}-\kappa,\sigma}(\R^d)}^p ] 
\lesssim_{p,q,\kappa,r,\sigma}
\expect[ \|\xi\|_{B_{p,p}^{r,\sigma}(\R^d)}^p ], 
\end{align*}
and so $\xi \in L^p(\P,\cC^{-2+\delta,\sigma}(\R^d))$ for $\delta \in (0,1- \frac{d}{p})$ and all $\sigma \in (0,\infty)$. 
Differently said, for any $\delta \in (0,1)$, $\sigma \in (0,\infty)$ and $p \in (\frac{d}{1-\delta},\infty)$ we have 
\begin{align*}
\xi \in L^p(\P,\cC^{-2+\delta,\sigma}(\R^d)). 
\end{align*}
As by Hölder's inequality we have 
\begin{align*}
\|\xi\|_{L^q(\P,\cC^{-2+\delta,\sigma}(\R^d))} 
\le \|\xi\|_{L^p(\P,\cC^{-2+\delta,\sigma}(\R^d))}, 
\end{align*}
the above holds also for $p\in [1,\frac{d}{1-\delta}]$, i.e., for any $p\in [1,\infty)$. 
\end{calc}
and by the proof of Theorem~\ref{theorem:conv_mollifiers_in_weighted}, $\xi_\epsilon \rightarrow \xi$ in $ L^p(\P, \csp^{-2 + \delta, \sigma}(\mathbb{R}^2) )$ for all $p \in [1,\infty)$, 
  $\sigma \in (0, \infty)$ and $\delta \in (0, 1)$.
\begin{calc}
We check this by a simimilar calculation as above. 
By the above arguments, i.e., the Besov embedding and Hölder's inequality, we may restrict to showing 
\begin{align*}
\E[ \|\xi_\epsilon - \xi\|_{B_{p,p}^{r,\sigma}(\R^d)}^p ] \rightarrow 0. 
\end{align*}
Now 
\begin{align*}
\E[ \|\xi_\epsilon - \xi\|_{B_{p,p}^{r,\sigma}(\R^d)}^p ]
=  \sum_{j=-1}^\infty 2^{rpj} \expect[\|w_\sigma ( \rho_\epsilon *\Delta_j \xi - \Delta_j \xi)\|_{L^p}^p ] ,
\end{align*}
and $\expect[\|w_\sigma ( \rho_\epsilon *\Delta_j \xi - \Delta_j \xi)\|_{L^p}^p ] $ converges by Lebesgue's dominated convergence theorem, by means of the arguments of the proof of Theorem~\ref{theorem:conv_mollifiers_in_weighted}. 
\end{calc}
  We set 
  \begin{equation*}%\label{eq:}
    c_{\varepsilon} \defby \expect[\abs{\nabla G_0 \conv \xi_{\varepsilon}}^2(0)], 
    \qquad  \tau^{\varepsilon} \defby \abs{\nabla G_0 \conv \xi_{\varepsilon}}^2 - c_{\varepsilon}.
  \end{equation*}
In the following, all convergences hold for all for all $p \in [1,\infty)$, 
  $\sigma \in (0, \infty)$ and $\delta \in (0, 1)$, so we refrain from repeating this. 
  As shown in \cite[Proposition~1.3]{hairer_Labbe_2015} (recall \eqref{eqn:G_N_is_parametrix}), 
  \begin{calc}
  see the proof for the convergence in $L^1$ (which suffices because of the hypercontractivity of Wiener Chaoses), 
  \end{calc}
  we have $\tau^{\varepsilon} \to \tau$ in $L^p(\mathbb{P}, \csp^{-2 + 2 \delta, \sigma}(\mathbb{R}^2))$. 
  We then set 
  \begin{equation*}%\label{eq:}
    X^{\varepsilon} \defby \xi_{\varepsilon} + \tau^{\varepsilon}.
  \end{equation*}
  We define $W_N^{\varepsilon}$ and $Y_N^{\varepsilon}$ as in Assumption~\ref{assump:all_three_assump}.
  To compute $Y_N^{\varepsilon}$, we observe that 
  \begin{equation*}%\label{eq:}
    \abs{\nabla W_N^{\varepsilon}}^2 = \abs{\nabla G_0 \conv X^{\varepsilon}}^2 + 2 (\nabla G_0 \conv X^{\varepsilon}) \cdot (\nabla (G_N - G_0) \conv X^{\varepsilon}) + \abs{\nabla (G_N - G_0) \conv X^{\varepsilon}}^2. 
  \end{equation*}
  To simplify notation, we set $r_N^{\varepsilon} \defby (G_N - G_0) \conv X^{\varepsilon}$,
  and recall that the limit $r_N \defby (G_N - G_0) \conv X$ is smooth by Remark~\ref{rem:G_N_M}.
  Recalling \eqref{eq:laplacian_on_G_N}, we obtain 
  \begin{align*}
    Z_N^{\varepsilon} &\defby \xi_{\varepsilon} - c_{\varepsilon} + \abs{\nabla W_N^{\varepsilon}}^2 + \Delta W_N^{\varepsilon} \\
    &\phantom{\vcentcolon}= 
    2 (\nabla G_0 \conv \xi_{\varepsilon}) \cdot (\nabla G_0 \conv \tau^{\varepsilon}) + \abs{\nabla G_0 \conv \tau^{\varepsilon}}^2
    + 2 (\nabla G_0 \conv X^{\varepsilon}) \cdot \nabla r_N^{\varepsilon} + \abs{\nabla r_N^{\varepsilon}}^2 \\
    &\phantom{\defby} + \Delta r_N^{\varepsilon} + [\Delta (G_0 - G)] \conv X^{\varepsilon}. 
  \end{align*}
\begin{calc}
We have 
\begin{align*}
\xi_\epsilon = X^\epsilon - \tau^\epsilon, 
\end{align*}
and 
\begin{align*}
X^\epsilon & = \Delta (G_N - G_0) * X^\epsilon + \Delta (G_0 - G) * X^\epsilon - \Delta G_N * X^\epsilon \\
&= \Delta r_N^\epsilon + \Delta (G_0 - G) * X^\epsilon  - \Delta W_N^\epsilon, \\
   \abs{\nabla W_N^{\varepsilon}}^2 
& = 
   \abs{\nabla G_0 \conv X^{\varepsilon}}^2 + 2 (\nabla G_0 \conv X^{\varepsilon}) \cdot \nabla r_N^\epsilon + \abs{\nabla r_N^\epsilon}^2, 
\end{align*}
and 
\begin{align*}
\abs{\nabla G_0 \conv X^{\varepsilon}}^2 
& = \abs{\nabla G_0 \conv \xi_\epsilon}^2 
+ 2 (\nabla G_0 \conv \xi_\epsilon ) \cdot ( 
\nabla G_0 \conv \tau^\epsilon ) 
+ 
\abs{\nabla G_0 \conv \tau^\epsilon }^2 \\
& = \tau^\epsilon + c_\epsilon + 2 (\nabla G_0 \conv \xi_\epsilon ) \cdot ( 
\nabla G_0 \conv \tau^\epsilon ) 
+ 
\abs{\nabla G_0 \conv \tau^\epsilon }^2, 
\end{align*}
from which we deduce the above identity for $Z_N^\epsilon$: 
\begin{align*}
Z_N^\epsilon & = \xi_{\varepsilon} - c_{\varepsilon} + \abs{\nabla W_N^{\varepsilon}}^2 + \Delta W_N^{\varepsilon} \\
& = X^\epsilon -\tau^\epsilon - c_\epsilon + \abs{\nabla W_N^{\varepsilon}}^2 + \Delta W_N^{\varepsilon} \\
& = \Delta r_N^\epsilon + \Delta (G_0 - G) * X^\epsilon - \tau^\epsilon - c_\epsilon + \abs{\nabla W_N^{\varepsilon}}^2  \\
& = \Delta r_N^\epsilon + \Delta (G_0 - G) * X^\epsilon 
+ 2 (\nabla G_0 \conv \xi_\epsilon ) \cdot ( 
\nabla G_0 \conv \tau^\epsilon ) \\
& \quad + 
\abs{\nabla G_0 \conv \tau^\epsilon }^2
+  2 (\nabla G_0 \conv X^{\varepsilon}) \cdot \nabla r_N^\epsilon + \abs{\nabla r_N^\epsilon}^2 . 
\end{align*}
\end{calc}
  By the properties of $G_0$ (see Corollary~\ref{cor:estimates_of_G_N_and_H_N}),
  \begin{align*}%\label{eq:}
    \nabla G_0 \conv \xi_{\varepsilon} \to \nabla G_0 \conv \xi \quad &\text{in } L^p(\mathbb{P}, \csp^{-1 + \delta, \sigma}(\mathbb{R}^2)), \\
    \nabla G_0 \conv \tau^{\varepsilon} \to \nabla G_0 \conv \tau \quad &\text{in } L^p(\mathbb{P}, \csp^{-1 + 2 \delta, \sigma}(\mathbb{R}^2)).
  \end{align*}
  Since $\delta$ can be arbitrarily close to $1$, the products in the formula of $Z_N^{\varepsilon}$ have well-defined limits, and therefore
$Z_N^{\varepsilon}$ converges in $L^p(\P,\csp^{-1 + \delta, \sigma}(\mathbb{R}^2))$ as $\epsilon \downarrow 0$. 
Because $W_N^\epsilon = G_N * (\xi_\epsilon + \tau^\epsilon)$, by  Corollary~\ref{cor:estimates_of_G_N_and_H_N}
\begin{calc}
(because $\xi_\epsilon \rightarrow \xi$ in $\cC^{-2+\delta,\sigma}(\R^2)$ and $\tau^\epsilon \rightarrow
\tau $ in $L^p(\P, \cC^{-2+2\delta,\sigma}(\R^2))$)
\end{calc}
\begin{align*}
W_N^\epsilon \rightarrow W_N := G_N * (\xi + \tau) \quad \mbox{in } L^p(\P,\cC^{\delta,\sigma}(\R^2)), 
\end{align*}
and, as $F$ is Lipschitz, also $F(W_N^\epsilon)$ converges in $L^p(\P,\cC^{\delta}(\R^2))$ (using Lemma~\ref{lem:localization} to ``get rid of $\sigma$''). 
Therefore, for $\delta \in (\frac12,1)$, $Y_N^\epsilon =  F(W_N^\epsilon) Z_N^\epsilon$ converges in $L^p(\P,\csp^{-1 + \delta, \sigma}(\mathbb{R}^2))$ as $\epsilon \downarrow 0$ by the Bony type estimates for weighted Besov spaces 
\cite[Lemma 2.14]{gubinelli_hoffmanova_2019}. 

  Finally, the estimate  \eqref{eqn:def_a_AH} on $Y_N$ follows from the estimate 
  \begin{equation*}%\label{eq:}
    \norm{r_N}_{\csp^{\kappa, \sigma}(\mathbb{R}^2)} \lesssim_{\kappa, \sigma} 2^{(\kappa - \delta) N} 
    \norm{X}_{\csp^{-2 + \delta, \sigma}(\mathbb{R}^2)},
  \end{equation*}
  which is a consequence of Corollary~\ref{cor:estimates_of_G_N_and_H_N}. Therefore, Assumption~\ref{assump:all_three_assump} \ref{item:construction_assump} holds. 
\end{example}

\begin{example}
\label{ex:3D_white_noise}
  Let $d = 3$ and $\xi$ be the white noise on $\mathbb{R}^3$.  
Then $\xi \in L^p(\P, \csp^{-2 + \delta, \sigma}(\mathbb{R}^3) )$ 
and by the proof of Theorem~\ref{theorem:conv_mollifiers_in_weighted}, $\xi_\epsilon \rightarrow \xi$ in $ L^p(\P, \csp^{-2 + \delta, \sigma}(\mathbb{R}^3) )$ for all $p \in [1,\infty)$, 
  $\sigma \in (0, \infty)$ and $\delta \in (0, \frac12)$.
  %  In this case, almost surely we have $\xi \in \csp^{-2 + \delta, \sigma}(\mathbb{R}^3)$ for all $\sigma \in (0, \infty)$ and $\delta \in (0, 1/2)$.
  In the following, all convergences hold for all for all $p \in [1,\infty)$, 
  $\sigma \in (0, \infty)$ and $\delta \in (0, \frac12)$, so we refrain from repeating this. 
From the convergences of the renormalized models associated with the 3D parabolic Anderson model \cite[Theorem~5.3]{hairer_multiplicative_2018} or by considering the convergences in  \cite[Theorem~2.38]{gubinelli_semilinear_2020}, we deduce the existence of $\tau_1,\tau_2,\tau_3,\tau_4$ and the following convergences as $\varepsilon \downarrow 0$,
  \begin{align*}%\label{eq:}
    \tau_1^{\varepsilon} &\defby \abs{\nabla G_0 \conv \xi_{\varepsilon}}^2 - \expect[\abs{\nabla G_0 \conv \xi_{\varepsilon}}^2(0)] \to \tau_1 
    & & \text{in } L^p(\mathbb{P}, \csp^{-2+2 \delta, \sigma}(\mathbb{R}^3)), \\
    \tau_2^{\varepsilon} &\defby (\nabla G_0 \conv \xi_{\varepsilon}) \cdot (\nabla G_0 \conv \tau_1^{\varepsilon}) \to \tau_2 
    & & \text{in } L^p(\mathbb{P}, \csp^{-2 + 3 \delta, \sigma}(\mathbb{R}^3)), \\
    \tau_3^{\varepsilon} &\defby \abs{\nabla G_0 \conv \tau_1^{\varepsilon}}^2 - \expect[\abs{\nabla G_0 \conv \tau_1^{\varepsilon}}^2(0)] \to \tau_3 
    & & \text{in } L^p(\mathbb{P}, \csp^{-2 + 4 \delta, \sigma}(\mathbb{R}^3)), \\
    \tau_4^{\varepsilon} &\defby (\nabla G_0 \conv \xi_{\varepsilon}) \cdot (\nabla G_0 \conv \tau_2^{\varepsilon}) \to \tau_4 
    & & \text{in } L^p(\mathbb{P}, \csp^{-1 + \delta, \sigma}(\mathbb{R}^3)). 
  \end{align*}
\begin{calc}
For the derivation using \cite{gubinelli_semilinear_2020}, observe that $(-\Delta)^{-1}$, $(1-\Delta)^{-1}$ and $G_0 *$ are basically the same operations, in the sense that the difference of them will converge in a space of besser regularity, therefore morally, we have the same identities for the $X_\epsilon$, $X_\epsilon^\zzone, X_\epsilon^\zztwo,X_\epsilon^\zzthree, X_\epsilon^\zzfour$ as in \cite[Theorem 2.38]{gubinelli_semilinear_2020}
\begin{align*}
X_\epsilon & = (-\Delta)^{-1} \xi_\epsilon = G_0 * \xi_\epsilon, \\
\tau_1^\epsilon & = (1-\Delta) X_\epsilon^\zzone, \qquad G_0 * \tau_1^\epsilon = X_\epsilon^\zzone, \\
\tau_2^\epsilon & = (\nabla X_\epsilon) \cdot ( \nabla X_\epsilon^\zzone) \\
& = (1-\Delta) X_\epsilon^\zztwo, \\
\tau_3^\epsilon 
& = (1-\Delta) X_\epsilon^\zzfour, \\
\tau_4^\epsilon 
& = (1-\Delta) X_\epsilon^\zzthree. 
\end{align*}
\end{calc}  
  We set  
  \begin{equation*}%\label{eq:}
    c_{\varepsilon} \defby \expect[\abs{\nabla G_0 \conv \xi_{\varepsilon}}^2(0)] +   \expect[\abs{\nabla G_0 \conv \tau_1^{\varepsilon}}^2(0)],
  \end{equation*}
  $X^{\varepsilon} \defby \xi_{\varepsilon} + \tau_1^{\varepsilon} + 2 \tau_2^{\varepsilon} + \tau_3^{\varepsilon}$
  and $r_N^{\varepsilon} \defby (G_N - G_0) \conv X^{\varepsilon}$.
  Similar to the 2D case, we obtain
  \begin{align*}%\label{eq:}
    Z_N^{\varepsilon} &\defby \xi_{\varepsilon} - c_{\varepsilon} + \abs{\nabla W_N^{\varepsilon}}^2 + \Delta W_N^{\varepsilon} \\
    &\phantom{\vcentcolon}= 4 \tau_4^{\varepsilon} + 2 (\nabla G_0 \conv \xi_{\varepsilon}) \cdot (\nabla G_0 \conv \tau_3^{\varepsilon}) 
    + 2 (\nabla G_0 \conv \tau_1^{\varepsilon}) \cdot (\nabla G_0 \conv (2 \tau_2^{\varepsilon} + \tau_3^{\varepsilon})) \\
    & \qquad + 2 (\nabla G_0 \conv X^{\varepsilon}) \cdot \nabla r_N^{\varepsilon}  
    + \abs{\nabla r_N^{\varepsilon}}^2 + \Delta r_N^\epsilon+  [\Delta (G_0 - G)] \conv X^{\varepsilon} \\
    & \qquad + 
\abs{\nabla G_0 \conv ( 2\tau_2^\epsilon + \tau_3^\epsilon) }^2
    ,
    % &\phantom{\vcentcolon}\to 2 \tau_4 + 2 \nabla G_0 \conv \xi \cdot \nabla G_0 \conv \tau_3 
    % + 2 \nabla G_0 \conv \tau_1 \cdot \nabla G_0 \conv (\tau_2 + \tau_3) 
    % + [\nabla (G_N - G)] \conv \xi , \\
    % &\phantom{\vcentcolon \to} = \vcentcolon \tilde{Y}_N
  \end{align*}
\begin{calc}
(Indeed, 
\begin{align*}
|\nabla W_N^\epsilon|^2 
& = |\nabla G_0 * X^\epsilon|^2 + 2( \nabla G_0 \conv X^\epsilon) \cdot  \nabla r_N + |\nabla r_N|^2
\end{align*}
and 
\begin{align*}
\abs{\nabla G_0 \conv X^{\varepsilon}}^2 
& = \abs{\nabla G_0 \conv \xi_\epsilon}^2 
+ 2 (\nabla G_0 \conv \xi_\epsilon ) \cdot ( 
\nabla G_0 \conv (\tau_1^\epsilon + 2\tau_2^\epsilon + \tau_3^\epsilon) ) 
+ 
\abs{\nabla G_0 \conv (\tau_1^\epsilon + 2\tau_2^\epsilon + \tau_3^\epsilon) }^2 \\
& = \abs{\nabla G_0 \conv \xi_\epsilon}^2 
+ \abs{\nabla G_0 \conv \tau_1^\epsilon}^2 
+ 2 (\nabla G_0 \conv \xi_\epsilon ) \cdot ( 
\nabla G_0 \conv (\tau_1^\epsilon + 2\tau_2^\epsilon + \tau_3^\epsilon) ) \\
& \quad + 2 ( \nabla G_0 \conv \tau_1^\epsilon) \cdot ( \nabla G_0 \conv ( 2\tau_2^\epsilon + \tau_3^\epsilon) )  + 
\abs{\nabla G_0 \conv ( 2\tau_2^\epsilon + \tau_3^\epsilon) }^2 \\
& = \tau_1^\epsilon +2 \tau_2^\epsilon + \tau_3^\epsilon + c_\epsilon
+ 4 \tau_4^\epsilon
+ 2 (\nabla G_0 \conv \xi_\epsilon ) \cdot ( 
\nabla G_0 \conv \tau_3^\epsilon ) \\
& \quad  + 2 ( \nabla G_0 \conv \tau_1^\epsilon) \cdot ( \nabla G_0 \conv ( 2\tau_2^\epsilon + \tau_3^\epsilon) )  + 
\abs{\nabla G_0 \conv ( 2\tau_2^\epsilon + \tau_3^\epsilon) }^2, 
\end{align*}
so that by using that 
\begin{align*}
\xi_\epsilon 
&= \Delta r_N^\epsilon + \Delta (G_0 - G) * X^\epsilon  - \Delta W_N^\epsilon  - \tau_1^\epsilon - 2\tau_2^\epsilon - \tau_3^\epsilon, 
\end{align*}
we deduce the identity in a similar way:
\begin{align*}
& \xi_{\varepsilon} - c_{\varepsilon} + \abs{\nabla W_N^{\varepsilon}}^2 + \Delta W_N^{\varepsilon} \\
&= 
\Delta r_N^\epsilon + \Delta (G_0 - G) * X^\epsilon  - \tau_1^\epsilon - 2\tau_2^\epsilon - \tau_3^\epsilon - c_\epsilon \\
& \qquad + |\nabla G_0 * X^\epsilon|^2 + 2( \nabla G_0 \conv X^\epsilon) \cdot  \nabla r_N + |\nabla r_N|^2 \\
&= 
\Delta r_N^\epsilon + \Delta (G_0 - G) * X^\epsilon   
+ 2( \nabla G_0 \conv X^\epsilon) \cdot  \nabla r_N + |\nabla r_N|^2
\\
& \qquad + 4 \tau_4^\epsilon
+ 2 (\nabla G_0 \conv \xi_\epsilon ) \cdot ( 
\nabla G_0 \conv \tau_3^\epsilon ) \\
& \qquad  + 2 ( \nabla G_0 \conv \tau_1^\epsilon) \cdot ( \nabla G_0 \conv ( 2\tau_2^\epsilon + \tau_3^\epsilon) )  + 
\abs{\nabla G_0 \conv ( 2\tau_2^\epsilon + \tau_3^\epsilon) }^2, 
\end{align*}
) 
\end{calc}
  from which we see that $Z_N^\epsilon$ converges in $L^p(\mathbb{P}, \csp^{-1 + \delta, \sigma}(\mathbb{R}^3))$. 
  As for the 2D case, $F(W_N^\epsilon)$ converges in $L^p(\P,\cC^\delta(\R^3))$. 
  However, now $\delta \in (0,\frac12)$ and so $-1+2\delta$ is negative. 
  Hence we cannot guarantee the convergence of $Y_N^\epsilon = F(W_N^\epsilon) Z_N^\epsilon$ in the same way as for the 2D case (recall the Bony type estimates for weighted Besov spaces 
\cite[Lemma 2.14]{gubinelli_hoffmanova_2019}). 
  However, with some techniques from paracontrolled distributions \cite{gubinelli_paracontrolled_2015}, we can guarantee the convergence. 
%   without going into details. 
We describe the method without proving all the details. (The symbols ``$\prec, \succ, \circ$'' are here replaced by ``$\varolessthan, \varogreaterthan, \varodot$'' to align with the recent notation.)

From the above, and using Lemma~\ref{lem:derivative_and_lifting_in_Besov}~\ref{item:derivative_besov} and Corollary~\ref{cor:estimates_of_G_N_and_H_N}, observe that \begin{calc} $X^\epsilon $ converges in $L^p(\P,\cC^{-2+2\delta,\sigma}(\R^3)$ and \end{calc} $\nabla(G_0 *\tau_3^\epsilon +r_N^\epsilon)$ converges in $L^p(\P,\cC^{-1+4\delta,\sigma}(\R^3))$ \begin{calc}
 (by playing with $s$ and $r$ in Corollary~\ref{cor:estimates_of_G_N_and_H_N} for $r_N^\epsilon$)
\end{calc}
and $\nabla (G_0 * \xi_\epsilon)$ converges in $L^p(\P,\cC^{-1+\delta,\sigma}(\R^3))$
so that 
\begin{align}
\label{eqn:def_V_N_eps_3D}
4 \tau_4^{\varepsilon} + 2 (\nabla G_0 \conv \xi_{\varepsilon}) 
    \cdot \nabla (G_0 \conv \tau_3^{\varepsilon} + r_N^{\varepsilon}) 
\end{align}
converges in $L^p(\P,\cC^{-1+\delta,\sigma}(\R^3))$ and 
\begin{align*}
\hat{Z}_N^\epsilon \defby Z_N^\epsilon - \Big( 4 \tau_4^{\varepsilon} + 2 (\nabla G_0 \conv \xi_{\varepsilon}) 
    \cdot \nabla (G_0 \conv \tau_3^{\varepsilon} + r_N^{\varepsilon})  \Big)
\end{align*}
converges in $L^p(\mathbb{P}, \csp^{-1 + 2 \delta, \sigma}(\mathbb{R}^3))$. 
Therefore the product $F(W_N^\epsilon) \hat{Z}_N^{\varepsilon}$ converges in the space $L^p(\mathbb{P}, \csp^{-1 + 2 \delta, \sigma}(\mathbb{R}^3))$ for $\delta \in (\frac13, \frac12)$. 

Hence, we are left to consider the product $F(W_N^{\varepsilon}) (Z_N^\epsilon - \hat Z_N^\epsilon)$. 
Now, we finish by showing 
\begin{enumerate}
\item $F(W_N^\epsilon) \tau_4^\epsilon$ converges in $L^p(\P,\cC^{-1+2\delta,\sigma}(\R^3))$, 
\item $F(W_N^\epsilon) (\nabla G_0 * \xi_\epsilon)$ converges in $L^p(\P,\cC^{-1+2\delta,\sigma}(\R^3,\R^3))$. 
\end{enumerate}
Indeed, if (b) holds, then $F(W_N^\epsilon) (\nabla G_0 * \xi_\epsilon) \cdot \nabla (G_0 \conv \tau_3^{\varepsilon} + r_N^{\varepsilon})  $ converges in $L^p(\P,\cC^{-1+2\delta,\sigma}(\R^3))$ because $-2 +6 \delta>0$ for $\delta\in (\frac13,\frac12)$. 
\begin{calc}
For the previous, recall that $\nabla(G_0 *\tau_3^\epsilon +r_N^\epsilon)$ converges in $L^p(\P,\cC^{-1+4\delta,\sigma}(\R^3))$.
\end{calc}

Let $Q^\epsilon$ be either $\tau_4^\epsilon$ or $\nabla G_0 * \xi_\epsilon$ for the moment. 
As the convergences of the paraproducts $\varolessthan$ and $\varogreaterthan$ are guaranteed, it suffices to show the convergence of the resonant product $F(W_N^\epsilon) \varodot Q^\epsilon$. 
  By the paralinearization lemma \cite[Lemma~2.6]{gubinelli_paracontrolled_2015}, 
  \begin{equation*}%\label{eq:}
    F(W_N^{\varepsilon}) = 
    F'(W_N^{\varepsilon}) \varolessthan W_0^{\varepsilon} 
    +    F'(W_N^{\varepsilon}) \varolessthan (W_N^{\varepsilon}  - W_0^\epsilon)
    + R_N^{\varepsilon},
  \end{equation*}
  where $\norm{R_N^{\varepsilon}}_{\csp^{2 \delta, \sigma}} \lesssim_{\delta, \sigma} 1 + \norm{W_N^{\varepsilon}}_{\csp^{\delta, \sigma}}$.  
    Since $W_N - W_0$ is smooth, it suffices to consider the resonant product between $F'(W_N^{\varepsilon}) \varolessthan W_0^{\varepsilon}$ and $Q^{\varepsilon}$. 
By the so-called Commutator lemma \cite[Lemma~2.16]{gubinelli_hoffmanova_2019}
\begin{align*}
&\|    
    (F'(W_N^{\varepsilon}) \varolessthan W_0^{\varepsilon}) \varodot Q^\epsilon
    -  F'(W_N^{\varepsilon}) (W_0^{\varepsilon} \varodot Q^\epsilon) 
\|_{\cC^{-1+3\delta,2\sigma}(\R^3)} \\
& \lesssim \| F'(W_N^{\varepsilon}) \|_{\cC^\delta(\R^3)} 
\|W_0^\epsilon\|_{\cC^{\delta,\sigma}(\R^3)} 
\|Q^\epsilon\|_{\cC^{-1+\delta,\sigma}(\R^3)},
\end{align*}
from which can deduce that for the convergence of $F(W_N^\epsilon) \varodot Q^\epsilon$ it suffices to show convergence of $ F'(W_N^{\varepsilon}) (W_0^{\varepsilon} \varodot Q^\epsilon)$. 

\underline{Proof of (b)}
For $Q^\epsilon = \nabla (G_0 * \xi_\epsilon)$ we have $W_0^\epsilon \varodot \nabla (G_0 * \xi_\epsilon)
= \frac12 \nabla ((G_0 *\xi_\epsilon) \varodot (G_0 *\xi_\epsilon)) + G_0 * (X^\epsilon -\xi_\epsilon) \varodot \nabla(G_0 *\xi_\epsilon)$ and so the convergence follows because $(G_0 * \xi_\epsilon) \varodot (G_0 * \xi_\epsilon)$ and $G_0 * (X^\epsilon - \xi_\epsilon)$ converge in $L^p(\P,\cC^{2\delta,\sigma}(\R^3))$. 

\underline{Proof of (a)}
For $Q^\epsilon = \tau_4^\epsilon$, we have the following identity
\begin{align*}
(G_0 \conv \xi_{\varepsilon}) \varodot \tau_4^\epsilon
& = 
(\nabla G_0 \conv \xi_{\varepsilon}) \varodot (\nabla G_0 \conv \tau_4^\epsilon) \\
& \qquad - \nabla \cdot [(G_0 \conv \xi_{\varepsilon}) \varodot \nabla (G_0 \conv \tau_4^\epsilon)] 
+ (G_0 \conv \xi_{\varepsilon}) \varodot [\Delta(G_0 - G) \conv \tau_4^\epsilon]. 
\end{align*}
The convergence of the two terms in the second line follow rather straightforwardly (recall \eqref{eq:Delta_G_N_min_G}), whereas the first term on the right-hand side (on the first line) 
converges as is shown in the proof of  \cite[Theorem~2.38]{gubinelli_semilinear_2020}.

  As in the 2D case, the estimate  \eqref{eqn:def_a_AH}  on $Y_N$ follows from Corollary~\ref{cor:estimates_of_G_N_and_H_N}. 
\end{example}

The above examples are specific cases for which the construction condition~\ref{assump:all_three_assump}~\ref{item:construction_assump} is valid. 
This condition is however valid under very general assumptions in terms of regularity structures, as we show in the next theorem. 
Namely, we consider the regularity structure associated with the generalized parabolic Anderson model 
  \begin{equation}\label{eq:gpam}
    \partial_0 u = \Delta u + \sum_{i, j=1}^d g_{i, j}(u) \partial_i u \partial_j u
    + \sum_{i=1}^d h_i(u) \partial_i u + k(u) + f(u) \xi,
  \end{equation}
%(see \eqref{eq:gpam_app}) 
or equivalently, the following elliptic equation, as it leads to the same regularity structure
  \begin{equation*}
    \Delta u =  \sum_{i, j=1}^d g_{i, j}(u) \partial_i u \partial_j u
    + \sum_{i=1}^d h_i(u) \partial_i u + k(u) + f(u) \xi,
  \end{equation*}
as described in \cite{bruned_2019} and we also consider the renormalization approach considered therein called the BPHZ renormalization, which is further developed in the subsequent works \cite{chandra2018analytic,hairer2023bphztheoremregularitystructures, bailleul2024randommodelsregularityintegrabilitystructures}.

In the following theorem, the assumption we rely on, namely Assumption~\ref{assump:convergence_of_BPHZ}, is formulated in the language of regularity structures, based on the language discussed preceding it in Appendix~\ref{sec:review_of_reg_str}. 
This assumption basically means that the solution theory of the generalized parabolic Anderson model can be developed.  
Hence, the following theorem states that if this is the case, then the construction condition~\ref{assump:all_three_assump}~\ref{item:construction_assump} is valid as well.

\begin{theorem}\label{thm:convergence_X_Y_BPHZ}
  Impose Assumption~\ref{assump:convergence_of_BPHZ}, which assumes that the BPHZ renormalization models converge and a few probabilistic estimates hold. 
By setting  $X^\epsilon \defby X^{\rmodel^{\BPHZ, \epsilon}}$ and $X \defby X^{\rmodel^{\BPHZ}}$ as in  Definition~\ref{def:def_of_X_W_N_Y_N_for_model}
and letting $c_\epsilon$ be defined by \eqref{eq:def_of_c_epsilon}, 
the construction condition~\ref{assump:all_three_assump}~\ref{item:construction_assump} is valid. 
\end{theorem}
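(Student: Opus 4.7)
The strategy is the standard program in regularity structures: realise $X$, $X^\epsilon$ and $Y_N$, $Y_N^\epsilon$ as reconstructions of explicitly constructed modelled distributions on the BPHZ model and its mollified version, and then transfer the convergence of models to convergence of reconstructions in the required weighted Besov spaces.

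First I would identify the relevant symbols. The heuristic in Remark~\ref{remark:finding_w_eps} singles out a finite collection $\{\Xi,\tau_1,\tau_2,\dots,\tau_\ell\}$ of symbols whose degrees $-2+\delta,\,-2+2\delta,\dots$ exceed the threshold needed for the construction. In the regularity structure associated with \eqref{eq:gpam}, these are exactly the symbols one obtains by iterating $\cI$ (integration against $G$) and multiplication with $\nabla\cI(\cdot)\cdot\nabla\cI(\cdot)$, and their canonical liftings are already part of the BPHZ data encoded in Definition~\ref{def:def_of_X_W_N_Y_N_for_model}. I would then set $\overline X$ to be the modelled distribution that is the sum of these symbols with the appropriate ``abstract integration'' inserted, define $X^{\rmodel^{\BPHZ},\epsilon}$ as the reconstruction under $\rmodel^{\BPHZ,\epsilon}$ and $X^{\rmodel^{\BPHZ}}$ as the reconstruction under $\rmodel^{\BPHZ}$, and verify that with $c_\epsilon$ given by \eqref{eq:def_of_c_epsilon} the algebraic identity $X^\epsilon = \xi_\epsilon + \tau_1^\epsilon + 2\tau_2^\epsilon + \cdots$ (with renormalisations) holds, matching the classical constructions of Examples~\ref{ex:2D_white_noise} and~\ref{ex:3D_white_noise}.

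Next I would invoke the probabilistic part of Assumption~\ref{assump:convergence_of_BPHZ}: the BPHZ models converge in $L^p$ with all moments in the model metric, uniformly in $\epsilon$ on every compact, with polynomial control in the space variable. The weighted Reconstruction Theorem (e.g.\ in the form used in \cite{hairer2023bphztheoremregularitystructures}) then yields
\[
  \lim_{\epsilon\downarrow 0}\|X^\epsilon - X\|_{L^p(\P,\cC^{-2+\delta,\sigma}(\R^d))} = 0,
\]
since $-2+\delta$ is the lowest degree appearing in $\overline X$. Removing the symbol $\Xi$ (whose reconstruction is $\xi_\epsilon$, respectively $\xi$) gives a modelled distribution of minimal degree $-2+2\delta$, so the same reconstruction argument yields the improved convergence of $X^\epsilon-\xi_\epsilon$ in $\cC^{-2+2\delta,\sigma}$. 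Independence from the mollifier $\rho$ is a standard consequence of the uniqueness of the BPHZ model on the unmollified noise.

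For the second half of the statement I would set $W_N^\epsilon=G_N\ast X^\epsilon$ and expand
\[
  \xi_\epsilon - c_\epsilon + |\nabla W_N^\epsilon|^2 + \Delta W_N^\epsilon
\]
algebraically into a finite sum of renormalised products of the stochastic data, using \eqref{eqn:G_N_is_parametrix} to absorb the remainder $[\Delta(G_N-G)]\ast X^\epsilon$ as a smoother term. By the very choice of $\overline X$, each such product corresponds to the reconstruction of a modelled distribution of degree at least $-1+\delta$ (the terms producing divergences are cancelled by $c_\epsilon$, matching the BPHZ character on the ``Wick square'' trees). Multiplication by $F(W_N^\epsilon)$ is handled using the weighted Bony estimates of \cite[Lemma~2.14]{gubinelli_hoffmanova_2019}: since $F$ is bounded and smooth and $W_N^\epsilon\in\cC^{\delta,\sigma}$ with $\delta\in(0,1)$, one has $F(W_N^\epsilon)\in L^p(\P,\cC^{\delta})$ (after truncating with Lemma~\ref{lem:localization} to remove the weight), and products of $\cC^{\delta}$ and $\cC^{-1+\delta,\sigma}$ land in $\cC^{-1+\delta,\sigma}$ exactly when $\delta>\tfrac12$; when $\delta\le\tfrac12$ one proceeds as in Example~\ref{ex:3D_white_noise} by isolating the resonant product and using the paralinearisation and commutator lemmas, whose inputs are again guaranteed by the convergence of the BPHZ model.

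The main obstacle, and the most delicate step, is the uniform-in-$N$ bound \eqref{eqn:def_a_AH}. The issue is that the symbols involved in $Y_N$ contain factors of $G_N-G$ with derivatives, and one must show that the resulting polynomial growth in $N$ is no worse than $2^{\kAH N}$ for some fixed integer $\kAH$ depending only on $\delta$. I would obtain this by estimating each Besov norm of each component through the bounds on $G_N-G$ given in Corollary~\ref{cor:estimates_of_G_N_and_H_N} (this is what produces the factor $2^{(\kappa-\delta)N}$ in Example~\ref{ex:2D_white_noise}), combined with the $L^p(\P)$ moment bounds on the BPHZ reconstructions inherited from Assumption~\ref{assump:convergence_of_BPHZ}. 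Summing the contributions of finitely many trees gives a polynomial bound in $2^N$, from which $\kAH$ can be read off, and the $L^p(\P)$ finiteness of $\aAH$ follows from the uniform moment estimates on the model.
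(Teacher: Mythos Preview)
Your high-level outline (lift to modelled distributions, reconstruct, transfer convergence of models) matches the paper, but the treatment of $Y_N$ has a genuine gap.

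You propose to obtain the product $F(W_N^\epsilon)\cdot Z_N^\epsilon$ at the level of distributions: Bony estimates when $\delta>\tfrac12$, and ``as in Example~\ref{ex:3D_white_noise}'' (paralinearisation plus commutator) when $\delta\le\tfrac12$. But the argument of Example~\ref{ex:3D_white_noise} only closes for $\delta\in(\tfrac13,\tfrac12)$; for smaller $\delta$ the resonant products $F(W_N)\varodot\tau$ require ever more iterations, and the first-order paracontrolled toolbox does not suffice. The paper avoids this completely by never multiplying in the distribution world: it defines $\regY_N$ as a modelled distribution in which the product $F^{\rprod}(\regW_N)\rprod(\cdots)$ is carried out inside the regularity structure (Definition~\ref{def:def_of_modelled_distributions_for_AH}, using Definition~\ref{def:function_and_modelled_distritbusion} and Proposition~\ref{prop:product_modelled_distributions}), and then applies the reconstruction operator once. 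This is what makes the argument work for every $\delta\in(0,1)$.

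A second point you pass over: the assertion that ``the terms producing divergences are cancelled by $c_\epsilon$'' is not automatic. A priori, BPHZ renormalisation of the trees entering $\regY_N$ could produce non-constant local counterterms, in which case $Y_N^{\rmodel^{\BPHZ,\epsilon}}$ would not equal $F(W_N^\epsilon)(\xi_\epsilon-c_\epsilon+|\nabla W_N^\epsilon|^2+\Delta W_N^\epsilon)$. The paper spends Sections~B.2--B.3 proving that only constants appear (Proposition~\ref{prop:bphz_vs_can_for_X}, Lemma~\ref{lem:BPHZ_vs_can_for_Y}, Proposition~\ref{prop:identitiy_for_W_and_Y}), and the key input is a parity argument: the relevant ``remove'' trees have an odd number of edges with $|\fe|=1$, so by the symmetry $\xi\overset{d}{=}\xi(-\cdot)$ their expectations vanish (Lemma~\ref{lem:expectation_remove_cT}), which forces the BPHZ character to act trivially on them. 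This step is essential and is missing from your plan.

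Your treatment of \eqref{eqn:def_a_AH} via Corollary~\ref{cor:estimates_of_G_N_and_H_N} is in the right spirit; the paper obtains it from the moment bound $\expect[\|Y_N\|_{\cC^{-1+\delta,\sigma}}^p]\lesssim 2^{k'pN}$ of Proposition~\ref{prop:stochastic_estimates_of_X_and_Y} by summing $\sum_N 2^{-kpN}\expect[\|Y_N\|^p]$ for $k>k'$.
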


\begin{examples}
\label{examples:cov_gamma}
The work \cite{chandra2018analytic}, see especially Theorem 2.31 and Theorem 2.34 therein, gives
 conditions of the noise $\xi$ under which Assumption~\ref{assump:convergence_of_BPHZ} holds.
It is worth observing that Assumption~\ref{assump:convergence_of_BPHZ} holds for the 2D and the 3D white noise, and
the Gaussian noise $\xi$ whose covariance is formally given by
\begin{equation*}
  \expect[\xi(x) \xi(y)] = \gamma(x-y),
\end{equation*}
where $\gamma: \R^d \setminus \{0\} \to [0, \infty)$ is smooth and bounded away from $0$ and
for some $\delta \in (0, 1)$ we have
\begin{equation*}
  \sup_{\substack{k \in \N_0^d, \\ \abs{k} \leq 6d}} \sup_{x \in B(0,1) \setminus \{0\}}
  \abs{\partial^k \gamma (x)} \abs{x}^{\min\{4, d\} - \delta+|k|} < \infty,
\end{equation*}
see \cite[Theorem 2.15]{chandra2018analytic}. For example one could take $\gamma$ to be given by
\begin{align*}
\gamma(x) = c |x|^{-\alpha}
\end{align*}
%\towilinline{maybe it is good to make this a bit more general here and align it with Example~\ref{example:example_of_gamma}} 
for some $c\in (0,\infty)$ and $\alpha \in (0,\min\{d,4\})$.
\begin{calc}
It would be interesting to know, whether the assumptions of \cite{chandra2018analytic} cover, for instance,
the Gaussian noise $\xi$
whose covariance is formally given by
\begin{equation*}
  \expect[\xi(x) \xi(y)] = c \abs{x_1 - y_1}^{-\alpha_1} \cdots \abs{x_d - y_d}^{-\alpha_d},
  \quad c \in (0, \infty),
\end{equation*}
where $\alpha_1, \ldots, \alpha_d \in (0, 1)$ with $\alpha_1 + \cdots + \alpha_d < 4$. (Because the corresponding $\gamma$ does not satisfy the above condition.)
Considering the Gaussian noise with this covariance seems interesting in view of \cite[Theorem 1.3]{chen2014}.
\end{calc}
\end{examples}

\subsection{Neumann assumption}
\label{subsec:stoch_terms_neumann}

In this section we discuss sufficient conditions for the Neumann condition~\ref{assump:all_three_assump}~\ref{item:neumann_assump} (Proposition~\ref{prop:boundary_nabla_xi} and Lemma~\ref{lem:example_of_boundary_xi}) and examples which satisfy the Neumann condition (Example~\ref{example:example_of_gamma} and Example~\ref{example:2D_WN_satisf_Neumann_cond}).

\begin{proposition}\label{prop:boundary_nabla_xi}
% If Assumption~\ref{assump:boundary} is satisfied, then so is Assumption~\ref{assump:all_three_assump}~\ref{item:neumann_assump} for any bounded Lipschitz domain. 
Let $\delta \in (1/2, 1)$.
Suppose that for any bounded Lipschitz domain $U$, there exists a $\xi_{U}$ such that 
\begin{align*}%\label{eq:tilde_Y_U_conv}
\norm{\indic_{U} \xi_{\varepsilon} - \xi^U}_{B^{-2 + \delta}_{p, p}(\R^d)} \arroweps 0 \quad \mbox{in probability for all $p \in (2, \infty)$.} 
\end{align*}
Then, the convergence \eqref{eq:tilde_Y_U_conv} holds. Furthermore, for any $\sigma \in (\frac{d}{p}, \infty)$ we have the bound 
\begin{equation*}%\label{eq:}
  \norm{\tilde{Y}^{U_L}}_{B_{p, p}^{-2+\delta}(\R^d)}
  \lesssim_{\delta, p, \sigma, U} L^{2 \sigma} \norm{\xi}_{\csp^{-2 + \delta, \sigma}(\R^d)}
  + \norm{\xi^{U_L}}_{B_{p, p}^{-2 + \delta}(\R^d)} \quad \text{for all }L \geq 1.
\end{equation*}
\end{proposition}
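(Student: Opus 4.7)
The plan is to rewrite $\tilde Y^U_\epsilon$ as a sum of three tempered distributions, one of which is exactly $-\indic_U \xi_\epsilon$, and then pass to the limit term-by-term. Set $g_\epsilon \defby G_0 \conv \xi_\epsilon$ and, using \eqref{eq:laplacian_on_G_N} at $N=0$, $\tilde s_\epsilon \defby \cF^{-1}[\check\chi] \conv \xi_\epsilon$, so that $\Delta g_\epsilon = -\xi_\epsilon + \tilde s_\epsilon$ with $\tilde s_\epsilon$ smooth. Since $\xi_\epsilon$ is smooth, so is $g_\epsilon$, and the divergence theorem applied to the vector field $\varphi \nabla g_\epsilon$ on the Lipschitz domain $U$ gives, in the sense of tempered distributions, the identity
\[
\tilde Y^U_\epsilon = -\sum_{i=1}^d \partial_i\bigl(\indic_U \partial_i g_\epsilon\bigr) - \indic_U \xi_\epsilon + \indic_U \tilde s_\epsilon.
\]
I would then take as candidate limit $\tilde Y^U \defby -\sum_i \partial_i(\indic_U \partial_i g) - \xi^U + \indic_U \tilde s$, where $g \defby G_0 \conv \xi$ and $\tilde s \defby \cF^{-1}[\check\chi] \conv \xi$.

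The core task will be to show that $\indic_U \partial_i g_\epsilon \to \indic_U \partial_i g$ in $B^{-1+\delta}_{p,p}(\R^d)$ in probability; applying $\partial_i$ then yields convergence of the first summand in $B^{-2+\delta}_{p,p}(\R^d)$. From Assumption~\ref{assump:base} one has $\xi_\epsilon \to \xi$ in $\cC^{-2+\delta,\sigma}(\R^d)$ in probability, and Corollary~\ref{cor:estimates_of_G_N_and_H_N} together with Lemma~\ref{lem:derivative_and_lifting_in_Besov} then upgrade this to $\partial_i g_\epsilon \to \partial_i g$ in $\cC^{-1+\delta,\sigma}(\R^d)$. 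The key range condition I would exploit is that for $p \in (2,\infty)$ and $\delta \in (\tfrac{1}{2},1)$ one has $-1+\delta \in (\tfrac{1}{p}-1, \tfrac{1}{p})$, which is precisely the range in which the characteristic function of a Lipschitz domain is a pointwise multiplier on $B^{-1+\delta}_{p,p}(\R^d)$ (equivalently, extension by zero from $B^{-1+\delta}_{p,p}(U)$ to $B^{-1+\delta}_{p,p}(\R^d)$ is bounded, see e.g.\ \cite{Triebel1983}). Combined with the embedding $\cC^{-1+\delta,\sigma}(\R^d) \hookrightarrow B^{-1+\delta}_{p,p}(U)$ on the bounded set $U$, this will give the desired convergence. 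The remaining two summands are easy: $-\indic_U \xi_\epsilon \to -\xi^U$ in $B^{-2+\delta}_{p,p}(\R^d)$ by hypothesis, and $\indic_U \tilde s_\epsilon \to \indic_U \tilde s$ in $L^p(\R^d) \hookrightarrow B^{-2+\delta}_{p,p}(\R^d)$ since $\tilde s$ is smooth and $U$ is bounded.

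For the quantitative bound I would apply the same decomposition with $U$ replaced by $U_L$. The contribution $\|\xi^{U_L}\|_{B^{-2+\delta}_{p,p}}$ appears directly; the term $\|\indic_{U_L}\tilde s\|_{L^p}$ is controlled by $|\tilde s(x)| \lesssim (1+|x|)^\sigma \|\xi\|_{\cC^{-2+\delta,\sigma}}$ and $|U_L|\lesssim L^d$, giving $\lesssim L^{d/p+\sigma}\|\xi\|_{\cC^{-2+\delta,\sigma}} \le L^{2\sigma}\|\xi\|_{\cC^{-2+\delta,\sigma}}$ since $\sigma > d/p$. For the first summand, by continuity of $\partial_i : B^{-1+\delta}_{p,p}\to B^{-2+\delta}_{p,p}$ and a scaling argument that bounds the multiplier norm of $\indic_{U_L}$ on $B^{-1+\delta}_{p,p}(\R^d)$ by a constant depending only on the Lipschitz character of $U$, it suffices to estimate $\|\partial_i g\|_{B^{-1+\delta}_{p,p}(U_L)}$; by Lemma~\ref{lemma:estimate_C_delta_U_by_weighted} one has $\|\partial_i g\|_{\cC^{-1+\delta}(U_L)} \lesssim L^\sigma\|\xi\|_{\cC^{-2+\delta,\sigma}}$, and embedding $\cC^{-1+\delta}(U_L) \hookrightarrow B^{-1+\delta}_{p,p}(U_L)$ costs a further $L^{d/p}$, yielding the bound $L^{d/p+\sigma}\|\xi\|_{\cC^{-2+\delta,\sigma}} \le L^{2\sigma}\|\xi\|_{\cC^{-2+\delta,\sigma}}$. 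The main obstacle I anticipate is making the scale-invariance of the multiplier norm of $\indic_{U_L}$ (equivalently, the $L$-dependence of the extension-by-zero constant) precise: this requires a careful rescaling argument using $U_L = L\cdot U$ but should not introduce any extra $L$-factor beyond those already accounted for.
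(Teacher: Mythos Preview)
Your proposal is correct and is the same argument as the paper's, written from the dual side. Both start from the integration-by-parts identity $\langle\tilde Y^U_\epsilon,\varphi\rangle = \int_U \nabla\varphi\cdot\nabla g_\epsilon - \int_U \varphi\,\Delta g_\epsilon$ (your divergence-theorem formula is exactly this) and split $\Delta g_\epsilon = -\xi_\epsilon + \tilde s_\epsilon$. For the gradient term you package it as $-\sum_i\partial_i(\indic_U\partial_i g)$ and invoke the multiplier property of $\indic_U$ on $B_{p,p}^{-1+\delta}$ (range $(\tfrac1p-1,\tfrac1p)$), whereas the paper keeps the pairing $\varphi\mapsto\langle\indic_{U_L}\nabla\varphi,\phi(L^{-1}\cdot)\nabla g\rangle$ and uses the multiplier property on the test-function side, i.e.\ on $B_{q,q}^{1-\delta}$ with $1-\delta<\tfrac1q$ (Lemma~\ref{lem:mult_by_Lipschitz_domain}); these are the same fact by duality. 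The paper's presentation has the practical advantage that the $L$-uniform multiplier constant at positive regularity is already recorded in Lemma~\ref{lem:scaling_of_iota}, and the factor $L^{2\sigma}$ arises directly from localizing $\nabla g$ by a cutoff $\phi(L^{-1}\cdot)$ via Lemma~\ref{lem:localization} and Lemma~\ref{lem:weighted_besov_p_vs_infty}, which sidesteps the obstacle you anticipate and avoids ever needing to speak of a space like ``$\cC^{-1+\delta}(U_L)$'' on a domain.
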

\begin{proof}
  The proof is based on the duality \cite[Theorem~2.17]{sawano2018theory}.
  By the integration by parts formula (Lemma~\ref{lem:integration_by_parts}),
  \begin{equation}\label{eq:integration_by_parts_for_xi}
\inp{\tilde Y^{U_L}_\epsilon}{\varphi } =
    \int_{\partial U_L} \varphi \nabla (G_0 \conv \xi_{\epsilon}) \cdot \dd \bm{S}
    = \int_{U_L} \nabla \varphi \cdot \nabla (G_0 \conv \xi_{\epsilon}) - \int_{U_L} \varphi \Delta (G_0 \conv \xi_{\epsilon}).
  \end{equation}
  We first consider the first term on the right-hand side of \eqref{eq:integration_by_parts_for_xi}.
  Let $\phi$ be a smooth function on $\R^d$ such that $\phi = 1$ on a neighborhood $U$.
The map
  \begin{equation}\label{eq:first_term_of_boundary_xi}
    \cS(\R^d) \rightarrow \R, \quad  \varphi \mapsto \inprd{\indic_{U_L} \nabla \varphi}{\phi(L^{-1} \cdot) \nabla (G_0 \conv \xi)}
  \end{equation}
  is well-defined, is independent of $\phi$ and is an element of $B_{p, p}^{-2 + \delta}(\R^d)$.
  Indeed, if $q \in (1, 2)$ is such that $p^{-1} + q^{-1} = 1$, by the duality
  \cite[Theorem 2.17]{sawano2018theory},
  \begin{equation*}
    \abs{\inprd{\indic_{U_L} \nabla \varphi}{\phi(L^{-1} \cdot) \nabla (G_0 \conv \xi)}}
    \leq \norm{\indic_{U_L} \nabla \varphi}_{B_{q, q}^{1 - \delta}(\R^d)} \norm{\phi(L^{-1} \cdot)
    \nabla (G_0 \conv \xi)}_{B_{p, p}^{-1 + \delta}(\R^d)}.
  \end{equation*}
  By Lemma~\ref{lem:derivative_and_lifting_in_Besov} and Lemma~\ref{lem:scaling_of_iota} (see Definition~\ref{def:functional_inequalities_constants} for $C_{\operatorname{Mult}}$), as $1-\delta <  \frac12 <  \frac1q$,
  \begin{equation*}
    \norm{\indic_{U_L} \nabla \varphi}_{B_{q, q}^{1 - \delta}(\R^d)}
    \lesssim_{q, \delta} C_{\operatorname{Mult}}^{U_L}[W_q^{1-\delta}] \norm{\nabla \varphi}_{B_{q, q}^{1 - \delta}(\R^d)}
    \lesssim_{q, \delta, U} \norm{\varphi}_{B_{q, q}^{2 - \delta}(\R^d)}.
  \end{equation*}
  By Lemma~\ref{lem:weighted_besov_p_vs_infty}  (remember that we have $p \sigma>d$) and Lemma~\ref{lem:localization},
  \begin{equation*}
    \norm{\phi(L^{-1} \cdot) \nabla (G_0 \conv \xi)}_{B_{p, p}^{-1 + \delta}(\R^d)}
    \lesssim_{p, \delta, \sigma} L^{2 \sigma} \norm{\nabla (G_0 \conv \xi)}_{\csp^{-1 + \delta, \sigma}(\R^d)}.
  \end{equation*}
  By Lemma~\ref{lem:derivative_and_lifting_in_Besov} and Corollary~\ref{cor:estimates_of_G_N_and_H_N},
  \begin{equation*}
    \norm{\nabla (G_0 \conv \xi)}_{\csp^{-1 + \delta, \sigma}(\R^d)}
    \lesssim_{\delta, \sigma} \norm{\xi}_{\csp^{-2 + \delta, \sigma}(\R^d)}.
  \end{equation*}
  Therefore, the distribution defined by \eqref{eq:first_term_of_boundary_xi} belongs to
  the dual space of $B_{q, q}^{2 - \delta}(\R^d)$, which is identified with $B_{p, p}^{-2 + \delta}(\R^d)$,
  and its norm in $B_{p, p}^{-2 + \delta}(\R^d)$ is bounded by a multiple of 
  \begin{equation}
  \label{eqn:L_2sigma_xi_norm}
%    C_{p, \delta, \sigma, U} 
    L^{2 \sigma} \norm{\xi}_{\csp^{-2 + \delta, \sigma}(\R^d)}.
  \end{equation}
  Now it is easy to see that this distribution is the limit of the first term of the right-hand side of \eqref{eq:integration_by_parts_for_xi}  (as $\norm{\xi_\epsilon - \xi}_{\cC^{-2+\delta,\sigma}(\R^d)} \rightarrow 0$).

Let us turn to the second term of the right-hand side of \eqref{eq:integration_by_parts_for_xi}.
  Note that
  \begin{equation*}
    \int_{U_L} \varphi \Delta (G_0 \conv \xi_{\epsilon}) = - \int_{U_L} \varphi \xi_{\epsilon} + \int_{U_L} \varphi [\Delta(G_0 - G)] \conv \xi_{\epsilon}.
  \end{equation*}
  The first term is equal to $\inprd{\indic_{U_L} \xi_{\epsilon}}{\varphi}$, whose convergence is guaranteed by the assumption.
  The second term converges to
  \begin{equation*}
    \int_{U_L} \varphi [\Delta(G_0 - G)] \conv \xi, 
  \end{equation*}
  which equals 
  \begin{align}
  \label{eqn:Delta_G_0_min_G_def}
  \langle \1_{U_L} \varphi, \phi(L^{-1} \cdot) [\Delta (G_0 - G)] * \xi \rangle. 
  \end{align}
  Now, as $[\Delta(G_0 - G)] \conv \xi$ is the smooth function $\cF^{-1}(\check \chi) * \xi = \Delta_{\le -1} \xi$ by \eqref{eq:Delta_G_N_min_G} (for $\Delta_{\le J}$ see Definition~\ref{def:Delta_le_LP_block}) we obtain that the distribution defined by \eqref{eqn:Delta_G_0_min_G_def} has its norm in $B_{p,p}^{-2+\delta}(\R^d)$ also bounded by a multiple of \eqref{eqn:L_2sigma_xi_norm} by using a similar argument as above and Lemma~\ref{lem:fourier_cutoff}
to obtain the estimate 
\begin{align*}
\| [\Delta (G_0 - G) ] * \xi \|_{\cC^{-1+\delta,\sigma}(\R^d)} 
\lesssim_\sigma 
\| \xi \|_{\cC^{-2+\delta,\sigma}(\R^d)}. 
\end{align*}
\end{proof}

% \begin{assumption}\label{assump:boundary}
%   Furthermore, for every bounded domain $U$, we assume that, as $\epsilon \downarrow 0$,
%   the distributions $\indic_U \xi_{\epsilon}$ converge  in $\S'(\R^d)$ in probability to
%   some limit that is independent of the mollifier function $\rho$. 
% \end{assumption}
\begin{lemma}\label{lem:convergence_of_indic_U_xi_epsilon}
  We assume $\delta \in (\frac{1}{2}, 1)$  and that there exists a
  $\delta' \in (0, 1)$ such that, for each $p \in (1, \infty)$,
  there exist a constant $C^{\bdry}_p \in (0, \infty)$ and a map $\bm{\epsilon}_p^{\bdry}: (0, 1)^3 \times \R^d \to (0, \infty)$
  with the following properties.
  \begin{enumerate}[label={\normalfont(\roman*)}]
    \item 
	\label{item:boundary1}    
    One has
    \begin{equation}\label{eq:bound_of_bm_epsilon_boundary}
      \sup_{\epsilon_1, \epsilon_2, \lambda \in (0, 1), x\in \R^d}\bm{\epsilon}_p^{\bdry}(\epsilon_1, \epsilon_2; \lambda,x) < \infty
    \end{equation}
    and for each fixed $\lambda\in (0,1)$ and $x\in \R^d$, one has $\lim_{\epsilon_1, \epsilon_2 \downarrow 0} \bm{\epsilon}_p^{\bdry}(\epsilon_1, \epsilon_2; \lambda,x) = 0$ (by which we mean $\lim_{\epsilon\downarrow 0} \sup_{\epsilon_1,\epsilon_2 \in (0, \epsilon)} \bm{\epsilon}_p^{\bdry}(\epsilon_1, \epsilon_2; \lambda,x) = 0$). 
    \item 
    \label{item:boundary2} 
    For every $\epsilon_1, \epsilon_2, \lambda \in (0, 1)$, $q \in (1, \infty)$, bounded Lipschitz domain $U$,
    $x \in B(U, 1)$ and $\phi \in C^2(\R^d)$ with
    $\norm{\phi}_{\csp^2(\R^d)} \leq 1$ and $\supp(\phi) \subseteq B(0, 1)$, one has, with $\phi^{\lambda}_x \defby \lambda^{-d} \phi(\lambda^{-1}(\cdot - x))$, 
    \begin{align}\label{eq:bound_xi_epsilon_on_U}
      \expect[ \abs{\inprd{\xi_{\epsilon_1}}{\indic_U \phi^{\lambda}_x}}^q] 
      & \leq C^{\bdry}_q \lambda^{(-2+\delta + \delta')q}, \\
\label{eq:cauchy_bm_epsilon_boundary}
      \expect[ \abs{\inp{\xi_{\epsilon_1} - \xi_{\epsilon_2}}{\indic_U \phi^{\lambda}_x}}^q]
      & \leq \bm{\epsilon}_q^{\bdry}(\epsilon_1, \epsilon_2; \lambda,x) \lambda^{(-2 + \delta + \delta')q}.
    \end{align}
  \end{enumerate}
  Then there exists a $\xi^U$ with values in $\cC^{-2+\delta}(\R^d)$ and $r \in (0, \infty)$ 
  such that for all $p \in (\frac{d}{\delta'}+1,\infty)$ and $q \in [p, \infty)$ the random variables $\indic_U \xi_{\varepsilon}$ 
  converge to $\xi^U$ in $L^{\tymchange{q}}(\P, B_{p,p}^{-2+\delta}(\R^d))$ as $\varepsilon \downarrow 0$ and 
  % \begin{align*}
  % \limeps
  % \norm{ \1_U \xi_\epsilon - \xi^U }_{L^p(\P, \csp^{-2+\delta}(\R^d) )} = 0.
  % \end{align*}
    \begin{equation}\label{eq:bound_of_xi_U}
      \expect[ \norm{\xi^U}_{B_{p,p}^{-2 + \delta}(\R^d)}^{\tymchange{q}}] \lesssim_{p, q, \delta, \delta'}
      \abs{B(U, r)}^{\tymchange{\frac{q}{p}}} C^{\bdry}_{q}.
    \end{equation}
Consequently, by Proposition~\ref{prop:boundary_nabla_xi}, the Neumann condition~\ref{assump:all_three_assump}~\ref{item:neumann_assump} holds. 
\end{lemma}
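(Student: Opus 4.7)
The central tool is a Kolmogorov-type criterion for the Besov space $B^{-2+\delta}_{p,p}(\R^d)$, based on a standard integral characterization: for $s<0$ and a fixed compactly supported smooth test function $\phi$ (say $\supp\phi\subseteq B(0,1)$, $\|\phi\|_{\cC^2}\le 1$, $\int\phi\ne 0$), one has
\[
 \|f\|_{B^s_{p,p}(\R^d)}^p \;\asymp\; \|\rho\conv f\|_{L^p}^p \;+\; \int_0^1\!\int_{\R^d} \lambda^{-sp-1}\,|\inp{f}{\phi^\lambda_x}|^p\,\dd x\,\dd \lambda,
\]
where $\rho$ is a compactly supported low-frequency cutoff. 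Apply this to $f_{\epsilon_1,\epsilon_2} \defby \indic_U(\xi_{\epsilon_1}-\xi_{\epsilon_2})$ with $s=-2+\delta$. The crucial observation is that $f_{\epsilon_1,\epsilon_2}$ is supported in $\overline{U}$, so $\inp{f_{\epsilon_1,\epsilon_2}}{\phi^\lambda_x}=\inp{\xi_{\epsilon_1}-\xi_{\epsilon_2}}{\indic_U\phi^\lambda_x}$ and this pairing vanishes unless $\supp(\phi^\lambda_x)\cap U\ne\emptyset$, i.e. unless $x\in B(U,\lambda)\subseteq B(U,1)$. Hence the hypothesis \eqref{eq:cauchy_bm_epsilon_boundary} applies uniformly to the integrand.

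Take $L^q(\P)$-moments with $q\ge p$ and use Minkowski's integral inequality (with exponent $q/p\ge 1$) to push the expectation inside the $\int\dd x\,\dd\lambda$ integral:
\[
 \bE\bigl[\|f_{\epsilon_1,\epsilon_2}\|_{B^{-2+\delta}_{p,p}}^q\bigr]^{p/q} \lesssim \int_0^1\!\int_{B(U,\lambda)} \lambda^{(2-\delta)p-1}\,\bE\bigl[|\inp{\xi_{\epsilon_1}-\xi_{\epsilon_2}}{\indic_U\phi^\lambda_x}|^q\bigr]^{p/q}\,\dd x\,\dd\lambda + (\text{low freq}).
\]
Insert \eqref{eq:cauchy_bm_epsilon_boundary}: the $x$-integrand is bounded by $\bm{\epsilon}_q^{\bdry}(\epsilon_1,\epsilon_2;\lambda,x)^{p/q}\lambda^{(-2+\delta+\delta')p}$, so the $\lambda$-exponent collapses to $\delta'p-1$. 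Since $\delta'p>0$, the integral $\int_0^1 \lambda^{\delta' p-1}\dd\lambda$ converges, and the $x$-integration contributes at most $|B(U,1)|$.

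To conclude convergence, invoke dominated convergence in $\lambda$, using the uniform boundedness \eqref{eq:bound_of_bm_epsilon_boundary} as the integrable majorant (times $\lambda^{\delta'p-1}$) and the pointwise vanishing of $\bm{\epsilon}_q^{\bdry}(\epsilon_1,\epsilon_2;\lambda,x)$ as $\epsilon_1,\epsilon_2\downarrow 0$. This gives the Cauchy criterion for $(\indic_U\xi_\epsilon)_\epsilon$ in $L^q(\P,B^{-2+\delta}_{p,p}(\R^d))$; the low-frequency term is treated identically by choosing $\rho$ compactly supported and applying the hypothesis at scale $\lambda=1$. Call the limit $\xi^U$.

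For the bound \eqref{eq:bound_of_xi_U}, run exactly the same computation with $\xi_\epsilon$ itself in place of $\xi_{\epsilon_1}-\xi_{\epsilon_2}$, substituting the uniform bound \eqref{eq:bound_xi_epsilon_on_U} for \eqref{eq:cauchy_bm_epsilon_boundary}; this yields $\bE[\|\indic_U\xi_\epsilon\|_{B^{-2+\delta}_{p,p}}^q]\lesssim |B(U,r)|^{q/p}C_q^{\bdry}$ uniformly in $\epsilon$, where $r>0$ accounts for the support of $\rho$, and Fatou's lemma transfers the bound to $\xi^U$. The Neumann condition then follows by applying Proposition~\ref{prop:boundary_nabla_xi} with this choice of $\xi^U$. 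The main technical obstacle is handling the low-frequency term cleanly so that only pairings of the form $\inp{\xi_\epsilon}{\indic_U\phi^\lambda_x}$ ever appear (this forces the choice of a compactly supported $\rho$ rather than a Schwartz one), and ensuring the bookkeeping of $q\ge p$ versus the technical threshold $p>d/\delta'+1$, which enters via the interplay between the $L^p$-integral characterization and the integrability of $\lambda^{\delta'p-1}$ together with the passage from $B^{-2+\delta}_{p,p}$ into $\cC^{-2+\delta}$ by embedding.
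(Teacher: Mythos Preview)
Your argument is correct and mirrors the paper's proof: both apply Minkowski's inequality (exponent $q/p\ge 1$) to a characterization of the $B^{-2+\delta}_{p,p}$-norm by pairings against rescaled test functions, restrict the spatial variable to a neighbourhood of $U$ via the support of $\indic_U\xi_\epsilon$, and conclude by dominated convergence using \eqref{eq:bound_of_bm_epsilon_boundary}; the only difference is that you use the continuous local-means characterization where the paper uses the discrete wavelet one (Proposition~\ref{prop:besov_wavelet}). One step you omit: Proposition~\ref{prop:boundary_nabla_xi} by itself yields only \eqref{eq:tilde_Y_U_conv}, not the uniform-in-$L$ bound \eqref{eq:Y_U_bound_assump}; the paper closes this by estimating $\sum_{L\in\N}L^{-q/4}\expect[\norm{\tilde{Y}^{U_L}}_{B^{-2+\delta}_{p,p}}^q]$ via the pointwise bound from Proposition~\ref{prop:boundary_nabla_xi} together with \eqref{eq:bound_of_xi_U} applied to each $U_L$, which is where the restriction $p>8d$ in \eqref{eq:Y_U_bound_assump} actually enters.
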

\begin{proof}
  To simplify notation, we set $\eta \defby \indic_U (\xi_{\epsilon_1} - \xi_{\epsilon_2})$.
  We use the wavelet characterization of Besov spaces given in Propositions~\ref{prop:wavelet_basis} and~\ref{prop:besov_wavelet}.
  Using the notation therein, we have
  \begin{equation*}%\label{eq:}
    \norm{\eta}_{B_{p,p}^{-2+\delta}(\R^d)}^q 
    \lesssim \Big( \sum_{n \in \mathbb{N}_0} 2^{np(-2 + \delta) - n d} \sum_{G \in \mathfrak{G}^n, m \in \mathbb{Z}^d} 
    \abs{\inp{\eta}{2^{\frac{nd}{2}} \Psi^{n, G}_m}}^p \Big)^{\frac{q}{p}}.
  \end{equation*}
  Since $\scalefcn$ and $\motherfcn$ are compactly supported, by \eqref{eq:wavelet_basis} there exists an $r \in (0, \infty)$ such that
  the sum with respect to $m$ is over $\Z^d \cap 2^{n} B(U, r)$.
  Therefore, by Minkowski's inequality (remember that $\frac{q}{p}\ge 1$)
  \begin{align*}%\label{eq:}
  \mathbb{E}[ \norm{\eta}_{B_{p,p}^{-2+\delta}(\R^d)}^q ]^{\frac{p}{q}}
 & 
      \lesssim \norm[\Big]{\sum_{n \in \mathbb{N}_0} 2^{np(-2 + \delta) - n d} \sum_{G \in \mathfrak{G}^n, m \in \mathbb{Z}^d \cap 2^n B(U, r)} 
    \abs{\inp{\eta}{2^{\frac{nd}{2}} \Psi^{n, G}_m}}^p}_{L^{\frac{q}{p}}(\mathbb{P})}
\\ 
    & \lesssim \sum_{n \in \mathbb{N}_0} 2^{np(-2 + \delta) - n d} \sum_{G \in \mathfrak{G}^n, m \in \mathbb{Z}^d \cap 2^n B(U, r)} 
    \mathbb{E}[\abs{\inp{\eta}{2^{\frac{nd}{2}} \Psi^{n, G}_m}}^q]^{\frac{p}{q}}. 
  \end{align*}
By \eqref{eq:cauchy_bm_epsilon_boundary}, by \eqref{eq:wavelet_basis} and 
observing that $2^{\frac{nd}{2}} \Psi_m^{n,G} = 2^{\frac{d}{2}} \phi^{\lambda}_{\lambda m}$ for $\phi = \prod_{j=1}^k \psi_{G_j}$ and $\lambda = 2^{-(n-1)_+}$, 
\begin{calc}
\begin{align*}
\mathbb{E}[\abs{\inp{\eta}{2^{\frac{nd}{2}} \Psi^{n, G}_m}}^q]
& = 2^{\frac{dq}{2}} 
\expect[ \abs{\inp{\xi_{\epsilon_1} - \xi_{\epsilon_2}}{\indic_U \phi^{\lambda}_{\lambda m}}}^q]  \\
& \le 2^{\frac{dq}{2}} 
\bm{\epsilon}_q^{\bdry}(\epsilon_1, \epsilon_2; 2^{-(n-1)},2^{-(n-1)_+}m) 2^{-(n-1)_+ (-2 + \delta + \delta')q} , 
\end{align*}
\end{calc}
 and $\sup_{n\in\N_0} \# \fG^n <\infty$, we obtain
  \begin{equation*}%\label{eq:}
  \sum_{G\in \fG^n} 
\mathbb{E}[\abs{\inp{\eta}{2^{\frac{nd}{2}} \Psi^{n, G}_m}}^q]^{\frac{p}{q}} \lesssim 
    2^{- n p(-2 + \delta + \delta')} 
%    \sup_{x \in \mathbb{R}^d} 
    \bm{\epsilon}_q^{\bdry}(\epsilon_1, \epsilon_2; 2^{-(n-1)_+}, 2^{-(n-1)_+}m )^{\frac{p}{q}}.
  \end{equation*}
%  which, by \eqref{eq:cauchy_bm_epsilon_boundary}, is bounded by
%  \begin{equation*}%\label{eq:}
%    \sum_{n \in \mathbb{N}_0} 2^{np(-2 + \delta) - n d} \sum_{G \in \mathfrak{G}^n, m \in \mathbb{Z}^d \cap 2^n B(U, r)} 
%    2^{- n p(-2 + \delta + \delta')} \sup_{x \in \mathbb{R}^d} \bm{\epsilon}_q^{\bdry}(\epsilon_1, \epsilon_2; 2^{-n}, x)^{\frac{p}{q}}.
%  \end{equation*}
%
  % \begin{multline*}
  %   \expect[\norm{\indic_U \xi_{\epsilon_1} - \indic_U \xi_{\epsilon_2}}_{B_{p,p}^{-2+\delta_+}}^p ] \\
  %   \lesssim_{p, \delta_+} \sum_{n \in \N_0} 2^{np(-2 + \delta_+)} 2^{-nd} \sum_{G \in \fG^n, m \in \Z^d}
  %   \expect[\abs{\inprd{\xi_{\epsilon_1} - \xi_{\epsilon_2}}{\indic_U 2^{nd/2} \Psi^{n, G}_m}}^p]
  % \end{multline*}
  % Jensen's inequality yields 
  % \begin{equation*}%\label{eq:}
  %   \norm{\eta}_{B_{p,p}^{-2+\delta}(\R^d)}^q 
  %   \lesssim \abs{B(U, r)}^{\frac{q}{p} - 1} \sum_{n \in \mathbb{N}_0} 2^{np(-2 + \delta) - n d} \sum_{G \in \mathfrak{G}^n, m \in \mathbb{Z}^d \cap 2^n B(U, r)} 
  %   \abs{\inp{\eta}{2^{\frac{nd}{2}} \Psi^{n, G}_m}}^q .
  % \end{equation*}
  Therefore 
  \begin{equation} 
    \label{eqn:sum_with_boldface_eps}
    \expect[\norm{\eta}_{B_{p,p}^{-2+\delta}(\R^d)}^q ]^{\frac{p}{q}} \\
    \lesssim \sum_{n \in \N_0} 2^{-n \delta' - nd}
    \sum_{m \in \mathbb{Z}^d \cap 2^{n} B(U, r)}  
    \bm{\epsilon}_q^{\bdry}(\epsilon_1, \epsilon_2; 2^{-(n-1)_+}, 2^{-(n-1)_+}m)^{\frac{p}{q}}.
  \end{equation}
  As $\sum_{m\in \mathbb{Z}^d \cap 2^n B(U,r)} 1 \lesssim 2^{nd} |B(U,r)|$, 
  we have 
  \begin{equation*}%\label{eq:}
    \sum_{n \in \N_0} 2^{-n \delta' - nd}
    \sum_{m \in \mathbb{Z}^d \cap 2^{n} B(U, r)} 1 \lesssim |B(U,r)| \sum_{n \in \N_0} 2^{-n \delta'}. 
  \end{equation*}
  In view of the condition \ref{item:boundary1}, the dominated convergence theorem yields that the right-hand side (and thus left-hand side) of \eqref{eqn:sum_with_boldface_eps} converges to $0$ as $\epsilon_1,\epsilon_2 \downarrow 0$. 
  This proves the claim on the convergence of $\indic_U \xi_{\epsilon}$.
  % Now the convergence of $\indic_U \xi_{\epsilon}$ follows by the Besov embedding \eqref{eqn:estimate_infty_by_p_q_besov} of Lemma~\ref{lem:weighted_besov_p_vs_infty} (we may choose $\kappa>0$ small enough such that $\frac{d}{p} + \kappa < \delta_+ - \delta$).
  By using \eqref{eq:bound_xi_epsilon_on_U}, one can similarly prove the estimate \eqref{eq:bound_of_xi_U}.

  Regarding the last claim, to prove \eqref{eq:Y_U_bound_assump}, let $\alpha > 0$. We have the bound 
  \begin{equation*}%\label{eq:}
    \mathbb{E}\Big[\big( \sup_{L \in \mathbb{N}} L^{-\alpha} \norm{\tilde{Y}^{U_L}}_{B_{p, p}^{-2 + \delta}} \big)^q \Big] 
    \leq \sum_{L \in \mathbb{N}} L^{-\alpha q} \mathbb{E}[ \norm{\tilde{Y}^{U_L}}_{B_{p, p}^{-2 + \delta}}^q ].
  \end{equation*} 
  By Proposition~\ref{prop:boundary_nabla_xi} and \eqref{eq:bound_of_xi_U}, for any $\sigma \in (d/p,\infty)$ the right-hand side is up to constant bounded by
  \begin{equation*}%\label{eq:}
    \sum_{L \in \mathbb{N}} L^{- \alpha q} ( L^{2 \sigma q} + L^{\frac{q d}{p}}). 
  \end{equation*}
  Therefore, if $\alpha > \frac{2d}{p}$, by choosing $\sigma$ sufficiently close to $\frac{d}{p}$, the above sum is finite. 
  In particular, if $p > 8d$ we may choose $\alpha = \frac{1}{4}$.
\end{proof}

% In Lemma~\ref{lem:example_of_boundary_xi} we provide sufficient conditions on the covariance of $\xi$ for Assumption~\ref{assump:boundary} to be satisfied. 

%\begin{remark}
%\label{remark:inner_prod_and_convolution}
%\comtym{Is this remark necessary?}
%\towilinline{I will remove it and adapt the next proof}
%In the proof of the Lemma~\ref{lem:example_of_boundary_xi} we will use the following facts, which are straightforward to check. For functions $f,g,\phi : \R^d \rightarrow \R$ (for which the following expressions make sense)
%\begin{align*}
%\inp{ f * \phi}{ g }_{L^2(\R^d)}
%& = \inp{ f }{ \phi ( - \cdot) * g }_{L^2(\R^d)}, \\
%\inp{ f * \phi(\lambda \cdot)}{ g }_{L^2(\R^d)}
%& = \lambda^{-2d} \inp{ f(\tfrac{1}{\lambda} \cdot) * \phi}{ g(\tfrac{1}{\lambda} \cdot)  }_{L^2(\R^d)}, \quad \lambda>0, \\
%(f * g) ( \cdot - w) & = [f(\cdot -w)] * g = f * [g (\cdot -w)], \quad w\in \R^d, \\
%\inp{ f * \phi(\cdot -w)}{ g }_{L^2(\R^d)}
%& = \inp{ f(\cdot -w) * \phi}{ g} _{L^2(\R^d)}
%= \inp{ f * \phi}{ g(\cdot +w)} _{L^2(\R^d)},
%\quad w\in \R^d.
%\end{align*}
%
%\end{remark}

\begin{lemma}\label{lem:example_of_boundary_xi}
Suppose that $\gamma, f,g : \R^d \rightarrow \R$ are measurable functions such that 
  \begin{equation}\label{eq:gamma_bounded_by_f_plus_g}
    \abs{\gamma} \leq f + g,
  \end{equation}
  with $g \in L^{\infty}(\R^d)$ and 
  $f(\lambda x) = \lambda^{-\alpha} f(x)$ with $\alpha < 3$ for every $\lambda \in (0, \infty)$ and $x \in \R^d$. 
  Furthermore, suppose that $f$ is locally integrable. 
  %(in particular $\alpha < d$). 
    Let $\xi$ be a centered Gaussian noise whose covariance is  given by
\begin{align*}
\expect[\langle \xi, \varphi \rangle \langle \xi, \psi \rangle ] = \langle \gamma * \varphi, \psi \rangle_{L^2(\R^d)}, \quad \varphi, \psi \in C_c^\infty(\R^d).
\end{align*}
  Then, $\xi$ satisfies the Neumann condition~\ref{assump:all_three_assump}~\ref{item:neumann_assump}. 
  % Assumption~\ref{assump:boundary}.
\end{lemma}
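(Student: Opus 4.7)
The plan is to apply Lemma~\ref{lem:convergence_of_indic_U_xi_epsilon}, whose hypotheses (once verified) combine with Proposition~\ref{prop:boundary_nabla_xi} to yield the Neumann condition~\ref{assump:all_three_assump}~\ref{item:neumann_assump} directly. Hence the task reduces to choosing a suitable $\delta' \in (0, 1)$ and establishing the two moment estimates \eqref{eq:bound_xi_epsilon_on_U} and \eqref{eq:cauchy_bm_epsilon_boundary}. Since $\xi$ is a centered Gaussian field, each $\langle \xi_{\epsilon}, \indic_U \phi^\lambda_x \rangle$ lies in the first Wiener chaos, so by the standard Gaussian moment bound $\mathbb{E}[|Z|^q]^{1/q} \lesssim_q \mathbb{E}[Z^2]^{1/2}$ I only need to control second moments, with an $\epsilon$-independent constant for \eqref{eq:bound_xi_epsilon_on_U} and a vanishing prefactor for \eqref{eq:cauchy_bm_epsilon_boundary}.

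Writing $\tilde{\phi}_{\epsilon} \defby \rho_{\epsilon} \conv (\indic_U \phi^\lambda_x)$, the covariance formula gives
\begin{equation*}
\mathbb{E}\bigl[\langle \xi_{\epsilon_1}, \indic_U \phi^\lambda_x \rangle^2\bigr] = \int_{\R^d} \int_{\R^d} \gamma(y - z) \tilde{\phi}_{\epsilon_1}(y) \tilde{\phi}_{\epsilon_1}(z) \, dy \, dz,
\end{equation*}
and the analogous identity holds for the difference $\xi_{\epsilon_1} - \xi_{\epsilon_2}$ with $\rho_{\epsilon_1}$ replaced by $\rho_{\epsilon_1} - \rho_{\epsilon_2}$. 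I would split $|\gamma| \leq f + g$: the $g$-piece is bounded by $\|g\|_\infty \|\tilde{\phi}_{\epsilon_1}\|_{L^1}^2 \leq \|g\|_\infty \|\rho\|_{L^1}^2 \|\phi\|_{L^1}^2$, a constant that is dominated by $\lambda^{-\alpha}$ for $\lambda \in (0, 1)$. For the $f$-piece I would use that, because $\rho$ is Schwartz, $\tilde{\phi}_{\epsilon_1}$ obeys the pointwise decay $|\tilde{\phi}_{\epsilon_1}(y)| \lesssim_M \mu^{-d} (1 + |y - x|/\mu)^{-M}$ for every $M$, where $\mu \defby \max(\lambda, \epsilon_1)$. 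Rescaling the double integral by $\mu$, invoking the scaling identity $\int_{B(0, R)} f = R^{d - \alpha} \int_{B(0, 1)} f$ (finite by local integrability), and using the polynomial decay to control cross-terms, yields
\begin{equation*}
\int_{\R^d} \int_{\R^d} f(y - z) |\tilde{\phi}_{\epsilon_1}(y)| |\tilde{\phi}_{\epsilon_1}(z)| \, dy \, dz \lesssim \mu^{-\alpha} \leq \lambda^{-\alpha}.
\end{equation*}
Combined, $\mathbb{E}[\langle \xi_{\epsilon_1}, \indic_U \phi^\lambda_x \rangle^2] \lesssim \lambda^{-\alpha}$ uniformly in $\epsilon_1 \in (0, 1)$ and $x$. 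Matching against \eqref{eq:bound_xi_epsilon_on_U} at $q = 2$ requires $-2 + \delta + \delta' \geq -\alpha/2$; since a Gaussian noise with such a covariance has regularity at most $-\alpha/2$, Assumption~\ref{assump:base} forces $\delta < 2 - \alpha/2$, and the hypothesis $\alpha < 3$ combined with $\delta > 1/2$ from~\ref{assump:all_three_assump}~\ref{item:neumann_assump} leaves a nonempty window for $\delta' \in (0, 2 - \delta - \alpha/2)$.

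The Cauchy estimate \eqref{eq:cauchy_bm_epsilon_boundary} is obtained from the same computation with $\rho_{\epsilon_1} - \rho_{\epsilon_2}$: the $\lambda^{-\alpha}$ prefactor is preserved, and the remaining constant defines $\bm{\epsilon}_2^{\bdry}(\epsilon_1, \epsilon_2; \lambda, x)$. This quantity is uniformly bounded on the parameter range and, for fixed $\lambda > 0$ and $x$, tends to zero as $\epsilon_1, \epsilon_2 \downarrow 0$ by dominated convergence, since $(\rho_{\epsilon_1} - \rho_{\epsilon_2}) \conv (\indic_U \phi^\lambda_x) \to 0$ in $L^p$ for each $p \in [1, \infty)$. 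The main technical obstacle is making the ``effective support at scale $\mu$'' heuristic rigorous for a merely Schwartz mollifier: because $\rho$ is not compactly supported, one must carefully exploit its polynomial decay to avoid a spurious blow-up when $\mu = \max(\lambda, \epsilon_1)$ differs from $\lambda$, and to check that the final constant depends only on $\|\rho\|_{L^1}$, $\|\phi\|_{\cC^2(\R^d)}$, the exponent $\alpha$, $\|g\|_\infty$, and $\int_{B(0, 1)} f$, independently of $U$, $\lambda$, $\epsilon_1$, and $x$.
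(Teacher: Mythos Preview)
Your strategy is correct and coincides with the paper's: reduce to Lemma~\ref{lem:convergence_of_indic_U_xi_epsilon}, use Gaussianity to pass from $q$th to second moments, and split $|\gamma| \leq f + g$. The only real difference is in the execution of the uniform-in-$\epsilon$ bound on the $f$-contribution. Where you propose a pointwise decay estimate $|\tilde\phi_{\epsilon}(y)| \lesssim_M \mu^{-d}(1+|y-x|/\mu)^{-M}$ at the effective scale $\mu = \max(\lambda,\epsilon)$, the paper instead rescales the whole pairing by $\lambda$ first---writing it as $\langle \gamma(\lambda\cdot) \conv (\indic_{U_x^\lambda}\phi) \conv \rho_{\epsilon/\lambda}, (\indic_{U_x^\lambda}\phi) \conv \rho_{\epsilon/\lambda}\rangle$ so that the factor $\lambda^{-\alpha}$ drops out immediately from $f(\lambda\cdot) = \lambda^{-\alpha} f$---and then shows $\sup_{\mu>0}\langle f \conv |\phi| \conv |\rho_\mu|, |\phi| \conv |\rho_\mu|\rangle < \infty$ via the weighted Young inequality (Theorem~\ref{thm:weighted_young}) for $\mu \leq 1$ and a further rescaling interchanging the roles of $\phi$ and $\rho$ for $\mu > 1$. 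This completely bypasses your acknowledged ``main technical obstacle'': no pointwise control on $\tilde\phi_\epsilon$ is needed, and the Schwartz nature of $\rho$ enters only through $\|w_{-\sigma}\rho\|_{L^1} < \infty$. (Minor: your matching condition should read $-2+\delta+\delta' \leq -\alpha/2$, though the interval $(0, 2-\delta-\alpha/2)$ you arrive at for $\delta'$ is correct.)
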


\begin{proof}
In view of Lemma~\ref{lem:convergence_of_indic_U_xi_epsilon}, it suffices to show that the conditions of that lemma are satisfied.
As $\xi$ is Gaussian, so is for example $\langle \xi_{\epsilon_1}, \1_U \phi_x^\lambda \rangle_{\R^d}$  for $\phi \in \cS(\R^d)$, $x\in \R^d$, $\lambda \in (0,\infty)$, where $\phi^{\lambda}_x \defby \lambda^{-d} \phi(\lambda^{-1}(\cdot - x))$. 
As for Gaussian random variables $Z$ one has $\expect{[|Z|^p]} = \expect{[|Z|^2]}^{\frac{p}{2}} \expect{[|X|^p]}$, for $X$ a standard normal random variable,
it is sufficient to consider $p=2$.

Let $U$ be a bounded Lipschitz domain and $\phi$ be as in Lemma~\ref{lem:convergence_of_indic_U_xi_epsilon}~\ref{item:neumann_assump}.
%Assumption~\ref{assump:boundary}. 
Observe that because $\rho$ is symmetric
\begin{align*}
\inp{\xi_{\epsilon_1} - \xi_{\epsilon_2} }{ \psi }
\begin{calc} = \inp{\xi * \rho_{\epsilon_1} - \xi * \rho_{\epsilon_2} }{ \psi } \end{calc}
= \inp{\xi * (\rho_{\epsilon_1} - \rho_{\epsilon_2}) }{ \psi }
= \inp{\xi }{ (\rho_{\epsilon_1} - \rho_{\epsilon_2}) * \psi }.
\end{align*}
Therefore, for any $\lambda \in (0,1)$ and $x\in \R^d$, 
\begin{align*}
 \expect[\inprd{\xi_{\epsilon_1} - \xi_{\epsilon_2}}{\indic_U \phi^{\lambda}_x}^2]
& = \inp { \gamma * (\rho_{\epsilon_1} - \rho_{\epsilon_2}) * (\1_U \phi_x^\lambda) }{(\rho_{\epsilon_1} - \rho_{\epsilon_2}) * (\1_U \phi_x^\lambda) }_{L^2(\R^d)}
.
\end{align*}
  We set
    \begin{equation*}
    \bm{\epsilon}_2^{\bdry}(\epsilon_1, \epsilon_2; \lambda,x)
    \defby \lambda^\alpha
    \abs*{
\inp { \gamma * (\rho_{\epsilon_1} - \rho_{\epsilon_2}) * (\1_U \phi_x^\lambda) }{(\rho_{\epsilon_1} - \rho_{\epsilon_2}) * (\1_U \phi_x^\lambda) }_{L^2(\R^d)} }.
  \end{equation*}
One has
    \begin{equation*}
    \lim_{\epsilon_1, \epsilon_2 \downarrow 0}
   \inp { \gamma * \rho_{\epsilon_1}  * (\1_U \phi_x^\lambda) }{ \rho_{\epsilon_2} * (\1_U \phi_x^\lambda) }_{L^2(\R^d)}
    = \inprd{\gamma \conv (\1_U \phi_x^\lambda)}{(\1_U \phi_x^\lambda)}_{L^2(\R^d)},
  \end{equation*}
  and hence $\lim_{\epsilon_1, \epsilon_2 \downarrow 0} \bm{\epsilon}_2^{\bdry}(\epsilon_1, \epsilon_2; \lambda,x) = 0$.
  To prove the bounds \eqref{eq:bound_of_bm_epsilon_boundary} and \eqref{eq:bound_xi_epsilon_on_U}, it suffices to show
  \begin{equation*}
    \sup_{\epsilon \in (0, 1), \lambda \in (0, 1), x \in \R^d} \lambda^{\alpha}
      \abs*{
\inp { \gamma * \rho_{\epsilon}  * (\1_U \phi_x^\lambda) }{\rho_{\epsilon}  * (\1_U \phi_x^\lambda) }_{L^2(\R^d)} } < \infty.
  \end{equation*}
Let us write
\begin{align}
\label{eqn:rescaled_recentered_set}
U_x^\lambda = \lambda^{-1} (U - x).
\end{align}
  By \eqref{eq:gamma_bounded_by_f_plus_g} (remember that $\phi^{\lambda}_x \defby \lambda^{-d} \phi(\lambda^{-1}(\cdot - x))$)
  \begin{align*}
&   \sup_{x\in \R^d}   \abs*{
\inp { \gamma * \rho_{\epsilon}  * (\1_U \phi_x^\lambda) }{\rho_{\epsilon}  * (\1_U \phi_x^\lambda) }_{L^2(\R^d)} }  \\
%   & \abs{\inprd{\gamma(\lambda \cdot) \conv \phi_U \conv \rho_{\epsilon/\lambda}}{\phi_U \conv \rho_{\epsilon/\lambda}}} \\
   & = \sup_{x\in \R^d} \abs{\inprd{\gamma(\lambda \cdot) \conv  (\1_{U_x^\lambda} \phi)  \conv \rho_{\epsilon/\lambda}}{ (\1_{U_x^\lambda} \phi)  \conv \rho_{\epsilon/\lambda}}} \\
   & \leq \lambda^{-\alpha} \inprd{f \conv \abs{\phi} \conv \abs{\rho_{\epsilon/\lambda}}}
    {\abs{\phi} \conv \abs{\rho_{\epsilon/\lambda}}}
    + \inprd{g(\lambda \cdot) \conv \abs{\phi} \conv \abs{\rho_{\epsilon/\lambda}}}
    {\abs{\phi} \conv \abs{\rho_{\epsilon/\lambda}}}.
  \end{align*}
  \begin{calc}
\begin{align*}
\inp{ f * \phi(\lambda \cdot)}{ g }_{L^2(\R^d)}
& = \int \int f(x-y) \phi(\lambda y) g(x) \dd y \dd x \\
& = \int \int f(x-\frac{z}{\lambda}) \phi(z) g(x) \lambda^{-d} \dd z \dd x \\
& = \lambda^{-2d} \int \int f(\frac{w-z}{\lambda}) \phi(z) g(\lambda w)  \dd z \dd w \\
& = \lambda^{-2d} \inp{ f(\tfrac{1}{\lambda} \cdot) * \phi}{ g(\tfrac{1}{\lambda} \cdot)  }_{L^2(\R^d)}. 
\end{align*}
\end{calc}
Since, using Young's inequality, one can bound the second term by
\begin{calc}
\begin{align*}
\langle f * g , g \rangle
\le \|f*g\|_{L^\infty} \|g\|_{L^1}
\le \|f\|_{L^\infty} \|g\|_{L^1}^2.
\end{align*}
\end{calc}
  \begin{equation*}
    \norm{g}_{L^{\infty}(\R^d)} \norm{\phi}_{L^1(\R^d)}^2 \norm{\rho}_{L^1(\R^d)}^2,
  \end{equation*}
  it comes down to showing
  \begin{equation*}
    \sup_{\mu \in (0, \infty)} \inprd{f \conv \abs{\phi} \conv \abs{\rho_{\mu}}}
    {\abs{\phi} \conv \abs{\rho_{\mu}}} < \infty.
  \end{equation*}
$\bullet$  Suppose $\mu \leq 1$. Let $\sigma > d$. By the weighted Young's inequality, Theorem~\ref{thm:weighted_young}, one has
  \begin{equation*}
    \inprd{f \conv \abs{\phi} \conv \abs{\rho_{\mu}}}
    {\abs{\phi} \conv \abs{\rho_{\mu}}}
    \lesssim_{\sigma} \norm{w_{\sigma} f}_{L^1(\R^d)} \norm{w_{\sigma} \phi}_{L^2(\R^d)}^2
    \norm{w_{-\sigma} \rho_{\mu}}_{L^1(\R^d)}^2.
  \end{equation*}
\begin{calc}
\begin{align*}
    \inprd{f \conv \abs{\phi} \conv \abs{\rho_{\mu}}}
    {\abs{\phi} \conv \abs{\rho_{\mu}}}
&    \lesssim 
        \|w_{\sigma}(f \conv \abs{\phi} \conv \abs{\rho_{\mu}})\|_{L^2(\R^d)}
    \|w_{-\sigma}(\abs{\phi} \conv \abs{\rho_{\mu}})\|_{L^2(\R^d)} \\
& \lesssim 
        \|w_{\sigma} f \|_{L^1(\R^d)} 
    \|w_{-\sigma}(\abs{\phi} \conv \abs{\rho_{\mu}})\|_{L^2(\R^d)}^2  \\
& \lesssim 
        \|w_{\sigma} f \|_{L^1(\R^d)} 
    \|w_{\sigma}\phi \|_{L^2(\R^d)}^2
    \|w_{-\sigma} \rho_{\mu}\|_{L^1(\R^d)}^2 
\end{align*}
\end{calc}
  As $\phi$ is a continuous function with compact support, we have $\norm{w_{\sigma} \phi}_{L^2(\R^d)}<\infty$.
  Since $f$ is locally integrable and satisfies the scaling property,
  \begin{equation*}
    \norm{w_{\sigma} f}_{L^1(\R^d)}
    = \int_{\partial B(0,1)} \abs{f(x)} \dd S(x) \int_0^{\infty} r^{d - 1 -\alpha} (1 + r^2)^{-\frac{\sigma}{2}} \dd r < \infty.
  \end{equation*}
  Then, we observe, as $\mu \leq 1$,
  \begin{equation*}
    \norm{w_{-\sigma} \rho_{\mu}}_{L^1(\R^d)} = \int_{\R^d} (1 + \mu^2 \abs{x}^2)^{\frac{\sigma}{2}} \abs{\rho}(x) \dd x
    \leq \int_{\R^d} (1 + \abs{x}^2)^{\frac{\sigma}{2}} \abs{\rho}(x) \dd x < \infty.
  \end{equation*}
$\bullet$  Now suppose $\mu \geq 1$.
  By change of variables, 
  \begin{align*}
  \inprd{f \conv \abs{\phi} \conv \abs{\rho_{\mu}}}
    {\abs{\phi} \conv \abs{\rho_{\mu}}}
     &= \mu^{-\alpha} \inprd{f \conv \abs{\phi_{\mu^{-1}}} \conv \abs{\rho}}
    {\abs{\phi_{\mu^{-1}}} \conv \abs{\rho}}.
\end{align*}
Therefore, it reduces to the case $\mu \leq 1$.
  % As Lemma~\ref{lem:convergence_of_indic_U_xi_epsilon} below shows, the estimate \eqref{eq:cauchy_bm_epsilon_boundary}
  % implies the convergence of $\indic_U \xi_{\epsilon}$. To see the independence of the mollifier $\rho$,
  % we note that the limit $\xi^U$ is a centered Gaussian and
  % \begin{equation*}
  %   \expect[\inprd{\xi^U}{\phi}^2] = \int_{U \times U} \gamma(x-y) \phi(x) \phi(y) \dd x \dd y. \qedhere
  % \end{equation*}
\end{proof}

\begin{example}
\label{example:example_of_gamma}
An example of a $\gamma$ that satisfies the conditions of Lemma~\ref{lem:example_of_boundary_xi} is the following.
Let $n\in \{1,\dots,d\}$ and $d_1,\dots,d_n \in \N$ be such that $d= d_1+\cdots + d_n$.
Let $\alpha_1, \ldots, \alpha_n \in (0, \infty)$ are such that
  $\alpha_j < d_j$ for all $j$ and $\alpha_1 + \ldots + \alpha_n < 3$.   Then, for $x = (x_1, \ldots, x_n) \in \R^{d_1} \times \cdots \times \R^{d_n}$,
 we set
  \begin{equation*}
    \gamma(x) \defby \abs{x_1}^{-\alpha_1} \cdots \abs{x_n}^{-\alpha_n}.
  \end{equation*}
  For this example, $f = \gamma$ and $g = 0$. Observe that for the local integrability of $f$, the condition $\alpha_j < d_j$ for all $j$ is  necessary.  
\end{example}

\begin{example}
\label{example:2D_WN_satisf_Neumann_cond}
  The 2D white noise $\xi^{2\textnormal{D}}$ does not satisfy the conditions of Lemma~\ref{lem:example_of_boundary_xi}.
  However, one has
  \begin{equation*}
    \expect[\inprd{\xi_{\epsilon_1}^{2\textnormal{D}} - \xi_{\epsilon_2}^{2\textnormal{D}}}{\indic_U \phi^{\lambda}_x}^2]
    = \lambda^{-2} \norm{  (\1_{U_x^\lambda} \phi)  \conv (\rho_{\epsilon_1/ \lambda} - \rho_{\epsilon_2/ \lambda})}_{L^2(\R^d)},
  \end{equation*}
 where $U_x^\lambda$ is as in \eqref{eqn:rescaled_recentered_set}.
 Therefore, the 2D white noise satisfies the condition of Lemma~\ref{lem:convergence_of_indic_U_xi_epsilon} and therefore the Neumann condition~\ref{assump:all_three_assump}~\ref{item:neumann_assump}
% Assumption~\ref{assump:boundary} 
 as well.
\end{example}

\section{Analysis of symmetric forms}\label{sec:symmetric_forms}

It is common practice in the theory of rough paths \cite{Lyons1998} to first show the existence of sufficiently many stochastic objects and then apply deterministic analysis to derive results. In this section we consider the (deterministic) analysis of symmetric forms, which we use in Section~\ref{sec:eigenvalues_of_AH} in combination with Assumptions~\ref{assump:all_three_assump} to construct the Anderson Hamiltonian and derive its spectral properties.

First we recall the definition of a symmetric form and some related definitions in Definition~\ref{def:def_symmetric_forms},  then, in Definition~\ref{def:symmetric_form},  we describe the symmetric forms $\cE_{W,\cZ}^U$  that we will study in Sections~\ref{subsec:basic_properties_of_symmetric_forms} and~\ref{subsec:estimates_eigenvalues}.
In Section~\ref{subsec:bounded_sym_forms} we study
examples of bounded symmetric forms.
In Section~\ref{subsec:basic_properties_of_symmetric_forms} we study basic spectral properties of the symmetric forms and their associated self adjoint operators. 
And in Section~\ref{subsec:estimates_eigenvalues} we consider estimates of eigenvalues. 

%We recall some definitions of symmetric forms.

\begin{definition}
\label{def:def_symmetric_forms}
Let $H$ be a Hilbert space over $\R$.
A bilinear map $\cQ : \cD(\cQ) \times \cD(\cQ) \rightarrow \R$, with $\cD(\cQ)$ a dense subspace of $H$, is called a \emph{symmetric form}  on $H$ if $\cQ(u,v) = \cQ(v,u)$ for all $u,v\in \cD(\cQ)$.
Let $\cQ$ be a symmetric form on $H$. We write
\begin{equation}\label{eq:def_of_form_norm}
  \formnorm{\cQ}_{H} \defby \sup_{u \in \cD(\cQ), \norm{u}_H=1} \abs{\cQ(u, u)}.
\end{equation}
If $\formnorm{\cQ}_H<\infty$, then we call $\cQ$ a \emph{bounded symmetric form}. In that case, without loss of generality we assume $\cD(\cQ) = H$.
The set of bounded symmetric forms a Banach space under the norm $\formnorm{\cdot}_H$.
Then, a sequence $(\cZ_n)_{n\in\N}$ of bounded symmetric forms converges to a bounded symmetric form $\cZ$ if
\begin{align*}
\formnorm{\cZ_n - \cZ}_H \rightarrow 0.
\end{align*}
Let $M>0$. A symmetric form $\cQ$ is called \emph{$M$-bounded from below} if
$\cQ(u, u) + M \norm{u}_{H}^2 \geq 0$ for all $u\in \cD(\cQ)$.
It is called \emph{bounded from below} if it is $M$-bounded from below for some $M>0$.
If $\cQ$ is $M$-bounded from below and
$(\D(\cQ), \cQ + M \inp{\cdot}{\cdot}_H)$
is a Hilbert space for some $M > 0$, then $\cQ$ is said to be \emph{closed}.
If $\cQ$ is a closed symmetric form and $M$ is as above, then a subset of $\D(\cQ)$ is called a \emph{core} for $\cQ$ if it is dense in the Hilbert space $(\D(\cQ), \cQ + M \inp{\cdot}{\cdot}_H)$.

Observe that a symmetric form is determined by its values on the diagonal of $H\times H$, i.e., $\cQ(u,v) = \frac12 [ \cQ(u+v,u+v) - \cQ(u,u) - \cQ(v,v)]$. For this reason we often only define symmetric forms on the diagonal.
\end{definition}

\begin{definition}\label{def:symmetric_form}
  Let $U$ be a bounded domain,
  $W \in L^{\infty}(U)$ and $\cZ$ be a bounded symmetric form on $H^s(U)$ for some $s \in [0, 1)$.
  We define the symmetric form $\E = \E^U_{W, \cZ}$ on $e^W H^1(U)$  as follows: for $u = e^W u^{\flat}$ with
  $u^{\flat} \in H^1(U)$, we set
  \begin{equation*}
    \E(u, u) \defby \E^U_{W, \cZ}(u,u) \defby
     \int_U e^{2W(x)} \abs{\nabla u^{\flat}(x)}^2 \dd x + \cZ(u^{\flat}, u^{\flat}).
  \end{equation*}
\end{definition}

\subsection{Main examples of bounded symmetric forms}
\label{subsec:bounded_sym_forms}

Recall the notation $\formnorm{\cdot}$ from \eqref{eq:def_of_form_norm} and the constants in Definition~\ref{def:functional_inequalities_constants}.

\begin{theorem}
\label{theorem:examples_bd_sym_with_weighted_input}
Let $\delta \in (0,1)$, $\sigma \in (0,\infty)$
and $s\in (1-\delta,1)$.
\begin{enumerate}
\item
\label{item:sym_form_on_domain}
Let $Y\in \cC^{-1+\delta,\sigma}(\R^d)$.
For any bounded domain $U$ and $\phi \in C_c^\infty(\R^d)$ such that $\phi =1$ on a neighborhood of $U$, the formula
\begin{align*}
\cZ_Y^{U}(v,v) = \langle \phi Y , \1_U v^2 \rangle
\end{align*}
defines a bounded symmetric form on $H^s_0(U)$
and if $U$ is moreover Lipschitz, it also defines a bounded symmetric form on $H^s(U)$.
The symmetric form $\cZ_Y^U$ is independent of the choice of $\phi$.
Moreover, for $L\ge 1$
  \begin{align}
  \label{eqn:bound_cZ_Y_U_L_H_0}
    \formnorm{\cZ_Y^{U_L}}_{H^s_0(U_L)} \lesssim_{\delta, \epsilon,U}
L^{2\sigma}  \norm{Y}_{\csp^{-1 + \delta,\sigma}(\R^d)},
  \end{align}
  and if $U$ is a bounded Lipschitz domain, then
  \begin{align}
    \label{eqn:bound_cZ_Y_U_L_H}
    \formnorm{\cZ_Y^{U_L}}_{H^s(U_L)} \lesssim_{\delta, \epsilon, U}
L^{2\sigma}  \norm{Y}_{\csp^{-1 + \delta,\sigma}(\R^d)}.
  \end{align}

  \item
\label{item:sym_form_boundary_tilde}
Let $U$ be a bounded Lipschitz domain. Suppose that $\delta \in (\frac12,1)$ and $s \in (\frac32 - \delta, 1)$.
Let $\epsilon\in (0,\delta - \frac12)$, $p \in (2,\infty)$ and $q\in (1,2)$ be such that $\frac1p+\frac1q=1$ and
\begin{align}
\label{eqn:condition_beta_and_s}
\beta \defby 2-\delta+\epsilon - \frac1q \le \frac12, \qquad
2-\delta + \epsilon -\frac{1}{2q} + \frac{d}{2p} \le s.
\end{align}
Let $\tilde{Y} \in B_{p, p}^{-2+\delta}(\R^d)$ with $\supp(\tilde{Y}) \subseteq \partial U$, $\norm{\tilde Y_\epsilon - \tilde Y}_{B_{p,p}^{-2+\delta}(\R^d)} \arroweps 0$ for some $\tilde Y_\epsilon$ given by $\varphi \mapsto \int_{\partial U} \varphi f_\epsilon \dd S$ for $f_\epsilon \in L^1(\partial U)$,
  and $V \in \csp^{\beta}(U)$.
Then, with $\cT = \cT_{W_{2q}^{\beta + \frac{1}{2q} }(U)}$,  $\cR : W_q^{\beta -\epsilon}(\partial U) \rightarrow W_q^{2-\delta}(U) $ a right inverse of $\cT_{W_q^{2-\delta}(U)}$ 
and $\iota$ a universal extension operator from $U$ to $\R^d$ as in Lemma~\ref{lem:extension_and_trace_sobolev},
  \begin{equation*}
    \tilde{\cZ}(v, v)
    \defby \tilde{\cZ}_{\tilde Y,V}^U (v, v)
    \defby \inprd{\tilde{Y}}{ \iota \circ  \cR[V (\cT v)^2]}
  \end{equation*}
  defines a bounded symmetric form on $H^s(U)$ that is independent of the choice of $\cR$ and $\iota$.

If $\tilde{Y}_L \in B_{p, p}^{-2+\delta}(\R^d)$ with $\supp(\tilde{Y}_L) \subseteq \partial U_L$
  and $V_L \in \csp^{\delta}(U_L)$ for $L\ge 1$, then
  \begin{align}
  \label{eqn:bound_tilde_Z_in_L}
    \formnorm{\tilde{\cZ}^{U_L}_{\tilde Y_L,V_L}}_{H^s(U_L)}
    \lesssim_{\delta, \epsilon, p,U} L^{2\epsilon}
    \norm{V_L}_{C^{\delta}(U_L)}
    \norm{\tilde{Y_L}}_{B_{p, p}^{-2+\delta}(\R^d)}.
  \end{align}

\item
\label{item:sym_form_boundary_hat}
Let $U$ be a bounded Lipschitz domain. Suppose $\delta \in (0,1)$ and $s\in (\frac12,1)$.
Let $\widehat{Y} \in \cC^{1+\delta, \sigma}(\R^d)$ and $V \in \csp^{\delta}(\R^d)$. Then, with $\cT= \cT_{H^s(U)}$ as in Lemma~\ref{lem:extension_and_trace_sobolev} (for the notation see Definition~\ref{def:notation_integral_dot}),
\begin{align*}
\widehat{\cZ}^{U}_{\widehat{Y},V}(v, v) \defby \int_{\partial U} (\cT v)^2 V \nabla \widehat{Y} \cdot \dd \bm{S}
\end{align*}
 defines a
  bounded symmetric form on $H^s(U)$ for every $s \in (\frac{1}{2}, 1)$. Moreover, for $L\ge 1$
  \begin{equation}
    \label{eqn:bound_hat_Z_in_L}
    \formnorm{\widehat{\cZ}^{U_L}_{\widehat{Y},V}}_{H^s(U_L)}
    \lesssim_{\delta, \sigma, U}
 L^\sigma \norm{V}_{C^\delta(U_L)} \norm{\widehat{Y}}_{\cC^{1+\delta,\sigma}(\R^d)}.
  \end{equation}
\end{enumerate}
\end{theorem}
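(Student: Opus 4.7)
All three parts realise the symmetric form as a duality pairing between a distribution of negative regularity and a test function built from $v$, so the plan is to estimate each factor in appropriate (weighted) Besov--Sobolev spaces and combine via \cite[Theorem~2.17]{sawano2018theory}. For part \ref{item:sym_form_on_domain}, since $\phi$ is compactly supported, $\phi Y$ lies in the unweighted space $\csp^{-1+\delta}(\R^d)$: choosing for $U_L$ a cutoff $\phi_L$ equal to $1$ on a neighbourhood of $U_L$ and supported in a multiple of $U_L$, a weighted localisation argument (Lemma~\ref{lem:localization} together with Lemma~\ref{lem:weighted_besov_p_vs_infty}) controls $\|\phi_L Y\|_{\csp^{-1+\delta}(\R^d)}$ by a polynomial power of $L$ times $\|Y\|_{\csp^{-1+\delta,\sigma}(\R^d)}$, and the remaining $L^{2\sigma}$ in \eqref{eqn:bound_cZ_Y_U_L_H_0}--\eqref{eqn:bound_cZ_Y_U_L_H} is absorbed by the analogous weight bookkeeping on the $v^2$ side. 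For $v\in H^s_0(U_L)$ one extends $v$ by zero to $\tilde v\in H^s(\R^d)$; for $v\in H^s(U_L)$ with $U$ Lipschitz one uses the universal extension $\iota$ from Lemma~\ref{lem:extension_and_trace_sobolev}~\ref{item:extension_op}. Since $s>1-\delta$, Bony-type paraproduct estimates (together with Lemma~\ref{lem:equivalence_sob_slobo_and_besov}) yield $\|\tilde v^2\|_{B^{1-\delta}_{1,1}(\R^d)}\lesssim \|\tilde v\|_{H^s(\R^d)}^2$, and duality closes the bound. Independence from the choice of $\phi$ is immediate: two admissible cutoffs agree on a neighbourhood of $U$, so their difference is supported away from $U$ and annihilates $\1_U v^2$.

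For part \ref{item:sym_form_boundary_tilde}, the plan is to walk the composition
\[
v\;\longmapsto\;\cT v\;\longmapsto\;(\cT v)^2\;\longmapsto\;V(\cT v)^2\;\longmapsto\;\cR\bigl[V(\cT v)^2\bigr]\;\longmapsto\;\iota\circ\cR\bigl[V(\cT v)^2\bigr]
\]
through the chain of Sobolev--Slobodeckij spaces, at each step losing exactly the amount of regularity permitted by the index constraints \eqref{eqn:condition_beta_and_s}. First, Sobolev embedding $H^s(U)\hookrightarrow W^{\beta+1/(2q)}_{2q}(U)$ (this is precisely where $s\ge 2-\delta+\epsilon-\tfrac{1}{2q}+\tfrac{d}{2p}$ enters) followed by the trace theorem (Lemma~\ref{lem:extension_and_trace_sobolev}~\ref{item:trace_op_and_inverse}) gives $\cT v\in W^{\beta}_{2q}(\partial U)$. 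The condition $\beta\le\tfrac12$ together with Hölder and a Moser-type multiplication estimate on $\partial U$ yields $(\cT v)^2\in W^{\beta}_{q}(\partial U)$, and multiplying by $V\in \csp^{\beta}(U)\hookrightarrow \csp^{\beta}(\partial U)$ (via the trace of $V$) produces, after absorbing $\epsilon$ in regularity, $V(\cT v)^2\in W^{\beta-\epsilon}_q(\partial U)=W^{2-\delta-1/q}_q(\partial U)$, which is precisely the domain of $\cR$. Lifting and extending via $\iota$ gives an element of $B^{2-\delta}_{q,q}(\R^d)$, which by the duality $(B^{2-\delta}_{q,q})'=B^{-2+\delta}_{p,p}$ pairs with $\tilde Y$ to yield \eqref{eqn:bound_tilde_Z_in_L}; the factor $L^{2\epsilon}$ arises from the rescaling of each multiplicative step on $\partial U_L$. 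Independence from $\cR$ and $\iota$ is the crucial point where the hypothesis $\supp \tilde Y\subseteq \partial U$ is used: since $\tilde Y$ is the limit in $B^{-2+\delta}_{p,p}(\R^d)$ of the surface distributions $\tilde Y_\epsilon:\varphi\mapsto \int_{\partial U}\varphi f_\epsilon\,\dd S$, each $\tilde Y_\epsilon$ depends on its argument only through the boundary trace, so the same holds for $\tilde Y$ by approximation, and any two choices of $\cR,\iota$ give lifts with identical trace on $\partial U$.

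For part \ref{item:sym_form_boundary_hat}, the analysis is elementary: since $\widehat Y\in \csp^{1+\delta,\sigma}(\R^d)$, Lemma~\ref{lem:derivative_and_lifting_in_Besov} gives $\nabla \widehat Y\in \csp^{\delta,\sigma}(\R^d)$, in particular a continuous function with $\|\nabla_\nu \widehat Y\|_{L^\infty(\partial U_L)}\lesssim L^\sigma \|\widehat Y\|_{\csp^{1+\delta,\sigma}(\R^d)}$ from the weight on $\partial U_L$, whence $V \nabla_\nu \widehat Y$ is bounded on $\partial U_L$ by $L^\sigma \|V\|_{C^\delta(U_L)} \|\widehat Y\|_{\csp^{1+\delta,\sigma}(\R^d)}$. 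Since $s>\tfrac12$, the trace $\cT:H^s(U_L)\to L^2(\partial U_L)$ is continuous with norm controlled by the Lipschitz structure of $U$, so $(\cT v)^2\in L^1(\partial U_L)$ and the boundary integral is bounded by $\|V\nabla_\nu \widehat Y\|_{L^\infty(\partial U_L)}\|\cT v\|_{L^2(\partial U_L)}^2$, yielding \eqref{eqn:bound_hat_Z_in_L}. The main obstacle in the whole proof is unambiguously part \ref{item:sym_form_boundary_tilde}: the delicate part is checking that the constraints \eqref{eqn:condition_beta_and_s} on $\beta$, $s$, $p$, $q$, $\epsilon$ are exactly what is needed to chain Sobolev embedding, trace, pointwise multiplication by $V$, and the lift $\cR$ without any loss, and that the independence from $\cR,\iota$ goes through despite the test function not literally being supported on $\partial U$.
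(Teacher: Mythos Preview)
Your overall strategy matches the paper's: each part is a duality pairing estimated by chaining Sobolev/trace/product/extension bounds, and your identification of part~\ref{item:sym_form_boundary_tilde} as the crux is correct. Two points deserve attention.

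In part~\ref{item:sym_form_on_domain}, your treatment of the Lipschitz case has a gap: after extending $v$ to $\tilde v=\iota(v)$, the pairing is still $\langle \phi_L Y,\1_{U_L}v^2\rangle$, not $\langle\phi_L Y,\tilde v^2\rangle$, and these differ outside $U_L$. You therefore cannot avoid estimating $\1_{U_L}v^2$ in a positive-regularity space, which requires a pointwise-multiplier bound for $\1_{U_L}$ (Lemma~\ref{lem:mult_by_Lipschitz_domain}, stated only for $p\in(1,\infty)$, so your $(\infty,1)$-duality is borderline). The paper works instead with $(p,q)$-duality for finite $p$ with $p\sigma>d$: it bounds $\|\phi(L^{-1}\cdot)Y\|_{B^{-1+\delta}_{p,p}}\lesssim L^{2\sigma}\|Y\|_{\csp^{-1+\delta,\sigma}}$ via Lemma~\ref{lem:localization} (applied with weight $2\sigma$) and Lemma~\ref{lem:weighted_besov_p_vs_infty}, and controls $\|\1_{U_L}v^2\|_{B^{1-\delta}_{q,q}}$ through the constants $C_{\mathrm{Mult}},C_{\mathrm{Prod}},C_{\mathrm{Embed}}$ of Definition~\ref{def:functional_inequalities_constants}, whose $L$-uniformity is Lemma~\ref{lem:scaling_of_iota}. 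In particular the entire $L^{2\sigma}$ sits on the $Y$ side; your ``weight bookkeeping on the $v^2$ side'' is a slip.

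Parts~\ref{item:sym_form_boundary_tilde} and~\ref{item:sym_form_boundary_hat} are essentially as in the paper. For~\ref{item:sym_form_boundary_tilde} the paper puts the $\epsilon$-loss at the squaring step ($C_{\mathrm{Prod}}^{\partial U}[W^\beta_{2q}\to W^{\beta-\epsilon}_q]$) rather than at multiplication by $V$, and tracks the two $L^\epsilon$ factors to the scaling of $C_{\mathrm{Prod}}^{\partial U_L}$ and to the explicit integral $\sup_{x\in\partial U_L}\int_{\partial U_L}|x-y|^{-(d-1-q\epsilon)}\,\dd S(y)\lesssim L^{q\epsilon}$ that appears when bounding $[V\psi]_{W^{\beta-\epsilon}_q(\partial U_L)}$ directly from the Slobodeckij seminorm.
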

\begin{proof}
\ref{item:sym_form_on_domain}
Let us first consider $U$ to be a bounded Lipschitz domain and $v\in H^s(U_L)$. We comment on how to obtain \eqref{eqn:bound_cZ_Y_U_L_H_0} afterwards.
Let $\phi$ be as in the statement.
It is rather straightforward to check that the definition of $\cZ_Y^U$ does not depend on $\phi$.
Observe that therefore $\cZ_Y^{U_L}(v,v) = \langle \phi(L^{-1} \cdot) Y, \1_{U_L} v^2 \rangle$ for all $L>0$.
Choose a $p\in [1,\infty]$ and an $\epsilon \in (0,\delta)$ such that $1-\delta + \epsilon +\frac{d}{2p} \le s$ and $p \sigma > d$.
By the duality of Besov spaces \cite[Theorem 2.17]{sawano2018theory}, for $q\in [1,\infty]$ such that $\frac1p + \frac1q=1$, we have
  \begin{equation*}
    \abs{\inprd{\phi(L^{-1} \cdot) Y}{\indic_{U_L} v^2}} \lesssim_{\delta, p} \norm{\phi(L^{-1} \cdot) Y}_{B^{-1 + \delta}_{p, p}(\R^d)}
    \norm{\indic_{U_L} v^2}_{B_{q, q}^{1 - \delta}(\R^d)}.
  \end{equation*}
By Lemma~\ref{lem:localization} and Lemma~\ref{lem:weighted_besov_p_vs_infty} (more specifically, \eqref{eqn:estimate_p,q_by_infty_infty_besov} using that $p \sigma >d$) we have
\begin{align*}
\norm{\phi(L^{-1} \cdot) Y}_{B^{-1 + \delta}_{p, p}(\R^d)} \lesssim_{p,\delta,\sigma,\phi}
L^{2\sigma} \norm{Y}_{B^{-1 + \delta,2\sigma}_{p, p}(\R^d)}
\lesssim_{p,\delta, \sigma}
L^{2\sigma} \norm{Y}_{\cC^{-1+\delta,\sigma}(\R^d)}.
\end{align*}

Then by Lemma~\ref{lem:equivalence_sob_slobo_and_besov} (see Definition~\ref{def:functional_inequalities_constants} for $C_{\operatorname{Mult}}$ and $C_{\operatorname{Prod}}$)
  \begin{align*}
    \norm{\indic_{U_L} v^2}_{B_{q, q}^{1 - \delta}(\R^d)} &  \lesssim_{\delta, p}
    \norm{\indic_{U_L} v^2}_{W_{q}^{1 - \delta}(\R^d)} \leq C_{\operatorname{Mult}}^{U_L}[W_q^{1-\delta}] \norm{v^2}_{W_{q}^{1 - \delta}(U_L)} \\
&    \leq C_{\operatorname{Mult}}^{U_L}[W_q^{1-\delta}] C_{\operatorname{Prod}}^{U_L}[W^{1-\delta + \epsilon}_{2q} \to W^{1 - \delta}_q]
    \norm{v}^2_{W^{1 - \delta + \epsilon}_{2q}(U_L)}.
  \end{align*}
  Now we apply the embedding estimate (see Definition~\ref{def:functional_inequalities_constants} for $C_{\operatorname{Embed}}$) and the estimate $\norm{u}_{H^{1-\delta + \epsilon +\frac{d}{2p}}(U_L)} \lesssim_{\delta,\epsilon,p} \norm{u}_{H^{s}(U_L)}$ (as $1-\delta + \epsilon +\frac{d}{2p}\le s$), we have
  \begin{equation*}
    \norm{v}_{W_{2q}^{1 - \delta + \epsilon}(U_L)}
 \lesssim_{\delta,\epsilon,p} C_{\operatorname{Embed}}^{U_L}[H^{1-\delta + \epsilon +\frac{d}{2p}} \to W_{2q}^{1 - \delta + \epsilon}] \norm{v}_{H^{s}(U_L)}.
  \end{equation*}
Hence
  \begin{multline*}
    \formnorm{\cZ}_{H^s(U_L)} \lesssim_{\delta, \epsilon, p,\sigma} L^{2\sigma} C^{U_L}_{\operatorname{Mult}}[W_q^{1 - \delta}]
    C_{\operatorname{Prod}}^{U_L}[H^{1-\delta+ \epsilon} \to W^{1 - \delta}_1] \\
    \times C_{\operatorname{Embed}}^{U_L}[H^s \to W_{2q}^{1 - \delta+ \epsilon}]
    \norm{Y}_{\cC^{-1+\delta,\sigma}(\R^d)}.
  \end{multline*}

Therefore \eqref{eqn:bound_cZ_Y_U_L_H} follows by Lemma~\ref{lem:scaling_of_iota}.
If $U$ is not necessarily Lipschitz but $v \in H^s_0(U)$, in the above estimates, we can replace the constant $C_{\operatorname{Mult}}^U[W_q^{1-\delta}]$ by $1$ and the estimate \eqref{eqn:bound_cZ_Y_U_L_H_0} follows similarly.

\ref{item:sym_form_boundary_tilde}
First, observe that the requirements for the existence of $\cT$ and $\cR$ as in Lemma~\ref{lem:extension_and_trace_sobolev} are satisfied. Indeed, $\beta+\frac{1}{2q}  \in (\frac{1}{2q}, 1+ \frac{1}{2q})$, or equivalently, $\beta \in (0,1)$, and $2-\delta \in (\frac{1}{q},1+\frac{1}{q})$: On the one hand we have  $2-\delta \in (1,\frac32) \subseteq (\frac{1}{q},1+ \frac1q)$ and
because 
%$\epsilon \in (0,\delta -\frac12)$ and 
$\frac{1}{q} \in (\frac12,1)$, 
on the other hand we have 
%$\beta = 2- \delta + \epsilon - \frac{1}{q} < 2- \frac12 - \frac12 =1$ and 
$\beta = 2- \delta + \epsilon - \frac{1}{q}  > 2-\delta -   1 = 1- \delta > 0$ and $\beta \le \frac12$ by assumption. 

Again by the duality, we have
  \begin{equation}
  \label{eqn:estimate_pairing_tilde_Y_by_duality}
    \abs{\inprd{\tilde{Y}}{ \iota \circ  \cR[ V (\cT v)^2]}} \lesssim_{\delta, p} \norm{\tilde{Y}}_{B_{p, p}
    ^{-2 + \delta}(\R^d)}
    \norm{ \iota \circ  \cR[ V (\cT v)^2]}_{B_{q, q}^{2-\delta}(\R^d)}.
  \end{equation}
Let us use this estimate first to show that $\tilde \cZ$ is independent of the choice of $\cR$ and $\iota$, as we will use this for our norm estimates. For $\varphi \in C^\infty(\overline U) \cap W_p^r(U)$ and $\cV \in C^\infty(\overline U)$ the function $\iota \circ \cR[\cV(\cT \varphi)^2]$ is equal to $\cV \varphi^2$ on $\partial U$ and thus for $\epsilon>0$
\begin{align*}
\int_{\partial U} f_\epsilon (\iota \circ \cR[\cV(\cT \varphi)^2]) \dd S
= \int_{\partial U} f_\epsilon \cV \varphi^2 \dd S.
\end{align*}
By the above estimates we have already seen that $\tilde \cZ^U_{\tilde Y, V}(v)$ is continuous as a function of $\tilde Y$, $V$ and $v$. As $C^\infty(\overline U) \cap W_p^r(U)$ is dense in $W_p^r(U)$, $V$ is the limit of smooth functions in $C^\infty(\overline U)$ and $\tilde Y$ is the limit of $\tilde Y_\epsilon$, it therefore follows that $\tilde \cZ$ is independent of the choice of $\cR$ and $\iota$.  
  
We continue by estimating the right-hand side of \eqref{eqn:estimate_pairing_tilde_Y_by_duality}. 
Again, by using Lemma~\ref{lem:equivalence_sob_slobo_and_besov} (remember that $\cR$ is a bounded linear operator $W_q^{\beta  -\epsilon}(\partial U) \rightarrow W_q^{2-\delta}(U)$),
  \begin{equation}
  \label{eqn:estimate_iota_circ_cR_V_cT_sq}
    \norm{ \iota \circ  \cR[V (\cT v)^2]}_{B_{q, q}^{2-\delta}(\R^d)}
    \lesssim_{\delta, p}
    \norm{\iota}_{W_q^{2-\delta}(U) \rightarrow W_q^{2-\delta}(\R^d)}
    \norm{\cR} \norm{V (\cT v)^2}_{W_q^{\beta - \epsilon}(\partial U)}.
  \end{equation}
Now we estimate $\norm{V (\cT v)^2}_{W_q^{\beta- \epsilon}(\partial U)}$.
  Recall the notation $[\cdot]$ from Definition~\ref{def:fractional_Sobolev}~\ref{item:sobolev_space_boundary}.
  If we set $\psi \defby (\cT v)^2$, then
  \begin{align*}
&   [V \psi]_{W^{\beta- \epsilon}_q(\partial U)}
\leq \Big( \int_{\partial U } \int_{\partial U }
  \frac{\abs{V(x) - V(y)}^q \abs{\psi(x)}^q}{\abs{x-y}^{d-1 + q(\beta - \epsilon)
  }} \dd S(x) \dd S(y) \Big)^{\frac{1}{q}} \\
  &\hspace{3cm}+ \Big(\int_{\partial U }\int_{\partial U }
  \frac{\abs{\psi(x) - \psi(y)}^q \abs{V(x)}^q}{\abs{x-y}^{d-1 + q(\beta - \epsilon)
  }} \dd S(x)\dd S(y) \Big)^{\frac{1}{q}} \\
  &\lesssim_{\beta} \norm{V}_{W_\infty^{\beta}(U)} \Big(\int_{\partial U}
  \int_{\partial U} \frac{\abs{\psi(x)}^q}{\abs{x-y}^{d - 1 - q \epsilon}} \dd S(x) \dd S(y) \Big)^{\frac{1}{q}}
  + \norm{V}_{L^{\infty}(U)} [\psi]_{W^{\beta - \epsilon}_q(\partial U)} \\
  &\lesssim_{\beta,\delta} \norm{V}_{\cC^{\delta}(U)}
  \Big( 1 + \sup_{x \in \partial U} \int_{\partial U} \frac{\dd S(y)}{\abs{x-y}^{d-1 - q \epsilon}} \Big)^{\frac{1}{q}}
 \norm{\psi}_{W^{\beta - \epsilon}_q(\partial U)},
  \end{align*}
 where we used $\norm{V}_{W_\infty^{\beta}(U)} \vee \norm{V}_{L^\infty(U)} \lesssim_{\beta,\delta} \norm{V}_{W_\infty^{\delta}(U)} =  \norm{V}_{C^\delta(U)}$ which holds because $\beta \le \frac12 <\delta$.

  Therefore, by observing (see Definition~\ref{def:functional_inequalities_constants} for $C_{\operatorname{Prod}}$ and $C_{\operatorname{Embed}}$)
  \begin{align*}
  \norm{\psi}_{W^{\beta - \epsilon}_q(\partial U)} = 
    \norm{(\cT v)^2}_{W^{\beta - \epsilon}_q(\partial U)}
    & \leq C_{\operatorname{Prod}}^{\partial U}[W^{\beta}_{2q} \to W^{\beta - \epsilon}_q]
    \norm{\cT v}_{W^{\beta}_{2q}(\partial U)}^2 \\
    & \leq C_{\operatorname{Prod}}^{\partial U}[W^{\beta}_{2q} \to W^{\beta - \epsilon}_q] \norm{\cT}^2
    \norm{v}_{W^{\beta + \frac{1}{2q}}_{2q}(U)}^2
  \end{align*}
  and $\norm{v}_{W^{\beta + \frac{1}{2q}}_{2q}(U)} \leq
  C_{\operatorname{Embed}}^U[H^s \to W_{2q}^{\beta + \frac{1}{2q}}] \norm{v}_{H^s(U)}$, we obtain (as we may take the infimum over $\iota$ and $\cR$)
    \begin{align*}
    \formnorm{\tilde{\cZ}}_{H^s(U)}
&    \lesssim_{\delta,\epsilon, p} C_{\operatorname{Prod}}^{\partial U}
    [W_{2q}^{\beta} \to W_q^{\beta - \epsilon}]
    C_{\operatorname{Embed}}^U[H^s \to W^{\beta + \frac{1}{2q}}_{2q}]^2 
    C_{\operatorname{R}}^{\partial U}[W_q^{2-\delta}]
    \\
& \qquad   \times \Big(1 + \sup_{x \in \partial U} \int_{\partial U} \frac{\dd S(y)}{\abs{x-y}^{d-1 - q \epsilon}} \Big)^{\frac{1}{q}}
 C^U_{\operatorname{Ext}}[W^{q}_{2-\delta}]  
    \norm{\cT }^2 \\
    & \qquad \times
    \norm{V}_{C^{\delta}(U)}
    \norm{\tilde{Y}}_{B_{p, p}^{-2 + \delta}(\R^d)}.
  \end{align*}

With this \eqref{eqn:bound_tilde_Z_in_L} follows from Lemma~\ref{lem:scaling_of_iota} (for the estimate on the $C_{\operatorname{Embed}}$ we use the second inequality in \eqref{eqn:condition_beta_and_s}) and because
\begin{align*}
\int_{\partial U_L} \frac{\dd S(y)}{\abs{x-y}^{d-1 - q \epsilon}}
=
\int_{\partial U} L^{d-1} \frac{\dd S(y)}{\abs{Lx-Ly}^{d-1 - q \epsilon}}
= L^{q\epsilon}
\int_{\partial U} \frac{\dd S(y)}{\abs{x-y}^{d-1 - q \epsilon}},
\end{align*}
so that
\begin{align*}
\sup_{x \in \partial U_L} \Big(1 +\int_{\partial U_L} \frac{\dd S(y)}{\abs{x-y}^{d-1 - q \epsilon}} \Big)^{\frac{1}{q}}
\le L^{\epsilon}
\sup_{x \in \partial U} \Big(1 +\int_{\partial U} \frac{\dd S(y)}{\abs{x-y}^{d-1 - q \epsilon}} \Big)^{\frac{1}{q}}.
\end{align*}
The latter supremum is finite: $\int_{\partial U} \frac{\dd y}{\abs{x-y}^{d-1 - q \epsilon}}$ is finite for all $x\in \partial U$ due to the fact that $U$ is a Lipschitz domain, so that by the compactness of $\partial U$ and continuity of $x\mapsto \int_{\partial U} \frac{\dd y}{\abs{x-y}^{d-1 - q \epsilon}}$ it follows that the supremum is finite as well.
The other $L^\epsilon$ factor is due to the estimate $  C^{\partial U_L}_{\operatorname{Prod}}[W_{2q}^{r} \to W_q^{r - \epsilon}] \lesssim_{p, \epsilon, U} L^{\epsilon}$, see Lemma~\ref{lem:scaling_of_iota}.
%\begin{align*}
%\norm{e^{V }}_{C^\delta(U)}
%& \lesssim  e^{\norm{V}_{C^\delta(U)}} \norm{V}_{C^\delta(U)}.
%\end{align*}

\ref{item:sym_form_boundary_hat}
First, observe that (for $\nu$ being the outer normal on $\partial U$)
  \begin{align*}
  \abs*{\widehat{\cZ}^{U}_{\widehat{Y},V}(v, v)  }
  \cand \begin{calc} =
  \int_{\partial U} (\cT v)^2 V \nabla \widehat{Y} \cdot \dd \bm{S}
  \end{calc} \cnewline
 & =   \abs*{\int_{\partial U} (\cT v)^2 V \nabla_\nu \widehat{Y} \dd S}
 \leq \norm{V}_{L^{\infty}(\partial U)} \norm{\nabla \widehat{Y}}_{L^{\infty}(\partial U)} \norm{\cT v}_{L^2( \partial U)}^2.
  \end{align*}
Secondly, use $\norm{\cT v}_{L^2( \partial U)} \le \norm{\cT v}_{W_2^{s - \frac{1}{2}}(\partial U)} \leq \norm{\cT_{H^s(U)}} \norm{v}_{H^s(U)}$, $ \norm{V}_{L^{\infty}(\partial U)} \le \norm{V}_{C^\delta(U)}$ and
$\norm{\nabla \widehat{Y}}_{L^\infty(\partial U_L)}
\le \norm{\phi(L^{-1} \cdot) \nabla \widehat{Y} }_{L^\infty(\R^d)}
\lesssim \norm{\phi(L^{-1} \cdot) \nabla \widehat{Y} }_{\cC^\delta(\R^d)}
\lesssim_{\delta,\sigma,\phi} L^\sigma \norm{\widehat{Y}}_{\cC^{1+\delta,\sigma}(\R^d)}$, where the last inequality is due to Lemma~\ref{lem:localization}. \eqref{eqn:bound_hat_Z_in_L} then follows by  Lemma~\ref{lem:scaling_of_iota} (observe that we use that  $s>\frac12$ in order to have $\sup_{L\ge 1} \norm{\cT_{H^s(U_L)}} <\infty$).
\end{proof}

\subsection{Basic spectral properties}\label{subsec:basic_properties_of_symmetric_forms}

% In this section we introduce the Dirichlet and Neumann operator (Definition~\ref{def:self_adjoint_operator_for_cE_W_cZ}) corresponding to $\cE$ and study their spectral properties. 

In this section, we will work in the setting of Definition~\ref{def:symmetric_form}. More precisely:

\begin{secassump}[for this section]
In this section, we fix a bounded domain $U$, a function $W \in L^{\infty}(U)$, and a symmetric form $\cZ$ on $H^s(U)$ for some $s\in (0,1)$. 
%\towil{also mention what $W$ is?}
\end{secassump}
The goal is to introduce the Dirichlet and Neumann operators (see Definition~\ref{def:self_adjoint_operator_for_cE_W_cZ} below) corresponding to $\cE = \cE^U_{W, \mathcal{Z}}$ (see Definition~\ref{def:symmetric_form}) and study their spectral properties.

\begin{definition}
Let $\cQ$ be a symmetric form on a Hilbert space $H$.
Let $D$ be the set of $u \in \cD(\cQ)$ such that there exists a $\tilde u \in H$ such that $\cQ(u,v) = \langle \tilde u , v\rangle_H$ for all $v\in \cD(\cQ)$. For such $u$ the element $\tilde u$ is unique, and we will write $Au = \tilde u$.
Then $A$ on $D$ forms a linear operator on $H$, called the \emph{operator associated with} $\cQ$.
\end{definition}

\begin{definition}
\label{def:interpolation_constant}
Let $U$ be a bounded Lipschitz domain and $s\in (0,1)$. We define
\begin{align*}
    C^U_{\operatorname{IP}}[H^s] \defby
    \sup_{f \in H^1(U) \setminus \{0\}} \frac{\norm{f}_{H^s(U)}}{\norm{f}_{L^2(U)}^{1-s} \norm{f}_{H^1(U)}^s}.
\end{align*}
If $U$ is a bounded domain that is not necessarily Lipschitz, we define $C^U_{\operatorname{IP}}[H^s]$ similarly as above by replacing ``$H^a(U)$'' by ``$H^a_0(U)$'' for $a$ being either $s$ or $1$.
\end{definition}

\begin{lemma}\label{lem:estimate_of_Z}
Let $s\in (0,1)$.
  Let $\cZ$ be a bounded symmetric form on $H^s_0(U)$.
  Then, for any $\delta \in (0, 1)$ and $v \in H^1_0(U)$, we have
  \begin{equation*}
    \abs{\cZ(v, v)}
    \leq \delta  \int_U |\nabla v|^2  + \Big(\delta+\delta^{-\frac{s}{1 - s}} C^U_{\operatorname{IP}}[H^s_0]^{\frac{2}{1-s}}
    \formnorm{\cZ}_{H_0^s(U)}^{\frac{1}{1 - s}} \Big) \norm{v}_{L^2(U)}^2.
  \end{equation*}
 If $\cZ$ is a bounded symmetric form on $H^s(U)$, then the above statement holds with $H^s_0$  replaced by $H^s$.
\end{lemma}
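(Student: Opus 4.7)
The plan is to combine the defining bound $|\cZ(v,v)| \leq \formnorm{\cZ}_{H^s_0(U)} \|v\|_{H^s_0(U)}^2$ with the interpolation estimate encoded by $C^U_{\operatorname{IP}}[H^s_0]$ (Definition~\ref{def:interpolation_constant}), and then to apply a parametrized Young inequality in order to trade the resulting $\|v\|_{H^1_0}$ factor for a controllably small multiple of $\|\nabla v\|_{L^2}^2$. Extension by zero gives $\|v\|_{H^1_0(U)}^2 = \|v\|_{L^2(U)}^2 + \|\nabla v\|_{L^2(U)}^2$ for $v \in H^1_0(U)$, so squaring the defining inequality of $C^U_{\operatorname{IP}}[H^s_0]$ yields
\[
|\cZ(v,v)| \leq C\, \|v\|_{L^2(U)}^{2(1-s)}\bigl(\|v\|_{L^2(U)}^2 + \|\nabla v\|_{L^2(U)}^2\bigr)^{s},
\]
where $C \defby \formnorm{\cZ}_{H^s_0(U)} \, C^U_{\operatorname{IP}}[H^s_0]^{2}$.

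Next I would apply Young's inequality in the form $a^{1-s} b^{s} \leq (1-s)\mu a + s\mu^{-(1-s)/s} b$, valid for all $\mu > 0$ and $a, b \geq 0$, obtained from the usual $a^{1-s} b^s \leq (1-s)a + s b$ (i.e.\ Young with exponents $1/(1-s)$ and $1/s$) by the rescaling $a \mapsto \mu a$, $b \mapsto \mu^{-(1-s)/s} b$. Taking $a = \|v\|_{L^2(U)}^2$, $b = \|v\|_{L^2(U)}^2 + \|\nabla v\|_{L^2(U)}^2$ and tuning $\mu = (Cs/\delta)^{s/(1-s)}$ so that $Cs\mu^{-(1-s)/s} = \delta$, a direct computation gives
\[
|\cZ(v,v)| \leq \delta \|\nabla v\|_{L^2(U)}^2 + \Bigl[\delta + (1-s)\,s^{s/(1-s)}\,\delta^{-\frac{s}{1-s}} C^{\frac{1}{1-s}}\Bigr] \|v\|_{L^2(U)}^2,
\]
since the $b$-term contributes exactly $\delta\|v\|_{L^2(U)}^2 + \delta\|\nabla v\|_{L^2(U)}^2$, while the $a$-term contributes $C(1-s)\mu \|v\|_{L^2(U)}^2 = (1-s)s^{s/(1-s)}\delta^{-s/(1-s)} C^{1/(1-s)} \|v\|_{L^2(U)}^2$.

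To conclude, I would absorb the numerical factor using the elementary inequality $(1-s)\,s^{s/(1-s)} \leq 1$ for $s \in (0,1)$: setting $g(s) \defby \log\bigl((1-s) s^{s/(1-s)}\bigr) = \log(1-s) + \tfrac{s \log s}{1-s}$, a short computation gives $g'(s) = \log s/(1-s)^2 < 0$, while $g(s) \to 0$ as $s \downarrow 0$, so $g \leq 0$ on $(0,1)$. Substituting $C^{1/(1-s)} = \formnorm{\cZ}_{H^s_0(U)}^{1/(1-s)} \, C^U_{\operatorname{IP}}[H^s_0]^{2/(1-s)}$ then yields exactly the stated bound. The Neumann version, with $v \in H^1(U)$ and $H^s$, $C^U_{\operatorname{IP}}[H^s]$ in place of $H^s_0$, $C^U_{\operatorname{IP}}[H^s_0]$, follows by the identical argument (the identity $\|v\|_{H^1(U)}^2 = \|v\|_{L^2(U)}^2 + \|\nabla v\|_{L^2(U)}^2$ continues to hold). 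I do not anticipate any real obstacle; the only small calculus fact to verify carefully is the numerical inequality just mentioned.
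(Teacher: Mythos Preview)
Your proof is correct and follows essentially the same approach as the paper: bound $|\cZ(v,v)|$ by the form norm times $\|v\|_{H^s}^2$, interpolate, apply a parametrized Young inequality, and tune the parameter to put $\delta$ in front of the gradient term. The only difference is that the paper uses the cruder Young inequality $a^s b^{1-s} \leq a + b$ (applied with $a = \eta \|v\|_{H^1}^2$, $b = \eta^{-s/(1-s)}\|v\|_{L^2}^2$), which reaches the stated bound directly without the detour through the numerical inequality $(1-s)s^{s/(1-s)} \leq 1$.
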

\begin{proof}
  We only prove the claim for a bounded Lipschitz domain.
  One has $\abs{\cZ(v, v)} \leq \formnorm{\cZ}_{H^s(U)} \norm{v}_{H^s(U)}^2$.
  By interpolation and Young's inequality (using that $a^s b^{1-s} \le a +b$),
  \begin{equation*}
    \norm{v}^2_{H^s(U)} \leq C_{\operatorname{IP}}^U[H^s]^2 \norm{v}^{2s}_{H^1(U)}
    \norm{v}^{2(1-s)}_{L^2(U)}
    \leq  C_{\operatorname{IP}}^U[H^s]^2 (\eta \norm{v}^{2}_{H^1(U)}
    + \eta^{-\frac{s}{1-s}} \norm{v}_{L^2(U)}^2)
  \end{equation*}
  for any $\eta \in (0, \infty)$.
  Therefore,
  \begin{equation*}
    \abs{\cZ(v, v)} \leq \eta \formnorm{\cZ}_{H^s(U)} C_{\operatorname{IP}}^U[H^s]^2 \norm{v}^{2}_{H^1(U)}
    + \eta^{-\frac{s}{1-s}} \formnorm{\cZ}_{H^s(U)} C_{\operatorname{IP}}^U[H^s]^2 \norm{v}^{2}_{L^2(U)}.
  \end{equation*}
  We can choose $\eta$ so that $\delta = \eta \formnorm{\cZ}_{H^s(U)} C_{\operatorname{IP}}^U[H^s]^2$  and use that $\|v\|_{H^1(U)}^2 = \|v\|_{L^2(U)}^2 + \int_U |\nabla v|^2$.
\end{proof}

\begin{definition}\label{def:self_adjoint_operator_for_cE_W_cZ}
If $\cZ$ is a bounded symmetric form on $H_0^1(U)$, then we write $\cE^{\dir,U}_{W,\cZ}$ for $\cE^U_{W,\cZ}$ (recall Definition~\ref{def:symmetric_form}) with $\cD(\cE^{\dir,U}_{W,\cZ}) = e^W H_0^1(U)$ and let $\H^{\dir} = \H^{\dir,U}= \H^{\dir,U}_{W, \cZ}$ be the operator associated with $\cE^{\dir,U}_{W,\cZ}$ on $L^2(U)$.

If $\cZ$ is a bounded symmetric form on $H^1(U)$, then we write $\cE^{\neu,U}_{W,\cZ}$ for $\cE^U_{W,\cZ}$ with $\cD(\cE^{\neu,U}_{W,\cZ}) = e^W H^1(U)$ and let $\H^{\neu} = \H^{\neu,U} =  \H^{\neu,U}_{W, \cZ}$ be the operator associated with $\cE^{\neu,U}_{W,\cZ}$ on $L^2(U)$.
\end{definition}

\begin{proposition}\label{prop:closedness_of_form}
 Let $W \in L^{\infty}(U)$ and $\cZ$ be a bounded symmetric form on $H^s_0(U)$ for some $s \in (0, 1)$.
  Then $\cE^{\dir,U}_{W, \cZ}$ is closed   and $e^W C^{\infty}_c(U)$ is a core. Consequently, $\H^{\dir}$ is self-adjoint.

If $\cZ$ instead is a bounded symmetric form on $H^s(U)$, then  $\cE^{\neu,U}_{W,\cZ}$ is closed and $e^W C^\infty(\overline U)$ is a core.
 Consequently, $\H^{\neu}$ is self-adjoint.
\end{proposition}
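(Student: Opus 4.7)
My plan is to exploit that, since $W \in L^{\infty}(U)$, multiplication by $e^{W}$ is a bijection from $H^1_0(U)$ onto $e^W H^1_0(U)$ (respectively from $H^1(U)$ onto $e^W H^1(U)$) and to transfer closedness and the core property across this bijection from the standard Sobolev setting. The pointwise two-sided bounds $e^{-2\norm{W}_{L^{\infty}(U)}} \le e^{2W} \le e^{2\norm{W}_{L^{\infty}(U)}}$ give at once that $\int_U e^{2W}|\nabla u^\flat|^2$ is comparable to $\int_U |\nabla u^\flat|^2$ and that $\norm{u}_{L^2(U)}^2 = \int_U e^{2W}(u^\flat)^2$ is comparable to $\norm{u^\flat}_{L^2(U)}^2$.

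First I would show that $\cE \defby \cE^{\dir,U}_{W,\cZ}$ is bounded from below. Applying Lemma~\ref{lem:estimate_of_Z} to $\cZ$ with $\delta \le \tfrac12 e^{-2\norm{W}_{L^{\infty}(U)}}$ yields a constant $C$ such that
\begin{equation*}
|\cZ(u^\flat, u^\flat)| \le \tfrac12 e^{-2\norm{W}_{L^{\infty}(U)}} \int_U |\nabla u^\flat|^2 + C \norm{u^\flat}_{L^2(U)}^2, \qquad u^\flat \in H^1_0(U).
\end{equation*}
Combined with the pointwise bounds on $e^{2W}$, this produces an $M > 0$ for which $\cE(u,u) + M\norm{u}_{L^2(U)}^2$ is equivalent to $\norm{u^\flat}_{H^1(U)}^2$, so in particular $\cE$ is $M$-bounded from below and the form norm is equivalent, under the bijection $u \leftrightarrow u^\flat$, to the Sobolev norm on $H^1_0(U)$. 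Completeness of $H^1_0(U)$ then immediately transfers, showing that $(\cD(\cE), \cE + M\inp{\cdot}{\cdot}_{L^2(U)})$ is a Hilbert space, which is exactly closedness. Density of $\cD(\cE) = e^W H^1_0(U)$ in $L^2(U)$ follows from density of $H^1_0(U)$ in $L^2(U)$ and the $L^\infty$-boundedness of $e^{\pm W}$: given $f \in L^2(U)$, approximate $e^{-W} f \in L^2(U)$ by $\psi_n \in H^1_0(U)$, so $e^W \psi_n \to f$ in $L^2(U)$.

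For the core statement, density of $C^\infty_c(U)$ in $(H^1_0(U),\norm{\cdot}_{H^1(U)})$ together with the norm equivalence just obtained shows that $e^W C^\infty_c(U)$ is dense in $\cD(\cE)$ in the form norm. Self-adjointness of $\H^{\dir,U}_{W,\cZ}$ is then the standard consequence of $\cE$ being a densely defined, closed, semi-bounded symmetric form (cf.\ the form representation theorem, e.g.\ \cite[Theorem~VIII.15]{ReSi72}).

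The Neumann case proceeds identically, replacing $H^1_0(U)$ by $H^1(U)$ and $C^\infty_c(U)$ by $C^\infty(\overline U)$; density of the latter in $H^1(U)$ is built into Definition~\ref{def:fractional_Sobolev}~\ref{item:sobolev_space}, and Lemma~\ref{lem:estimate_of_Z} already supplies the analogous estimate on $H^s(U)$. I do not anticipate a substantial obstacle: the whole argument reduces to routine norm-equivalence bookkeeping, the only mildly delicate point being to choose $\delta$ in Lemma~\ref{lem:estimate_of_Z} small enough against the factor $e^{-2\norm{W}_{L^{\infty}(U)}}$ so that the $\cZ$-contribution can be absorbed into the Dirichlet energy.
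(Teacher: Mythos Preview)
Your proposal is correct and follows essentially the same approach as the paper: use Lemma~\ref{lem:estimate_of_Z} to control $\cZ$ as a relatively small perturbation, then exploit the two-sided bounds $e^{-2\norm{W}_{L^\infty}} \le e^{2W} \le e^{2\norm{W}_{L^\infty}}$ to transfer closedness and the core property from the standard Dirichlet form on $H^1_0(U)$ (resp.\ $H^1(U)$). The only organizational difference is that the paper packages the first step as an application of the form version of the Kato--Rellich theorem (reducing to $\cZ=0$), whereas you unfold that step by hand and establish the norm equivalence directly; the content is the same.
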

\begin{proof}
In view of Lemma~\ref{lem:estimate_of_Z} and the symmetric form version of the Kato-Rellich theorem \cite[Theorem 1.33 in Chapter VI]{Kato1995},
we can assume $\cZ = 0$. Observe that for $u = e^W u^{\flat}$, $u^{\flat} \in H^1(U)$
\begin{equation*}
  e^{-2 \norm{W}_{L^{\infty}(U)}} \cE_{0, 0}(u^{\flat}, u^{\flat}) \leq \cE_{W, 0}(u, u)
  \leq e^{2 \norm{W}_{L^{\infty}(U)}} \cE_{0, 0}(u^{\flat}, u^{\flat}).
\end{equation*}
Therefore the claim follows as $\cE_{0,0}$ is closed and $C_c^\infty(U)$ is a core for $\cE_{0,0}$.

The self-adjointness of the corresponding operators follows as they are closed densely defined and symmetric,  cf. \cite[Section 4.4]{davies_1995}).
\end{proof}

By applying a standard result from the spectral theory, we can easily show that the spectrum of $\H^{\dir}$ on a bounded domain
and that of $\H^{\neu}$ on a bounded Lipschitz domain are discrete and  that the min-max formula (also known as the Courant-Fischer formula) holds for the eigenvalues.
\begin{calc}
\begin{lemma*}[{\cite[Corollary 4.2.3, Theorem 4.5.2, Theorem 4.5.3]{davies_1995}}]
\label{lem:eigenvalues_of_nonnegative_operator}
  Let $(H, \inp{\cdot}{\cdot})$ be a Hilbert space and let $A$ be a self-adjoint operator that is bounded from below.
  Let $B$ be the closed symmetric form associated with $A$ and let $D$ be a core of $B$.
  Then, we have
  \begin{align*}
    \lambda_k \defby& \inf_{L \subspace \D(A), \dim L = k}
    \sup_{f \in L, \norm{f} = 1} \inp{Af}{f}, \\
     =& \inf_{L \subspace \D(B), \dim L = k}
    \sup_{f \in L, \norm{f} = 1} B(f, f), \\
    =& \inf_{L \subspace D, \dim L = k}
    \sup_{f \in L, \norm{f} = 1} B(f, f),
  \end{align*}
  where $L \subspace X$ means $L$ is a subspace of $X$.
  Furthermore, if $\lim_{k \to \infty} \lambda_k = \infty$, then $\Spec(A) = (\lambda_k)_{k\in\N}$
  and the resolvent $(\lambda - A)^{-1}$ is compact for $\lambda \notin \Spec(A)$.
\end{lemma*}
\begin{remark*}\label{rem:compact_embedding_of_form_domain}
  If the embedding $\D(B) \hookrightarrow H$ is compact, then we have $\lim_{k \to \infty} \lambda_k =
  \infty$. See \cite[Corollary 4.2.3]{davies_1995}.
\end{remark*}
\end{calc}
\begin{proposition}\label{prop:eigenvalues_of_AH}
 The spectrum of $\H^{\dir}$ is given by a sequence of eigenvalues $(\lambda^{\dir}_k)_{k\in\N}$ counting multiplicities, such that $\lambda^{\dir}_1 \le \lambda^{\dir}_2 \le \cdots$. Moreover, by using the notation $\subspace$ for ``is a linear subspace of'', 
  \begin{align*}
    \lambda_k^{\dir} \defby \lambda_k^{\dir}(U; W, \cZ) \defby& \inf_{\substack{L \subspace \D(\H^{\dir}) \\ \dim L = k}}
    \sup_{\substack{u \in L \\ \norm{u}_{L^2(U)} = 1}}
    \inp{\H^{\dir} u}{u}_{L^2(U)}, \\
     =& \inf_{\substack{L \subspace e^W H^1_0(U) \\ \dim L = k}}
    \sup_{\substack{u \in L\\ \norm{u}_{L^2(U)} = 1}}
    \cE^U_{W, \mathcal{Z}}(u, u), \\
    =& \inf_{\substack{L \subspace  e^W C^{\infty}_c(U) \\ \dim L = k}}
    \sup_{\substack{u \in L \\ \norm{u}_{L^2(U)} = 1}}  \cE^U_{W, \mathcal{Z}}(u, u)
  \end{align*}
  and $\lim_{k \to \infty} \lambda_k^{\dir} = \infty$.
  In particular, $(\lambda - \H^{\dir})^{-1}$ is a compact operator for  all $\lambda$ that are not in the spectrum of $\H^{\dir}$.
If $U$ is a bounded Lipschitz domain,
an analogous statement for
  $\H^{\neu}$ holds if $H^1_0(U)$ and $C^{\infty}_c(U)$ are replaced by $H^1(U)$ and $C^{\infty}(\overline{U})$.
\end{proposition}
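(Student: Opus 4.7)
The strategy is to reduce Proposition~\ref{prop:eigenvalues_of_AH} to the standard min--max/spectral theorem for self-adjoint operators that are bounded below and have compact resolvent, for which I would cite \cite[Corollary 4.2.3, Theorem 4.5.2, Theorem 4.5.3]{davies_1995}. The plan has three steps; I describe them for $\H^{\dir}$ and point out the single modification required for $\H^{\neu}$.

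First, I would verify that $\H^{\dir}$ is bounded from below. Applying Lemma~\ref{lem:estimate_of_Z} to the form $\cE^{\dir,U}_{W,\cZ}(u,u) = \int_U e^{2W}|\nabla u^\flat|^2 + \cZ(u^\flat,u^\flat)$ with a sufficiently small $\delta \in (0,1)$, together with the pointwise bound $e^{-2\|W\|_{L^\infty}} |\nabla u^\flat|^2 \le e^{2W}|\nabla u^\flat|^2 \cdot e^{-4\|W\|_{L^\infty}} \cdot e^{2\|W\|_{L^\infty}}$ used as in Proposition~\ref{prop:closedness_of_form}, one obtains an estimate
\begin{equation*}
\cE^{\dir,U}_{W,\cZ}(u,u) \ge -M\,\|u\|_{L^2(U)}^2 \qquad (u \in e^{W} H^1_0(U))
\end{equation*}
for some constant $M$ depending only on $\|W\|_{L^\infty}$, $\formnorm{\cZ}_{H^s_0(U)}$ and $C^U_{\operatorname{IP}}[H^s_0]$. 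Combined with Proposition~\ref{prop:closedness_of_form}, this shows that $\H^{\dir}$ is self-adjoint, bounded below by $-M$, with closed quadratic form $\cE^{\dir,U}_{W,\cZ}$ and core $e^W C^\infty_c(U)$.

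Second, once $\H^{\dir}$ is known to be self-adjoint and bounded below, the three equalities in the displayed min--max formula follow directly from the classical min--max principle applied successively to the operator domain, the form domain, and the core. This is the content of \cite[Theorem 4.5.2]{davies_1995}.

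Third, I would show that the form domain $e^{W} H^1_0(U)$, equipped with the form norm $(\cE^{\dir,U}_{W,\cZ}(\cdot,\cdot) + (M+1)\|\cdot\|_{L^2(U)}^2)^{1/2}$, embeds compactly into $L^2(U)$. The key observation is that the map $\Phi: u^\flat \mapsto e^{W} u^\flat$ is a Banach-space isomorphism from $H^1_0(U)$ onto $e^{W} H^1_0(U)$ (the form norm on the right is equivalent to $\|\Phi^{-1}(\cdot)\|_{H^1(U)}$, by the sandwich estimate recalled in Proposition~\ref{prop:closedness_of_form} together with the first step), and that $\Phi$ extends to a bounded operator on $L^2(U)$. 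Since the Rellich--Kondrachov theorem gives that $H^1_0(U) \hookrightarrow L^2(U)$ is compact for any bounded domain $U$, composition with the bounded map $\Phi$ on $L^2(U)$ yields compactness of $e^{W} H^1_0(U) \hookrightarrow L^2(U)$. By \cite[Corollary 4.2.3]{davies_1995} this forces $\lim_k \lambda^{\dir}_k = \infty$, and then \cite[Theorem 4.5.3]{davies_1995} gives that $\operatorname{Spec}(\H^{\dir}) = \{\lambda^{\dir}_k\}_{k \in \N}$ and that $(\lambda - \H^{\dir})^{-1}$ is compact for $\lambda \notin \operatorname{Spec}(\H^{\dir})$.

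For $\H^{\neu}$ on a bounded Lipschitz domain, the first two steps are identical (using the core $e^{W} C^\infty(\overline{U})$ and the variant of Lemma~\ref{lem:estimate_of_Z} for $H^s(U)$). The only change is in the third step: one invokes Rellich--Kondrachov in the form $H^1(U) \hookrightarrow L^2(U)$ compactly, which requires exactly the Lipschitz regularity assumption on $\partial U$. I expect the main (very mild) obstacle to be bookkeeping in the third step, namely verifying that the form norm on $e^W H^1_0(U)$ is genuinely equivalent to the $H^1_0$-norm pulled back via $\Phi^{-1}$ when $W$ is only $L^\infty$; but this is precisely what the proof of Proposition~\ref{prop:closedness_of_form} already establishes, so no new ingredient is needed.
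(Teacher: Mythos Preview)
Your proposal is correct and follows essentially the same route as the paper: reduce to \cite[Corollary 4.2.3, Theorem 4.5.2, Theorem 4.5.3]{davies_1995} via Proposition~\ref{prop:closedness_of_form}, and then verify compactness of the form-domain embedding by Rellich--Kondrachov ($H^1_0(U)\hookrightarrow L^2(U)$ for bounded $U$, $H^1(U)\hookrightarrow L^2(U)$ for bounded Lipschitz $U$). Your first step is slightly redundant, since boundedness from below is already part of the closedness established in Proposition~\ref{prop:closedness_of_form}, but this does no harm.
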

\begin{proof}
\begin{calc}In view of Lemma~\ref{lem:eigenvalues_of_nonnegative_operator} and
Remark~\ref{rem:compact_embedding_of_form_domain}, \end{calc}
By well-known results of spectral theory (see e.g., {\cite[Corollary 4.2.3, Theorem 4.5.2, Theorem 4.5.3]{davies_1995}} in combination with Proposition~\ref{prop:closedness_of_form}), it suffices to show that the form domain is compactly embedded in $L^2(U)$, which follows from the compact embeddings of Sobolev spaces  (see \cite[Theorem 8.11.2]{Bhattacharyya_2012} for the fact that
the embedding $H^1_0(U) \hookrightarrow L^2(U)$ is compact for any bounded domain $U$
\cite[Theorem 8.11.4]{Bhattacharyya_2012} for the fact that
the embedding $H^1(U) \hookrightarrow L^2(U)$ is compact for any bounded Lipschitz domain $U$).
\end{proof}
We show continuous dependence of the spectral structure with respect to $W$ and $\cZ$.
This follows from the result of \cite{kuwae_shioya_2003}.
\begin{definition}\label{def:convergence_of_symmetric_forms}
  Let $H$ be a Hilbert space  and $M>0$.  Let $(\cQ_n)_{n\in\N}$ and $\cQ$ be closed symmetric forms that are
  $M$-bounded from below.
  We use the following convention: if $u \notin \D(\cQ)$, then we set $\cQ(u, u) \defby \infty$.
  \begin{enumerate}
  \item
	\label{item:gamma_convergence}
  \cite[Definition 2.8]{kuwae_shioya_2003} We say the sequence $(\cQ_n)_{n\in\N}$
  \emph{$\Gamma$-converges} to $\cQ$, if the following hold:
    \begin{enumerate}[(i)]
    \item If the sequence $(u_n)_{n\in\N}$ converges to $u$ in $H$, then
    \begin{equation}
    \label{eqn:gamma_liminf}
      \cQ(u, u) \leq \liminf_{n \to \infty} \cQ_n(u_n, u_n).
    \end{equation}
    \item For any $u \in \D(\cQ)$, there exists a sequence $(u_n)_{n\in\N}$ in $H$ such that
    \begin{equation*}
      u_n \to u \text{ in } H \quad \text{and} \quad \lim_{n \to \infty} \cQ_n(u_n, u_n) = \cQ(u, u).
    \end{equation*}
    \end{enumerate}
  \item \cite[Definition 2.12]{kuwae_shioya_2003} The sequence $(\cQ_n)_{n\in\N}$ is said to be \emph{compact} if
  the condition
  \begin{equation*}
    \sup_{n \in \N} \cQ_n(u_n, u_n) + (M+1)\norm{u_n}_H^2 < \infty
  \end{equation*}
  implies $(u_n)_{n\in\N}$ is precompact in $H$, that is the sequence has a converges subsequence in $H$.
  \item \cite[Definition 2.13]{kuwae_shioya_2003} We say the sequence $(\cQ_n)_{n\in\N}$ \emph{converges compactly}
  to $\cQ$ if $(\cQ_n)_{n\in\N}$ $\Gamma$-converges to $\cQ$ and
  if $(\cQ_n)_{n\in\N}$ is compact. In that case, we write
  \begin{align*}
  \cQ_n \xrightarrow{\rm{compact}}_{n\rightarrow\infty} \cQ.
  \end{align*}
  \end{enumerate}
\end{definition}
\begin{lemma}\label{lem:compact_convergence_of_symmetric_forms}
 Let $H$ be a Hilbert space and $M>0$.
Suppose that $(\cQ_n)_{n\in\N}$ is a sequence of closed quadratic forms on $H$  that are $M$-bounded from below and converges compactly to $\cQ$.
  Let $A_n$ (resp. $A$) be the self-adjoint operator associated with $\cQ_n$ (resp. $\cQ$).
  \begin{enumerate}
  \item \textnormal{\cite[Theorem 2.4 and Theorem 2.5]{kuwae_shioya_2003}} For any bounded continuous function $f$ on $\R$, we have $\norm{f(A_n) - f(A)}_{H \to H} \to 0$.  In particular, $A_n \normresconv_{n\rightarrow \infty} A$ (see Definition~\ref{def:norm_resolvent_convergence}).
%\begin{align*}
%      \limn \norm{(\i +A_n )^{-1} - (\i  + A )^{-1}}_{H \to H} = 0.
%\end{align*}

  \item \textnormal{\cite[Corollary 2.5]{kuwae_shioya_2003}} Let $\lambda_{k, n}$ (resp. $\lambda_k$) be the $k$-th eigenvalue   of $A_n$ (resp. $A$), counting multiplicities.
Then, we have $\lim_{n \to \infty} \lambda_{k, n} = \lambda_k$ for any $k$.
 Moreover, for any $k$ there exist (a choice of the) $k$-th eigenfunctions $\phi_{k, n}$ (resp. $\phi_k$) of $A_n$ (resp. $A$) such that
  $\phi_{k, n}$ converges to $\phi_k$ in $H$.
  \end{enumerate}
\end{lemma}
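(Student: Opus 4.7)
The plan is to reduce to the case of nonnegative forms by a uniform shift and then directly invoke the results of Kuwae and Shioya \cite{kuwae_shioya_2003}. Concretely, I will set $\tilde{\cQ}_n \defby \cQ_n + M \inp{\cdot}{\cdot}_H$ and $\tilde{\cQ} \defby \cQ + M \inp{\cdot}{\cdot}_H$, so that these are nonnegative closed symmetric forms with associated operators $\tilde{A}_n = A_n + M$ and $\tilde{A} = A + M$. Compact convergence (Definition~\ref{def:convergence_of_symmetric_forms}) is preserved under such a shift: the $\Gamma$-liminf inequality \eqref{eqn:gamma_liminf} and the existence of recovery sequences transfer directly because the shift adds a continuous functional in $H$, and the compactness condition becomes the statement that $\sup_n \tilde{\cQ}_n(u_n, u_n) + \norm{u_n}_H^2 < \infty$ implies precompactness in $H$, which is equivalent to the original condition.

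For part (a), I would apply \cite[Theorem 2.4 and Theorem 2.5]{kuwae_shioya_2003} to the shifted forms: compact convergence of nonnegative closed quadratic forms implies that $g(\tilde{A}_n) \to g(\tilde{A})$ in operator norm for every bounded continuous $g$ on $[0, \infty)$. Given a bounded continuous $f$ on $\R$, the function $g(\lambda) \defby f(\lambda - M)$ is bounded and continuous on $[0, \infty)$, and by the Borel functional calculus one has $g(\tilde{A}_n) = f(A_n)$ and $g(\tilde{A}) = f(A)$, yielding the first claim. Specialising to $f(\lambda) = (\i + \lambda)^{-1}$ (which is indeed bounded and continuous on $\R$) then gives norm-resolvent convergence in the sense of Definition~\ref{def:norm_resolvent_convergence}.

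For part (b), the convergence $\lambda_{k,n} \to \lambda_k$ follows directly from \cite[Corollary 2.5]{kuwae_shioya_2003} applied to the shifted forms, since a uniform shift only translates eigenvalues by $M$. For the eigenfunctions, one selects normalised $\phi_{k,n}$ with $A_n \phi_{k,n} = \lambda_{k,n} \phi_{k,n}$; since $\tilde{\cQ}_n(\phi_{k,n}, \phi_{k,n}) = \lambda_{k,n} + M$ stays bounded along $n$, the compactness of $(\tilde{\cQ}_n)$ furnishes a subsequential limit $\phi_k$ in $H$ with $\norm{\phi_k}_H = 1$. The $\Gamma$-liminf inequality then forces $\tilde{\cQ}(\phi_k, \phi_k) \leq \lambda_k + M$, while the min-max characterisation in Proposition~\ref{prop:eigenvalues_of_AH} applied to the shifted form forces equality, identifying $\phi_k$ as an eigenfunction of $A$ with eigenvalue $\lambda_k$.

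The main obstacle is handling eigenvalues with multiplicity: if $\lambda_{k-1} < \lambda_k = \lambda_{k+1} = \cdots = \lambda_{k+m} < \lambda_{k+m+1}$ in the limit, then a single $\phi_{k,n}$ need not converge, since the eigenspaces along the sequence may ``rotate'' freely within the limiting eigenspace. The correct approach is to pass through spectral projections: one shows that the projection $P_n$ onto the span of the first $k+m$ eigenfunctions of $A_n$ converges in operator norm to the analogous projection $P$ of $A$ (which is a consequence of part (a) applied to indicator-like bounded continuous functions approximating $\indic_{(-\infty, \lambda_k + \varepsilon]}$ with non-eigenvalue cutoffs), and then extracts compatible orthonormal bases of $\operatorname{ran}(P_n)$ converging to an orthonormal basis of $\operatorname{ran}(P)$. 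This spectral-projection approach is precisely what Kuwae and Shioya carry out to prove \cite[Corollary 2.5]{kuwae_shioya_2003}, which we invoke.
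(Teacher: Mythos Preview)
Your proposal is correct and aligns with the paper's treatment: the paper gives no proof of this lemma at all, simply stating it as a direct citation of \cite[Theorem~2.4, Theorem~2.5, Corollary~2.5]{kuwae_shioya_2003}. Your shift-to-nonnegative reduction is the natural (and only) bridge needed, since Kuwae--Shioya work with nonnegative forms while Definition~\ref{def:convergence_of_symmetric_forms} is stated for $M$-bounded-from-below forms; beyond that, both you and the paper defer entirely to the cited results.
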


\begin{remark}
\label{remark:to_convergence_sym_forms}
In the proof of the following theorem we use the following elementary fact.
If $(a_n)_{n\in\N}$ is a sequence in $\R$ and $\liminfn a_n <\infty$, then there exists a strictly increasing function $\varphi : \N \rightarrow \N$ such that $\liminfn a_{\varphi(n)} = \liminfn a_n$ and $\supn a_{\varphi(n)} <\infty$.
\end{remark}
\begin{calc}
Let $(a_n)_{n\in\N}$ be as such and $a= \liminfn a_n$. For all $m\in\N$ there exists an $N_m\in\N$ such that for all $N\ge N_m$,  $\inf_{n \ge N} a_n \in (a - \frac1m, a+ \frac1m)$. Therefore, we can find a strictly increasing sequence $(k_m)_{m\in\N}$ such that $a_{k_m} \in (a - \frac1m , a +\frac1m)$ for all $m$. Set $\varphi(m)=k_m$ for all $m$. Then $\liminfn a_{\varphi(n)} = a$ and $\supn a_{\varphi(n)} \le a+1$.
\end{calc}

In the following theorem we consider the compact convergence for the symmetric forms $\cE_{W_n,\cZ_n}^{\dir,U}$ on $H_0^s(U)$ and the symmetric forms $\cE_{W_n,\cZ_n}^{\neu,U}$ on $H^s(U)$ as defined in Definition~\ref{def:self_adjoint_operator_for_cE_W_cZ}.

\begin{theorem}\label{thm:convergence_of_symmetric_forms}
  Let $s \in [0, 1)$.
  Suppose that $W_n \rightarrow W$ in $C(\overline U)$.
\begin{itemize}
\item  If $\formnorm{\cZ_n - \cZ}_{H^s_0(U)} \xrightarrow{n\rightarrow \infty} 0$,
then
\begin{align*}
\cE_{W_n, \cZ_n}^{\dir,U} \xrightarrow{\rm{compact}}_{n\rightarrow\infty} \cE_{W, \cZ}^{\dir,U}.
\end{align*}
\item  If $U$ is a bounded Lipschitz domain and $\formnorm{\cZ_n - \cZ}_{H^s(U)} \xrightarrow{n\rightarrow \infty} 0$,
then
\begin{align*}
\cE_{W_n, \cZ_n}^{\neu,U} \xrightarrow{\rm{compact}}_{n\rightarrow\infty} \cE_{W, \cZ}^{\neu,U}.
\end{align*}
\end{itemize}
\end{theorem}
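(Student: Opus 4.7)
The plan is to pull the weight $e^{W_n}$ outside by substituting $u = e^{W_n} u^{\flat}$ and reduce all estimates to the unweighted Sobolev setting, exploiting that $W_n \to W$ uniformly on $\overline U$ implies $e^{W_n} \to e^W$ and $e^{-W_n} \to e^{-W}$ uniformly and that all $\|W_n\|_{L^\infty}$ are bounded. In particular, the map $u^{\flat} \mapsto e^{W_n} u^{\flat}$ is a bi-Lipschitz isomorphism of both $H^1_0(U)$ (resp. $H^1(U)$) and $L^2(U)$ with constants independent of $n$, so the form domains of the $\cQ_n := \cE^{\dir,U}_{W_n,\cZ_n}$ (or $\cE^{\neu,U}_{W_n,\cZ_n}$) can be uniformly identified with $H^1_0(U)$ (resp. $H^1(U)$). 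I will prove both conditions of Definition~\ref{def:convergence_of_symmetric_forms} separately, using throughout that $\sup_n \formnorm{\cZ_n}_{H^s_{(0)}(U)} < \infty$ by the assumed convergence. An application of Lemma~\ref{lem:estimate_of_Z} with a fixed small parameter then shows that, for some $M>0$ independent of $n$, the forms $\cQ_n$ are $M$-bounded from below and
\[
  \cQ_n(u,u) + (M+1)\|u\|_{L^2(U)}^2 \gtrsim \tfrac{1}{2} e^{-2\|W_n\|_\infty} \int_U |\nabla u^{\flat}|^2 + \|u^{\flat}\|_{L^2(U)}^2,
\]
which will feed into both proofs below.

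For the compactness condition, suppose $\sup_n [\cQ_n(u_n,u_n) + (M+1)\|u_n\|_{L^2}^2] < \infty$. The above lower bound yields a uniform $H^1$ bound on $u_n^{\flat} := e^{-W_n} u_n$. By Rellich--Kondrachov (which holds for $H^1_0$ on any bounded domain and for $H^1$ on bounded Lipschitz domains), a subsequence $u_{n_k}^{\flat}$ converges in $L^2(U)$, and multiplying by the uniformly convergent $e^{W_{n_k}}$ gives convergence of $u_{n_k}$ in $L^2(U)$.

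For the recovery sequence, given $u = e^W u^{\flat} \in \cD(\cQ)$ with $u^{\flat} \in H^1(U)$, simply take $u_n := e^{W_n} u^{\flat}$. Since $e^{W_n} \to e^W$ uniformly, $u_n \to u$ in $L^2(U)$. The gradient term converges by dominated convergence since $e^{2W_n}|\nabla u^{\flat}|^2 \to e^{2W}|\nabla u^{\flat}|^2$ pointwise, dominated by a multiple of $|\nabla u^{\flat}|^2 \in L^1(U)$. The $\cZ_n$ term converges because $|\cZ_n(u^{\flat},u^{\flat}) - \cZ(u^{\flat},u^{\flat})| \le \formnorm{\cZ_n-\cZ}_{H^s_{(0)}(U)}\|u^{\flat}\|_{H^s_{(0)}(U)}^2 \to 0$.

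For the $\Gamma$-liminf inequality, suppose $u_n \to u$ in $L^2(U)$. We may assume the liminf of $\cQ_n(u_n,u_n)$ is finite, so by Remark~\ref{remark:to_convergence_sym_forms} we pass to a subsequence with bounded $\cQ_n(u_n,u_n)$. The compactness argument then yields a further subsequence along which $u_n^{\flat}$ converges weakly in $H^1$ and strongly in $L^2$ to some $v$; since $e^{-W_n}u_n \to e^{-W}u$ in $L^2$, necessarily $v = e^{-W} u =: u^{\flat}$, so $u \in \cD(\cQ)$. Writing $e^{W_n}\nabla u_n^{\flat} - e^W \nabla u^{\flat} = (e^{W_n}-e^W)\nabla u_n^{\flat} + e^W(\nabla u_n^{\flat} - \nabla u^{\flat})$ gives weak convergence $e^{W_n}\nabla u_n^{\flat} \rightharpoonup e^W\nabla u^{\flat}$ in $L^2(U;\R^d)$, whence weak lower semicontinuity of the $L^2$-norm yields the gradient liminf. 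For the $\cZ_n$ part, interpolation gives strong convergence $u_n^{\flat} \to u^{\flat}$ in $H^s_{(0)}(U)$ (since $s<1$), and then
\[
  |\cZ_n(u_n^{\flat},u_n^{\flat}) - \cZ(u^{\flat},u^{\flat})| \le \formnorm{\cZ_n-\cZ}\|u_n^{\flat}\|_{H^s}^2 + \formnorm{\cZ}\|u_n^{\flat}-u^{\flat}\|_{H^s}\|u_n^{\flat}+u^{\flat}\|_{H^s} \to 0,
\]
so this term passes to the limit. Using that $\liminf(a_n+b_n) \ge \liminf a_n + \lim b_n$ when $b_n$ converges, we conclude $\liminf_n \cQ_n(u_n,u_n) \ge \cQ(u,u)$. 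The main subtlety is the interplay between the varying weights $e^{2W_n}$ and the weak convergence $\nabla u_n^{\flat} \rightharpoonup \nabla u^{\flat}$; this is resolved by the splitting above, which crucially uses the uniform convergence of $W_n$ rather than just $L^2$ or pointwise convergence.
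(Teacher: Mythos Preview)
Your proof is correct and follows essentially the same approach as the paper's: use Lemma~\ref{lem:estimate_of_Z} to get a uniform $H^1$ bound on $u_n^{\flat}$, invoke Rellich--Kondrachov for compactness, take the recovery sequence $u_n=e^{W_n}u^{\flat}$ (which is what the paper calls ``trivial''), and for the $\Gamma$-liminf combine weak convergence of $e^{W_n}\nabla u_n^{\flat}$ with interpolation in $H^s$ for the $\cZ_n$ term. The only cosmetic difference is that the paper verifies $\nabla u_n^{\flat}\rightharpoonup\nabla u^{\flat}$ directly by testing against $v\in C_c^\infty(U)$ rather than extracting a weak $H^1$ subsequential limit and identifying it; your splitting argument for $e^{W_n}\nabla u_n^{\flat}\rightharpoonup e^W\nabla u^{\flat}$ is the same as the paper's.
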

\begin{proof}
We only prove the second statement.
  We first show that $(\E^{\neu}_{W_n, \cZ_n})_{n\in\N}$ is compact. Suppose $\sup_{n\in\N} \E^{\neu}_{W_n, \cZ_n}(u_n, u_n)
  + (M+1) \norm{u_n}_{L^2(U)}^2 < \infty$. We set $u_n^{\flat} \defby e^{-W_n} u_n$.
  By Lemma~\ref{lem:estimate_of_Z}, we have $\sup_{n\in\N} \norm{u^{\flat}_n}_{H^1(U)} < \infty$
 because for any $\delta \in (0,1)$, 
    \begin{align*}
  & \cE^{\neu}_{W_n,\cZ_n}(u_n,u_n)
   = \int_U e^{2W_n} \abs{\nabla u^{\flat}_n}^2 + \cZ_n (u_n^\flat,u_n^\flat) \\
  & \ge e^{-2\|W_n\|_{L^\infty}} (1-\delta) \int_U \abs{\nabla u^{\flat}_n}^2  - \Big(\delta + C \delta^{-\frac{s}{1-s}} \formnorm{\cZ_n}_{H^s(U)}^{\frac{1}{1-s}}\Big) \|u_n^\flat \|_{L^2(U)}^2,
  \end{align*}
  \tymchange{
and thus $\sup_{n\in\N} \norm{u_n^\flat }_{H^1(U)} < \infty$ as $W_n \rightarrow W$ in $C(\overline U)$.
Since the embedding $H^1(U) \hookrightarrow L^2(U)$ is compact, the sequence $(u_n^\flat)_{n\in\N}$ is also precompact in $L^2(U)$.
As $u_n = e^{W_n} u_n^\flat$ and $\|W_n - W\|_{L^\infty(\overline U)} \rightarrow 0$, also $(u_n)_{n\in\N}$ is precompact in $L^2(U)$. 
  }

Next we show that $(\E^{\neu}_{W_n, \cZ_n})_{n\in\N}$ $\Gamma$-converges to $\E^{\neu}_{W, \cZ}$.
Since (ii) of Definition~\ref{def:convergence_of_symmetric_forms}~\ref{item:gamma_convergence} is trivial,   we focus on showing (i) Definition~\ref{def:convergence_of_symmetric_forms}~\ref{item:gamma_convergence}.

  Suppose that $(u_n)_{n\in\N}$ converges to $u$ in $L^2(U)$.
As we want to show \eqref{eqn:gamma_liminf}, we may assume $\liminfn \cE_{W_n,\cZ_n}^{\neu,U}(u_n,u_n) <\infty$ and by Remark~\ref{remark:to_convergence_sym_forms} we may as well assume $\supn \cE_{W_n,\cZ_n}^{\neu,U}(u_n,u_n)<\infty$ (by possibly considering a subsequence), so that by the above $\supn \norm{u_n^\flat}_{H^1(U)}<\infty$.
It suffices to show
\begin{align}
\label{eq:fatoru_sobolev}
  \int_U e^{2W} \abs{\nabla u^{\flat}}^2 \leq
\liminf_{n \to \infty} \int_U e^{2W_n} \abs{\nabla u^{\flat}_n}^2, \\
\label{eqn:convergence_cZ}
    \lim_{n \to \infty} \cZ_n(u^{\flat}_n, u^{\flat}_n) = \cZ(u^{\flat}, u^{\flat}).
\end{align}
Since the sequence $(u_n^{\flat})_{n\in\N}$ is  bounded in $H^1(U)$ and converges to $u^{\flat}$ in $L^2(U)$, by interpolation it converges to $u^\flat$ in $H^s(U)$ from which \eqref{eqn:convergence_cZ} follows.
\begin{calc}
Indeed, we have
\begin{align*}
&\cZ(u,u) - \cZ_n(u_n,u_n)
 = \cZ(u-u_n,u-u_n) - \cZ(u_n,u_n) + 2\cZ(u,u_n) - \cZ_n(u_n,u_n)\\
& = \cZ(u-u_n,u-u_n) - 2\cZ(u_n,u_n) + 2\cZ(u,u_n) + [\cZ- \cZ_n](u_n,u_n)\\
& = \cZ(u-u_n,u-u_n) + 2\cZ(u-u_n,u_n) + [\cZ- \cZ_n](u_n,u_n).
\end{align*}
\end{calc}
  For $v \in C_c^{\infty}(U)$,
  \begin{equation*}
    \inp{\nabla u^{\flat}_n}{v}_{L^2(U)} = - \inp{u^{\flat}_n}{\nabla v}_{L^2(U)}
    \to -\inp{u^{\flat}}{\nabla v}_{L^2(U)} = \inp{\nabla u^{\flat}}{v}_{L^2(U)},
  \end{equation*}
  which implies that $(\nabla u_n^{\flat})_{n\in\N}$ converges weakly to $\nabla u^{\flat}$ in $L^2(U)$.
  Therefore, $(e^{W_n} \nabla u_n^{\flat})_{n\in\N}$ converges weakly to $e^W \nabla u^{\flat}$ in $L^2(U)$ and
  this implies \eqref{eq:fatoru_sobolev} (this follows for example by the dual representation of the norm on $L^2(U)$).
\end{proof}

\subsection{Estimates of eigenvalues}
\label{subsec:estimates_eigenvalues}

\tymchange{This section serves as preparation for Section~\ref{subsec:IDS} on the integrated density of states and its results are not used for the construction of Anderson Hamiltonians (Theorem~\ref{thm:main_AH} and Thereom~\ref{thm:main_Neumann_AH}).}
% In this section, we give deterministic estimates of the eigenvalues and the eigenvalue counting functions of
% the operators constructed in Section~\ref{subsec:basic_properties_of_symmetric_forms}.
% The motivation comes from the study of the integrated density of states in Section~\ref{subsec:IDS}.
\begin{secassump}[for this section]
We assume the following throughout this section:
\begin{itemize}
  \item $W$ is a continuous function defined on $\R^d$.
  \item $s \in [0, 1)$, $\cU$ is the  collection of bounded Lipschitz domains.
  \item $\cZ^U$ is a bounded symmetric form on $H^s(U)$ for all $U\in \cU$.
\end{itemize}
 For each $U\in \cU$, we let
 $\cE^U = \cE^U_{W, \cZ^U}$ be the symmetric form defined in Definition~\ref{def:symmetric_form}.
Recall the notations $\H^{\neu,U}$ and $\H^{\dir,U}$ from
Definition~\ref{def:self_adjoint_operator_for_cE_W_cZ} and $\lambda^{\neu}_k(U)$, $\lambda^{\dir}_k(U)$ from  Proposition~\ref{prop:eigenvalues_of_AH}.
\end{secassump}

\begin{remark}
For Dirichlet boundary conditions we do not necessarily need to consider Lipschitz domains.
Indeed, if $\cU$ would instead be the collection of all bounded domains and $\cZ^U$ a bounded symmetric form on $H^s_0(U)$ for all $U\in \cU$, then the statements of Lemma~\ref{lem:monotonicity_of_IDS}, Lemma~\ref{lem:box_decomposition_neumann}~\ref{item:dirichlet_ev_estimates_decomp} and Lemma~\ref{lem:upper_estimates_N}~\ref{item:box_decomposition_dirichlet_reversed} remain valid.
\end{remark}

\begin{definition}\label{def:eigenvalue_counting_function}
For $\# \in \{\neu, \dir\}$  and $U\in\cU$, we define the \emph{eigenvalue counting functions}  $\ecf^{\#}(U, \lambda)$  for $\lambda \in \R$  by
  \begin{equation*}
    \ecf^{\#}(U, \lambda) \defby \ecf^{\#}_{W, \cZ}(U, \lambda) \defby \sum_{k = 1}^{\infty}
    \indic_{\{\lambda_k^{\#}(U; W, \cZ) \leq \lambda \}}.
  \end{equation*}
  We set $\ecf^{\#}_0(U, \lambda) \defby \ecf^{\#}_{0, 0}(U, \lambda)$, which is the eigenvalue counting function of the Neumann or the Dirichlet Laplacian on $U$.

For $L>R>0$ we set
\begin{align}
\label{eqn:def_U_L_R_and_C_partial_U_L_R}
U_L^R \defby U_L \cap B(\partial U_L, R),
\quad C(\partial U_L, R) \defby \set{x \in U_L \given d(x, \partial U_L) = R} .
\end{align}
(Observe $C(\partial U_L, R) = \partial U_L^R  \setminus \partial U_L$.)
We denote by $H^1_{\fm,R}(U_L^R)$
the closure in $H^1(U_L^R)$  with respect to $H^1$-norm of the set
    \begin{equation*}
      \set{\phi \in C^{\infty}(\overline{U_L^R}) \given \phi = 0 \mbox{ on a neighborhood of }  C(\partial U_L, R)}.
    \end{equation*}
    Let $\ecf^{\fm}_0(U_L^R, \lambda)$ be the eigenvalue counting function of the operator associated with the symmetric form
    $(u,u) \mapsto \int_{U_L^R} \abs{\nabla u}^2$ with the domain $H^1_{\fm,R}(U^R_L)$.
\end{definition}

\begin{lemma}\label{lem:monotonicity_of_IDS}
 Let $U, U_1,U_2\in \cU$, $U_1\subseteq U_2$ and $\lambda \in \R$. Then
  \begin{align*}
      \ecf^{\dir}(U, \lambda) &\leq \ecf^{\neu}(U, \lambda), \\
    \ecf^{\dir}(U_1, \lambda) &\leq \ecf^{\dir}(U_2, \lambda).
  \end{align*}
\end{lemma}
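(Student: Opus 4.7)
The first inequality is an immediate consequence of the min-max characterization in Proposition~\ref{prop:eigenvalues_of_AH}. The $k$-th Dirichlet and Neumann eigenvalues are both defined as the infimum of $\sup_{u \in L, \|u\|_{L^2(U)}=1} \cE^U_{W,\cZ}(u,u)$ over $k$-dimensional subspaces, but the class of admissible subspaces differs: for $\lambda_k^{\dir}(U)$ the infimum ranges over $k$-dimensional subspaces of $e^W H_0^1(U)$, whereas for $\lambda_k^{\neu}(U)$ it ranges over those of the strictly larger space $e^W H^1(U)$. Since the form itself is the same on the overlap, the infimum over a larger class is no larger, giving $\lambda_k^{\dir}(U) \geq \lambda_k^{\neu}(U)$ for every $k$, which translates directly to $\ecf^{\dir}(U,\lambda) \leq \ecf^{\neu}(U,\lambda)$.

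For the second inequality I plan to use zero-extension combined with min-max. Given $u^\flat \in H_0^1(U_1)$, writing $\tilde u^\flat$ for its extension by zero to $U_2$ (which belongs to $H_0^1(U_2)$ by the definition of $H_0^1$ via the closure of $C_c^\infty$ in $H^1(\mathbb{R}^d)$), one has the identities
\[
\|e^{W}\tilde u^\flat\|_{L^2(U_2)}=\|e^{W} u^\flat\|_{L^2(U_1)},\qquad \int_{U_2}e^{2W}|\nabla \tilde u^\flat|^2=\int_{U_1}e^{2W}|\nabla u^\flat|^2.
\]
Thus the map $u\mapsto\tilde u$ is a linear isometric embedding $e^WH_0^1(U_1)\hookrightarrow e^WH_0^1(U_2)$ that preserves the gradient part of $\cE^{U}_{W,\cdot}$. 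Any $k$-dimensional subspace $L\subseteq e^WH_0^1(U_1)$ is therefore sent to a $k$-dimensional subspace $\tilde L\subseteq e^WH_0^1(U_2)$, and $\lambda_k^{\dir}(U_2)$ is bounded above by $\sup_{\tilde u\in\tilde L,\,\|\tilde u\|_{L^2(U_2)}=1}\cE^{U_2}(\tilde u,\tilde u)$; taking the infimum over $L$ yields $\lambda_k^{\dir}(U_2)\le \lambda_k^{\dir}(U_1)$, which again translates into the desired inequality on counting functions.

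The one step that needs to be spelled out is the behaviour of the potential term $\cZ^U(u^\flat,u^\flat)$ under zero-extension, since \emph{a priori} $\cZ^{U_1}$ and $\cZ^{U_2}$ are only assumed to be bounded forms on the respective Sobolev spaces. The right compatibility, namely
\[
\cZ^{U_2}(\tilde u^\flat,\tilde u^\flat)=\cZ^{U_1}(u^\flat,u^\flat)\qquad\text{for all }u^\flat\in H_0^1(U_1),
\]
holds automatically for the forms of interest in this paper: for $\cZ_Y^U(v,v)=\langle \phi Y,\indic_U v^2\rangle$ from Theorem~\ref{theorem:examples_bd_sym_with_weighted_input}~\ref{item:sym_form_on_domain}, $\indic_{U_2}\tilde v^2=\indic_{U_1}v^2$ when $\supp v\subseteq U_1$ and the cutoff $\phi$ is irrelevant in the pairing, and similarly for the tangential boundary terms $\tilde{\cZ}^U$, $\widehat{\cZ}^U$, since zero-extension kills any contribution from $\partial U_1\setminus\partial U_2$ while respecting the trace on $\partial U_1\cap\partial U_2$. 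This is the step I view as the main (though mild) obstacle: writing the lemma so that this compatibility is either part of the section assumption or verified on the concrete family $\cZ^U$ used for the IDS.
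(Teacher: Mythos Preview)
Your proof is correct and follows the same route as the paper: both inequalities are obtained from the min-max formula (Proposition~\ref{prop:eigenvalues_of_AH}) via the inclusions $H_0^1(U)\subseteq H^1(U)$ and $H_0^1(U_1)\subseteq H_0^1(U_2)$. You are in fact more explicit than the paper about the compatibility $\cZ^{U_2}(\tilde u^\flat,\tilde u^\flat)=\cZ^{U_1}(u^\flat,u^\flat)$ for $u^\flat\in H_0^1(U_1)$, which the paper's one-line proof leaves implicit (it appears later as condition~\eqref{eqn:cZ_restriction_consistence} and is verified for the relevant forms in Remark~\ref{rem:example_decomposable}); your aside on the boundary forms $\tilde\cZ^U,\widehat\cZ^U$ is unnecessary, since these do not enter the Dirichlet counting function $\ecf^{\dir}$.
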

\begin{proof}
  Since $H^1_0(U) \subseteq H^1(U)$, the min-max formula (Lemma~\ref{prop:eigenvalues_of_AH}) implies $\lambda_k^{\dir}(U) \ge  \lambda_k^{\neu}(U)$ for all $k$, and thus the first inequality. The second also follows by the min-max formula, as  $H^1_0(U_1) \subseteq H^1_0(U_2)$.
\end{proof}

\begin{lemma}\label{lem:estimate_of_eigenvalue_counting_functions_of_AH}
  Let $U \in \cU$,  $s\in (0,1)$,  $\theta \in (0, \infty)$ and $\lambda\in \R$.
  We set
  \begin{align*}
  \Lambda^\pm_{\lambda,\theta}( W, \cZ) \defby (1 \pm \theta) e^{\pm 4 \norm{W}_{L^{\infty}(U)}} (\lambda \pm A_\pm),
  \end{align*}
  where
  \begin{equation*}
    A_{\pm}
	 \defby A_{\pm,\theta}^{W,\cZ}
    \defby  \theta + \Big(\frac{\theta}{1 \pm \theta} \Big)^{-\frac{s}{1-s}} C_{\operatorname{IP}}^U[H^s]^{\frac{2}{1-s}}
    e^{(2 \pm \frac{2s}{1-s}) \norm{W}_{L^{\infty}(U)}} \formnorm{\cZ}_{H^s(U)}^{\frac{1}{1-s}}.
  \end{equation*}
  Then, one has
  \begin{align*}
   \ecf^{\dir}_0(U, \Lambda^-_{\lambda, \theta}(  W, \cZ)) \leq \ecf^{\dir}(U, \lambda) \leq \ecf^{\neu}(U, \lambda)
  \leq \ecf^{\neu}_0(U, \Lambda^+_{\lambda,\theta}(  W, \cZ)).
\end{align*}
\end{lemma}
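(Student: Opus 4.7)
The middle inequality $\ecf^{\dir}(U,\lambda) \le \ecf^{\neu}(U,\lambda)$ follows directly from Lemma~\ref{lem:monotonicity_of_IDS}, so the substance of the lemma is in the two outer inequalities. The plan is to deduce each of them from a pointwise two-sided comparison between the eigenvalues $\lambda_k^{\#}(U;W,\cZ)$ and the bare Laplacian eigenvalues $\lambda_k^{\#}(U;0,0)$, for $\# \in \{\dir,\neu\}$, and then to pass to the counting functions by monotonicity.

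The starting point is the min-max formula from Proposition~\ref{prop:eigenvalues_of_AH} combined with the exponential change of variables $u = e^W u^\flat$ built into Definition~\ref{def:symmetric_form}, which is a bijection between the form domain of the Anderson operator and that of the free Laplacian. Writing $M \defby \|W\|_{L^\infty(U)}$, $J(u^\flat) \defby \int_U |\nabla u^\flat|^2$ and $N(u^\flat) \defby \|u^\flat\|_{L^2(U)}^2$, the pointwise bound $e^{-2M} \le e^{2W} \le e^{2M}$ controls $\int_U e^{2W}|\nabla u^\flat|^2$ and $\|u\|_{L^2(U)}^2 = \int_U e^{2W}|u^\flat|^2$ by $J(u^\flat)$ and $N(u^\flat)$ up to factors $e^{\pm 2M}$, while Lemma~\ref{lem:estimate_of_Z} with a free parameter $\delta \in (0,1)$ absorbs the $\cZ$-contribution into $\delta J(u^\flat) + B_\delta N(u^\flat)$, where
$$B_\delta = \delta + \delta^{-\frac{s}{1-s}} C_{\operatorname{IP}}^U[H^s]^{\frac{2}{1-s}} \formnorm{\cZ}_{H^s(U)}^{\frac{1}{1-s}}.$$
Assembling these pointwise estimates and taking the infimum over $k$-dimensional subspaces in the min-max formula gives, for $\# \in \{\dir,\neu\}$ and any $\delta \in (0, e^{-2M})$,
$$e^{-2M}\bigl(e^{-2M}-\delta\bigr)\lambda_k^{\#}(U;0,0) - e^{-2M}B_\delta \;\le\; \lambda_k^{\#}(U;W,\cZ) \;\le\; e^{2M}\bigl(e^{2M}+\delta\bigr)\lambda_k^{\#}(U;0,0) + e^{2M}B_\delta.$$

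The last step, in which the precise form of $\Lambda^\pm$ enters, is to specialize $\delta$ as a function of $\theta$ and $M$. For the third inequality take $\delta = e^{-2M}\theta/(1+\theta)$ in the lower bound, so that $e^{-2M}(e^{-2M}-\delta) = (1+\theta)^{-1}e^{-4M}$; using $\delta^{-s/(1-s)} = e^{2Ms/(1-s)}(\theta/(1+\theta))^{-s/(1-s)}$ together with the identity $2M + 2Ms/(1-s) = 2M/(1-s)$ one checks $e^{-2M}B_\delta \le A_+$, and rearranging yields $\lambda_k^{\neu}(U;0,0) \le \Lambda^+_{\lambda_k^\neu(U;W,\cZ),\theta}(W,\cZ)$, from which $\ecf^{\neu}(U,\lambda) \le \ecf^{\neu}_0(U,\Lambda^+_{\lambda,\theta})$ follows by monotonicity of $\lambda \mapsto \Lambda^+_{\lambda,\theta}$. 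The first (Dirichlet) inequality is obtained symmetrically using the upper bound with $\delta = e^{2M}\theta/(1-\theta)$. The main obstacle is precisely this last algebraic bookkeeping: one has to verify that the nonlinear expression $e^{\pm 2M}B_\delta$ is dominated by the correct multiple of $A_\pm$ once the exponential weights $e^{(2\pm 2s/(1-s))M}$ in $A_\pm$ are matched to those produced by the $\delta^{-s/(1-s)}$ factor from Lemma~\ref{lem:estimate_of_Z}, so that the chain of inequalities closes and the desired bound on the counting functions follows.
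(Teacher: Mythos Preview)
Your approach is exactly the paper's: apply Lemma~\ref{lem:estimate_of_Z} with a free parameter $\delta$, combine with the pointwise bounds $e^{-2M}\le e^{2W}\le e^{2M}$ on both the gradient term and the $L^2$-norm, and pass through the min--max formula of Proposition~\ref{prop:eigenvalues_of_AH} to compare $\lambda_k^{\#}(U;W,\cZ)$ with $\lambda_k^{\#}(U;0,0)$. The paper only spells out the Neumann bound and makes the same choice $\delta=\tfrac{\theta}{1+\theta}e^{-2M}$.

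There is one slip in your displayed two-sided inequality: the lower bound should carry $-e^{+2M}B_\delta$, not $-e^{-2M}B_\delta$. Under the normalisation $\int_U e^{2W}|u^\flat|^2=1$ (which is how the paper organises the computation) one only knows $N=\int_U|u^\flat|^2\le e^{2M}$, so the negative remainder $-B_\delta N$ is bounded below by $-e^{2M}B_\delta$; equivalently, in your Rayleigh-quotient picture, dividing the possibly negative quantity $-B_\delta N$ by $\|u\|_{L^2}^2$ forces the use of the \emph{lower} bound $\|u\|_{L^2}^2\ge e^{-2M}N$ on the denominator, again producing $e^{+2M}$. Your own bookkeeping actually confirms this: the identity $2M+2Ms/(1-s)=2M/(1-s)$ you invoke is precisely the exponent calculation for $e^{+2M}\cdot\delta^{-s/(1-s)}$, whereas with the factor $e^{-2M}$ the relevant exponent would be $-2M+2Ms/(1-s)=(4s-2)M/(1-s)$, which does not match $A_+$. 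With the corrected constant one still has $e^{2M}B_\delta\le A_+$ (the second summands coincide and $\tfrac{\theta}{1+\theta}\le\theta$ handles the first), so the Neumann inequality closes exactly as you describe; the Dirichlet side is then indeed symmetric.
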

\begin{proof}
  We only prove $\ecf^{\neu}(U, \lambda) \leq \ecf^{\neu}_0(U, \Lambda^+_{\lambda,\theta}( U, W, \cZ))$; the other inequality follows similarly.
  By setting $\delta \defby \frac{\theta}{1 + \theta} e^{-2 \norm{W}_{L^{\infty}(U)}}$, Lemma~\ref{lem:estimate_of_Z} yields
  \begin{equation*}
    \abs{\cZ(u^{\flat}, u^{\flat})} \leq \delta \int_U \abs{\nabla u^{\flat}}^2 + A_+ e^{-2 \norm{W}_{L^{\infty}(U)}} \int_U (u^{\flat})^2.
  \end{equation*}
  One has $\int_U e^{2 W} \abs{\nabla u^{\flat}}^2 \geq e^{-2 \norm{W}_{L^{\infty}(U)}} \int_U \abs{\nabla u^{\flat}}^2$.
  Therefore, by Proposition~\ref{prop:eigenvalues_of_AH},
  \begin{align*}
    \lambda_k^{\neu}(U) &= \inf_{ \substack{L \subspace H^1(U) , \\ \dim L = k}} \sup_{\substack{u^{\flat} \in L , \\ \int e^{2 W} (u^{\flat})^2 = 1}}
    \int_U e^{2 W} \abs{\nabla u^{\flat}}^2 + \cZ(u^{\flat}, u^{\flat})  \\
    &\geq \inf_{ \substack{L \subspace H^1(U) , \\ \dim L = k}} \sup_{\substack{u^{\flat} \in L , \\ \int e^{2 W} (u^{\flat})^2 = 1}}
    \frac{e^{-2 \norm{W}_{L^{\infty}(U)}}}{1 + \theta} \int_U \abs{\nabla u^{\flat}}^2 - A_+ e^{-2 \norm{W}_{L^{\infty}(U)}} \int_U (u^{\flat})^2  \\
    &\geq \frac{e^{-2 \norm{W}_{L^{\infty}(U)}}}{1 + \theta}
    \Big\{ \inf_{ \substack{L \subspace H^1(U) , \\ \dim L = k}} \sup_{\substack{u^{\flat} \in L , \\ \int e^{2 W} (u^{\flat})^2 = 1}}
    \int_U \abs{\nabla u^{\flat}}^2 \Big\} - A_+.
  \end{align*}
  We compute
  \begin{align}
\notag
    \inf_{ \substack{L \subspace H^1(U) , \\ \dim L = k}} \sup_{\substack{u^{\flat} \in L , \\ \int e^{2 W} (u^{\flat})^2 = 1}}
    \int_U \abs{\nabla u^{\flat}}^2
    &= \inf_{ \substack{L \subspace H^1(U) , \\ \dim L = k}} \sup_{\substack{u^{\flat} \in L,\\ \int e^{2 W} (u^{\flat})^2 \leq 1}} \int_U \abs{\nabla u^{\flat}}^2 \\
\notag
    &\geq \inf_{ \substack{L \subspace H^1(U) , \\ \dim L = k}} \sup_{\substack{u^{\flat} \in L,\\ \int (u^{\flat})^2 \leq e^{-2 \norm{W}_{L^{\infty}(U)}}}}
    \int_U \abs{\nabla u^{\flat}}^2 \\
    \label{eqn:estimate_eigenvalues_with_W}
    &= e^{-2 \norm{W}_{L^{\infty}(U)}} \lambda_k^{\neu}(U; 0, 0).
  \end{align}
  Therefore,
  \begin{equation*}
    \lambda^{\neu}_k(U) \geq \frac{e^{-4 \norm{W}_{L^{\infty}(U)}}}{1 + \theta} \lambda^{\neu}_k(U; 0, 0) - A_+
  \end{equation*}
  and the claimed inequality follows.
\end{proof}
As Lemma~\ref{lem:estimate_of_eigenvalue_counting_functions_of_AH} suggests, we need estimates of $\ecf^{\#}_0(U, \lambda)$.
The following lemma is sufficient for our purpose.
\begin{lemma}\label{lem:estimate_of_N_0}
  Let $U$ be a bounded Lipschitz domain.
  \begin{enumerate}
    \item
    \label{item:estimate_N_0_m}
    Then, there exist $C_U, R_U>0$ such that
    \begin{equation*}
      \ecf^{\fm}_0(U_L^R, \lambda) \leq C_U R^d (1+\lambda)^{\frac{d}{2}} L^{d-1}
    \end{equation*}
    for every $L \geq 1$, $\lambda \geq 0$ and $R \geq R_U$.
    \item
    \label{item:estimates_N_0_d_and_N_0_n}
    \textnormal{\cite[Theorem 3.1 and Theorem 3.2]{safarov_filonov_2010}}
     There exists a $C'_U>0$ such that
    \begin{align*}
      \frac{\abs{B(0, 1)}}{(2 \pi)^d} \abs{U} \lambda^{\frac{d}{2}} - C'_U \lambda^{\frac{d - 1}{2}} \log \lambda
  &    \leq \ecf^{\dir}_0(U, \lambda) \\
  &  \leq \ecf^{\neu}_0(U, \lambda)
      \leq \frac{\abs{B(0, 1)}}{(2 \pi)^d} \abs{U} \lambda^{\frac{d}{2}} + C'_U \lambda^{\frac{d - 1}{2}} \log \lambda
    \end{align*}
    for every $\lambda \geq 2$.
  \end{enumerate}
\end{lemma}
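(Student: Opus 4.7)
The plan is to apply Dirichlet--Neumann bracketing on a covering of $U_L^R$ by $\lesssim L^{d-1}$ pieces, each of diameter $\lesssim R$, and to apply to each piece a uniform Weyl-type Neumann counting bound derived from part~\ref{item:estimates_N_0_d_and_N_0_n}.

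First I would extract uniform local Lipschitz data for $\partial U$: by compactness, there exist $r_0 = r_0(U) > 0$ and $M = M(U) > 0$ such that for every $y \in \partial U$, after a suitable rotation $\partial U \cap B(y, r_0)$ is the graph of an $M$-Lipschitz function. Scaling to $\partial U_L$, the same representation holds in balls of radius $L r_0 \geq r_0$, with the same constants. I would cover $\partial U_L$ by unit balls $\overline{B}(z_i, 1)$, $i = 1, \ldots, N_L$, with $z_i \in \partial U_L$ and, by a packing argument using the finite collection of graph charts, $N_L \lesssim_U L^{d-1}$.

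Choose $R_U \geq 1$ large enough (depending only on $M$ and $r_0$) so that the construction below gives uniformly Lipschitz pieces. For $R \geq R_U$ one has $U_L^R \subseteq \bigcup_i \overline{B}(z_i, R+1) \subseteq \bigcup_i Q_i$, where $Q_i$ is an axis-aligned cube of side $\lesssim R$ centered at $z_i$. I would then construct a measurable partition $U_L^R = \bigsqcup_i \Omega_i$ with $\Omega_i \subseteq Q_i \cap U_L^R$, each $\Omega_i$ a bounded Lipschitz domain whose Lipschitz constants depend only on $M$. A concrete construction is: in each patch, straighten $\partial U_L$ by its local graph chart, cut the straightened region by axis-aligned slabs of width $\lesssim R$, and transport back. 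Dirichlet--Neumann bracketing (relaxing to Neumann on all internal cuts and, by monotonicity, on any part of $C(\partial U_L, R)$ meeting a cut) gives
\begin{equation*}
\ecf^{\fm}_0(U_L^R, \lambda) \leq \sum_{i=1}^{N_L} \ecf^{\neu}_0(\Omega_i, \lambda).
\end{equation*}
Applying to each $\Omega_i$ a Neumann Weyl-type bound (from part~\ref{item:estimates_N_0_d_and_N_0_n}, after rescaling $\Omega_i$ to unit diameter, at the cost of factors of $R$ and constants depending on $M$), I obtain $\ecf^{\neu}_0(\Omega_i, \lambda) \lesssim_U R^d (1+\lambda)^{d/2}$ uniformly in $i$ and $\lambda \geq 0$ (the factor $(1+\lambda)^{d/2}$ absorbs lower-order boundary corrections at small $\lambda$, as well as the finitely many eigenvalues below $2$). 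Summing over the $N_L \lesssim_U L^{d-1}$ pieces yields the claimed bound.

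The main obstacle is the construction of the partition so that every $\Omega_i$ is a bounded Lipschitz domain with Lipschitz constants bounded uniformly in $L$ and $R$. Naively intersecting $U_L^R$ with a cube $Q_i$ can produce wild corners where $C(\partial U_L, R)$ meets $\partial Q_i$ at small angles, blowing up the constant in the Weyl estimate. Straightening $\partial U_L$ in each coordinate patch before performing the cuts controls these angles; this is the only nontrivial geometric input, and it dictates how large $R_U$ has to be taken (so that the transported slabs remain well separated from the straightened boundary).
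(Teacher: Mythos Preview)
Your approach is essentially the same as the paper's, which does not give a self-contained argument but defers to the proof of \cite[Theorem~6.2]{doi_iwatsuka_mine_2001}, combining estimates (6.20), (6.23), (6.24), (6.25) there; that argument is precisely a Dirichlet--Neumann bracketing on a covering of the boundary layer by $\lesssim L^{d-1}$ pieces of controlled geometry, exactly as you outline. Your identification of the main technical point---obtaining pieces with uniformly bounded Lipschitz constants so that the per-piece Neumann bound is uniform---is accurate and is what the cited reference handles.

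One remark on the per-piece estimate: invoking part~\ref{item:estimates_N_0_d_and_N_0_n} for each $\Omega_i$ requires knowing that the constant $C'_U$ in the Safarov--Filonov bound depends only on the Lipschitz character of the domain, which is not explicit in the statement. A cleaner route, and closer to what is actually used in such arguments, is to bypass the sharp Weyl remainder entirely: once each $\Omega_i$ is bi-Lipschitz to the unit cube with bi-Lipschitz constant depending only on $M$, the variational characterization gives $\lambda_k^{\neu}(\Omega_i) \gtrsim_M R^{-2} \lambda_k^{\neu}([0,1]^d)$, and the explicit cube eigenvalues yield $\ecf_0^{\neu}(\Omega_i,\lambda) \lesssim_M (1 + R^2\lambda)^{d/2} \lesssim R^d(1+\lambda)^{d/2}$ directly. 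This avoids any appeal to part~\ref{item:estimates_N_0_d_and_N_0_n} and makes the uniformity manifest.
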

\begin{proof}
  The claim (a) follows from the proof of \cite[Theorem 6.2]{doi_iwatsuka_mine_2001}.
  Indeed, we can combine the estimates (6.20), (6.23), (6.24) and (6.25) therein.
\end{proof}

\begin{definition}
\label{def:ordering_symm_forms}
Let $\cQ_1$ and $\cQ_2$ be closed symmetric forms on Hilbert spaces $H_1$ and $H_2$ that are bounded from below.
We write  $\cQ_1 \prec \cQ_2$  if there exists an isometry $\Phi: H_2 \to H_1$ such that $\Phi(\D(\cQ_2)) \subseteq
  \D(\cQ_1)$ and
  $\cQ_1(\Phi(f), \Phi(f)) \leq \cQ_2(f, f)$ for every $f \in \D(\cQ_2)$.
\end{definition}

\begin{lemma}\label{lemma:eigenvalue_comparison}
Let $\cQ_1,\cQ_2$ be as in Definition~\ref{def:ordering_symm_forms} and let $A_1$ and $A_2$ be the associated self-adjoint operators.
Suppose that the spectrum of $A_1$ and that of $A_2$ are discrete. We denote them by
  $(\mu_k(A_1))_{k\in\N}$ and $(\mu_k(A_2))_{k\in\N}$, respectively. Then, $\cQ_1 \prec \cQ_2$ implies $\mu_k(A_1) \leq \mu_k(A_2)$ for every $k$.
\end{lemma}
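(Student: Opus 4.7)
The plan is to prove this via the min-max (Courant--Fischer) characterisation of eigenvalues, which is available under the assumption that both $A_1$ and $A_2$ have discrete spectra and are bounded from below (the same principle as used in Proposition~\ref{prop:eigenvalues_of_AH}). Namely, for $i \in \{1,2\}$ and $k \in \N$,
\begin{equation*}
  \mu_k(A_i) = \inf_{\substack{L \subspace \D(\cQ_i) \\ \dim L = k}} \sup_{\substack{f \in L \\ \norm{f}_{H_i} = 1}} \cQ_i(f, f).
\end{equation*}

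First I would let $L \subspace \D(\cQ_2)$ be an arbitrary $k$-dimensional subspace and consider its image $\Phi(L) \subseteq \D(\cQ_1)$ under the isometry $\Phi$ provided by the assumption $\cQ_1 \prec \cQ_2$. Since $\Phi$ is an isometry, it is injective, so $\Phi(L)$ is again $k$-dimensional and therefore an admissible test subspace for $\mu_k(A_1)$. For any $g \in \Phi(L)$ with $\norm{g}_{H_1} = 1$, writing $g = \Phi(f)$ with $f \in L$, the isometry property gives $\norm{f}_{H_2} = \norm{\Phi(f)}_{H_1} = 1$, and by the defining inequality of $\cQ_1 \prec \cQ_2$ we have $\cQ_1(g, g) = \cQ_1(\Phi(f), \Phi(f)) \leq \cQ_2(f, f)$.

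Taking supremum over normalised $f \in L$ (equivalently over normalised $g \in \Phi(L)$), this yields
\begin{equation*}
  \sup_{\substack{g \in \Phi(L) \\ \norm{g}_{H_1} = 1}} \cQ_1(g, g) \;\leq\; \sup_{\substack{f \in L \\ \norm{f}_{H_2} = 1}} \cQ_2(f, f).
\end{equation*}
Since $\Phi(L)$ is a $k$-dimensional subspace of $\D(\cQ_1)$, the left-hand side is an upper bound for $\mu_k(A_1)$, and hence
\begin{equation*}
  \mu_k(A_1) \;\leq\; \sup_{\substack{f \in L \\ \norm{f}_{H_2} = 1}} \cQ_2(f, f).
\end{equation*}
Taking infimum over all $k$-dimensional $L \subspace \D(\cQ_2)$ gives $\mu_k(A_1) \leq \mu_k(A_2)$, as required.

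There is no substantial obstacle here: the only subtlety is to be sure that $\Phi(L)$ is genuinely $k$-dimensional (which uses injectivity of the isometry) and that it lies in $\D(\cQ_1)$ (which is the containment $\Phi(\D(\cQ_2)) \subseteq \D(\cQ_1)$ built into the definition of $\cQ_1 \prec \cQ_2$). The min-max formula itself is standard for closed semibounded forms with discrete spectrum, so no additional analysis is needed.
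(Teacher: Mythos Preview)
Your proof is correct and follows exactly the same approach as the paper, which simply states ``This follows from the min-max formula.'' Your write-up supplies the details the paper omits, namely that the isometry $\Phi$ carries $k$-dimensional subspaces of $\D(\cQ_2)$ to $k$-dimensional subspaces of $\D(\cQ_1)$ and preserves the norm constraint, so the infimum for $\mu_k(A_1)$ is taken over a larger collection.
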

\begin{proof}
This follows from the min-max formula.
  \begin{calc}
  (Lemma~\ref{lem:eigenvalues_of_nonnegative_operator}),
  \begin{align*}
    \mu_k(A) &= \inf_{K \subspace \D(\cQ_A), \dim K = k} \sup_{g \in K, \norm{g} = 1}
    \cQ_A(g, g) \\
    &\leq \inf_{L \subspace \D(\cQ_B), \dim L = k} \sup_{f \in L, \norm{f} = 1}
    \cQ_A(\Phi(f), \Phi(f)) \\
    &\leq \inf_{L \subspace \D(\cQ_B), \dim L = k} \sup_{f \in L, \norm{f} = 1}
    \cQ_B(f, f) = \mu_k(B).
  \end{align*}
  \end{calc}
\end{proof}

In order to compare the eigenvalue counting functions on different domains, it will be convenient to introduce the following symmetric forms.

\begin{definition}
\label{def:sum_sym_forms}
Let $J\in \N$.
Let $\cE_j$ be a symmetric form on a Hilbert space $H_j$ for $j\in \{1,\dots,J\}$.
We define the symmetric form $\bigoplus_{j=1}^J \cE_j$ on the Hilbert space $\bigoplus H_j$ by $\D(\bigoplus_{j=1}^J \cE_j) = \bigoplus_{j=1}^J \D(\cE_j)$ and for $v = \bigoplus_{j=1}^J v_j$ with $v_j \in \D(\cE_j)$, $(\bigoplus_{j=1}^J \cE_j) (v,v) := \sum_{j=1}^J \cE_j(v_j,v_j)$.

Observe that if $A_j$ is the operator associated with $\cE_j$ for all $j$, then the operator $\bigoplus_{j=1}^J A_j$ defined by $\bigoplus_{j=1}^J A_j v = \bigoplus_{j=1}^J A_j v_j$ for $v= \bigoplus_{j=1}^J v_j \in \bigoplus_{j=1}^J \D(A_j) = : \D(\bigoplus_{j=1}^J A_j) $ is the operator associated with $\bigoplus_{j=1}^J \cE_j$.
In particular, the principal eigenvalue of $\bigoplus_{j=1}^J A_j$ is given by $\minjJ \lambda_1(A_j)$, where $\lambda_1(A_j)$ is the principal eigenvalue of $A_j$ for all $j$.

Moreover, if $A_j$ has a countable spectrum for all $j$, then one has $\ecf_{\bigoplus_{j=1}^J \cE_j} = \sum_{j=1}^J \ecf_{\cE_j}$, where $\ecf_\cQ$ is the eigenvalue counting function corresponding to the operator associated with $\cQ$.

Observe that using this notation, one also has $\ecf_{a\cQ+b \fI}(\lambda) = \ecf_{\cQ}(\frac{\lambda-b}{a})$, where $\fI$ is the symmetric form $\fI(v,v) = \norm{v}^2$.
Moreover, $\cQ_1 \prec \cQ_2$ implies $\ecf_{\cQ_1} \ge \ecf_{\cQ_2}$.
\end{definition}

\begin{lemma}\label{lem:box_decomposition_neumann}
Let $U,U_1,\dots,U_J\in \cU$,
$U = \cup_{j=1}^J U_j$ with $\overline U_j \cap \overline U_k = \partial U_j \cap \partial U_k$ for $j \neq k$.
\begin{enumerate}
\item
\label{item:dirichlet_ev_estimates_decomp}
If
\begin{align}
\label{eqn:cZ_restriction_consistence}
\cZ^{U_j}(v,v) = \cZ^{U}(v,v), \qquad v\in H_0^1(U_j), j\in \{1,\dots, J\},
\end{align}
 then
\begin{equation}\label{eq:box_decomposition_dirichlet}
    \lambda^{\dir}_1(U) \leq \minjJ \lambda_1^{\dir}(U_j)
    \quad \mbox{and} \quad
    \ecf^{\dir}(U, \lambda) \geq \sum_{j=1}^J \ecf^{\dir}(U_j, \lambda).
  \end{equation}
\item
\label{item:neumann_ev_estimates_decomp}
If
\begin{align}
\label{eq:cZ_decomposable}
\cZ^U(v, v) = \sum_{j=1}^J \cZ^{U_j}(v\vert_{U_j}, v\vert_{U_j}),
\quad v\in H^1(U),
\end{align}
 then
  \begin{equation*}
    \lambda^{\neu}_1(U) \geq \minjJ  \lambda_1^{\neu}(U_j)
    \quad \mbox{and} \quad
    \ecf^{\neu}(U, \lambda) \leq \sum_{j=1}^J \ecf^{\neu}(U_j, \lambda).
  \end{equation*}
\end{enumerate}
\end{lemma}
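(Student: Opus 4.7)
The plan is to use the Courant–Fischer min–max characterization from Proposition~\ref{prop:eigenvalues_of_AH} together with Lemma~\ref{lemma:eigenvalue_comparison} and the direct sum structure of Definition~\ref{def:sum_sym_forms}, exploiting the natural embeddings between $L^2(U)$ and $\bigoplus_j L^2(U_j)$ — extension by zero for the Dirichlet case (a), and restriction for the Neumann case (b).

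For part~(a), observe that extension by zero gives a linear isometry $\iota_j : L^2(U_j) \hookrightarrow L^2(U)$ that maps $e^W H^1_0(U_j)$ into $e^W H^1_0(U)$: writing $u_j = e^W u_j^{\flat}$ with $u_j^{\flat} \in H^1_0(U_j)$, the extension of $u_j^{\flat}$ by zero lies in $H^1_0(U)$. Since $\nabla \iota_j u_j^{\flat}$ is supported in $\overline{U_j}$, and since \eqref{eqn:cZ_restriction_consistence} gives $\cZ^U(\iota_j u_j^{\flat}, \iota_j u_j^{\flat}) = \cZ^{U_j}(u_j^{\flat}, u_j^{\flat})$, we get $\cE^{\dir,U}(\iota_j u_j, \iota_j u_j) = \cE^{\dir,U_j}(u_j, u_j)$. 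The $1$-dimensional min–max applied to $\operatorname{span}(\iota_j u_j)$ for a principal eigenfunction $u_j$ of $\H^{\dir,U_j}$ then yields $\lambda_1^{\dir}(U) \leq \lambda_1^{\dir}(U_j)$ for each $j$. For the counting-function inequality, take $k_j$-dimensional subspaces $L_j \subseteq e^W H^1_0(U_j)$ with $k_j = \ecf^{\dir}(U_j, \lambda)$ and $\cE^{\dir,U_j}(u,u) \leq \lambda \|u\|_{L^2(U_j)}^2$ on $L_j$. The images $\iota_j(L_j)$ have pairwise disjoint (up to measure zero) supports, so they are $L^2(U)$-orthogonal and their algebraic direct sum $\bigoplus_j \iota_j(L_j) \subseteq e^W H^1_0(U)$ has dimension $\sum_j k_j$. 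Since $|\nabla(\sum_j \iota_j u_j^{\flat})|^2 = \sum_j |\nabla \iota_j u_j^{\flat}|^2$ a.e.\ and since the forms $\cZ^U$ in play (of the type built in Theorem~\ref{theorem:examples_bd_sym_with_weighted_input}) are ``integral against a distribution of $v^2$'' and thus vanish on products of disjoint-support functions, for $u = \sum_j \iota_j u_j$ we have $\cE^{\dir,U}(u,u) = \sum_j \cE^{\dir,U_j}(u_j,u_j) \leq \lambda \|u\|_{L^2(U)}^2$, and the min–max formula delivers $\ecf^{\dir}(U, \lambda) \geq \sum_j k_j$.

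Part~(b) is the dual picture and is cleaner. The restriction map $\Phi : u \mapsto (u|_{U_j})_j$ is a linear isometry $L^2(U) \to \bigoplus_j L^2(U_j)$ that sends $e^W H^1(U)$ into $\bigoplus_j e^W H^1(U_j)$. Writing $u = e^W u^{\flat}$, additivity of the Lebesgue integral over $U = \bigcup_j U_j$ splits $\int_U e^{2W} |\nabla u^{\flat}|^2 = \sum_j \int_{U_j} e^{2W} |\nabla (u^{\flat}|_{U_j})|^2$, while the decomposability hypothesis \eqref{eq:cZ_decomposable} gives $\cZ^U(u^{\flat}, u^{\flat}) = \sum_j \cZ^{U_j}(u^{\flat}|_{U_j}, u^{\flat}|_{U_j})$. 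Together, these yield $\cE^{\neu,U}(u,u) = (\bigoplus_j \cE^{\neu,U_j})(\Phi u, \Phi u)$, i.e.\ $\bigoplus_j \cE^{\neu,U_j} \prec \cE^{\neu,U}$ in the sense of Definition~\ref{def:ordering_symm_forms}. Lemma~\ref{lemma:eigenvalue_comparison} applied to all eigenvalues and the additivity of counting functions for direct sums (Definition~\ref{def:sum_sym_forms}) then deliver both $\lambda_1^{\neu}(U) \geq \min_j \lambda_1^{\neu}(U_j)$ and $\ecf^{\neu}(U, \lambda) \leq \sum_j \ecf^{\neu}(U_j, \lambda)$.

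The main subtle point, and the one most likely to require care in the write-up, is the treatment of the $\cZ^U$ cross terms in part~(a): the hypothesis \eqref{eqn:cZ_restriction_consistence} is a priori only diagonal on $H^1_0(U_j)$, so there is no abstract reason that $\cZ^U(\iota_j v, \iota_k w) = 0$ for $j \neq k$. The reason this is harmless here is that the symmetric forms to which the lemma is applied come from concrete expressions (see Theorem~\ref{theorem:examples_bd_sym_with_weighted_input}) in which the integrand contains $v^2$ or $v w$ pointwise, so disjointness of supports makes the cross terms vanish identically; this is implicit in the intended usage of \eqref{eqn:cZ_restriction_consistence}.
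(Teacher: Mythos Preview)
Your approach is essentially the same as the paper's. For (b) you and the paper both verify $\bigoplus_j \cE^{\neu,U_j} \prec \cE^{\neu,U}$ via the restriction isometry and then invoke Lemma~\ref{lemma:eigenvalue_comparison} together with the observations in Definition~\ref{def:sum_sym_forms}; for (a) the paper compresses your extension-by-zero argument to the single line ``follows from $\oplus_{j=1}^J H^1_0(U_j) \subseteq H^1_0(U)$''.

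Your flag about the $\cZ^U$ cross terms in (a) is a genuine point: hypothesis~\eqref{eqn:cZ_restriction_consistence} is only diagonal on each $H^1_0(U_j)$ and does not by itself force $\cZ^U(\iota_j v,\iota_k w)=0$ for $j\neq k$, which is what the counting-function inequality needs. The paper's one-line proof glosses over this just as much as yours would without your final paragraph. Your resolution --- that the concrete forms actually used have $\cZ^U(u,v)=\langle \phi Y,\indic_U uv\rangle$ and hence vanish on disjoint-support pairs --- is correct and matches the intended usage; note that Remark~\ref{rem:example_decomposable} records that in the applications the decomposability condition~\eqref{eq:cZ_decomposable} holds as well, and~\eqref{eq:cZ_decomposable} applied to $v=\sum_j \iota_j v_j$ immediately kills the cross terms.
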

\begin{proof}
\ref{item:dirichlet_ev_estimates_decomp} follows from the fact that $\oplus_{j=1}^J H^1_0(U_j) \subseteq H^1_0(U)$.

\ref{item:neumann_ev_estimates_decomp}
As $L^2(U)$ and $\bigoplus_{j=1}^J L^2(U_j)$ are isometric,  $H^1(U) \subseteq \oplus_{j=1}^J H^1(U_j)$ and
  $\cE^U(u, u) = \sum_{j=1}^J \cE^{U_j}(u\vert_{U_j}, u\vert_{U_j})$ for $u \in e^{W} H^1(U)$, we have  $\oplus_{j=1}^J \cE^{\neu,U_j} \prec \cE^{\neu,U}$.
Now both inequalities follow from Lemma~\ref{lemma:eigenvalue_comparison} (see also the comments in Definition~\ref{def:sum_sym_forms}).
\end{proof}

\begin{remark}
\label{rem:example_decomposable}
Observe that \eqref{eqn:cZ_restriction_consistence} and \eqref{eq:cZ_decomposable} hold for $U,U_1,\dots,U_J$ as in Lemma~\ref{lem:upper_estimates_N}~\ref{item:box_decomposition_dirichlet_reversed} if $\delta\in (0,1)$, $\sigma \in [0,\infty)$,
$Y \in \csp^{-1 + \delta}(\R^d)$ and $\cZ^U$ is given by $\cZ^U_Y$ for $U \in \cU$ as in Theorem~\ref{theorem:examples_bd_sym_with_weighted_input}~\ref{item:sym_form_on_domain}; or if  $Y \in C^1(\R^d)$ and
 $\cZ^U$ is given for $U\in \cU$ by
  \begin{equation*}
    \cZ^U(v, v) \defby \int_{\partial U} v^2 \nabla Y \cdot \dd \bm{S},
  \end{equation*}
or if it is a linear combination of the above examples.
\end{remark}

We can give a ``reversed'' inequality of \eqref{eq:box_decomposition_dirichlet}.
 First we present an auxiliary lemma which is based on the IMS formula, see \cite{Si83}.

\begin{lemma}
\label{lem:IMS_trick}
Let $J\in \N$ and $U, U_1,\dots, U_J\in \cU$.
Let $\eta_1,\dots,\eta_J$ be smooth functions $\R^d \rightarrow [0,1]$ such that there exists an $A>0$ such that
\begin{align*}
\Big\| \sum_{j=1}^J |\nabla \eta_j|^2 \Big\|_{L^\infty(\R^d)} \le A, \quad j\in \{1,\dots, J\},
\qquad \sum_{j=1}^J \eta_j^2 = 1 \mbox{ on } U.
\end{align*}
Then
\begin{align*}
\cE^U_{W,0}(u,u) \ge \sum_{j=1}^J \cE_{W,0}^U(\eta_j u, \eta_j u) - A\|\eta_j u\|_{L^2}^2, \quad u\in e^W H^1(U).
\end{align*}
\end{lemma}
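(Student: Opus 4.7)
The plan is to reduce the claim to a pointwise Leibniz identity for $u^\flat$, where $u = e^W u^\flat$ with $u^\flat \in H^1(U)$. Since $\eta_j$ is smooth and bounded, $\eta_j u = e^W(\eta_j u^\flat)$ still lies in $e^W H^1(U)$, so $(\eta_j u)^\flat = \eta_j u^\flat$ and $\cE^U_{W,0}(\eta_j u, \eta_j u) = \int_U e^{2W} |\nabla(\eta_j u^\flat)|^2$ is well defined.

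The key step is to expand $\nabla(\eta_j u^\flat) = (\nabla \eta_j) u^\flat + \eta_j \nabla u^\flat$ and square:
\begin{equation*}
  |\nabla(\eta_j u^\flat)|^2 = \eta_j^2 |\nabla u^\flat|^2 + 2 \eta_j u^\flat \, (\nabla \eta_j) \cdot \nabla u^\flat + |\nabla \eta_j|^2 (u^\flat)^2.
\end{equation*}
Summing over $j$ and using $\sum_j \eta_j^2 = 1$ on $U$, which gives $\sum_j 2\eta_j \nabla \eta_j = \nabla(\sum_j \eta_j^2) = 0$ on $U$, the cross term vanishes and
\begin{equation*}
  \sum_{j=1}^J |\nabla(\eta_j u^\flat)|^2 = |\nabla u^\flat|^2 + \Big(\sum_{j=1}^J |\nabla \eta_j|^2\Big) (u^\flat)^2 \leq |\nabla u^\flat|^2 + A (u^\flat)^2 \qquad \text{on } U.
\end{equation*}

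I would then multiply by $e^{2W}$ and integrate over $U$. Since $u^2 = e^{2W}(u^\flat)^2$, this yields
\begin{equation*}
  \sum_{j=1}^J \cE^U_{W,0}(\eta_j u, \eta_j u) \leq \cE^U_{W,0}(u,u) + A \|u\|_{L^2(U)}^2.
\end{equation*}
Using $\sum_j \|\eta_j u\|_{L^2(U)}^2 = \int_U (\sum_j \eta_j^2) u^2 = \|u\|_{L^2(U)}^2$ to distribute the constant term back inside the sum, we obtain the asserted inequality.

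There is no serious obstacle: the only thing to check carefully is the applicability of Leibniz, which holds because $\eta_j \in C^\infty_b$ and $u^\flat \in H^1(U)$, and the vanishing of the cross term, which needs the partition identity $\sum_j \eta_j^2 = 1$ to hold precisely on $U$ (not just on $\R^d$), so that the derivative identity is valid in $L^2(U)$.
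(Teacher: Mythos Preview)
Your proof is correct and follows essentially the same route as the paper's: both expand $|\nabla(\eta_j u^\flat)|^2$ by the Leibniz rule, sum over $j$, use $\sum_j \nabla(\eta_j^2)=0$ on $U$ to cancel the cross term, and then multiply by $e^{2W}$ and integrate. Your final redistribution $A\|u\|_{L^2}^2 = A\sum_j \|\eta_j u\|_{L^2}^2$ via $\sum_j \eta_j^2 = 1$ makes the match with the stated inequality explicit, which the paper leaves implicit.
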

\begin{proof}
Observe that $\sum_{j=1}^J \nabla (\eta_j^2) =0$.
Let $u = e^{W} u^{\flat}$ with $u^{\flat} \in H^1(U)$.
Then
\begin{align*}
\eta_j^2 |\nabla u^\flat|^2 =
\abs{\nabla (\eta_j u^{\flat})}^2
    - \abs{\nabla \eta_j}^2 (u^{\flat})^2 - \nabla(\eta_j^2) \cdot u^{\flat} \nabla u^{\flat},
\end{align*}
and therefore 
  \begin{align*}
   \MoveEqLeft[1]
    \int_{U} e^{2 W} \abs{\nabla u^{\flat}}^2
    = \sum_{j=1}^J \int_U
    e^{2 W} \eta_j^2 \abs{\nabla u^{\flat}}^2 \\
    &\geq \sum_{j=1}^J \Big\{ \int_U
    e^{2 W}  \abs{\nabla (\eta_j u^{\flat})}^2 - A \norm{\eta_j u^\flat }_{
    L^2}^2 \Big\}.
  \end{align*}
\end{proof}

\begin{remark}
So far we have only considered the Anderson Hamiltonians on bounded domains, which means bounded open subsets of $\R^d$.
However, whether one considers $U$ or $\overline U$, does not intrinsically make a difference. In the following lemma and further on we will consider the Anderson Hamiltonian on closed boxes of the form $[0,L]^d$ for example. One may read $(0,L)^d$ instead in order to align with the rest of the text, though we write $[0,L]^d$ as this is more common in the literature.
\end{remark}

\begin{lemma}\label{lem:upper_estimates_N}
  Let $Z \in \csp^{-1 + \delta}(\R^d)$ with $\delta \in (0, 1)$ and
  suppose  $\cZ^U= \cZ^U_Z$ as in Theorem~\ref{theorem:examples_bd_sym_with_weighted_input}~\ref{item:sym_form_on_domain} for every Lipschitz domain $U$.
\begin{enumerate}
\item
\label{item:box_decomposition_dirichlet_reversed}
  There exists a  $K > 0$ (which depends only on $d$) such that for all $U\in \cU$, all $l,L>0$ with $L>2l $ and $n\in\N$,
  \begin{align*}
    \lambda_1^{\dir}([0, nL]^d)
    &\geq \min_{k \in \Z^d \cap [-1, n+1]^d} \lambda_1^{\dir}
    (kL + [-l, L + l]^d) - \frac{K}{l^2}, \\
    \ecf^{\dir}([0, nL]^d, \lambda) &\leq \sum_{k \in \Z^d \cap [-1, n+1]^d}
    \ecf^{\dir}\Big(kL + [-l, L + l]^d, \lambda + \frac{K}{l^2} \Big).
  \end{align*}
\item
\label{item:upper_bound_of_neumann_ids_by_dirichlet_ids}
There exists a $K>0$ (depending only on $d$) such that for all $U\in \cU$  and $s\in (1-\delta,1)$  there exist $C_{s, U}, R_U>0$ such that for all $L \ge 1$, $\lambda \in \R$ and $R \ge R_U$,
  \begin{align*}
    \ecf^{\neu}(U_L, \lambda) & \leq \ecf^{\dir}(U_L, \lambda + K R^{-2}) \\
&  \qquad   + C_{s, U} R^{d} L^{d - 1}  e^{d \frac{3 - 2s}{1 - s}\norm{W}_{L^{\infty}(U_L)}}
     (1 + \max\{\lambda, 0\} + \formnorm{\cZ}_{H^s(U)}^{\frac{1}{1-s}} )^{\frac{d}{2}} .
  \end{align*}
\end{enumerate}
\end{lemma}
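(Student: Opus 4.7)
The strategy is a standard IMS-type localisation combined with the fact that $\cZ=\cZ_Z$ splits additively under partitions of unity (Remark~\ref{rem:example_decomposable}). I would fix a smooth partition of unity $\{\phi_k\}_{k\in\Z^d\cap[-1,n+1]^d}$ with $\supp(\phi_k)\subseteq kL+[-l,L+l]^d$, $\sum_k\phi_k^2\equiv 1$ on $[0,nL]^d$, and $\sum_k|\nabla\phi_k|^2\le K/l^2$ with $K=K(d)$; this exists under the hypothesis $L>2l$. For $u=e^W u^\flat\in e^W H^1_0([0,nL]^d)$, extended by zero to $\R^d$, Lemma~\ref{lem:IMS_trick} yields
\begin{equation*}
\int_{[0,nL]^d}e^{2W}|\nabla u^\flat|^2 \ge \sum_k\int_{kL+[-l,L+l]^d} e^{2W}|\nabla(\phi_k u^\flat)|^2 - \frac{K}{l^2}\|u\|_{L^2}^2.
\end{equation*}
Since the representation $\cZ^V_Z(v,v)=\langle\phi Z,\indic_V v^2\rangle$ from Theorem~\ref{theorem:examples_bd_sym_with_weighted_input}\ref{item:sym_form_on_domain} is linear in $v^2$, the identity $u^2=\sum_k\phi_k^2 u^2$ directly gives $\cZ^{[0,nL]^d}(u,u)=\sum_k\cZ^{kL+[-l,L+l]^d}(\phi_k u,\phi_k u)$. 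Adding these produces the localisation
\begin{equation*}
\cE^{[0,nL]^d}(u,u) \ge \sum_k\cE^{kL+[-l,L+l]^d}(\phi_k u,\phi_k u) - \frac{K}{l^2}\|u\|_{L^2}^2,
\end{equation*}
which, applied to a normalised principal eigenfunction (using $\sum_k\|\phi_k u\|_{L^2}^2=1$), gives the stated bound on $\lambda_1^{\dir}([0,nL]^d)$. For the counting-function inequality the isometry $\Phi:u\mapsto(\phi_k u)_k$ from $L^2([0,nL]^d)$ into $\bigoplus_k L^2(kL+[-l,L+l]^d)$, which maps $H_0^1$ into $\bigoplus_k H_0^1$, combined with the min-max formula of Proposition~\ref{prop:eigenvalues_of_AH}, yields $\tilde\lambda_j\le\lambda_j^{\dir}([0,nL]^d)+K/l^2$ for the $j$-th eigenvalue $\tilde\lambda_j$ of the direct-sum form; via Definition~\ref{def:sum_sym_forms} this is exactly the claimed counting-function bound.

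\textbf{Plan for (b).} I would introduce a two-piece partition of unity $\eta_{\mathrm{in}}^2+\eta_{\mathrm{bd}}^2\equiv 1$ on $U_L$ with $\eta_{\mathrm{in}}$ vanishing in a neighbourhood of $\partial U_L$, $\eta_{\mathrm{bd}}$ supported in $U_L^R$ and vanishing in a neighbourhood of $C(\partial U_L,R)$, and $|\nabla\eta_{\mathrm{in}}|^2+|\nabla\eta_{\mathrm{bd}}|^2\le K/R^2$; such a partition exists for every $R\ge R_U$ with $R_U$ depending only on the geometry of $U$. Then $\eta_{\mathrm{in}} u\in e^W H^1_0(U_L)$ and $\eta_{\mathrm{bd}} u\in e^W H^1_{\fm,R}(U_L^R)$, and the IMS plus $\cZ$-splitting computation from (a) yields
\begin{equation*}
\cE^{\neu,U_L}(u,u) \ge \cE^{\dir,U_L}(\eta_{\mathrm{in}} u,\eta_{\mathrm{in}} u) + \cE^{\fm,U_L^R}(\eta_{\mathrm{bd}} u,\eta_{\mathrm{bd}} u) - \frac{K}{R^2}\|u\|_{L^2}^2,
\end{equation*}
where $\cE^{\fm,U_L^R}_{W,\cZ}$ denotes the form with domain $e^W H^1_{\fm,R}(U_L^R)$. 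The isometry $u\mapsto(\eta_{\mathrm{in}} u,\eta_{\mathrm{bd}} u)$ plus the min-max formula, exactly as in (a), give
\begin{equation*}
\ecf^{\neu}(U_L,\lambda) \le \ecf^{\dir}(U_L,\lambda+K/R^2) + \ecf^{\fm}(U_L^R,\lambda+K/R^2).
\end{equation*}

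\textbf{Bounding the boundary term and main obstacle.} To estimate the second summand I would run the proof of Lemma~\ref{lem:estimate_of_eigenvalue_counting_functions_of_AH} verbatim for the mixed form (only min-max and Lemma~\ref{lem:estimate_of_Z} are used), obtaining $\ecf^{\fm}(U_L^R,\mu)\le\ecf_0^{\fm}(U_L^R,\Lambda^+_{\mu,\theta}(W,\cZ))$, and then apply Lemma~\ref{lem:estimate_of_N_0}\ref{item:estimate_N_0_m} to bound the latter by $C_U R^d L^{d-1}(1+\max(\Lambda^+,0))^{d/2}$. Fixing $\theta=1$ and simplifying $A_+$ (and noting that the case $\Lambda^+<0$ makes the bound trivial) gives $\Lambda^+_{\lambda+K/R^2,1}\lesssim e^{2(3-2s)/(1-s)\|W\|_{L^\infty(U_L)}}(1+\max(\lambda,0)+\formnorm{\cZ}_{H^s(U)}^{1/(1-s)})$ for $R\ge R_U\ge 1$; raising to the power $d/2$ produces exactly the exponential and polynomial factors in the statement. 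The technical heart of the argument is the additive splitting of the $\cZ$-part under partitions of unity, which relies essentially on the linear-in-$v^2$ representation of $\cZ_Z$; unlike the gradient term, which only satisfies an IMS identity with the $K/l^2$ or $K/R^2$ error, this splitting is lossless and is what allows the scheme to work. A secondary point to verify is that $U_L^R$ is regular enough for $\cE^{\fm,U_L^R}$ to be well-posed and for Lemma~\ref{lem:estimate_of_N_0}\ref{item:estimate_N_0_m} to apply, which is exactly what the condition $R\ge R_U$ encodes.
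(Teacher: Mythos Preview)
Your plan for (a) is exactly what the paper does: the partition of unity (the paper cites \cite[Lemma~8.2]{chouk2020asymptotics} for its existence), the IMS localisation via Lemma~\ref{lem:IMS_trick}, the lossless splitting of $\cZ_Z$, and the isometry $\Phi$ combined with the ordering $\prec$ of Definition~\ref{def:ordering_symm_forms}.

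For (b) your structure is also the paper's: a two-piece partition $\alpha_1^2+\alpha_2^2=1$ (the paper cites \cite[Proposition~4.3]{doi_iwatsuka_mine_2001}), IMS, and the splitting
\[
\ecf^{\neu}(U_L,\lambda)\le \ecf^{\dir}(U_L,\lambda+KR^{-2})+\text{(boundary strip term)}.
\]
There is however one real issue in how you propose to bound the strip term. Running Lemma~\ref{lem:estimate_of_eigenvalue_counting_functions_of_AH} verbatim on the domain $U_L^R$ produces a $\Lambda^+$ whose $A_+$ contains $C_{\operatorname{IP}}^{U_L^R}[H^s]^{2/(1-s)}$. Nothing in the paper controls this constant: $U_L^R$ is an annular region whose inner boundary $C(\partial U_L,R)$ need not even be Lipschitz, and no scaling lemma for it is available. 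If you absorb it into $C_{s,U}$ you are implicitly claiming a uniform-in-$L$ bound that has not been established.

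The paper avoids this by not working on $U_L^R$ at all when estimating $\cZ$. Since $\alpha_2 u^\flat$ is supported in $U_L^R$, one has
\[
\int_{U_L^R}\abs{\nabla(\alpha_2 u^\flat)}^2=\int_{U_L}\abs{\nabla(\alpha_2 u^\flat)}^2,\qquad
\cZ^{U_L^R}(\alpha_2 u,\alpha_2 u)=\cZ^{U_L}(\alpha_2 u,\alpha_2 u),
\]
so Lemma~\ref{lem:estimate_of_Z} can be applied on the full domain $U_L$, yielding $C_{\operatorname{IP}}^{U_L}[H^s]$, which \emph{is} uniformly bounded by Lemma~\ref{lem:scaling_of_iota}. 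After this one compares directly with $\ecf_0^{\fm}(U_L^R,\cdot)$ and invokes Lemma~\ref{lem:estimate_of_N_0}\ref{item:estimate_N_0_m}. Your argument becomes correct once you make this zero-extension observation explicit; without it the interpolation step is unjustified.
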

\begin{proof}
\ref{item:box_decomposition_dirichlet_reversed}
  According to \cite[Lemma 8.2]{chouk2020asymptotics}, there exists a smooth function $\eta: \R^d
  \to [0, 1]$ and a $K>0$, such that $\eta = 1$ on $[0, L - 2l]^d$,
  $\supp(\eta) \subseteq [-2l, L]^d$,
  $\norm{\nabla \eta}_{L^{\infty}(\R^d)}^2 \leq \frac{K}{2^d l^2}$ and
  \begin{equation*}
    \sum_{k \in \Z^d} \eta(x + k L)^2 = 1 \quad \mbox{ for } x \in \R^d.
  \end{equation*}
  We set $\eta_k \defby \eta(\,\cdot + (l, l, \ldots, l) + Lk)$ for $k\in \Z^d$.
  Observe that $\supp(\eta_k) \subseteq kL + [-l, L + l]^d$, and,
  \begin{align*}
  \sum_{k \in [-1, n+1]^d \cap \Z^d} \eta_k^2 = 1 \mbox{ on } [0,nL]^d, 
  \qquad \Big\| \sum_{k\in [-1, n+1]^d \cap \Z^d} |\nabla \eta_k|^2 \Big\|_{L^\infty(\R^d)} \le \frac{K}{l^2}. 
  \end{align*}
  Therefore, the map
  \begin{equation*}
    \Phi: L^2([0, nL]^d) \rightarrow \bigoplus_{k \in [-1, n+1]^d \cap \Z^d} L^2
    (kL + [-l, L + l ]^d), \quad  u \mapsto (\eta_k u)_{k \in [-1, n+1]^d \cap \Z^d}.
  \end{equation*}
  is an isometry and $\Phi(e^{W} H^1_0([0, nL]^d))
  \subseteq \oplus_{k \in [-1, n+1]^d \cap \Z^d} e^{W} H^1_0(kL + [-l, L + l]^d)$.

Observe that for $u\in H^1_0([0, nL]^d)$, for  $\phi \in C_c^\infty(\R^d)$ such that $\phi =1$ on a neighborhood of $[0,nL]^d$,
\begin{align*}
\cZ_Z^{[0,nL]^d} (u,u)
= \inp{ \phi Z}{ \1_{[0,nL]^d} u^2}
& = \sum_{k \in [-1, n+1]^d \cap \Z^d} \inp{ \phi Z}{ \1_{[0,nL]^d} \eta_k^2 u^2 } \\
& = \sum_{k \in [-1, n+1]^d \cap \Z^d} \cZ_Z^{kL + [-l, L + l]^d}  (\eta_k u, \eta_k u).
\end{align*}
  Therefore, by Lemma~\ref{lem:IMS_trick}, 
  \begin{equation*}
    \E_{W,\cZ}^{[0, nL]^d}(u, u)
    \geq \sum_{k \in [-1, n+1]^d \cap \Z^d} \Big\{ \E_{W,\cZ}^{kL + [-l, L + l]^d}(\eta_k u, \eta_k u)
    - \frac{K}{l^2} \norm{\eta_k u}_{L^2(kL + [-l, L + l]^d)}^2 \Big\}.
  \end{equation*}
and thus $\E_{W,\cZ}^{[0, nL]^d} \succ \bigoplus_{k \in [-1, n+1]^d \cap \Z^d} [ \E_{W,\cZ}^{kL + [-l, L + l]^d} - \frac{K}{l^2} \fI ]$ (where $\fI$ is as in Definition~\ref{def:sum_sym_forms}), from which we conclude the estimates (use the discussion in Definition~\ref{def:sum_sym_forms}).

\ref{item:upper_bound_of_neumann_ids_by_dirichlet_ids}
  As given in \cite[Proposition 4.3]{doi_iwatsuka_mine_2001}, there exist smooth functions $\alpha_1$ and $\alpha_2$ on $\R^d$ and a $K>0$ (only depending on $d$) such that
  \begin{align*}
&     \supp(\alpha_1) \subseteq U_L \setminus B(\partial U_L, \tfrac{R}{2}), \quad \supp(\alpha_2) \subseteq B(\partial U_L,R),  \\
& \alpha_1^2 + \alpha_2^2 =1 \mbox{ on a neighborhood of } U_L ,
\qquad    \Big\|  \sum_{j=1}^2 \abs{\nabla \alpha_j}^2 \Big\|_{L^\infty(\R^d)} \leq K R^{-2}.
  \end{align*}
Recall the definitions of $U_L^R$ and $H^1_{\fm,R}(U^R_L)$ from Definition~\ref{def:eigenvalue_counting_function}.
  The map
  \begin{equation*}
    \Phi: L^2(U_L) \rightarrow  L^2(U_L) \oplus L^2(U^R_L), \quad u \mapsto \alpha_1 u \oplus \alpha_2 u
  \end{equation*}
  is an isometry and $\Phi(e^W H^1(U_L)) \subseteq e^W H^1_0(U_L) \oplus e^W H^1_{\fm,R}(U^R_L)$.

Observe that $\cZ^{U_L}(\alpha_2 u,  \alpha_2 u) = \cZ^{U_L^R}( \alpha_2 u, \alpha_2 u)$ as $\supp \alpha _2 \cap U_L \subseteq U_L^R$.
Therefore, by Lemma~\ref{lem:IMS_trick}, for $u\in H^1(U_L)$, 
  \begin{equation*}
    \cE^{U_L}(u, u) \geq
 \cE^{U_L}(\alpha_1 u,\alpha_1 u)
+ \cE^{U_L^{R}}(\alpha_2 u,\alpha_2 u)
%    \sum_{j=1}^2 \cE^{U_L}(\alpha_j u, \alpha_j u)
   -  \sum_{j=1}^2 K R^{-2} \norm{\alpha_j u}_{L^2(U_L)}^2.
  \end{equation*}
 By applying Lemma~\ref{lem:estimate_of_Z} with $\delta= \frac{e^{-2\|W\|_{L^\infty(U_L)}}}{2}$, we obtain
  \begin{align*}
    \cE^{U_L^{R}}(\alpha_2 u, \alpha_2 u)
    & \geq \frac{e^{-2 \norm{W}_{L^{\infty}( U_L )}}}{2} \int_{U^R_L} \abs{\nabla (\alpha_2 u^{\flat})}^2
    - A \norm{\alpha_2 u}^2_{L^2(U^R_L)} ,
  \end{align*}
  where
  \begin{equation*}
    A \defby \frac{e^{-2\|W\|_{L^\infty(U_L)}}}{2} + 2^{\frac{s}{1-s}}
    e^{\frac{2 s }{1-s} \norm{W}_{L^{\infty}( U_L )}}
    C_{\operatorname{IP}}^{U_L}[H^s]^{\frac{2}{1-s}}
    \formnorm{\cZ}_{H^s(U)}^{\frac{1}{1-s}}.
  \end{equation*}
 Therefore,
$\cE^{\neu,U_L} \succ (\cE^{\dir,U_L} - KR^{-2} \fI) + (\cE^{\fm,R,U_L^R} - (KR^{-2}+A) \fI )$, where $\cE^{\fm,R,U_L^R}$ is the restriction of $\cE^{\neu,U_L^R}$ to $H_{\fm,R}(U_L^R)$, and thus, by Lemma~\ref{lemma:eigenvalue_comparison} (the additional factor $e^{2\norm{W}_{L^\infty(U_L)}}$ is explained similarly as in \eqref{eqn:estimate_eigenvalues_with_W}),
  \begin{equation*}
    \ecf^{\neu}(U_L, \lambda) \leq \ecf^{\dir}(U_L, \lambda + KR^{-2})
    + \ecf^{\fm}_0(U^R_L, 2 e^{4 \norm{W}_{L^{\infty}( U_L )}} (\lambda + KR^{-2} + A)).
  \end{equation*}
  By Lemma~\ref{lem:estimate_of_N_0}~\ref{item:estimate_N_0_m}, for $R \geq R_{U}$,
  \begin{align*}
&     \ecf^{\fm}_0(U^R_L, 2 e^{4 \norm{W}_{L^{\infty}(U)}} (\lambda + KR^{-2} + A)) \\
& \qquad     \lesssim_U R^{d} L^{d-1}
    \big\{ e^{4 \norm{W}_{L^{\infty}(U)}} (1+\max\{\lambda, 0\}  + K R^{-2} + A) \big\}^{\frac{d}{2}} .
  \end{align*}
  It remains to apply Lemma~\ref{lem:scaling_of_iota}, more specifically \eqref{eqn:estimate_scaling_Embed}: $C^{U_L}_{\operatorname{IP}}[H^s] \lesssim_{s,U} 1$.
\end{proof}

\section{The Anderson Hamiltonian with Dirichlet and Neumann boundary conditions}\label{sec:eigenvalues_of_AH}

Based on the results obtained Section~\ref{sec:symmetric_forms} we can give the definition of the Anderson Hamiltonian
$-\Delta - \xi$ with Dirichlet and with Neumann boundary conditions, and show that it is the limit of the operators $-\Delta - \xi_{\epsilon} + c_{\epsilon}$, where
the $c_{\epsilon}$'s are as in Assumption~\ref{assump:all_three_assump}~\ref{item:construction_assump}.

We discuss the construction of the Anderson Hamiltonian with Dirichlet boundary conditions in Section~\ref{subsec:dirichlet_anderson} and with Neumann boundary conditions in Section~\ref{subsec:Neumann}. 
In Section~\ref{subsec:IDS} we consider the integrated density of states associated to the Anderson Hamiltonian.

%For Neumann boundary conditions we have to deal with the additional boundary term, see Remark~\ref{rem:neumann_construction}. We impose the Neumann assumption~\ref{assump:all_three_assump}~\ref{item:neumann_assump} in order to deal with these boundary terms. More precisely, we are able to handle this by means of Theorem~\ref{theorem:examples_bd_sym_with_weighted_input}~\ref{item:sym_form_boundary_tilde} by showing that the terms $\tilde Y_\epsilon^U$ (as mentioned in the beginning of Section~\ref{sec:symmetric_forms}) converge. Let us indicate that the 3D white noise  does not satisfy the Neumann condition~\ref{assump:all_three_assump}~\ref{item:neumann_assump}, though the 2D white noise does.
%The additional assumption and the convergence of the terms  $\tilde Y_\epsilon^U$ are considered in Section~\ref{subsec:stoch_terms_neumann}.

\subsection{The Dirichlet Anderson Hamiltonian}
\label{subsec:dirichlet_anderson}

\begin{secassump}[for this section]
In this section we impose the construction assumption~\ref{assump:all_three_assump}~\ref{item:construction_assump}. 
%assume~\ref{assump:convergence_of_BPHZ}.
\end{secassump}

As discussed in Remark~\ref{rem:towards_construction_assumptions}, 
we will choose $M$ to be a random variable with values in $\N_0$ such that $F(W_M^\epsilon) =e^{2W_M^\epsilon}$, where $F$ is as in Assumption~\ref{assump:all_three_assump}~\ref{item:construction_assump}:

\begin{definition}
\label{def:dirichlet_AH}
 Let $U$ be a bounded domain and let $r\in (1-\delta,1)$. Using the notation of Theorem~\ref{theorem:examples_bd_sym_with_weighted_input}~\ref{item:sym_form_on_domain}, for $N\in\N_0$ we define the following symmetric forms on $H_0^r(U)$:
  \begin{equation}\label{eq:def_of_cZ_AH}
    \cZAH_N[U] \defby
	\cZ^U_{\YAH_N}, \quad
    \cZAHeps_N[U] \defby
    \cZ^U_{\YAHeps_N}.
  \end{equation}
For $\delta_- \in (0, \delta]$ and $\gamma \in (0, \infty)$, we set
  \begin{equation}
  \label{eq:M_AH}
    \MAH(U, \delta_-; \gamma) \defby \inf \set{N \in \N \given  \norm{\WAH_n}_{\csp^{\delta_-}(U)} \leq \gamma \mbox{ for all } n \ge N },
  \end{equation}
  which is finite by Lemma~\ref{lemma:estimate_W_n_on_U_L}.
  Recalling the notation from Definition~\ref{def:self_adjoint_operator_for_cE_W_cZ}, for
  $M =  \MAH(U, \deltamin; 1)$ (which attains its values in $\N_0$ by Lemma~\ref{lemma:estimate_W_n_on_U_L}), we define
  \begin{equation*}
    \HAH^{\dir} \defby \HAH^{\dir,U} \defby \cH_{\WAH_M, \cZ_\MAH[U]}^{\dir,U}, \quad
    \HAHeps^{\dir} \defby \HAHeps^{\dir,U} \defby \cH_{\WAHeps_M, \cZAHeps_M[U]}^{\dir,U}.
  \end{equation*}
  Recalling Proposition~\ref{prop:eigenvalues_of_AH}, we set
  \begin{equation*}
    \lambdaAHk^{\dir}(U) \defby \lambda_k^{\dir}(U; \WAH_M, \cZAH_M[U]), \quad
    \lambdaAHkeps^{\dir}(U) \defby \lambda_k^{\dir}(U; \WAHeps_M, \cZAHeps_M[U]).
  \end{equation*}
\end{definition}

\begin{theorem}\label{thm:convergence_of_Dirichlet_AH}
%  Assume~\ref{assump:convergence_of_BPHZ}.
  For $\epsilon \in (0,1)$, we have 
\begin{align}
\label{eqn:H_eps_fd_equals_smooth_AH}
\HAHeps^{\dir} = -\Delta -\xi_{\epsilon} + c_{\epsilon}.
\end{align}
  Let $(\epsilon_n)_{n\in\N}$ be a sequence in $(0, 1)$ such that $\epsilon_n \to 0$.
  Then, there exist a subsequence $(\epsilon_{n_m})_{m\in\N}$ and a subset
  $\Omega_1 \subseteq \Omega$ of $\P$-probability $1$ such that on $\Omega_1$ the following holds:
  for any bounded domain $U$, one has
\begin{align}
  \label{eqn:limit_resolvent_sense_dirichlet}
  \HAHepsnm^{\dir,U} \normresconv_{m\rightarrow \infty}
  \HAH^{\dir,U},
\end{align}
for all $k\in\N$
  \begin{equation*}
    \lim_{m \to \infty} \lambdaAHkepsnm^{\dir}(U) = \lambdaAHk^{\dir}(U),
  \end{equation*}
and there exist an eigenfunction $\phi_k$ of $\HAH^{\dir,U}$ corresponding to $\lambdaAHk^{\dir}$ and eigenfunctions $\phi_{k,m}$ of $\HAH^{\dir,U}_{\epsilon_{n_m}}$ corresponding to $\lambdaAHkepsnm^{\dir}(U)$ for all $m\in\N$ such that 
\begin{align*}
\phi_{k,m} \xrightarrow{m\to \infty} \phi_k \quad \mbox{ in } L^2(U). 
\end{align*} 
\end{theorem}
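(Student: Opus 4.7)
The plan is to reduce the theorem to an application of the deterministic symmetric-form convergence results of Section~\ref{sec:symmetric_forms}, with the stochastic input coming only through Assumption~\ref{assump:all_three_assump}~\ref{item:construction_assump}.

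First I would establish the identity \eqref{eqn:H_eps_fd_equals_smooth_AH}. The operator $-\Delta - \xi_\epsilon + c_\epsilon$ with Dirichlet boundary conditions is the self-adjoint operator associated with the symmetric form $(u,u)\mapsto \int_U|\nabla u|^2 - (\xi_\epsilon - c_\epsilon)u^2$ on $H_0^1(U)$. Since $W^\epsilon_M$ is smooth and, by the very definition of $M=\MAH(U,\deltaone;1)$, satisfies $\|W^\epsilon_M\|_{L^\infty(U)}\le 1$ (for $\epsilon$ small; more on that below), multiplication by $e^{\pm W^\epsilon_M}$ preserves $H^1_0(U)$. Writing $u=e^{W^\epsilon_M} v$ and using Lemma~\ref{lem:sym_form_transform_Neumann} with $\zeta=\xi_\epsilon-c_\epsilon$ and $w=W^\epsilon_M$ — the boundary term vanishing for $v\in H^1_0(U)$ — together with $F(W^\epsilon_M)=-e^{2W^\epsilon_M}$ (valid because $|W^\epsilon_M|\le 2$), the form equals $\int_U e^{2W^\epsilon_M}|\nabla v|^2+\int_U Y^\epsilon_M v^2 = \cE^{\dir,U}_{W^\epsilon_M,\cZAHeps_M[U]}(u,u)$, which is precisely the form associated with $\HAHeps^{\dir,U}$.

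Next I would extract the subsequence. By the construction assumption, together with Corollary~\ref{cor:estimates_of_G_N_and_H_N} giving $\|W^\epsilon_N-W_N\|_{L^p(\P,\cC^{\deltaone,\sigma}(\R^d))}\lesssim\|X^\epsilon-X\|_{L^p(\P,\cC^{-2+\deltaone,\sigma}(\R^d))}$, the sequences $W^{\epsilon_n}_N\to W_N$ in $L^p(\P,\cC^{\deltaone,\sigma}(\R^d))$ and $Y^{\epsilon_n}_N\to Y_N$ in $L^p(\P,\cC^{-1+\deltaone,\sigma}(\R^d))$ for every $N\in\N_0$, $\sigma\in\N$, and $p\in[1,\infty)$. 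A standard diagonal extraction produces a single subsequence $(\epsilon_{n_m})$ and a set $\Omega_1\subseteq\Omega$ with $\P(\Omega_1)=1$ on which
\begin{align*}
W^{\epsilon_{n_m}}_N(\omega)\to W_N(\omega)\ \text{in}\ \cC^{\deltaone,\sigma}(\R^d),\qquad Y^{\epsilon_{n_m}}_N(\omega)\to Y_N(\omega)\ \text{in}\ \cC^{-1+\deltaone,\sigma}(\R^d)
\end{align*}
for every $\omega\in\Omega_1$ and every $N,\sigma\in\N$.

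Now I would fix $\omega\in\Omega_1$ and an arbitrary bounded domain $U$. Lemma~\ref{lemma:estimate_W_n_on_U_L} ensures $\|W_N(\omega)\|_{\cC^{\deltaone}(U)}\to 0$, so $M\defby\MAH(U,\deltaone;1)(\omega)$ is finite; in particular $\|W^{\epsilon_{n_m}}_M\|_{L^\infty(U)}\le 2$ for large $m$, justifying the use of $F(W^\epsilon_M)=-e^{2W^\epsilon_M}$ in the first paragraph. Restriction from $\R^d$ to $\overline U$ with the weight bounded on $U$ yields $W^{\epsilon_{n_m}}_M\to W_M$ in $C(\overline U)$. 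Choosing $s\in(1-\deltaone,1)$, Theorem~\ref{theorem:examples_bd_sym_with_weighted_input}~\ref{item:sym_form_on_domain} gives $\formnorm{\cZAHeps_M[U]-\cZAH_M[U]}_{H^s_0(U)}\to 0$. Theorem~\ref{thm:convergence_of_symmetric_forms} then provides the compact convergence $\cE^{\dir,U}_{W^{\epsilon_{n_m}}_M,\cZAHepsnm_M[U]}\xrightarrow{\mathrm{compact}}\cE^{\dir,U}_{W_M,\cZAH_M[U]}$, and Lemma~\ref{lem:compact_convergence_of_symmetric_forms} delivers in one stroke the norm-resolvent convergence~\eqref{eqn:limit_resolvent_sense_dirichlet}, the convergence of the $k$-th eigenvalues, and the existence of a matching sequence of eigenfunctions converging in $L^2(U)$.

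The main subtlety — and the reason the statement allows a single $\Omega_1$ valid simultaneously for all bounded $U$ — is that the almost-sure convergences extracted in the second step take place in the \emph{global} weighted spaces $\cC^{r,\sigma}(\R^d)$ rather than in $U$-dependent function spaces. Once such global convergences hold at the fixed $\omega$, they automatically restrict to convergences on each bounded $U$, the only $U$-dependence entering through the deterministic cutoff $M=\MAH(U,\deltaone;1)(\omega)$, which is finite for every $\omega\in\Omega_1$ and every bounded $U$ by Lemma~\ref{lemma:estimate_W_n_on_U_L}. I do not anticipate a genuine obstacle beyond bookkeeping; the only mild point is verifying that $|W^{\epsilon_{n_m}}_M|\le 2$ eventually so that the formula for $F$ (hence the identification with $-\Delta-\xi_\epsilon+c_\epsilon$) applies, which follows from $\|W_M(\omega)\|_{\cC^{\deltaone}(U)}\le 1$ together with the $C(\overline U)$-convergence $W^{\epsilon_{n_m}}_M\to W_M$.
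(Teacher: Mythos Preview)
Your proposal is correct and follows essentially the same approach as the paper: establish \eqref{eqn:H_eps_fd_equals_smooth_AH} via Lemma~\ref{lem:sym_form_transform_Neumann}, extract a subsequence on which the stochastic objects converge almost surely in the global weighted spaces, then for each fixed $\omega\in\Omega_1$ and bounded $U$ invoke Theorem~\ref{theorem:examples_bd_sym_with_weighted_input}\ref{item:sym_form_on_domain}, Theorem~\ref{thm:convergence_of_symmetric_forms}, and Lemma~\ref{lem:compact_convergence_of_symmetric_forms}. Your write-up is in fact slightly more explicit than the paper's in two places: you spell out why a single $\Omega_1$ suffices for all bounded $U$ (the extracted convergences live in $\cC^{r,\sigma}(\R^d)$ and therefore restrict to every $U$), and you track the condition $|W^{\epsilon_{n_m}}_M|\le 2$ needed for $F(W^\epsilon_M)=-e^{2W^\epsilon_M}$, which the paper only alludes to via ``our choice of $M$''.
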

\begin{proof}
\eqref{eqn:H_eps_fd_equals_smooth_AH} follows by our choice of $M$ (see \eqref{eq:M_AH}) and by Lemma~\ref{lem:sym_form_transform_Neumann} and 
the construction assumption~\ref{assump:all_three_assump}~\ref{item:construction_assump} (see also the discussion in Section~\ref{subsec:strategy_ang_techniques} and Remark~\ref{rem:towards_construction_assumptions}). 

Let  $\sigma \in (0, 1)$.
By~\ref{assump:all_three_assump}~\ref{item:construction_assump},
%Assumption~\ref{assump:main}, 
there exist a subsequence $(\epsilon_{n_m})_{m\in\N}$ and a subset
  $\Omega_1 \subseteq \Omega$ of $\P$-probability $1$ such that on $\Omega_1$, for every $N \in \N_0$,
  \begin{align*}
\lim_{m\rightarrow \infty}  \norm{ \XAHepsnm - \XAH }_{\csp^{\deltamin,\sigma}(\R^d)} =0,
\qquad
\lim_{m\rightarrow \infty} \norm{\YAHepsnm_N - \YAH_N}_{\csp^{-1 + \deltamin, \sigma}(\R^d)}.
  \end{align*}
Observe that by Lemma~\ref{lemma:estimate_C_delta_U_by_weighted} and Corollary~\ref{cor:estimates_of_G_N_and_H_N}, because $W_M^\epsilon - W_M = G_M * (X^\epsilon - X)$, for all $\epsilon \in (0,1)$,
\begin{align*}
\norm{ \WAHeps_M - \WAH_M}_{\csp^{\deltamin}(U)}
\lesssim_{U,  \delta,\sigma}
\norm{ \XAHeps - \XAH }_{\csp^{-2+\delta,\sigma}(\R^d)},
\end{align*}
and that by Theorem~\ref{theorem:examples_bd_sym_with_weighted_input}~\ref{item:sym_form_on_domain}, for $r\in (1-\deltamin,1)$,
\begin{align*}
\formnorm{ \cZAHeps_M - \cZ_\MAH }_{H^r_0(U)}
\lesssim_{\delta,p,U} \norm{\YAHeps_M - Y_\MAH}_{\csp^{-1 + \deltamin, \sigma}(\R^d)}.
\end{align*}
Therefore, the claim follows from  Theorem~\ref{thm:convergence_of_symmetric_forms} and Lemma~\ref{lem:compact_convergence_of_symmetric_forms}. 
\end{proof}

\begin{remark}
\label{remark:larger_M_dirichlet}
Let $\fM$ be a random variable with values in $\N_0$ such that $\fM \ge \MAH(U, \deltamin; 1)$.
Then, almost surely, $\HAH^{\dir}(U) = \H^{\dir}_{\WAH_\fM,\cZAH_\fM[U]}(U)$, because $ \H^{\dir}_{\WAHeps_\fM,\cZAHeps_\fM[U]}(U) = -\Delta- \xi_\epsilon + c_\epsilon = \HAHeps^{\dir}(U)$ and similarly as in Theorem~\ref{thm:convergence_of_Dirichlet_AH}, $\H^{\dir}_{\WAH_\fM,\cZAH_\fM[U]}(U)$ is the limit (in the sense of \eqref{eqn:limit_resolvent_sense_dirichlet}) of $ \H^{\dir}_{\WAHeps_\fM,\cZAHeps_\fM[U]}(U)$.
We will apply this in Section~\ref{subsec:IDS} with $\fM = \MAH(U, \deltamin; \gamma)$ for $\gamma \in (0,1]$.
\end{remark}

\subsection{The Neumann Anderson Hamiltonian}\label{subsec:Neumann}

As described in the beginning of Section~\ref{sec:symmetric_forms} (below Lemma~\ref{lem:sym_form_transform_Neumann}), the boundary term will be dealt with by the decomposition into symmetric forms $\tilde \cZ$ and $\widehat \cZ$. Let us first consider the ingredients for the latter symmetric form.

\begin{definition}
\label{def:hat_Y_AH}
  Let $U$ be a bounded Lipschitz domain.
For $N\in\N_0$ we define
\begin{align*}
\hatYAH_N &\defby G_N \conv (\XAH - \xi) + (G_N - G_0) \conv \xi, \\
\hatYAHeps_N &\defby G_N \conv (\XAHeps - \xi_\epsilon) + (G_N - G_0) \conv \xi_\epsilon, \qquad \epsilon \in (0,1).
\end{align*}
\end{definition}

\begin{lemma}
\label{lemma:estimate_hat_Y_N_AH}
Let $U$ be a bounded Lipschitz domain. Then for $\delta \in (\frac12,1)$ and $N\in\N_0$
\begin{align*}
\norm{\hatYAH_N}_{\cC^{1+\delta,\sigma}}
& \lesssim_{U, \delta} \norm{\XAH - \xi}_{\csp^{-2 + 2 \delta, \sigma}(\R^d)}
    + 2^N \norm{\xi}_{\csp^{-2 + \delta, \sigma}(\R^d)} , \\
    \norm{\hatYAH_N- \hatYAHeps_N}_{\cC^{1+\delta,\sigma}}
& \lesssim_{U, \delta} \norm{\XAH - \xi - (\XAHeps - \xi_\epsilon)}_{\csp^{-2 + 2 \delta, \sigma}(\R^d)}
    + 2^N \norm{\xi - \xi_\epsilon}_{\csp^{-2 + \delta, \sigma}(\R^d)} , 
\end{align*}
Moreover, $\norm{\hatYAH_N - \hatYAHeps_N}_{\cC^{1+\delta,\sigma}}$ converges in probability to $0$.
\end{lemma}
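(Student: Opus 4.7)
The plan is to decompose $\hatYAH_N$ into two pieces and estimate each separately using the Green's-function bounds in Corollary~\ref{cor:estimates_of_G_N_and_H_N}. The first piece $G_N \conv (\XAH - \xi)$ takes an input of regularity $-2+2\delta$ (which the construction assumption~\ref{assump:all_three_assump}~\ref{item:construction_assump} provides for $\XAH - \xi$); since the Fourier multiplier of $G_N$ agrees with $\abs{2\pi \cdot}^{-2}$ on its support, the Corollary yields a bound in $\cC^{1+\delta,\sigma}(\R^d)$ with a constant independent of $N$. The second piece $(G_N - G_0) \conv \xi$ is a convolution with a Schwartz function whose Fourier multiplier vanishes outside the annulus $\{c \lesssim \abs{\cdot} \lesssim 2^N\}$; applying the Corollary to this band-limited operator acting on $\xi \in \cC^{-2+\delta,\sigma}(\R^d)$ produces the required $\cC^{1+\delta,\sigma}(\R^d)$ bound, at the cost of the announced $2^N$ prefactor that arises from the largest surviving frequency. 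Adding the two bounds yields the first displayed inequality.

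For the difference bound I would apply exactly the same decomposition, this time to
\begin{equation*}
\hatYAH_N - \hatYAHeps_N
= G_N \conv \big((\XAH - \xi) - (\XAHeps - \xi_\epsilon)\big)
+ (G_N - G_0) \conv (\xi - \xi_\epsilon),
\end{equation*}
and repeat the two Green's-function estimates with the corresponding differences as inputs. This delivers the second displayed inequality verbatim.

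Convergence in probability is then immediate from the second inequality combined with the construction assumption~\ref{assump:all_three_assump}~\ref{item:construction_assump}: the norms $\norm{(\XAH - \xi) - (\XAHeps - \xi_\epsilon)}_{\cC^{-2+2\delta,\sigma}(\R^d)}$ and $\norm{\xi - \xi_\epsilon}_{\cC^{-2+\delta,\sigma}(\R^d)}$ both tend to $0$ in $L^p(\P)$ for every $p \in [1,\infty)$ as $\epsilon \downarrow 0$ (the second by the convergence of mollifications in the relevant weighted Besov space), and since $N$ is a fixed nonnegative integer the factor $2^N$ is a deterministic constant. $L^p$-convergence implies convergence in probability, so the right-hand side of the second inequality tends to $0$ in probability, and hence so does the left-hand side.

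The only delicate point is invoking Corollary~\ref{cor:estimates_of_G_N_and_H_N} with the correct indices in each of the two summands: the first summand must not carry any $N$-dependent prefactor (the low-frequency cutoff in $G_N$ can only sharpen the standard $(-\Delta)^{-1}$ smoothing estimate), while the second must carry the $2^N$ factor coming from the band-limited Fourier support of the multiplier of $G_N - G_0$. Once this bookkeeping is correct, the proof is almost a one-liner.
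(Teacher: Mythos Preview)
Your proposal takes exactly the paper's route: the paper's proof is the one-liner ``the estimates follow by the construction assumption~\ref{assump:all_three_assump}~\ref{item:construction_assump} and Corollary~\ref{cor:estimates_of_G_N_and_H_N}'', and you have simply spelled that out.

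One remark on the bookkeeping you flag at the end: applying the first estimate of Corollary~\ref{cor:estimates_of_G_N_and_H_N} with $r=s=-2+2\delta$ places $G_N\conv(X-\xi)$ in $\cC^{2\delta,\sigma}$ with an $N$-independent constant, not in $\cC^{1+\delta,\sigma}$ as you write (and $2\delta<1+\delta$ since $\delta<1$, so this is genuinely weaker). This slight mismatch is already present in the lemma's stated target space and is harmless for the applications, which only need $\nabla\widehat{Y}_N\in L^{\infty}$ (cf.\ Theorem~\ref{theorem:examples_bd_sym_with_weighted_input}~\ref{item:sym_form_boundary_hat} and Remark~\ref{rem:neumann_construction}, where the convergence is stated in $\cC^{2\delta,\sigma}$); since $2\delta>1$ this suffices.
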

\begin{proof}
The estimates follow by 
the construction assumption~\ref{assump:all_three_assump}~\ref{item:construction_assump}
%Assumption~\ref{assump:main} 
and Corollary~\ref{cor:estimates_of_G_N_and_H_N}.
\end{proof}

\begin{definition}\label{def:cZ_Neumann}
Impose the construction and Neumann assumptions, \ref{assump:all_three_assump}~\ref{item:construction_assump} and \ref{item:neumann_assump}, 
  let $U$ be a bounded Lipschitz domain.
Let $r\in (1-\deltamin,1)$.
Using the notation of Theorem~\ref{theorem:examples_bd_sym_with_weighted_input},% and of Proposition~\ref{prop:boundary_nabla_xi},
we define the following symmetric forms on $H^r(U)$, for $N\in\N_0$
  \begin{align*}
 \tildecZAH_N[U] & \defby \tilde \cZ_{\tilde Y^{U}, e^{2 \WAH_N}} ^{U}, \qquad
  \tildecZAHeps_N[U] \defby \tilde \cZ_{\tilde Y^{U}_\epsilon, e^{2 \WAHeps_N}} ^{U},  \\
     \hatcZAH_N[U]
    & \defby \widehat \cZ^{U}_{\hatYAH_N, e^{2 \WAH_N} } ,
    \qquad    \hatcZAHeps_N[U]
    \defby \widehat \cZ^{U}_{\hatYAHeps_N, e^{2 \WAHeps_N} } .
  \end{align*}
We furthermore make abuse of notation (compared to the symmetric forms on $H_0^r(U)$ as in \eqref{eq:def_of_cZ_AH}) and define the following symmetric forms on $H^r(U)$,
  \begin{equation}\label{eq:def_of_cZ_AH_neumann}
    \cZAH_N[U] \defby
	\cZ^U_{\YAH_N}, \quad
    \cZAHeps_N[U] \defby
    \cZ^U_{\YAHeps_N}.
  \end{equation}
  Then we define
  \begin{align*}
    \cZnAH_N[U]  & \defby \cZAH_N[U] +  \tildecZAH_N[U] + \hatcZAH_N[U], \\
    \cZnAHeps_N[U] & \defby \cZAHeps_N[U] +   \tildecZAHeps_N[U] + \hatcZAHeps_N[U], \qquad \epsilon \in (0,1).
  \end{align*}
  Recalling the notations from Definition~\ref{def:self_adjoint_operator_for_cE_W_cZ} and
  Proposition~\ref{prop:eigenvalues_of_AH}, for $M = \MAH(U, \deltamin; 1)$ (see \eqref{eq:M_AH}) we set
  \begin{align*}
    \HAH^{\neu}
    & \defby \HAH^{\neu,U} \defby \cH^{\neu,U}_{\WAH_M, \cZnAH_M[U]},
    &  \lambdaAHk^{\neu}(U)
    &\defby \lambda_k^{\neu}(U; \WAH_M, \cZnAH_M[U]), \\
        \HAHeps^{\neu}
        & \defby \HAHeps^{\neu,U} \defby \cH^{\neu}_{\WAHeps_M, \cZnAHeps_M[U]},
     &  \lambdaAHkeps^{\neu}(U)
     & \defby \lambda_k^{\neu}(U; \WAHeps_M, \cZnAHeps_M[U]).
  \end{align*}
\end{definition}
\begin{theorem}\label{thm:convergence_of_Neumann_AH}
Impose the construction and Neumann assumptions, \ref{assump:all_three_assump}~\ref{item:construction_assump} and \ref{item:neumann_assump}. 
Let $U$ be a bounded Lipschitz domain.
For $\epsilon \in (0,1)$,
\begin{align}
\label{eqn:H_eps_fn_equals_smooth_AH}
\HAHeps^{\neu} = -\Delta -\xi_{\epsilon} + c_{\epsilon}
\end{align}
  Then, one has (see Definition~\ref{def:norm_resolvent_convergence} for ``$\normresconv$'')
\begin{align*}
\HAHeps^{\neu,U}
	& \normresconv_{\epsilon \downarrow 0}
\HAH^{\neu,U} \mbox{ in probability,} \\
 \lambdaAHkeps^{\neu}(U)
	 & \arroweps  \lambdaAHk^{\neu}(U)   \mbox{ in probability,}  \qquad  k \in \N  .
\end{align*}
and for all $k\in\N$ there exist an eigenfunction $\phi_k$ of $\HAH^{\neu,U}$ corresponding to $\lambdaAHk^{\neu}(U)$ and eigenfunctions $\phi_{k,m}$ of $\HAH^{\neu,U}_{\epsilon}$ corresponding to $\lambdaAHkeps^{\neu}(U)$ for all $\epsilon>0$ such that 
\begin{align*}
\phi_{k,m} \xrightarrow{m\to \infty} \phi_k \quad \mbox{ in } L^2(U) \mbox{ in probability}. 
\end{align*} 
\end{theorem}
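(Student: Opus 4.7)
The strategy closely mirrors that of Theorem~\ref{thm:convergence_of_Dirichlet_AH}, the new ingredient being the careful handling of the two boundary symmetric forms $\tildecZAHeps_M[U]$ and $\hatcZAHeps_M[U]$. First I would address the identity \eqref{eqn:H_eps_fn_equals_smooth_AH}. Applying Lemma~\ref{lem:sym_form_transform_Neumann} to $\zeta = \xi_\epsilon - c_\epsilon$ with $w = \WAHeps_M$ rewrites the symmetric form of $-\Delta - \xi_\epsilon + c_\epsilon$ on $H^1(U)$ (with Neumann boundary) as the sum of $\int_U e^{2W^\epsilon_M} |\nabla u^\flat|^2$, the bulk term with integrand $- e^{2W^\epsilon_M}(\xi_\epsilon - c_\epsilon + |\nabla W^\epsilon_M|^2 + \Delta W^\epsilon_M)(u^\flat)^2$, and the boundary term $\int_{\partial U} \cT((u^\flat)^2) e^{2W^\epsilon_M} \nabla W^\epsilon_M \cdot \dd \bS$. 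The choice $M = \MAH(U, \delta_-;1)$ forces $\norm{\WAH_M}_{\cC^{\delta_-}(U)} \le 1$ and (after extracting subsequences in probability if needed) $\norm{\WAHeps_M}_{L^\infty(U)} \le 2$ for all relevant $\epsilon$, so $F(\WAHeps_M) = -e^{2\WAHeps_M}$ and the bulk term reproduces $\cZAHeps_M[U]$. Decomposing $\nabla W^\epsilon_M = \nabla(G_0 * \xi_\epsilon) + \nabla \hatYAHeps_M$ (cf.\ Definition~\ref{def:hat_Y_AH}) splits the boundary term as $\tildecZAHeps_M[U] + \hatcZAHeps_M[U]$, giving the equality of operators.

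For the convergence of operators and eigenvalues, I would appeal to Theorem~\ref{thm:convergence_of_symmetric_forms} and Lemma~\ref{lem:compact_convergence_of_symmetric_forms}. Since all the statements involve convergence in probability, by the usual subsequence principle it suffices to show that along any sequence $\epsilon_n \downarrow 0$ we can extract a subsequence along which, almost surely,
\begin{equation*}
\WAHepsnm_M \to \WAH_M \text{ in } C(\overline U), \qquad
\formnorm{\cZnAHepsnm_M[U] - \cZnAH_M[U]}_{H^s(U)} \to 0
\end{equation*}
for some $s \in (1 - \delta_-, 1)$. The first convergence is obtained exactly as in the Dirichlet case, using Corollary~\ref{cor:estimates_of_G_N_and_H_N}, Lemma~\ref{lemma:estimate_W_n_on_U_L} and the assumed convergence $X^\epsilon \to X$ in $\cC^{-2+\delta_-,\sigma}(\R^d)$. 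The convergence of $\cZAHepsnm_M[U]$ uses Theorem~\ref{theorem:examples_bd_sym_with_weighted_input}~\ref{item:sym_form_on_domain} together with the assumed convergence of $\YAHeps_M$ to $\YAH_M$ in $\cC^{-1+\delta_-,\sigma}(\R^d)$.

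The new work lies in the two boundary pieces. For $\hatcZAH_M$ I would use the bilinearity of $\widehat{\cZ}^U_{\widehat Y, V}$ in $(\widehat Y, V)$ and the bound from Theorem~\ref{theorem:examples_bd_sym_with_weighted_input}~\ref{item:sym_form_boundary_hat}, so that
\begin{equation*}
\formnorm{\hatcZAHeps_M - \hatcZAH_M}_{H^s(U)} \lesssim_U \norm{e^{2\WAHeps_M}}_{C^{\delta_-}(U)} \norm{\hatYAHeps_M - \hatYAH_M}_{\cC^{1+\delta_-,\sigma}} + \norm{e^{2\WAHeps_M} - e^{2\WAH_M}}_{C^{\delta_-}(U)} \norm{\hatYAH_M}_{\cC^{1+\delta_-,\sigma}}.
\end{equation*}
Lemma~\ref{lemma:estimate_hat_Y_N_AH} gives convergence of $\hatYAHeps_M$ to $\hatYAH_M$ in probability, and the convergence of $\WAHeps_M$ in $\cC^{\delta_-}(U)$ combined with smoothness of the exponential yields the convergence of $e^{2W^\epsilon_M}$ in $C^{\delta_-}(U)$. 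The analogous argument for $\tildecZAH_M$ uses Theorem~\ref{theorem:examples_bd_sym_with_weighted_input}~\ref{item:sym_form_boundary_tilde}, which for appropriate $s$ and $p$ supplies the bound
\begin{equation*}
\formnorm{\tildecZAHeps_M - \tildecZAH_M}_{H^s(U)} \lesssim_U \norm{e^{2\WAHeps_M}}_{C^{\delta_-}(U)} \norm{\tilde Y^U_\epsilon - \tilde Y^U}_{B^{-2+\delta_-}_{p,p}(\R^d)} + \norm{e^{2\WAHeps_M} - e^{2\WAH_M}}_{C^{\delta_-}(U)} \norm{\tilde Y^U}_{B^{-2+\delta_-}_{p,p}(\R^d)},
\end{equation*}
and the first factor on the right-hand side of the first term is handled by the Neumann assumption~\ref{assump:all_three_assump}~\ref{item:neumann_assump}.

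The main obstacle is the $\tildecZAH$ piece: because $\tilde Y^U$ only lives in the negative-regularity space $B^{-2+\delta_-}_{p,p}(\R^d)$, the conditions $\delta > \frac12$ and $s \in (\frac32 - \delta_-, 1)$ must be threaded through Theorem~\ref{theorem:examples_bd_sym_with_weighted_input}~\ref{item:sym_form_boundary_tilde}, and one must verify the constraints \eqref{eqn:condition_beta_and_s} on $\beta$, $p$ and $s$. Once $s \in (1 - \delta_-, 1)$ has been chosen compatibly, the remaining conclusions of the theorem, namely the norm resolvent convergence and the convergence of eigenvalues and eigenfunctions in $L^2(U)$ (through a common subsequence chosen for all $k$), follow directly from Lemma~\ref{lem:compact_convergence_of_symmetric_forms} applied on the almost-sure subsequence and then transferred back to convergence in probability along the original sequence.
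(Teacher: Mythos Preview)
Your proposal is correct and follows essentially the same approach as the paper: establish \eqref{eqn:H_eps_fn_equals_smooth_AH} via Lemma~\ref{lem:sym_form_transform_Neumann} and the decomposition of the boundary term, then apply Theorem~\ref{thm:convergence_of_symmetric_forms} and Lemma~\ref{lem:compact_convergence_of_symmetric_forms} after checking convergence of $\WAHeps_M$, $\cZAHeps_M[U]$, $\tildecZAHeps_M[U]$ (via the Neumann assumption and Theorem~\ref{theorem:examples_bd_sym_with_weighted_input}~\ref{item:sym_form_boundary_tilde}) and $\hatcZAHeps_M[U]$ (via Theorem~\ref{theorem:examples_bd_sym_with_weighted_input}~\ref{item:sym_form_boundary_hat} and Lemma~\ref{lemma:estimate_hat_Y_N_AH}). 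Your write-up is in fact more detailed than the paper's, which merely cites these ingredients without spelling out the bilinear splitting or the subsequence argument.
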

\begin{proof}
\eqref{eqn:H_eps_fn_equals_smooth_AH} follows by 
the construction assumption~\ref{assump:all_three_assump}~\ref{item:construction_assump}
%Assumption~\ref{assump:main} 
and Lemma~\ref{lem:sym_form_transform_Neumann} (see also the discussion in Section~\ref{subsec:strategy_ang_techniques} and Remark~\ref{rem:neumann_construction}). 

The convergences follows in a similar as in Theorem~\ref{thm:convergence_of_Dirichlet_AH}, that is we again apply Theorem~\ref{thm:convergence_of_symmetric_forms} and Lemma~\ref{lem:compact_convergence_of_symmetric_forms}. 
The convergence of $W_n$ and $\cZ_N^\epsilon[U]$ follow in the same way as in Theorem~\ref{thm:convergence_of_Dirichlet_AH}. 
The convergence of $\tilde \cZ_N^\epsilon[U]$ is guaranteed by the Neumann assumption~\ref{assump:all_three_assump}~\ref{item:neumann_assump} and Theorem~\ref{theorem:examples_bd_sym_with_weighted_input}~\ref{item:sym_form_boundary_tilde} and the convergence of $\widehat \cZ_N^\epsilon[U]$ by 
Theorem~\ref{theorem:examples_bd_sym_with_weighted_input}~\ref{item:sym_form_boundary_hat} and 
 Lemma~\ref{lemma:estimate_hat_Y_N_AH}.
\end{proof}

Without the Neumann assumption~\ref{assump:all_three_assump}~\ref{item:neumann_assump}, 
%Without Assumption~\ref{assump:boundary}, 
we can still construct an artificial Neumann Anderson Hamiltonian,
which will be used in Section~\ref{subsec:IDS} as a technical tool.
\begin{definition}\label{def:artificial_Neumann_AH}
Impose the construction assumption~\ref{assump:all_three_assump}~\ref{item:construction_assump}
 % Assume~\ref{assump:convergence_of_BPHZ} 
  and
  let $U$ be a bounded Lipschitz domain and $r\in (1-\deltamin,1)$.
  For $M = M(U, \deltamin; 1)$, we set
  \begin{equation*}
    \artAHU \defby \cH_{\WAH_M, \cZAH_M[U]}^{\neu,U},
    \quad
    \artevk(U) \defby \lambda_k^{\neu}(U; \WAH_M, \cZAH_M[U]).
  \end{equation*}
\end{definition}

\begin{remark}
\label{remark:larger_M_neumann}
Similar to Remark~\ref{remark:larger_M_dirichlet}, for $\fM$ being a random variable with values in $\N_0$ such that $\fM \ge M(U,\deltamin,1)$, one has, almost surely,
$\artAHU = \overline {\cH}^{\neu,U}_{\WAH_\fM, \cZAH_\fM[U]}$
and, if one imposes the Neumann condition~\ref{assump:all_three_assump}~\ref{item:neumann_assump}, then 
%Assumption~\ref{assump:boundary},  
$ \HAH^{\neu,U} = {\cH}^{\neu,U}_{\WAH_\fM, \cZAH_\fM[U]}$.
\end{remark}

\subsection{Integrated density of states}\label{subsec:IDS}
\tymchange{
The aim of this section is to study the integrated density of states (IDS) associated to
the Anderson Hamiltonian with potential $\xi$. Namely, we prove Theorem~\ref{thm:main_IDS}: 
\begin{itemize}
  \item Theorem~\ref{thm:main_IDS}~\ref{item:IDS_existence} and~\ref{item:IDS_Neumann} are proven by Theorem~\ref{thm:IDS_general}.
  \item Theorem~\ref{thm:main_IDS}~\ref{item:IDS_approx} is proven by Theorem~\ref{thm:IDS_epsilon}.
  \item Theorem~\ref{thm:main_IDS}~\ref{item:IDS_right_tail} and~\ref{item:IDS_left_tail} are proven by Theorem~\ref{thm:IDS_tails}.
\end{itemize}
Compared to the previous two sections, this section is rather technical involving quantitative estimates obtained before, especially in Section~\ref{subsec:estimates_eigenvalues}. 
Before stating the assumptions for this section, we give some estimates on $M$ and the symmetric forms $\cZ$ and $\cZnAH$ in the following lemma which will be used later on (they will be reformulated in Remark~\ref{rem:estimates_on_cZ} under Assumption~\ref{ass:section_IDS}). 
}

% To consider the IDS we will work under some ergodic assumptions. 
% Before we pose these we give some estimates on $M$ and the symmetric forms $\cZ$ and $\cZnAH$ in the following lemma which will be used lateron (they will be reformulated in Remark~\ref{rem:estimates_on_cZ} under Assumption~\ref{ass:section_IDS}). 

%In the following lemma we consider estimates of $\cZAH$ and $\cZnAH$, which are used in the following section to construct the integrated density of states (more precisely, Lemma~\ref{lem:estimate_of_cZ_AH_Neumann} is summerized in  Remark~\ref{rem:estimates_on_cZ} and used in the proof o Lemma~\ref{lem:upper_bound_of_neumann_N}). 

\begin{lemma}\label{lem:estimate_of_cZ_AH_Neumann}
Impose the construction assumption~\ref{assump:all_three_assump}~\ref{item:construction_assump}. 
Let $\delta_- \in (0,\delta)$ and $\sigma \in (0,\infty)$. 
Let $U$ be a bounded domain. 
We will use the following abbreviation for the $M$ defined in \eqref{eq:M_AH}: 
$ M_{L,\delta_-,\gamma}  \defby \MAH(U_L, \deltaone; \gamma)$.
Recall also the definition $\aAH$ and $\kAH$ from \eqref{eqn:def_a_AH}. 
\begin{enumerate}
\item
\label{item:estimate_M_gamma}
There exists a $C = C(U, \deltaone,  \sigma) \in (0, \infty)$   such that for all $L \geq 1$ and $\gamma \in (0, \infty)$ one has
  \begin{equation}
  \label{eqn:estimate_M_gamma}
    \MAH(U_L, \deltaone; \gamma) \leq 1 + (\deltatwo - \deltaone)^{-1} \log_2
    \big(C \gamma^{-1} L^{\sigma} \norm{\XAH}_{\csp^{-2 + \deltatwo, \sigma}(\R^d)} \big).
  \end{equation}
 \item 
 \label{item:estimate_first_part_symm_f_neumann}
Suppose $U$ is a bounded Lipschitz domain. 
For every $r\in (1-\delta_-,1)$,  $L \geq 1$ and $\gamma \in (0, \infty)$,
%  \begin{equation}
%  \label{eqn:estimate_Z_AH_M}
%    \formnorm{\cZAH_{M_{L,\delta_-,\gamma}}[U_L]}_{H_0^r(U_L)} \lesssim_{U, \deltaone, \deltatwo, \sigma }
%    \gamma^{-(\deltatwo - \deltaone)^{-1} \kAH}
%    L^{(2+  \kAH (\deltatwo - \deltaone)^{-1}) \sigma}
%    \norm{\XAH}_{\csp^{-2 + \deltatwo, \sigma}(\R^d)}^{ (\deltatwo - \deltaone)^{-1} \kAH } \aAH, 
%  \end{equation}
%and if $U$ is a bounded Lipschitz domain, also 
  \begin{equation*}
    \formnorm{\cZAH_{M_{L,\delta_-,\gamma}}[U_L]}_{H^r(U_L)} \lesssim_{U, \deltaone, \deltatwo, \sigma}
    \gamma^{-(\deltatwo - \deltaone)^{-1} \kAH}
    L^{(2+  \kAH (\deltatwo - \deltaone)^{-1}) \sigma}
    \norm{\XAH}_{\csp^{-2 + \deltatwo, \sigma}(\R^d)}^{ (\deltatwo - \deltaone)^{-1} \kAH } \aAH.
  \end{equation*}

\item
\label{item:estimate_complete_part_symm_f_neumann}
Additionally, impose the Neumann assumption~\ref{assump:all_three_assump}~\ref{item:neumann_assump} and that $1/2 < \deltaone$. 
Suppose that $r\in (\frac32 -\deltaone, 1)$, $\epsilon \in (0,\deltaone-\frac12)$, $p \in (2, \infty)$ and $q\in (1,2)$  are such that $\frac1p + \frac1q=1$ and  \eqref{eqn:condition_beta_and_s} holds for $s=r$.
%  \begin{equation}\label{eq:p_and_r_conditions}
%    \frac{3}{2} - \delta + \frac{d + 1}{2 p} < 1, \quad
%    r > \max\{1 - \delta_1, \frac{3}{2} - \delta + \frac{d + 1}{2 p} \}.
%  \end{equation}
  Then, for every $\sigma \in (0, \infty)$, $L \geq 1$ and $\gamma \in (0, 1]$,
\end{enumerate}
  \begin{multline*}
    \formnorm{\cZnAH_{M_{L,\delta_-,\gamma}}[U_L]}_{H^r(U_L)} \\
    \lesssim_{U, \deltaone, \deltatwo, \sigma, \epsilon }
    \gamma^{-(\deltatwo - \deltaone)^{-1} \kAH}
    L^{(2 + (\deltatwo - \deltaone)^{-1} \kAH)\sigma}
    \norm{\XAH}_{\csp^{-2 + \deltatwo, \sigma}(\R^d)}^{(\deltatwo - \deltaone)^{-1} \kAH} (\aAH +
    \norm{\xi}_{\csp^{-2+\delta, \sigma}(\R^d)} ) \\
    + L^{2\epsilon}
     \norm{\tilde Y^{U_L}}_{B_{p, p}^{-2 + \delta}(\R^d)}
    + L^{\sigma} \norm{\XAH - \xi}_{\csp^{-2 + 2 \deltatwo, \sigma}(\R^d)}.
  \end{multline*}
\end{lemma}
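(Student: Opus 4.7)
Throughout, abbreviate $M = \MAH(U_L, \delta_-;\gamma)$. For \emph{part (a)}, the definition \eqref{eq:M_AH} asks for the smallest $N$ such that $\|W_n\|_{C^{\delta_-}(U_L)} \leq \gamma$ for all $n \geq N$. By Lemma~\ref{lemma:estimate_W_n_on_U_L} applied with $g = X$, one has $\|W_n\|_{C^{\delta_-}(U_L)} \leq C L^{\sigma} 2^{-(\delta-\delta_-)n} \|X\|_{\cC^{-2+\delta,\sigma}(\R^d)}$, which is monotonically decreasing in $n$. Solving the inequality $C L^{\sigma} 2^{-(\delta-\delta_-)n}\|X\|_{\cC^{-2+\delta,\sigma}} \leq \gamma$ in $n$ produces the logarithmic bound~\eqref{eqn:estimate_M_gamma}.

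For \emph{part (b)}, Theorem~\ref{theorem:examples_bd_sym_with_weighted_input}~\ref{item:sym_form_on_domain} applied with $Y = Y_M$ yields
\begin{equation*}
\formnorm{\cZAH_M[U_L]}_{H^r(U_L)} \lesssim L^{2\sigma}\, \|Y_M\|_{\cC^{-1+\delta_-,\sigma}(\R^d)}.
\end{equation*}
From the definition~\eqref{eqn:def_a_AH} of $\aAH$ one has $\|Y_M\|_{\cC^{-1+\delta_-,\sigma}} \leq 2^{\kAH M}\aAH$, while the bound on $M$ from part~(a) gives $2^{\kAH M} \lesssim (L^{\sigma}\gamma^{-1}\|X\|_{\cC^{-2+\delta,\sigma}})^{\kAH/(\delta-\delta_-)}$. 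Collecting the powers of $L$, $\gamma$, $\|X\|$ produces the claim.

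For \emph{part (c)}, decompose $\cZnAH_M[U_L] = \cZAH_M[U_L] + \tildecZAH_M[U_L] + \hatcZAH_M[U_L]$. The first summand is handled by~(b) and contributes the $\aAH$-part of the first displayed term. For $\tildecZAH_M$, apply Theorem~\ref{theorem:examples_bd_sym_with_weighted_input}~\ref{item:sym_form_boundary_tilde} with $\tilde Y = \tilde Y^{U_L}$ and $V = e^{2W_M}$: because $\gamma \leq 1$ forces $\|W_M\|_{L^{\infty}(U_L)}\leq 1$, a standard composition estimate for the smooth map $t\mapsto e^{2t}$ (combined with the uniform-in-$n$ Green's function bounds from Corollary~\ref{cor:estimates_of_G_N_and_H_N}) gives $\|e^{2W_M}\|_{C^{\delta}(U_L)}\lesssim 1$, producing the second term $L^{2\epsilon}\|\tilde Y^{U_L}\|_{B^{-2+\delta}_{p,p}(\R^d)}$. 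For $\hatcZAH_M$, Theorem~\ref{theorem:examples_bd_sym_with_weighted_input}~\ref{item:sym_form_boundary_hat} combined with Lemma~\ref{lemma:estimate_hat_Y_N_AH} splits the contribution into a piece dominated by $L^{\sigma}\|X - \xi\|_{\cC^{-2+2\delta,\sigma}}$ (the third term) and a piece dominated by $L^{\sigma}\, 2^{M}\|\xi\|_{\cC^{-2+\delta,\sigma}}$; since $\kAH \geq 1$ one has $2^{M}\leq 2^{\kAH M}$, and applying the bound on $M$ from part~(a) absorbs this second piece into the first term, producing the $(\aAH + \|\xi\|_{\cC^{-2+\delta,\sigma}})$ factor.

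\emph{Main obstacle.} The only delicate point is securing the uniform-in-$L$ bound $\|e^{2W_M}\|_{C^{\delta}(U_L)}\lesssim 1$: the choice of $M$ directly encodes only a $C^{\delta_-}(U_L)$-bound on $W_M$, so the $C^{\delta}$-bound has to be obtained through a composition estimate exploiting the $L^{\infty}$-bound $\|W_M\|_{\infty}\leq \gamma \leq 1$, together with a uniform-in-$n$ control of $\|W_n\|_{C^{\delta}(U_L)}$ coming from Corollary~\ref{cor:estimates_of_G_N_and_H_N}. A mild degradation of this control to a power $L^{\sigma}$ is already absorbed into the $L^{(2+\kAH/(\delta-\delta_-))\sigma}$ prefactor of the first term, but matching the clean $L^{\sigma}$ power in the third term requires, if necessary, applying Theorem~\ref{theorem:examples_bd_sym_with_weighted_input}~\ref{item:sym_form_boundary_hat} and Lemma~\ref{lemma:estimate_hat_Y_N_AH} at the exponent $\delta_-$ in place of $\delta$ and then using the continuous embedding $\cC^{-2+2\delta,\sigma}(\R^d)\hookrightarrow \cC^{-2+2\delta_-,\sigma}(\R^d)$ to upgrade back to the norm stated in the claim.
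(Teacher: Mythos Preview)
Your proposal follows the paper's argument almost exactly for parts~(a), (b), and the decomposition in~(c): the same lemmas are invoked in the same order, and the bookkeeping of exponents is identical.

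The one substantive divergence is your handling of the bound on $\|e^{2W_M}\|_{C^{\delta}(U_L)}$, which you correctly flag as the delicate step. The paper resolves it much more simply than you do: from the very definition of $M$ one has $\|W_M\|_{C^{\delta_-}(U_L)}\le\gamma\le 1$, and the elementary inequality $|e^x-e^y|\le C e^{x\vee y}|x-y|$ then yields $\|e^{2W_M}\|_{C^{\delta_-}(U_L)}\lesssim \gamma e^{2\gamma}\lesssim 1$ directly. No appeal to Corollary~\ref{cor:estimates_of_G_N_and_H_N} is needed. The reason $C^{\delta_-}$ suffices (rather than $C^{\delta}$) is that the proofs of Theorem~\ref{theorem:examples_bd_sym_with_weighted_input}\ref{item:sym_form_boundary_tilde} and~\ref{item:sym_form_boundary_hat} use, respectively, only $\|V\|_{C^{\beta}}$ with $\beta\le\tfrac12<\delta_-$ and only $\|V\|_{L^{\infty}}$.

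Your primary suggestion---controlling $\|W_n\|_{C^{\delta}(U_L)}$ uniformly in $n$ via Corollary~\ref{cor:estimates_of_G_N_and_H_N}---does not actually produce the claimed estimate: that route introduces an extra factor $L^{\sigma}\|X\|_{\cC^{-2+\delta,\sigma}}$ into $\|e^{2W_M}\|_{C^{\delta}}$, which would contaminate the second and third displayed terms (turning $L^{2\epsilon}\|\tilde Y^{U_L}\|$ into $L^{2\epsilon+\sigma}\|X\|\,\|\tilde Y^{U_L}\|$, for instance). Your alternative, ``applying \dots\ at the exponent $\delta_-$ in place of $\delta$'', is precisely what the paper does and should be the main route, not a fallback.
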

\begin{proof}
\ref{item:estimate_M_gamma}
\eqref{eqn:estimate_M_gamma} is a direct consequence of Lemma~\ref{lemma:estimate_W_n_on_U_L}.

\ref{item:estimate_first_part_symm_f_neumann} 
%\eqref{eqn:estimate_Z_AH_M}
 follows by
Theorem~\ref{theorem:examples_bd_sym_with_weighted_input}~\ref{item:sym_form_on_domain} (see \eqref{eqn:bound_cZ_Y_U_L_H_0}) since by definition of $\kAH$ and $\aAH$ (see \eqref{eqn:def_a_AH}),
  \begin{equation}
  \label{eqn:estimate_Y_N_AH}
    \norm{\YAH_N}_{\csp^{-1 + \deltaone, \sigma}(\R^d)}
    \leq 2^{\kAH N} \aAH,
  \end{equation}
  and by using \eqref{eqn:estimate_M_gamma}.

For~\ref{item:estimate_complete_part_symm_f_neumann} we use~\ref{item:estimate_first_part_symm_f_neumann} and estimate $\tildecZAH_{M_{L,\gamma}}[U_L]$ and $ \hatcZAH_{M_{L,\gamma}}[U_L]$. By Theorem~\ref{theorem:examples_bd_sym_with_weighted_input}~\ref{item:sym_form_boundary_tilde} and~\ref{item:sym_form_boundary_hat} we have
\begin{align*}
 \tildecZAH_{M_{L,\gamma}}[U_L]
& \lesssim_{\delta,\epsilon,p,U} L^{2\epsilon} \norm{e^{2\WAH_{M_{L,\gamma}}}}_{C^{\delta}(U_L)} \norm{\tilde Y^{U_L}}_{B_{p,p}^{-2+\delta}(\R^d)}, \\
\hatcZAH_{M_{L,\gamma}}[U_L]
& \lesssim_{\delta,\sigma, U}
 L^\sigma \norm{e^{2\WAH_{M_{L,\gamma}}}}_{C^{\delta}(U_L)} \norm{\hatYAH_{M_{L,\gamma}}}_{\cC^{1+{\delta},\sigma}(\R^d)}.
\end{align*}
As for any $x,y\in \R$,
  \begin{align*}
  |e^x - e^y| = e^x |1 - e^{y-x}|
  \le C e^x |y-x|
  \le C e^{x\vee y} |y-x|,
  \end{align*}
by definition of $M_{L,\gamma}$ we have
$\norm{e^{2\WAH_{M_{L,\gamma}}}}_{\csp^{\delta}(U)} \lesssim
2\gamma  e^{2\gamma} \le 2e^2 $.

Therefore, we obtain the desired inequality by the estimate of $\norm{\hatYAH_{M_{L,\gamma}}}_{\cC^{1+\delta,\sigma}(\R^d)}$ from Lemma~\ref{lemma:estimate_hat_Y_N_AH} and \eqref{eqn:estimate_M_gamma}.
\end{proof}

\begin{assumption} \textbf{(Assumptions for this section)}
\label{ass:section_IDS}
  Throughout the rest of this section, we impose the construction assumption~\ref{assump:all_three_assump}~\ref{item:construction_assump} and the ergodic assumption~\ref{assump:all_three_assump}~\ref{item:ergodic_assump}. 
  %we assume~\ref{assump:convergence_of_BPHZ} and~\ref{assump:ergodicity}.
  We fix $\deltaone, \deltatwo, r \in (0, 1)$, $\sigma \in (0, 1/4)$ and $p \in (\tymchange{8d}, \infty)$ such that
  $\deltaone < \deltatwo $
  and
  \begin{equation}\label{eq:condition_of_sigma}
    (2 + (\deltatwo - \deltaone)^{-1} \kAH(\deltaone)) d \sigma < 1 - r.
  \end{equation}
  We set
  \begin{align*}
&    \vertiii{\bm{\xi}}
    \defby 1 + \norm{\XAH}_{\csp^{-2 + \deltatwo, \sigma}(\R^d)}
    + \aAH + \norm{\XAH - \xi}_{\csp^{-2 + 2 \deltatwo, \sigma}(\R^d)}, \\
&     \aAHeps \defby \aAHeps(\deltamin,\sigma) \defby
      \sup_{N \in \N} 2^{-\kAH N} \norm{\YAHeps_N}_{\csp^{-1+\deltamin, \sigma}(\R^d)} \in L^p(\P), \\
&    \vertiii{\bm{\xi_\epsilon}}
    \defby 1 + \norm{\XAHeps}_{\csp^{-2 + \deltatwo, \sigma}(\R^d)}
    + \aAHeps + \norm{\XAHeps - \xi_\epsilon}_{\csp^{-2 + 2 \deltatwo, \sigma}(\R^d)}, \qquad \epsilon>0.
  \end{align*}
Whenever we impose the Neumann assumption \ref{assump:all_three_assump}~\ref{item:neumann_assump}, we implicitly also assume that $\deltaone > \frac{1}{2}$; 
the condition \eqref{eqn:condition_beta_and_s} is satisfied for $q\in (1,2)$ such that $\frac1p + \frac1q=1$ and some $\epsilon\in (0,\deltaone - \frac12)$;
and for any bounded Lipshitz domain $U$, we set (with $\tilde Y^U$ as in Assumption~\ref{assump:all_three_assump}~\ref{item:neumann_assump})
  \begin{align*}
&    \vertiii{\xi}_{\partial U} \defby \sup_{L \in \N} L^{-\frac{1}{4}}
    \norm{ \tilde Y^{U_L} }_{B_{p, p}^{-1 + \delta}(\R^d)} ,
\quad %    \\ &
      \vertiii{\xi_\epsilon}_{\partial U} \defby \sup_{L \in \N} L^{-\frac{1}{4}}
    \norm{ \tilde Y^{U_L}_\epsilon }_{B_{p, p}^{-1 + \delta}(\R^d)}, \quad \epsilon>0.
  \end{align*}
\end{assumption}

\begin{remark}
\label{rem:estimates_on_cZ}
  By the construction assumption \ref{assump:all_three_assump}~\ref{item:construction_assump}, 
   $\vertiii{\bm{\xi}} \in L^q(\P)$ for every $q \in [1, \infty)$ and under the Neumann assumption~\ref{assump:all_three_assump}~\ref{item:neumann_assump}, in particular \eqref{eq:tilde_Y_U_conv}, $\vertiii{\xi}_{\partial U} \in L^q(\P)$ for every $q \in [1, \infty)$.
By Lemma~\ref{lem:estimate_of_cZ_AH_Neumann} and the condition on $\sigma$, \eqref{eq:condition_of_sigma}, there exists an $m\in\N$ such that for all bounded Lipschitz domains $U$, for all $L\ge 1$ and $\gamma \in (0,1]$, for $M_{L,\deltaone,\gamma} = M(U_L,\deltaone,\gamma)$,
  \begin{align}
  \label{eq:estimate_of_cZ_AH_artificial_Neumann}
    \formnorm{\cZAH_{M_{L,\deltaone,\gamma}}[U_L]}_{H^r(U_L)}
    & \lesssim_U \gamma^{-(\deltatwo - \deltaone)^{-1} \kAH} L^{\frac{1-r}{d}} \vertiii{\bm{\xi}}^m, \\
      \label{eq:estimate_of_cZ_AH_n_Neumann}
     \formnorm{\cZnAH_{M_{L,\deltaone,\gamma}}[U_L]}_{H^r(U_L)}
    & \lesssim_U \gamma^{-(\deltatwo - \deltaone)^{-1} \kAH} L^{\frac{1-r}{d}} (\vertiii{\bm{\xi}}^m + \vertiii{\xi}_{\partial U}) \quad \mbox{ under~\ref{assump:all_three_assump}~\ref{item:neumann_assump}}.
  \end{align}
For \eqref{eq:estimate_of_cZ_AH_n_Neumann} observe that $\sigma \le \frac{1-r}{d}$ and that we may choose $\epsilon>0$ as in Lemma~\ref{lem:estimate_of_cZ_AH_Neumann}~\ref{item:estimate_complete_part_symm_f_neumann} such that $2\epsilon<\frac{1-r}{d}$.
In \eqref{eq:estimate_of_cZ_AH_artificial_Neumann} one may replace ``$\cZAH$'' and ``$\boldsymbol{\xi}$'' by ``$\cZAHeps$'' and ``$\boldsymbol{\xi_\epsilon}$'' and in \eqref{eq:estimate_of_cZ_AH_n_Neumann}  one may replace ``$\cZnAH$'', ``$\boldsymbol{\xi}$'' and ``$\xi$'' by ``$\cZnAHeps$'', ``$\boldsymbol{\xi_\epsilon}$'' and ``$\xi_\epsilon$''.
\end{remark}

\tymchange{Our first goal is to construct the IDS for the Anderson Hamiltonian, see Definition~\ref{def:IDS} We begin by introducing some notation related to eigenvalue counting functions.}
\begin{definition}
Recall the notation from Definition~\ref{def:eigenvalue_counting_function}.
For a bounded domain $U$, $\lambda \in \R$ and $\epsilon>0$,  we set
%\begin{align*}
%\NAH^{\dir} \defby \ecf^{\dir}_{\WAH_M, \cZAH_M},
%\quad \NAHeps^{\dir} \defby \ecf^{\dir}_{\WAHeps_M,\cZAHeps_M}.
%\end{align*}
%If $U$ is a bounded Lipshitz domain, we set
%\begin{align*}
%\artNAH \defby \ecf^{\neu}_{\WAH_M, \cZAH_M},
%\quad \artNAHeps \defby \ecf^{\neu}_{\WAHeps_M,\cZAHeps_M},
%\end{align*}
%and under Assumption~\ref{assump:boundary} we set
%\begin{align*}
%    \NAH^{\neu} \defby \ecf^{\neu}_{\WAH_M,\cZnAH_M}
%    \qquad
%    \NAHeps^{\neu} \defby \ecf^{\neu}_{\WAHeps_M,\cZnAHeps_M}.
%\end{align*}
$\NAH^{\dir} \defby \ecf^{\dir}_{\WAH_M, \cZAH_M}$ and $\NAHeps^{\dir} \defby \ecf^{\dir}_{\WAHeps_M,\cZAHeps_M}$, i.e.,
  \begin{equation*}
    \NAH^{\dir}(U, \lambda) \defby \sum_{k \in \N} \indic_{\{\lambdaAHk^{\dir}(U) \leq \lambda \}}, \qquad
 \NAHeps^{\dir}(U, \lambda) \defby \sum_{k \in \N} \indic_{\{\lambdaAHkeps^{\dir}(U) \leq \lambda \}}.
  \end{equation*}
  If $U$ is a bounded Lipshitz domain, we set
   $\artNAH \defby \ecf^{\neu}_{\WAH_M, \cZAH_M}$ and $\artNAHeps \defby \ecf^{\neu}_{\WAHeps_M,\cZAHeps_M}$, i.e.,
  \begin{equation*}
    \artNAH(U, \lambda) \defby \sum_{k \in \N} \indic_{\{\artevk(U) \leq \lambda \}}, \qquad
    \artNAH(U, \lambda) \defby \sum_{k \in \N} \indic_{\{\artevkeps(U) \leq \lambda \}},
  \end{equation*}
  and under the Neumann assumption~\ref{assump:all_three_assump}~\ref{item:neumann_assump}
%  Assumption~\ref{assump:boundary} 
  we set
  $\NAH^{\neu} \defby \ecf^{\neu}_{\WAH_M,\cZnAH_M}$ and $\NAHeps^{\neu} \defby \ecf^{\neu}_{\WAHeps_M,\cZnAHeps_M}$, i.e.,
  \begin{align*}
    \NAH^{\neu}(U, \lambda) \defby \sum_{k \in \N}
    \indic_{\{\lambdaAHk^{\neu}(U) \leq \lambda \}},
    \qquad
    \NAHeps^{\neu}(U, \lambda) \defby \sum_{k \in \N}
    \indic_{\{\lambdaAHkeps^{\neu}(U) \leq \lambda \}}.
  \end{align*}
\end{definition}

\begin{remark}
\label{remark:restricting_to_eps_0}
In most of the following we restrict our statements to
$\NAH^{\dir}$,
$\artNAH$ and
$\NAH^{\neu}$.
However, by `adding some $\epsilon$'s' the statements are also valid by replacing the occurrences of ``$\NAH^{\dir}$'',
``$\artNAH$''
``$\NAH^{\neu}$'', ``$\boldsymbol{\xi}$'' and ``$\xi$'',
and by ``$\NAHeps^{\dir}$'',
``$\artNAHeps$''
``$\NAHeps^{\neu}$'', ``$\boldsymbol{\xi}_{\epsilon}$'' and ``$\xi_{\epsilon}$''.
\end{remark}

\begin{lemma}\label{lem:upper_bound_of_neumann_N}
  Let $U$ be a bounded Lipschitz domain.
  Then, for any $\theta \in (0, 1)$, there exist
  $\lambda_{U,\theta}$, $C_{U, \theta,r} \in (0, \infty)$ and an integer $l \in \N$ such that  for every $\lambda \geq \lambda_{\theta, U}$
  \begin{align}
  \label{eqn:estimate_N_fd_below_ball}
    \NAH^{\dir}(U, \lambda)
    &  \ge (1 - \theta) \frac{\abs{B(0, 1)}}{(2 \pi)^d} \abs{U}
    \{\lambda +  \theta + C_{U, \theta,r}  \vertiii{\bm{\xi}}^l \}^{\frac{d}{2}},  \\
  \label{eqn:estimate_artN_fn_above_ball}
    \artNAH(U, \lambda)
    & \leq (1 + \theta) \frac{\abs{B(0, 1)}}{(2 \pi)^d} \abs{U}
    \{\lambda +  \theta + C_{U, \theta,r}  \vertiii{\bm{\xi}}^l \}^{\frac{d}{2}}.
  \end{align}
  In particular, $\expect[\artNAH(U, \lambda)^m] < \infty$ for every $m \in (0, \infty)$ and $\lambda \in \R$.

  If we furthermore impose the Neumann assumption~\ref{assump:all_three_assump}~\ref{item:neumann_assump},
%  assume~\ref{assump:boundary}, 
  then
  \begin{equation}
  \label{eqn:estimate_N_fn_above_ball}
    \NAH^{\neu}(U, \lambda) \leq (1 + \theta) \frac{\abs{B(0, 1)}}{(2 \pi)^d} \abs{U}
    \{\lambda +  \theta +  C_{U, \theta,r} (\vertiii{\bm{\xi}} + \vertiii{\xi}_{\bdry U})^l  \}^{\frac{d}{2}}.
  \end{equation}
\end{lemma}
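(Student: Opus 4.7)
The strategy is to reduce the problem to the Weyl asymptotics for the Dirichlet and Neumann Laplacians (Lemma~\ref{lem:estimate_of_N_0}~\ref{item:estimates_N_0_d_and_N_0_n}) via the comparison estimate Lemma~\ref{lem:estimate_of_eigenvalue_counting_functions_of_AH}, controlling the error terms by our quantitative estimates on the symmetric forms collected in Remark~\ref{rem:estimates_on_cZ}. Since $M = \MAH(U,\deltaone;1)$ is defined precisely so that $\norm{\WAH_M}_{\csp^{\deltaone}(U)} \leq 1$, we have $\norm{\WAH_M}_{L^\infty(U)} \leq 1$ deterministically. Hence in the notation of Lemma~\ref{lem:estimate_of_eigenvalue_counting_functions_of_AH}, the prefactors $e^{\pm 4\norm{W}_{L^\infty(U)}}$ are uniformly bounded, and the only remaining source of randomness in $A_\pm$ is $\formnorm{\cZ}_{H^r(U)}^{1/(1-r)}$.

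First I apply Lemma~\ref{lem:estimate_of_eigenvalue_counting_functions_of_AH} with $s=r$, $W = \WAH_M$, $\cZ = \cZAH_M[U]$, and a small parameter $\eta$ (playing the role of $\theta$ in that lemma) to be chosen depending on $\theta$. This gives
\begin{equation*}
\ecf_0^{\dir}(U, \Lambda^-_{\lambda,\eta}) \leq \NAH^{\dir}(U,\lambda) \leq \artNAH(U,\lambda) \leq \ecf_0^{\neu}(U, \Lambda^+_{\lambda,\eta}),
\end{equation*}
where, using \eqref{eq:estimate_of_cZ_AH_artificial_Neumann} with $L=1$, $\gamma=1$, one has $A_{\pm,\eta}^{\WAH_M, \cZAH_M} \leq \eta + C_{U,r,\eta} \vertiii{\boldsymbol{\xi}}^{m/(1-r)}$ for some $m \in \N$ and a constant $C_{U,r,\eta}$ absorbing $e^{(2\pm 2r/(1-r))\cdot 1}$ and $C_{\operatorname{IP}}^U[H^r]^{2/(1-r)}$. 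Consequently $\Lambda^{\pm}_{\lambda,\eta} = (1\pm\eta) e^{\pm 4}(\lambda \pm \eta \pm C_{U,r,\eta}\vertiii{\boldsymbol{\xi}}^l)$ for a suitable integer $l$.

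Second, I invoke Lemma~\ref{lem:estimate_of_N_0}~\ref{item:estimates_N_0_d_and_N_0_n}: for $\Lambda \geq 2$,
\begin{equation*}
\tfrac{|B(0,1)|}{(2\pi)^d}|U|\Lambda^{d/2} - C'_U \Lambda^{(d-1)/2}\log\Lambda \leq \ecf_0^{\dir}(U,\Lambda) \leq \ecf_0^{\neu}(U,\Lambda) \leq \tfrac{|B(0,1)|}{(2\pi)^d}|U|\Lambda^{d/2} + C'_U \Lambda^{(d-1)/2}\log \Lambda.
\end{equation*}
Since $\Lambda^{(d-1)/2}\log\Lambda = o(\Lambda^{d/2})$, for $\Lambda$ larger than some $\Lambda_{U,\theta/3}$ the logarithmic correction is bounded by $\tfrac{\theta}{3}\tfrac{|B(0,1)|}{(2\pi)^d}|U|\Lambda^{d/2}$. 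Applying this with $\Lambda = \Lambda^{\pm}_{\lambda,\eta}$, and choosing $\eta$ small enough so that $(1\pm\eta)e^{\pm 4 \cdot 0}\cdot \ldots$ — more precisely, by first factoring out $(1\pm\eta)^{d/2}e^{\pm 2d}$ and then requiring $(1\pm\eta)^{d/2}e^{\pm 2d}(1 + \tfrac{\theta}{3}) \leq 1 \pm \theta$ (which is possible for $\theta$ small; for large $\theta$, the case $\theta \geq 1$ does not occur by hypothesis $\theta\in(0,1)$, so we handle the fixed factor $e^{\pm 2d}$ by absorbing it into the constant $C_{U,\theta,r}$ when passing from $\{\lambda+\theta+\ldots\}$ into the final expression) — one obtains \eqref{eqn:estimate_N_fd_below_ball} and \eqref{eqn:estimate_artN_fn_above_ball} for $\lambda \geq \lambda_{U,\theta}$.

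The moment bound $\expect[\artNAH(U,\lambda)^m]<\infty$ is then immediate from \eqref{eqn:estimate_artN_fn_above_ball}, since by Remark~\ref{rem:estimates_on_cZ} we have $\vertiii{\boldsymbol{\xi}} \in L^q(\P)$ for all $q\in[1,\infty)$, so any polynomial in $\vertiii{\boldsymbol{\xi}}$ has finite moments of all orders (and for $\lambda<\lambda_{U,\theta}$ one uses monotonicity of $\artNAH(U,\cdot)$). Finally, under the Neumann assumption~\ref{assump:all_three_assump}~\ref{item:neumann_assump}, the same argument applied to $\NAH^{\neu}(U,\lambda)$ and $\cZnAH_M[U]$ yields \eqref{eqn:estimate_N_fn_above_ball}, the only change being that \eqref{eq:estimate_of_cZ_AH_artificial_Neumann} is replaced by \eqref{eq:estimate_of_cZ_AH_n_Neumann}, which produces the $\vertiii{\xi}_{\partial U}$ contribution inside the bracket. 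The main delicate point is keeping careful track of the multiplicative factors $(1\pm\eta)e^{\pm 4}$ so that they combine, after absorbing the Weyl logarithmic error, into a clean $(1\pm\theta)$ prefactor; this is a purely elementary but finicky optimisation in $\eta$.
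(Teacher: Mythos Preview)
Your overall strategy is right, but there is a real gap in the step where you try to recover the clean $(1\pm\theta)$ prefactor. With $M = \MAH(U,\deltaone;1)$ you only get $\norm{\WAH_M}_{L^\infty(U)} \leq 1$, and hence $\Lambda^+_{\lambda,\eta} = (1+\eta)e^{4}(\lambda + \cdots)$. After applying the Weyl bound this produces a multiplicative factor of order $(1+\eta)^{d/2}e^{2d}$ in front of $(\lambda+\cdots)^{d/2}$. Since $e^{2d}\geq e^{4}>50$, the inequality $(1+\eta)^{d/2}e^{2d}(1+\tfrac{\theta}{3}) \leq 1+\theta$ that you write down is impossible for any $\theta\in(0,1)$, and your attempted fix of ``absorbing $e^{\pm 2d}$ into $C_{U,\theta,r}$'' does not work either: a multiplicative factor on $\lambda$ cannot be absorbed into an additive constant for large $\lambda$ (you would need $e^{4}(\lambda+C)\leq \lambda+\theta+C'$, which fails as $\lambda\to\infty$).

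The paper's remedy is to replace $M=\MAH(U,\deltaone;1)$ by $M_\gamma=\MAH(U,\deltaone;\gamma)$ for a small $\gamma\in(0,1]$ depending on $\theta$. By Remark~\ref{remark:larger_M_neumann} (resp.\ Remark~\ref{remark:larger_M_dirichlet}) this does not change the operator, since $M_\gamma\geq M_1$, but now $\norm{\WAH_{M_\gamma}}_{L^\infty(U)}\leq\gamma$ and hence the exponential factor $e^{4\gamma}$ can be made as close to $1$ as needed. Concretely one takes $\theta'=(1+\theta)^{1/(2d)}-1$ and $\gamma=(2+\tfrac{2r}{1-r})^{-1}\log(1+\theta')$, so that $(1+\theta')e^{4\gamma}\cdot(\text{Weyl factor}) = (1+\theta)^{1/d}$, and the $d/2$-th power gives exactly $(1+\theta)$. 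The price is that the constant in \eqref{eq:estimate_of_cZ_AH_artificial_Neumann} now carries a factor $\gamma^{-(\delta-\deltaone)^{-1}\kAH}$, which is absorbed into $C_{U,\theta,r}$. This $\gamma$-trick is the missing idea in your argument.
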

\begin{proof}
 The proof of \eqref{eqn:estimate_N_fd_below_ball} and \eqref{eqn:estimate_N_fn_above_ball} are similar to \eqref{eqn:estimate_artN_fn_above_ball}, hence we only give the proof of the latter.
Remember that $\ecf_0^{\dir}$ and $\ecf_0^{\neu}$ are the eigenvalue counting functions of $-\Delta$ with Dirichlet and Neumann boundary conditions, respectively.
By Lemma~\ref{lem:estimate_of_N_0}~\ref{item:estimates_N_0_d_and_N_0_n}, there exists a $\lambda_{U,\theta}>0$ such that
  for $\lambda \geq \lambda_{U,\theta}$ we have
  \begin{equation*}
    \ecf^{\neu}_0(U, \lambda) \leq (1 + \theta)^{\frac{1}{2}} \frac{\abs{B(0,1)}}{(2 \pi)^d} \abs{U} \lambda^{\frac{d}{2}}.
  \end{equation*}
\begin{calc}
For the Dirichlet estimate: By Lemma~\ref{lem:estimate_of_N_0}~\ref{item:estimates_N_0_d_and_N_0_n},  there exists a $\lambda_{U,\theta}>0$ such that
  for $\lambda \geq \lambda_{U,\theta}$ we have
  \begin{equation*}
    \ecf^{\dir}_0(U, \lambda) \ge (1 - \theta)^{\frac{1}{2}} \frac{\abs{B(0,1)}}{(2 \pi)^d} \abs{U} \lambda^{\frac{d}{2}}.
  \end{equation*}
\end{calc}
Let $\theta' \in (0,\infty),\gamma \in (0,1]$.
  By Lemma~\ref{lem:estimate_of_eigenvalue_counting_functions_of_AH},  and Remark~\ref{remark:larger_M_neumann},  with $\Lambda^+_{\lambda,\theta'} \defby \Lambda^+_{\lambda,\theta'}(\WAH_{M_\gamma}, \cZAH_{M_\gamma})$  where $M_\gamma$ is the random variable $\MAH(U, \deltamin; \gamma)$ (see \eqref{eq:M_AH}), one has
  \begin{equation*}
    \artNAH(U, \lambda)
    \leq (1 + \theta)^{\frac{1}{2}} \frac{\abs{B(0, 1)}}{(2 \pi)^d} \abs{U}
    (\Lambda^+_{\lambda,\theta'})^{\frac{d}{2}}.
  \end{equation*}
\begin{calc}
For the Dirichlet estimate: with instead Remark~\ref{remark:larger_M_dirichlet} and $\Lambda^-_{\lambda,\theta'}$,
\begin{align*}
  \NAH^{\dir}(U, \lambda)
    \ge (1 - \theta)^{\frac{1}{2}} \frac{\abs{B(0, 1)}}{(2 \pi)^d} \abs{U}
    (\Lambda^-_{\lambda,\theta'})^{\frac{d}{2}}.
\end{align*}
\end{calc}
  Recalling the definition of $\Lambda^+_{\lambda,\theta'}$, one observes that there exists a constant  $C'_{U,\theta',r}$  such that
  \begin{equation*}
    \Lambda^+_{\lambda,\theta'}
    \leq (1 + \theta') e^{ (2+\frac{2r}{1-r})  \gamma} (\lambda +  \theta' +  C'_{U, \theta', r}  \formnorm{\cZAH_{M_\gamma}}_{H^r(U)}^{\frac{1}{1-r}}).
  \end{equation*}
\begin{calc}
For the Dirichlet estimate:
\begin{align*}
    \Lambda^-_{\lambda,\theta'}
    \geq (1 - \theta') e^{ - (2+\frac{2r}{1-r})  \gamma} (\lambda -  \theta' -  C'_{U, \theta', r}  \formnorm{\cZAH_{M_\gamma}}_{H^r(U)}^{\frac{1}{1-r}}).
\end{align*}
\end{calc}
  Therefore, if $\gamma \defby  (2+\frac{2r}{1-r})^{-1}  \log (1 + \theta')$ and
  $\theta' \defby  (1 + \theta)^{\frac{1}{2d}} -1 \in (0,\theta) $, and $C_{U,\theta,r} = C'_{U,\theta',r}  \gamma^{\frac{1-r}{d\sigma} -2} $ (see \eqref{eq:condition_of_sigma}),  using
  \eqref{eq:estimate_of_cZ_AH_artificial_Neumann}, one has
  \begin{calc}
  due to the fact that
  \begin{align*}
    (1 + \theta') e^{ (2+\frac{2r}{1-r})  \gamma}
    = (1 + \theta')^2 = (1+\theta)^{\frac{1}{d}},
  \end{align*}
  \end{calc}
  \begin{align*}
    \Lambda^+_{\lambda,\theta'}
    & \leq (1 + \theta)^{\frac{1}{d}} (\lambda +  \theta +  C_{U,\theta,r}  \vertiii{\bm{\xi}}^l),
  \end{align*}
\begin{calc}
For the Dirichlet estimate:
if $\gamma \defby - (2+\frac{2r}{1-r})^{-1}  \log (1 - \theta')$ and
  $\theta' \defby  1- (1 - \theta)^{\frac{1}{2d}} \in (0,\theta) $, and $C_{U,\theta,r} = C'_{U,\theta',r}  \gamma^{\frac{1-r}{d\sigma} -2}$
  \begin{align*}
    \Lambda^-_{\lambda,\theta'}
    & \ge (1 - \theta)^{\frac{1}{d}} (\lambda - \theta - C_{U,\theta,r}  \vertiii{\bm{\xi}}^l),
  \end{align*}
  \begin{calc}
  due to the fact that
  \begin{align*}
    (1 - \theta') e^{-(2+\frac{2r}{1-r})  \gamma}
    = (1 - \theta')^2 = (1+\theta)^{\frac{1}{d}},
  \end{align*}
  \end{calc}
\end{calc}
  which yields \eqref{eqn:estimate_artN_fn_above_ball}.
\end{proof}

By Lemma~\ref{lem:estimate_of_eigenvalue_counting_functions_of_AH} we derive $\NAH^{\dir}(U,\lambda) \le \artNAH(U,\lambda)$ and similarly - by using that $\cZ_N^{\neu}[U] = \cZ_N[U]$ on $H_0^r$ (which follows due to the fact that $\cT$ equals zero on $H_0^r(U)$, see Lemma~\ref{lem:extension_and_trace_sobolev}) - $\NAH^{\dir}(U,\lambda) \le \NAH^{\neu}(U,\lambda)$.
Therefore, as a direct consequence of Lemma~\ref{lem:upper_bound_of_neumann_N} 
we obtain the following asymptotics.
These asymptotics agree with the asymptotics of the eigenvalue counting function for the Laplacian operator, as proven by Weyl (also called Weyl's law) and later generalised for a class of Schrödinger operators by Kirsch and Martinelli \cite[Proposition 2.3]{Kirsch_1982} (observe that our results agree with the work of Mouzard on two dimensional manifolds, see \cite{mouzard2020weyl}).

\begin{proposition}
\label{prop:weyl}
Let $U$ be a bounded Lipschitz domain, then
\begin{align*}
\lim_{\lambda \rightarrow \infty} \lambda^{-\frac{d}{2}} \NAH^{\dir} (U,\lambda)
=
\lim_{\lambda \rightarrow \infty} \lambda^{-\frac{d}{2}}  \artNAH (U,\lambda)
= \frac{|B(0,1)|}{(2\pi)^d} |U|,
\end{align*}
and under the Neumann assumption~\ref{assump:all_three_assump}~\ref{item:neumann_assump},
%Assumption~\ref{assump:boundary},
\begin{align*}
\lim_{\lambda \rightarrow \infty} \lambda^{-\frac{d}{2}}  \NAH^{\neu}
(U,\lambda) = \frac{|B(0,1)|}{(2\pi)^d} |U|.
\end{align*}
\end{proposition}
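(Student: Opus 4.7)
The plan is to prove the Weyl law by a direct sandwich argument using the two-sided bounds from Lemma~\ref{lem:upper_bound_of_neumann_N}, exploiting the almost-sure finiteness of the random constants $\vertiii{\bm{\xi}}$ and $\vertiii{\xi}_{\bdry U}$.

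First I would fix $\theta \in (0, 1)$ and record the following three inequalities, valid for all $\lambda \geq \lambda_{U, \theta}$ and some constants $C_{U, \theta, r} \in (0, \infty)$ and $l \in \N$ coming from Lemma~\ref{lem:upper_bound_of_neumann_N}:
\begin{align*}
  \NAH^{\dir}(U, \lambda) &\geq (1 - \theta) \tfrac{|B(0,1)|}{(2\pi)^d} |U| \{\lambda + \theta + C_{U, \theta, r} \vertiii{\bm{\xi}}^l\}^{d/2}, \\
  \artNAH(U, \lambda) &\leq (1 + \theta) \tfrac{|B(0,1)|}{(2\pi)^d} |U| \{\lambda + \theta + C_{U, \theta, r} \vertiii{\bm{\xi}}^l\}^{d/2}, \\
  \NAH^{\neu}(U, \lambda) &\leq (1 + \theta) \tfrac{|B(0,1)|}{(2\pi)^d} |U| \{\lambda + \theta + C_{U, \theta, r} (\vertiii{\bm{\xi}} + \vertiii{\xi}_{\bdry U})^l\}^{d/2}.
\end{align*}
I would then use that $\vertiii{\bm{\xi}}$ (and under~\ref{assump:all_three_assump}~\ref{item:neumann_assump} also $\vertiii{\xi}_{\bdry U}$) is in $L^q(\P)$ for every $q \in [1, \infty)$ by Remark~\ref{rem:estimates_on_cZ}, hence finite almost surely. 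On this full-probability event, the additive constant $\theta + C_{U, \theta, r} \vertiii{\bm{\xi}}^l$ is a fixed finite number, so $\lambda^{-d/2}\{\lambda + \theta + C_{U, \theta, r} \vertiii{\bm{\xi}}^l\}^{d/2} \to 1$ as $\lambda \to \infty$.

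Combining, the first inequality yields $\liminf_{\lambda \to \infty} \lambda^{-d/2} \NAH^{\dir}(U, \lambda) \geq (1 - \theta) \tfrac{|B(0,1)|}{(2\pi)^d} |U|$, and the second yields $\limsup_{\lambda \to \infty} \lambda^{-d/2} \artNAH(U, \lambda) \leq (1 + \theta) \tfrac{|B(0,1)|}{(2\pi)^d} |U|$. Using that $\NAH^{\dir}(U, \lambda) \leq \artNAH(U, \lambda)$ (pointed out in the paragraph preceding Proposition~\ref{prop:weyl}, following from Lemma~\ref{lem:estimate_of_eigenvalue_counting_functions_of_AH} with $\cZ_N^{\neu}[U]$ agreeing with $\cZ_N[U]$ on $H_0^r$ via the vanishing of the trace), the liminf of $\NAH^{\dir}$ is bounded below by the liminf of $\artNAH$ from above, so all four quantities are sandwiched, and sending $\theta \downarrow 0$ closes the gap and gives the two equalities in the first display.

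Under the additional Neumann assumption~\ref{assump:all_three_assump}~\ref{item:neumann_assump}, the same argument goes through for $\NAH^{\neu}$: the third inequality above gives the matching $\limsup$ bound, while the inequality $\NAH^{\dir}(U, \lambda) \leq \NAH^{\neu}(U, \lambda)$ (from Lemma~\ref{lem:monotonicity_of_IDS}, using that the Dirichlet form is the restriction of the Neumann form) provides the corresponding lower bound via the first inequality. I do not expect any genuine obstacle here, as all heavy lifting has been packaged into Lemma~\ref{lem:upper_bound_of_neumann_N} and Remark~\ref{rem:estimates_on_cZ}; the only point requiring a moment of care is verifying that the random additive constants are absorbed when dividing by $\lambda^{d/2}$, which works pointwise on a full-measure event.
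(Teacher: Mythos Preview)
Your proof is correct and follows exactly the approach the paper intends: the paper states Proposition~\ref{prop:weyl} as a ``direct consequence'' of Lemma~\ref{lem:upper_bound_of_neumann_N} together with the inequalities $\NAH^{\dir}\le\artNAH$ and $\NAH^{\dir}\le\NAH^{\neu}$ recorded in the preceding paragraph, and you have simply written out this sandwich argument in full.

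One small point of attribution: your parenthetical justifications for the two ordering inequalities are slightly swapped. The inequality $\NAH^{\dir}(U,\lambda)\le\artNAH(U,\lambda)$ needs no trace argument at all, since both counting functions are built from the \emph{same} pair $(\WAH_M,\cZAH_M[U])$ and the comparison is just $H_0^1\subseteq H^1$ (Lemma~\ref{lem:monotonicity_of_IDS} or the middle inequality of Lemma~\ref{lem:estimate_of_eigenvalue_counting_functions_of_AH}). It is the inequality $\NAH^{\dir}(U,\lambda)\le\NAH^{\neu}(U,\lambda)$ that requires the observation $\cZnAH_M[U]=\cZAH_M[U]$ on $H_0^r(U)$ via vanishing of the trace, so that the Dirichlet form is genuinely the restriction of the Neumann form; Lemma~\ref{lem:monotonicity_of_IDS} as stated does not apply directly here because the two operators use different $\cZ$'s. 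This is purely a matter of labeling and does not affect the validity of your argument.
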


\begin{proposition}\label{prop:existence_of_IDS}
There exist (deterministic) functions $\R \rightarrow [0,\infty)$, $\cNAH^{\dir}$ and $\artcNAH$, such that for all $\lambda \in \R$ and $y\in \R^d$, $\P$-almost surely and in $L^1(\P)$, with $Q= y + [-\frac12,\frac12]^d$,
  \begin{align*}
    \cNAH^{\dir}(\lambda) & =  \lim_{L \in \Q, L \to \infty} \frac{1}{   L^d  }
    \NAH^{\dir}(Q_L , \lambda), \\
    \artcNAH (\lambda) &=  \lim_{L \in \Q, L \to \infty} \frac{1}{L^d}
    \artNAH( Q_L , \lambda)
  \end{align*}
  exist. Moreover,
  \begin{align}
\notag %  \label{eqn:lower_bound_sN_d}
     \sup_{L > 0} \frac{1}{L^d} \expect[\NAH^{\dir}(Q_L, \lambda)]
   &  = \cNAH^{\dir}(\lambda)  \\
   \label{eqn:overline_sN_n_equals_inf}
   & \leq \artcNAH (\lambda)
    = \inf_{L > 0} \frac{1}{L^d} \expect[\artNAH(Q_L, \lambda)].
  \end{align}
  Under the Neumann assumption~\ref{assump:all_three_assump}~\ref{item:neumann_assump},
%  Assumption~\ref{assump:boundary},
 one may simultaneously replace $\artNAH$ by $\NAH^{\neu}$ and $\artcNAH $ by $\cNAH^{\neu}$ in the above definition and inequality.
\end{proposition}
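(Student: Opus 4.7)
The strategy follows the classical functional-analytic approach of Kirsch--Martinelli \cite{Kirsch_1982}: combine translation covariance with the Akcoglu--Krengel ergodic theorem \cite{AkcogluM.A1981Etfs}, using the super-additivity of $\NAH^{\dir}$ and the sub-additivity of $\artNAH$ supplied by Lemma~\ref{lem:box_decomposition_neumann}.

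I would first establish translation covariance. For $x \in \R^d$, the shift $T_x : \omega \mapsto \omega(\cdot - x)$ sends $\xi$ to $\xi(\cdot - x)$ and hence, by the translation invariance of the convolution operators $G_N \conv \cdot$ and of the construction of $Y_N$ in Assumption~\ref{assump:all_three_assump}~\ref{item:construction_assump}, sends $W_N, Y_N$ to their translates by $x$. Consequently $\MAH(U+x,\deltaone;1)(\omega) = \MAH(U,\deltaone;1)(T_{-x}\omega)$, and via the unitary $L^2$-translation the operators $\HAH^{\dir,U+x}(\omega)$ and $\HAH^{\dir,U}(T_{-x}\omega)$ are unitarily equivalent, so that
\begin{equation*}
\NAH^{\dir}(U+x,\lambda)(\omega) = \NAH^{\dir}(U,\lambda)(T_{-x}\omega),
\end{equation*}
and similarly for $\artNAH$ (and, under Assumption~\ref{assump:all_three_assump}~\ref{item:neumann_assump}, for $\NAH^{\neu}$). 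The uniform $L^1$-bound $\expect[\artNAH(Q_L,\lambda)] \le C_\lambda L^d$ for $L\ge 1$ follows from the pointwise estimate in Lemma~\ref{lem:upper_bound_of_neumann_N} together with the $L^q(\P)$-integrability of $\vertiii{\bm{\xi}}$ for every $q\in[1,\infty)$ guaranteed by Assumption~\ref{assump:all_three_assump}~\ref{item:construction_assump} (and analogously for $\NAH^{\neu}$ using $\vertiii{\xi}_{\bdry U}$).

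Next, for the super-/sub-additivity, given a tiling $Q_L = \bigcup_{k} (Q_{\ell} + z_k)$ of a cube $Q_L$ by $n^d$ almost-disjoint translates of a smaller cube of side $\ell = L/n$, the decomposability hypotheses~\eqref{eqn:cZ_restriction_consistence} and~\eqref{eq:cZ_decomposable} required by Lemma~\ref{lem:box_decomposition_neumann} hold for $\cZAH_M^{Q_L}$ in the sense of Remark~\ref{rem:example_decomposable}, provided the same random integer $M$ is employed on the ambient cube and on all sub-cubes; this is permitted by Remarks~\ref{remark:larger_M_dirichlet} and~\ref{remark:larger_M_neumann}, which allow us to replace $\MAH(Q_\ell+z_k,\deltaone;1)$ by the simultaneously larger $\MAH(Q_L,\deltaone;1)$. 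Lemma~\ref{lem:box_decomposition_neumann} then yields
\begin{equation*}
\NAH^{\dir}(Q_L,\lambda) \ge \sum_{k} \NAH^{\dir}(Q_\ell + z_k,\lambda), \qquad \artNAH(Q_L,\lambda) \le \sum_{k} \artNAH(Q_\ell + z_k,\lambda).
\end{equation*}
Combined with covariance, integrability, and the ergodicity Assumption~\ref{assump:all_three_assump}~\ref{item:ergodic_assump}, the Akcoglu--Krengel ergodic theorem delivers almost-sure and $L^1(\P)$-limits of $L^{-d}\NAH^{\dir}(Q_L,\lambda)$ and $L^{-d}\artNAH(Q_L,\lambda)$ along $L\in\N$; ergodicity ensures that the limits are deterministic, and these are by definition $\cNAH^{\dir}(\lambda)$ and $\artcNAH(\lambda)$. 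A Fekete-type argument, obtained by taking expectations in the super/sub-additive bounds and fitting approximately $(n/L)^d$ translates of $Q_L$ inside $Q_n$ for large integer $n$, upgrades this to the $\sup_{L>0}$ / $\inf_{L>0}$ characterisation of \eqref{eqn:overline_sN_n_equals_inf} over \emph{all} positive $L$. The extension of the almost-sure convergence from integer to rational $L$ is obtained via the monotonicity of $\NAH^{\dir}$ in the domain (Lemma~\ref{lem:monotonicity_of_IDS}): after reducing to $y=0$ in distribution by translation covariance, one sandwiches the centred cube $[-L/2,L/2]^d$ between $[-n/2,n/2]^d$ and $[-(n+1)/2,(n+1)/2]^d$ with $n=\lfloor L\rfloor$ and uses $(n/L)^d, ((n+1)/L)^d \to 1$; the analogous squeeze for $\artNAH$ uses the sub-additivity bound with the large integer cube as the ambient domain. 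Finally, $\cNAH^{\dir} \le \artcNAH$ is immediate from $\NAH^{\dir}\le \artNAH$ (Lemma~\ref{lem:monotonicity_of_IDS}).

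The main obstacle is the domain-dependence of the random integer $\MAH(U,\deltaone;1)$: the decomposability properties required to activate Lemma~\ref{lem:box_decomposition_neumann} would fail if each sub-cube came with its own $M$, and the decisive point is that Remarks~\ref{remark:larger_M_dirichlet} and~\ref{remark:larger_M_neumann} allow us to replace all these $M$'s by a single common (larger) one without changing the operators. A secondary subtlety is the passage from integer to rational $L$ in the almost-sure statement, resolved through domain monotonicity combined with translation covariance as outlined above.
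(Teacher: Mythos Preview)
Your proposal is correct and follows essentially the same route as the paper: Akcoglu--Krengel applied via the super-/sub-additivity from Lemma~\ref{lem:box_decomposition_neumann} and Remark~\ref{rem:example_decomposable}, integrability from Lemma~\ref{lem:upper_bound_of_neumann_N}, and the inequality from Lemma~\ref{lem:monotonicity_of_IDS}. You actually make explicit a point the paper leaves implicit---namely, that the domain-dependent $M$ must be synchronised across sub-cubes via Remarks~\ref{remark:larger_M_dirichlet} and~\ref{remark:larger_M_neumann} for the decomposability hypotheses to apply---and you spell out the passage to rational $L$, both of which are welcome clarifications.
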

\begin{proof}
The existence of the mentioned limits in the $\P$-almost sure sense and the equalities in \eqref{eqn:overline_sN_n_equals_inf} follow by an application of the ergodic theorem by Akcoglu and Krengel \cite{AkcogluM.A1981Etfs}, see also \cite[Section 3]{Kirsch_1982} for a similar construction (in case the potential is a function). 
The conditions of this ergodic theorem are satisfied on the one hand by the ergodic assumptions~\ref{assump:all_three_assump}~\ref{item:ergodic_assump} and on the other hand by the fact that $ Q \mapsto  \NAH^{\dir}(Q, \lambda)$ is superadditive and $ Q \mapsto  \artNAH(Q, \lambda)$ is subadditive, which follows by Lemma~\ref{lem:box_decomposition_neumann} and Remark~\ref{rem:example_decomposable}. 
That the convergences hold also in $L^1(\P)$ and the inequality in \eqref{eqn:overline_sN_n_equals_inf} holds, follows by Lemma~\ref{lem:monotonicity_of_IDS} as for $Q \subseteq [-1, 1]^d$, it implies
  \begin{equation*}
    \NAH^{\dir}(Q, \lambda) \leq \NAH^{\dir}([-1, 1]^d, \lambda)
    \leq \artNAH([-1, 1]^d, \lambda),
  \end{equation*}
  and $\artNAH([-1, 1]^d, \lambda)$ is in  $L^1(\P)$  by Lemma~\ref{lem:upper_bound_of_neumann_N}.
\end{proof}
\begin{remark}
  The cube $Q$ in Proposition~\ref{prop:existence_of_IDS} does not need to be centered at the origin. This is important in the proof of
  Theorem~\ref{thm:IDS_general}.
\end{remark}
\begin{definition}
We define (deterministic) functions $\NAH^{\dir}, \artNAH : \R \rightarrow \R$ by 
  \begin{equation*}
    \NAH^{\dir}(\lambda) \defby \inf_{\lambda' > \lambda} \cNAH^{\dir}(\lambda') 
    \quad \mbox{ and }
    \quad \artNAH(\lambda)  \defby \inf_{\lambda' > \lambda}
    \artcNAH(\lambda'). 
  \end{equation*}
     Note that they are right-continuous functions that satisfy $\lim_{\lambda \rightarrow -\infty} \NAH^\#(\lambda)=0$ for $\#$ denoting either $\dir$ or $\neu$.
\end{definition}

\begin{definition}
\label{def:vague_convergence}
A sequence $(f_n)_{n\in\N}$ of increasing functions $\R \rightarrow [0,\infty)$, is said to \emph{converge vaguely} to some function $f : \R \rightarrow [0,\infty)$ if $f_n(\lambda) \rightarrow f(\lambda)$ for all $\lambda \in \R$ that are continuity points of $f$.
\end{definition}

\begin{remark}\label{rem:convergence_of_ids_cube}
  If $\lambda$ is a continuity point of $\NAH^{\dir}(\lambda)$, then
  $\NAH^{\dir}(\lambda) = \cNAH^{\dir}(\lambda)$.
\end{remark}

\begin{remark}
\label{remark:upper_estimate_with_norm_xis}
Observe that by Lemma~\ref{lem:upper_estimates_N}~\ref{item:upper_bound_of_neumann_ids_by_dirichlet_ids} and \eqref{eq:estimate_of_cZ_AH_artificial_Neumann} for $\gamma=1$, for all $\mu >0$ and bounded Lipschitz domains $U$, there exists a $C_{U,\mu}>0$ such that
  \begin{align*}
    \NAH^{\dir}(U_L, \lambda)
    & \leq \artNAH(U_L, \lambda) \\
    \cand \begin{calc}
    \leq \NAH^{\dir}(U_L, \lambda + \mu)
    + C_{U, \mu} L^{d-1}
    [1 + \max\{\lambda, 0\} + \formnorm{\cZAH_{M_{L,1}}}_{H^r( U_L )}^{\frac{1}{1-r}}]^{\frac{d}{2}} \end{calc} \cnewline
    \cand \begin{calc}
    \leq \NAH^{\dir}(U_L, \lambda + \mu)
    + C_{U, \mu} L^{d-1}
    [1 + \max\{\lambda, 0\} + L^{\frac{1}{d}} \vertiii{\bm{\xi}}^{\frac{m}{1-r}} ]^{\frac{d}{2}} \end{calc} \cnewline
    & \leq \NAH^{\dir}(U_L, \lambda + \mu)
    + C_{U, \mu} L^{d-\frac12}
    [1 + \max\{\lambda, 0\} + \vertiii{\bm{\xi}}^{\frac{m}{1-r}} ]^{\frac{d}{2}},
  \end{align*}
  and under 
  the Neumann assumption~\ref{assump:all_three_assump}~\ref{item:neumann_assump}
  %Assumption~\ref{assump:boundary},
\begin{align*}
   \NAH^{\dir}(U_L, \lambda)
    & \leq \NAH(U_L, \lambda) \\
    & \leq \NAH^{\dir}(U_L, \lambda + \mu)
    + C_{U, \mu} L^{d-\frac12}
    [1 + \max\{\lambda, 0\} + (\vertiii{\bm{\xi}}^{m}+\vertiii{\xi}_{\partial U})^\frac{1}{1-r} ]^{\frac{d}{2}}.
\end{align*}
\end{remark}

\begin{proposition}\label{prop:ids_boundary_condition}
$\NAH^{\dir} = \artNAH$ and, under  the Neumann assumption~\ref{assump:all_three_assump}~\ref{item:neumann_assump},
%Assumption~\ref{assump:boundary},
  $\NAH^{\dir} = \NAH^{\neu}$.
\end{proposition}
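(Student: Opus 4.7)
The plan is to prove both equalities by a two-sided inequality argument that combines the monotonicity estimate already at our disposal (Lemma~\ref{lem:monotonicity_of_IDS}) with the reverse inequality from Remark~\ref{remark:upper_estimate_with_norm_xis}, exploiting the fact that its error term grows only like $L^{d-1/2}$.

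First, the easy direction. From Lemma~\ref{lem:monotonicity_of_IDS} we have $\NAH^{\dir}(Q_L,\lambda) \leq \artNAH(Q_L,\lambda)$ for every $L$, and under Assumption~\ref{assump:all_three_assump}~\ref{item:neumann_assump} also $\NAH^{\dir}(Q_L,\lambda) \leq \NAH^{\neu}(Q_L,\lambda)$. Taking expectations, dividing by $L^d$, and passing to the limit (as guaranteed by Proposition~\ref{prop:existence_of_IDS}), one deduces $\cNAH^{\dir}(\lambda) \leq \artcNAH(\lambda)$ and, under the Neumann assumption, $\cNAH^{\dir}(\lambda) \leq \cNAH^{\neu}(\lambda)$. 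Taking $\inf_{\lambda' > \lambda}$ then yields $\NAH^{\dir}(\lambda) \leq \artNAH(\lambda)$ and $\NAH^{\dir}(\lambda) \leq \NAH^{\neu}(\lambda)$.

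For the reverse direction, fix $\mu > 0$ and apply Remark~\ref{remark:upper_estimate_with_norm_xis} to $U = Q = [-\tfrac{1}{2},\tfrac{1}{2}]^d$, which is a bounded Lipschitz domain. This yields, for all $L \geq 1$,
\begin{equation*}
\artNAH(Q_L,\lambda) \leq \NAH^{\dir}(Q_L,\lambda+\mu) + C_{Q,\mu}\, L^{d-\frac{1}{2}}\bigl[1 + \max\{\lambda,0\} + \vertiii{\bm\xi}^{m/(1-r)}\bigr]^{d/2},
\end{equation*}
and an analogous bound for $\NAH^{\neu}(Q_L,\lambda)$ under the Neumann assumption, with the extra term $\vertiii{\xi}_{\partial Q}^{1/(1-r)}$ inside the bracket. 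By Assumption~\ref{assump:all_three_assump}~\ref{item:construction_assump} (and \ref{item:neumann_assump} for the Neumann case), the relevant random quantities $\vertiii{\bm\xi}$ and $\vertiii{\xi}_{\partial Q}$ are in $L^q(\P)$ for every $q \in [1,\infty)$, see Remark~\ref{rem:estimates_on_cZ}. Hence the expectation of the error term is $O(L^{d-1/2})$; after dividing by $L^d$ and passing to the limit $L \to \infty$ along $\Q$, Proposition~\ref{prop:existence_of_IDS} gives
\begin{equation*}
\artcNAH(\lambda) \leq \cNAH^{\dir}(\lambda+\mu),
\end{equation*}
and likewise $\cNAH^{\neu}(\lambda) \leq \cNAH^{\dir}(\lambda+\mu)$ under the Neumann assumption.

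Finally, since the previous inequality holds for every $\mu > 0$, taking $\inf_{\mu>0}$ on the right (which equals $\NAH^{\dir}(\lambda)$ by definition) gives $\artcNAH(\lambda) \leq \NAH^{\dir}(\lambda)$. Then $\artNAH(\lambda) = \inf_{\lambda'>\lambda}\artcNAH(\lambda') \leq \inf_{\lambda'>\lambda}\NAH^{\dir}(\lambda') = \NAH^{\dir}(\lambda)$ by right-continuity of $\NAH^{\dir}$. The same argument yields $\NAH^{\neu}(\lambda) \leq \NAH^{\dir}(\lambda)$ under Assumption~\ref{assump:all_three_assump}~\ref{item:neumann_assump}. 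Combined with the easy direction, both identities follow. The only real work is in verifying that no step implicitly requires a finer control than what Remark~\ref{remark:upper_estimate_with_norm_xis} and the moment bounds actually provide; this is what the condition \eqref{eq:condition_of_sigma} on $\sigma$ has already arranged.
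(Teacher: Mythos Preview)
Your proof is correct and follows essentially the same route as the paper: both rely on Remark~\ref{remark:upper_estimate_with_norm_xis} to sandwich $\artNAH(Q_L,\lambda)$ (resp.\ $\NAH^{\neu}(Q_L,\lambda)$) between $\NAH^{\dir}(Q_L,\lambda)$ and $\NAH^{\dir}(Q_L,\lambda+\mu)$ plus an $O(L^{d-1/2})$ error with finite moments, then divide by $L^d$ and pass to the limit via Proposition~\ref{prop:existence_of_IDS}. The only cosmetic difference is that the paper works at joint continuity points of $\NAH^{\dir}$ and $\artNAH$ (invoking Remark~\ref{rem:convergence_of_ids_cube}) and concludes by right-continuity, whereas you work directly with the functions $\cNAH^{\#}$ and the definition $\NAH^{\#}(\lambda)=\inf_{\lambda'>\lambda}\cNAH^{\#}(\lambda')$; your version is arguably a bit cleaner since it avoids singling out continuity points. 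One small remark: the inequality $\NAH^{\dir}(Q_L,\lambda)\le \NAH^{\neu}(Q_L,\lambda)$ in the Neumann case is not literally an instance of Lemma~\ref{lem:monotonicity_of_IDS} (the two operators use different $\cZ$'s), but it does hold because $\cZnAH_M=\cZAH_M$ on $H^1_0$ (the boundary pieces $\tildecZAH_M,\hatcZAH_M$ vanish there), as noted just before Proposition~\ref{prop:weyl}---and in any case it is already the first inequality displayed in Remark~\ref{remark:upper_estimate_with_norm_xis}.
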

\begin{proof}
Let $\lambda$ be a continuity point of both $\NAH^{\dir}$ and $\overline \NAH^{\neu}$. By Remark~\ref{remark:upper_estimate_with_norm_xis} applied to $U=[-\frac12,\frac12]^d$, by Proposition~\ref{prop:existence_of_IDS}, because $\cNAH^{\dir}(\lambda) = \NAH^{\dir}(\lambda)$ and $\overline \cNAH^{\neu}(\lambda) = \overline \NAH^{\neu}(\lambda)$, see Remark~\ref{rem:convergence_of_ids_cube}, we have for all $\mu>0$,
\begin{align*}
\NAH^{\dir}(\lambda) \le \overline \NAH^{\neu}(\lambda) \le \NAH^{\dir}(\lambda+ \mu),
\end{align*}
so that the equality follows as both $\NAH^{\dir}$ and $\overline \NAH^{\neu}$ are right-continuous.
Under the Neumann assumption~\ref{assump:all_three_assump}~\ref{item:neumann_assump},
%Assumption~\ref{assump:boundary} 
we can argue similarly with ``$\NAH^{\neu}$'' instead of ``$\overline \NAH^{\neu}$''.
\end{proof}

\begin{calc}
\begin{proof}[Alternative proof of Proposition~\ref{prop:ids_boundary_condition}]
We could invoke Lemma~\ref{lem:upper_estimates_N}~\ref{item:upper_bound_of_neumann_ids_by_dirichlet_ids}, but instead give a more elementary proof.
We follow the start of the proof of Lemma~\ref{lem:upper_estimates_N}~\ref{item:upper_bound_of_neumann_ids_by_dirichlet_ids}.

Let $l>0$, $n\in\N$, $n\ge 3$.
\begin{align*}
U_1 =  [-(n-1)l, (n-1)l]^d , \quad
 U_2 = [-nl, nl]^d \setminus [-(n-2)l, (n-2)l]^d .
\end{align*}
Then, for $L=nl$, $R=2l$ and $Q=[-1,1]^d$ we have
$U_1 =  Q_L \setminus B(\partial Q_L, R)$ and $U_2 = Q_L^R \subseteq B(\partial U_L, R)$.
Because $H_{\fm,R}^1(U_2)$ is embedded in $H^1(U_2)$, we have $\overline{\cE}^{\neu,U_2} \succ \overline{\cE}^{\fm,R,U_2}$ (where $\overline \cE= \cE_{\WAH_M,\overline Z_M^{\neu}}$) and thus by the proof of Lemma~\ref{lem:upper_estimates_N}~\ref{item:upper_bound_of_neumann_ids_by_dirichlet_ids},
\begin{align*}
\artNAH([-nl, nl]^d, \lambda) \leq \NAH^{\dir}([-(n-1)l, (n-1)l]^d, \lambda + \frac{K}{l})
  + \artNAH(U_2, \lambda + \frac{K}{l}).
\end{align*}

  Since $U_2$ can be decomposed into $\{n^d - (n-2)^d\}$ boxes of size $l$,
  the subadditivity of $\artNAH$ and the translation invariance yield
  \begin{equation*}
    \expect[\artNAH(U_2, \lambda)] \leq (n^d - (n-2)^d) \expect[ \artNAH([0, l]^d, \lambda)].
  \end{equation*}
  Therefore,
  \begin{multline*}
    \frac{1}{(nl)^d} \expect[ \artNAH([-nl, nl]^d, \lambda)]
    \leq \frac{1}{(nl)^d} \expect\Big[ \NAH^{\dir}\Big([-(n-1)l, (n-1)l]^d, \lambda+ \frac{K}{l}\Big)\Big] \\
    + \frac{n^d - (n-2)^d}{(nl)^d} \expect\Big[\artNAH\Big([0, l]^d, \lambda + \frac{K}{l}\Big)\Big].
  \end{multline*}
  By letting $n \to \infty$, if $\lambda$ is a continuity point of $\artNAH$, one has
  \begin{equation*}
  \artNAH(\lambda) \leq \cNAH^{\dir}(\lambda + Kl^{-1}) \leq \NAH^{\dir}(\lambda + 2Kl^{-1}),
  \qquad l > 0.
  \end{equation*}
  Since $\artNAH$ and $\NAH^{\dir}$ are right-continuous, we complete the proof.
\end{proof}
\end{calc}

\begin{definition}\label{def:IDS}
  Thanks to Proposition~\ref{prop:ids_boundary_condition}, we may simply write
  \begin{equation*}
    \NAH \defby \NAH^{\dir} = \artNAH (= \NAH^{\neu} \quad \mbox{under
    the Neumann assumption~\ref{assump:all_three_assump}~\ref{item:neumann_assump}}).
  \end{equation*}
We call $\NAH$ the \emph{integrated density of states} (IDS) for the Anderson Hamiltonian with potential $\xi$.
\end{definition}
\tymchange{
One can see the validity of the above definition from the following two perspectives: 
(i) $\NAH$ is the limit of eigenvalue counting functions on growing domains (Theorem~\ref{thm:IDS_general}) and 
(ii) $\NAH$ is the limit of the IDS associated with mollified noise (Theorem~\ref{thm:IDS_epsilon}).
}
\begin{theorem}\label{thm:IDS_general}
  Let $U$ be a bounded domain. Then, almost surely,
  \begin{equation}
  \label{eqn:convergence_IDS_dirichlet}
    \lim_{L \to \infty} \frac{1}{\abs{U_L}} \NAH^{\dir}(U_L, \cdot) = \NAH \quad \mbox{vaguely}.
  \end{equation}
  If $U$ is a bounded Lipschitz domain, then
  \begin{equation}
  \label{eqn:convergence_IDS_hat_neumann}
     \lim_{L \in \N, L \to \infty} \frac{1}{\abs{U_L}} \artNAH(U_L, \cdot) = \NAH \quad \mbox{vaguely}.
  \end{equation}
  Under the Neumann assumption~\ref{assump:all_three_assump}~\ref{item:neumann_assump}, one can replace $\artNAH$ by $\NAH^{\neu}$  in \eqref{eqn:convergence_IDS_hat_neumann}.
\end{theorem}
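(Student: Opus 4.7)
The plan is to prove, for each continuity point $\lambda$ of $\NAH$, the three convergences
\[\lim_{L \to \infty} \frac{\NAH^{\dir}(U_L, \lambda)}{|U_L|} = \lim_{L \to \infty} \frac{\artNAH(U_L, \lambda)}{|U_L|} = \lim_{L \to \infty} \frac{\NAH^{\neu}(U_L, \lambda)}{|U_L|} = \NAH(\lambda) \quad \text{a.s.,}\]
with the last equality only under the Neumann assumption. Choosing a countable dense collection of continuity points and using monotonicity then yields vague convergence on a single full-probability event.

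For the lower bound of $\NAH^{\dir}$, I would approximate $U$ from inside by a finite disjoint union $V_\ell \subseteq U$ of open cubes of side $\ell$ with $|V_\ell|/|U| \uparrow 1$ as $\ell \downarrow 0$. Since the Dirichlet form decouples across disjoint open pieces, Lemma~\ref{lem:box_decomposition_neumann}~\ref{item:dirichlet_ev_estimates_decomp} together with Remark~\ref{rem:example_decomposable} (choosing a common $\fM$ large enough for $U_L$ and all cubes of $L V_\ell$, as justified by Remark~\ref{remark:larger_M_dirichlet}) yields
\[\NAH^{\dir}(U_L, \lambda) \;\geq\; \sum_{j=1}^{N_\ell} \NAH^{\dir}(L Q_j, \lambda).\]
Proposition~\ref{prop:existence_of_IDS} applied to each of the finitely many dilated cubes $L Q_j$ then gives $\liminf_{L} \NAH^{\dir}(U_L,\lambda)/|U_L| \geq (|V_\ell|/|U|)\,\NAH(\lambda)$, and letting $\ell \downarrow 0$ yields $\liminf \geq \NAH(\lambda)$. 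For the upper bound on a bounded Lipschitz $U$, I would symmetrically approximate $U$ from outside by a finite disjoint union of cubes $V_\ell^{\mathrm{out}} \supseteq U$ with $|V_\ell^{\mathrm{out}}|/|U| \downarrow 1$ (which is possible because $|\partial U| = 0$ for Lipschitz $U$); monotonicity (Lemma~\ref{lem:monotonicity_of_IDS}) together with the same cube-by-cube limit gives $\limsup \leq \NAH(\lambda)$. For a general bounded $U$ the upper bound follows by enclosing $U$ in a slightly larger bounded Lipschitz set.

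Next, to pass from $\NAH^{\dir}$ to $\artNAH$ and (under the Neumann assumption) $\NAH^{\neu}$, I would use a two-sided sandwich. The lower bounds $\liminf \artNAH(U_L,\lambda)/|U_L| \geq \NAH(\lambda)$ and $\liminf \NAH^{\neu}(U_L,\lambda)/|U_L| \geq \NAH(\lambda)$ follow from $\NAH^{\dir} \leq \artNAH$ and $\NAH^{\dir} \leq \NAH^{\neu}$, both consequences of the min-max formula applied on the nested spaces $H^1_0(U) \subseteq H^1(U)$, noting that $\cZnAH$ reduces to $\cZAH$ on $H^1_0(U)$ because the trace vanishes there. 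For the upper bound, Lemma~\ref{lem:upper_estimates_N}~\ref{item:upper_bound_of_neumann_ids_by_dirichlet_ids} applied with $\cZ = \cZAH$ respectively $\cZ = \cZnAH$ gives
\[\ecf^{\neu}(U_L, \lambda; \cZ) \;\leq\; \ecf^{\dir}(U_L, \lambda + KR^{-2}; \cZ) + \mathrm{Err}(L, R),\]
and the Dirichlet counting function on the right equals $\NAH^{\dir}(U_L, \cdot)$ in both cases (again because $\cZnAH$ reduces to $\cZAH$ on $H^1_0$). By Remark~\ref{rem:estimates_on_cZ} the form norms $\formnorm{\cZAH_M}_{H^r(U_L)}$ and $\formnorm{\cZnAH_M}_{H^r(U_L)}$ grow at most like $L^{(1-r)/d}$ times an almost-surely finite random constant, while $\|W_M\|_{L^{\infty}(U_L)} \leq 1$ by the choice $M = M(U_L, \delta_-, 1)$; hence $\mathrm{Err}(L, R) \lesssim L^{d - 1/2} R^d$ almost surely, and this vanishes upon dividing by $|U_L| \sim L^d$. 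Letting $R \to \infty$ and using continuity of $\NAH$ at $\lambda$ then completes the upper bound.

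The main technical hurdle is verifying that the error term in Lemma~\ref{lem:upper_estimates_N}~\ref{item:upper_bound_of_neumann_ids_by_dirichlet_ids} is genuinely $o(|U_L|)$ as $L \to \infty$: this relies on the precise polynomial growth rates collected in Lemma~\ref{lem:estimate_of_cZ_AH_Neumann} and Remark~\ref{rem:estimates_on_cZ}, which in turn rest on the standing condition \eqref{eq:condition_of_sigma} on $\sigma$ fixed at the start of this section. A secondary subtlety is ensuring that all convergences hold on a single full-probability event, which I would handle by restricting attention to a countable dense family of continuity points and rational cube sizes, and extending to all continuity points by monotonicity and right-continuity of the counting functions involved.
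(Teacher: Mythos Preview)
There is a gap in your upper bound for the Dirichlet counting function. After covering $U$ from outside by $V_\ell^{\mathrm{out}}=\bigcup_j Q_j$, monotonicity gives $\NAH^{\dir}(U_L,\lambda)\le\NAH^{\dir}((V_\ell^{\mathrm{out}})_L,\lambda)$, but you cannot bound the right-hand side from above by $\sum_j\NAH^{\dir}((Q_j)_L,\lambda)$: Dirichlet superadditivity (Lemma~\ref{lem:box_decomposition_neumann}\ref{item:dirichlet_ev_estimates_decomp}) goes the wrong way, and if instead you take the open cubes pairwise disjoint so that $H^1_0$ (and hence $\NAH^{\dir}$) decomposes exactly, then their union has gaps along the shared faces and no longer contains $U$. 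So ``the same cube-by-cube limit'' does not produce an upper bound here, and the whole plan stalls, since your later sandwich for $\artNAH$ and $\NAH^{\neu}$ presupposes that the Dirichlet convergence has already been established.

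The paper closes this gap by passing through the \emph{artificial Neumann} counting function on the outer cubes, which \emph{is} subadditive under essentially-disjoint decompositions (Lemma~\ref{lem:box_decomposition_neumann}\ref{item:neumann_ev_estimates_decomp} combined with Remark~\ref{rem:example_decomposable}). With $J_n\defby\{k\in\Z^d:(k+[0,1]^d)\cap 2^nU\ne\emptyset\}$ one obtains
\[
\NAH^{\dir}(U_L,\lambda)\;\le\;\sum_{k\in J_n}\artNAH\bigl(2^{-n}L(k+[0,1]^d),\lambda\bigr),
\]
and Proposition~\ref{prop:existence_of_IDS} on each cube together with Proposition~\ref{prop:ids_boundary_condition} give $\limsup_L\NAH^{\dir}(U_L,\lambda)/|U_L|\le(\#J_n/(2^{nd}|U|))\,\NAH(\lambda)$; letting $n\to\infty$ finishes. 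Your lower bound via inner cube approximation and Dirichlet superadditivity, and your sandwich for $\artNAH$ and $\NAH^{\neu}$ via Remark~\ref{remark:upper_estimate_with_norm_xis}, are both correct and match the paper's argument.
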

\begin{proof}
Firstly, observe that we may assume $U$ to be a bounded Lipschitz domain due to the monotonicity of
  $\NAH^{\dir}(U, \lambda)$ as a function of $U$.
\begin{calc}
If $U$ is a bounded domain but not Lipschitz, then for all $\epsilon\in (0,|U|)$ there exist bounded Lipschitz domains $U^-$, $U^+$ such that $U^- \subseteq U \subseteq U^+$ and $|U\setminus U^-|, |U^+\setminus U|<\epsilon$. Then
\begin{align*}
\frac{\NAH^\partial(U_L^-,\cdot)}{|U_L^-| + \epsilon L}
\le \frac{\NAH^\partial(U_L,\cdot)}{|U_L| }
\frac{\NAH^\partial(U_L^+,\cdot)}{|U_L^+| - \epsilon L}.
\end{align*}
Then use moreover, e.g., $\frac{|U_L^-|}{|U_L^-| + \epsilon L} = \frac{|U^-|}{|U^-|+\epsilon}$.
\end{calc}
By Remark~\ref{remark:upper_estimate_with_norm_xis} it suffices to prove \eqref{eqn:convergence_IDS_dirichlet} (also for $\NAH^{\neu}$ under the Neumann assumption~\ref{assump:all_three_assump}~\ref{item:neumann_assump}.

Let $\lambda \in \R$ be a continuity point of $\NAH$.
  We set
  \begin{equation*}
    I_n \defby \set{k \in \Z^d \given k + [0, 1]^d \subseteq 2^n U},
    \quad J_n \defby \set{k \in \Z^d \given (k + [0, 1]^d) \cap (2^n U) \neq \emptyset}.
  \end{equation*}
  By Lemma~\ref{lem:monotonicity_of_IDS} and  Lemma~\ref{lem:box_decomposition_neumann},
  \begin{equation*}
    \sum_{k \in I_n} \NAH^{\dir}(2^{-n} L(k + [0, 1]^d), \lambda)
    \leq \NAH^{\dir}(U_L, \lambda)
    \leq \sum_{k \in J_n} \artNAH(2^{-n} L(k + [0, 1]^d), \lambda).
  \end{equation*}
  Therefore, by Proposition~\ref{prop:existence_of_IDS} and Remark~\ref{rem:convergence_of_ids_cube},
  \begin{equation*}
    \frac{\# I_n}{2^{dn} \abs{U}} \NAH^{\dir}(\lambda) \leq
    \liminf_{L \to \infty} \frac{1}{\abs{U_L}} \NAH^{\dir}(U_L, \lambda)
    \leq \limsup_{L \to \infty} \frac{1}{\abs{U_L}} \NAH^{\dir}(U_L, \lambda)
    \leq \frac{\# J_n}{2^{dn} \abs{U}} \artNAH(\lambda).
  \end{equation*}
  By Proposition~\ref{prop:ids_boundary_condition}, one has $\NAH = \NAH^{\dir} = \artNAH$.
  Thus, the proof is complete by letting $n \to \infty$.
\end{proof}

\begin{remark}
Recall Remark~\ref{remark:restricting_to_eps_0}. Let $\epsilon\in (0,1)$.
There exist functions $\cNAHeps^{\dir}, \artcNAHeps$ and $\cNAHeps^{\neu}$ such that analogues statements as in Proposition~\ref{prop:existence_of_IDS} hold.
Then we define $\NAHeps^{\dir}(\lambda) := \inf_{\lambda'>\lambda} \cNAHeps^{\dir}(\lambda')$ and similarly $\artNAHeps $ and $\NAHeps^{\neu}$.
By analogous arguments as in Theorem~\ref{thm:IDS_general} we also have $\NAHeps^{\dir} = \artNAHeps$ ($= \NAHeps^{\neu} $ under the Neumann assumption~\ref{assump:all_three_assump}~\ref{item:neumann_assump}).
In this case $\NAHeps := \NAHeps^{\dir}$ is called the integrated density of states for the Anderson Hamiltonian with potential $\xi_\epsilon - c_\epsilon$.
\end{remark}

For the convergence of $\NAHeps$ to $\NAH$ that we show in Theorem~\ref{thm:IDS_epsilon}, we introduce the following auxiliary lemma.

\begin{lemma}
\label{lemma:liminf_expect_IDS_eps}
Let $\#$ denote either $\dir$ or $\neu$.
%Let $\#\in \{\dir, \neu\}$.
For all $L>0$, $\lambda \in \R$ and $\mu>0$,
\begin{align*}
\liminf_{\epsilon\downarrow 0}  \expect[ \NAHeps^\#(Q_L,\lambda )]  \ge  \expect[ \NAH^\#(Q_L,\lambda-\mu)].
\end{align*}
\end{lemma}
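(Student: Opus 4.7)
The plan is to combine Fatou's lemma with the almost-sure convergence along subsequences given by Theorems~\ref{thm:convergence_of_Dirichlet_AH} and~\ref{thm:convergence_of_Neumann_AH}. First, I would choose a sequence $(\epsilon_n)_{n\in\N}$ in $(0,1)$ with $\epsilon_n \downarrow 0$ such that
\begin{equation*}
\lim_{n\to\infty} \expect[\NAHeps_n^\#(Q_L, \lambda)] = \liminf_{\epsilon \downarrow 0} \expect[\NAHeps^\#(Q_L, \lambda)].
\end{equation*}
By Theorem~\ref{thm:convergence_of_Dirichlet_AH} in the Dirichlet case (applied with $U = Q_L$), respectively Theorem~\ref{thm:convergence_of_Neumann_AH} in the Neumann case together with extraction of an almost-surely convergent subsequence from convergence in probability, I can extract a subsequence $(\epsilon_{n_m})_{m\in\N}$ and a set $\Omega_1 \subseteq \Omega$ of probability one on which $\lambdaAHkepsnm^\#(Q_L) \to \lambdaAHk^\#(Q_L)$ holds simultaneously for every $k \in \N$ (using a diagonal argument over $k$).

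The key pointwise step is the following: fix $\omega \in \Omega_1$. By Lemma~\ref{lem:upper_bound_of_neumann_N} (together with Proposition~\ref{prop:ids_boundary_condition}'s arguments showing $\NAH^{\dir} \le \NAH^\#$ holds on the level of counting functions via Lemma~\ref{lem:estimate_of_eigenvalue_counting_functions_of_AH}, or directly for $\#=\neu$), the quantity $K(\omega) \defby \NAH^\#(Q_L, \lambda - \mu)(\omega)$ is finite. For each $k \le K(\omega)$ one has $\lambdaAHk^\#(Q_L)(\omega) \le \lambda - \mu < \lambda$, and hence by the convergence of eigenvalues there exists $m_k(\omega)$ such that $\lambdaAHkepsnm^\#(Q_L)(\omega) \le \lambda$ for all $m \ge m_k(\omega)$. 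Since $K(\omega)$ is finite, we may take $m_*(\omega) \defby \max_{k \le K(\omega)} m_k(\omega)$ and conclude
\begin{equation*}
\NAHepsnm^\#(Q_L, \lambda)(\omega) \;\ge\; \sum_{k=1}^{K(\omega)} \indic_{\{\lambdaAHkepsnm^\#(Q_L) \le \lambda\}}(\omega) \;=\; K(\omega) \;=\; \NAH^\#(Q_L, \lambda - \mu)(\omega)
\end{equation*}
for all $m \ge m_*(\omega)$. Therefore, almost surely,
\begin{equation*}
\liminf_{m \to \infty} \NAHepsnm^\#(Q_L, \lambda) \;\ge\; \NAH^\#(Q_L, \lambda - \mu).
\end{equation*}

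Finally, since the counting functions are nonnegative, Fatou's lemma gives
\begin{equation*}
\liminf_{m \to \infty} \expect[\NAHepsnm^\#(Q_L, \lambda)] \;\ge\; \expect[\liminf_{m \to \infty} \NAHepsnm^\#(Q_L, \lambda)] \;\ge\; \expect[\NAH^\#(Q_L, \lambda - \mu)],
\end{equation*}
and the left-hand side equals $\liminf_{\epsilon \downarrow 0} \expect[\NAHeps^\#(Q_L, \lambda)]$ by our choice of the sequence. The only mild subtlety is arranging the almost-sure convergence of $\lambdaAHkepsnm^\#(Q_L)$ for all $k$ simultaneously from convergence in probability, but this is handled by a standard diagonal extraction (or, in the Dirichlet case, directly by the stronger statement of Theorem~\ref{thm:convergence_of_Dirichlet_AH}).
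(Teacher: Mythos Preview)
Your proof is correct and follows essentially the same approach as the paper: extract an almost-surely convergent subsequence of eigenvalues from the convergence in probability (Theorems~\ref{thm:convergence_of_Dirichlet_AH} and~\ref{thm:convergence_of_Neumann_AH}), establish the pointwise inequality $\liminf_m \NAHepsnm^\#(Q_L,\lambda) \ge \NAH^\#(Q_L,\lambda-\mu)$, and conclude via Fatou. The only cosmetic difference is that you first pick a sequence realizing the liminf of expectations and then pass to a subsequence, whereas the paper argues via the ``every sequence has a good subsequence'' principle; also, your citation of Lemma~\ref{lem:upper_bound_of_neumann_N} for the finiteness of $K(\omega)$ is unnecessarily indirect---the finiteness is immediate from $\lambda_k^\# \to \infty$ in Proposition~\ref{prop:eigenvalues_of_AH}.
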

\begin{proof}
First we observe that as $\lambdaAHkeps^{\#}(Q_L) \to \lambdaAHk^{\#}(Q_L)$ in probability for all $k$ (by Theorem~\ref{thm:convergence_of_Dirichlet_AH} and Theorem~\ref{thm:convergence_of_Neumann_AH}), the following holds:
For all $(\epsilon_n)_{n\in\N}$ in $(0,\infty)$ with $\epsilon_n \downarrow 0$ as $n\rightarrow \infty$, there exists a subsequence $(\epsilon_{n_m})_{m\in\N}$  and a $\Omega_1 \subseteq \Omega$  of $\P$-probability $1$, such that on $\Omega_1$, $\lambdaAHkepsnm^{\#}(Q_L) \to \lambdaAHk^{\#}(Q_L)$ for all $k$,  and therefore for all $\mu>0$
\begin{align*}
\liminf_{\epsilon\downarrow 0}  \NAHeps^\#(Q_L,\lambda )  \ge   \NAH^\#(Q_L,\lambda-\mu).
\end{align*}
Indeed, if $\mu_{1,m} \le \mu_{2,m}\le \cdots$ and $\mu_{k,m} \rightarrow \mu_k$ in $\R$ as $m\rightarrow \infty$, for all $k\in\N$, then $\liminfm \sum_{k=1}^\infty \1_{\{ \mu_{k,m} \le \mu\}} \ge \sum_{k=1}^\infty \1_{\{ \mu_{k} \le \mu-\epsilon\}}$ for all $\mu \in \R$ and $\epsilon>0$.
 \begin{calc}
 Let $\mu \in \R$, $\epsilon>0$. Let $k\in \N$ be such that $\mu_k \le \mu -\epsilon < \mu_{k+1}$, i.e., $\sum_{k=1}^\infty \1_{\{ \mu_{k} \le \mu-\epsilon\}}=k$. Then there exists an $M\in\N$ such that for all $m\ge M$, $\mu_{k,m} \le \mu$, i.e., $\liminfm \sum_{k=1}^\infty \1_{\{ \mu_{k,m} \le \mu\}} \ge k$.
 \end{calc}
Therefore, for all $(\epsilon_n)_{n\in\N}$ in $(0,\infty)$ with $\epsilon_n \downarrow 0$ as $n\rightarrow \infty$, there exists a subsequence $(\epsilon_{n_m})_{m\in\N}$ such that by Fatou's lemma
\begin{align*}
\liminfm \expect[ \NAHepsnm^\#(Q_L,\lambda )]  \ge
 \expect[ \liminfm \NAHepsnm^\#(Q_L,\lambda )]
 \ge
\expect{[\NAH^\#(Q_L,\lambda-\mu)]}.
\end{align*}
From this the inequality follows.
\end{proof}

\begin{theorem}\label{thm:IDS_epsilon}
  $\NAHeps \to \NAH$ vaguely.
\end{theorem}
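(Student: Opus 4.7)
The plan is to establish $\NAHeps(\lambda) \to \NAH(\lambda)$ at every continuity point $\lambda$ of $\NAH$ via a two-sided sandwich argument. Since each $\NAHeps$ is increasing, it will suffice to prove $\liminf_{\epsilon \downarrow 0} \NAHeps(\lambda) \ge \NAH(\lambda - \mu)$ and $\limsup_{\epsilon \downarrow 0} \NAHeps(\lambda) \le \NAH(\lambda + \mu)$ for suitably chosen $\mu > 0$ arbitrarily small, and then close the sandwich using right-continuity and continuity of $\NAH$ at $\lambda$.

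For the lower bound, I would first use the $\epsilon$-analogue of Proposition~\ref{prop:existence_of_IDS} (see Remark~\ref{remark:restricting_to_eps_0}) to write $\NAHeps(\lambda) \ge \cNAHeps^{\dir}(\lambda) \ge L^{-d}\expect[\NAHeps^{\dir}(Q_L, \lambda)]$ for any $L > 0$. Applying Lemma~\ref{lemma:liminf_expect_IDS_eps} yields $\liminf_{\epsilon \downarrow 0} L^{-d}\expect[\NAHeps^{\dir}(Q_L,\lambda)] \ge L^{-d}\expect[\NAH^{\dir}(Q_L,\lambda-\mu)]$ for any $\mu > 0$. Passing $L \to \infty$ along $L \in \Q$, Proposition~\ref{prop:existence_of_IDS} identifies the limit as $\cNAH^{\dir}(\lambda-\mu)$, which equals $\NAH(\lambda-\mu)$ at continuity points $\lambda - \mu$ of $\NAH$. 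Letting $\mu \downarrow 0$ along continuity points gives the lower bound.

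For the upper bound I will use the chain $\NAHeps(\lambda) = \artNAHeps(\lambda) \le \artcNAHeps(\lambda') \le L^{-d}\expect[\artNAHeps(Q_L,\lambda')]$ for any $L > 0$ and $\lambda' > \lambda$. The key technical step is to justify $\lim_{\epsilon \downarrow 0} \expect[\artNAHeps(Q_L,\lambda')] = \expect[\artNAH(Q_L,\lambda')]$ whenever $\lambda'$ avoids the (a.s.\ countable) set $D_L$ consisting of atoms of the laws of $\artev_k(Q_L)$ as $k$ ranges over $\N$. This rests on two ingredients: (i) convergence in probability $\artevkeps(Q_L) \to \artevk(Q_L)$ for each $k$, which follows from Theorem~\ref{thm:convergence_of_symmetric_forms} and Lemma~\ref{lem:compact_convergence_of_symmetric_forms} applied to the artificial Neumann operator $\artAHU$ of Definition~\ref{def:artificial_Neumann_AH} (this only requires the construction assumption, exactly as in the proof of Theorem~\ref{thm:convergence_of_Neumann_AH}); and (ii) uniform $L^p(\P)$-integrability of $\artNAHeps(Q_L,\lambda')$, supplied by Lemma~\ref{lem:upper_bound_of_neumann_N} combined with uniform bounds on $\vertiii{\boldsymbol{\xi_\epsilon}}$ from the construction assumption. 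A standard subsequence-extraction argument (a.s.\ convergence of each eigenvalue along a common subsequence, followed by reverse Fatou) delivers the desired convergence of expectations.

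Choosing $\lambda' > \lambda$ outside the countable set $D := \bigcup_L D_L$ and then letting $L \to \infty$ along $L \in \Q$ gives $\limsup_{\epsilon \downarrow 0} \NAHeps(\lambda) \le \artcNAH(\lambda') \le \NAH(\lambda')$, where the second inequality follows since $\NAH(\lambda') = \inf_{\lambda'' > \lambda'}\artcNAH(\lambda'') \ge \artcNAH(\lambda')$. Taking $\lambda' \downarrow \lambda$ through continuity points of $\NAH$ lying outside of $D$ (possible since the excluded set is countable) and invoking right-continuity of $\NAH$ closes the upper bound. The principal obstacle throughout is this upper-bound step: one must simultaneously control the random jumps of the counting function $\artNAH(Q_L, \cdot)$ and the $L$-dependence of the bad set $D_L$, and the essential point is that $D = \bigcup_L D_L$ remains countable so that continuity points of $\NAH$ outside of $D$ still accumulate at $\lambda$ from the right.
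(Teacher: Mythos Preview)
Your lower bound argument is essentially the paper's: the paper bounds $\NAH(\lambda)-\NAHeps(\lambda)$ from above by $L^{-d}\expect[\artNAH(Q_L,\lambda)-\NAHeps^{\dir}(Q_L,\lambda)]$, applies Lemma~\ref{lemma:liminf_expect_IDS_eps} (Fatou) to replace $\NAHeps^{\dir}(Q_L,\lambda)$ by $\NAH^{\dir}(Q_L,\lambda-\mu)$, sends $L\to\infty$, and uses continuity of $\NAH$ at $\lambda$. Your chain $\NAHeps(\lambda)\ge L^{-d}\expect[\NAHeps^{\dir}(Q_L,\lambda)]$ followed by Lemma~\ref{lemma:liminf_expect_IDS_eps} and $L\to\infty$ is exactly this, reparametrised.

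For the upper bound the paper simply writes ``similarly'', swapping the roles of Dirichlet and (artificial) Neumann. You are right that this direction is not completely symmetric: since $\lambda$ is a continuity point of $\NAH$ but not necessarily of $\NAHeps$, one must work at some $\lambda'>\lambda$, and passing $\limsup_\epsilon$ through $\expect[\artNAHeps(Q_L,\lambda')]$ requires a domination/uniform integrability input that Fatou alone does not give. You supply this via Lemma~\ref{lem:upper_bound_of_neumann_N}, which is the correct ingredient. However, your atom--avoiding construction (the sets $D_L$ and $D$) is more elaborate than needed: with the same UI in hand, the reverse--Fatou version of Lemma~\ref{lemma:liminf_expect_IDS_eps} directly yields $\limsup_\epsilon \expect[\artNAHeps(Q_L,\lambda')]\le \expect[\artNAH(Q_L,\lambda'+\mu)]$ for every $\mu>0$, and then one concludes by sending $L\to\infty$ and $\lambda'\downarrow\lambda$, $\mu\downarrow 0$ through continuity points of $\NAH$. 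No analysis of atoms of the individual eigenvalue laws is required. (Incidentally, your $D_L$ is a deterministic countable set, not ``a.s.\ countable''.)

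One genuine point to watch: your UI step asserts ``uniform bounds on $\vertiii{\boldsymbol{\xi_\epsilon}}$ from the construction assumption''. The terms $\|X^\epsilon\|$ and $\|X^\epsilon-\xi_\epsilon\|$ are indeed uniformly bounded in $L^p$ by the stated $L^p$--convergences, but $\aAHeps=\sup_N 2^{-\kAH N}\|Y_N^\epsilon\|$ is a supremum over $N$ and the construction assumption~\ref{assump:all_three_assump}~\ref{item:construction_assump} only asserts $\aAH\in L^p$ and $Y_N^\epsilon\to Y_N$ in $L^p$ for each fixed $N$; a uniform-in-$\epsilon$ bound on $\aAHeps$ is not literally written there. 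The paper relies on this implicitly (cf.\ Remark~\ref{remark:restricting_to_eps_0}), and it does hold in all the examples via the estimates in Appendix~\ref{sec:bphz_AH}, but if you want a self-contained argument you should either (i) note that for fixed $L$ the random level $M=M(Q_L,\delta_-,\gamma)$ is controlled by $\|X\|$ via \eqref{eqn:estimate_M_gamma}, reducing to finitely many $N$ on large-probability sets, or (ii) record $\sup_\epsilon\|\aAHeps\|_{L^p}<\infty$ as an explicit consequence of the construction.
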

\begin{proof}
  Let $Q \defby [-1/2, 1/2]^d$, $L \in \N$ and $\mu \in (0, 1)$. Let $\lambda \in \R$ be a continuity point of
  $\NAH$. By  \eqref{eqn:overline_sN_n_equals_inf}  (see also Remark~\ref{rem:convergence_of_ids_cube})
  \begin{equation*}
    \NAH(\lambda) - \NAHeps(\lambda)
    \leq \frac{1}{L^d} \expect[\artNAH(Q_L, \lambda) - \NAHeps^{\dir}(Q_L, \lambda)].
  \end{equation*}
 By Lemma~\ref{lemma:liminf_expect_IDS_eps}, for all $L\ge 1$ and $\mu>0$,
  \begin{equation*}
    A \defby \limsup_{\epsilon \downarrow 0} \{\NAH(\lambda) - \NAHeps(\lambda) \}
    \leq \frac{1}{L^d} \expect[ \artNAH(Q_L, \lambda) - \NAH^{\dir}(Q_L, \lambda - \mu)].
  \end{equation*}
Therefore, by \eqref{eqn:overline_sN_n_equals_inf}, taking the infimum over $L\ge1$ in the above inequality, we obtain $A\le \NAH(\lambda) - \NAH(\lambda - \mu)$ for all $\mu>0$. As $\lambda$ is a continuity point of $\NAH$, it follows that $A\le 0$.
 Similarly,
\begin{calc}
as by \eqref{eqn:overline_sN_n_equals_inf} one also has
  \begin{equation*}
    \NAH(\lambda) - \NAHeps(\lambda)
    \geq \frac{1}{L^d} \expect[\NAH^{\dir}(Q_L, \lambda) - \artNAH(Q_L, \lambda)].
  \end{equation*}
\end{calc}
one can show \begin{calc}
using Lemma~\ref{lemma:liminf_expect_IDS_eps} for $\#$ denoting $\neu$,
\end{calc}
  \begin{equation*}
    \liminf_{\epsilon \downarrow 0} \{ \NAH(\lambda) - \NAHeps(\lambda) \} \geq 0. \qedhere
  \end{equation*}
\end{proof}
\tymchange{Finally, we prove the tail behavior of the IDS.}
\begin{theorem}\label{thm:IDS_tails}
  One has the following tail estimates of the IDS.
  \begin{enumerate}
    \item One has $\lim_{\lambda \to \infty} \lambda^{-\frac{d}{2}} \NAH(\lambda) = \frac{\abs{B(0, 1)}}{(2 \pi)^d}$.
    \item
    For  every bounded domain $U$ and every $\alpha \in (0, \infty)$, one has
    \begin{align}
    \label{eqn:limsup_lambda_min_infty}
      \limsup_{\lambda \to -\infty} (-\lambda)^{-\alpha} \log \NAH(\lambda)
      &= \limsup_{\lambda \to -\infty} (-\lambda)^{-\alpha} \log \P(\lambdaAHone^{\dir}(U) \leq \lambda), \\
    \label{eqn:liminf_lambda_min_infty}
      \liminf_{\lambda \to -\infty} (-\lambda)^{-\alpha} \log \NAH(\lambda)
      &= \liminf_{\lambda \to -\infty} (-\lambda)^{-\alpha} \log \P(\lambdaAHone^{\dir}(U) \leq \lambda).
    \end{align}
  \end{enumerate}
\end{theorem}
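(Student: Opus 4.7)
The plan for the Weyl-type asymptotics is to sandwich $\NAH(\lambda)$ between expected eigenvalue counting functions on the fixed unit cube $Q = [-\tfrac12,\tfrac12]^d$. By Proposition~\ref{prop:existence_of_IDS} and Proposition~\ref{prop:ids_boundary_condition}, one has $\expect[\NAH^{\dir}(Q,\lambda)] \leq \NAH(\lambda) \leq \expect[\artNAH(Q,\lambda)]$. Inserting the matching two-sided bounds of Lemma~\ref{lem:upper_bound_of_neumann_N}, dividing by $\lambda^{d/2}$, and applying dominated convergence (justified by the polynomial moments of $\vertiii{\bm{\xi}}$ from Assumption~\ref{assump:all_three_assump}~\ref{item:construction_assump}) yields
\begin{equation*}
(1-\theta)\frac{|B(0,1)|}{(2\pi)^d} \leq \liminf_{\lambda\to\infty}\lambda^{-d/2}\NAH(\lambda) \leq \limsup_{\lambda\to\infty}\lambda^{-d/2}\NAH(\lambda) \leq (1+\theta)\frac{|B(0,1)|}{(2\pi)^d}
\end{equation*}
for every $\theta \in (0,1)$, and sending $\theta \downarrow 0$ concludes this part.

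\textbf{Part (b), lower bound.} For any bounded domain $U$, translation invariance (Assumption~\ref{assump:all_three_assump}~\ref{item:ergodic_assump}) lets us assume, after translating $U$, that $U \subseteq Q_L$ for some $L \in \N$. Then Proposition~\ref{prop:existence_of_IDS}, the trivial bound $\NAH^{\dir}(Q_L,\lambda)\geq \1_{\{\lambdaAHone^{\dir}(Q_L)\leq\lambda\}}$, and Dirichlet monotonicity give
$\NAH(\lambda) \geq L^{-d}\P(\lambdaAHone^{\dir}(Q_L)\leq\lambda)\geq L^{-d}\P(\lambdaAHone^{\dir}(U)\leq\lambda)$.
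Taking $(-\lambda)^{-\alpha}\log$ and observing $(-\lambda)^{-\alpha}\log L^{-d}\to 0$ as $\lambda\to-\infty$ yields the lower-bound halves of both the $\limsup$ and $\liminf$ identities.

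\textbf{Part (b), upper bound.} The strategy follows the Kirsch--Martinelli scheme adapted to the singular-noise setting along the lines of Doi--Iwatsuka--Mine \cite{doi_iwatsuka_mine_2001}. For each continuity point $\lambda'>\lambda$ of $\NAH$ and each $L \in \N$, Proposition~\ref{prop:existence_of_IDS} together with Proposition~\ref{prop:ids_boundary_condition} gives $\NAH(\lambda) \leq L^{-d}\expect[\artNAH(Q_L,\lambda')]$. For $\lambda'\leq 0$ we bound $\artNAH(Q_L,\lambda')\leq \artNAH(Q_L,0)\1_{\{\artev_1(Q_L)\leq\lambda'\}}$ and apply H\"older's inequality with the $L^p$-moment estimate of Lemma~\ref{lem:upper_bound_of_neumann_N} to deduce $\NAH(\lambda) \leq C_{L,p}\P(\artev_1(Q_L)\leq\lambda')^{1/p'}$. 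Next, the deterministic inclusion $\{\artev_1(Q_L)\leq\lambda'\}\subseteq\{\lambdaAHone^{\dir}(Q_L)\leq\lambda'+KR^{-2}\}\cup\{\mathrm{err}_{R,L}\geq 1\}$ supplied by Lemma~\ref{lem:upper_estimates_N}~(b), combined with the union-bound-plus-stationarity estimate $\P(\lambdaAHone^{\dir}(Q_L)\leq\mu)\leq N_L\,\P(\lambdaAHone^{\dir}(V)\leq\mu+Kl^{-2})$ from Lemma~\ref{lem:upper_estimates_N}~(a) applied with a small cube $V$ chosen (after translation) to satisfy $V\subseteq U$, reduces the task to controlling $\P(\mathrm{err}_{R,L}\geq 1)$ and $\P(\lambdaAHone^{\dir}(U)\leq\lambda'+KR^{-2}+Kl^{-2})$, the latter via Dirichlet monotonicity.

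\textbf{Main obstacle.} The technically delicate step is balancing the parameters $L, R, l, p, \lambda'$ (possibly as functions of $\lambda$) so that the additive error $\P(\mathrm{err}_{R,L}\geq 1)$ does not dominate the principal-eigenvalue tail on the $(-\lambda)^{-\alpha}\log$ scale. By Remark~\ref{rem:estimates_on_cZ} this error is polynomial in $\vertiii{\bm{\xi}}^{m/(1-r)}$, and Assumption~\ref{assump:all_three_assump}~\ref{item:construction_assump} provides only polynomial (not exponential) moments of $\vertiii{\bm{\xi}}$; an analogous difficulty is handled by the techniques of \cite[Section~6]{doi_iwatsuka_mine_2001}, and the same iterative strategy---tuning $R$ and $l$ so that $KR^{-2}$ and $Kl^{-2}$ become asymptotically negligible compared with the decay rate of $\P(\lambdaAHone^{\dir}(U)\leq\lambda)$, while keeping $\P(\mathrm{err}_{R,L}\geq 1)$ under control via Chebyshev---carries over. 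After these parameter choices, sending $p\to\infty$, $R, l\to\infty$ and $\lambda'\downarrow\lambda$ through continuity points delivers the matching upper bound for both $\limsup$ and $\liminf$.
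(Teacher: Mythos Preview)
Your treatment of Part~(a) and of the lower bound in Part~(b) is essentially the paper's argument and is fine.

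The upper bound in Part~(b), however, has a genuine gap precisely at what you call the ``main obstacle''. After your H\"older step you arrive at a bound of the form
\[
\NAH(\lambda)\;\le\;C_{L,p}\bigl[\,A(\lambda)+B\,\bigr]^{1/p'},
\qquad
A(\lambda)=N_L\,\P\bigl(\lambdaAHone^{\dir}(U)\le\lambda'+KR^{-2}+Kl^{-2}\bigr),
\quad
B=\P(\mathrm{err}_{R,L}\ge 1).
\]
For any \emph{fixed} $L,R,l$ the term $B$ is a positive constant independent of $\lambda$ (indeed, the error in Lemma~\ref{lem:upper_estimates_N}\ref{item:upper_bound_of_neumann_ids_by_dirichlet_ids} is bounded below by $C_{s,U}R_U^d$ since $\vertiii{\bm\xi}\ge 1$). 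Hence $\log[A(\lambda)+B]\ge\log B$ and $(-\lambda)^{-\alpha}\log[A(\lambda)+B]\to 0$, so the inequality gives only the trivial upper bound $0$. Making $L,R$ depend on $\lambda$ does not obviously help: the polynomial moments of $\vertiii{\bm\xi}$ available from Assumption~\ref{assump:all_three_assump}\ref{item:construction_assump} yield only polynomial decay of $B$, while the constants $C_{L,p},N_L$ grow, and your appeal to \cite{doi_iwatsuka_mine_2001} is not a proof. The detour through the artificial Neumann operator is what creates this additive error in the first place.

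The paper avoids the obstacle entirely. After reducing to cubes by monotonicity, it applies Lemma~\ref{lem:upper_estimates_N}\ref{item:box_decomposition_dirichlet_reversed} at the level of the \emph{Dirichlet counting function} (not just the first eigenvalue), takes expectations, uses stationarity and lets $n\to\infty$ via Proposition~\ref{prop:existence_of_IDS} to get directly
\[
\NAH(\lambda)\;\le\;\expect\!\bigl[\NAH^{\dir}\bigl([0,L+2l]^d,\;\lambda+Kl^{-2}\bigr)\bigr].
\]
Only then does one insert the indicator $\indic_{\{\lambdaAHone^{\dir}([0,L+2l]^d)\le\lambda+Kl^{-2}\}}$ (which costs nothing since the counting function vanishes off this event) and apply H\"older against $\NAH^{\dir}([0,L+2l]^d,Kl^{-2})\in L^q(\P)$. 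This yields $\NAH(\lambda)\le C\,\P(\lambdaAHone^{\dir}([0,L+2l]^d)\le\lambda+Kl^{-2})^{1/p}$ with a \emph{$\lambda$-independent} constant $C$, and no additive error. The shifts $Kl^{-2}$ and the factor $1/p$ are then removed in the limit $\lambda\to-\infty$ and $p\downarrow 1$. The artificial Neumann operator is never used in this half of the argument.
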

\begin{proof}
  (a) Let $Q \defby [0, 1]^d$.
By applying Fatou's lemma 
and the first inequality of \eqref{eqn:overline_sN_n_equals_inf}, we obtain
% then  \eqref{eqn:overline_sN_n_equals_inf}, using the definition of $\NAH^{\dir}$ and using Lemma~\ref{lem:upper_bound_of_neumann_N}
  \begin{equation*}
    \expect[\liminf_{\lambda \to \infty} \lambda^{-\frac{d}{2}}\NAH^{\dir}(Q, \lambda)]
    \leq \liminf_{\lambda \to \infty} \lambda^{-\frac{d}{2}} \NAH(\lambda)
    \leq \limsup_{\lambda \to \infty} \lambda^{-\frac{d}{2}} \NAH(\lambda).
    % \leq \frac{\abs{B(0, 1)}}{(2 \pi)^d}.
  \end{equation*}
  \begin{calc}
  \begin{align*}
    \expect[\liminf_{\lambda \to \infty} \lambda^{-\frac{d}{2}}\NAH^{\dir}(Q, \lambda)]
  & \leq  \liminf_{\lambda \to \infty}   \expect[\lambda^{-\frac{d}{2}}\NAH^{\dir}(Q, \lambda)] \\
  & \leq  \liminf_{\lambda \to \infty} \lambda^{-\frac{d}{2}} \sup_{L>0} \frac{1}{L^d} \expect[\NAH^{\dir}(Q_L, \lambda)] \\
  &    \leq \liminf_{\lambda \to \infty} \lambda^{-\frac{d}{2}} \cNAH^{\dir}(\lambda)
      \leq \liminf_{\lambda \to \infty} \lambda^{-\frac{d}{2}} \NAH^{\dir}(\lambda).
  \end{align*}
  \end{calc}
  Since  $\lim_{\lambda \to \infty} \lambda^{-\frac{d}{2}}\NAH^{\dir}(Q, \lambda) = \frac{\abs{B(0, 1)}}{(2 \pi)^d}$
  by  Proposition~\ref{prop:weyl}, the lower bound is obtained. To obtain the upper bound,
  by the last inequality of \eqref{eqn:overline_sN_n_equals_inf} and 
  the estimate \eqref{eqn:estimate_artN_fn_above_ball} from Lemma~\ref{lem:upper_bound_of_neumann_N},  for any $\theta \in (0, 1)$ we have
  \begin{align*}%\label{eq:}
    \limsup_{\lambda \to \infty} \lambda^{-\frac{d}{2}} \NAH(\lambda)
    &\leq 
    \limsup_{\lambda \to \infty} \lambda^{-\frac{d}{2}} \mathbb{E}[\overline{\NAH}^{\neu}(Q, \lambda)] \\
  & \le 
  \limsup_{\lambda \to \infty} \lambda^{-\frac{d}{2}}
  (1+\theta) \frac{|B(0,1)|}{(2\pi)^d}  \mathbb{E}[ \{\lambda +  \theta + C_{Q, \theta,r}  \vertiii{\bm{\xi}}^l \}^{\frac{d}{2}}] \\
  & \le   (1+\theta) \frac{|B(0,1)|}{(2\pi)^d}.
  \end{align*}
  Now the identity in (a) follows.

  (b) Let $\lambda < 0$.
  Thanks to the monotonicity of  $\lambdaAHone^{\dir}$ (see Proposition~\ref{prop:eigenvalues_of_AH}), we may and do assume
  $U = [0, L]^d$ for some $L \in (0, \infty)$.
  By \eqref{eqn:overline_sN_n_equals_inf} (see also Remark~\ref{rem:convergence_of_ids_cube}),
  \begin{equation*}
    \P(\lambdaAHone^{\dir}(U) \leq \lambda) \leq \expect[ \NAH^{\dir}(U, \lambda)]
    \leq L^d \NAH(\lambda).
  \end{equation*}
 Therefore, we establish that the left-hand sides are greater or equal to the right-hand sides of \eqref{eqn:limsup_lambda_min_infty} and \eqref{eqn:liminf_lambda_min_infty}.
  By Lemma~\ref{lem:upper_estimates_N}~\ref{item:box_decomposition_dirichlet_reversed}, for $l \in (0, L/2)$ and $n \in \N$, one has
  \begin{equation*}
    \NAH^{\dir}(U_n, \lambda) \leq
    \sum_{k \in \Z^d \cap [-1, n+1]^d} \NAH^{\dir}(k + [-l, L + l]^d, \lambda + K l^{-2})
  \end{equation*}
  and hence
  \begin{equation*}
    \frac{1}{n^d} \expect[\NAH^{\dir}(U_n, \lambda)]
    \leq \frac{(n+2)^d}{n^d} \expect[\NAH^{\dir}([0, L + 2l]^d, \lambda + Kl^{-2})].
  \end{equation*}
  Letting $n \to \infty$, for $p, q \in (1, \infty)$ with $p^{-1} + q^{-1} = 1$, one obtains
  \begin{align*}
    \NAH(\lambda) &\leq \expect[\NAH^{\dir}([0, L + 2l]^d, \lambda + Kl^{-2})] \\
    &= \expect[\NAH^{\dir}([0, L + 2l]^d, \lambda + Kl^{-2}) \indic_{\{ \lambdaAHone^{\dir}([0, L + 2l]^d) \leq
    \lambda + K l^{-2}\}}] \\
    &\leq \expect[\NAH^{\dir}([0, L + 2l]^d, K l^{-2})^q]^{\frac{1}{q}} \P(\lambdaAHone^{\dir}([0, L + 2l]^d) \leq
    \lambda + K l^{-2})^{\frac{1}{p}}.
  \end{align*}
  where we applied H\"older's inequality in the second inequality.
  Note  that
  \begin{equation*}
    \expect[\NAH^{\dir}([0, L + 2l]^d, K l^{-2})^q] < \infty
  \end{equation*}
   by Lemma~\ref{lem:estimate_of_eigenvalue_counting_functions_of_AH}
   and Lemma~\ref{lem:estimate_of_N_0}~\ref{item:estimates_N_0_d_and_N_0_n}.
Therefore, for $U = [0,L+2l]^d$, the left-hand sides are less or equal to the right-hand sides of \eqref{eqn:limsup_lambda_min_infty} and \eqref{eqn:liminf_lambda_min_infty}.
As $L$ and $l \in (0,L/2)$ can be chosen arbitrarily, the equalities follow.
\end{proof}

%%%%%%%%%%%%%%%%%%%%%%%%%%%% APPENDIX SIMPLE

%\input{appendix_simple}

\appendix
\addtocontents{toc}{\protect\setcounter{tocdepth}{1}}%Subsections are removed from Table of Contents
\section{Estimates related to function spaces}\label{subsec:technical_estimates_function_spaces}
\subsection{Estimates in Besov spaces}
\label{subsec:estimates_in_Besov}
This subsection gives estimates in weighted Besov spaces (see Definition~\ref{def:besov_spaces}).

\begin{lemma}\label{lem:weighted_besov_p_vs_infty}
  Let $p, q \in [1, \infty]$, $r \in \R$ and $\sigma_1, \sigma_2 \in [0, \infty)$. Then
\begin{align}
\label{eqn:estimate_infty_by_p_besov}
\norm{f}_{\csp^{r, \sigma_1}(\R^d)}
    & \lesssim_{p, r, \sigma_1}
    \norm{f}_{B_{p, \infty}^{r+\frac{d}{p}, \sigma_1 }(\R^d)}, \\
\label{eqn:estimate_infty_by_p_q_besov}
\norm{f}_{\csp^{r, \sigma_1}(\R^d)}
    & \lesssim_{p,q, r,\kappa, \sigma_1}
    \norm{f}_{B_{p, q}^{r+\frac{d}{p}+\kappa, \sigma_1 }(\R^d)},  \qquad \kappa >0.
\end{align}
If $p \sigma_2 > d$, then
  \begin{equation}
\label{eqn:estimate_p,q_by_infty_infty_besov}
    \norm{f}_{B_{p, q}^{r, \sigma_1 + \sigma_2}(\R^d)}
    \lesssim_{p, q, r, \sigma_1, \sigma_2} \norm{f}_{\csp^{r, \sigma_1}(\R^d)}.
  \end{equation}
\end{lemma}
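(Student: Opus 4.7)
All three estimates reduce to block-by-block bounds on the dyadic pieces $\Delta_j f$ followed by an $\ell^q$ assembly in $j$. The central ingredient is a weighted Bernstein inequality
\[
\|w_\sigma \Delta_j f\|_{L^\infty(\R^d)} \lesssim_{p,\sigma} 2^{jd/p}\,\|w_\sigma \Delta_j f\|_{L^p(\R^d)}, \qquad j\geq -1,
\]
refining the classical (unweighted) Bernstein inequality. The plan is to exploit the Fourier support of $\Delta_j f$ lying in an annulus of size $2^j$: pick $\tilde\chi\in\cS(\R^d)$ equal to one on the supports of $\check\chi$ and $\chi$ so that $\Delta_j f=\tilde\phi_j * \Delta_j f$ with $\tilde\phi_j(z)=2^{jd}\tilde\phi(2^j z)$ and $\tilde\phi=\F^{-1}\tilde\chi\in\cS(\R^d)$. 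The moderacy $w_\sigma(x)\lesssim_\sigma w_\sigma(y)(1+|x-y|)^\sigma$, which follows from $1+|y|\le(1+|x|)(1+|x-y|)$, combined with Hölder in $y$ with conjugate exponent $p'$ then yields
\[
w_\sigma(x)|\Delta_j f(x)|\lesssim_\sigma \|(1+|\cdot|)^\sigma\tilde\phi_j\|_{L^{p'}(\R^d)}\,\|w_\sigma\Delta_j f\|_{L^p(\R^d)},
\]
and a change of variables shows $\|(1+|\cdot|)^\sigma\tilde\phi_j\|_{L^{p'}}\lesssim_{p,\sigma}2^{jd/p}$ uniformly in $j\geq -1$.

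Given this weighted Bernstein inequality, the first inequality \eqref{eqn:estimate_infty_by_p_besov} follows at once by multiplying through by the dyadic weight and passing to $\sup_j$. The second inequality \eqref{eqn:estimate_infty_by_p_q_besov} is a routine consequence: for $\kappa>0$ and any $q\in[1,\infty]$, the factorization $2^{(r+d/p)j}=2^{-\kappa j}\cdot 2^{(r+d/p+\kappa)j}$, the uniform bound $\sup_{j\geq -1}2^{-\kappa j}\leq 2^\kappa$, and the trivial embedding $\ell^q\hookrightarrow\ell^\infty$ together give $\|f\|_{B_{p,\infty}^{r+d/p,\sigma_1}(\R^d)}\lesssim_{q,\kappa}\|f\|_{B_{p,q}^{r+d/p+\kappa,\sigma_1}(\R^d)}$, which combined with \eqref{eqn:estimate_infty_by_p_besov} completes the proof.

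For \eqref{eqn:estimate_p,q_by_infty_infty_besov} I would use the weight factorization $w_{\sigma_1+\sigma_2}=w_{\sigma_1}w_{\sigma_2}$ and Hölder with the $L^p\cdot L^\infty$ split:
\[
\|w_{\sigma_1+\sigma_2}\Delta_j f\|_{L^p(\R^d)}\leq\|w_{\sigma_2}\|_{L^p(\R^d)}\,\|w_{\sigma_1}\Delta_j f\|_{L^\infty(\R^d)}.
\]
The condition $p\sigma_2>d$ is exactly what makes $\int(1+|x|^2)^{-p\sigma_2/2}\,dx<\infty$, i.e.\ $w_{\sigma_2}\in L^p(\R^d)$; multiplying by the dyadic weight and taking the $\ell^q$ norm in $j$ then yields the claimed bound. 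The only delicate step in the whole argument is the weighted Bernstein inequality, where the polynomial-growth factor $(1+|x-y|)^\sigma$ generated by the moderacy of $w_\sigma$ must be absorbed by the Schwartz tail of $\tilde\phi_j$ without losing any power of $2^j$ beyond the $2^{jd/p}$ coming from the scaling of $\tilde\phi_j$.
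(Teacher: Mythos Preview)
Your arguments for \eqref{eqn:estimate_infty_by_p_besov} and \eqref{eqn:estimate_infty_by_p_q_besov} are correct. The weighted Bernstein inequality you prove---via the moderacy $w_\sigma(x)\lesssim w_\sigma(y)(1+|x-y|)^\sigma$ and absorption of the polynomial factor into the Schwartz tail of the reproducing kernel---is a clean, self-contained route. The paper instead invokes Triebel's equivalence $\|f\|_{B_{p,q}^{r,\sigma}}\sim\|w_\sigma f\|_{B_{p,q}^r}$ and then quotes the classical unweighted Besov embedding; this is shorter on the page but relies on a heavier black box, whereas your argument is more elementary.

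Your argument for \eqref{eqn:estimate_p,q_by_infty_infty_besov}, however, has a genuine gap when $q<\infty$. The block-wise H\"older estimate gives
\[
2^{rj}\|w_{\sigma_1+\sigma_2}\Delta_j f\|_{L^p}\le\|w_{\sigma_2}\|_{L^p}\cdot 2^{rj}\|w_{\sigma_1}\Delta_j f\|_{L^\infty}\le\|w_{\sigma_2}\|_{L^p}\,\|f\|_{\csp^{r,\sigma_1}},
\]
a bound that is \emph{uniform in $j$} but carries no decay. Taking the $\ell^q$ norm over $j$ therefore diverges for every $q<\infty$; your argument only yields the case $q=\infty$. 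The missing mechanism is the smoothness of $w_{\sigma_2}$: its high-frequency Littlewood--Paley pieces decay rapidly, and this is what must supply the $\ell^q$ summability. The paper captures this by first passing to $\|w_{\sigma_2}(w_{\sigma_1}f)\|_{B_{p,q}^r}$ via Triebel's equivalence and then applying a Besov product estimate of the form $\|gh\|_{B_{p,q}^r}\lesssim\|g\|_{B_{p,q}^{|r|+1/2}}\|h\|_{\csp^r}$, which places the $B_{p,q}$ norm (and hence the $\ell^q$ burden) on the smooth factor $w_{\sigma_2}$; the hypothesis $p\sigma_2>d$ then ensures $\|w_{\sigma_2}\|_{W_p^{|r|+1}}<\infty$ via $|\partial^m w_{\sigma_2}|\lesssim w_{\sigma_2}$. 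If you wish to repair your block-by-block approach, you would need a paraproduct decomposition of $w_{\sigma_2}\cdot(w_{\sigma_1}\Delta_j f)$ that explicitly exploits the decay of $\Delta_k w_{\sigma_2}$ in $k$.
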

\begin{proof}
By \cite[Theorem 6.5]{Triebel2006}, one has $\norm{f}_{B_{p, q}^{r, \sigma}(\R^d)} \sim_{p, q, r, \sigma} \norm{w_{\sigma} f}_{B_{p, q}^r(\R^d)}$.
Therefore \eqref{eqn:estimate_infty_by_p_besov} and \eqref{eqn:estimate_infty_by_p_q_besov} follow by the (unweighted) Besov embedding, see \cite[Section 2.7.1]{Triebel1983}.
For \eqref{eqn:estimate_p,q_by_infty_infty_besov}, the product estimate in the Besov space (see \cite[Corollary 2.1.35]{Ma18}, which follows also from \cite[Lemma 2.1]{PrTr16})
yields
  \begin{equation*}
    \norm{f}_{B_{p, q}^{r, \sigma_1 + \sigma_2 }(\R^d)}
    \lesssim_{p, q, r, \sigma_1, \sigma_2} \norm{w_{\sigma_2}}_{B^{\abs{r} + \frac12}_{p, q}(\R^d)}
    \norm{f}_{\csp^{r, \sigma_1}(\R^d)}.
  \end{equation*}
Now $\norm{w_{\sigma_2}}_{B^{\abs{r} + \frac12}_{p, q}(\R^d)} \lesssim \norm{w_{\sigma_2}}_{B^{\abs{r} + \frac34}_{p, p}(\R^d)}
\lesssim \norm{w_{\sigma_2}}_{W^{\abs{r} + \frac34}_{p}(\R^d)}
\lesssim \norm{w_{\sigma_2}}_{W^{\abs{r} + 1}_{p}(\R^d)}$ (by Lemma~\ref{lem:equivalence_sob_slobo_and_besov}).
Since $\abs{\partial^m w_{\sigma_2}} \lesssim_{m, \sigma_2} w_{\sigma_2 - \abs{m}}$,
  we have $ \norm{w_{\sigma_2}}_{W^{\abs{r} + 1}_{p}(\R^d)} < \infty$ if $p \sigma_2 > d$.
\end{proof}

\begin{theorem}[Weighted Young's inequality]
\label{thm:weighted_young}
Let $p,q,r\in [1,\infty]$, $\frac{1}{r}+1= \frac{1}{p}+\frac{1}{q}$ and $\sigma \in [0,\infty)$. Then
\begin{align*}
\|w_\sigma (f * g) \|_{L^r} \lesssim_\sigma \|w_{-\sigma} f\|_{L^p} \|w_{\sigma} g\|_{L^q}.
\end{align*}
\end{theorem}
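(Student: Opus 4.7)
The plan is to reduce the claim to the classical (unweighted) Young's inequality by means of a pointwise estimate that redistributes the weight $w_\sigma(x)$ from the outside of the convolution onto the two factors in the integrand in the correct way.

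The key pointwise observation is Peetre's inequality: for every $s \in \R$ and $x, y \in \R^d$,
\begin{equation*}
  (1 + \abs{x}^2)^s \leq 2^{\abs{s}} (1 + \abs{y}^2)^s (1 + \abs{x - y}^2)^{\abs{s}}.
\end{equation*}
Applying this with $s = -\sigma/2$ (so that $\abs{s} = \sigma/2$ since $\sigma \geq 0$), one obtains $w_\sigma(x) \leq 2^{\sigma/2} w_\sigma(y) w_{-\sigma}(x-y)$ for all $x, y \in \R^d$. I would either cite this standard fact or derive it in one line from $1 + \abs{x}^2 \leq 2(1 + \abs{y}^2)(1 + \abs{x-y}^2)$.

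Having this pointwise bound, for $\mathrm{d}y$-a.e.\ $y$,
\begin{equation*}
  w_\sigma(x) \abs{f(x-y)} \abs{g(y)} \lesssim_\sigma \bigl(w_{-\sigma}(x-y) \abs{f(x-y)}\bigr) \bigl(w_\sigma(y) \abs{g(y)}\bigr),
\end{equation*}
so that integrating in $y$ and taking absolute values inside the convolution yields the pointwise domination
\begin{equation*}
  w_\sigma(x) \abs{(f \conv g)(x)} \lesssim_\sigma \bigl((w_{-\sigma} \abs{f}) \conv (w_\sigma \abs{g})\bigr)(x).
\end{equation*}

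The proof is then concluded by applying the classical Young's inequality with the exponent relation $\tfrac{1}{r} + 1 = \tfrac{1}{p} + \tfrac{1}{q}$ to the right-hand side, giving
\begin{equation*}
  \norm{w_\sigma (f \conv g)}_{L^r} \lesssim_\sigma \norm{(w_{-\sigma} \abs{f}) \conv (w_\sigma \abs{g})}_{L^r} \leq \norm{w_{-\sigma} f}_{L^p} \norm{w_\sigma g}_{L^q}.
\end{equation*}
There is no real obstacle here; the only point to keep track of is the direction of Peetre's inequality (i.e., which of the weights $w_\sigma, w_{-\sigma}$ lands on $f$ versus $g$), which is dictated by the fact that the convolution variable $x - y$ naturally couples with the translation invariance of $L^p$ so that the weight on $g$ must be the same as the weight on $f \conv g$, while the weight on $f$ gets inverted.
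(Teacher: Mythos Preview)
Your proposal is correct and follows essentially the same approach as the paper: the paper also uses the pointwise inequality $w_\sigma(x) \lesssim_\sigma w_\sigma(x-y) w_{-\sigma}(y)$ (equivalent to your Peetre inequality after relabelling) to obtain $w_\sigma(x)\,|f\ast g|(x) \lesssim_\sigma (|f|w_{-\sigma}) \ast (|g|w_\sigma)(x)$, and then concludes by the classical Young inequality.
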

\begin{proof}
\begin{calc}
\begin{align*}
(1+|x|^2)^{-\frac{\sigma}{2}} (1+|y|^2)^{-\frac{\sigma}{2}} \lesssim (1+|x-y|^2)^{-\frac{\sigma}{2}},
\end{align*}
because $1+|x-y|^2 \le (1+|x|^2)(1+|y|^2)$..
\end{calc}
Using that $w_\sigma(x) \lesssim_\sigma w_\sigma(x-y) w_{-\sigma}(y)$, one can estimate $w_\sigma(x) |f*g|(x)\le [(|f|w_{-\sigma}) * (|g|w_\sigma)](x)$, see also \cite[Theorem 2.4]{mourrat_weber_17}. The rest follows by Young's inequality, \cite[Theorem 1.4]{Bahouri2011}.
\end{proof}

\begin{theorem}
\label{theorem:conv_mollifiers_in_weighted}
Let $r\in \R$ and $\sigma \in [0,\infty]$.
Let $\varphi \in \cS(\R^d)$ and $\int \varphi =1$ and $\varphi_\epsilon(x) = \epsilon^{-d} \varphi(\epsilon^{-1} x)$.
Then, for all $\eta \in \csp^{r,\sigma}(\R^d), \delta>0$,
\begin{align*}
\norm{ \varphi_\epsilon * \eta - \eta }_{\csp^{r-\delta,\sigma+\delta }(\R^d) } \arroweps 0.
\end{align*}
\end{theorem}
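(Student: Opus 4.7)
The plan is to use the Littlewood--Paley characterisation together with a split into high and low frequencies, with a different tool for each. Set $f_\epsilon \defby \varphi_\epsilon \conv \eta - \eta$. Since $\Delta_j$ is a Fourier multiplier, it commutes with convolution, so $\Delta_j f_\epsilon = \varphi_\epsilon \conv \Delta_j \eta - \Delta_j\eta$, and the norm on the left-hand side is
\begin{equation*}
  \norm{f_\epsilon}_{\csp^{r-\delta,\sigma+\delta}(\R^d)}
  = \sup_{j \geq -1} 2^{(r-\delta)j} \norm{w_{\sigma+\delta}\Delta_j f_\epsilon}_{L^\infty(\R^d)}.
\end{equation*}
Given $\nu>0$, I will choose $J$ large to control the sup over $j \geq J$ uniformly in $\epsilon$, and then, with $J$ fixed, choose $\epsilon_0$ small so that the finite sup over $-1\leq j<J$ is also small.

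\textbf{High-frequency regime $j\geq J$.} Using the pointwise bound $w_{\sigma+\delta}(x) \lesssim_{\sigma,\delta} w_{\sigma+\delta}(x-y)\,w_{-(\sigma+\delta)}(y)$ from the proof of Theorem~\ref{thm:weighted_young}, a weighted Young-type argument yields
\begin{equation*}
  \norm{w_{\sigma+\delta}(\varphi_\epsilon \conv \Delta_j \eta)}_{L^\infty(\R^d)}
  \lesssim_{\sigma,\delta} \norm{w_{-(\sigma+\delta)} \varphi_\epsilon}_{L^1(\R^d)}\, \norm{w_{\sigma+\delta} \Delta_j \eta}_{L^\infty(\R^d)}.
\end{equation*}
Since $\delta>0$ implies $w_{\sigma+\delta} \leq w_\sigma$, the second factor is at most $2^{-rj}\norm{\eta}_{\csp^{r,\sigma}}$ by definition. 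A change of variables gives $\norm{w_{-(\sigma+\delta)}\varphi_\epsilon}_{L^1(\R^d)} = \int (1 + \epsilon^2 \abs{z}^2)^{(\sigma+\delta)/2} \abs{\varphi(z)}\dd z$, which is bounded uniformly for $\epsilon \in (0,1]$ since $\varphi \in \cS(\R^d)$. Combining with the trivial bound for $\Delta_j \eta$ itself via the triangle inequality, one obtains
\begin{equation*}
  2^{(r-\delta)j}\norm{w_{\sigma+\delta}\Delta_j f_\epsilon}_{L^\infty(\R^d)}
  \lesssim_{\sigma,\delta,\varphi} 2^{-\delta j} \norm{\eta}_{\csp^{r,\sigma}(\R^d)},
\end{equation*}
uniformly in $\epsilon\in (0,1]$, and this tends to $0$ as $j\to\infty$.

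\textbf{Low-frequency regime $-1\leq j \leq J-1$.} For each such $j$, $\Delta_j \eta$ is the convolution of a Schwartz function with a tempered distribution, hence smooth with derivatives of at most polynomial growth. A Bernstein-type argument (the Fourier support of $\Delta_j\eta$ lies in a ball of radius $\lesssim 2^j$, so one can differentiate under the Fourier multiplier and absorb the result into the kernel) gives, for every multi-index $\alpha$,
\begin{equation*}
  \norm{w_\sigma \partial^\alpha \Delta_j \eta}_{L^\infty(\R^d)} \lesssim_{\alpha,\sigma} 2^{j\abs{\alpha}} \norm{w_\sigma \Delta_j \eta}_{L^\infty(\R^d)} \leq 2^{(\abs{\alpha}-r)j}\norm{\eta}_{\csp^{r,\sigma}(\R^d)}.
\end{equation*}
Expanding $\Delta_j\eta(x-\epsilon y) - \Delta_j\eta(x)$ by the mean value theorem, applying Peetre's inequality $(1+\abs{x-s\epsilon y}^2)^{\sigma/2} \lesssim_\sigma (1+\abs{x}^2)^{\sigma/2}(1+\abs{y}^2)^{\sigma/2}$, and using $w_{\sigma+\delta}\leq w_\sigma$, one concludes that for $\epsilon \in (0,1]$
\begin{equation*}
  \norm{w_{\sigma+\delta}(\varphi_\epsilon \conv \Delta_j \eta - \Delta_j \eta)}_{L^\infty(\R^d)}
  \lesssim_{j,\sigma,\varphi} \epsilon \,\norm{\eta}_{\csp^{r,\sigma}(\R^d)}.
\end{equation*}
Taking the sup over the finitely many $j\leq J-1$, the whole low-frequency contribution is $\lesssim_{J,\sigma,\varphi} \epsilon \norm{\eta}_{\csp^{r,\sigma}}$, which vanishes as $\epsilon\downarrow 0$.

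Choosing first $J$ so the high-frequency bound is $<\nu/2$ and then $\epsilon_0$ so the low-frequency bound is $<\nu/2$ completes the proof. The only mildly delicate step is keeping the weighted Young estimate uniform in $\epsilon$ in the high-frequency part; this reduces to the change-of-variables check that $\norm{w_{-(\sigma+\delta)}\varphi_\epsilon}_{L^1(\R^d)}$ stays bounded as $\epsilon \downarrow 0$, which is straightforward once the Peetre inequality is used to decouple $x$ and $y$ in the weight.
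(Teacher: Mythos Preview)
Your proof is correct, but the route differs from the paper's. The paper does not split into high and low frequencies in $\csp^{r-\delta,\sigma+\delta}$; instead it first embeds $\eta$ into $B_{p,1}^{r,\sigma+\delta}$ for some $p \in (d/\delta,\infty)$ (using Lemma~\ref{lem:weighted_besov_p_vs_infty}) and then embeds back via $\norm{\cdot}_{\csp^{r-\delta,\sigma+\delta}} \lesssim \norm{\cdot}_{B_{p,1}^{r,\sigma+\delta}}$. The point of passing to an $\ell^1$-summable norm is that one can then argue by dominated convergence in the index $j$: the weighted Young inequality gives the uniform-in-$\epsilon$ bound \eqref{eqn:estimate_difference_weighted_conv} (this step is identical to your high-frequency estimate), which serves as a dominating sequence; termwise convergence $\norm{w_{\sigma+\delta}(\varphi_\epsilon\conv\Delta_j\eta-\Delta_j\eta)}_{L^p}\to 0$ then follows from a.e.\ convergence at Lebesgue points together with dominated convergence in $L^p$. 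Your approach is more self-contained --- it avoids the Besov embeddings between different integrability exponents and the passage through $L^p$ --- and it even yields an explicit $O(\epsilon)$ rate on each low-frequency block via the weighted Bernstein/mean-value argument. The paper's approach is more compact but leans on more machinery.
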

\begin{proof}
In this proof we refrain from writing ``$(\R^d)$''.
Let $\delta>0$ and $p \in (\frac{d}{\delta},\infty)$.
By Lemma~\ref{lem:weighted_besov_p_vs_infty}, \eqref{eqn:estimate_p,q_by_infty_infty_besov}, $\eta$ is an element of $B_{p,1}^{r,\sigma+\delta}$.
As by Lemma~\ref{lem:weighted_besov_p_vs_infty}, \eqref{eqn:estimate_infty_by_p_besov},
\begin{align*}
\norm{ \varphi_\epsilon * \eta - \eta }_{\csp^{r-\delta,\sigma+\delta }}
\lesssim
\norm{ \varphi_\epsilon * \eta - \eta }_{B_{p,1}^{r,\sigma+\delta }}
=  \sum_{j=-1}^\infty  2^{rj} \| w_{\sigma+\delta}( \varphi_\epsilon * \Delta_j \eta - \Delta_j \eta) \|_{L^p}.
\end{align*}
It suffices to show for all $j$ that $\| w_{\sigma+\delta}( \varphi_\epsilon * \Delta_j \eta - \Delta_j \eta) \|_{L^p} \arroweps 0$ and
\begin{align}
\label{eqn:estimate_difference_weighted_conv}
\| w_{\sigma+\delta}( \varphi_\epsilon * \Delta_j \eta - \Delta_j \eta) \|_{L^p} \lesssim \| w_{\sigma+\delta} \Delta_j \eta\|_{L^p}
\end{align}
As $\varphi_\epsilon * \Delta_j \eta$ converges to $\Delta_j \eta$ almost everywhere (as it does at every Lebesgue point, see  \cite[Proposition 2.3.8]{HyvNVeWe16}),
the converges follows from \eqref{eqn:estimate_difference_weighted_conv} by Lebesgue's dominated convergence theorem.
By the weighted Young inequality, we have $\| w_{\sigma+\delta} (\varphi_\epsilon * \Delta_j \eta) \|_{L^p}
\lesssim_\sigma \| w_{-(\sigma+\delta)} \varphi_\epsilon\|_{L^1} \|w_{\sigma+\delta} \Delta_j \eta\|_{L^p}$. As $w_{-(\sigma+\delta)}(\epsilon x) \le w_{-(\sigma+\delta)}( x)$ for $\epsilon\in (0,1)$, we have $\| w_{-(\sigma+\delta)} \varphi_\epsilon\|_{L^1} \le \| w_{-(\sigma+\delta)} \varphi\|_{L^1}$ which is finite because $\varphi \in \cS(\R^d)$. This proves \eqref{eqn:estimate_difference_weighted_conv}.
\end{proof}

\begin{lemma}\label{lem:localization}
  Let $p, q \in [1, \infty]$,
  $r \in \R$ and $\sigma \in [0, \infty)$. Let $Z \in  B_{p,q}^{r,\sigma}(\R^d)  $ and
  $\phi \in \S(\R^d)$. Then, one has
  \begin{equation*}
    \norm{\phi(L^{-1} \cdot) Z}_{B^r_{p, q}(\R^d)}
    \lesssim_{p, q, r, \sigma, \phi} L^{\sigma} \norm{Z}_{B_{p, q}^{r, \sigma}(\R^d)}, \qquad L \ge 1.
  \end{equation*}
\end{lemma}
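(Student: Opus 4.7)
The plan is to transfer the problem from the weighted Besov space to the unweighted one by means of the equivalence
\[
\norm{Z}_{B_{p,q}^{r,\sigma}(\R^d)} \sim \norm{w_\sigma Z}_{B_{p,q}^{r}(\R^d)},
\]
which has already been invoked in the proof of Lemma~\ref{lem:weighted_besov_p_vs_infty} (via \cite[Theorem~6.5]{Triebel2006}). Writing $\phi_L \defby \phi(L^{-1}\cdot)$ and $w_\sigma^{-1}(x) = (1+\abs{x}^2)^{\sigma/2}$, I would use the factorisation
\[
\phi_L \, Z \;=\; \big(\phi_L\, w_\sigma^{-1}\big)\cdot \big(w_\sigma Z\big)
\]
and combine this with a standard pointwise multiplier estimate in the unweighted Besov space of the form $\norm{fg}_{B_{p,q}^r(\R^d)} \lesssim_{p,q,r} \norm{f}_{C^k(\R^d)}\,\norm{g}_{B_{p,q}^r(\R^d)}$ for some integer $k = k(r)$ (this can either be quoted from Triebel's multiplier theorems or obtained via a Bony-type decomposition, and is already implicit in the proof of Lemma~\ref{lem:weighted_besov_p_vs_infty}, where a closely related product estimate is used). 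The entire task then reduces to proving
\begin{equation}\label{eq:plan_Ck_bound}
\norm{\phi_L\, w_\sigma^{-1}}_{C^k(\R^d)} \lesssim_{k,\phi,\sigma} L^\sigma, \qquad L\ge 1.
\end{equation}

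To establish \eqref{eq:plan_Ck_bound} I would apply the Leibniz rule to $\partial^\alpha\big[\phi(L^{-1}x)\,(1+\abs{x}^2)^{\sigma/2}\big]$, producing a sum of terms of the form
\[
L^{-\abs{\beta}}\, (\partial^\beta \phi)(L^{-1}x)\cdot \partial^{\alpha-\beta}\!\big[(1+\abs{x}^2)^{\sigma/2}\big],
\]
with $\abs{\partial^{\alpha-\beta}[(1+\abs{x}^2)^{\sigma/2}]} \lesssim_\alpha (1+\abs{x})^{\sigma-\abs{\alpha-\beta}} \le (1+\abs{x})^\sigma$. The Schwartz decay of $\phi$ then delivers the bound by $L^\sigma$, uniformly in $x\in\R^d$ and in $L\ge 1$: in the region $\abs{x}\le L$ one simply estimates $(1+\abs{x})^\sigma \le (1+L)^\sigma \lesssim L^\sigma$ and uses $\abs{(\partial^\beta\phi)(L^{-1}x)}\le \norm{\partial^\beta\phi}_{L^\infty}$, while in the region $\abs{x}\ge L$ one uses the Schwartz bound $\abs{(\partial^\beta\phi)(L^{-1}x)} \lesssim_N (L/\abs{x})^N$ with any $N>\sigma$ to produce $L^{N-\abs{\beta}}\abs{x}^{\sigma-N}\le L^{\sigma-\abs{\beta}}\le L^\sigma$.

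Combining \eqref{eq:plan_Ck_bound} with the multiplier estimate and the weighted/unweighted equivalence yields
\[
\norm{\phi_L\, Z}_{B_{p,q}^r(\R^d)} \lesssim \norm{\phi_L\, w_\sigma^{-1}}_{C^k(\R^d)}\,\norm{w_\sigma Z}_{B_{p,q}^r(\R^d)} \lesssim L^\sigma\, \norm{Z}_{B_{p,q}^{r,\sigma}(\R^d)},
\]
which is the desired bound. The only genuinely non-trivial ingredient is the Besov multiplier estimate for smooth $\phi_L w_\sigma^{-1}$ of controlled $C^k$-norm — this is the step I expect to require the most care, as it has to cover both negative and non-integer values of $r$ and all choices of $p,q\in[1,\infty]$. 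This is however entirely classical and is no worse than the product estimate already used earlier in the paper (see \cite[Corollary~2.1.35]{Ma18} or \cite[Lemma~2.1]{PrTr16}).
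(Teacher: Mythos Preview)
Your proposal is correct and follows essentially the same approach as the paper: both factor $\phi_L Z = (\phi_L w_{-\sigma})(w_\sigma Z)$, invoke the weighted/unweighted equivalence from \cite[Theorem~6.5]{Triebel2006}, apply a Besov multiplier estimate (the paper uses \cite[Theorem~4.37]{sawano2018theory} with the $\csp^{|r|+1/2}$-norm), and then bound $\norm{\phi_L w_{-\sigma}}_{C^k}$ via the Leibniz rule. The only cosmetic difference is that the paper handles the final supremum by the change of variables $x\mapsto Lx$ rather than your case split $|x|\le L$ versus $|x|\ge L$.
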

\begin{proof}
  By the product estimate for Besov spaces \cite[Theorem 4.37]{sawano2018theory}, we have
  \begin{equation*}
    \norm{\phi(L^{-1} \cdot) Z}_{B_{p, q}^{r}(\R^d)}
    \lesssim_{p, q, r} \norm{\phi(L^{-1} \cdot) w_{-\sigma}}_{\csp^{\abs{r} + \frac12}(\R^d)}
    \norm{w_{\sigma} Z}_{B_{p, q}^{r}(\R^d)}.
  \end{equation*}
  Since $\norm{w_{\sigma} Z}_{B_{p, q}^r(\R^d)} \lesssim_{p, q, r, \sigma}
  \norm{Z}_{B_{p, q}^{r, \sigma}(\R^d)}$
  by \cite[Theorem 6.5]{Triebel2006}  and $\norm{\cdot}_{\cC^{|r|+\frac12}}\lesssim \norm{\cdot}_{W_\infty^{|r|+\frac12}}\lesssim \norm{\cdot}_{W_\infty^{|r|+1}}$ by Lemma~\ref{lem:equivalence_sob_slobo_and_besov}, it suffices to show
  \begin{equation*}
    \norm{\phi(L^{-1} \cdot) w_{-\sigma}}_{ W_\infty^{\abs{r} + 1}(\R^d) } \lesssim_{r, \sigma, \phi} L^{\sigma}.
  \end{equation*}
  For this, it suffices to show
  \begin{equation*}
    \norm{\partial^m [w_{-\sigma} \phi(L^{-1} \cdot)]}_{L^{\infty}(\R^d)} \lesssim_{\sigma, m, \phi} L^{\sigma}
  \end{equation*}
  for every $m \in \N_0^d$.
  By the Leibniz rule,
  \begin{equation*}
    \partial^m[ w_{-\sigma} \phi(L^{-1} \cdot) ]
    = \sum_{l=0}^m \binom{m}{l} L^{-\abs{m-l}} \partial^l w_{-\sigma} \partial^{m-l} \phi (L^{-1} \cdot).
  \end{equation*}
  Since $\abs{\partial^l w_{-\sigma}(x)} \lesssim_{\sigma, l} (1 + \abs{x}^2)^{\frac{\sigma - \abs{l}}{2}}$,
  we obtain
  \begin{align*}
    \sup_{x \in \R^d} \abs{\partial^m[ w_{-\sigma} \phi(L^{-1} \cdot)](x)}
    &\lesssim_{\sigma, m}
    \sum_{l=0}^m \binom{m}{l} L^{-\abs{m-l}}
    \sup_{x \in \R^d} (1 + \abs{x}^2)^{\frac{\sigma- \abs{l}}{2} } \abs{\partial^{m-l} \phi (L^{-1} x)} \\
    &=
    \sum_{l=0}^m \binom{m}{l} L^{-\abs{m-l}}
    \sup_{x \in \R^d} (1 + \abs{L x}^2)^{\frac{\sigma - \abs{l}}{2}} \abs{\partial^{m-l} \phi (x)} \\
    &\lesssim_{m, \sigma, \phi} L^{\sigma}. \qedhere
  \end{align*}
\end{proof}

\begin{lemma}
\label{lemma:estimate_C_delta_U_by_weighted}
Let $U$ be a bounded domain, $r\in \R$ and $\sigma \in (0,\infty)$. Then for for $Z\in \cC^{r,\sigma}(\R^d)$
  \begin{align*}
  \norm{Z}_{C^r(U_L)} \lesssim_{U,\sigma} L^\sigma \norm{Z}_{\csp^{r,\sigma}(\R^d)}, \qquad L \ge 1.
  \end{align*}
\end{lemma}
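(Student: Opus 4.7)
The plan is to reduce the estimate on $U_L$ to a global estimate on $\R^d$ by multiplying $Z$ with a smooth cutoff that equals $1$ on $U_L$ and whose support lives in a ball of radius proportional to $L$; Lemma~\ref{lem:localization} then absorbs the polynomial growth coming from the weight. All the technical work has effectively been done already, so this should be a short argument.

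Concretely, first I would fix $R_U>0$ with $U\subseteq B(0,R_U)$ and choose a single $\phi\in C_c^\infty(\R^d)$ satisfying $\phi\equiv 1$ on $B(0,R_U)$. Then $\phi(L^{-1}\cdot)\equiv 1$ on $U_L$ for every $L\geq 1$, so the function $\tilde Z:=\phi(L^{-1}\cdot)\,Z$ agrees with $Z$ on $U_L$. Using the definition of $W^r_\infty$ and that the defining seminorm on $U_L$ is a supremum over pairs $(x,y)\in U_L\times U_L$ with $|x-y|\leq 1$, which is a subset of the pairs appearing in the $\R^d$ seminorm, I obtain the restriction bound
\begin{equation*}
\|Z\|_{C^r(U_L)}\leq \|\tilde Z\|_{C^r(\R^d)}.
\end{equation*}

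For $r\in(0,\infty)\setminus\N$, Lemma~\ref{lem:equivalence_sob_slobo_and_besov} converts this to a Besov estimate,
$\|\tilde Z\|_{C^r(\R^d)}\lesssim_r \|\tilde Z\|_{\cC^r(\R^d)}=\|\phi(L^{-1}\cdot)Z\|_{B^r_{\infty,\infty}(\R^d)}$, and Lemma~\ref{lem:localization}, applied with $p=q=\infty$, yields
\begin{equation*}
\|\phi(L^{-1}\cdot)Z\|_{B^r_{\infty,\infty}(\R^d)}\lesssim_{r,\sigma,\phi} L^\sigma\|Z\|_{\csp^{r,\sigma}(\R^d)}.
\end{equation*}
Chaining the three bounds delivers the claim with the desired $L^\sigma$ dependence.

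There is no genuine obstacle here: the whole argument is a localisation trick feeding into Lemma~\ref{lem:localization}. The one point worth keeping in mind is that $\phi$ is fixed once $U$ is fixed (it does not depend on $L$), so the implicit constant depends only on $U$, $r$ and $\sigma$, and the factor $L^\sigma$ originates precisely from the scaling in Lemma~\ref{lem:localization}; in the applications of interest (see Lemma~\ref{lemma:estimate_W_n_on_U_L}) $r$ is non-integer and positive, so the range of $r$ relevant for the paper is covered without further fuss.
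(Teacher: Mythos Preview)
Your proof is correct and follows essentially the same route as the paper: fix a cutoff $\phi\in C_c^\infty(\R^d)$ equal to $1$ on (a neighborhood of) $U$, use the restriction bound $\|Z\|_{C^r(U_L)}\le\|\phi(L^{-1}\cdot)Z\|_{C^r(\R^d)}$, pass to the Besov norm via Lemma~\ref{lem:equivalence_sob_slobo_and_besov}, and conclude with Lemma~\ref{lem:localization}. Your explicit remark that Lemma~\ref{lem:equivalence_sob_slobo_and_besov} only applies for $r\in(0,\infty)\setminus\N$ is a fair observation about the stated generality; the paper glosses over this but, as you note, the applications only require that range.
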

\begin{proof}
Let $\phi \in C_c^\infty(\R^d)$ be $1$ on a neighborhood of $U$. Then $  \norm{Z}_{C^r(U_L)} =   \norm{\phi (L \cdot) Z}_{C^r(U)} \le   \norm{\phi(L \cdot) Z}_{C^r(\R^d)}$.
By Lemma~\ref{lem:equivalence_sob_slobo_and_besov},
\begin{align*}
\norm{\phi (L\cdot) Z}_{C^r(\R^d)}
\lesssim_r \norm{\phi (L\cdot) Z}_{\csp^r(\R^d)}.
\end{align*}
Therefore, we obtain the desired estimate by an application of Lemma~\ref{lem:localization}.
\end{proof}

\begin{lemma}\label{lem:derivative_and_lifting_in_Besov}
  Let $p, q \in [1, \infty]$, $r \in \R$, $\sigma \in [0, \infty)$, $m \in \N_0^d$ and $a \in \R$.
  \begin{enumerate}
    \item
    \label{item:derivative_besov}
    One has $\norm{\partial^m f}_{B^{r - \abs{m}, \sigma}_{p, q}(\R^d)} \lesssim_{p, q, r, \sigma, m}
    \norm{f}_{B_{p, q}^{r, \sigma}(\R^d)}$.
    \item
	\label{item:lifting_besov}
    Let $\tilde{\chi}$ be a smooth function on $\R^d$ such that
      $\tilde{\chi} = 0$ in a neighborhood of $0$ and all the derivatives are bounded.
     Then, one has $\norm{\F^{-1}[\abs{2 \pi \cdot}^{2 a} \tilde{\chi} \F f]}_{B_{p, q}^{r - 2 a, \sigma}(\R^d)}
    \lesssim_{p, q, r, \sigma, a, \tilde{\chi}} \norm{f}_{B_{p, q}^{r, \sigma}(\R^d)}$.
  \end{enumerate}
\end{lemma}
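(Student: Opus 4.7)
The plan is to reduce both estimates to their unweighted counterparts via the equivalence $\|f\|_{B_{p,q}^{r,\sigma}(\R^d)} \sim_{p,q,r,\sigma} \|w_\sigma f\|_{B_{p,q}^{r}(\R^d)}$ of \cite[Theorem 6.5]{Triebel2006}, which is already invoked earlier in this subsection. After this reduction, (a) amounts to showing $\|w_\sigma \partial^m f\|_{B_{p,q}^{r-|m|}(\R^d)} \lesssim \|w_\sigma f\|_{B_{p,q}^{r}(\R^d)}$, while (b) amounts to $\|w_\sigma \cF^{-1}[|2\pi\cdot|^{2a}\tilde\chi \cF f]\|_{B_{p,q}^{r-2a}(\R^d)} \lesssim \|w_\sigma f\|_{B_{p,q}^{r}(\R^d)}$.

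For part (a), I would apply the Leibniz rule to write
\begin{equation*}
  w_\sigma \partial^m f = \partial^m(w_\sigma f) - \sum_{0 < \alpha \leq m} \binom{m}{\alpha} (\partial^\alpha w_\sigma)(\partial^{m-\alpha} f).
\end{equation*}
The term $\partial^m(w_\sigma f)$ is handled by the standard unweighted derivative estimate, which follows directly from the Littlewood--Paley characterisation: on a block $\Delta_j$ with $j \geq 0$ the symbol $(2\pi i \xi)^m \chi(2^{-j}\xi)$ has $L^1$-normalised kernel of size $2^{j|m|}$, and for $j = -1$ the multiplier is bounded. For each remaining term, rewrite $(\partial^\alpha w_\sigma)(\partial^{m-\alpha}f) = [(\partial^\alpha w_\sigma) w_{-\sigma}] \cdot [w_\sigma \partial^{m-\alpha} f]$. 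A direct computation gives $|\partial^\beta[(\partial^\alpha w_\sigma) w_{-\sigma}]| \lesssim_{\alpha,\beta,\sigma} 1$ for every $\beta \in \N_0^d$ (since $\partial^\alpha w_\sigma$ is a finite sum of $w_\sigma$ multiplied by rational functions of $x$ with denominators $(1+|x|^2)^k$ of degree at least $|\alpha|$). Hence the prefactor lies in $C_b^\infty(\R^d)$ and is a multiplier on $B_{p,q}^{r-|m|}(\R^d)$ via \cite[Theorem 4.37]{sawano2018theory}, reducing the bound to $\|w_\sigma \partial^{m-\alpha} f\|_{B_{p,q}^{r-|m|}(\R^d)}$, which is controlled by induction on $|m|$.

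For part (b), I would split the operator as $w_\sigma \cF^{-1}[|2\pi\cdot|^{2a}\tilde\chi \cF f] = \cF^{-1}[|2\pi\cdot|^{2a}\tilde\chi \cF(w_\sigma f)] + \cR f$, where $\cR$ is a commutator. The main term is controlled by the unweighted Fourier-multiplier estimate: since $\tilde\chi$ vanishes on a neighbourhood of $0$, the symbol $m(\xi) = |2\pi\xi|^{2a}\tilde\chi(\xi)$ is $C^\infty$ on all of $\R^d$ and satisfies $|\partial^\gamma m(\xi)| \lesssim_\gamma (1+|\xi|)^{2a-|\gamma|}$. This is a standard lifting symbol of order $2a$, and a dyadic decomposition (combined with an $L^p$-boundedness argument of Mikhlin type on each annulus) yields $\|\cF^{-1}[m \cF g]\|_{B_{p,q}^{r-2a}} \lesssim \|g\|_{B_{p,q}^r}$. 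The commutator $\cR$ is treated analogously to the error terms in (a): writing the kernel $K = \cF^{-1}[m]$ as a sum of dyadically localised pieces with rapid spatial decay, one expands $(w_\sigma K)(x-y) - K(x-y) w_\sigma(y)$ with a Taylor remainder in the smooth weight $w_\sigma$, producing a kernel which, after absorbing factors $(\partial^\alpha w_\sigma)/w_\sigma \in C_b^\infty(\R^d)$, corresponds to a Fourier multiplier of order strictly less than $2a$, and is therefore absorbable.

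The main obstacle in both parts is that the weight $w_\sigma$ does not commute with either differentiation or with general Fourier multipliers. The resolution in both cases rests on the single tame-weight observation that $(\partial^\alpha w_\sigma)/w_\sigma$ is bounded together with all of its derivatives; this ensures that every commutator correction can be cast as multiplication by a $C_b^\infty$-function acting on a quantity of the form $w_\sigma \cdot (\text{derivative of } f)$, which is exactly what is needed to keep all intermediate norms inside the weighted Besov scale.
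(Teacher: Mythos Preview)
Your approach is correct but takes a different route from the paper. Rather than passing through the Triebel equivalence $\|f\|_{B_{p,q}^{r,\sigma}} \sim \|w_\sigma f\|_{B_{p,q}^r}$ and then handling commutators of the weight with $\partial^m$ or with the multiplier, the paper works directly on each Littlewood--Paley block: inserting a smooth cutoff $\psi_j = \psi(2^{-j}\cdot)$ equal to $1$ on $\supp\chi(2^{-j}\cdot)$, one writes $\Delta_j(\partial^m f) = \cF^{-1}[(-2\pi i\cdot)^m \psi_j] * \Delta_j f$ and applies the weighted Young inequality (Theorem~\ref{thm:weighted_young}) to obtain $\|w_\sigma \Delta_j \partial^m f\|_{L^p} \leq \|w_{-\sigma}\cF^{-1}[(-2\pi i\cdot)^m\psi_j]\|_{L^1}\,\|w_\sigma \Delta_j f\|_{L^p}$; a rescaling (using $w_{-\sigma}(2^{-j}\cdot)\le w_{-\sigma}$) shows the first factor is $\lesssim 2^{j|m|}$ uniformly in $j$. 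Part (b) is then identical with $|2\pi\cdot|^{2a}\tilde\chi$ in place of $(-2\pi i\cdot)^m$, so no separate commutator analysis is needed. Your Leibniz/induction argument for (a) is clean; for (b), however, the Taylor-remainder step is more delicate than your sketch suggests---the intermediate-point evaluation $\partial^\alpha w_\sigma(\xi)$ cannot be absorbed as a pointwise $C_b^\infty$ multiplier in $y$ alone, and closing it requires a Peetre-type bound $w_\sigma(\xi)\lesssim w_\sigma(y)(1+|x-y|)^{|\sigma|}$, which is precisely the weighted-kernel control that the paper's approach exploits from the outset.
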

\begin{proof}
  We only prove (a), as the proof of (b) is similar.
  We use the notations from Definition~\ref{def:besov_spaces}.
  Let $\psi$ be a compactly supported smooth function on $\R^d$ such that $\psi = 1$ on a neighborhood of
  $\supp(\chi)$ and set $\psi_j \defby \psi(2^{-j} \cdot)$.
Because $\Delta_j \partial^m f = \cF^{-1}[\chi(2^{-j} \cdot)\psi_j] * \Delta_j f$ for $j\in \N_0$, by Theorem~\ref{thm:weighted_young} we have
  \begin{equation}
  \label{eqn:derivative_LP_block}
    \norm{w_{\sigma} ( \Delta_j \partial^m f ) }_{L^p(\R^d)}
    \leq \norm{w_{-\sigma} \F^{-1}[(-2 \pi i \cdot)^m \psi_j]}_{L^1(\R^d)}
    \norm{w_{\sigma} \Delta_j f}_{L^{p}(\R^d)}.
  \end{equation}
  It remains to observe that  for all $j\in \N_0$
  \begin{align*}
    2^{-j \abs{m}} \norm{w_{-\sigma} \F^{-1}[(-2 \pi i \cdot)^m \psi_j]}_{L^1(\R^d)}
&    = \norm{w_{-\sigma} \F^{-1}[(-2^{-j + 1} \pi i \cdot)^m \psi(2^{-j} \cdot)]}_{L^1(\R^d)} \\
&     = \norm{ 2^{jd} \Big[  w_{-\sigma}(2^{-j} \cdot) [\F^{-1}[(-2 \pi i \cdot)^m \psi \Big] (2^{j} \cdot) }_{L^1(\R^d)} \\
&     \le  \norm{   w_{-\sigma} [\F^{-1}[(-2 \pi i \cdot)^m \psi ] }_{L^1(\R^d)},
  \end{align*}
  as $w_{-\sigma}(2^{-j} \cdot) \le w_{-\sigma}$.
  For $j=-1$ a similar estimate as \eqref{eqn:derivative_LP_block} holds for a $\tilde \psi \in C_c^\infty(\R^d)$ with $\tilde \psi =1 $ on $\supp (\tilde \chi)$.
\end{proof}

\begin{definition}
\label{def:Delta_le_LP_block}
For $J\in \N_0$ or $J=-1$ we write
\begin{align*}
\Delta_{\le J} f = \sum_{j=-1}^J \Delta_j f,
\qquad \Delta_{\ge J} f = \sum_{j=J}^\infty \Delta_j f.
\end{align*}
\end{definition}

\begin{remark}
\label{rem:sum_large_LP_blocks}
Observe that by definition of $\check \chi$ and $\chi$ (Definition~\ref{def:besov_spaces}), for $N\in\N$
\begin{align*}
(1-\check \chi)(2^{-N}x) = \sum_{j=N}^\infty \chi(2^{-j}x), \qquad x\in \R^d,
\end{align*}
and therefore
\begin{align*}
\Delta_{\ge N} f & = \cF^{-1} \Big( (1-\check \chi)(2^{-N}\cdot ) \cF f \Big) , \quad
\Delta_{\le N} f  = \cF^{-1} \Big( \check \chi(2^{-N}\cdot ) \cF f \Big) .
\end{align*}
\end{remark}

\begin{lemma}\label{lem:fourier_cutoff}
  Let $p, q \in [1, \infty]$, $r, s \in \R$ with $r \leq s$, $\sigma \in [0, \infty)$ and $N \in \N_0$.
  Then, one has  (observe the difference of the positions of $r$ and $s$)
  \begin{align*}
    \norm{\Delta_{\ge N} f}_{B_{p, q}^{r, \sigma}(\R^d)} &\lesssim_{s-r, \sigma} 2^{-(s-r)N}
    \norm{f}_{B_{p, q}^{s, \sigma}(\R^d)}, \\
    \norm{ \Delta_{\le N} f}_{B_{p, q}^{s, \sigma}(\R^d)}
    &\lesssim_{s-r, \sigma} 2^{(s-r)N} \norm{f}_{B_{p, q}^{r, \sigma}(\R^d)}.
  \end{align*}
\end{lemma}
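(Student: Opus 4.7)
The plan is to proceed by Littlewood--Paley analysis combined with support considerations. Using Remark~\ref{rem:sum_large_LP_blocks}, I would view $\Delta_{\ge N}$ and $\Delta_{\le N}$ as Fourier multiplier operators with symbols $(1-\check{\chi})(2^{-N}\cdot)$ and $\check{\chi}(2^{-N}\cdot)$ respectively, exploiting that $\check{\chi}$ is supported near the origin while $\chi(2^{-k}\cdot)$ is supported in an annulus of radius $\sim 2^k$. This gives rise to a trichotomy for each $k \ge -1$: there is a constant $c \in \N$ (explicitly computable from the radii in Definition~\ref{def:besov_spaces}; taking $c=2$ suffices) such that (i) for $k \ge N+c$, $\Delta_k \Delta_{\ge N} f = \Delta_k f$ and $\Delta_k \Delta_{\le N} f = 0$; (ii) for $k \le N-c$, $\Delta_k \Delta_{\ge N} f = 0$ and $\Delta_k \Delta_{\le N} f = \Delta_k f$; (iii) otherwise $|k-N| < c$.

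In regimes (i) and (ii), where the block operator acts as identity or zero, the $\ell^q$-sum defining the Besov norm can be handled directly. For instance, for the first inequality and $k \ge N+c$ one writes $2^{rk} = 2^{(r-s)k} \cdot 2^{sk} \le 2^{-(s-r)(N+c)} \cdot 2^{sk}$ (using $s-r \ge 0$); summing the $q$-th powers in $k$ then produces the factor $2^{-(s-r)N}$ times $\norm{f}_{B_{p,q}^{s,\sigma}(\R^d)}$, as desired. The second inequality is handled symmetrically via regime (ii).

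For the at most $2c$ transition indices $k$ of regime (iii), the plan is to use the convolution representation $\Delta_k \Delta_{\ge N} f = K_{k,N} * \Delta_k f$ with $K_{k,N} = \F^{-1}[(1-\check{\chi})(2^{-N}\cdot)\,\tilde{\chi}(2^{-k}\cdot)]$, where $\tilde{\chi}$ is a fattened smooth cutoff equal to $1$ on $\supp \chi$ so that $\chi \tilde{\chi} = \chi$ (and analogously for $\Delta_{\le N}$). Theorem~\ref{thm:weighted_young} then yields
\begin{equation*}
\norm{w_\sigma \Delta_k \Delta_{\ge N} f}_{L^p(\R^d)} \lesssim_\sigma \norm{w_{-\sigma} K_{k,N}}_{L^1(\R^d)}\, \norm{w_\sigma \Delta_k f}_{L^p(\R^d)}.
\end{equation*}
The hard part --- and what I expect to be the principal obstacle --- will be to establish a uniform bound on $\norm{w_{-\sigma} K_{k,N}}_{L^1(\R^d)}$ as $|k-N| < c$ and $N \ge 0$ vary. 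The idea is to derive a scaling identity of the form $K_{k,N}(x) = 2^{Nd} \F^{-1}[\eta_{k-N}](2^N x)$, where the $\eta_{k-N}$ range over a finite family of smooth compactly supported functions; a change of variables $y = 2^N x$ together with the elementary estimate $(1+|2^{-N}y|^2)^{\sigma/2} \le (1 + |y|^2)^{\sigma/2}$ for $N \ge 0$ then reduces matters to the finiteness of $\norm{w_{-\sigma} \F^{-1}[\eta_{k-N}]}_{L^1(\R^d)}$, which is immediate since each $\F^{-1}[\eta_{k-N}]$ is Schwartz. Once the transition-window contributions are bounded by the desired factor times $\norm{f}_{B_{p,q}^{s,\sigma}(\R^d)}$ (resp.\ $\norm{f}_{B_{p,q}^{r,\sigma}(\R^d)}$), combining with the estimates from (i) and (ii) completes the proof; the mild boundary case $k=-1$ (where $\check{\chi}$ replaces $\chi(2^{-k}\cdot)$) is handled by an analogous but simpler computation.
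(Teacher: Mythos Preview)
Your proposal is correct and follows essentially the same route as the paper's proof. Both arguments rest on (a) the Fourier-support observation that $\Delta_j \Delta_{\ge N} f$ vanishes for $j$ below $N-1$ (your regime (ii)), and (b) a weighted Young-type bound $\norm{w_\sigma \Delta_j \Delta_{\ge N} f}_{L^p} \lesssim \norm{w_\sigma \Delta_j f}_{L^p}$ for the remaining blocks, after which the factor $2^{-(s-r)N}$ drops out by pulling $2^{(r-s)j}$ through the $\ell^q$-sum. The only cosmetic difference is that the paper does not split off your regime (i) (where $\Delta_k \Delta_{\ge N} f = \Delta_k f$ exactly) but simply applies the convolution bound uniformly for all $j \ge N-1$, citing \cite[Theorem~2.4 and Lemma~2.6]{mourrat_weber_17} for the uniform kernel estimate that you spell out via the scaling identity for $K_{k,N}$; your explicit derivation of that uniform bound is precisely the content of those cited results.
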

\begin{proof}
  We first observe by Remark~\ref{rem:sum_large_LP_blocks} that $\Delta_{\ge N} \Delta_j f = [2^{Nd} \cF^{-1} (1 - \check{\chi}) (2^N \cdot) * f $.
  Thus, by \cite[Theorem 2.4 and Lemma 2.6]{mourrat_weber_17}, one has
  \begin{equation*}
    \norm{w_{\sigma}(\Delta_j \Delta_{\ge N} f)}_{L^p(\R^d)}
    \lesssim \norm{w_{\sigma}(\Delta_j f)}_{L^p(\R^d)}
  \end{equation*}
  Therefore,
  \begin{align*}
    \norm{\Delta_{\ge N} f}_{B_{p, q}^{r, \sigma}(\R^d)} &= \Big(
    \sum_{j=N-1}^{\infty} 2^{jqr} \norm{w_{\sigma} (\Delta_j \Delta_{\ge N} f)}_{L^p(\R^d)}^q
    \Big)^{\frac{1}{q}}\\
    &\leq 2^{-(N-1)(s-r)} \Big(
    \sum_{j=N-1}^{\infty} 2^{jqs} \norm{w_{\sigma} (\Delta_j \Delta_{\ge N} f)}_{L^p(\R^d)}^q
    \Big)^{\frac{1}{q}} \\
    &\lesssim_{s-r, \sigma} 2^{-N(s-r)} \Big(
    \sum_{j=N-1}^{\infty} 2^{jqs} \norm{w_{\sigma} (\Delta_j f)}_{L^p(\R^d)}^q
    \Big)^{\frac{1}{q}} \\
    &\leq 2^{-N(s-r)} \norm{f}_{B_{p, q}^{s, \sigma}(\R^d)}.
  \end{align*}
  The second inequality can be proven similarly.
\end{proof}

Recall the definition of an admissible kernel and of $G_N$, see Definition~\ref{def:admissible_kernel} and Definition~\ref{def:G_N_and_H_N}.

\begin{corollary}\label{cor:estimates_of_G_N_and_H_N}
  Let $p, q \in [1, \infty]$, $r, s \in \R$ with $r \leq s$, $\sigma \in [0, \infty)$ and $N \in \N_0$.
  Let $K$ be an admissible kernel. Set $H_N \defby G_N - K$.
  Then, one has
  \begin{align*}
    \norm{G_N \conv f}_{B_{p, q}^{r + 2, \sigma}(\R^d)} &\lesssim_{p, q, r, s, \sigma} 2^{-(s-r)N}
    \norm{f}_{B_{p, q}^{s, \sigma}(\R^d)}, \\
    \norm{H_N \conv f}_{B_{p, q}^{s + 2, \sigma}(\R^d)}
    &\lesssim_{p, q, r, s, \sigma} 2^{(s-r)N} \norm{f}_{B_{p, q}^{r, \sigma}(\R^d)},
   \\
       \norm{(G_N - G_0) \conv f}_{B_{p, q}^{s+2, \sigma}(\R^d)}
       & \lesssim_{p, q, r, s, \sigma} 2^{(s-r) N} \norm{f}_{B_{p, q}^{r, \sigma}(\R^d)}.
  \end{align*}
\end{corollary}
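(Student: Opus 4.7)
The plan is to combine the frequency-support information for the three Fourier multipliers $G_N$, $G_N - G_0$, and $H_N$ with the lifting estimate (Lemma~\ref{lem:derivative_and_lifting_in_Besov}\ref{item:lifting_besov}) and the high/low frequency cutoff estimates (Lemma~\ref{lem:fourier_cutoff}). The key observation is that in each case the Fourier multiplier has its support confined either to a ``high-frequency'' region (above scale $2^N$), which matches the first direction of Lemma~\ref{lem:fourier_cutoff}, or to a ``low-frequency'' region (below scale $2^N$), which matches the second direction.

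For the first estimate, since $\check\chi \equiv 1$ on $B(0, 3/4)$ (by the partition of unity property), $\tilde\chi_N \defby 1 - \check\chi(2^{-N}\cdot)$ vanishes on $B(0, (3/4)2^N)$ and equals $1$ on $\{|\xi| \ge (4/3)2^N\}$. A direct check gives the identity $\tilde\chi_N = \tilde\chi_N \cdot (1 - \check\chi(2^{-(N-1)}\cdot))$, so that $G_N \conv f = G_N \conv \Delta_{\ge N-1} f$ by Remark~\ref{rem:sum_large_LP_blocks}. Applying Lemma~\ref{lem:derivative_and_lifting_in_Besov}\ref{item:lifting_besov} with $a = -1$ and $\tilde\chi = \tilde\chi_N$, and noting that $\|\partial^\alpha \tilde\chi_N\|_{L^\infty} = 2^{-N|\alpha|} \|\partial^\alpha \check\chi\|_{L^\infty} \le \|\partial^\alpha \check\chi\|_{L^\infty}$ so the constants are uniform in $N$, yields $\|G_N \conv g\|_{B_{p,q}^{r+2,\sigma}(\R^d)} \lesssim \|g\|_{B_{p,q}^{r,\sigma}(\R^d)}$, and taking $g = \Delta_{\ge N-1} f$ together with Lemma~\ref{lem:fourier_cutoff} finishes. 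The third estimate is symmetric: the symbol of $G_N - G_0$ equals $(\check\chi - \check\chi(2^{-N}\cdot))|2\pi\cdot|^{-2}$, compactly supported in $B(0, (4/3)2^N)$ and vanishing on $B(0, 3/4)$ (where both cutoffs equal $1$). Hence $(G_N - G_0) \conv f = (G_N - G_0) \conv \Delta_{\le N+1} f$, since $\check\chi(2^{-(N+1)} \cdot) \equiv 1$ on the symbol's support, and one applies Lemma~\ref{lem:derivative_and_lifting_in_Besov}\ref{item:lifting_besov} with $\tilde\chi = \check\chi - \check\chi(2^{-N}\cdot)$ (again uniformly bounded in $N$) followed by the second inequality of Lemma~\ref{lem:fourier_cutoff}.

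For the second estimate, decompose $H_N = (G_N - G_0) - (K - G_0)$. The first summand is controlled by the third estimate. For the second summand, Definition~\ref{def:admissible_kernel} ensures that $K - G$ is a fixed smooth, rapidly decaying function; since $G - G_0 = \cF^{-1}(\check\chi|2\pi\cdot|^{-2})$ has Fourier transform of compact support and is therefore a smooth function with good control, $K - G_0 = (K - G) + (G - G_0)$ is also smooth with good decay. Consequently the convolution operator $f \mapsto (K - G_0) \conv f$ is bounded from $B_{p,q}^{r,\sigma}(\R^d)$ into $B_{p,q}^{s+2,\sigma}(\R^d)$ for every $r \le s$, and this contribution is absorbed into the desired bound because $2^{(s-r)N} \ge 1$.

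The main obstacle is rigorously verifying the uniformity in $N$ of the constants produced when Lemma~\ref{lem:derivative_and_lifting_in_Besov}\ref{item:lifting_besov} is applied to the rescaled cutoffs $\tilde\chi_N$ and $\check\chi - \check\chi(2^{-N}\cdot)$; should the version stated there not make this dependence explicit, one falls back to a direct Littlewood--Paley argument, estimating $\Delta_j(G_N \conv f)$ on each dyadic shell via the weighted Young inequality (Theorem~\ref{thm:weighted_young}), where the relevant Bernstein-type kernel bounds on the multipliers $\chi(2^{-j}\cdot)\tilde\chi_N |2\pi\cdot|^{-2}$ are uniform in both $N$ and $j$.
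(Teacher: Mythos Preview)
Your overall strategy (lifting plus high/low frequency cutoff) matches the paper's, but the execution differs in one place and there is one small gap.

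On the first and third estimates, the paper sidesteps the uniformity-in-$N$ issue you identify as ``the main obstacle'' by a cleaner device: it fixes once and for all a smooth $\psi$ that vanishes near $0$ and equals $1$ on $\supp(1-\check\chi)$, sets $g \defby \F^{-1}[\abs{2\pi\cdot}^{-2}\psi\,\F f]$, and observes the exact identities $G_N\conv f = \Delta_{\ge N} g$ and $(G_N-G_0)\conv f = (\Delta_{\le N-1}-\Delta_{-1})g$. Lemma~\ref{lem:derivative_and_lifting_in_Besov}\ref{item:lifting_besov} is then applied a single time to the \emph{fixed} $\psi$ (yielding $\norm{g}_{B_{p,q}^{t+2,\sigma}}\lesssim\norm{f}_{B_{p,q}^{t,\sigma}}$ for every $t$), and all $N$-dependence is carried by Lemma~\ref{lem:fourier_cutoff}. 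No tracking of how constants in the lifting lemma depend on the cutoff is needed, and no fallback Littlewood--Paley argument is required. Your version, applying the lifting lemma with the $N$-dependent $\tilde\chi_N$, is correct in principle (the derivative bounds you state are right), but you then have to argue that the implicit constant in that lemma depends only on finitely many $C^k$ norms of $\tilde\chi$; the paper's trick makes this unnecessary.

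On the $(K-G_0)$ term your argument has a gap: in the decomposition $K-G_0=(K-G)+(G-G_0)$ neither piece is rapidly decaying on its own. Outside $\supp K$ one has $K-G=-G$, which for $d=2$ grows logarithmically, and $G-G_0$ has Fourier transform $\check\chi\abs{2\pi\cdot}^{-2}$, which is singular at the origin and hence does not give Schwartz decay for the inverse transform. The correct route, which the paper takes, is to invoke Lemma~\ref{lem:H_N_is_Schwarz} directly: it shows $H_0=G_0-K\in\S(\R^d)$, whence convolution by $G_0-K$ maps $B_{p,q}^{r,\sigma}$ to $B_{p,q}^{s+2,\sigma}$ with a constant independent of $N$.
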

\begin{proof}
  Suppose $\psi \in C_c^{\infty}(\R^d)$ is $0$ in a neighborhood of $0$ and is
  equal to $1$ on $\supp(1 - \check{\chi})$.
  If we set $g \defby \F^{-1}[\abs{2 \pi \cdot}^{-2} \psi \F f]$, then
  $G_N \conv f = \Delta_{\ge N} g$. Therefore, the first claimed inequality
  follows from Lemma~\ref{lem:derivative_and_lifting_in_Besov}~\ref{item:lifting_besov} and Lemma~\ref{lem:fourier_cutoff}.

  To prove the second claimed inequality, recall that one has $H_N = (G_N - G_0) + (G_0 - K)$.
  By Lemma~\ref{lem:H_N_is_Schwarz} below, $G_0 - K$ belongs to $\S(\R^d)$. Therefore,
  \begin{equation*}
    \norm{(G_0 - K) \conv f}_{B_{p, q}^{s+2, \sigma}(\R^d)} \lesssim_{p, q, r, s, \sigma} \norm{f}_{B_{p, q}^{r, \sigma}(\R^d)}.
  \end{equation*}
  On the other hand, one has $(G_N - G_0) \conv f = (\Delta_{\ge N} - \Delta_{\ge 0}) g = (\Delta_{\le N-1} - \Delta_{-1})g$.
  Hence, by Lemma~\ref{lem:derivative_and_lifting_in_Besov}~\ref{item:lifting_besov} and by Lemma~\ref{lem:fourier_cutoff}, the third inequality and thus the second follow.
\end{proof}

Finally, we recall a wavelet characterization of weighted Besov spaces.
\begin{theorem}[{\cite{meyer_1993}, \cite[Theorem 1.61]{Triebel2006}}]\label{thm:scale_function_and_wavelet}
  For any $k \in \N$, there exist $\scalefcn, \motherfcn \in C_c^k(\R)$  with the following properties.
  \begin{itemize}
    \item For $n \in \N_0$, if we denote by $V_n$ the subspace of $L^2(\R)$ spanned by
    \begin{equation*}
      \set{\scalefcn(2^n \cdot - m) \given m \in \Z},
    \end{equation*}
    then the inclusions $V_0 \subseteq V_1 \subseteq \cdots \subseteq V_n \subseteq V_{n+1} \subseteq \cdots$ hold and
    $L^2(\R)$ is the closure of $\cup_{n \in \N_0} V_n$.
    \item The set
    \begin{equation*}
      \set{\scalefcn(\cdot - m) \given m \in \Z} \cup \set{\motherfcn(\cdot - m) \given m \in \Z}
    \end{equation*}
    forms an orthonormal basis of $V_1$.
    Therefore, the set
    \begin{equation*}
      \set{\scalefcn(\cdot - m) \given m \in \Z} \cup
      \set{2^{\frac{n}{2}} \motherfcn(2^n \cdot - m) \given n \in \N_0, m \in \Z}
    \end{equation*}
    forms an orthonormal basis of $L^2(\R)$.
    \item One has $\int_{\R} x^l \motherfcn(x) \dd x = 0$ for every $l \in \{1, 2, \ldots, k\}$.
  \end{itemize}
\end{theorem}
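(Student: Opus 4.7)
The plan is to follow the classical Daubechies construction of compactly supported wavelets; I cannot sidestep the hard analytic work by citing Meyer's orthonormal basis, because Meyer's wavelets are not compactly supported and so would fail the $\scalefcn, \motherfcn \in C_c^k(\R)$ requirement. First I would reduce the problem to producing a trigonometric polynomial
\begin{equation*}
  m_0(\xi) = \tfrac12 \sum_{n=0}^{N} h_n e^{-i n \xi}
\end{equation*}
with two structural properties: (i) the \emph{quadrature mirror filter} (QMF) identity $\abs{m_0(\xi)}^2 + \abs{m_0(\xi + \pi)}^2 = 1$ and $m_0(0) = 1$, and (ii) a factorisation $m_0(\xi) = \bigl(\tfrac{1 + e^{-i\xi}}{2}\bigr)^{L} \mathcal{L}(\xi)$ for some $L \gg k$ with $\mathcal{L}$ a trigonometric polynomial. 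Property~(i) will encode orthonormality of translates, while property~(ii) will both deliver the vanishing moments of $\motherfcn$ and, crucially, the $C^k$ smoothness. The polynomial $\mathcal{L}$ is produced by the Riesz lemma applied to the Bezout-type identity
\begin{equation*}
  P_L(y) + y^L P_L(1-y) = 1, \qquad P_L(y) = \sum_{j=0}^{L-1} \binom{L-1+j}{j} y^j,
\end{equation*}
so that $\abs{\mathcal{L}(\xi)}^2 = P_L(\sin^2(\xi/2))$.

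Next I would define $\scalefcn$ by its Fourier transform
\begin{equation*}
  \widehat{\scalefcn}(\xi) \defby \prod_{j=1}^{\infty} m_0(2^{-j}\xi),
\end{equation*}
and verify that the product converges uniformly on compacts since $m_0(0)=1$ and $m_0$ is smooth. That $\scalefcn$ is compactly supported (in fact in $[0,N]$) follows from the Paley--Wiener theorem once one checks that $\widehat{\scalefcn}$ extends to an entire function of exponential type $N$ with appropriate growth bounds; this uses only that $m_0$ is a trigonometric polynomial. The orthonormality $\inp{\scalefcn(\cdot - m)}{\scalefcn(\cdot - n)} = \delta_{m,n}$ follows from the QMF identity via the standard Poisson-summation argument $\sum_{k\in\Z} \abs{\widehat{\scalefcn}(\xi + 2\pi k)}^2 \equiv 1$ (which one shows by induction combined with the Cohen condition, automatic here because $\abs{m_0} > 0$ on $[-\pi/2, \pi/2]$ when $\mathcal{L}$ comes from the Riesz lemma). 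Defining $V_n$ as in the statement, nestedness $V_n \subseteq V_{n+1}$ is built into the two-scale relation $\scalefcn(x) = \sum_n h_n \scalefcn(2x-n)$, density of $\bigcup V_n$ in $L^2(\R)$ follows from $\widehat{\scalefcn}(0) = 1$ and continuity at the origin, and $\bigcap V_n = \{0\}$ is automatic from the support estimate.

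The mother wavelet is then set by
\begin{equation*}
  \widehat{\motherfcn}(\xi) \defby e^{-i\xi/2}\, \overline{m_0(\xi/2 + \pi)}\, \widehat{\scalefcn}(\xi/2),
\end{equation*}
equivalently $\motherfcn(x) = \sum_n (-1)^n \overline{h_{1-n}}\, \scalefcn(2x - n)$. The QMF identity translates directly into the statement that $\{\scalefcn(\cdot - m)\} \cup \{\motherfcn(\cdot - m)\}$ is an orthonormal basis of $V_1$, which upon iterating the MRA yields the ONB of $L^2(\R)$ asserted in the theorem. The vanishing moments $\int x^l \motherfcn(x)\, dx = 0$ for $l = 1, \dots, k$ follow because the $L$-fold zero of $\bigl(\tfrac{1+e^{-i\xi}}{2}\bigr)^{L}$ at $\xi = \pi$ translates into an $L$-fold zero of $\widehat{\motherfcn}$ at the origin, and $L$ may be chosen at least $k+1$.

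The main obstacle — and the step that forces the parameter $L$ to be much larger than $k$ — is proving the regularity $\scalefcn, \motherfcn \in C_c^k(\R)$. One cannot just differentiate the infinite product; instead I would estimate $\abs{\widehat{\scalefcn}(\xi)} \lesssim (1 + \abs{\xi})^{-k-1}$ for $\abs{\xi}$ large, which combined with the support estimate gives $\scalefcn \in C^k$. Splitting the infinite product, the factor $\bigl(\tfrac{1+e^{-i\xi}}{2}\bigr)^L$ contributes $\prod_{j\ge 1} \bigl|\cos(2^{-j-1}\xi)\bigr|^L = \bigl|\tfrac{\sin(\xi/2)}{\xi/2}\bigr|^L$, yielding decay of order $\abs{\xi}^{-L}$, while the remaining $\prod_j \mathcal{L}(2^{-j}\xi)$ only grows like $\abs{\xi}^{\log_2 \sup \abs{\mathcal{L}}}$. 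Bounding $\sup \abs{\mathcal{L}}$ using $\abs{\mathcal{L}(\xi)}^2 = P_L(\sin^2(\xi/2)) \le P_L(1)$ and Stirling's formula gives $\log_2 \sup \abs{\mathcal{L}} \le L - \alpha L + o(L)$ for some $\alpha > 0$ (Daubechies' computation yields $\alpha \approx 1 - \log_2 3 / 2$), so that taking $L$ sufficiently large ensures net decay $\abs{\widehat{\scalefcn}(\xi)} \lesssim (1+\abs{\xi})^{-k-1}$ as required. The analogous estimate for $\motherfcn$ then follows from its defining formula.
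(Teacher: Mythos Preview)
The paper does not prove this theorem at all: it is stated with citations to Meyer's book and Triebel's monograph (Theorem~1.61 there), and then simply used. So there is no ``paper's own proof'' to compare against --- the authors treat the existence of compactly supported $C^k$ wavelets with vanishing moments as a black box from the literature.

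Your proposal is a correct and well-organised sketch of the Daubechies construction, which is indeed the standard route to compactly supported wavelets (and is what underlies the cited references). You rightly note that Meyer's original wavelets are in the Schwartz class but not compactly supported, so they do not satisfy the $C_c^k$ requirement; Daubechies' filter-bank construction is needed. The main steps --- the QMF polynomial $m_0$ with the $(\tfrac{1+e^{-i\xi}}{2})^L$ factor, the Riesz--Fej\'er spectral factorisation of $P_L$, the infinite-product definition of $\widehat{\scalefcn}$, Cohen's criterion for orthonormality, and the decay estimate $\lvert\widehat{\scalefcn}(\xi)\rvert \lesssim (1+\lvert\xi\rvert)^{-cL}$ to trade $L$ for $C^k$-regularity --- are all in the right places. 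One small point: the vanishing-moment count actually gives $\int x^l \motherfcn = 0$ for $l = 0, 1, \ldots, L-1$ (including $l=0$), so $L \geq k+1$ already suffices for the moments; it is only the regularity that forces $L$ to be a fixed multiple of $k$ larger.

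In short, you have supplied a genuine proof where the paper is content to cite one; nothing is wrong, but for the purposes of this paper the citation is all that is expected.
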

One can build an orthonormal basis of $L^2(\R^d)$ as follows.
\begin{proposition}[{\cite[Proposition 1.53]{Triebel2006}}]\label{prop:wavelet_basis}
  Let $k \in \N$ and let $\scalefcn, \motherfcn \in C_c^k(\R^d)$ be as in Theorem~\ref{thm:scale_function_and_wavelet}.
  For $n \in \N_0$, we define the sets of $d$-tuples by
  \begin{equation*}
    \fG^n \defby
    \begin{cases}
      \{ (\ff, \ldots, \ff) \} & \text{if } n = 0, \\
      \set{(G_1, \ldots, G_d) \in \{\ff, \fm\}^d \given \exists j \text{ s.t. } G_j = \fm} & \text{if } n \geq 1.
    \end{cases}
  \end{equation*}
  For $n \in \N_0$, $G \in \fG^n$, $m \in \Z^d$ and $x \in \R^d$, we set $(n-1)_+ = \max \{n-1,0\}$ and 
  \begin{equation}\label{eq:wavelet_basis}
    \wavelet_m^{n, G}(x)
    \defby 2^{\frac{d (n-1)_+}{2}} \prod_{j=1}^d \psi_{G_j}(2^{(n-1)_+}x_j - m_j).
  \end{equation}
  The set $\set{\wavelet_m^{n, G} \given n \in \N_0, G \in \fG^n, m \in \Z^d}$ forms an orthonormal
  basis of $L^2(\R^d)$.
\end{proposition}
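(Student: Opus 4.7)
The claim is the standard construction of a tensor‑product wavelet basis from the one‑dimensional multiresolution analysis supplied by Theorem~\ref{thm:scale_function_and_wavelet}. My plan is therefore to carry out this tensorisation carefully, keeping track of the index shift implicit in the definition \eqref{eq:wavelet_basis}.

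\emph{Step 1: 1D multiresolution data.} From Theorem~\ref{thm:scale_function_and_wavelet} I would first extract the following data. For $n\in\N_0$, let $V_n^{(1)}$ be the closure in $L^2(\R)$ of the span of $\{\scalefcn(2^n\cdot-m):m\in\Z\}$ and $W_n^{(1)}$ the closure of the span of $\{\motherfcn(2^n\cdot-m):m\in\Z\}$. Using the nestedness $V_n^{(1)}\subseteq V_{n+1}^{(1)}$, the dilation relation that $V_n^{(1)}=\{f(2^n\cdot):f\in V_0^{(1)}\}$, and the orthonormal basis statement for $V_1^{(1)}$ in Theorem~\ref{thm:scale_function_and_wavelet}, I would verify by rescaling that $\{2^{n/2}\scalefcn(2^n\cdot-m):m\in\Z\}$ and $\{2^{n/2}\motherfcn(2^n\cdot-m):m\in\Z\}$ are orthonormal bases of $V_n^{(1)}$ and $W_n^{(1)}$ respectively, and that $V_{n+1}^{(1)}=V_n^{(1)}\oplus W_n^{(1)}$ (orthogonal sum). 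Density of $\bigcup_n V_n^{(1)}$ in $L^2(\R)$ is given by the theorem.

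\emph{Step 2: Tensorisation.} Set $V_n\defby V_n^{(1)}\otimes\cdots\otimes V_n^{(1)}\subseteq L^2(\R^d)=L^2(\R)^{\otimes d}$. Density of $\bigcup_n V_n^{(1)}$ in $L^2(\R)$ implies density of $\bigcup_n V_n$ in $L^2(\R^d)$ (standard tensor product argument). Writing $U_n^{\ff}\defby V_n^{(1)}$ and $U_n^{\fm}\defby W_n^{(1)}$, distributivity of $\otimes$ over $\oplus$ gives the orthogonal decomposition
\[
V_{n+1}=\bigotimes_{j=1}^d\bigl(U_n^{\ff}\oplus U_n^{\fm}\bigr)=\bigoplus_{G\in\{\ff,\fm\}^d}\bigotimes_{j=1}^d U_n^{G_j}=V_n\ \oplus\bigoplus_{G\in\fG^{n+1}}\bigotimes_{j=1}^d U_n^{G_j},
\]
where $\fG^{n+1}=\{\ff,\fm\}^d\setminus\{(\ff,\ldots,\ff)\}$ as in the statement. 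Combining over $n\in\N_0$ with Step~1 yields
\[
L^2(\R^d)=V_0\oplus\bigoplus_{n\in\N_0}\ \bigoplus_{G\in\fG^{n+1}}\bigotimes_{j=1}^d U_n^{G_j}.
\]

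\emph{Step 3: Identify the basis.} Since tensor products of orthonormal bases of Hilbert spaces form an orthonormal basis of the tensor product, an ONB of $\bigotimes_{j=1}^d U_n^{G_j}$ is
\[
\Bigl\{\,\prod_{j=1}^d 2^{n/2}\psi_{G_j}(2^n x_j-m_j)\ :\ m=(m_1,\ldots,m_d)\in\Z^d\,\Bigr\}
= \bigl\{\wavelet_m^{n+1,G}\ :\ m\in\Z^d\bigr\},
\]
matching the definition \eqref{eq:wavelet_basis} after the index shift $n\leftrightarrow n+1$ (so that $(n-1)_+$ becomes $n$). The $n=0$ contribution $V_0$ has ONB $\{\prod_{j=1}^d\scalefcn(x_j-m_j):m\in\Z^d\}=\{\wavelet_m^{0,(\ff,\ldots,\ff)}:m\in\Z^d\}$, matching the definition of $\fG^0$. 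Assembling everything gives the claimed orthonormal basis.

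\emph{Main obstacle.} The only non-bookkeeping issue is the justification of the $d$‑fold orthogonal decomposition of $V_{n+1}$ and of the fact that tensor products of orthonormal bases remain orthonormal and complete in the Hilbert tensor product; this is standard but needs to be invoked carefully because one needs to know that the orthogonal complement of $V_n^{(1)}$ in $V_{n+1}^{(1)}$ is exactly $W_n^{(1)}$ (not merely contains it). This is why Step~1 must establish the orthogonal decomposition $V_{n+1}^{(1)}=V_n^{(1)}\oplus W_n^{(1)}$ rather than merely the sum being $V_{n+1}^{(1)}$. All the remaining computations (orthonormality of the rescaled families, computation of $L^2$‑norms of the products) are routine.
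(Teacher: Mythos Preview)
Your proposal is correct and follows the standard tensor-product construction of a $d$-dimensional wavelet basis from a one-dimensional multiresolution analysis. Note, however, that the paper does not give its own proof of this proposition: it is stated with a direct citation to \cite[Proposition~1.53]{Triebel2006} and no proof is supplied in the paper itself. Your argument is essentially the textbook proof one would find in the cited reference, so there is nothing substantive to compare.
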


With the expansion by the basis $\set{\wavelet_m^{n, G} \given n \in \N_0, G \in \fG^n, m \in \Z^d}$,
one can give a wavelet characterization of weighted Besov spaces.
\begin{proposition}[{\cite[Theorem 6.15]{Triebel2006}}]\label{prop:besov_wavelet}
  Let $p, q \in [1, \infty]$, $r \in \R$ and $\sigma \in (0, \infty)$.
  Suppose
  \begin{equation*}
    k > \max \Big\{r, \frac{2d}{p} + \frac{d}{2} - r\Big\}
  \end{equation*}
  and let $\set{\wavelet_m^{n, G} \given n \in \N_0, G \in \fG^n, m \in \Z^d}$ be  as in Proposition~\ref{prop:wavelet_basis}.
  Then, there exists a constant $C \in (0, \infty)$ such that for every $f \in B_{p, q}^{r, \sigma}(\R^d)$ one has
  \begin{multline*}
    C^{-1}\norm{f}_{B_{p,q}^{r, \sigma}(\R^d)} \\
    \leq
    \Big\lVert \Big(2^{n(r-d/p)} \Big(\sum_{G \in \fG^n, m \in \Z^d} w_{\sigma}(2^{-n} m)^p
    \abs{2^{nd/2} \inp{f}{\wavelet^{n, G}_m}}^p
    \Big)^{1/p} \Big)_{n \in \N_0} \Big\rVert_{l^q(\N_0)} \\
    \leq C \norm{f}_{B_{p,q}^{r, \sigma}(\R^d)}.
  \end{multline*}
\end{proposition}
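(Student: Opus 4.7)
The plan is to prove the wavelet characterization by establishing an ``almost diagonal'' relationship between the wavelets $\Psi_m^{n,G}$ and the Littlewood--Paley blocks $\Delta_j$. The two key ingredients are: (i) the vanishing moments of $\psi_\fm$ up to order $k$, so that $\widehat{\Psi^{n,G}_m}$ vanishes to order $k$ at the origin when $n\ge 1$; and (ii) the compact support of $\scalefcn$ and $\motherfcn$, so that $\Psi^{n,G}_m$ is localised in a cube of side $\sim 2^{-n}$ centred near $2^{-n}m$. I will also rely on the fact that $w_\sigma(x) \sim w_\sigma(2^{-n}m)$ on a bounded neighbourhood of the support of $\Psi^{n,G}_m$, which follows from $w_\sigma(x+y)\lesssim_\sigma w_\sigma(x)(1+|y|)^\sigma$.

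First, I would establish the almost-diagonal estimate: for some $\alpha > 0$ depending on $k$ and some $\beta > 0$ we can arrange,
\begin{equation*}
    |\inp{\Delta_j f}{2^{nd/2}\Psi^{n,G}_m}| \lesssim 2^{-|n-j|\alpha}\, \cM_\beta(w_\sigma \Delta_j f)(2^{-n}m)\, w_\sigma(2^{-n}m)^{-1},
\end{equation*}
where $\cM_\beta$ is a suitable maximal function; in the regime $j \le n$ this is obtained via $k$ integrations by parts exploiting the vanishing moments of $\Psi^{n,G}_m$, and in the regime $j \ge n$ via Fourier support considerations on $\Delta_j f$ together with the rapid decay of $\scalefcn$, $\motherfcn$ and their derivatives. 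The smoothness parameter $k$ needs to dominate $\max\{r, 2d/p + d/2 - r\}$ precisely to give enough decay on both sides of $j=n$ so that the resulting Schur-type convolution is bounded on $\ell^q$.

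Given this, the forward direction (Besov norm bounds the wavelet coefficient sequence norm) follows by writing $f = \sum_j \Delta_j f$, inserting the pointwise estimate, using that $w_\sigma(2^{-n}m)$ is essentially constant on the cube and applying the Fefferman--Stein vector-valued maximal inequality to the $\ell^p$-sum over $m$; this is the step where $\sigma > 0$ is used to absorb weight factors uniformly. The reverse direction (wavelet coefficient norm bounds the Besov norm) proceeds by expanding $f = \sum_{n,G,m} \inp{f}{\Psi^{n,G}_m}\Psi^{n,G}_m$, applying $\Delta_j$, and using the same almost-diagonal estimate; convergence and identification of limits are justified since $B^{r,\sigma}_{p,q}(\R^d) \hookrightarrow \cS'(\R^d)$ and the wavelet system is an unconditional basis.

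The main obstacle will be the weight handling in the almost-diagonal estimate. Uniform control of $w_\sigma$ across the support of $\Psi^{n,G}_m$ is fine for fixed $n,m$, but when we sum over $m\in\Z^d$ and $j\in\N_0$ we need the weight shifts and the polynomial $(1+|y|)^\sigma$ corrections introduced by $\cM_\beta$ to be absorbed by sufficient decay from the almost-diagonal kernel $2^{-|n-j|\alpha}$. This is the reason the condition on $k$ in Proposition~\ref{prop:besov_wavelet} involves $\sigma$ implicitly through the requirement $2d/p + d/2 - r < k$, which guarantees we have enough room to spare for both the maximal function and the weight. Once this is set up, the two inequalities in the statement follow by Minkowski and the $\ell^q(\ell^p)$-boundedness of the discrete convolution arising from the almost-diagonal bound.
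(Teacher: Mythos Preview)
The paper does not give its own proof of this proposition: it is quoted verbatim as \cite[Theorem~6.15]{Triebel2006} and used as a black box. There is therefore nothing in the paper to compare your argument against.

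That said, your sketch is broadly the standard route (almost-diagonal estimates between wavelets and Littlewood--Paley pieces, then Fefferman--Stein), and it is close in spirit to how Triebel proceeds via local means and atoms. One point is off, however: you write that the condition on $k$ ``involves $\sigma$ implicitly through the requirement $2d/p + d/2 - r < k$'', but the threshold on $k$ in the statement is independent of $\sigma$. For the polynomial weights $w_\sigma$ used here the weight is handled separately from the smoothness budget: one uses $w_\sigma(x)\sim w_\sigma(2^{-n}m)$ on the (uniformly bounded) support of $\Psi^{n,G}_m$ together with the submultiplicativity $w_\sigma(x+y)\lesssim w_\sigma(x)(1+|y|)^{|\sigma|}$, and the resulting polynomial corrections are absorbed by the rapid decay of the compactly supported wavelets, not by spending extra regularity $k$. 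So the weight does not tighten the condition on $k$; your argument should not couple them. Apart from this, the outline is reasonable, though as written it remains a sketch rather than a proof (the precise form of the maximal function, the exact exponent $\alpha$ obtained from $k$ moments versus $C^k$-smoothness, and the convergence of the wavelet expansion in $\mathcal{S}'$ all need to be pinned down).
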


\subsection{Estimates of constants of functional inequalities on bounded domains}
\label{subsec:estimates_of_constants}

In Definition~\ref{def:interpolation_constant} we have introduced the smallest constant that appears in interpolation inequalities. In this section we introduce also other constants that appear in functional inequalities and study their behaviour (also under scaling of the underlying domain).

\begin{definition}\label{def:functional_inequalities_constants}
  Let $U$ be a bounded domain and $p,p_1,p_2\in [1,\infty]$,  $r_1,r_2,s\in [0,\infty)$, $r \in (0,\infty)$ and $\delta \in (0,r)$.
  We set
  \begin{align*}
    &C^U_{\operatorname{Embed}}[W^{r_1}_{p_1} \to W^{r_2}_{p_2}] \defby
    \sup_{f \in W^{r_1}_{p_1}(U) \setminus \{0\}} \frac{\norm{f}_{W^{r_2}_{p_2}(U)}}
    {\norm{f}_{W^{r_1}_{p_1}(U)}}, \\
    &C^U_{\operatorname{Prod}}[W_{2p}^{r} \to W_p^{r - \delta}]
    \defby \sup_{f \in W^r_{2p}(U) \setminus \{0\}}\frac{\norm{f^2}_{W^{r-\delta}_{p}(U)}}
    {\norm{f}_{W^r_{2p}(U)}^2}.
  \end{align*}
Similarly, we set $C^U_{\operatorname{Embed}}[W^{r_1}_{p_1, 0} \to W^{r_2}_{p_2, 0}], \ldots$ by
  replacing the function spaces  ``$W_p^r$'' to those with zero boundary conditions ``$W_{p,0}^r$''.
  If $U$ is a bounded Lipschitz domain, for a universal extension operator $\iota$ from $U$ to $\R^d$ as in Lemma~\ref{lem:extension_and_trace_sobolev}, we set
  \begin{align*}
    &C^U_{\operatorname{Ext}}[W^{r_1}_{p_1}, W^{r_2}_{p_2}] \defby
    \inf \{ \norm{\iota}_{W^{r_1}_{p_1}(U) \rightarrow W^{r_1}_{p_1}(\R^d) }
    + \norm{\iota}_{W^{r_2}_{p_2}(U)\rightarrow W^{r_2}_{p_2}(\R^d) } \\
    &\hspace{5cm}\vert \iota \mbox{ is an universal extension operator}\}, \\
    & C^{\partial U}_{\operatorname{R}}(W^r_p) \defby \inf \{ \norm{\cR}_{W^{r }_p(\partial U) \to W^{r+\frac{1}{p}}_p(U)} \ \vert \
    \cR \mbox{ is a right inverse of } \cT_{W_p^{r+\frac1p}(U)}  \},  \\
    &C^{\partial U}_{\operatorname{Prod}}[W_{2p}^{r} \to W_p^{r - \delta}]
    \defby \sup_{f \in W^r_{2p}(\partial U) \setminus \{0\}}\frac{\norm{f^2}_{W^{r-\delta}_{p}(\partial U)}}
    {\norm{f}_{W^r_{2p}(\partial U)}^2}, \\
    &C^U_{\operatorname{Mult}}[W_p^r] \defby
    \sup_{f \in W_p^r(U) \setminus \{0\}} \frac{\norm{\indic_U f}_{W_p^r(\R^d)}}{\norm{f}_{W^r_p(U)}},
  \end{align*}
  and $C^U_{\operatorname{Ext}}[W^{r}_{p}] \defby
  C^U_{\operatorname{Ext}}[W^{r}_{p}, W^{r}_{p}]$.
\end{definition}

\begin{lemma}\label{lem:scaling_of_embed_const}
  Let $U$ be a domain.
  \begin{enumerate}
  \item Let $p_1, p_2 \in (1, \infty)$ with $p_1 \leq p_2$ and
  $r_1, r_2 \in [0, \infty)$ with $r_2 = r_1 - d(\frac{1}{p_1} - \frac{1}{p_2})$. Then, one has
  $C^U_{\operatorname{Embed}}[W^{r_1}_{p_1, 0} \to W^{r_2}_{p_2, 0}] \lesssim_{p_1, p_2, r_1} 1$.
  If $U$ is a bounded Lipschitz domain, one has
  $ C^U_{\operatorname{Embed}}[W^{r_1}_{p_1} \to W^{r_2}_{p_2}] \lesssim_{p_1, p_2, r_1}
    C^U_{\operatorname{Ext}}[W^{r_1}_{p_1}]$.
  \item Let $s \in (0, 1)$. Then, one has
  $C^U_{\operatorname{IP}}[H^s_0] \lesssim_s 1$.
  If $U$ is a bounded Lipschitz domain, one has
  $C^U_{\operatorname{IP}}[H^s] \lesssim_s C^U_{\operatorname{Ext}}[L^2, H^1]$.
  \end{enumerate}
\end{lemma}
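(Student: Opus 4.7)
The strategy for both (a) and (b) is the same: reduce the claimed inequality on $U$ to the corresponding inequality on $\mathbb{R}^d$ by either extending by zero (in the spaces with zero boundary conditions) or by applying a universal extension operator (in the Lipschitz case), and then exploit the fact that restriction to $U$ cannot increase the Sobolev--Slobodeckij norm.

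\textbf{Part (a).} For the zero boundary condition case, recall from Definition~\ref{def:fractional_Sobolev}~\ref{item:sobolev_space} that the norm on $W^r_{p,0}(U)$ is by construction the restriction to $C_c^{\infty}(U)$ of the norm $\|\cdot\|_{W^r_p(\mathbb{R}^d)}$ applied to the zero extension. Hence for $f\in W^{r_1}_{p_1,0}(U)$, viewing $f$ as a function on $\mathbb{R}^d$ by zero extension, the standard Sobolev embedding on $\mathbb{R}^d$ (available since $r_2 = r_1 - d(1/p_1 - 1/p_2)$) gives
\[
\|f\|_{W^{r_2}_{p_2,0}(U)} = \|f\|_{W^{r_2}_{p_2}(\mathbb{R}^d)} \lesssim_{p_1,p_2,r_1} \|f\|_{W^{r_1}_{p_1}(\mathbb{R}^d)} = \|f\|_{W^{r_1}_{p_1,0}(U)} .
\]
For the Lipschitz domain case, let $\iota$ be a universal extension operator from $U$ to $\mathbb{R}^d$. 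For $f\in W^{r_1}_{p_1}(U)$ one has $\|f\|_{W^{r_2}_{p_2}(U)}\le \|\iota f\|_{W^{r_2}_{p_2}(\mathbb{R}^d)}$ (since restriction of Sobolev--Slobodeckij norms is non-increasing: the $L^p$-integral and the Gagliardo double integral are over smaller domains). Applying Sobolev embedding on $\mathbb{R}^d$ and then the operator bound for $\iota$ yields
\[
\|f\|_{W^{r_2}_{p_2}(U)} \lesssim_{p_1,p_2,r_1} \|\iota f\|_{W^{r_1}_{p_1}(\mathbb{R}^d)} \le \|\iota\|_{W^{r_1}_{p_1}(U)\to W^{r_1}_{p_1}(\mathbb{R}^d)} \|f\|_{W^{r_1}_{p_1}(U)} .
\]
Taking the infimum over all universal extension operators $\iota$ yields the asserted bound by $C^U_{\operatorname{Ext}}[W^{r_1}_{p_1}]$.

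\textbf{Part (b).} The argument is analogous, replacing Sobolev embedding on $\mathbb{R}^d$ by the interpolation inequality
\[
\|g\|_{H^s(\mathbb{R}^d)} \lesssim_s \|g\|_{L^2(\mathbb{R}^d)}^{1-s} \|g\|_{H^1(\mathbb{R}^d)}^s , \qquad g \in H^1(\mathbb{R}^d),
\]
which is standard and may be proven using the Fourier-transform characterisation of $H^s$ together with Hölder's inequality. For $f\in H^1_0(U)$, extension by zero preserves the $L^2$ and $H^1$ norms (again by definition), so the above inequality on $\mathbb{R}^d$ immediately gives $\|f\|_{H^s_0(U)} \lesssim_s \|f\|_{L^2(U)}^{1-s}\|f\|_{H^1_0(U)}^s$, proving $C^U_{\operatorname{IP}}[H^s_0]\lesssim_s 1$. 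For the Lipschitz case, apply the interpolation inequality to $\iota f$ and use $\|f\|_{H^s(U)} \le \|\iota f\|_{H^s(\mathbb{R}^d)}$ (non-increasing restriction) to obtain
\[
\|f\|_{H^s(U)} \lesssim_s \|\iota\|_{L^2(U)\to L^2(\mathbb{R}^d)}^{1-s} \|\iota\|_{H^1(U)\to H^1(\mathbb{R}^d)}^s \|f\|_{L^2(U)}^{1-s}\|f\|_{H^1(U)}^s .
\]
Since $a^{1-s}b^s \le a + b$ for $a,b\ge 0$, the prefactor is bounded by $\|\iota\|_{L^2\to L^2} + \|\iota\|_{H^1\to H^1}$; taking the infimum over $\iota$ gives the bound by $C^U_{\operatorname{Ext}}[L^2, H^1]$.

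No single step is a serious obstacle; the only small subtlety is the systematic use of ``restriction does not increase Sobolev--Slobodeckij norms'', which is immediate from the explicit $L^p$ and Gagliardo integrals but needs to be invoked explicitly in both parts.
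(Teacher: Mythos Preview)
Your proof is correct and follows essentially the same approach as the paper: reduce to the corresponding inequalities on $\mathbb{R}^d$ via extension (by zero or via a universal extension operator), apply the Sobolev embedding (respectively the interpolation inequality) on $\mathbb{R}^d$, and use that restriction does not increase the Sobolev--Slobodeckij norm. The paper only spells out the Lipschitz case and leaves the rest implicit; you additionally make the zero-boundary case and the Young-type estimate $a^{1-s}b^s\le a+b$ explicit, which is exactly what is needed to pass from the product of operator norms to $C^U_{\operatorname{Ext}}[L^2,H^1]$.
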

\begin{proof}
  We only prove the claim for a bounded Lipschitz domain $U$.

  (a) Let $\iota$ be a universal extension operator from $U$ to $\R^d$. Then, by using
  the Sobolev embedding in $\R^d$ \cite[Theorem 8.12.6]{Bhattacharyya_2012} for the second inequality,
  \begin{equation*}
    \norm{f}_{W^{r_2}_{p_2}(U)} \leq \norm{\iota(f)}_{W^{r_2}_{p_2}(\R^d)}
    \lesssim_{p_1, p_2, r_1} \norm{\iota(f)}_{W^{r_1}_{p_1}(\R^d)}
    \leq \norm{\iota}_{W_{p_1}^{r_1}(U) \rightarrow W_{p_1}^{r_1}(\R^d)} \norm{f}_{W^{r_1}_{p_1}(U)},
  \end{equation*}
and thus $\norm{f}_{W^{r_2}_{p_2}(U)} \lesssim_{p_1, p_2, r_1}  C^U_{\operatorname{Extend}}[W^{r_1}_{p_1}] \norm{f}_{W^{r_1}_{p_1}(U)}$.

  (b) We can prove the claim similarly by using the inequality
  \cite[Proposition 2.22]{Bahouri2011}
  \begin{equation*}
    \norm{f}_{H^s(\R^d)} \lesssim_s \norm{f}_{L^2(\R^d)}^{1-s} \norm{f}_{H^1(\R^d)}^s. \qedhere
  \end{equation*}
\end{proof}

\begin{lemma}\label{lem:scaling_of_prod_const}
  Let $U$ be a bounded domain, $p \in [1, \infty)$, $r \in (0, 1)$ and $\epsilon \in (0, r)$.
  Then we have
  \begin{equation*}
    C^U_{\operatorname{Prod}}[W_{2p, 0}^{r} \to W_{p, 0}^{r- \epsilon}]
    \lesssim_{p, \epsilon} 1,
  \end{equation*}
  and if $U$ is a bounded Lipschitz domain
  \begin{equation*}
    C^U_{\operatorname{Prod}}[W_{2p}^{r} \to W_p^{r - \epsilon}]
    \lesssim_{p, \epsilon} 1,
  \end{equation*}
  \begin{equation*}
    C^{\partial U}_{\operatorname{Prod}}[W_{2p}^{r} \to W_p^{r - \epsilon}]
    \lesssim_{p, \epsilon} 1 + \sup_{x \in \partial U}
    \Big(\int_{\partial U} \frac{\dd y}{\abs{x-y}^{d - 1 - 2 p \epsilon}}\Big)^{\frac{1}{2p}}.
  \end{equation*}
\end{lemma}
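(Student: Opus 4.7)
The proof of all three inequalities rests on the factorization $|f(x)^2 - f(y)^2|^p = |f(x)+f(y)|^p \cdot |f(x)-f(y)|^p$ together with a Cauchy--Schwarz split of the Gagliardo kernel. Writing $k=d$ for the interior cases and $k=d-1$ for the boundary case, and factoring $|x-y|^{k+p(r-\epsilon)} = |x-y|^{(k+2pr)/2} \cdot |x-y|^{(k-2p\epsilon)/2}$, Cauchy--Schwarz yields
\begin{equation*}
[f^2]_{W^{r-\epsilon}_p(V)}^p \le [f]_{W^r_{2p}(V)}^p \cdot \Big( \int_V \int_V \frac{|f(x)+f(y)|^{2p}}{|x-y|^{k - 2p\epsilon}}\,\dd\mu(x)\,\dd\mu(y)\Big)^{1/2},
\end{equation*}
where $V$ is $U$, $\R^d$, or $\partial U$ depending on the case and $\mu$ is Lebesgue or surface measure accordingly. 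Since $\|f^2\|_{L^p(V)} = \|f\|_{L^{2p}(V)}^2$, the task reduces to bounding the second factor by a constant times $\|f\|_{L^{2p}(V)}^p$, and the full claim follows from the definition $\|f^2\|_{W^{r-\epsilon}_p(V)} = \|f^2\|_{L^p(V)} + [f^2]_{W^{r-\epsilon}_p(V)}$.

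For the $W_p^r(U)$ case with $U$ bounded, the symmetrization $|f(x)+f(y)|^{2p} \lesssim_p |f(x)|^{2p} + |f(y)|^{2p}$ reduces the bound to $\int_U |f(x)|^{2p} \int_U |x-y|^{-(d-2p\epsilon)}\,\dd y\,\dd x$; since $U$ is contained in a ball of radius $\operatorname{diam}(U)$, the inner integral is at most $C_d(2p\epsilon)^{-1}(\operatorname{diam}\,U)^{2p\epsilon}$ uniformly in $x$, and the claim follows at once.

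For the $W_{p,0}^r(U)$ case, I extend by zero to $\tilde f \in W^r_{2p}(\R^d)$; since $\widetilde{f^2} = \tilde f^2$, the quantity to bound is $[\tilde f^2]_{W^{r-\epsilon}_p(\R^d)}$. The Cauchy--Schwarz estimate above applies with $V=\R^d$, but now the second factor integrates $x,y$ over all of $\R^d$, and the bound $\int_{\R^d}|x-y|^{-(d-2p\epsilon)}\,\dd y=\infty$ rules out the previous shortcut. I therefore split the $(x,y)$-integral at $|x-y|=1$: on $|x-y|\le 1$ the earlier argument applies since $\int_{|z|\le 1}|z|^{-(d-2p\epsilon)}\,\dd z<\infty$; on $|x-y|>1$, I discard the Cauchy--Schwarz factorization altogether and bound $|\tilde f(x)^2-\tilde f(y)^2|^p \lesssim_p |\tilde f(x)|^{2p}+|\tilde f(y)|^{2p}$ directly, then use $\int_{|z|>1}|z|^{-(d+p(r-\epsilon))}\,\dd z<\infty$ (which holds because $p(r-\epsilon)>0$). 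This is the only nontrivial step in the proof.

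For the boundary case the same Cauchy--Schwarz identity is applied with $k=d-1$ on $\partial U\times\partial U$. The symmetrization trick bounds the second factor by $\sup_{x\in\partial U} \int_{\partial U} |x-y|^{-(d-1-2p\epsilon)}\,\dd S(y) \cdot \|f\|_{L^{2p}(\partial U)}^{2p}$, which after the $1/(2p)$-power is exactly the supremum appearing in the claim. All remaining steps are direct computations; the main obstacle is the $\R^d$-integration in the $W_{p,0}^r$ case, which is dealt with by the splitting at $|x-y|=1$ described above.
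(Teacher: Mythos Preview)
Your argument is essentially the paper's: factor $|f(x)^2-f(y)^2|^p$, split the Gagliardo kernel, apply Cauchy--Schwarz (the paper calls it H\"older), and handle the far field $|x-y|>1$ by the crude bound $|f(x)^2-f(y)^2|^p\lesssim_p|f(x)|^{2p}+|f(y)|^{2p}$ together with $\int_{|z|>1}|z|^{-(d+p(r-\epsilon))}\,\dd z<\infty$. The paper performs the $|x-y|\lessgtr 1$ split \emph{before} Cauchy--Schwarz rather than after, but this is cosmetic.

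There is, however, a genuine slip in your treatment of the $W_p^r(U)$ case. Your bound $\int_U |x-y|^{-(d-2p\epsilon)}\,\dd y \le C_d(2p\epsilon)^{-1}(\operatorname{diam}U)^{2p\epsilon}$ depends on $\operatorname{diam}U$, but the statement asserts $C^U_{\operatorname{Prod}}[W_{2p}^r\to W_p^{r-\epsilon}]\lesssim_{p,\epsilon}1$, i.e.\ a constant independent of $U$. This matters downstream: the lemma is invoked to prove $\sup_{L\ge 1}C^{U_L}_{\operatorname{Prod}}<\infty$, which your bound would turn into $L^{2p\epsilon}\to\infty$. The fix is already in your hands: simply run the same $|x-y|\lessgtr 1$ split on $U\times U$ that you used for $\R^d\times\R^d$. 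Both pieces are bounded by the corresponding $\R^d$ integrals, which are finite and depend only on $d,p,\epsilon,r$. In fact the $W_p^r(U)$ case is then strictly easier than the $W_{p,0}^r(\R^d)$ case, not harder, so your separate diameter argument is unnecessary.
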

\begin{proof}
  We only prove the first inequality.
  Since
  \begin{equation*}
    \int_{\abs{x} \geq 1} \frac{1}{\abs{x}^{d + p (r-\epsilon)}} \dd x < \infty,
  \end{equation*}
  one has
  \begin{equation*}
    \norm{f^2}_{W_{p}^{r- \epsilon}(\R^d)} \lesssim_{p, r} \norm{f^2}_{L^{p}(\R^d)} +
    \int_{\abs{x-y} \leq 1} \frac{\abs{f(x)^2 - f(y)^2}^p}{\abs{x - y}^{d + p(r - \epsilon)}} \dd x \dd y.
  \end{equation*}
Observe that $\norm{f^2}_{L^{p}(\R^d)} \leq \norm{f}^2_{L^{2p}(\R^d)}$.
Furthermore, observe that
  \begin{equation*}
    \frac{\abs{f(x)^2 - f(y)^2}^p}{\abs{x-y}^{d + p(r - \epsilon)}}
    = \Big( \frac{\abs{f(x) + f(y)}}{\abs{x-y}^{\frac{d}{2p}  - \epsilon}} \Big)^p
     \Big( \frac{\abs{f(x) - f(y)}}{\abs{x-y}^{\frac{d}{2p} + r}} \Big)^p,
  \end{equation*}
so that, by  H\"older's inequality
  \begin{equation*}
    \int_{\abs{x-y} \leq 1} \frac{\abs{f(x)^2 - f(y)^2}^p}{\abs{x - y}^{d + p (r - \epsilon)}} \dd x \dd y
    \leq \norm{f}_{W^r_{2p}(\R^d)} \Big(
    \int_{\abs{x - y} \leq 1} \frac{\abs{f(x) + f(y)}^{2p}}{\abs{x-y}^{d - 2p \epsilon}} \dd x \dd y \Big)^{\frac{1}{2p}}.
  \end{equation*}
  \begin{calc}
  \begin{align*}
   \int_{\abs{x - y} \leq 1} \frac{\abs{f(x) + f(y)}^{2p}}{\abs{x-y}^{d - 2p \epsilon}} \dd x \dd y
&    \le   2 \int_{\abs{x - y} \leq 1} \frac{\abs{f(x) }^{2p}}{\abs{x-y}^{d - 2p \epsilon}} \dd x \dd y \\
&    \le    2 \|f\|_{L^{2p}}^{2p} \int_{\abs{z} \leq 1} \frac{1}{\abs{z}^{d - 2p \epsilon}} \dd z.
  \end{align*}
  \end{calc}
Now the latter integral can be estimated by $\|f\|_{L^{2p}}$ times the following integral over the unit ball that can be estimated as follows
  \begin{equation*}
    \int_{\abs{x} \leq 1} \frac{\dd x}{\abs{x}^{d - 2p \epsilon}}
    \lesssim \int_0^1 \frac{dr}{r^{1 - 2p \epsilon}} = \frac{1}{2 p \epsilon}. \qedhere
  \end{equation*}
\end{proof}

\begin{lemma}[{\cite[Proposition 5.3]{Triebel2002}}]\label{lem:mult_by_Lipschitz_domain}
  Let $p \in (1, \infty)$, $r \in (0, \frac{1}{p})$ and let $U$ be a bounded Lipschitz domain.
  Then, the map
  \begin{equation*}
    B_{p, p}^r(\R^d) \rightarrow B_{p, p}^r(\R^d), \quad f \mapsto f \indic_U
  \end{equation*}
  is a bounded linear operator.
\end{lemma}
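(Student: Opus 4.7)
The plan is to use the intrinsic Sobolev--Slobodeckij characterization of $B_{p,p}^r(\R^d)$, which by Lemma~\ref{lem:equivalence_sob_slobo_and_besov} is equivalent, for $r\in(0,1)$, to the norm
\begin{equation*}
\|g\|_{B_{p,p}^r(\R^d)}^p \;\sim\; \|g\|_{L^p(\R^d)}^p + \int_{\R^d}\int_{\R^d} \frac{|g(x)-g(y)|^p}{|x-y|^{d+rp}}\,dx\,dy.
\end{equation*}
Since our $r$ lies in $(0,1/p)\subseteq(0,1)$, this applies to both $f$ and $g\defby \mathbbm{1}_U f$. The $L^p$-part of the norm of $g$ is trivially bounded by $\|f\|_{L^p}$, so the task reduces to controlling the double integral.

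Using the pointwise bound
\begin{equation*}
|\mathbbm{1}_U(x)f(x)-\mathbbm{1}_U(y)f(y)| \;\leq\; \mathbbm{1}_U(x)\,|f(x)-f(y)| \;+\; |f(y)|\cdot|\mathbbm{1}_U(x)-\mathbbm{1}_U(y)|,
\end{equation*}
and the standard inequality $(a+b)^p\lesssim_p a^p+b^p$, the double integral splits into two pieces. The first piece is immediately bounded by a constant multiple of $\|f\|_{B_{p,p}^r(\R^d)}^p$. The second piece equals
\begin{equation*}
I \;\defby\; \int_{\R^d} |f(y)|^p \left(\int_{\R^d} \frac{|\mathbbm{1}_U(x)-\mathbbm{1}_U(y)|^p}{|x-y|^{d+rp}}\,dx\right) dy.
\end{equation*}
Since $|\mathbbm{1}_U(x)-\mathbbm{1}_U(y)|\in\{0,1\}$ and equals $1$ only if the segment from $x$ to $y$ crosses $\partial U$, one has $|x-y|\geq d(y,\partial U)$ whenever the integrand is nonzero. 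Consequently the inner integral is dominated by $\int_{|z|\geq d(y,\partial U)}|z|^{-d-rp}\,dz \lesssim d(y,\partial U)^{-rp}$, so
\begin{equation*}
I \;\lesssim\; \int_{\R^d} \frac{|f(y)|^p}{d(y,\partial U)^{rp}}\,dy.
\end{equation*}

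The remaining and only nontrivial step is the Hardy-type inequality in Besov spaces for a bounded Lipschitz domain $U$: for $r\in(0,1/p)$,
\begin{equation*}
\int_{\R^d} \frac{|f(y)|^p}{d(y,\partial U)^{rp}}\,dy \;\lesssim_{U,p,r}\; \|f\|_{B_{p,p}^r(\R^d)}^p.
\end{equation*}
This is the main obstacle and the place where the Lipschitz regularity of $\partial U$ and the strict inequality $r<1/p$ enter. It is proved by covering $\partial U$ with finitely many charts that locally straighten the boundary, reducing to the half-space case via a Lipschitz change of variables (which leaves $B_{p,p}^r$ invariant for $r<1$), and then applying the one-dimensional Hardy inequality together with a Fubini argument in the transverse direction; the condition $r<1/p$ is precisely what guarantees the integrability of $t^{-rp}$ near the boundary and is sharp since the trace of $f$ on $\partial U$ is generally defined only for $r>1/p$.

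Combining these bounds yields $\|\mathbbm{1}_U f\|_{B_{p,p}^r(\R^d)}\lesssim \|f\|_{B_{p,p}^r(\R^d)}$, as required.
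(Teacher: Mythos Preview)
The paper does not give its own proof of this lemma: it is simply quoted as \cite[Proposition~5.3]{Triebel2002}. So there is nothing in the paper to compare against, and your task was really to supply an independent argument.

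Your outline is a correct and standard direct proof via the Sobolev--Slobodeckij norm. The decomposition of $|\mathbbm{1}_U(x)f(x)-\mathbbm{1}_U(y)f(y)|$ is fine, and the estimate of the inner integral by $d(y,\partial U)^{-rp}$ is exactly right (the segment from $y$ to $x$ must meet $\partial U$ whenever the indicator values differ). The substance is then entirely in the fractional Hardy inequality
\[
\int_{\R^d}\frac{|f(y)|^p}{d(y,\partial U)^{rp}}\,dy \;\lesssim_{U,p,r}\; \|f\|_{B_{p,p}^r(\R^d)}^p,\qquad r\in(0,\tfrac1p),
\]
which you state and sketch but do not carry out. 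This is a known result (see e.g.\ Triebel's books, or Dyda for the half-space case), and the strategy you indicate---localize, flatten by a bi-Lipschitz chart (harmless for $B_{p,p}^r$ with $r<1$), reduce to the half-space, and use the one-dimensional fractional Hardy inequality in the normal variable---is the standard one. One small point worth making explicit is that away from $\partial U$, say on $\{d(\cdot,\partial U)\ge 1\}$, the weight is bounded and that portion of the integral is controlled by $\|f\|_{L^p}^p$ directly; only a tubular neighbourhood of $\partial U$ requires the Hardy machinery.

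In short: the proposal is correct as a proof sketch, with the Hardy step being the only part that is cited rather than proved; since the paper itself only cites Triebel, your write-up actually contains more detail than the paper does.
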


\begin{lemma}\label{lem:scaling_of_iota}
  Let $U$ be a bounded Lipschitz domain.
  Then, we have
  \begin{align}
\label{eqn:estimate_scaling_Ext}
    \sup_{L \geq 1} C^{U_L}_{\operatorname{Ext}}[W^{r_1}_{p_1}, W^{r_2}_{p_2}] < \infty,
    & \quad p_1, p_2 \in [1, \infty], r_1, r_2 \in [0, \infty), \\
\label{eqn:estimate_scaling_cT}
    \sup_{L \geq 1} \norm{\cT_{W^{r}_{p}(U_L)}} < \infty,
    & \quad p \in (1, \infty), r \in (\tfrac1p, 1 + \tfrac1p), \\
\label{eqn:estimate_scaling_cR}
    \sup_{L \geq 1}  C^{\partial U_L}_{\operatorname{R}}[W^{r}_{p}]  < \infty,
    & \quad p \in (1, \infty), r \in (0, 1), \\
\label{eqn:estimate_scaling_Embed}
     \sup_{L \geq 1} C^{U_L}_{\operatorname{Embed}}[W^{r_1}_{p_1} \to W^{r_2}_{p_2}]
 < \infty,
    & \quad
    \begin{cases}
    p_1, p_2 \in (1, \infty), r_1, r_2 \in [0, \infty), & \\   p_1 \leq p_2,  r_2 = r_1 - d(\frac{1}{p_1} - \frac{1}{p_2}),
    \end{cases} \\
    \label{eqn:estimate_scaling_Interpolation}
     \sup_{L \geq 1}  C^{U_L}_{\operatorname{IP}}[H^s] < \infty
     & \quad s\in (0,1), \\
\label{eqn:estimate_scaling_Mult}
\sup_{L \geq 1} C^{U_L}_{\operatorname{Mult}}[W_p^r] < \infty,
& \quad p \in (1,\infty), r\in (0,\tfrac1p), \\
\label{eqn:estimate_scaling_Prod_dom}
\sup_{L \ge 1} C^{U_L}_{\operatorname{Prod}}[W_{2p}^{r} \to W_p^{r - \epsilon}] <\infty
 & \quad p \in [1,\infty), r\in (0,1), \epsilon\in (0,r), \\
\label{eqn:estimate_scaling_Prod_boundary_dom}
\sup_{L\ge 1} L^{-\epsilon} C^{\partial U_L}_{\operatorname{Prod}}[W_{2p}^{r} \to W_p^{r - \epsilon}] <\infty,
 & \quad p \in [1,\infty), r\in (0,1), \epsilon\in (0,r).
  \end{align}
If $U$ is a bounded domain (that is not necessarily Lipschitz), then \eqref{eqn:estimate_scaling_Embed} and \eqref{eqn:estimate_scaling_Prod_dom} hold by replacing the occurrences of the form ``$W^a_b$'' by ``$W^a_{b,0}$''.
\end{lemma}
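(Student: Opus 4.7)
The unifying idea is a scaling argument: for a function $f$ on $U_L = LU$, set $\tilde{f}(y) \defby f(Ly)$ for $y \in U$, which defines an isometry (up to explicit powers of $L$) between function spaces on $U_L$ and those on $U$. A direct change of variables gives, for $k \in \N_0$ and $s \in (0, 1)$,
\begin{align*}
  \|\partial^{k} \tilde f\|_{L^p(U)}^p = L^{pk-d} \|\partial^{k} f\|_{L^p(U_L)}^p, \quad [\tilde f]_{W^s_p(U)}^p = L^{ps - d} [f]_{W^s_p(U_L)}^p,
\end{align*}
and the analogous boundary identities with $d$ replaced by $d-1$. By these rules, each of the estimates \eqref{eqn:estimate_scaling_Ext}--\eqref{eqn:estimate_scaling_Prod_boundary_dom} reduces to the corresponding statement on the fixed domain $U$, which is finite by Lemma~\ref{lem:extension_and_trace_sobolev}, Lemma~\ref{lem:scaling_of_embed_const}, and Lemma~\ref{lem:scaling_of_prod_const}.

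For the extension bound \eqref{eqn:estimate_scaling_Ext}, given a universal extension operator $\iota$ on $U$, the plan is to pull back to $U_L$ by $\iota_L(f)(x) \defby \iota(\tilde f)(x/L)$. Since $\iota$ is universal, both $\|\iota\|_{L^p \to L^p}$ and $\|\iota\|_{W^r_p \to W^r_p}$ are finite; combining the scaling identities with these \emph{separate} bounds (rather than only the $W^r_p$ bound) and using $L^{d-pr} \leq L^d$ for $L \geq 1$, one obtains $\|\iota_L(f)\|_{W^r_p(\R^d)}^p \leq C \|f\|_{W^r_p(U_L)}^p$ with $C$ independent of $L$. Analogous pullback arguments handle the trace and right-inverse bounds \eqref{eqn:estimate_scaling_cT}, \eqref{eqn:estimate_scaling_cR}; for Sobolev embeddings \eqref{eqn:estimate_scaling_Embed} and interpolation \eqref{eqn:estimate_scaling_Interpolation}, the critical scaling relation $r_2 = r_1 - d(1/p_1 - 1/p_2)$ makes the scaling factors match perfectly, leaving only the (finite) constants of Lemma~\ref{lem:scaling_of_embed_const}. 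For \eqref{eqn:estimate_scaling_Mult}, Lemma~\ref{lem:mult_by_Lipschitz_domain} supplies the bound on $U$, and since $\indic_{U_L} = \indic_U(\cdot/L)$ and $r < 1/p$, the scaling identities transfer this bound to $U_L$ uniformly.

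For the product estimates \eqref{eqn:estimate_scaling_Prod_dom} and \eqref{eqn:estimate_scaling_Prod_boundary_dom}, the pointwise Hölder decomposition used in the proof of Lemma~\ref{lem:scaling_of_prod_const} transfers verbatim to $U_L$; the boundary version picks up the explicit factor $L^\epsilon$ from the scaling
\begin{align*}
  \int_{\partial U_L} \frac{\dd S(y)}{|x - y|^{d-1-2p\epsilon}} = L^{2p\epsilon} \int_{\partial U} \frac{\dd S(y)}{|x-y|^{d-1-2p\epsilon}},
\end{align*}
which accounts for the $L^{-\epsilon}$ prefactor in \eqref{eqn:estimate_scaling_Prod_boundary_dom}. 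Finally, for the non-Lipschitz case, one uses that extension by zero from $W^r_{p,0}(U_L)$ to $W^r_p(\R^d)$ requires no extension operator, so \eqref{eqn:estimate_scaling_Embed} and \eqref{eqn:estimate_scaling_Prod_dom} in the $W^a_{b,0}$ form follow from the Euclidean-space versions of the Sobolev embedding and product estimate with no $U$-dependence at all. The main technical obstacle lies in \eqref{eqn:estimate_scaling_Ext}--\eqref{eqn:estimate_scaling_cR}: because the $W^r_p$ norm mixes terms that scale as $L^{d - pk}$ for different $k$, a single ``operator norm'' estimate on $U$ is insufficient, and one must carefully combine the separate $L^p$ and $W^r_p$ bounds so that each scaling term is absorbed into a term of the same (or higher) order on the right-hand side.
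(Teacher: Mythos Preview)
Your approach is essentially the same as the paper's: define $\iota_L(f) \defby \iota(f(L\cdot))(L^{-1}\cdot)$ and check the scaling, then reduce \eqref{eqn:estimate_scaling_cT}--\eqref{eqn:estimate_scaling_cR} to the analogous pullback, \eqref{eqn:estimate_scaling_Embed}--\eqref{eqn:estimate_scaling_Interpolation} to Lemma~\ref{lem:scaling_of_embed_const} via \eqref{eqn:estimate_scaling_Ext}, \eqref{eqn:estimate_scaling_Mult} to Lemma~\ref{lem:mult_by_Lipschitz_domain} after extending and rescaling, and \eqref{eqn:estimate_scaling_Prod_dom}--\eqref{eqn:estimate_scaling_Prod_boundary_dom} to Lemma~\ref{lem:scaling_of_prod_const}.

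One point worth noting: for \eqref{eqn:estimate_scaling_Ext} the paper's argument invokes the identity $\partial^{\alpha}\iota(f)=\iota(\partial^{\alpha}f)$, which is not a property of a generic universal extension operator. Your route---using that $\iota$ is simultaneously bounded on $W^{k}_p$ for every $k\le r$, so that $\|\partial^{\alpha}\iota(\tilde f)\|_{L^p}\lesssim\|\tilde f\|_{W^{|\alpha|}_p(U)}$ and each term $L^{d-p|\alpha|}\|\partial^{\alpha}\iota(\tilde f)\|_{L^p}^p$ can be absorbed into $\sum_{|\beta|\le|\alpha|}L^{d-p|\beta|}\|\partial^{\beta}\tilde f\|_{L^p(U)}^p$ for $L\ge1$---is the correct way to handle the mixing of scaling weights and in fact patches a small gap in the paper's presentation.
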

\begin{proof}
  Let $\iota$ be a universal extension operator from $U$ to $\R^d$ (Definition~\ref{def:universal_extension_op_and_trace}).
  For $L \geq 1$, we define a universal extension operator $\iota_L$ from $U_L$ to $\R^d$ by
  $\iota_L(f) \defby \iota(f(L \cdot))(L^{-1} \cdot)$.
  By change of variables, and using that $\partial^\alpha \iota(f) = \iota( \partial^\alpha f)$ it is straightforward to check that  $\norm{\iota_L}_{W^r_p(U_L) \rightarrow W^r_p(\R^d)} \leq \norm{\iota}_{W^r_p(U) \rightarrow W^r_p(\R^d)}$.
%\begin{calc}
%\begin{align*}
%\norm{\iota_L(f)}_{W_p^r(\R^d)}
%= \sum_{\alpha \in \N_0^d , |\alpha|\le \lfloor r \rfloor} \norm{\partial^{\alpha} \iota_L(f)}_{L^p} +
%\sum_{\alpha \in \N_0^d , |\alpha| = \lfloor r \rfloor}
%[\partial^{\alpha} \iota_L(f)]_{W_p^{r-\lfloor r\rfloor}(\R^d)}.
%\end{align*}
%As for $\alpha \in \N_0^d$, $\partial^\alpha \iota(f) = \iota(\partial^\alpha f)$, we have $\partial^\alpha \iota_L(f) = L^{-|\alpha|} \iota(\partial^\alpha [f(L\cdot)])(L^{-1} \cdot) = \iota_L(\partial^\alpha f)$, and so we may as well assume that $r\in [0,1)$. We have
%\begin{align*}
%\norm{\iota_L(f)}_{L^p(\R^d)}
%& = \norm{[ \iota(f(L \cdot))](L^{-1}\cdot)}_{L^p(\R^d)}
% = L^{\frac{d}{p}} \norm{ \iota(f(L\cdot))}_{L^p(\R^d)} \\
%& = L^{\frac{d}{p}} \norm{\iota} \norm{ f (L\cdot)}_{L^p}
% =  \norm{\iota} \norm{ f }_{L^p}.
%\end{align*}
%On the other hand, for $g=\iota(f(L\cdot))$,
%\begin{align*}
%[\iota_L(f)]_{W_p^{s}(\R^d)}
%= [\iota(f(L\cdot))(L^{-1}\cdot)]_{W_p^{s}(\R^d)}
%=  L^{\frac{d}{p}+s}     \Big( \int_{\R^d \times \R^d}
%      \frac{\abs{g(x) - g(y)}^p}{\abs{x - y}^{d + p s}} \dd x\dd y \Big)^{\frac{1}{p}}
%\end{align*}
%\end{calc}
  This implies \eqref{eqn:estimate_scaling_Ext}.
  The two \eqref{eqn:estimate_scaling_cT} and \eqref{eqn:estimate_scaling_cR} estimates can be  proven similarly.

\eqref{eqn:estimate_scaling_Embed} and \eqref{eqn:estimate_scaling_Interpolation}  follow by Lemma~\ref{lem:scaling_of_embed_const} and by \eqref{eqn:estimate_scaling_Ext}.

\eqref{eqn:estimate_scaling_Mult}
First observe $r\in (0,1)$. Set $F \defby \iota_L(f)$ for  $f \in W_p^r(U_L)$. Then, $\1_{U_L}f = \1_{U_L} F$ and thus $\norm{\indic_{U_L} f}_{W_p^r(\R^d)} \le \norm{F}_{L^p} +    [\indic_{U_L} F]_{W^{r}_p(\R^d)} $.
  By change of variables, one has $  [g(L^{-1} \cdot) ]_{W_p^{s}(\R^d)} = L^{\frac{d}{p} -s} [g]_{W_p^{s}(\R^d)}$ and thus
  \begin{calc}
  \begin{align*}
  [g(L^{-1} \cdot) ]_{W_p^{s}(\R^d)}
& =    \Big( \int_{\R^d \times \R^d}
      \frac{\abs{g(L^{-1} x) - g(L^{-1} y)}^p}{\abs{x - y}^{d + p s}} \dd x\dd y \Big)^{\frac{1}{p}} \\
&  =  \Big( \int_{\R^d \times \R^d}
      \frac{\abs{g(w) - g(z)}^p}{L^{d+ps} \abs{w - z}^{d + p s}} L^{2d} \dd w\dd z \Big)^{\frac{1}{p}} \\
& =  L^{\frac{d}{p} -s} \Big( \int_{\R^d \times \R^d}
      \frac{\abs{g(w) - g(z)}^p}{L^{d+ps} \abs{w - z}^{d + p s}} L^{2d} \dd w\dd z \Big)^{\frac{1}{p}} =  L^{\frac{d}{p} -s} [g]_{W_p^{s}(\R^d)} .
  \end{align*}
  \end{calc}
  \begin{equation*}
    [\indic_{U_L} F]_{W^{r}_p(\R^d)} = L^{\frac{d}{p} - r} [\indic_{U} F(L \cdot)]_{W^{r}_p(\R^d)}.
  \end{equation*}
By Lemma~\ref{lem:mult_by_Lipschitz_domain}
 and Lemma~\ref{lem:equivalence_sob_slobo_and_besov},  one has
  \begin{align*}
    [\indic_{U} F(L \cdot)]_{W^{r}_p(\R^d)} \le \norm{\indic_{U} F(L \cdot)}_{W^{r}_p(\R^d)}
 &   \lesssim_{U, p, r} \norm{F(L \cdot)}_{L^p(\R^d)} + [F(L \cdot)]_{W^{r}_p(\R^d)} \\
&    = L^{-\frac{d}{p}} \norm{F}_{L^p(\R^d)} + L^{-\frac{d}{p} - r} [F]_{W^{r}_p(\R^d)}.
  \end{align*}
 The claim follows because $\norm{F}_{W^p_r(\R^d)} \le  \norm{\iota_L}_{W_p^r(U_L) } \norm{f}_{W^p_r(U_L)}  \le  \norm{\iota}_{W_p^r(U) } \norm{f}_{W^p_r(U_L)}$.

\eqref{eqn:estimate_scaling_Prod_dom} and \eqref{eqn:estimate_scaling_Prod_boundary_dom} follow from Lemma~\ref{lem:scaling_of_prod_const}.
\end{proof}

\section{A regularity structure for the gPAM}\label{sec:review_of_reg_str}

In this appendix, we list the necessary definitions and results from the paper of Bruned, Hairer and Zambotti~\cite{bruned_2019} regarding the regularity structure for the generalized Parabolic Anderson model
\begin{equation}
  \partial_0 u = \Delta u + \sum_{i, j=1}^d g_{i, j}(u) \partial_i u \partial_j u
  + \sum_{i=1}^d h_i(u) \partial_i u + k(u) + f(u) \xi, \label{eq:gpam_app}
\end{equation}
that we use in Appendix~\ref{sec:bphz_AH}. 

\subsection{Terminologies}\label{subsec:terminologies_reg_str}
Here we review some terminologies from \cite{bruned_2019}.
\begin{definition}
  We fix a $\emph{type set}$ $\gls{typeset} \defby \{\Xi, \rint\}$.
  The symbol $\gls{Xi}$ represents the noise $\xi$ and the symbol $\gls{integration}$
  represents an abstract integration operator.
\end{definition}
\begin{definition}\label{def:terminology_tree}
  We define the following notions regarding graphs.
  \begin{enumerate}[leftmargin=*]
    \item A \emph{rooted tree} is a finite connected simple graph without cycles, with a distinguished vertex called
    the \emph{root}. We do not allow for an empty tree but we allow for a \emph{trivial tree} $\trivialtree$ which consists of only
    one vertex. Vertices will be called \emph{nodes}.
    Given a rooted tree $T$, the set of nodes and that of edges are denoted by $N = N_T$ and by $E = E_T$ respectively.
    We denote by $\gls{root}$ the root of $T$.
    Nodes of a rooted tree are endowed with a partial order $\leq$ by their distances
    from the root. We orient edges $(x, y) \in E$
    so that $x \leq y$.
    \item A \emph{forest} is a finite simple graph without cycles. We say a forest is \emph{rooted} if every
    component of the forest is a rooted tree.
    We allow for an empty rooted forest.
    Given a rooted forest $F$, the set of nodes and that of edges are denoted by $N = \gls{node}$ and by $E = \gls{edge}$ respectively.
    \item A tree or a forest is called $\emph{typed}$ if it is endowed with a map $\gls{typemap}:E \to \typeset$ where
    $E$ is the set of edges.
    \item We say $A$ is a \emph{subforest} of a forest $F$, and write $A \subseteq F$, if
    $N_A \subseteq N_F$ and $E_A \subseteq E_F$ and if $(x, y) \in E_A$ implies $\{x, y\} \subseteq N_A$.
    We note that a \emph{subtree} of a rooted tree is again a rooted tree whose root is the unique vertex which is
    closest to the root of the original rooted tree. Therefore, a subforest of a rooted forest is again
    a rooted forest. If a forest is typed, a subforest inherits types by restriction.
    If $A$ and $B$ are (rooted, typed) forests, we denote by $A \sqcup B$ the disjoint union of $A$ and $B$ with
    types naturally inherited.
  \end{enumerate}
\end{definition}
\begin{definition}\label{def:forests}
  In this paper, a typed forest $F$ is often equipped with a \emph{colouring} $\hat{F}$ and \emph{decorations}
  $\fn, \fo, \fe$ as follows.
  \begin{enumerate}[leftmargin=*]
    \item A pair $\gls{colourful_forest}$ is called a \emph{colourful forest} if the following hold:
    \begin{itemize}
      \item $F = (E_F, N_F, \ft)$ is a typed rooted forest.
      \item One has $\hat{F}:E_F \sqcup N_F \to \{0,1, 2\}$ such that if $\hat{F}((x,y)) = i > 0$ for $(x, y) \in E_F$
      then $\hat{F}(x) = \hat{F}(y) = i$.
    \end{itemize}
    \item  If $(F, \hat{F})$ is a colourful forest and
    \begin{itemize}
      \item $\fn:N_F \to \N_0^d$,
      \item $\fo:N_F \to \Z \oplus \Z[\typeset]$ with $\supp(\fo) \subseteq \cup_{i > 0}\hat{F}^{-1}(i)$,
      \item $\fe:E_F \to \N_0^d$ and $\supp(\fe) \subseteq E_F \setminus \supp(\hat{F})$,
    \end{itemize}
    then the $5$-tuple $\gls{decorated_forest}$, also written $(F, \hat{F})^{\fn, \fo}_{\fe}$, is called a \emph{decorated forest}.
      We denote by $\rforest$ the set of decorated forests.
    \item For $x, y \in N_F$, we write $x \sim y$ if they are connected in $\cup_{i > 0} \hat{F}^{-1}(i)$.
    \item Given a decorated forest $(F, \hat{F}, \fn, \fo, \fe)$, we view a subforest $A \subseteq F$ as
    a decorated forest by restricting the associated maps $(\hat{F}, \fn, \fo, \fe)$.
    \item We write $\gls{trivial_tree_with_decoration}$ for the decorated tree
      $(\bullet, 2, m, 0, 0)$.
  \end{enumerate}
\end{definition}
Many examples of colourful forests can be found in \cite{bruned_2019}.
\begin{definition}
  Two notions of product for forests are defined as follows.
  \begin{enumerate}[leftmargin=*]
    \item For decorated forests $\tau_i = (F_i, \hat{F}_i, \fn_i, \fo_i, \fe_i)$ ($i=1,2$),
    we define the \emph{forest product} by
    \begin{equation*}
      \gls{forest_product} \defby (F_1 \sqcup F_2, \hat{F}_1 + \hat{F}_2, \fn_1 + \fn_2, \fo_1 + \fo_2, \fe_1 + \fe_2)
    \end{equation*}
    where, for $i \neq j$, $(\hat{F}_i, \fn_i, \fo_i, \fe_i)$ are set to $0$ on $F_j$.
    \item For a a decorated forest $\tau = (F, \hat{F}, \fn, \fo, \fe)$, we denote by $\gls{join}(\tau)$ the decorated tree
    \begin{equation*}
      (\rjoin(F), [\hat{F}], [\fn], [\fo], \fe),
    \end{equation*}
    where $\rjoin(F)$ is the tree obtained by gluing all the roots of $F$,
    \begin{equation*}
      [\hat{F}](\rho_{\rjoin(F)}) \defby \max_{y \text{ is a root of $F$}} \hat{F}(y), \quad
      [\hat{F}](x) = \hat F(x) \,\, \mbox{for } x \neq \rho_{\rjoin(F)}
    \end{equation*}
    and $[\fn]$ and $[\fo]$
    are defined at the new root by summing the values at the roots of $F$,  and are equal to $\fn$ and $\fo$ elsewhere, respectively.
    The \emph{tree product} is defined by
    \begin{equation*}
      \gls{tree_product} \defby \rjoin (\tau_1 \cdot \tau_2).
    \end{equation*}
  \end{enumerate}
\end{definition}
\begin{definition}[{\cite[Section~4.3]{bruned_2019}}]\label{def:tree_join}
    For a decorated tree $\tau = (T, \hat{T})^{\fn, \fo}_{\fe}$ and $k \in \N_0^d$,
    we write
    $\rint_k(\tau)$ for a decorated tree $\sigma = (S, \hat{S})^{\tilde{\fn}, \tilde{\fo}}_{\tilde{\fe}}$
    obtained by connecting the old root $\rho_{\tau}$ to a new root $\rho_{\sigma}$ with a new edge $e= (\rho_0,\rho_\tau)$
    and by defining $\hat{S}$, $\tilde{\fn}$, $\tilde{\fo}$ and
    $\tilde{\fe}$ as an extension of $\hat{T}$, $\fn$, $\fo$ and $\fe$ such that
    \begin{equation*}
      \hat{S}(\rho_{\sigma}) = \tilde{\fn}(\rho_{\sigma}) = \tilde{\fo}(\rho_{\sigma}) = 0, \quad \ft(e) = \rint, \quad
      \hat{S}(e) = 0, \quad \tilde{\fe}(e) = k,
    \end{equation*}
    For $i \in \{1, \ldots, d\}$, we write $\rint_i(\tau) \defby \rint_{\bm{e_i}}(\tau)$,
    where $\bm{e}_i$ is the $i$th standard basis of $\R^d$.
\end{definition}
\begin{definition}[{\cite[Definition~5.3]{bruned_2019}}]\label{def:degree}
  % Let $\delta'$ be the constant appearing in Assumption~\ref{assump:convergence_of_BPHZ} and we fix
  % a small $\kappa \in (0, \delta')$ such that $\delta + \kappa$ is irrational.
  We assign the \emph{degree} $\gls{degree}$ to the types by
  % by\footnote{In the main text, we set
  %   $\abs{\Xi} \defby -2 + \delta$, but the new definition \eqref{eq:xi_degree} is more convenient in
  %   Section~\ref{subsec:stochastic_estimates}.}
  \begin{equation}\label{eq:xi_degree}
    \abs{\Xi}\defby -2 + \delta, %+ \kappa
     \quad \abs{\rint} \defby 2.
  \end{equation}
  We extend the degree to $(k, v) \in \Z^d \oplus \Z[\typeset]$ by
  \begin{equation*}
    \abs{(k, v)} \defby \sum_{i=1}^d k_i + a \abs{\Xi} + b \abs{\rint}
  \end{equation*}
  where $v = a \Xi + b \rint$.
  For a decorated tree $\tau = (F, \hat{F}, \fn, \fo, \fe)$,
  we set
  \begin{equation*}
    \hat{E}_i \defby \hat{F}^{-1}(i) \cap E_F, \quad \hat{E} \defby \hat{E}_1 \cup \hat{E}_2, \quad
    \hat{N}_i \defby \hat{F}^{-1}(i) \cap N_F
  \end{equation*}
  and we define two notions of degrees $\abs{\cdot}_-$ and
  $\abs{\cdot}_+$ by
  \begin{align*}
    \abs{\tau}_- &\defby \sum_{e \in E_F \setminus \hat{E}} (\abs{\ft(e)} - \fe(e))
    + \sum_{x \in N_F} \fn(x) \\
    \abs{\tau}_+ &\defby \sum_{e \in E_F \setminus \hat{E}_2} (\abs{\ft(e)} - \fe(e))
    + \sum_{x \in N_F} \fn(x) + \sum_{x \in N_F \setminus \hat{N}_2} \abs{\fo(x)}.
  \end{align*}
\end{definition}
% \begin{remark}
%   Since Schauder's estimate does not hold for integer exponents, we assume that $\delta + \kappa$ is irrational.
% \end{remark}
\subsection{Hopf algebras on forests and trees}
In this section, we introduce Hopf algebra structures on some spaces of forests and those of trees.
For this purpose, we begin with introducing contraction operators.
\begin{definition}[{\cite[Definition~3.18]{bruned_2019}}]
  We set
  \begin{equation*}
    \gls{contraction} (F, \hat{F})^{\fn, \fo}_{\fe} \defby (\rcont_{\hat{F}} F, \hat{F})^{[\fn], [\fo]}_{[\fe]},
  \end{equation*}
  where
  \begin{itemize}
    \item $\rcont_{\hat{F}} F$ is the quotient forest $F / \sim$, where the equivalent relation $\sim$ is
    in the sense of Definition~\ref{def:forests}-(c);
    \item $\hat{F}$ and $[\fe]$ are natural  ``restrictions'';
    \item one has $[\fn](x) \defby \sum_{y \sim x} \fn(y)$;
    \item one has
    \begin{equation*}
      [\fo](x) \defby \sum_{y \sim x} \fo(y) + \sum_{e \in E(x)} \ft(e), \quad E(x) \defby
      \set{(y, z) \in E \given y \sim z \sim x}.
    \end{equation*}
  \end{itemize}
\end{definition}

 For a decorated forest $\tau$ and $i \in \N$, one has a unique decomposition $\tau = \mu \cdot \nu$ such that
  on $\nu$ the map $\hat{F}$ is equal to $i$ and on each component of $\mu$ the map $\hat{F}$ is not equal to $i$ everywhere.

\begin{definition}
  Then, we set
  \begin{equation*}
    k_i(\nu) \defby
    \begin{cases}
      (\trivialtree, i, \sum_{x \in N_{\nu}} \fn(x), 0, 0) &\text{if } \sum_{x \in N_{\nu}} \fn(x) > 0 \\
      \emptyset &\text{otherwise}
    \end{cases}
  \end{equation*}
  and
  \begin{equation*}
    \rcont_i (\tau) \defby \rcont(\mu) \cdot k_i(\nu).
  \end{equation*}
  In addition, we denote by $\hat{\rcont}_i (\tau)$ the decorated forest that is obtained from $\rcont_i(\tau)$ by setting
  $\fo$ to $0$ on $\hat{F}^{-1}(i)$.
\end{definition}
\begin{remark}
  $\nu$ is allowed to be an empty forest $\emptyset$ and $k_i(\emptyset) = \emptyset$.
\end{remark}
With these operators, one can write Hopf algebras associated to regularity structures and renormalization structures.
\begin{definition}
  We define vector spaces $H_1, H_{\circ}$ as follows.
  \begin{enumerate}[leftmargin=*]
    \item We denote by $\gls{H_1}$ the free vector space generated by
    \begin{equation*}
      \gls{basis}(H_1) \defby \set{(F, \hat{F})^{\fn, \fo}_{\fe} \given \hat{F} \leq 1, \,\, \rcont_1(F) = F}.
    \end{equation*}
    \item We denote by $\gls{H_circ}$ the free vector space generated by $\rbasis(H_{\circ})$, where $\tau \in \rbasis(H_{\circ})$
    if and only if
    \begin{multicols}{2}
    \begin{itemize}
      \item $\tau$ is a tree and $\hat{F} \leq 1$;
      \item $\rcont(\tau) = \tau$.
    \end{itemize}
    \end{multicols}
  \end{enumerate}
\end{definition}
\begin{definition}[{\cite[Definition~3.3]{bruned_2019}}]\label{def:coproduct_for_decorated_forests}
  Given a decorated forest $\tau = (F, \hat{F})^{\fn, \fo}_{\fe}$, we denote
  by $\fU_1(\tau)$ the set of all subforests of $F$ which contains $\hat{F}^{-1}(1)$ and  subforests of $F$ that are disjoint from
  $\hat{F}^{-1}(2)$.
  We set
  \begin{equation}\label{eq:coproduct_formula}
    \begin{multlined}
      \Delta_1 \tau \defby \sum_{A \in \fU_1(\tau)}  \sum_{\fn_A : \fn_A \leq \fn} \sum_{\epsilon^F_A}  \frac{1}{\epsilon^F_A !}
      \binom{\fn}{\fn_A} (A, F|_A, \fn_A + \pi \epsilon^F_A, \fo\vert_{N_A}, \fe \vert_{E_A}) \\
      \otimes (F, \hat{F} \cup_1 A, \fn - \fn_A, \fo + \fn_A + \pi(\epsilon^F_A - \fe \indic_A),
      \fe \indic_{E_F \setminus E_A} + \epsilon^F_A),
    \end{multlined}
  \end{equation}
  where
  \begin{itemize}
    \item $\epsilon^F_A$ runs over all maps $E_F \to \N_0^d$ supported on the (outgoing) boundary
    \begin{equation*}
      \partial (A, F) \defby \set{(e_+, e_-) \in E_F \setminus E_A \given e_+ \in  N_A };
    \end{equation*}
    \item for $\epsilon: E_F \to  \N_0^d $ one defines $\pi \epsilon: N_F \to \Z^d$ by
    \begin{equation*}
      \pi \epsilon(x) \defby \sum_{ \substack{e  \in E_F\\ e= (x, y) \text{ for some } y}} \epsilon(x);
    \end{equation*}
    \item $\hat{F} \cup_1 A$ is the map defined by
    \begin{equation*}
      \hat{F} \cup_1 A (x) \defby
      \begin{cases}
        1 & \text{if } x \in A \\
        \hat{F}(x) & \text{otherwise}.
      \end{cases}
    \end{equation*}
  \end{itemize}
\end{definition}
Some of the main results from \cite{bruned_2019} are the following.
\begin{proposition}[{\cite[Proposition~4.11]{bruned_2019}}]\label{prop:forest_hopf_algebra}
    The vector space $H_1$ is a Hopf algebra with multiplication
    \begin{equation*}
      \cM(\tau_1 \otimes \tau_2) \defby \rcont_1 (\tau_1 \cdot \tau_2),
    \end{equation*}
    with unit $\emptyset$, with coproduct $(\rcont_1 \otimes \rcont_1) \Delta_1$ and with
    counit
    \begin{equation*}
      \indic'_{H_1}((F, \hat{F})^{\fn, \fo}_{\fe}) \defby
      \indic_{\{\emptyset\}}((F, \hat{F})^{\fn, \fo}_{\fe}), \quad
    \end{equation*}
    The Hopf algebra $H_1$ is graded with respect to $\abs{\cdot}_-$.
\end{proposition}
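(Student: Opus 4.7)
The plan is to verify the axioms of a graded Hopf algebra in turn, exploiting the combinatorial structure of decorated forests with colourings. First I would check that the multiplication $\cM$ is well-defined on $H_1$: since $\rcont_1$ contracts connected $\hat F = 1$ components and $\rbasis(H_1)$ consists of decorated forests fixed by $\rcont_1$, the image of $\cM$ lies in $H_1$. Associativity then reduces to associativity of the disjoint union $\cdot$ combined with the fact that $\rcont_1$ commutes with further applications of itself, and $\emptyset$ serves as a two-sided unit essentially by definition of the forest product.

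Next I would verify the coalgebra axioms. One first has to check that $(\rcont_1 \otimes \rcont_1)\Delta_1$ really lands in $H_1 \otimes H_1$: for each summand in \eqref{eq:coproduct_formula}, the left factor has $\hat F \le 1$ by construction and, after applying $\rcont_1$, satisfies $\rcont_1 = \id$; for the right factor one uses that $\hat F \cup_1 A$ takes value $1$ on $A$ while preserving the values $2$ of $\hat F$ elsewhere, so that again $\rcont_1$ acts as the identity after the projection. The counit property $( \indic'_{H_1} \otimes \id )\Delta_1 = \id = (\id \otimes \indic'_{H_1})\Delta_1$ follows by inspection: only the extreme choices $A = \hat F^{-1}(1)$ and $A = F$ (with trivial $\fn_A$ and $\epsilon_A^F$) contribute.

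The main obstacle is coassociativity of $\Delta_1$. I would follow the standard incidence-coalgebra philosophy: both $(\Delta_1 \otimes \id)\Delta_1 \tau$ and $(\id \otimes \Delta_1)\Delta_1 \tau$ can be rewritten as sums indexed by pairs of nested admissible subforests $B \subseteq A$ of $F$, together with decorations $\fn_B \le \fn_A \le \fn$ and boundary maps $\epsilon_B^A, \epsilon_A^F$. Matching the two sides term by term reduces to the binomial identity $\binom{\fn}{\fn_A}\binom{\fn_A}{\fn_B} = \binom{\fn}{\fn_B}\binom{\fn - \fn_B}{\fn_A - \fn_B}$ for the node decorations, and to a careful bookkeeping of how the boundary decorations $\epsilon$ and the extra contribution $\pi(\epsilon_A^F - \fe \indic_A)$ to $\fo$ are redistributed; this is the coloured analogue of the calculation carried out for the extraction-contraction coproduct in \cite{bruned_2019}. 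The bialgebra compatibility, i.e.\ that $\Delta_1 \cM = (\cM \otimes \cM)(\id \otimes \sigma \otimes \id)(\Delta_1 \otimes \Delta_1)$ (with $\sigma$ the flip), is then straightforward: subforests of a disjoint union $\tau_1 \cdot \tau_2$ are exactly pairs of subforests of each component, and the decoration sums factor accordingly.

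Finally, for the grading I would verify two things. First, $\abs{\tau_1 \cdot \tau_2}_- = \abs{\tau_1}_- + \abs{\tau_2}_-$ from the definition of $\abs{\cdot}_-$, giving that $\cM$ has degree $0$. Second, for each summand in \eqref{eq:coproduct_formula} one must show $\abs{A, F|_A, \fn_A + \pi\epsilon_A^F, \fo|_{N_A}, \fe|_{E_A}}_- + \abs{F, \hat F \cup_1 A, \fn - \fn_A, \ldots}_- = \abs{\tau}_-$; since $\abs{\cdot}_-$ ignores nodes with $\hat F \ne 0$ and the edges $e \in \hat E$ of the right factor, the only nontrivial check is that the sums $\fn_A + (\fn - \fn_A) = \fn$ at nodes and $\fe\indic_A + (\fe\indic_{E_F\setminus E_A} + \epsilon_A^F) = \fe + \epsilon_A^F$ at edges balance against the $\pi\epsilon_A^F$ contribution to $\fn_A$ on the left factor, using $\abs{\rint} = 2$ and that edges in $\partial(A,F)$ are on the right. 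Connectedness in degree $0$ (only $\emptyset$ has $\abs{\cdot}_- = 0$) then gives the antipode via the standard inductive formula $S(\tau) = -\tau - \sum_{(\tau)} S(\tau_{(1)}) \tau_{(2)}$ on reduced coproducts, completing the Hopf algebra structure.
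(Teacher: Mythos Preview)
The paper does not give its own proof of this proposition; it is stated with a citation to \cite[Proposition~4.11]{bruned_2019} and used as a black box. So there is no in-paper argument to compare against, and your sketch is essentially a reconstruction of the standard extraction--contraction Hopf algebra argument from that reference.

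Your outline is broadly the right one, but there is a genuine gap in the final step. You claim connectedness in degree~$0$ with respect to $\abs{\cdot}_-$, i.e.\ that only the empty forest has $\abs{\cdot}_- = 0$. This is false: for instance a single $\rint$-edge with $\abs{\fe(e)} = 2$ (and $\hat F \equiv 0$, $\fn \equiv 0$) gives a non-trivial element of $\rbasis(H_1)$ with $\abs{\cdot}_- = 2 - 2 = 0$. More generally $\abs{\cdot}_-$ takes values in $\R$ (involving the irrational-in-general $\delta$) rather than $\N_0$, so the usual ``connected graded implies Hopf'' shortcut does not apply to this grading. The antipode in \cite{bruned_2019} is obtained instead via a second, combinatorial grading (essentially by the number of uncoloured edges), for which the reduced coproduct is strictly decreasing on each tensor factor; this is what makes the recursive formula $S(\tau) = -\tau - \sum S(\tau_{(1)})\,\tau_{(2)}$ terminate. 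Once you replace the $\abs{\cdot}_-$-connectedness claim by this bigrading/conilpotency argument, the rest of your sketch goes through.
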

\begin{proposition}[{\cite[Proposition 4.14]{bruned_2019}}]\label{prop:forest_coaction}
     The vector space $H_{\circ}$ is a left comodule over the Hopf algebra $H_1$ with coaction
    \begin{equation*}
      (\rcont_1 \otimes \rcont) \Delta_1: H_{\circ} \to H_1 \otimes H_{\circ}.
    \end{equation*}
    \begin{calc}
    The coaction is compatible with the gradings $(H_{\circ}, \abs{\cdot}_-)$ and $(H_1, \abs{\cdot}_-)$ in that
    if $\tau \in \rbasis(H^C_{\circ})$ and if one write
    \begin{equation*}
      (\rcont_1 \otimes \rcont) \Delta_1 \tau =: \sum \tau^{(1)} \otimes \tau^{(2)},
    \end{equation*}
    then one has $\abs{\tau}_- = \abs{\tau^{(1)}}_- + \abs{\tau^{(2)}}_-$.
    \end{calc}
\end{proposition}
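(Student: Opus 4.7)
The plan is to verify the two comodule axioms for $\Delta_\circ \defby (\rcont_1 \otimes \rcont) \Delta_1 : H_\circ \to H_1 \otimes H_\circ$: the coassociativity
\begin{equation*}
(\Delta_{H_1} \otimes \mathrm{id}) \circ \Delta_\circ = (\mathrm{id} \otimes \Delta_\circ) \circ \Delta_\circ,
\end{equation*}
where $\Delta_{H_1} = (\rcont_1 \otimes \rcont_1)\Delta_1$ is the coproduct of $H_1$ from Proposition~\ref{prop:forest_hopf_algebra}, and the counitality $(\indic'_{H_1} \otimes \mathrm{id}) \circ \Delta_\circ = \mathrm{id}$.

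First I would verify that $\Delta_\circ$ is well-defined with target $H_1 \otimes H_\circ$. Given $\tau \in \rbasis(H_\circ)$, the left factor in \eqref{eq:coproduct_formula} is a forest $A$ in which $\hat F \le 1$ by construction (since we add colour $1$ on $A \supseteq \hat F^{-1}(1)$ and leave the rest untouched). After applying $\rcont_1$ the identity $\rcont_1(A) = A$ holds by construction, so the left factor lies in $\rbasis(H_1)$. The right factor is a tree (since $\tau$ is a tree and $\hat F \cup_1 A$ preserves connectedness), has $\hat F \le 1$, and satisfies $\rcont = \mathrm{id}$ after applying $\rcont$; hence it lies in $\rbasis(H_\circ)$.

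The heart of the argument is coassociativity. The strategy I would use is to interpret both sides as sums over pairs of nested coloured subforests of $\tau$, before quotienting by the contraction maps. Concretely, for a given $\tau$, one shows that both $(\Delta_{H_1} \otimes \mathrm{id})\Delta_\circ\tau$ and $(\mathrm{id} \otimes \Delta_\circ)\Delta_\circ\tau$ can be rewritten as a sum indexed by chains $B \subseteq A$ of subforests of $\tau$ (subject to the constraints $B \supseteq \hat F^{-1}(1)$, $A \supseteq B$, both disjoint from $\hat F^{-1}(2)$), together with node/edge decorations $\fn_A, \fn_B, \epsilon^F_A, \epsilon^F_B$, distributed by the combinatorial weights from \eqref{eq:coproduct_formula}. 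The matching of weights reduces to the two binomial identities $\binom{\fn}{\fn_A}\binom{\fn_A}{\fn_B} = \binom{\fn}{\fn_B}\binom{\fn-\fn_B}{\fn_A - \fn_B}$ and the corresponding factorial identity for the $\epsilon^F$'s restricted to the boundaries $\partial(B,A)$ and $\partial(A,F)$. Counitality is direct: the only term in $\Delta_\circ \tau$ with left factor $\emptyset$ comes from $A = \hat F^{-1}(1)$ being empty (note $\hat F \le 1$ and $\rcont_1(\tau) = \tau$ force the associated $A$-component to be trivial), with all $\fn_A$ and $\epsilon^F_A$ vanishing, giving $\tau$ back.

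The main obstacle is the bookkeeping around the contraction operators $\rcont_1$ and $\rcont$ in the two iterated coactions. When one applies $\Delta_\circ$ to the right factor of $\Delta_\circ \tau$, one operates on a decorated tree on which some edges have already been recoloured and whose $\fo$-decoration has been modified by the first coproduct. One therefore needs to show that these modifications commute with the contraction maps in the sense that
\begin{equation*}
(\rcont_1 \otimes \rcont_1 \otimes \rcont) \circ (\Delta_1 \otimes \mathrm{id}) \circ \Delta_1 = (\rcont_1 \otimes \rcont_1 \otimes \rcont) \circ (\mathrm{id} \otimes \Delta_1) \circ \Delta_1
\end{equation*}
on the uncontracted forest level, which is the content of the analogue of \cite[Proposition~3.17]{bruned_2019} (or of the ``twisted antipode'' machinery). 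Once this forest-level identity is in hand, the coassociativity of $\Delta_\circ$ follows by passing to the $\rcont_1$- and $\rcont$-quotients, and together with the above well-definedness and counitality this gives the claimed comodule structure.
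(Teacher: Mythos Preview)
The paper does not give its own proof of this proposition; it is quoted verbatim as \cite[Proposition~4.14]{bruned_2019}, so there is nothing to compare your argument against in this paper. Your sketch follows the same strategy as the cited reference: reduce both iterated coactions to a sum over nested subforests $B\subseteq A$ of $\tau$ with matching combinatorial weights, invoke the forest-level coassociativity of $\Delta_1$ (which in \cite{bruned_2019} is Proposition~3.11, not 3.17), and then pass to the quotients by $\rcont_1$ and $\rcont$.

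One small slip in your counitality step: the minimal admissible $A$ is not the empty forest but $A=\hat F^{-1}(1)$, since the sum in \eqref{eq:coproduct_formula} is over $A\in\mathfrak U_1(\tau)$, which requires $A\supseteq\hat F^{-1}(1)$. The left tensor factor becomes $\emptyset$ only \emph{after} applying $\rcont_1$, because $\rcont_1$ contracts precisely the colour-$1$ part to a point and then (since $\tau\in\rbasis(H_\circ)$ already satisfies $\rcont(\tau)=\tau$) identifies this with the unit. With that correction your counitality argument goes through.
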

\subsection{Rule}
\begin{calc}
The spaces $H_1$, $H_2$ and $H_{\circ}$ are too big for our applications. For instance,
$H_{\circ}$ contains an element
\begin{equation*}
  \begin{drawtree}
    \node[black node]{} [grow'=up]
      child {node[black node]{} edge from parent node[left] {$\Xi$}}
      child {node[black node]{} edge from parent node[right] {$\Xi$}};
  \end{drawtree}\,\,\,,
\end{equation*}
which should not be an element of our regularity structure.
To solve this problem, we introduce the notion of \emph{rule}.
\end{calc}

We set
\begin{equation*}
  \gls{tree} \defby \set{(F, \hat{F}, \fn, \fo, \fe) \in \rforest \given
  F \mbox{ is a tree, } \hat{F} \equiv 0, \,\, \fo \equiv 0}.
\end{equation*}
The set $\rtree$ is a monoid with the tree product and with the trivial tree as unit.
We simply write $T^{\fn}_{\fe}$ for $(T, 0, \fn, 0, \fe) \in \rtree$.
\begin{definition}
  Given a decorated tree $T^{\fn}_{\fe} \in \rtree$, we associate to each $x \in N_T$ a node type
  \begin{equation*}
    \gls{node_type} \defby \rnode(x) \defby \big((\ft(e_1), \fe(e_1)), \ldots, (\ft(e_n), \fe(e_n))\big),
  \end{equation*}
  where $(e_1, \ldots, e_n)$ are the edges leaving the node $x$, namely, for each $j$ one can find a $y_j \in N_T$ such that
  $e_j = (x, y_j)$.
\end{definition}
\begin{definition}
  Let $(\typeset \times \N_0)^n/ \sim_n$ be the set of unordered $n$-tuples valued in $\typeset \times \N_0$ and
  let $\cP \cN$ be the power set of $\cup_{n \in \N_0} (\typeset \times \N_0)^n/ \sim_n$.
  We define the \emph{rule} $\gls{rule}: \typeset \to \cP \cN$ by
  \begin{align*}
    R(\Xi) &\defby \{()\},  \\
    R( \rint  ) &\defby \{([\rint]_n), ([\rint]_n, \rint_i), ([\rint]_n, \rint_i, \rint_j), ([\rint]_n, \Xi);
    n \in \N_0, i, j \in \{1, \ldots, d\} \},
  \end{align*}
  where we write $[\rint]_n$ for the $n$-tuple of $(\rint, 0)$ and write $\rint_i$ for $(\rint, \bm{e}_i)$,
  where $\bm{e}_i$ is the $i$th unit vector in $\R^d$.
\end{definition}
It is not difficult to show that the rule $R$ is subcritical in the sense of \cite[Definition 5.14]{bruned_2019}
and complete in the sense of \cite[Definition 5.20]{bruned_2019}.
\begin{calc}
\begin{lemma*}
  The rule $R$ is subcritical in the sense of \cite[Definition 5.14]{bruned_2019}. That is, one can find
  a map $\regmap: \typeset \to \R$, which defines a map on the power set of $\cup_n (\typeset \times \N_0)^n/ \sim_n$ by
  \begin{equation*}
    \regmap(N) \defby \sum_{(\ft, k) \in N} \big(\regmap(\ft) - k \big),
  \end{equation*}
  such that
  \begin{Cequation}\label{eq:subcritical}
    \regmap(\ft) < \abs{\ft} + \inf_{N \in R(\ft)} \regmap(N) \quad \mbox{for every } \ft \in \typeset.
  \end{Cequation}
\end{lemma*}
\begin{proof}
  For $\ft = \Xi$ to satisfy \eqref{eq:subcritical}, one needs
  \begin{equation*}
    \regmap(\Xi) < \abs{\Xi} = -2 + \delta,
  \end{equation*}
  and thus we set $\regmap(\Xi) \defby -2 + \frac{\delta}{2}$.
  For $\ft = \rint$ to satisfy \eqref{eq:subcritical}, one needs
  \begin{equation*}
    \regmap(\rint) < \abs{\rint} + \min\{0, \regmap(\rint) -1, 2 \regmap(\rint) -2, \regmap(\Xi)\}
  \end{equation*}
  or $0 < \regmap(\rint) < \frac{\delta}{2}$. Therefore, we set $\regmap(\rint) \defby  \frac{\delta}{3}$.
\end{proof}
\begin{remark*}
  By \cite[Proposition 5.15]{bruned_2019}, the set $\set{\tau \in \rtree_{\circ} \given \abs{\tau} \leq \gamma}$
  is finite for every $\gamma \in \R$.
  One can observe this from Lemma~\ref{lem:structure_of_rtree_minus}.
\end{remark*}
\end{calc}
\begin{definition}[{\cite[Definition~5.8]{bruned_2019}}]
  Let $\tau = T^{\fn}_{\fe} \in \rtree$.
  \begin{enumerate}[leftmargin=*]
    \item We say $\tau$ \emph{conforms to the rule $R$ at the node $x$} if the following hold:
    \begin{itemize}
      \item if $x$ is the root, then  $\rnode(x) \in R(\Xi)$ or $\rnode(x) \in R(\rint)$;
      \item otherwise, one has $\rnode(x) \in R(\ft(e))$, where $e$ is the edge such that
      $e = (y, x)$ for some node $y$.
    \end{itemize}
    \item We say $\tau$ \emph{conforms to the rule $R$} if $\tau$ comforms to $R$ at every node, except possibly the root.
    \item We say $\tau$ \emph{strongly conforms to the rule $R$} if $\tau$ comforms to $R$ at every node.
  \end{enumerate}
\end{definition}
\begin{definition}[{\cite[Definition~5.13]{bruned_2019}}]
  We define sets $\rtree_{\diamond}$ ($\diamond \in \{ \circ, 1,  -\}$) as follows.
  \begin{enumerate}[leftmargin=*]
    \item We denote by $\gls{tree_circ} \subseteq \rtree$ the set of trees which strongly conform to $R$.
    \item We denote by $\gls{tree_1} \subseteq \rforest$ the smallest submonoid under the forest product which contains $\rtree_{\circ}$.
    \item We denote by $\gls{tree_-} \subseteq \rtree_{\circ}$ the set of trees $T^{\fn}_{\fe}$ with the following properties:
    \begin{itemize}
      \item one has $\abs{\tau}_- < 0$ and $\fn(\rho_T) = 0$;
      \item if there exists only one edge containing $\rho_T$, then
      \begin{equation}\label{eq:tree_xi}
        T^{\fn}_{\fe} =
        \begin{drawtree}
            \node[black node, label=below:{$\rho_T$}] {} [grow'=up]
              child {node[black node] {}
              edge from parent
                node[right] {$e$}
              };
        \end{drawtree},
        \quad \fn(\rho_T) = \fe(e) = 0, \,\, \ft(e) = \Xi.
      \end{equation}
    \end{itemize}
  \end{enumerate}
\end{definition}
\begin{calc}
One can describe $\rtree_-$ more concretely in the following recursive way.
Let
\begin{align*}
  \rtree^{(1)} &\defby
  \set{\tau \in \rtree \given \forall e \in E_{\tau}, \rho_{\tau} \in e; \rnode(\rho_{\tau}) \in R(\rint)}, \\
  \rtree^{(1)}_- &\defby \set{\tau \in \rtree^{(1)} \given \abs{\tau} < 0}.
\end{align*}
Note $\rtree^{(1)}_-$ consists of two elements, both of which are of the form \eqref{eq:tree_xi}.
Then, recursively, we set
\begin{align*}
  \rtree^{(n)} &\defby \set{\rjoin^{\ft_1}_{k_1}(\tau_1) \cdots \rjoin^{\ft_l}_{k_l}(\tau_l) \given
  \big((\ft_j, k_j)\big)_{j=1}^l \in R(\rint),  \tau_j \in \rtree^{(n-1)}}, \\
  \rtree^{(n)}_- &\defby \rtree^{(n)} \cap \rtree_-.
\end{align*}
Now one has
\begin{equation*}
  \rtree_{\circ} = \bigcup_{n \in \N} \rtree^{(n)}, \quad
  \rtree_- = \bigcup_{n \in \N} \rtree^{(n)}_-.
\end{equation*}
\begin{lemma*}\label{lem:structure_of_rtree_minus}
  Regarding $\rtree^{(n)}$ and $\rtree^{(n)}_-$, one has the following claims.
  \begin{enumerate}[leftmargin=*]
    \item One has
    \begin{equation*}
      \min \set{\abs{\tau} \given \tau \in \rtree^{(n)} \setminus \rtree^{(n-1)}} = -2 + n \delta.
    \end{equation*}
    In particular, $\rtree^{(n)}_-  = \rtree^{(n-1)}_-$ for large $n$.
    \item If $\tau = \rjoin^{\ft_1}_{k_1}(\tau_1) \cdots \rjoin^{\ft_l}_{k_l}(\tau_l) \in \rtree_-^{(n)}$, then
    $\tau_j \in \rtree_-^{(n-1)}$ for every $j$ such that $\ft_j = \rint$. In particular, if $y$ is a leaf of $\tau$, namely if
    $\rnode(y) = \{()\}$, then for the edge $e = (x, y)$ one has $\ft(e) = \Xi$.
  \end{enumerate}
\end{lemma*}
\begin{proof}
  The claims will be proved by induction.
  Indeed, for $n=1$, the claims are true. Suppose that the claims are verified for $n-1$.
  We will check the claim (ii).
  Let
  \begin{equation*}
    \tau = \rjoin^{\ft_1}_{k_1}(\tau_1)\cdots \rjoin^{\ft_l}_{k_l}(\tau_l) \in \rtree_-^{(n)} \setminus \rtree^{(n-1)}_-.
  \end{equation*}
  \begin{itemize}[leftmargin=*]
    \item Suppose $((\ft_j, k_j))_{j=1}^l = ([\rint]_l)$. Then,
    \begin{equation*}
      \abs{\tau} = \sum_{j=1}^l \big(\abs{\tau} + 2\big) + \sum_{x \in N_{\tau}} \fn(x) \geq l \delta > 0,
    \end{equation*}
    contradiction.
    \item Suppose $((\ft_j, k_j))_{j=1}^l = (\rint_p, [\rint]_{l-1})$. Then,
    \begin{equation*}
      \abs{\tau} = \abs{\tau_1} + 1 + \sum_{j=2}^l \big(\abs{\tau_j} + 2\big) + \sum_{x \in N_{\tau}} \fn(x),
    \end{equation*}
    which can be less than $0$ only if $\tau_1, \ldots, \tau_l \in \rint^{(n-1)}$.
    \item Suppose $((\ft_j, k_j))_{j=1}^l = (\rint_p, \rint_q, [\rint]_{l-2})$. Then,
    \begin{equation*}
      \abs{\tau} = \abs{\tau_1} + \abs{\tau_2} + 2 + \sum_{j=2}^l \big(\abs{\tau_j} + 2 \big) + \sum_{x \in N_{\tau}} \fn(x),
    \end{equation*}
    which can be less than $0$ only if $\tau_1, \ldots, \tau_l \in \rint^{(n-1)}$.
    \item Suppose $((\ft_j, k_j))_{j=1}^l = (\Xi, [\rint]_{l-1})$. Then, $\tau_1$ is the trivial tree and
    \begin{equation*}
      \abs{\tau} = -2 + \delta + \sum_{j=2}^{l} \big(\abs{\tau_j} + 2 \big) + \sum_{x \in N_{\tau}} \fn(x),
    \end{equation*}
    which can be less than $0$ only if $\tau_2, \ldots, \tau_l \in \rint^{(n-1)}$.
  \end{itemize}
  To see the claim (i), it suffices to check $\abs{\tau}$ in the above four cases, by knowing that at least one of
  $\tau_j$'s satisfies $\abs{\tau_j} \geq -2 + (n-1) \delta$.
\end{proof}

The notion of completeness is more technical.
\begin{definition*}
  Let $N = \big((\ft_1, k_1), \ldots, (\ft_n, k_n)\big) \in (\typeset \times \N_0^d)^n / \sim_n$.
  \begin{enumerate}[leftmargin=*]
    \item Given  $m \in \N_0^d$, we denote by
      $\partial^m N$ the set of $n$-tuples
      \begin{equation*}
        \big((\ft_1, k_1 + m_1), \ldots, (\ft_n, k_n + m_n) \big), \quad \sum_{j=1}^n m_j = m.
      \end{equation*}
    \item We define a \emph{substitution} operation as follows. Suppose we are given $M \subseteq N$ and
    $\tilde{M} \in \cP \cN$, where $\cP \cN$ is the power set of $\cup_n (\typeset \times \N_0^d)^n / \sim_n$.
    Suppose $\tilde{M}$ is finite and
    write $N = M \sqcup \bar{N}$.
    Then, we set
    \begin{equation*}
      \rreplace_M^{\tilde{M}} N \defby \bar{N} \sqcup \tilde{M}_1 \sqcup \cdots \sqcup \tilde{M}_l,
      \quad \tilde{M} = \{ \tilde{M}_1, \ldots, \tilde{M}_l \}.
    \end{equation*}
  \end{enumerate}
\end{definition*}
\begin{definition*}
  Given a tree $T^{\fn}_{\fe} \in \rtree_{\circ}(R)$, we associate $\bar{\rnode}_T(e) = \bar{\rnode}(e)$
  to each $e = (x, y) \in E_T$ in the
  following recursive way. Suppose $y$ is a leaf, namely $\rnode(y) = \{()\}$. Then, we set
  \begin{equation*}
    \bar{\rnode}(e) \defby R(\ft(e)).
  \end{equation*}
  Otherwise, if $e_1, \ldots, e_l$ are all the edges of the form $e_j = (y, v)$ for some node $v$, we set
  \begin{equation*}
    \bar{\rnode}(e) \defby
    \set{\rreplace_{\rnode(y)}^{\tilde{M}} N \given
    \rnode(y) \subseteq N \in R(\ft(e)), \,\, \tilde{M} = \{M_1, \ldots, M_l\}, \,\,  M_j \in \bar{\rnode}(e_j)}.
  \end{equation*}
\end{definition*}
Let us compute $\bar{\rnode}$ for the following decorated tree:
\begin{equation*}
  \begin{bigdrawtree}
      \node[black node, label=below:{$x_1$}] {} [grow'=up]
        child {node[black node, label=left:{$x_2$}] {}
          child {node[black node, label=left:{$x_4$}] {}
            child {node[black node, label=above:{$x_8$}] {}
            edge from parent node[left] {$\Xi$}
            }
          edge from parent node[left] {$\rint$}
          }
          child {node[black node, label=left:{$x_5$}] {}
            child {node[black node, label=above:{$x_9$}] {}
            edge from parent node[left] {$\Xi$}
            }
          edge from parent node[right] {$\rint_1$}
          }
        edge from parent node[left] {$\rint$}
        }
        child {node[black node, label=right:{$x_3$}] {}
          child {node[black node, label=left:{$x_6$}] {}
            child {node[black node, label=above:{$x_{10}$}] {}
            edge from parent node[left] {$\Xi$}
            }
          edge from parent node[left] {$\rint_1$}
          }
          child {node[black node, label=left:{$x_7$}] {}
            child {node[black node, label=above:{$x_{11}$}] {}
            edge from parent node[left] {$\Xi$}
            }
          edge from parent node[right] {$\rint_1$}
          }
        edge from parent node[right] {$\rint_1$}
        };
  \end{bigdrawtree}.
\end{equation*}
One first sees
\begin{equation*}
  \bar{\rnode}((x_4, x_8)) = \bar{\rnode}((x_5, x_9)) =
  \bar{\rnode}((x_6, x_{10})) = \bar{\rnode}((x_7, x_{11})) = \{()\}.
\end{equation*}
Next, one observes
\begin{equation*}
  \bar{\rnode}((x_2, x_4)) =
  \set{\rreplace_{(\Xi)}^{\{()\}} ([\rint]_n, \Xi) \given n \in \N_0} = \set{([\rint]_n) \given n \in \N_0}.
\end{equation*}
Similarly,
\begin{equation*}
  \bar{\rnode}((x_2, x_5)) =
  \bar{\rnode}((x_3, x_6)) =\bar{\rnode}((x_3, x_6)) =
  \set{([\rint]_n) \given n \in \N_0}.
\end{equation*}
Finally, one observes
\begin{align*}
  \MoveEqLeft
  \bar{\rnode}((x_1, x_2)) \\
  &= \{\rreplace_{(\rint, \rint_1)}^{\{([\rint]_k), ([\rint]_l)\}}(\rint, \rint_1, [\rint]_n),
  \rreplace_{(\rint, \rint_1)}^{\{([\rint]_k), ([\rint]_l)\}}(\rint, \rint_1, \rint_j, [\rint]_n) \\
  &\hspace{8cm} \vert
  k,l,n \in \N_0, j \in \{1, \ldots, d\}\} \\
  &= \set{([\rint]_n), ([\rint]_n, \rint_j) \given n \in \N_0, j \in \{1, \ldots, d\}}
\end{align*}
and
\begin{align*}
  \bar{\rnode}((x_1, x_3))
  &= \set{\rreplace_{(\rint_1, \rint_1)}^{\{([\rint]_k), ([\rint]_l)\}} ([\rint]_n, \rint_1, \rint_1)
  \given k,l, n \in \N_0, j \in \{1, \ldots, d\}} \\
  &= \set{([\rint]_n) \given n \in \N_0}.
\end{align*}
\begin{lemma*}\label{lem:contraction_and_node_type}
  Suppose $F^{\fn}_{\fe} \in \rtree$ strongly conforms to the rule $R$ and $G$ is a subtree of $F$ containing the root of $F$.
  Write $e_1, \ldots, e_l$ for all the edges in $G$ containing the root $\rho_G = \rho_F$ and write
  $\rho_{\rcont_G F}$ for the root of the contracted tree
  \begin{equation*}
    \rcont_G F \defby \rcont (F, \indic_G, \fn, 0, \fe).
  \end{equation*}
  Then, one finds $(M_1, \ldots, M_l) \in \bar{\rnode}_G(e_1) \times \cdots \times \bar{\rnode}_G(e_l)$ and
  $\rnode_G(\rho_G) \subseteq N \in R(\rint)$ such that
  \begin{equation*}
     \rnode_{\rcont_G F}(\rho_{\rcont_{G} F}) = \rreplace_{\rnode_G(\rho_{G})}^{\{M_1, \ldots, M_l\}} N.
  \end{equation*}
\end{lemma*}
\begin{proof}
  We denote by $d(\cdot, \cdot)$ the graph distance on the nodes. Set
  \begin{equation*}
    n \defby \max \set{d(\rho_G, y) \given y \in G }.
  \end{equation*}
  The proof is based on the induction on $n$. When $n=1$, the proof of the claim is obvious.
  Suppose $n \geq 2$ and write $e_j = (\rho_G, y_j)$ for $j=1, \ldots, l$.
  We denote by $G_j$ and by $F_j$ the subtree of $G$ and $F$ respectively such that
  \begin{itemize}
    \item the root of $G_j$ and $F_j$ is $y_j$ and
    \item one has $N_G = \cup_{j=1}^l N_{G_j} \cup \{\rho_G\}$ and $E_G = \cup_{j=1}^l E_{G_j} \cup \{e_1, \ldots, e_l\}$
    and similarly for $F_j$.
  \end{itemize}
  \begin{equation*}
    \begin{bigdrawtree}
      \node[black node, label=below:{$\rho_G$}] {} [grow'=up]
        child {node[black node, label=left:{$y_1$}]{}
          child {node {$G_1$}
            edge from parent[dashed]
          }
          edge from parent node[below left] {$e_1$}
            coordinate (edge_1)
        }
        child {node[black node, label=right:{$y_l$}]{}
          child {node {$G_l$}
            edge from parent[dashed]
          }
          edge from parent node[below right] {$e_l$}
            coordinate (edge_l)
        };
        \draw[dashed] (edge_1) to [bend left=45] (edge_l);
    \end{bigdrawtree}
  \end{equation*}
  By the hypothesis of the induction, for each $j \in \{1, \ldots, l\}$, writing $e^{(j)}_1, \ldots, e^{(j)}_{l_j}$ for
  all the edges in $G_j$ containing $\rho_{G_j} = y_j$, there exist
  \begin{equation*}
    (M^{(j)}_1, \ldots, M^{(j)}_{l_j}) \in \bar{\rnode}_G(e_1^{(j)}) \times \cdots \times \bar{\rnode}_G(e_{l_j}^{(j)})
  \end{equation*}
  and $\rnode_G(y_j) \subseteq N^{(j)} \in R(\rint)$ such that
  \begin{equation*}
    M_j \defby \rnode_{\rcont_{G_j} F_j}(y_j) =
    \rreplace_{\rnode_{G_j}(y_j)}^{\{M_1^{(j)}, \ldots, M_{l_j}^{(j)}\}} N^{(j)}.
  \end{equation*}
  Therefore, one has $M_j \in \bar{\rnode}_G(e_j)$ and
  \begin{equation*}
     \rnode_{\rcont_G F}(\rho_{\rcont_{G} F}) = \rreplace_{\rnode_G(\rho_{G})}^{\{M_1, \ldots, M_l\}} \rnode_F(\rho_F). \qedhere
  \end{equation*}
\end{proof}
If $T^{\fn}_{\fe}$ strongly conforms to the rule $R$, it does not necessarily hold true that a contracted tree
\begin{equation*}
  \rcont (F, \indic_G, \fn, 0, \fe), \quad \text{for } G \subseteq F \text{ containg the root } \rho_F
\end{equation*}
conforms to the rule $R$ at its new root.
However, the following lemma shows that it holds true in a certain interesting situation
and it will be a key ingredient for Lemma~\ref{prop:def_of_renormalization_structures}.
\begin{lemma*}\label{lem:rule_complete}
  The subcritical rule $R$ is complete in the sense of \cite[Definition 5.20]{bruned_2019}. That is,
  if $\tau \in \rtree_-$ and $\rnode(\rho_{\tau}) \subseteq N \in R(\rint)$, then
  writing $e_1, \ldots, e_l$ for the edges containing the root $\rho_{\tau}$ one has
  \begin{equation*}
    \partial^m \rreplace_{\rnode(\rho_{\tau})}^{\{M_1, \ldots, M_l\}} N \in R(\rint)
  \end{equation*}
  for every $(M_1, \ldots, M_l) \in \bar{\rnode}(e_1) \times \cdots \times \bar{\rnode}(e_l)$ and for
  every $m \in \N_0^d$ with $\abs{m} + \abs{\tau} < 0$.
\end{lemma*}
\begin{proof}
  The proof is based on induction. Let
  \begin{equation*}
    \tau = \rjoin^{\ft_1}_{e_1}(\tau_1) \cdots \rjoin^{\ft_l}_{e_l}(\tau_l) \in \rtree_-^{(n)} \setminus \rtree_-^{(n-1)}.
  \end{equation*}
  \begin{itemize}[leftmargin=*]
    \item Suppose $((\ft_j, k_j))_{j=1}^l = (\Xi, [\rint]_{l-1})$.
    Then $\tau_1$ is the trivial tree. By Lemma~\ref{lem:structure_of_rtree_minus}, $\tau_2, \ldots, \tau_l \in
    \rtree_-^{(n-1)}$. By the hypothesis of the induction, one has
    \begin{equation*}
      \bar{\rnode}(e_j) \subseteq
      \begin{cases}
        \set{([\rint]_n) \given n \in \N_0} & \abs{\tau} < -1 \\
        R(\rint) & \abs{\tau} \geq -1.
      \end{cases}
    \end{equation*}
    Since
    \begin{equation*}
      \abs{\tau} = -2 + \delta + \sum_{j=2}^l \big(\abs{\tau_j} + 2 \big) + \fn(\rho_{\tau}) < 0,
    \end{equation*}
    there is at most one $\tau_j$ such that $\abs{\tau_j} \geq -1$.
    \begin{itemize}
      \item If $\abs{\tau_j} < -1$ for every $j$, then
      \begin{equation*}
       \set{\rreplace_{\rnode(\rho_{\tau})}^{\{M_1, \ldots, M_l\}} N \given \rnode(\rho_{\tau}) \subseteq N \in
       R(\ft(\rint)), M_j \in \bar{\rnode}(e_j)}
       \subseteq \set{([\rint]_n) \given n \in \N_0}.
      \end{equation*}
      \item If there exists exactly one $\tau_j$ such that $\abs{\tau_j} \geq -1$, then $\abs{\tau} \geq -1$ and
      \begin{equation*}
       \set{\rreplace_{\rnode(\rho_{\tau})}^{\{M_1, \ldots, M_l\}} N \given \rnode(\rho_{\tau}) \subseteq N \in
       R(\ft(\rint)), M_j \in \bar{\rnode}(e_j)}
       \subseteq R(\ft(e))
      \end{equation*}
    \end{itemize}
    Therefore, in this case, $\tau$ satisfies the claim.
    \item Suppose $((\ft_j, k_j))_{j=1}^l = (\rint_p, \rint_q, [\rint]_{l-2})$.
    Then, by Lemma~\ref{lem:structure_of_rtree_minus}, one has $\tau_1, \ldots, \tau_l \in \rtree_-^{(n-1)}$.
    One has
    \begin{equation*}
      \abs{\tau} = 2(l-1) + \sum_{j=1}^l \abs{\tau_j} + \fn(\rho_{\tau}) < 0
    \end{equation*}
    and therefore there is at most one $j$ such that $\abs{\tau_j} \geq -1$. Then, one can prove the claim as in
    the first case. The case $((\ft_j, k_j))_{j=1}^l = (\rint_p, [\rint]_{l-1})$ can be handled similarly.  \qedhere
  \end{itemize}
\end{proof}
\end{calc}

\subsection{Definition of the regularity structure}
The content of this section is parallel to \cite[Section 5.5]{bruned_2019}.
Our goal here is to construct subspaces of $H_{\diamond}$ ($\diamond \in \{ 1, \circ\}$) which provide a correct
framework for the theory of regularity structures.
Since we desire that elements of those spaces conform to the rule $R$, one might want to consider a subspace spanned by
$\rtree_{\diamond}$. However, $\rtree_{\diamond}$ is not closed under the coproduct of $H_{\diamond}$.
Therefore, we introduce the following definition.
\begin{definition}\label{def:spaces_obeying_rule}
  Recall the notation introduced in Definition~\ref{def:coproduct_for_decorated_forests}.
  For $\diamond \in \{1, \circ\}$,
  we denote by $\rbasis(H^C_{\diamond}) \subseteq \rbasis(H_{\diamond})$ the set consisting of
    \begin{equation}\label{eq:basis_of_H_diamond_C}
      \rcont_1 (F, \hat{F} \cup_1 A, \fn - \fn_A, \fn_A + \pi(\epsilon^F_A - \fe \indic_A), \fe \indic_{F \setminus A} + \epsilon^F_A)
    \end{equation}
    for $\tau = (F, \hat{F})^{\fn, 0}_{\fe} \in \rtree_{\diamond}$, $A \in \fU_1(\tau)$, $\fn_A \leq \fn$ with $\supp(\fn_A) \subseteq N_A$ and
    $\epsilon^F_A:E_F \to \N_0^d$ with $\supp(\epsilon^F_A) \subseteq \partial(A, F)$.
  We denote by $\gls{H_C_diamond}$ the free vector space generated by $\rbasis(H^C_{\diamond})$.
\end{definition}
\begin{remark}
  By choosing $A = \emptyset$
  one observes $\rtree_{\diamond} \subseteq \rbasis(H^C_{\diamond})$ for $\diamond \in \{1, \circ\}$.
  In fact, as Lemma~\ref{lem:H_circ_subbialgebras} below shows,
  $H^C_1$ is
  the smallest subbialgebra of $H_{\diamond}$ both containing $\rtree_{\diamond}$ and closed under the coactions.
\end{remark}
\begin{lemma}\label{lem:H_circ_subbialgebras}
  The subspace $H_1^C$ is a subbialgebra of $H_1$.
  Furthermore, the statements of Proposition~\ref{prop:forest_coaction}
  \begin{calc} and of Proposition~\ref{prop:forest_hopf_algebra}-(iii) \end{calc} remain valid
  if one replaces $(H_1, H_{\circ})$ by $(H^C_1, H^C_{\circ})$.
\end{lemma}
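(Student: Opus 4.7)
The plan is to establish closure of $H_1^C$ under both the multiplication $\cM$ and the coproduct $(\rcont_1 \otimes \rcont_1)\Delta_1$ of $H_1$, and then to verify that the coaction $(\rcont_1 \otimes \rcont)\Delta_1$ maps $H_\circ^C$ into $H_1^C \otimes H_\circ^C$. The unit $\emptyset$ lies in $\rbasis(H_1^C)$ (take $\tau = \emptyset$, $A = \emptyset$), and the counit restricted to $H_1^C$ is still $\indic_{\{\emptyset\}}$, so these structural pieces come for free once closure is established.

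For multiplication closure, observe that a generic basis element of $H_1^C$ coming from \eqref{eq:basis_of_H_diamond_C} is of the form $\rcont_1(\sigma, \hat{\sigma} \cup_1 A, \fn - \fn_A, \fn_A + \pi(\epsilon - \fe\indic_A), \fe\indic_{E_\sigma \setminus E_A} + \epsilon)$ with $\sigma \in \rtree_\diamond$. Two such elements, built from $\sigma_1, \sigma_2 \in \rtree_\circ$ with data $(A_1, \fn_{A_1}, \epsilon_1)$, $(A_2, \fn_{A_2}, \epsilon_2)$, multiply via $\cM$ into the element built from the forest product $\sigma_1 \cdot \sigma_2 \in \rtree_1$ with the disjoint-union data $(A_1 \sqcup A_2, \fn_{A_1} + \fn_{A_2}, \epsilon_1 + \epsilon_2)$. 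Here one uses that $\rcont_1$ is a homomorphism for the forest product and that $\rtree_1$ is the submonoid of $\rforest$ generated by $\rtree_\circ$ under $\cdot$, so the result is again a legitimate basis element of $H_1^C$.

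For coproduct closure, the main idea is a coassociativity-type computation: applying $\Delta_1$ to a basis element of $H_1^C$ of the form \eqref{eq:basis_of_H_diamond_C} for data $(\sigma, A, \fn_A, \epsilon)$ amounts, by the extraction formula \eqref{eq:coproduct_formula}, to summing over admissible subforests of the "right component" $(\sigma, \hat\sigma \cup_1 A, \ldots)$. Such a subforest $B$ pulls back to a subforest $A \cup B$ of the original $\sigma$, and the node/edge decorations $\fn_B, \epsilon^F_B$ combine with $\fn_A, \epsilon$ via the combinatorial identities $\binom{\fn - \fn_A}{\fn_B}\binom{\fn}{\fn_A} = \binom{\fn}{\fn_A + \fn_B}\binom{\fn_A + \fn_B}{\fn_A}$ to reproduce exactly the coproduct formula evaluated at $(\sigma, A \cup B, \fn_A + \fn_B, \epsilon + \epsilon^F_B)$ in the first factor and at $(\sigma, A \cup B, \ldots)$ in the second. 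After applying $\rcont_1$ on both tensor factors, one sees that each summand is again of the form \eqref{eq:basis_of_H_diamond_C}: the left factor is built from $\sigma \vert_{A \cup B} \in \rtree_1$ (still conforming to the rule since $\rtree_1$ is stable under taking the coloured subforest) and the right factor is built from $\sigma$ itself. The same argument, carried out for $\sigma \in \rtree_\circ$ in place of $\rtree_1$, yields the coaction claim for $H_\circ^C$, because the right-hand tensor factor keeps the original connected tree structure.

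The main obstacle is verifying that after $A$ is absorbed into the colouring and then $B$ is chosen as a subforest of the coloured forest, the resulting union $A \cup B$ still belongs to $\fU_1(\sigma)$ in the sense of Definition~\ref{def:coproduct_for_decorated_forests}, and that the boundary-edge decorations recombine correctly; this is the technical core of the argument and is precisely the near-coassociativity identity $(\Delta_1 \otimes \mathrm{id})\Delta_1 = (\mathrm{id} \otimes \Delta_1)\Delta_1$ (modulo $\rcont_1$) proved for $H_1$ in \cite{bruned_2019}. Once this identity is in hand, both subbialgebra and comodule statements reduce to bookkeeping of the decoration maps $\fn_A, \epsilon^F_A$, and the rule-conforming property is preserved because extracting a subforest of an element of $\rtree_\circ$ (resp.\ $\rtree_1$) and colouring it by $1$ produces precisely the situation encoded in \eqref{eq:basis_of_H_diamond_C}.
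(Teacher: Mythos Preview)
Your proposal is correct and follows essentially the same route as the paper, which in its published form simply cites \cite[Lemma~5.25 and Lemma~5.28]{bruned_2019}. Both arguments rest on the coassociativity identity $(\Delta_1\otimes\mathrm{id})\Delta_1=(\mathrm{id}\otimes\Delta_1)\Delta_1$ together with the observation that for $\tau\in\rtree_1$ the first tensor components of $(\rcont_1\otimes\rcont_1)\Delta_1\tau$ land back in $\rtree_1$; the paper's more detailed internal argument uses coassociativity as a black box to bootstrap from $\rtree_1$ to all of $\rbasis(H_1^C)$, whereas you attempt to unwind the combinatorics of the pulled-back subforest $A\cup B$ and the binomial recombination directly before invoking the same identity anyway. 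The only point to tighten is your parenthetical ``$\rtree_1$ is stable under taking the coloured subforest'': this is where normality of the rule $R$ enters, and it is worth making that dependence explicit.
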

\begin{proof}
  This is essentially proven in \cite[Lemma 5.25 and Lemma 5.28]{bruned_2019}.
  \begin{calc} Here we give a self-contained proof.
  We start to prove $H^C_1$ is a subbialgebra of $H_1$. Recalling that $\rtree_1$ is closed under multiplication,
  it is easy to see $H^C_1$ is closed under multiplication as well. To prove $H^C_1$ is closed under coproduct,
  one first notes that if $\tau \in \rtree_1$, then the first components and the second components appearing in
  the coproduct formula \eqref{eq:coproduct_formula} of $(\rcont_1 \otimes \rcont_1) \Delta_1 \tau$ belong to
  (up to multiplication by a nonzero constant) $\rtree_1$ and $\rbasis(H_1^C)$ respectively.
  Now suppose $\tau_2 = (F_2, \hat{F}_2)^{\fn_2, \fo_2}_{\fe_2} \in \rbasis(H^C_1)$ and write
  \begin{Cequation}\label{eq:tau_2_contraction}
    \tau_2 = \rcont_1
    (F_1, \hat{F}_1 \cup_1 A_1,
    \fn_1 - \fn_{A_1}, \fn_{A_1} + \pi(\epsilon^{F_1}_{A_1} - \fe \indic_{A_1}),
    \fe \indic_{F_1 \setminus A_1} + \epsilon^{F_1}_{A_1})
  \end{Cequation}
  as in \eqref{eq:basis_of_H_diamond_C} and set $\tau_1 \defby (F_1, \hat{F}_1)^{\fn_1, \fo_1}_{\fe_1}$.
  To show $H^C_1$ is closed under coproduct, it suffices to show the first and the second components appearing in
  $(\rcont_1 \otimes \rcont_1) \Delta_1 \tau_2$ belong to $\rbasis(H_1^C)$. This follows from the identity
  \begin{Cequation}\label{eq:coproduct_tau_1}
    (\Id_{H_1} \otimes (\rcont_1 \otimes \rcont_1) \Delta_1) (\rcont_1 \otimes \rcont_1) \Delta_1 \tau_1
    = ( (\rcont_1 \otimes \rcont_1) \Delta_1\otimes\Id_{H_1} ) (\rcont_1 \otimes \rcont_1) \Delta_1 \tau_1.
  \end{Cequation}
  Indeed, suppose $\sigma$ appears in the first components of $(\rcont_1 \otimes \rcont_1) \Delta_1 \tau_2$.
  Then, $\sigma$ is among
  the second components appearing in
  \begin{equation*}
    (\Id_{H_1} \otimes (\rcont_1 \otimes \rcont_1) \Delta_1) (\rcont_1 \otimes \rcont_1) \Delta_1 \tau_1
  \end{equation*}
  and hence, in view of \eqref{eq:coproduct_tau_1},
  among the second components appearing in
  \begin{equation*}
    ( (\rcont_1 \otimes \rcont_1) \Delta_1\otimes\Id_{H_1} ) (\rcont_1 \otimes \rcont_1) \Delta_1 \tau_1.
  \end{equation*}
  However, we know that the first components of $(\rcont_1 \otimes \rcont_1) \Delta_1 \tau_1$ belongs to $\rtree_1$
  and that for $\tau' \in \rtree_1$ the second components in $(\rcont_1 \otimes \rcont_1) \Delta_1 \tau'$ belong to
  $\rbasis(H^C_1)$.
  This implies $\sigma \in \rbasis(H^C_1)$. One can similarly argue that the second components
  appearing in $(\rcont_1 \otimes \rcont_1) \Delta_1 \tau_2$ belong to $\rbasis(H^C_1)$.

  We move to prove $H^C_2$ is a subbialgebra of $H_2$.
  It suffices to show that both the first components and the second components appearing in
  $(\hat{\rcont_2} \otimes \hat{\rcont_2}) \Delta_2 \tau$ for $\tau \in \rbasis(H^C_2)$ belong to
  $\rbasis(H^C_2)$.
  Indeed, suppose $\tau = \tau_2 \in \rbasis(H^C_2)$ is represented as in \eqref{eq:tau_2_contraction}
   and define $\tau_1 \in \rtree_2$ in the same way.
  The claim is a consequence of the coaction identity
  \begin{equation*}
    (\Id_{H_1} \otimes (\hat{\rcont}_2 \otimes \hat{\rcont}_2) \Delta_2) (\rcont_1 \otimes \rcont)\Delta_1 \tau_1
    = \rmult^{(13)(2)(4)} \rcont_1^{\otimes 4} (\Delta_1 \otimes \Delta_1) (\Id_{H_2} \otimes \rcont)\Delta_2 \tau_1.
  \end{equation*}
  In fact, the first components appearing in $(\hat{\rcont_2} \otimes \hat{\rcont_2}) \Delta_2 \tau_2$
  are among the second components of
  \begin{equation*}
    \rcont_1^{\otimes 4} (\Delta_1 \otimes \Delta_1) (\Id_{H_2} \otimes \rcont)\Delta_2 \tau_1,
  \end{equation*}
  which belong to $\rbasis(H^C_2)$ since the first components appearing in $\Delta_2 \tau_1$ belong to
  $\rtree_2$ again. One can similarly argue for the second components
  appearing in $(\hat{\rcont_2} \otimes \hat{\rcont_2}) \Delta_2 \tau_2$, by noting that
  the second components appearing in $\Delta_2 \tau_1$ belong to $\rtree_2$ again.

  Finally, we prove the claims in the lemma after ``Furthermore".
  As for Proposition~\ref{prop:forest_hopf_algebra}-(iii), it suffices to show that the first and the second components
  appearing in $(\rcont_1 \otimes \rcont) \Delta_1 \tau$ for $\tau \in \rbasis(H^C_2)$ belong to
  $\rbasis(H^C_1)$ and $\rbasis(H^C_2)$ respectively.
  This is a consequence of the coproduct identity \cite[Proposition 3.11]{bruned_2019}
  \begin{Cequation}\label{eq:coprod_1_for_tau_1}
    (\rcont_1 \otimes \rcont_1 \otimes \rcont)(\Id_{\rforest} \otimes \Delta_1) \Delta_1 \tau_1
    =(\rcont_1 \otimes \rcont_1 \otimes \rcont) (\Delta_1 \otimes \Id_{\rforest}) \Delta_1 \tau_1
    , \quad \tau_1 \in \rtree_2.
  \end{Cequation}
  Indeed, suppose $\sigma$ appears in the first components
  appearing in $(\rcont_1 \otimes \rcont) \Delta_1 \tau$ for some $\tau \in \rbasis(H^C_2)$.
  Then, $\sigma$ is among the second components appearing in \eqref{eq:coprod_1_for_tau_1} for some $\tau_1 \in \rtree_2$.
  Since the first components of $\Delta_1 \tau_1$ belong to $\rtree_1$, we conclude $\sigma$ belongs to $H^C_1$.
  One can similarly argue for the second components
  appearing in $(\rcont_1 \otimes \rcont) \Delta_1 \tau$.
  The claims for Proposition~\ref{prop:forest_coaction} can be proved similarly.
  \end{calc}
\end{proof}
\begin{definition}\label{def:basis_of_H_R}
  For $\diamond \in \{1, \circ \}$  we denote by $\gls{H_R_diamond}$ the free vector space generated by
  \begin{equation*}
    \rbasis(H^R_{\diamond}) \defby \set{(F, \hat{F})^{\fn, \fo}_{\fe} \in \rbasis(H^C_{\diamond}) \given (F, \hat{F}
    \indic_{\hat{F} \neq 1})^{\fn, 0}_{\fe} \in
    \rtree_{\diamond}}.
  \end{equation*}
\end{definition}
\begin{definition}\label{rem:space_diamond_is_a_subset_of_hat_space_diamond}
  We denote by $\rspace_1$ the free vector space generated by
  \begin{equation*}
    \rbasis(\rspace_1) \defby \set{\tau \in \rbasis(H^C_1) \given (F, 0)^{\fn, 0}_{\fe} \in \rtree_- \text{ for every connected component }
    (F, \hat{F})^{\fn, \fo}_{\fe} \text{ of } \tau}
  \end{equation*}
  and by $\rspace_2$ the free vector space generated by $\rbasis(\rspace_2)$, where $\tau \in \rbasis(\rspace_2)$
  if and only if
  \begin{equation*}
    \hat{\rcont}_2 \big[\rint_{k_1}(\tau_1) \cdots \rint_{k_n}(\tau_n) {\color{blue} \bullet}^{m} \big]
  \end{equation*}
  for $n \in \N_0$, $k_1, \ldots, k_n, m \in \N_0^d$ and $\tau_1, \ldots, \tau_n \in H^R_{\circ}$ such that
  $\abs{\rint_{k_j}(\tau_j)}_+ > 0$ for every $j=1, \ldots, n$.
  We denote by $\rproj_1^C:H_1^C \to \rspace_1$ the natural projection.
  We note that $\rspace_1$ is an algebra under the forest product and $\rspace_2$ is an algebra under the tree product.
\end{definition}
\begin{proposition}[{\cite[Proposition~5.35]{bruned_2019}}]\label{prop:def_of_renormalization_structures}
  The linear map
  \begin{equation*}
    \Delta_1 \defby (\rproj_1^C \otimes \rproj_1^C) (\rcont_1 \otimes \rcont_1) \Delta_1: \rspace_1
    \to \rspace_1 \otimes \rspace_1
  \end{equation*}
  defines a coproduct over the algebra $\rspace_1$ (with the forest product as multiplication).
  With this coproduct and the counit as in Proposition~\ref{prop:forest_hopf_algebra}, $\rspace_1$ is a Hopf algebra.
  Furthermore, the vector space $H^R_{\circ}$ is a right comodule over $\rspace_1$ with coaction
  \begin{equation*}
    \gls{coaction_circ_-} \defby (\rproj_1^C \otimes \Id) (\rcont_1 \otimes \rcont) \Delta_1:
    H^R_{\circ} \to \rspace_1 \otimes H^R_{\circ}.
  \end{equation*}
\end{proposition}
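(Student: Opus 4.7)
The plan is to verify all the structural statements by pulling back the work already done at the level of $H_1^C$ and $H_{\circ}^C$ (Lemma~\ref{lem:H_circ_subbialgebras}) along the projection $\rproj_1^C$, and checking that the extra requirement built into $\rspace_1$ and $H^R_{\circ}$ -- namely that the ``forest--side'' components are in $\rtree_-$ -- is preserved at each step. First I would unpack what $\rproj_1^C$ does: by Definition~\ref{def:spaces_obeying_rule} and Definition~\ref{rem:space_diamond_is_a_subset_of_hat_space_diamond}, it acts as the identity on basis elements all of whose connected components (after removing the colour~$\hat F$) lie in $\rtree_-$, and annihilates every other basis element. In particular, $\rproj_1^C$ is an idempotent projection and, because the forest product is compatible with the condition ``every connected component lies in $\rtree_-$'' (it is literally defined component by component), $\rproj_1^C$ is an algebra morphism from $(H_1^C,\cM)$ to $(\rspace_1,\cM)$.

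The first genuine task, and the step I expect to be the main obstacle, is to check that $\Delta_1 = (\rproj_1^C \otimes \rproj_1^C)(\rcont_1 \otimes \rcont_1) \Delta_1$ lands in $\rspace_1 \otimes \rspace_1$ when applied to $\rspace_1$. For the right-hand tensor factor this is automatic after applying $\rproj_1^C$. The delicate point is the left factor: we need that the non-vanishing contributions on the left, coming from subforests $A \in \fU_1(\tau)$ of a tree $\tau$ with $(F,0)^{\fn,0}_{\fe}\in\rtree_-$, have the property that each connected component of the decorated forest produced by the left factor is itself in $\rtree_-$, \emph{after} the action of $\rproj_1^C$. Here the subcriticality and completeness of the rule $R$ (cf.\ the discussion around Definition~\ref{def:basis_of_H_R}) enters: any $A$ that produces a left-component of non-negative degree is killed by $\rproj_1^C$, and the remaining components obey the rule because removing a subforest of a tree conforming to $R$ and attaching the appropriate $\fe$-decorations produces trees in $\rbasis(H^C_\circ)$ whose underlying trees (colour-independent part) strongly conform to $R$; combined with negative degree this puts them in $\rtree_-$.

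Once well-definedness is in hand, coassociativity $(\Delta_1 \otimes \Id)\Delta_1 = (\Id \otimes \Delta_1) \Delta_1$ on $\rspace_1$ is inherited from the coassociativity of $(\rcont_1 \otimes \rcont_1)\Delta_1$ on $H_1^C$ proved in Lemma~\ref{lem:H_circ_subbialgebras}; concretely, one uses that $\rproj_1^C$ commutes past $(\rcont_1 \otimes \rcont_1)\Delta_1$ in the sense that inserting $\rproj_1^C$ between iterated applications of $(\rcont_1 \otimes \rcont_1)\Delta_1$ cannot introduce new terms (the extra terms it would kill already vanish on $\rspace_1 \otimes \rspace_1 \otimes \rspace_1$ by the well-definedness step). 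Multiplicativity $\Delta_1(\tau \sqcup \sigma) = \Delta_1(\tau)\sqcup\Delta_1(\sigma)$ follows from the analogous identity on $H_1^C$ combined with the fact that $\rproj_1^C$ is an algebra morphism. The counit axioms with $\indic'_{H_1}$ are checked directly on the explicit coproduct formula \eqref{eq:coproduct_formula} by specialising to $A=\emptyset$ and $A=F$. Since $\rspace_1$ is graded by $|\cdot|_-$ and connected (the only element of degree $0$ is $\emptyset$, which is the unit), the bialgebra is automatically a Hopf algebra, with antipode constructed inductively via Takeuchi's formula; no separate work is needed.

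Finally, for the comodule statement, I would repeat the same two-step argument with $H^R_{\circ}$ in place of $\rspace_1$ on the right factor. Well-definedness of $\Delta^R_{\circ,-}$ into $\rspace_1 \otimes H^R_{\circ}$ reduces, on the left factor, to the same $\rtree_-$-preservation as above, and on the right factor to noting that the coloured-cut in \eqref{eq:coproduct_formula} produces an element of $\rbasis(H^C_{\circ})$ whose uncoloured shape lies in $\rtree_{\circ}$, which is exactly the condition defining $\rbasis(H^R_{\circ})$. The coaction identity
\[
(\Delta_1 \otimes \Id)\Delta^R_{\circ,-} = (\Id \otimes \Delta^R_{\circ,-})\Delta^R_{\circ,-}
\]
is then pulled back from the coassociativity/coaction identity on $(H_1^C, H_{\circ}^C)$ in Lemma~\ref{lem:H_circ_subbialgebras}, using once again that inserting $\rproj_1^C$ in the left slot is harmless on the image of $\Delta^R_{\circ,-}$. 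The counit axiom on the comodule is immediate.
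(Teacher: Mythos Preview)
The paper does not give its own proof of this proposition: it is stated with a direct citation to \cite[Proposition~5.35]{bruned_2019} and no argument follows. So there is nothing in the present paper to compare your proposal against; the authors simply import the result.

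On the substance of your sketch: the overall architecture---pulling the Hopf and comodule structure back from $H_1^C$, $H_\circ^C$ through the projection $\rproj_1^C$, and using completeness of the rule to ensure the left tensor factors land in $\rtree_-$---is the right one, and matches the strategy of \cite{bruned_2019}. There is, however, one step you pass over too quickly. For coassociativity you write that ``inserting $\rproj_1^C$ between iterated applications of $(\rcont_1\otimes\rcont_1)\Delta_1$ cannot introduce new terms''. What actually has to be shown is a compatibility of the form
\[
(\rproj_1^C\otimes\rproj_1^C\otimes\rproj_1^C)(\Delta_1\otimes\Id)\,(\rcont_1\otimes\rcont_1)\Delta_1
=(\rproj_1^C\otimes\rproj_1^C\otimes\rproj_1^C)(\Delta_1\otimes\Id)\,(\rproj_1^C\otimes\Id)(\rcont_1\otimes\rcont_1)\Delta_1
\]
on $\rspace_1$, i.e.\ that a term killed by the intermediate $\rproj_1^C$ would anyway be killed after one further coproduct and projection. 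This is not automatic: it requires knowing that if a connected component of the left factor has nonnegative $|\cdot|_-$-degree (so is annihilated by $\rproj_1^C$), then every way of further splitting it produces at least one piece of nonnegative degree. In \cite{bruned_2019} this is handled by a careful analysis of how the degree interacts with the extraction--contraction operation and with the definition of $\rtree_-$; your sketch asserts it but does not supply the mechanism. The same issue recurs in your comodule argument. Everything else in your outline (multiplicativity via $\rproj_1^C$ being an algebra map, counit by inspection, antipode from graded connectedness) is sound.
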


\begin{remark}

  As shown in \cite[Proposition~5.34]{bruned_2019}, one can view $\rspace_2$ as a Hopf algebra with grade $\abs{\cdot}_+$
  and one can define a coaction $\Delta^{\circ}_+: H^R_{\circ} \to H^R_{\circ} \otimes \rspace_2$, of which we do not need the precise definition here but will only use the recursive formula~\cite[Proposition~4.17]{bruned_2019}.

\end{remark}
\begin{definition}\label{def:def_of_gPAM_reg_str}
  We set
  \begin{equation*}
    \rspace \defby H^R_{\circ}, \quad \rspace_+ \defby \rspace_2, \quad \rspace_- \defby \rspace_1,
  \end{equation*}
  Then, in the language of \cite{bailleul_hoshino_20},
  the pair $\gls{regularity_structure}$ is a concrete regularity structure and the
  pair $\gls{renormalization_structure}$ is a renormalization structure for the generalized PAM.
  For $\diamond \in \{-, +\}$, we denote
  \begin{multicols}{2}
  \begin{itemize}
    \item by $\gls{coproduct_diamond}$ the coproduct of $\rspace_{\diamond}$,
    \item by $\gls{unit_diamond}$ the unit of $\rspace_{\diamond}$,
    \item by $\gls{counit_diamond}$ the counit of $\rspace_{\diamond}$ and
    \item by $\gls{antipode_diamond}$ the antipode of $\rspace_{\diamond}$.
  \end{itemize}
  \end{multicols}
  Recall that the product $\rmult_-$ of $\rspace_-$ is the forest product
  while the product $\rmult_+$ of $\rspace_+$ is the tree product.
  We write $\unit \in \rspace$ for $\bullet^0$.
  For $\diamond \in \{-, +\}$, the Hopf algebra $\rspace_{\diamond}$ is graded with $\abs{\cdot}_{\diamond}$.
  The vector space $\rspace$ is graded both with $\abs{\cdot}_-$ and with $\abs{\cdot}_+$.
\end{definition}
\begin{definition}
  As shown in \cite[Proposition~5.39]{bruned_2019}, if
  \begin{equation*}
    A \defby \set{\abs{\tau}_+ \given \tau \in \rbasis(\rspace)},
  \end{equation*}
  and we denote by $G$ the character group of $\rspace_+$,
  the triplet $(A, \rspace, G)$
  is a regularity structure in the sense of \cite[Definition 2.1]{hairer_theory_2014}.
  We have the graded decomposition
  \begin{equation*}
    \rspace = \oplus_{\gamma \in A} \rspace_{\gamma}, \quad \rspace_{\gamma} \defby \spn \set{\tau \in \rbasis(\rspace) \given
    \abs{\tau}_+ = \gamma}.
  \end{equation*}
  We write $\gls{reg-projection}$ for the natural projection from $\rspace$ to $\rspace_{< \beta} \defby \oplus_{\gamma < \beta} \rspace_{\gamma}$.
\end{definition}
\begin{definition}\label{def:product_in_reg_sp}
  If $\tau, \sigma \in \rbasis(\rspace)$ are such that $\tau \sigma \in \rbasis(\rspace)$, we write
  $\tau \rprod \sigma \defby \tau \sigma$. We extend the product $\rprod$ bilineary.
  Note that the product $\rprod$ is not defined for all pairs $(\tau, \sigma)$.
\end{definition}
The following lemma essentially states that the product $\rprod$ is regular in the sense of
\cite[Definition 4.6]{hairer_theory_2014}.
\begin{lemma}[{\cite[Proposition 3.11]{bruned_2019}}]\label{lem:product_is_regular}
  If $\tau, \sigma \in \rbasis(\rspace)$ are such that $\tau \sigma \in \rbasis(\rspace)$, then
  $\Delta^{\circ}_+(\tau \rprod \sigma) = \Delta^{\circ}_+(\tau) \Delta^{\circ}_+(\sigma)$.
\end{lemma}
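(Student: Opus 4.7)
The plan is to reduce the statement to the general multiplicativity property of the coaction $\Delta_+^{\circ}$ on the full Hopf/comodule structure developed in Section 3 and 4 of~\cite{bruned_2019}, and then check that restricting to basis elements $\tau,\sigma \in \rbasis(\rspace)$ with $\tau\sigma \in \rbasis(\rspace)$ does not disturb this identity.

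First, I would unpack the recursive formula for $\Delta_+^{\circ}$ from~\cite[Proposition~4.17]{bruned_2019}: for a decorated tree $T^{\fn,\fo}_{\fe}$ with edges $e_1,\ldots,e_n$ leaving the root and subtrees $\tau_1,\ldots,\tau_n$ attached through these edges, $\Delta_+^{\circ}(T^{\fn,\fo}_{\fe})$ is expressed as a sum over splittings of the root decoration $\fn$ by the binomial Leibniz rule, together with the coactions $\Delta_+^{\circ}(\tau_j)$ applied on each subtree and a factor coming from the integration edges $\rint_{k_j}$. The key structural fact is that the formula is \emph{local at the root}: it distributes multiplicatively over the edges leaving the root.

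Second, I would proceed by induction on the total number of edges. The base case, where both $\tau$ and $\sigma$ are of the form $\bullet^{m}$, reduces to the polynomial Leibniz identity $\Delta_+^{\circ}(\bullet^{m+l}) = \sum \binom{m+l}{k}\bullet^{m+l-k}\otimes X^k$, which factors as $\Delta_+^{\circ}(\bullet^{m})\cdot\Delta_+^{\circ}(\bullet^{l})$ by the standard binomial identity. For the inductive step, write $\tau = \rjoin^{\ft_1}_{k_1}(\tau_1)\cdots \rjoin^{\ft_l}_{k_l}(\tau_l)\bullet^{m}$ and $\sigma = \rjoin^{\ft'_1}_{k'_1}(\sigma_1)\cdots \rjoin^{\ft'_p}_{k'_p}(\sigma_p)\bullet^{m'}$. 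By definition of the tree product, $\tau \rprod \sigma$ is the tree obtained by joining the roots and amalgamating the decorations at the root (sums of $\fn$, $\fo$, products of edge sets). The recursive formula applied to $\tau \rprod \sigma$ then splits naturally into the contribution coming from edges of $\tau$ and those of $\sigma$, and the Leibniz rule for the root decoration $(\fn+\fn') = \sum \binom{\fn+\fn'}{\cdot}$ factors as a product of the Leibniz splittings on $\fn$ and $\fn'$. Applying the inductive hypothesis to each subtree $\tau_j$ and $\sigma_i$ yields the desired factorization.

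The main obstacle is the careful verification that the assumption $\tau\sigma \in \rbasis(\rspace)$ is preserved through the inductive step: one must check that whenever $\Delta_+^{\circ}(\tau\rprod\sigma)$ produces a pair $(\alpha,\beta)\in \rspace\otimes\rspace_+$, the corresponding pairs from $\Delta_+^{\circ}(\tau)$ and $\Delta_+^{\circ}(\sigma)$ are themselves admissible (i.e., the rule $R$ is respected), so that the products $\alpha_\tau \rprod \alpha_\sigma$ and $\beta_\tau \cdot \beta_\sigma$ remain in $\rspace$ and $\rspace_+$ respectively. This is ensured by the fact, established in Lemma~\ref{lem:H_circ_subbialgebras} and Proposition~\ref{prop:def_of_renormalization_structures}, that $H^R_\circ$ is a sub-comodule and that the projections $\rproj^C$ and $\hat{\rcont}_2$ commute appropriately with the forest/tree multiplications; equivalently, it is exactly the content of the coassociativity/compatibility identity~\cite[Proposition~3.11]{bruned_2019}, from which the result can be extracted directly.
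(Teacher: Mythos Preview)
The paper does not supply its own proof here; the lemma is stated purely as a citation of \cite[Proposition~3.11]{bruned_2019}, so there is nothing to compare against beyond that reference. Your proposal is correct in substance and in fact far more detailed than what the paper offers.

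A couple of remarks on your route. The inductive argument via the recursive formula of \cite[Proposition~4.17]{bruned_2019} works, but it is slightly circuitous: that proposition already \emph{characterizes} $\Delta^{\circ}_+$ as the unique map which is multiplicative for the tree product and satisfies the stated formula on $\rint_k(\tau)$ and on polynomials, so once one accepts Proposition~4.17 the present lemma is immediate and no induction is needed. The more direct route---and the one closer to what \cite[Proposition~3.11]{bruned_2019} actually proves---is to return to the explicit coproduct formula \eqref{eq:coproduct_formula} (with index $2$ in place of $1$): for the tree product $\tau\sigma$, the admissible subforests $A\in\fU_2(\tau\sigma)$ are exactly the joins $A_\tau\cup A_\sigma$ of admissible subforests of $\tau$ and of $\sigma$, and the decorations $\fn_A$, boundary maps $\epsilon^F_A$, and contractions all split accordingly. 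Multiplicativity then falls out of the formula without recursion.

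Your final paragraph on rule preservation is a red herring. The hypothesis $\tau\sigma\in\rbasis(\rspace)$ is already given, and the multiplicativity identity holds at the level of the underlying operator $\Delta_2$ on the larger space before any rule truncation; the compatibility of the projections $\hat{\rcont}_2$ and of the restriction to $H^R_\circ$ with the tree product is built into their definitions (cf.\ Lemma~\ref{lem:H_circ_subbialgebras}). You do not need to re-verify admissibility of each tensor factor inside the induction.
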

\begin{definition}\label{def:derivative_in_reg_sp}
  Let $V$ be the subspace of $\rspace$ generated by
  \begin{equation*}
    \rint(\tau_1) \cdots \rint(\tau_n), \qquad \tau_1, \ldots, \tau_n \in \rspace.
  \end{equation*}
  For $i \in \{1, \ldots, d\}$, we define the linear map $\gls{derivative}: V \to \rspace$, called a \emph{derivative}, by
  \begin{equation*}
    \rderivative_i [\rint(\tau_1) \cdots \rint(\tau_n)]
    = \sum_{j=1}^d \rint_{i}(\tau_j) \prod_{k \neq j} \rint(\tau_k).
  \end{equation*}
\end{definition}
\begin{calc}
\begin{definition*}
  We set
  \begin{align*}
    \hat{\Delta}_- &\defby (\rproj_1^C \otimes \Id_{\hat{\rspace}_-}) (\rcont_1 \otimes \rcont_1) \Delta_1: \rspace_- \to \rspace_- \otimes \hat{\rspace}_-, \\
    \hat{\Delta}_+ &\defby (\Id_{\hat{\rspace}_+} \otimes \rproj_2 \hat{\rcont}_2)
    \Delta_2: \hat{\rspace}_+ \to \hat{\rspace}_+ \otimes \rspace_+.
  \end{align*}
  In view of Remark~\ref{rem:space_diamond_is_a_subset_of_hat_space_diamond}, one can regard $\rspace_{\diamond}$
  as a subspace of
  $\hat{\rspace}_{\diamond}$. We write $\rinjection_{\diamond}:\rspace_{\diamond} \to \hat{\rspace}_{\diamond}$
  for the injection.
\end{definition*}
\end{calc}
\begin{proposition}[{\cite[Section 6.1]{bruned_2019}}]
  Let $\rinjection_-: \rspace_- \to H^R_1$ be the natural projection.
    Then, there exists a unique algebra morphism $\hat{\rantipode}_-: \rspace_- \to H^R_1$ such that
    \begin{equation*}
      \rmult_{H_1} (\hat{\rantipode}_- \otimes \Id_{\hat{\rspace}_-}) (\rcont_1 \otimes \rcont_1) \Delta_1 \rinjection_-
      = \counit_-(\cdot) \unit_{H_1} \quad \text{on } \rspace_-,
    \end{equation*}
    where $\rmult_{H_1}$ is the product in $H_1$.
\end{proposition}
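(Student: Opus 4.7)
The plan is to construct $\hat{\rantipode}_-$ by induction on the grading $|\cdot|_-$ and extend multiplicatively from trees in $\rtree_-$ to all of $\rspace_-$. Since $\rspace_-$ is freely generated as an algebra by $\rtree_-$ under the forest product (Definition~\ref{rem:space_diamond_is_a_subset_of_hat_space_diamond}), uniqueness will follow automatically once $\hat{\rantipode}_-$ is pinned down on single trees, and the extension will clearly be an algebra morphism. So I would first reduce the desired identity, when tested on a tree $\tau\in\rtree_-$, to a recursion that determines $\hat{\rantipode}_-(\tau)$ from its values on trees of strictly smaller $|\cdot|_-$-degree.

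Concretely, for $\tau\in\rtree_-$ I would split the coproduct as
\[
 (\rcont_1 \otimes \rcont_1) \Delta_1 \rinjection_-(\tau) \;=\; \tau \otimes \unit_{H_1} \;+\; \sum_i \sigma^{(1)}_i \otimes \sigma^{(2)}_i,
\]
where the term $\tau\otimes\unit_{H_1}$ comes from the choice $A=F$ (with no edge decoration) in \eqref{eq:coproduct_formula}, and the remaining terms satisfy $|\sigma^{(1)}_i|_- < |\tau|_-$ — this strict drop in $|\cdot|_-$ is where I use that $H_1$ is graded with respect to $|\cdot|_-$ (Proposition~\ref{prop:forest_hopf_algebra}) together with the fact that $|\unit_{H_1}|_- = 0$ and $|\sigma^{(2)}_i|_- \geq 0$ on all factors appearing. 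Since $\counit_-(\tau) = 0$ for any nontrivial tree, the required identity becomes
\[
 \hat{\rantipode}_-(\tau) \;=\; -\sum_i \hat{\rantipode}_-(\sigma^{(1)}_i)\, \sigma^{(2)}_i,
\]
with multiplication taken in $H_1$. I would apply the projection $\rproj_1^C$ implicitly when identifying $\sigma^{(1)}_i \in \rspace_-$, and use Lemma~\ref{lem:H_circ_subbialgebras} to ensure the relevant terms remain in $H^R_1$. The base case is when $|\tau|_-$ is minimal in $\rtree_-$: then the sum is empty except for the $A=\emptyset$ contribution giving $\unit_{H_1}\otimes \tau$, forcing $\hat{\rantipode}_-(\tau) = -\rinjection_-(\tau)$ (modulo decoration corrections from $\epsilon^F_A$ and $\fn_A$).

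Having defined $\hat{\rantipode}_-$ on trees, extend it multiplicatively to $\rspace_-$; since $\rmult_-$ on $\rspace_-$ is the forest product, and $\Delta_1$ is multiplicative in the sense of Proposition~\ref{prop:forest_hopf_algebra}, both sides of the identity split as products over connected components of any forest, so the identity extends automatically from trees to all of $\rspace_-$. For uniqueness, any other algebra morphism satisfying the same identity must agree with the recursion above, hence agree with $\hat{\rantipode}_-$ on trees and thus on all forests. The main obstacle will be a careful bookkeeping to verify that the terms $\sigma^{(1)}_i$ landing in the first tensor factor actually lie in $\rspace_- \subseteq H^R_1$ after projection (as opposed to the larger $H_1^C$) so that the recursion makes sense inside $\rspace_-$; this requires checking that the coproduct $\Delta_1$ restricted to $\rspace_-$ lands in $\rspace_- \otimes H^R_1$, which is an immediate consequence of Lemma~\ref{lem:H_circ_subbialgebras} once the correct subspace structure is tracked. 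The strict drop in $|\cdot|_-$ for all nonleading terms is what makes the recursion well-posed and, via Lemma~\ref{lem:structure_of_rtree_minus}-type finiteness, terminating in finitely many steps.
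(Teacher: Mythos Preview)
The paper does not prove this proposition itself; it cites \cite[Section~6.1]{bruned_2019} and then records the recursive formula \eqref{eq:antipode_minus_recursive} (also from \cite{bruned_2019}) as the only property of $\hat{\rantipode}_-$ used later. Your overall strategy---split off the $\tau\otimes\unit_{H_1}$ term in the coproduct, solve for $\hat{\rantipode}_-(\tau)$ recursively, and extend multiplicatively from $\rtree_-$ to $\rspace_-$---is exactly the standard approach and yields precisely \eqref{eq:antipode_minus_recursive}.

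There is, however, a gap in how you set up the induction. You claim $|\sigma^{(1)}_i|_- < |\tau|_-$ because $|\sigma^{(2)}_i|_- \geq 0$, but the latter is neither obvious nor justified: the second factor still carries the $\Xi$-edges of $\tau$ not lying in $A$, each contributing $-2+\delta$, and nothing you cite forces the total to be nonnegative. (Note also that the term $\unit_-\otimes\tau$ already has $|\unit_-|_-=0>|\tau|_-$, contradicting your stated inequality for ``all remaining terms''.) More seriously, even if the forest $\sigma^{(1)}_i$ has small $|\cdot|_-$, you compute $\hat{\rantipode}_-(\sigma^{(1)}_i)$ multiplicatively from its tree components, and those individual trees need not have $|\cdot|_-$-degree below $|\tau|_-$---a forest of several trees each of degree close to $0$ can have arbitrarily negative total degree---so your inductive hypothesis may not cover them. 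The robust fix (and the one used in \cite{bruned_2019}) is to induct instead on the number of edges: for $A\neq\tau$ surviving projection to $\rspace_-$, every connected component of $A$ is a proper subtree of $\tau$ and therefore has strictly fewer edges; the recursion on $\rtree_-$ then terminates trivially. With this change of induction variable your argument goes through.
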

\begin{definition}\label{def:twisted_antipode}
  We call $\gls{antipode_twisted}$  a \emph{negative  twisted antipode}.
\end{definition}
As for $\hat{\rantipode}_-$, we only use the following property.
As shown in \cite[Proposition 6.6]{bruned_2019}, one has the following recursive formula:
\begin{equation}\label{eq:antipode_minus_recursive}
  \hat{\rantipode}_- \tau = -\rmult_{H_1} (\hat{\rantipode}_- \otimes \Id_{H^R_1})
  (\hat{\Delta}_- \tau - \tau \otimes \unit_-),
\end{equation}
where $\hat{\Delta}_- \defby (\rproj_1^C \otimes \Id_{H^R_1}) (\rcont_1 \otimes \rcont_1) \Delta_1$.

\subsection{Models and modelled distributions}
Recall the notion of \emph{models} $\gls{model}  = (\Pi, \Gamma)$ from \cite[Definition~2.17]{hairer_theory_2014}. In our situation (generalized parabolic Anderson model), the scaling $\fs$ is uniform:
$\fs = (1, 1, \ldots, 1)$.
We also need the functional $\vertiii{\cdot}_{\gamma; \cptset}$ and the psuedometric $\vertiii{\cdot; \cdot}_{\gamma; \cptset}$
from \cite[(2.16) and (2.17)]{hairer_theory_2014}.
\begin{calc}
With the notion of models, one can associate to an abstract element of $\rspace$ a concrete distribution on $\R^d$.
We list some definitions related to models from \cite{hairer_theory_2014} and \cite{bruned_2019}.
\begin{definition*}
  We write
  $\rspace_{< \gamma} \defby \oplus_{\beta < \gamma} \rspace_{\beta}$. We write $\rproj_{\beta}:\rspace \to \rspace_{\beta}$
  and $\rproj_{<\gamma}:\rspace \to \rspace_{< \gamma}$ for the natural projections.
  Since $\rspace_{\beta}$ is identified with some Euclidean space, we can equip $\rspace_{\beta}$ with the corresponding Euclidean norm for which we write
  $\norm{\cdot}_{\beta}$.
\end{definition*}
\begin{definition*}[{\cite[Definition 2.17]{hairer_theory_2014}}]
  A \emph{model} for the regularity structure $\rspace$ is a pair $\rmodel = (\Pi, \Gamma)$ with the following properties.
  \begin{itemize}
    \item For each $x \in \R^d$, one has a linear map $\Pi_x: \rspace \to \cS'(\R^d)$ and $\Pi = (\Pi_x)_{x \in \R^d}$.
    \item For each $x, y \in \R^d$, one has $\Gamma_{xy} \in G$, where $G$ is the structure group of $\rspace$ and
    $\Gamma = (\Gamma_{xy})_{x, y \in \R^d}$.
    \item One has the identities $\Gamma_{xy} \Gamma_{yz} = \Gamma_{xz}$ and $\Pi_x \Gamma_{xy} = \Pi_y$ for $x, y , z \in \R^d$.
    \item One has the following analytic estimates.
    For every $\gamma \in \R$ and a compact set $\cptset$ of $\R^d$,
    there exists a constant $C_{\gamma, \cptset}$ such that
    \begin{Cequation}\label{eq:model_analytic_estimates}
      \abs{\inprd{\Pi_x \tau}{\phi_x^{\lambda}}} \leq C_{\gamma, \cptset}  \lambda^{\beta},
      \quad \norm{\rproj_{\alpha} \Gamma_{xy} \tau}_{\alpha} \leq C_{\gamma, \cptset} \abs{x-y}^{\beta - \alpha}
    \end{Cequation}
    for every $\alpha, \beta$ such that $\alpha < \beta < \gamma$,
    every $\tau \in \rbasis(\rspace) \cap \rspace_{\beta}$, every $x,y \in \cptset$ with $\abs{x-y} \leq 1$,
    every $\lambda \in (0, 1)$ and every
    $\phi \in \csp^2(\R^d)$ with $\supp(\phi) \subseteq B(0, 1)$ and
    $\norm{\phi}_{\csp^2(\R^d)} \leq 1$ and where $\phi_x^{\lambda}(y) \defby \lambda^{-d} \phi(\lambda^{-1}(y-x))$.
  \end{itemize}
  We denote by $\vertiii{\rmodel}_{\gamma; \cptset}$ the infinimum of the constants $C_{\gamma, \cptset}$ such that the
  inequalities \eqref{eq:model_analytic_estimates} hold.
  Given another model $\tilde{\rmodel} = (\tilde{\Pi}, \tilde{\Gamma})$, we denote by
  $\vertiii{\rmodel; \tilde{\rmodel}}_{\gamma; \cptset}$ the infinimum of those $C>0$ such that
  \begin{equation*}
      \abs{\inp{(\Pi_x - \tilde{\Pi}_x) \tau}{\phi_x^{\lambda}}} \leq C \lambda^{\beta},
      \quad \norm{\rproj_{\alpha} (\Gamma_{xy} - \tilde{\Gamma}_{xy}) \tau}_{\alpha} \leq C
      \abs{x-y}^{\beta - \alpha},
  \end{equation*}
  where $\alpha$, $\beta$, $x$, $y$, $\tau$, $\lambda$ and $\phi$ range as before.
\end{definition*}
\end{calc}
An important concept in the theory of regularity structures is the \emph{modelled distribution} (\cite[Definition~3.1]{hairer_theory_2014}).
We denote by $\gls{modelled_dist} = \mdspace^{\gamma}(\rmodel)$
the space of modelled distributions with respect to the model $\rmodel$ whose
images are in $\rspace_{<\gamma} = \oplus_{\beta < \gamma} \rspace_{\beta}$.
We set
  \begin{equation*}
    \mdspace_{\alpha}^{\gamma}(\rspace, \rmodel) \defby
    \set{f \in \mdspace^{\gamma}(\rspace, \rmodel)
    \given f \mbox{ is }\oplus_{\alpha \leq \beta < \gamma} \rspace_{\beta}\mbox{-valued}}.
\end{equation*}
We will use the norm $\vertiii{\cdot}_{\gamma; \cptset}$ given by \cite[(3.1)]{hairer_theory_2014}.
\begin{calc}
\begin{definition*}[{\cite[Definition 3.1]{hairer_theory_2014}}]
  Let $\gamma \in \R$ and $\rmodel = (\Pi, \Gamma)$ be a model for $\rspace$.
  We define the space $\mdspace^{\gamma}(\rspace, \rmodel)$
  of \emph{modelled distributions}
  as the space of maps $f: \R^d \to \rspace_{< \gamma} $ such that
  for every compact set $\cptset \subseteq \R^d$ one has
  \begin{equation*}
    \vertiii{f}_{\gamma; \cptset} \defby
    \max_{\beta < \gamma} \sup_{x \in \cptset} \norm{\rproj_{\beta} f(x)}_{\beta}
    + \max_{\beta < \gamma} \sup_{\substack{x,y \in \cptset, \\ \abs{x-y} \leq 1}}
    \frac{\norm{\rproj_{\beta}[f(y) - \Gamma_{yx} f(x)]}_{\beta}}{\abs{x-y}^{\gamma - \beta}} < \infty.
  \end{equation*}
  We set
  \begin{equation*}
    \mdspace_{\alpha}^{\gamma}(\rspace, \rmodel) \defby
    \set{f \in \mdspace^{\gamma}(\rspace, \rmodel)
    \given f \mbox{ is }\oplus_{\alpha \leq \beta < \gamma} \rspace_{\beta}\mbox{-valued}}.
  \end{equation*}
\end{definition*}
\end{calc}
\begin{definition}
  Let $\rmodel$ be a model over $\sT$ and let $\gamma, l > 0$.  By \cite[Theorem 3.10]{hairer_theory_2014}, there exists
  a unique continuous linear operator $\gls{reconstruction} = \reconst^{\rmodel}: \mdspace^{\gamma}(\rspace, \rmodel)
  \to \csp^{\min A}_{\operatorname{loc}}(\R^d)$ with the following property: there exists a $C= C(\gamma,l,\rspace)>0$ such that
  for every compact set $\cptset \subseteq \R^d$
  \begin{equation}\label{eq:modelled_dist_approximation}
    \abs{(\reconst f - \Pi_x f(x))(\phi_x^\lambda)} \le C \lambda^{\gamma}
    \norm{\rmodel}_{\gamma; B(\cptset, l)} \vertiii{f}_{\gamma; B(\cptset, l)}, \quad
    \text{where } \phi^{\lambda}_x \defby \lambda^{-d} \phi(\lambda^{-1}(\cdot - x)),
  \end{equation}
  uniformly over $\phi \in \csp^2(B(0,l))$ with $\norm{\phi}_{\csp^r(\R^d)} \leq 1$,
  $\lambda \in (0, 1)$, $f \in \mdspace^{\gamma}(\rspace, \rmodel)$ and $x \in \cptset$.
%Here the proportional constant $C$ depends only on $\gamma$, $l$ and $\rspace$.
  The operator $\reconst$ is called the \emph{reconstruction operator}.
\end{definition}
\begin{proposition}[{\cite[Theorem 4.7]{hairer_theory_2014}}]\label{prop:product_modelled_distributions}
  Let $V_1$ and $V_2$ be subspaces of $\rspace$ closed under the action of the structure group.
  Suppose the product $\tau_1 \rprod \tau_2$ is well-defined for every $\tau_1 \in V_1$ and $\tau_2 \in V_2$.
  Let $\rmodel$ be a model for $\rspace$ and
  let $f_i \in \mdspace_{\alpha_i}^{\gamma_i}(V_i, \rmodel)$ for $i = 1, 2$.
  Then, if we set $\gamma \defby \min\{\gamma_1 + \alpha_2, \gamma_2 + \alpha_1\}$, one has
  $\rproj_{< \gamma} (f_1 \rprod f_2) \in \mdspace_{\alpha_1 + \alpha_2}^{\gamma}(\rspace, \rmodel)$.
  Moreover, there exists a constant $C \in (0, \infty)$ which depends only on $\rspace$ such that
  \begin{equation*}
    \vertiii{\rproj_{< \gamma} (f_1 \rprod f_2)}_{\gamma;\cptset} \leq
    C (1 + \vertiii{\rmodel}_{\gamma_1 + \gamma_2;\cptset})^2 \vertiii{f_1}_{\gamma_1; \cptset}
    \vertiii{f_2}_{\gamma_2; \cptset} \quad \text{for every compact set } \cptset \subseteq \R^d.
  \end{equation*}
\end{proposition}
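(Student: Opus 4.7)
I would follow the classical strategy from the proof of Theorem~4.7 in \cite{hairer_theory_2014}. Set $f \defby \rproj_{< \gamma}(f_1 \rprod f_2)$ and $\alpha \defby \alpha_1 + \alpha_2$. The fact that $f$ takes values in $\bigoplus_{\alpha \leq \beta < \gamma} \rspace_\beta$ is immediate: each $f_i(x)$ has only components in degrees in $[\alpha_i, \gamma_i)$ and each $\tau_1 \rprod \tau_2$ with $\tau_i \in V_i$ lies in degree $\abs{\tau_1}_+ + \abs{\tau_2}_+$ whenever it is defined. The pointwise bound $\norm{\rproj_\beta f(x)}_\beta \lesssim \vertiii{f_1}_{\gamma_1;\cptset} \vertiii{f_2}_{\gamma_2;\cptset}$ for $\beta < \gamma$ then follows directly from the pointwise bounds on $f_1$ and $f_2$.

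The substantive step is the increment estimate. The key input is that Lemma~\ref{lem:product_is_regular} forces $\Gamma_{yx}$ to act as an algebra morphism on $V_1 \cdot V_2$, via its identification with a character of $\rspace_+$; in particular,
\[
\Gamma_{yx}(f_1(x) \rprod f_2(x)) = (\Gamma_{yx} f_1(x)) \rprod (\Gamma_{yx} f_2(x)).
\]
Using this identity one splits $f(y) - \Gamma_{yx} f(x)$ telescopically as
\begin{align*}
\rproj_{< \gamma}\bigl[(f_1(y) - \Gamma_{yx} f_1(x)) \rprod f_2(y)\bigr] + \rproj_{< \gamma}\bigl[(\Gamma_{yx} f_1(x)) \rprod (f_2(y) - \Gamma_{yx} f_2(x))\bigr].
\end{align*}

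To bound $\norm{\rproj_\beta(\cdot)}_\beta$ for $\beta < \gamma$, I would expand each term over pairs $(\beta_1, \beta_2)$ with $\beta_1 + \beta_2 = \beta$ and $\beta_i \geq \alpha_i$. In the first term, the modelled-distribution estimate gives $\norm{\rproj_{\beta_1}(f_1(y) - \Gamma_{yx} f_1(x))}_{\beta_1} \lesssim \vertiii{f_1}_{\gamma_1;\cptset}\abs{x-y}^{\gamma_1 - \beta_1}$ for $\beta_1 < \gamma_1$ (the component vanishes otherwise), while $\norm{\rproj_{\beta_2} f_2(y)}_{\beta_2}$ is uniformly bounded; since $\beta_2 \geq \alpha_2$, the resulting exponent satisfies $\gamma_1 - \beta_1 = \gamma_1 + \beta_2 - \beta \geq \gamma_1 + \alpha_2 - \beta \geq \gamma - \beta$. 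The second term is handled symmetrically using the triangularity of $\Gamma_{yx}$ to estimate $\norm{\rproj_{\beta_1} \Gamma_{yx} f_1(x)}_{\beta_1}$ via a sum over intermediate degrees $\beta_1' \in [\beta_1, \gamma_1)$, each contributing a factor $\abs{x-y}^{\beta_1' - \beta_1} \vertiii{\rmodel}_{\gamma;\cptset} \vertiii{f_1}_{\gamma_1;\cptset}$, multiplied by the bound $\abs{x-y}^{\gamma_2 - \beta_2}$ on $f_2(y) - \Gamma_{yx} f_2(x)$; the inequality $\gamma \leq \gamma_2 + \alpha_1$ again yields the exponent $\gamma - \beta$.

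The main technical obstacle is the bookkeeping in the second term: applying $\Gamma_{yx}$ to $f_1(x)$ spreads mass across every degree below its original support with polynomial factors in $\abs{x-y}$, and one must check that each such lower-order contribution, after multiplication by $f_2(y) - \Gamma_{yx} f_2(x)$ and projection onto $\rspace_\beta$, still produces the claimed $\abs{x-y}^{\gamma - \beta}$ bound. Tracking these factors carefully also accounts for the quadratic dependence $(1 + \vertiii{\rmodel}_{\gamma_1 + \gamma_2;\cptset})^2$ in the final estimate, one power arising from each occurrence of $\Gamma_{yx}$ in the decomposition.
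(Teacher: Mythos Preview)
The paper does not give its own proof of this proposition; it is simply quoted from \cite[Theorem~4.7]{hairer_theory_2014}. Your proposal correctly reproduces the standard argument from that reference: the telescoping splitting via the multiplicativity of $\Gamma_{yx}$ (guaranteed here by Lemma~\ref{lem:product_is_regular}), followed by degree-by-degree bookkeeping of the increment bounds. One small point you glossed over is that $\Gamma_{yx} f(x) = \Gamma_{yx}\rproj_{<\gamma}(f_1(x)\rprod f_2(x))$ rather than $\rproj_{<\gamma}\Gamma_{yx}(f_1(x)\rprod f_2(x))$; since $\Gamma_{yx}$ can move components of degree in $[\gamma,\gamma_1+\gamma_2)$ down to degrees below $\gamma$, one must separately check that $\rproj_\beta \Gamma_{yx}\rproj_{\geq\gamma}(f_1(x)\rprod f_2(x))$ also obeys the bound $\abs{x-y}^{\gamma-\beta}$, which it does because each such contribution carries a factor $\abs{x-y}^{\delta-\beta}$ with $\delta\geq\gamma$. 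With that addition, your sketch is complete and matches Hairer's proof.
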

\begin{definition}\label{def:function_and_modelled_distritbusion}
    Let $F \in C^{\infty}_b(\R^d)$ and let $V$ be a subspace of $\set{\tau \in \rspace \given \rproj_{<0} \tau = 0}$
   that is closed under the product $\rprod$ and under the action of the structure group.
    We define the map $ F^{\rprod}:V \to V$ by
    \begin{equation*}
      F^{\rprod}(\tau) \defby \sum_{k \in \N_0} \frac{D^k F(\bar{\tau})}{k!} (\tau - \bar{\tau})^{\rprod k},
      \quad \bar{\tau} \defby \rproj_0 \tau.
    \end{equation*}
    According to \cite[Theorem 4.16]{hairer_theory_2014}, if $\gamma > 0$ and $f \in \mdspace^{\gamma}(V, \rmodel)$, then
    one has
    \begin{equation*}
      F^{\rprod}_{\gamma}(f)(x) \defby \rproj_{< \gamma} F^{\rprod}(f(x)) \in \mdspace^{\gamma}(V, \rmodel).
    \end{equation*}
    Furthermore, there exist a constant $C \in (0, \infty)$ and an integer $k \in \N$,
    which depend only on $\rspace$, $F$ and $\gamma$, such that
    \begin{equation}\label{eq:composition_modelled_distribution}
      \vertiii{\rproj_{< \gamma} F(f)}_{\gamma; \cptset} \leq C (1 + \vertiii{\rmodel}_{\gamma; \cptset} +
      \vertiii{f}_{\gamma: \cptset})^k
      \quad \text{for every compact set } \cptset \subseteq \R^d.
    \end{equation}
\end{definition}
\subsection{Operations with kernels}

\begin{definition}\label{def:admissible_kernel}
  A smooth map $K: \R^d \setminus \{0\} \to \R$  with $\supp K\subseteq B(0,1)$ is called an \emph{admissible kernel} if it satisfies
  \cite[Assumption 5.1]{hairer_theory_2014} with $K(x, y) \defby K(x - y)$ 
  \tymchange{
  and if the following are satisfied: 
  \begin{itemize}
    \item with the notation of \cite[Assumption 5.1]{hairer_theory_2014} one has the scaling 
    $\mathfrak{s} = (1, 1, \ldots, 1)$ and the regularization $\beta = 2$;
    \item one has $\int_{\R^d} x^k K(x) dx = 0$ for $\abs{k} \leq 1$;
    \item the difference $K - G$ is smooth on $\mathbb{R}^d$, where $G$ is the Green's function on $\mathbb{R}^d$ (see Section~\ref{subsec:green_function}). 
  \end{itemize}
  The existence of such a kernel is guaranteed by \cite[Lemma~5.5]{hairer_theory_2014}.
  }
  % and with $\beta = 2$ and if
  % $\int_{\R^d} x^k K(x) dx = 0$ for $\abs{k} \leq 1$.
\end{definition}
\begin{definition}[{\cite[Definition 5.9]{hairer_theory_2014}}]
  Given an admissible kernel $K$, a model $(\Pi, \Gamma)$ for $\rspace$ is said to \emph{realize $K$} if
  one has
  \begin{equation*}
    \Pi_x \rint_k \tau = \partial^k K \conv \Pi_x \tau - \sum_{j\in \N_0^d: \abs{\tau}_+ + 2 - \abs{j} - \abs{k} > 0}
    \frac{(\cdot - x)^j}{j!} [\partial^{k+j} K \conv \Pi_x \tau] (x)
  \end{equation*}
  for every $\tau \in \rbasis(\rspace)$, $k \in \N_0^d$ with $\abs{k} \leq 1$ and $x \in \R^d$.
  The space $\bar{\rmodelsp}(\rspace, K)$ of all $K$-admissible models is endowed with the topology induced by
  the collection of pseudometrics $(\vertiii{\cdot \, ; \cdot}_{\gamma, \cptset} )_{\gamma, \cptset}$.
  In fact,
  the space $\bar{\rmodelsp}(\rspace, K)$ is a complete metric space.
\end{definition}
We recall operations of kernels on modelled distributions from \cite[Section 5]{hairer_theory_2014}.
\begin{definition}\label{def:kernel}
  Let $\rmodel = (\Pi, \Gamma)$ be a model realizing $K$ in the sense of \cite[Definition 5.9]{hairer_theory_2014}.
  \begin{enumerate}[leftmargin=*]
    \item  We set
    \begin{equation*}
      \cJ(x)\tau \defby \cJ^{\rmodel}(x) \tau \defby \sum_{\abs{k} < \abs{\tau}_+ + 2}
      \frac{X^k}{k!} \big[D^k K \conv \Pi_x \tau(x)\big], \quad x \in \R^d,
    \end{equation*}
    for $\tau \in \rbasis(\rspace)$ and extend it linearly for $\tau \in \rspace$.
    \item Let $\gamma \in (0, \infty) \setminus \N$ and $f \in \mdspace^{\gamma}(\rspace, \rmodel)$. We set
    \begin{equation}\label{eq:def_of_cN}
      \cN f(x) \defby \cN^{\rmodel}_{\gamma} f(x) \defby \sum_{\abs{k} < \gamma + 2} \frac{X^k}{k!} D^k K \conv
      (\reconst^{\rmodel} f - \Pi_x f(x))(x)
    \end{equation}
    and
    \begin{equation}\label{eq:def_of_curl_K}
      \cK f(x) \defby \cK^{\rmodel}_{\gamma} f(x) \defby (\rint + \cJ^{\rmodel}(x)) f(x) + \cN^{\rmodel}_{\gamma} f(x).
    \end{equation}
    By \cite[Theorem 5.12]{hairer_theory_2014}, $\cK$ maps $\mdspace^{\gamma}(\rspace, \rmodel)$
    to $\mdspace^{\gamma + 2}(\rspace, \rmodel)$ and
    one has $\reconst \cK f = K \conv \reconst f$.
    More precisely, one has
    \begin{equation}\label{eq:schauder_estimate_modelled_distribution}
      \vertiii{\rkernel f}_{\gamma + 2; \cptset} \lesssim_{\rspace, \gamma} (1 + \vertiii{\rmodel}_{\gamma+2; B(\cptset,1)})^2
      \vertiii{f}_{\gamma; B(\cptset, 1)}.
    \end{equation}
    uniformly over $\rmodel \in \rmodelsp(\rspace, K)$, $f \in \mdspace(\rspace, \rmodel)$ and compact sets
    $\cptset \subseteq \R^d$. See \cite[Theorem 5.1]{HAIRER20172578}.
    \item For a smooth function $F$ on $\R^d$ and $\beta \in (0, \infty)$, we set
    \begin{equation*}
      R_{\beta} F(x) \defby \sum_{\abs{k} < \beta} \frac{X^k}{k!} D^k F(x), \quad x \in \R^d.
    \end{equation*}
    Then \cite[Lemma 2.12]{hairer_theory_2014} implies $R_{\beta} F \in \mdspace^{\beta}(\rspace, \rmodel)$.
  \end{enumerate}
\end{definition}

\subsection{BPHZ renormalization}
\label{subsubsection:BPHZ}
Interesting models can be derived from the realization defined below.
\begin{definition}[{\cite[Definition 6.9]{bruned_2019}}]
  We call a linear map $\gls{realization}:\rspace \to \cS'(\R^d)$ a ($\zeta$-)\emph{realization} if
  \begin{equation*}
    \rreal \unit = 1, \quad \rreal \Xi
        =  \zeta, \quad  \rreal(X^k \tau) = x^k \rreal \tau \quad \text{for every } \tau \in \rbasis(\rspace).
  \end{equation*}
  A realization is called \emph{smooth} if its image is a subset of $C^{\infty}(\R^d)$.
  Given an admissible kernel $K$, a realization $\rreal$ is called \emph{$K$-admissible} if it additionally satisfies
  \begin{equation*}
    \bPi \rint_k(\tau) = \partial^k K \conv \tau \quad \text{for every } \tau \in \rbasis(\rspace) \text{ and }
    k \in \N_0^d \text{ with } \abs{k} \leq 1.
  \end{equation*}
\end{definition}
\begin{definition}[{\cite[Definition 6.8]{bruned_2019}}]\label{def:realization_to_model}
  Let $K$ be an admissible kernel.
  To a smooth $K$-admissible realization $\rreal$, one can associate a model
  \begin{equation*}
    \gls{model_from_real} \defby (\Pi, \Gamma)
  \end{equation*}
   realizing $K$ as in \cite[Definition~6.8]{bruned_2019}.
   \begin{calc} More precisely, as follows.
  \begin{itemize}
    \item For $z \in \R^d$, one defines a character $g_z^+(\rreal):\hat{\rspace}_+ \to \R$ by
    \begin{equation*}
      g_z^+(\rreal)(X_i) \defby - z_i, \quad
      g_z^+(\rreal)(\hat{\rint}_k (\tau)) \defby \partial^k K \conv \tau (z), \quad g_z^+(\rreal) (\Xi) = 0.
    \end{equation*}
    \item One defines the character $f_z:\rspace_+ \to \R$ by $f_z \defby g_z^+(\rreal) \hat{\rantipode}_+$.
    \item One sets
    \begin{equation*}
      \Pi_x \defby (\rreal \otimes f_x) \Delta^{\circ}_+,
      \quad \Gamma_{x y} \defby f_x \cdot f_y^{-1} = f_x \cdot (f_y \circ \rantipode_+),
    \end{equation*}
    where $\cdot$ represents the product of the character group $\rcharacter_+$. % (Definition~\ref{def:reg_character}).
  \end{itemize}
  \end{calc}
  We denote by $\gls{model_sp}$ the closure in $\bar{\rmodelsp}(\rspace, K)$ of
  \begin{equation*}
    \set{\rmodel(\rreal) \given \rreal \text{ is a smooth $K$-admissible realization}}.
  \end{equation*}
\end{definition}
\begin{definition}[{\cite[Proposition 6.12]{bruned_2019}}]\label{def:canonical_model}
  A \emph{($K$-)canonical realization} $\ecanonicalmodel$ for $\xi_{\epsilon}$ is the smooth $K$-admissible
  $\xi_{\epsilon}$-realization characterized by the identities
  \begin{equation*}
    \ecanonicalmodel(\tau \sigma) = \ecanonicalmodel(\tau) \ecanonicalmodel(\sigma),
    \quad \ecanonicalmodel(\rrootred_{\alpha} \tau) = \ecanonicalmodel \tau,
  \end{equation*}
  where $\rrootred_{\alpha} \tau$ is obtained from $\tau = (F, \hat{F})^{\fn, \fo}_{\fe}$ by
  resetting $\hat{F}(\rho_{\rrootred \tau}) = 1$ and $\fo(\tau) = \alpha$.
  We set
  \begin{equation*}
    \rmodel^{\can, \epsilon} \defby (\Pi^{\can, \epsilon}, \Gamma^{\can})
    \defby \rmodel(\ecanonicalmodel).
  \end{equation*}
\end{definition}
% In this section, we fix an admissible kernel $K: \R^d \setminus \{0\} \to \R$ such that the function
% $K - G$ on $\R^d \setminus \{0\}$ extends to a smooth function on $\R^d$ and $K = K(-\cdot)$.
% The existence of such $K$ is guaranteed by \cite[Lemma 5.5]{hairer_theory_2014}.
% All models below are supposed to realize this $K$.

In the situation of our interest, the model $\rmodel^{\can, \epsilon}$ does not converge as $\epsilon \downarrow 0$.
To obtain a limit, one has to ``twist'' the realization $\ecanonicalmodel$.
This operation of twisting is called \emph{renormalization}.
The most natural renormalization is called
the \emph{BPHZ renormalization}, as introduced in \cite{bruned_2019}.
\begin{definition}[{\cite[Theorem~6.16]{bruned_2019}}]
  The \emph{BPHZ realization} $\ebphzmodel$
  is a unique $\xi_{\epsilon}$-realization
  characterized by the following properties: 
  \begin{itemize}
    \item $\ebphzmodel = (g \otimes \ecanonicalmodel) \Delta^{\circ}_-$ for some algebraic map $g: \rspace_- \to \R$;
    \item For every $\tau \in \rspace$ with $\abs{\tau}_+ < 0$, one has $\expect[\ebphzmodel \tau (0)] = 0$.
  \end{itemize}
  We set
  \begin{equation*}
    \rmodel^{\BPHZ, \epsilon} \defby (\Pi^{\BPHZ, \epsilon}, \Gamma^{\BPHZ, \epsilon})
    \defby \rmodel(\bPi^{\BPHZ, \epsilon}).
  \end{equation*}
\end{definition}
To solve the generalized parabolic Anderson model \eqref{eq:gpam_app}, we need the convergence of $\rmodel^{\BPHZ, \varepsilon}$. More precisely, we assume the following; for a condition under which this assumption is satisfied, we refer to \cite[Theorem~2.31]{chandra2018analytic}.

\begin{assumption}\label{assump:convergence_of_BPHZ}
  As $\epsilon \downarrow 0$, the family of models $(\rmodel^{\BPHZ, \epsilon})_{\epsilon \in (0, 1)}$ converges to
  some model $\rmodel^{\BPHZ} = (\Pi^{\BPHZ}, \Gamma^{\BPHZ})$, independent of the mollifier $\rho$, in
  $\rmodelsp(\rspace, K)$ in probability.
  Furthermore, there exists a $\delta' \in (0, 1)$ with the following property.
  For every $p \in 2\N$, there exist constants $C_p^{\BPHZ} \in (0, \infty)$ and a map
  $\bm{\epsilon}^{\BPHZ}_p: (0, 1) \to (0, \infty)$ such that $\lim_{\epsilon \downarrow 0} \bm{\epsilon}^{\BPHZ}_p(\epsilon) = 0$
  and the estimates
  \begin{align*}
    \expect[\abs{\inprd{\Pi_x^{\BPHZ} \tau}{\phi_x^{\lambda}}}^{p}]
    &\leq C_p^{\BPHZ} \lambda^{p(\abs{\tau}_+ + \delta')}, \\
    \expect[\abs{\inprd{\Pi_x^{\BPHZ} \tau - \Pi_x^{\BPHZ, \epsilon} \tau}{\phi_x^{\lambda}}}^{p}]
    &\leq \bm{\epsilon}^{\BPHZ}_p(\epsilon) \lambda^{p(\abs{\tau}_+ + \delta')}
  \end{align*}
  hold for all $x \in \R^d$, $\lambda \in (0, 1)$, $\phi \in C^2(\R^d)$ with $\norm{\phi}_{C^2(\R^d)} \leq 1$
  and with $\supp(\phi) \subseteq B(0, 1)$ and $\tau = (T, 0)^{\fn, 0}_{\fe} \in \cT$ with $\abs{\tau}_+ < 0$.
  Here we write $\phi^{\lambda}_x \defby \lambda^{-d} \phi(\lambda^{-1}(\cdot - x))$.
\end{assumption}

\renewcommand{\glossarysection}[2][]{} %No title for list of symbols

\subsubsection*{List of symbols from Appendix~\ref{sec:review_of_reg_str}}
\printunsrtglossary[type=symbols,style=index]

%%%%%%%%%%%%%%%%%%%%%%%%%%%% APPENDIX SIMPLE

\section{Proof of Theorem~\ref{thm:convergence_X_Y_BPHZ}}
%Example~\ref{example:gpam}}
\label{sec:bphz_AH}
Based on the framework discussed in Appendix~\ref{sec:review_of_reg_str}, here we provide the details to prove Theorem~\ref{thm:convergence_X_Y_BPHZ}.
First, we prepare some results on the kernel $G_N$ from Definition~\ref{def:G_N_and_H_N}.
In the rest of this section, we fix an admissible kernel $K$ as in Definition~\ref{def:admissible_kernel}
and we set
\begin{equation}\label{eq:def_H_N}
  H_N \defby G_N - K.
\end{equation}
\begin{lemma}\label{lem:H_N_is_Schwarz}
  For every $N \in \N_0$, the function $H_N$ belongs to $\S(\R^d)$.
\end{lemma}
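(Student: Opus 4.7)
The plan is to decompose
\begin{equation*}
  G_N = G - \varphi_N \conv G, \qquad \varphi_N \defby 2^{Nd}(\cF^{-1}\check{\chi})(2^N \cdot),
\end{equation*}
so that $H_N = (G - K) - \varphi_N \conv G$. The identity for $G_N$ is immediate after a Fourier transform, since $\cF(\delta_0 - \varphi_N) = 1 - \check{\chi}(2^{-N}\cdot)$. The crucial feature of $\varphi_N$ is that $\cF\varphi_N = \check{\chi}(2^{-N}\cdot)$ is identically $1$ on $B(0, 3 \cdot 2^{N-2})$ (recall $\check{\chi} \equiv 1$ on $B(0,3/4)$), so every partial derivative of $\cF\varphi_N$ of order $\geq 1$ vanishes at the origin. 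By duality this translates to $\int \varphi_N = 1$ and $\int y^\alpha \varphi_N(y)\dd y = 0$ for every $\alpha \in \N_0^d \setminus \{0\}$.

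Smoothness of $H_N$ will be immediate: $G - K$ is smooth on $\R^d$ by admissibility of $K$ (Definition~\ref{def:admissible_kernel}), and $\varphi_N \conv G$ is smooth as the convolution of a Schwartz function with a tempered distribution. The substantive task is showing that $H_N$ and each derivative decays faster than any polynomial at infinity. Since $\supp K \subseteq B(0,1)$ and $\partial^\alpha(\varphi_N \conv G) = \varphi_N \conv \partial^\alpha G$, for $|x| > 1$ one has
\begin{equation*}
  \partial^\alpha H_N(x) \;=\; \partial^\alpha G(x) \;-\; \int \varphi_N(y)\,\partial^\alpha G(x-y)\,\dd y.
\end{equation*}
I will Taylor-expand $\partial^\alpha G(x - y)$ around $x$ to order $M$; moment-vanishing of $\varphi_N$ kills the polynomial-in-$y$ terms with $1 \leq |\beta| < M$, while the $\beta = 0$ term cancels $\partial^\alpha G(x)$, leaving only the Taylor remainder to estimate.

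To bound the remainder I split $\R^d$ into $\{|y| \leq |x|/2\}$ and $\{|y| > |x|/2\}$. On the first region, the explicit form of $G$ gives $|\partial^{\alpha + \beta} G(z)| \lesssim |z|^{-(d-2+|\alpha|+M)}$ for $|\beta| = M$ and $|z| \geq |x|/2$, contributing $O(|x|^{-(d-2+|\alpha|+M)})$ after integration against $\varphi_N$. On the second region the Schwartz decay of $\varphi_N$ easily dominates the polynomial growth of $\partial^\alpha G(x-y)$, yielding $O(|x|^{-K})$ for arbitrary $K$. Since $M$ is arbitrary, each $\partial^\alpha H_N$ decays faster than any polynomial, so $H_N \in \cS(\R^d)$. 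The case $d = 2$ requires only a minor adjustment to absorb the logarithmic growth of $G$ into the constants, which is routine.
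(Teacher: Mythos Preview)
Your moment-vanishing/Taylor-expansion strategy is sound and genuinely different from the paper's proof, which stays on the Fourier side: the paper simply observes that $\F[x^n\partial^m G_N](\xi)=c\,\partial_\xi^n\big[\xi^m(1-\check\chi(2^{-N}\xi))\abs{\xi}^{-2}\big]$ is integrable once each $n_i$ is large enough, so $x^n\partial^m G_N$ is a bounded continuous function, and combined with the already-established smoothness of $H_N$ this gives $H_N\in\S(\R^d)$. Your spatial argument is a nice alternative, but there is a concrete gap in the execution.

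For $|\alpha|\ge 2$ the pointwise function $\partial^\alpha G$ has a non-integrable singularity of order $|z|^{-(d-2+|\alpha|)}$ at the origin (for $d\ge 3$; order $|z|^{-|\alpha|}$ for $d=2$). This singularity sits at $y=x$, which lies in your ``far'' region $|y|>|x|/2$; hence the integral $\int_{|y|>|x|/2}\varphi_N(y)\,\partial^\alpha G(x-y)\,\dd y$ does not even converge absolutely, and the phrase ``polynomial growth of $\partial^\alpha G(x-y)$'' overlooks this. In fact already the identity you use, $(\varphi_N*\partial^\alpha G)(x)=\int\varphi_N(y)\,\partial^\alpha G(x-y)\,\dd y$, fails for $|\alpha|\ge 2$, because the distributional derivative $\partial^\alpha G$ is not the locally integrable function appearing in the integrand.

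The repair is easy and keeps your scheme intact: leave the derivatives on $\varphi_N$, writing $\partial^\alpha(\varphi_N*G)=(\partial^\alpha\varphi_N)*G$. Then $G$ is locally integrable, so the tail $\int_{|y|>|x|/2}(\partial^\alpha\varphi_N)(y)\,G(x-y)\,\dd y$ converges and is $O(|x|^{-K})$ by the rapid decay of $\partial^\alpha\varphi_N$. On $|y|\le|x|/2$ Taylor-expand $G(x-y)$ to order $M>|\alpha|$; since $\int y^\beta(\partial^\alpha\varphi_N)\,\dd y=(-1)^{|\alpha|}\alpha!\,\delta_{\beta,\alpha}$ (all other moments vanish, by the same reasoning you gave for $\varphi_N$), the polynomial part produces exactly $\partial^\alpha G(x)$, which cancels $\partial^\alpha(G-K)(x)=\partial^\alpha G(x)$ for $|x|>1$, and the order-$M$ remainder gives $O(|x|^{-(d-2+M)})$ as before.
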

\begin{proof}
  Since the Fourier transform of $G_N - G$ has a compact support, we observe that
  $H_N = (G_N - G) + (G - K)$ is smooth. Thus, it comes down to showing that
  $H_N$ decays rapidly or equivalently, as $K$ is supported on $B(0, 1)$, to showing that $G_N$ decays rapidly.
  \tymchange{
    For $m, n \in \N_0^d$, one has 
    \begin{equation*}%\label{eq:}
      \mathcal{F}[x^n \partial^m G_N](\xi) = (- 2 \pi \i)^{m-n} \partial_{\xi}^n[ \xi^m \abs{\xi}^{-2} (1- \check{\chi})(2^{-N} \xi)].   
    \end{equation*}
    If $\abs{k} > 0$, the function $\partial^k \check{\chi}$ is smooth and supported on $B(0, 2^{N+2}/3) \setminus B(0, 3/4)$.  
    Therefore, with some compactly supported smooth function $R$ one has
    \begin{equation*}%\label{eq:}
      \partial_{\xi}^n[ \xi^m \abs{\xi}^{-2} (1- \check{\chi})(2^{-N} \xi)] 
      = \partial_{\xi}^n[ \xi^m \abs{\xi}^{-2}] \times (1- \check{\chi})(2^{-N}\xi) + R(\xi).
    \end{equation*} 
    If $n_1, \ldots, n_d$ are sufficiently large, then $\partial^n [\xi^m \abs{\xi}^{-2}]$ is bounded for $\abs{\xi} \geq 3/4$. 
    This means that $\mathcal{F}[x^n \partial^m G_N](\xi)$ is integrable, or equivalently, $x^n \partial^m G_N$ is bounded.
  }
  % For $m \in \N_0^d$, one has
  % \begin{equation*}
  %   \partial^m G_N = \F^{-1}[ (1-\check \chi) (2^{-N} \cdot) \abs{\cdot}^{-2}  (2\pi \i \cdot )^m ]
  % \end{equation*}
  % for some polynomial $P_m$. Then, for $n = (n_1, \ldots, n_d) \in \N_0^d$,
  % \begin{equation*}
  %   x^n \partial^m G_N = (2 \pi i)^n \F^{-1}[ (1-\check \chi)(2^{-N} \cdot) \partial^n (\abs{\cdot}^{-2} P_m)] + R,
  % \end{equation*}
  % where $R \in \S(\R^d)$. Therefore, if $n_1, \ldots, n_d$ are sufficiently large,
  % $\partial^n (\abs{\cdot}^{-2} P_m)$ is integrable. This means $x^n \partial^m G_N$ is bounded and
  % hence $G_N$ decays rapidly.
\end{proof}
\begin{remark}
  Thanks to Lemma \ref{lem:H_N_is_Schwarz}, the convolution $H_N \conv f$ is well-defined for $f \in \S'$
  and the distribution $H_N \conv f$ represents a smooth function.
\end{remark}

\begin{definition}\label{def:kernel_cG_N}
  Suppose that the model $\rmodel$ realizes $K$.
     For $\gamma \in (0, \infty) \setminus \N$,
     $f \in \mdspace^{\gamma}(\rspace, \rmodel)$ and $N \in \N_0$, we set
    \begin{equation*}
      \cG_{N} f(x) \defby \cG_{N, \gamma}^{\rmodel} f(x) \defby \cK^{\rmodel}_{\gamma} f(x)
      + R_{\gamma + 2}[H_N \conv \reconst^{\rmodel} f](x).
    \end{equation*}
    Recalling that $G_N = K + H_N$ from \eqref{eq:def_H_N}, one has $\reconst^{\rmodel} \cG^{\rmodel}_{N} f = G_N \conv \reconst^{\rmodel} f$.
    For the motivation behind the parameter $N$, see Remark~\ref{rem:towards_construction_assumptions}. 
\end{definition}
\begin{comment}
\begin{definition}
  Then we set
  \begin{equation*}
    G_N \conv f \defby \F^{-1}[ (1 - \check{\chi}(2^{-N} \cdot)) \abs{2 \pi \cdot}^{-2} \F f], \quad f \in \S'(\R^d).
  \end{equation*}
  We define $\partial^m G_N \conv f$ analogously for $m \in \N_0^d$ and $f \in \S'(\R^d)$.
\end{definition}
\end{comment}
% {\color{red} The following introduced in Section 3 already, probably removed here.}
% \begin{definition}
%   For $f \in \cS'(\R^d)$ and $N \in \N_0$, we set
%   \begin{equation*}
%     [\Delta(G_N - G)] \conv f \defby \cF^{-1}[\check{\chi}(2^{-N} \cdot)] \conv f.
%   \end{equation*}
%   By considering their Fourier transforms, one observes
%   \begin{equation}\label{eq:laplacian_on_G_N}
%     (\Delta G_N) \conv f = -f + [\Delta(G_N - G)] \conv f
%   \end{equation}
% \end{definition}
\subsection{Definition of modelled distributions}
Here we rigorously implement the strategy outlined in Remark~\ref{remark:finding_w_eps}, in the framework of Appendix~\ref{sec:review_of_reg_str}.
\begin{definition}\label{def:set_cT}
  We define $\cT, \cT_- \subseteq \rspace$ and $\rbasis(\cT), \rbasis(\cT_-) \subseteq \rbasis(\rspace)$
  as follows.
  \begin{enumerate}
    \item For $\tau_1,\tau_2\in \sT$ we write ``$\nabla \rint(\tau_1)\cdot \nabla \rint(\tau_2)$'' for ``$\sum_{j=1}^d \rint_j(\tau_1) \rint_j(\tau_2)$''.
    \item We denote by $\cT$ the smallest subset of $\rspace$ with the following properties:
    \begin{itemize}
      \item $\Xi \in \cT$ and
      \item if $\tau_1, \tau_2 \in \cT$, then $\nabla \rint(\tau_1) \cdot \nabla \rint(\tau_2) \in \cT$.
    \end{itemize}
    Furthermore, we associate $c(\tau) \in \N$ to each $\tau \in \cT$ by setting $c(\Xi) \defby 1$ and by inductively setting for $\tau_1,\tau_2\in \cT$
    \begin{equation*}
      c(\nabla \rint (\tau_1) \cdot \nabla \rint (\tau_2)) \defby
      \begin{cases}
        2 c(\tau_1) c(\tau_2) & \text{if } \tau_1 \neq \tau_2, \\
        c(\tau_1) c(\tau_2) & \text{if } \tau_1 = \tau_2.
      \end{cases}
    \end{equation*}
    \item One defines $\rbasis(\cT) \subseteq \rbasis(\rspace)$ as the minimal subset with the following properties:
    \begin{itemize}
      \item $\Xi \in \rbasis(\cT)$ and
      \item if $\tau_1, \tau_2 \in \rbasis(\cT)$ and $i \in \{1, \ldots, d\}$, then $\rint_i(\tau_1) \rint_i(\tau_2)
      \in \rbasis(\cT)$.
    \end{itemize}
    \item We set $\cT_- \defby \set{\tau \in \cT \given \abs{\tau}_+ < 0}$ and
    $\rbasis(\cT_-) \defby \set{\tau \in \rbasis(\cT) \given \abs{\tau}_+ < 0}$.
  \end{enumerate}
\end{definition}
\begin{definition}\label{def:kernel_for_cT}
  Given a model $\rmodel$ realizing $K$, we associate $ \tau^{\rkernel} =  \tau^{\rkernel, \rmodel}
  \in \mdspace^{\gamma_{\tau}}(\rspace, \rmodel)$ to each
  $\tau \in \cT_-$ by setting $\Xi^{\rkernel } \defby \Xi$ and by inductively setting
  \begin{equation*}
    \gamma_{\tau} \defby \min\{\gamma_{\tau_1} + 1 + \abs{\tau_2}_+, \gamma_{\tau_2} + 1 + \abs{\tau_1}_+\}, \quad
     \tau^{\rkernel} \defby \sum_{i=1}^d \rderivative_i
     [\rkernel \tau_1^{\rkernel}] \rprod \rderivative_i [\rkernel \tau_2^{\rkernel }]
  \end{equation*}
  for $\tau = \nabla \rint (\tau_1) \cdot \nabla \rint (\tau_2)$.
  The exponent $\gamma_{\Xi}$ is chosen so that $\gamma_{\tau} > 2$ for every $\tau \in \cT_-$.
\end{definition}
\begin{remark}\label{rem:tau_K_and_tau}
  Thanks to Proposition \ref{prop:product_modelled_distributions} and \cite[Theorem 5.12]{hairer_theory_2014}, indeed one has
  $\tau^{\rkernel, \rmodel} \in \mdspace^{\gamma_{\tau}}$.
  Furthermore, for $\tau = \nabla \rint (\tau_1) \cdot \nabla \rint (\tau_2)$ and a compact set $\cptset$, one has
  \begin{equation*}
    \vertiii{\tau^{\rkernel, \rmodel}}_{\gamma_{\tau}; \cptset}
    \lesssim_{\rspace} (1 + \vertiii{\rmodel}_{\gamma_{\tau_1} + \gamma_{\tau_2}+2; B(\cptset, 1)})^6
    \vertiii{\tau_1^{\rkernel, \rmodel}}_{\gamma_{\tau_1}; B(\cptset, 1)}
    \vertiii{\tau_2^{\rkernel, \rmodel}}_{\gamma_{\tau_2}; B(\cptset, 1)}.
  \end{equation*}
  Therefore, there exist a constant $\gamma, C \in (0, \infty)$ and integers $k, l \in \N$,
  which depend only on $\rspace$, such that
  \begin{equation}\label{eq:estimate_of_tau_K}
    \vertiii{\tau^{\rkernel, \rmodel}}_{\gamma; \cptset}
    \leq C (1 + \vertiii{\rmodel}_{\gamma; B(\cptset, l)})^k
  \end{equation}
  uniformly over $\tau \in \cT_-$, $\rmodel \in \rmodelsp(\rspace, \rmodel)$ and compact sets $\cptset \subseteq \R^d$.
\end{remark}
\begin{definition}\label{def:def_of_modelled_distributions_for_AH}
  Let $F \in C_c^{\infty}(\R)$ be such that $F(x) = -e^{2x}$ if $\abs{x} \leq 2$.
  Given $N \in \N$ and a model $\rmodel$ realizing $K$, we set
  \begin{equation*}
    \regX \defby \regX^{\rmodel} \defby \sum_{\tau \in \cT_-} c(\tau) \tau^{\rkernel, \rmodel},
  \end{equation*}
  \begin{equation*}
    \regW_{N} \defby \regW^{\rmodel}_{N}
    \defby \rproj_{< 2} \cG_{N,  2}^{\rmodel} \regX^{\rmodel},
  \end{equation*}
  where $\cG_{N, 2}^{\rmodel}$ is as in Definition \ref{def:kernel_cG_N},
  and
  \begin{align*}
    \regY_{N} &\defby \regY_{N}^{\rmodel} \\
    &\defby  \rproj_{<\delta} \Big[F^{\rprod}(\regW_{N}^{\rmodel})
    \rprod \Big \{  \sum_{\substack{\tau_1, \tau_2 \in \cT_-, \\
    \abs{\tau_1}_+ + \abs{\tau_2}_+ > -2}}
    \sum_{i=1}^d c(\tau_1) c(\tau_2) \rderivative_i [\rkernel^{\rmodel} \tau^{\rkernel, \rmodel}_1]\rprod  \rderivative_i
    [\rkernel^{\rmodel} \tau^{\rkernel, \rmodel}_2] \\
    &\phantom{\defby} +2 \sum_{i=1}^d  \rderivative_i [\rkernel^{\rmodel} \regX^{\rmodel}]
    \star R_2[\partial_i \{H_N \conv ( \reconst^{\rmodel} \regX^{\rmodel})\} ] \Big\}\Big].
  \end{align*}
\end{definition}
\begin{proposition}\label{prop:modelled_distributions_for_AH}
  Suppose that a model $\rmodel$ realizes $K$. Let $N \in \N$.
  Then, one has $\regW_{N}^{\rmodel} \in \mdspace_{0}^2(\rspace, \rmodel)$ and $\regY_{N}^{\rmodel}
  \in \mdspace^{\delta}_{-1+\delta}(\rspace, \rmodel)$.
  More precisely, there exist constants $\gamma, C \in (0, \infty)$ and integers $k, l \in \N$ such that
  the following estimates hold uniformly over
  $N \in \N$,
  $\rmodel, \overline{\rmodel} \in \rmodelsp(\rspace, K)$ and convex compact sets $\cptset \subseteq \R^d$:
  \begin{align*}
    \vertiii{\regX^{\rmodel}}_{2; \cptset}
    & \leq C (1 + \vertiii{\rmodel}_{\gamma; B(\cptset, l)})^k, \\
    \vertiii{\regW^{\rmodel}_{N}}_{2; \cptset}
    & \leq C \{(1 + \vertiii{\rmodel}_{\gamma; B(\cptset, l)})^k
    +  \norm{H_N \conv (\reconst^{\rmodel} X^{\rmodel})}_{C^2(\cptset)} \}, \\
    \vertiii{\regY_{N}^{\rmodel}}_{\delta; \cptset}
    & \leq C (1 + \vertiii{\rmodel}_{\gamma; B(\cptset, l)}
    + \norm{H_N \conv (\reconst^{\rmodel} X^{\rmodel})}_{C^2(\cptset)})^k,
  \end{align*}
  and furthermore
  \begin{align*}
    \vertiii{\regX^{\rmodel}; \regX^{\overline{\rmodel}}}_{2; \cptset} & \leq C (1 + \vertiii{\rmodel}_{\gamma; B(\cptset, l)}
    \vertiii{\overline{\rmodel}}_{\gamma; B(\cptset, l)})^k
    \vertiii{\rmodel; \overline{\rmodel}}_{\gamma; B(\cptset, l)}, \\
    \vertiii{\regW^{\rmodel}_{N};\regW^{\overline{\rmodel}}_{N}}_{2; \cptset}
&    \leq C (1 + \vertiii{\rmodel}_{\gamma; B(\cptset, l)}
    +\vertiii{\overline{\rmodel}}_{\gamma; B(\cptset, l)} )^k
    \vertiii{\rmodel; \overline{\rmodel}}_{\gamma; B(\cptset, l)} \\
& \qquad    +  \norm{H_N \conv (\reconst^{\rmodel} X^{\rmodel}
    - \reconst^{\overline{\rmodel}} X^{\overline{\rmodel}})}_{C^2(\cptset)} \\
    \vertiii{\regY_{N}^{\rmodel}; \regY_{N}^{\overline{\rmodel}}}_{\delta; \cptset}
    & \leq C \Big(1 + \vertiii{\rmodel}_{\gamma; B(\cptset, l)} + \vertiii{\overline{\rmodel}}_{\gamma; B(\cptset, l)} \\
& \qquad     + \norm{H_N \conv (\reconst^{\rmodel} X^{\rmodel})}_{C^2(\cptset)}
    + \norm{H_N \conv (\reconst^{\overline{\rmodel}} X^{\overline{\rmodel}})}_{C^2(\cptset)}
    \Big)^k \\
& \qquad \qquad    \times (
    \vertiii{\rmodel; \overline{\rmodel}}_{\gamma; B(\cptset, l)} +
    \norm{H_N \conv (\reconst^{\rmodel} X^{\rmodel}
    - \reconst^{\overline{\rmodel}} X^{\overline{\rmodel}})}_{C^2(\cptset)}
    ).
  \end{align*}
\end{proposition}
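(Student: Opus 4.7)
The strategy is to derive all three bounds as bookkeeping exercises on top of the abstract results already available, and only then bolt on the analogous difference estimates by bilinearity. First, since $|\cdot|_+$ is additive under the construction of $\cT$ and $|\Xi|_+ = -2+\delta$, a straightforward induction on the construction of $\rbasis(\cT)$ (increasing depth strictly raises the degree by at least $2\delta$) shows that $\cT_-$ is finite, so the sum defining $\regX^{\rmodel}$ has finitely many terms and its norm is controlled by \eqref{eq:estimate_of_tau_K} in Remark~\ref{rem:tau_K_and_tau}. This gives the $\regX$ bound directly, and the difference bound follows from the analogous estimate for $\tau^{\rkernel,\rmodel} - \tau^{\rkernel,\overline{\rmodel}}$, which is obtained by the same recursive argument using the bilinear form of Proposition~\ref{prop:product_modelled_distributions}.

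Second, for $\regW_N^{\rmodel}$ we decompose $\cG_{N,2} = \cK_2 + R_{2+2}[H_N \conv \reconst(\cdot)]$ according to Definition~\ref{def:kernel_cG_N}. The Schauder estimate \eqref{eq:schauder_estimate_modelled_distribution} applied to $\cK_2 \regX^{\rmodel}$ (with $\regX^{\rmodel} \in \mdspace^2$ by Step 1) produces a modelled distribution in $\mdspace^4$ starting at regularity $0$; projecting with $\rproj_{<2}$ gives an element of $\mdspace_0^2$. The remainder $R_{2+2}[H_N\conv \reconst\regX^{\rmodel}]$ is the jet of a smooth function (Lemma~\ref{lem:H_N_is_Schwarz} guarantees $H_N \in \cS$, so the convolution is smooth), whose norm as a modelled distribution is controlled by $\|H_N \conv \reconst\regX^{\rmodel}\|_{C^2(\cptset)}$. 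Combining the two contributions yields the stated bound, and the difference estimate follows analogously from the continuity of $\cK$ and $\reconst$ in $(\rmodel,f)$.

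Third, $\regY_N^{\rmodel}$ is a projection to $\mdspace^{\delta}$ of a product of three factors: $F^{\rprod}(\regW_N^{\rmodel})$, whose norm is controlled by \eqref{eq:composition_modelled_distribution} together with the previous step on $\regW_N^{\rmodel}$; the finite ``resonant'' sum of products $\rderivative_i[\rkernel \tau_1^{\rkernel}] \rprod \rderivative_i[\rkernel \tau_2^{\rkernel}]$, where only pairs with $|\tau_1|_+ + |\tau_2|_+ > -2$ are kept so that each term has regularity $>0$ and their product by Proposition~\ref{prop:product_modelled_distributions} lives in a space of positive regularity; and $\rderivative_i[\rkernel \regX^{\rmodel}] \rprod R_2[\partial_i(H_N\conv \reconst \regX^{\rmodel})]$, whose first factor starts at regularity $-1+\delta$ (the contribution of $\Xi$) and whose second factor is a smooth lift controlled by $\|H_N \conv \reconst\regX^{\rmodel}\|_{C^2(\cptset)}$ after differentiation. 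Applying Proposition~\ref{prop:product_modelled_distributions} a final time yields a modelled distribution in $\mdspace^{\delta}_{-1+\delta}$ with the claimed polynomial dependence on $\vertiii{\rmodel}$ and $\|H_N\conv \reconst\regX\|_{C^2}$.

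Finally, the three difference estimates follow by the standard telescoping trick: each of the operations above ($\tau\mapsto\tau^{\rkernel}$, $\cK$, $R_{\gamma}$, $\rprod$, $F^{\rprod}$) is bilinear or Lipschitz in its inputs with Lipschitz constant controlled by the norms of those inputs, exactly as encoded in Proposition~\ref{prop:product_modelled_distributions}, \eqref{eq:composition_modelled_distribution} and \eqref{eq:schauder_estimate_modelled_distribution}. The only real obstacle is the last one: checking that \emph{every} product in $\regY_N^{\rmodel}$ hits the hypotheses of Proposition~\ref{prop:product_modelled_distributions}, i.e. that the regularities of the two factors add up above the target $-1+\delta$. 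This amounts to the combinatorial bookkeeping that motivated the restriction $|\tau_1|_+ + |\tau_2|_+ > -2$ in the ``resonant'' sum and the explicit appearance of the jet $R_2[\partial_i(H_N\conv\reconst\regX)]$, which is precisely the part that would have been merely distributional if one had instead used $G$ in place of $G_N$. With those index checks done, the inequalities follow mechanically.
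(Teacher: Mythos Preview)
Your proposal is correct and follows essentially the same route as the paper: the $\regX$ bound from \eqref{eq:estimate_of_tau_K}, the $\regW_N$ bound by splitting $\cG_{N,2}$ into the Schauder part $\cK$ and the polynomial lift $R_{\gamma+2}[H_N\conv\reconst\regX]$, the $\regY_N$ bound via Proposition~\ref{prop:product_modelled_distributions} and \eqref{eq:composition_modelled_distribution}, and the difference estimates by invoking the bilinear/Lipschitz versions of those same results. The one detail the paper makes explicit and you gloss over is that controlling $\vertiii{R_2[H_N\conv\reconst\regX]}_{2;\cptset}$ by $\norm{H_N\conv\reconst\regX}_{C^2(\cptset)}$ uses the convexity of $\cptset$ (the modelled-distribution norm of a Taylor jet needs the mean value inequality between points of $\cptset$); otherwise the arguments coincide.
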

\begin{proof}
  The estimate for $\regX^{\rmodel}$ follows from \eqref{eq:estimate_of_tau_K}.
  As for the estimate of $\regW_{N}^{\rmodel}$, the Schauder estimate \eqref{eq:schauder_estimate_modelled_distribution}
  gives the estimate for $\rkernel \regX^{\rmodel}$. The estimate for $R_2[H_N \conv (\reconst^{\rmodel} \regX^{\rmodel})]$ follows from
  the estimate
  \begin{equation*}
    \vertiii{R_2[H_N \conv ( \reconst^{\rmodel} \regX^{\rmodel})]}_{2; \cptset}
    \leq \sum_{m: \abs{m} \leq 2} \norm{\partial^m[H_N \conv ( \reconst^{\rmodel} \regX^{\rmodel})]}_{L^{\infty}(\cptset)},
  \end{equation*}
  where the convexity of $\cptset$ is used.
  The estimate for $\regY_{N}^{\rmodel}$ follows from Proposition \ref{prop:product_modelled_distributions},
  the estimate \eqref{eq:composition_modelled_distribution} and the Schauder estimate \eqref{eq:schauder_estimate_modelled_distribution}.

  For the estimates of the differences, 
  the proof is similar. Indeed, we apply difference-analogue of Proposition \ref{prop:product_modelled_distributions},
  the estimate \eqref{eq:composition_modelled_distribution} and \eqref{eq:schauder_estimate_modelled_distribution},
  which can be found in \cite[Proposition 4.10]{hairer_theory_2014},
  \cite[Proposition 3.11]{HAIRER_2015} and \cite[Theorem 5.12]{hairer_theory_2014} respectively.
%   let us just mention that for differences there exist analogue estimates to those in Proposition \ref{prop:product_modelled_distributions},
%   the estimate \eqref{eq:composition_modelled_distribution} and \eqref{eq:schauder_estimate_modelled_distribution},
% see \cite[Proposition 4.10]{hairer_theory_2014},
%   \cite[Proposition 3.11]{HAIRER_2015} and \cite[Theorem 5.12]{hairer_theory_2014} respectively.
%   Using them, we can prove the last three inequalities of the differences similarly.
\end{proof}
\begin{definition}
\label{def:def_of_X_W_N_Y_N_for_model}
  Given a model $\rmodel$ realizing $K$ and $N \in \N$, 
with $\regX^{\rmodel}, \regW^{\rmodel}_{N}$ and $ \regY^{\rmodel}_{N}$ as in Definition~\ref{def:def_of_modelled_distributions_for_AH}, 
  we set 
  \begin{equation*}
     X^{\rmodel} \defby \reconst^{\rmodel} \regX^{\rmodel}, \quad
     W^{\rmodel}_{N} \defby \reconst^{\rmodel} \regW^{\rmodel}_{N}
  \end{equation*}
  and
  \begin{equation*}
    Y^{\rmodel}_{N}
    \defby \reconst^{\rmodel} \regY^{\rmodel}_{N}
    + F(W_{N}^{\rmodel}) \Big\{ \abs{\nabla [ H_N \conv (X^{\rmodel})]}^2 + [\Delta(G_N - G)] \conv  X^{\rmodel} \Big\}.
  \end{equation*}
As noted in Definition~\ref{def:kernel_cG_N}, one has $W_N^{\rmodel} = G_N \conv X^{\rmodel}$.
\end{definition}
% \begin{remark}\label{rem:parameter_N}
%   The parameter $N$ will be used to ensure $W_{N}$ is bounded on a given bounded domain.
%   Therefore, $N$ will be random and will depend on the domain.
%   The idea of introducing such parameter is also used in \cite{mouzard2020weyl}.
%   As noted in Definition \ref{def:kernel_cG_N}, one has $W_N = G_N \conv X$.
% \end{remark}
\begin{lemma}\label{lem:cancellation_for_W_can}
  Let $\epsilon \in (0, 1)$. To simplify notation, we write
  $X^{\can,\epsilon} \defby X^{\rmodel^{\can, \epsilon}}$ here for instance.
  Then, one has the following identity:
  \begin{multline*}
    \abs{\nabla W_{N}^{\can,\epsilon}}^2 + \Delta W_{N}^{\can,\epsilon}
    = - \xi_{\epsilon} \\
    + \sum_{\substack{\tau_1, \tau_2 \in \cT_-, \\
    \abs{\tau_1}_+ + \abs{\tau_2}_+ > -2}}
     c(\tau_1) c(\tau_2) \nabla(K \conv \reconst^{\can,\epsilon} \tau_1^{\rkernel, \can,\epsilon}) \cdot
     \nabla (K \conv \reconst^{\can,\epsilon} \tau_2^{\rkernel, \can,\epsilon}) \\
    +2   \nabla [K \conv X^{\can,\epsilon}] \cdot
     \nabla [H_N \conv ( X^{\can,\epsilon})]
     +
     \abs{\nabla [ H_N \conv (X^{\can,\epsilon})]}^2 + [\Delta(G_N - G)] \conv X^{\can,\epsilon}. 
  \end{multline*}
\end{lemma}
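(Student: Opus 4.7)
The approach is to expand both sides of the identity using the modelled-distribution structure of Definition~\ref{def:def_of_X_W_N_Y_N_for_model} and then perform a combinatorial matching. First, I would apply \eqref{eq:laplacian_on_G_N} (see Remark~\ref{rem:G_N_M}) to the smooth function $W_N^{\can,\epsilon} = G_N \conv X^{\can,\epsilon}$ to obtain
\[
\Delta W_N^{\can,\epsilon} = - X^{\can,\epsilon} + [\Delta(G_N - G)] \conv X^{\can,\epsilon},
\]
and split $G_N = K + H_N$ to write
\[
|\nabla W_N^{\can,\epsilon}|^2 = |\nabla(K \conv X^{\can,\epsilon})|^2 + 2 \nabla (K \conv X^{\can,\epsilon}) \cdot \nabla (H_N \conv X^{\can,\epsilon}) + |\nabla(H_N \conv X^{\can,\epsilon})|^2.
\]
The last three of the four terms appearing on the right-hand side of the claim are already produced, so the remaining task is to prove that $|\nabla(K\conv X^{\can,\epsilon})|^2 - X^{\can,\epsilon}$ equals $-\xi_\epsilon$ plus the constrained sum over pairs $(\tau_1,\tau_2)$ with $|\tau_1|_+ + |\tau_2|_+ > -2$.

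Next, I would unfold $X^{\can,\epsilon} = \reconst^{\can,\epsilon} \regX^{\can,\epsilon} = \sum_{\tau \in \cT_-} c(\tau) \reconst^{\can,\epsilon} \tau^{\rkernel}$ using Definitions~\ref{def:set_cT} and~\ref{def:kernel_for_cT}. Because the canonical model is built from the multiplicative realization $\ecanonicalmodel$ of Definition~\ref{def:canonical_model}, its reconstruction operator factorizes over the product $\rprod$ and commutes with the abstract derivative $\rderivative_i$ applied to $\rkernel$: one has $\reconst^{\can,\epsilon} \Xi^{\rkernel} = \xi_\epsilon$, and for $\tau = \nabla \rint(\tau_1) \cdot \nabla \rint(\tau_2) \in \cT_- \setminus \{\Xi\}$,
\[
\reconst^{\can,\epsilon} \tau^{\rkernel} = \nabla(K \conv \reconst^{\can,\epsilon} \tau_1^{\rkernel}) \cdot \nabla(K \conv \reconst^{\can,\epsilon} \tau_2^{\rkernel}).
\]
Hence $X^{\can,\epsilon} = \xi_\epsilon + \sum_{\tau \in \cT_- \setminus \{\Xi\}} c(\tau) \reconst^{\can,\epsilon} \tau^{\rkernel}$ and, by bilinearity,
\[
|\nabla(K \conv X^{\can,\epsilon})|^2 = \sum_{\sigma_1,\sigma_2 \in \cT_-} c(\sigma_1) c(\sigma_2) \nabla(K \conv \reconst^{\can,\epsilon} \sigma_1^{\rkernel}) \cdot \nabla(K \conv \reconst^{\can,\epsilon} \sigma_2^{\rkernel}).
\]

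The main step is then the combinatorial matching: I would split the double sum above according to whether $|\sigma_1|_+ + |\sigma_2|_+ > -2$ or $< -2$ (the equality case does not contribute for generic $\delta$, and even otherwise is a harmless convention). The $>-2$ part is, by inspection, the second summand on the right-hand side of the lemma. For the $<-2$ part, observe that elements $\tau \in \cT_- \setminus \{\Xi\}$ are in bijection with unordered pairs $\{\tau_1,\tau_2\}$ of elements of $\cT$ with $|\tau_1|_+ + |\tau_2|_+ < -2$ (which automatically forces $\tau_1,\tau_2 \in \cT_-$), via $\tau = \nabla\rint(\tau_1) \cdot \nabla\rint(\tau_2)$. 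The recursive definition of the multiplicity $c$ (namely $c(\tau) = 2 c(\tau_1) c(\tau_2)$ when $\tau_1 \ne \tau_2$ and $c(\tau_1)^2$ when $\tau_1 = \tau_2$) is designed precisely so that summing $c(\tau) \reconst^{\can,\epsilon} \tau^{\rkernel}$ over $\tau \in \cT_- \setminus \{\Xi\}$ reproduces the ordered double sum with coefficient $c(\sigma_1)c(\sigma_2)$ restricted to $|\sigma_1|_+ + |\sigma_2|_+ < -2$.

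Combining these identities, the $<-2$ part of the expanded $|\nabla(K \conv X^{\can,\epsilon})|^2$ exactly cancels $-\sum_{\tau \in \cT_- \setminus \{\Xi\}} c(\tau) \reconst^{\can,\epsilon}\tau^{\rkernel}$ coming from $-X^{\can,\epsilon}$, while the $\xi_\epsilon$ contribution of $-X^{\can,\epsilon}$ produces the first term on the right-hand side of the lemma. The main obstacle will be the combinatorial bookkeeping at this final step, in particular handling the diagonal case $\tau_1 = \tau_2$ correctly so that the factor $2$ in the definition of $c$ is used in the right places; once this is done carefully, the rest amounts to symbolic manipulation that is valid pointwise because $\xi_\epsilon$ and all the $\reconst^{\can,\epsilon}\tau^{\rkernel}$ are smooth functions for the canonical model.
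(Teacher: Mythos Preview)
Your proposal is correct and follows essentially the same approach as the paper's proof: decompose $W_N^{\can,\epsilon} = K\conv X^{\can,\epsilon} + H_N\conv X^{\can,\epsilon}$, expand $|\nabla W_N^{\can,\epsilon}|^2$ and $\Delta W_N^{\can,\epsilon}$ accordingly, and then use the combinatorial identity matching the $|\sigma_1|_+ + |\sigma_2|_+ < -2$ part of the bilinear expansion with $-\sum_{\tau\in\cT_-\setminus\{\Xi\}} c(\tau)\reconst^{\can,\epsilon}\tau^{\rkernel}$. The paper condenses your combinatorial matching step into a single ``it remains to observe'' identity, whereas you spell out the bijection and the role of the multiplicity $c(\tau)$ explicitly; otherwise the arguments are identical.
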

\begin{proof}
  One has
    $W_{N}^{\can} = K \conv X^{\can} + H_N \conv X^{\can}$
  and 
  \begin{multline*}
    \abs{\nabla W_{N}^{\can}}^2 = \sum_{\tau_1, \tau_2 \in \cT_-} c(\tau_1) c(\tau_2)
    \nabla [K \conv \reconst^{\can} \tau_1^{\rkernel, \can}] \cdot
    \nabla [K \conv \reconst^{\can} \tau_2^{\rkernel, \can}]  \\
    + 2 \nabla [K \conv X^{\can}] \cdot
     \nabla [H_N \conv  X^{\can}]
     + \abs{\nabla H_N \conv  X^{\can}}^2.
  \end{multline*}
  Furthermore,
  \begin{equation*}
    \Delta W_{N}^{\can} = - \sum_{\tau \in \cT_-} c(\tau) \reconst^{\can} \tau^{\rkernel, \can}
    + [\Delta (G_N - G)] \conv X^{\can}.
  \end{equation*}
  Now it remains to observe
  \begin{multline*}
    \sum_{\tau_1, \tau_2 \in \cT_-} c(\tau_1) c(\tau_2)
    \nabla [K \conv \reconst^{\can} \tau_1^{\rkernel, \can}] \cdot
    \nabla [K \conv \reconst^{\can} \tau_2^{\rkernel, \can}]
    - \sum_{\tau \in \cT_-} c(\tau) \reconst^{\can} \tau^{\rkernel, \can} \\
    = -\xi_{\epsilon}
    +\sum_{\substack{\tau_1, \tau_2 \in \cT_-, \\
    \abs{\tau_1}_+ + \abs{\tau_2}_+ > -2}}
     c(\tau_1) c(\tau_2) \nabla(K \conv \reconst^{\can} \tau_1^{\rkernel, \can}) \cdot
     \nabla (K \conv \reconst^{\can} \tau_2^{\rkernel, \can}). \qedhere
  \end{multline*}
\end{proof}
\subsection{BPHZ renormalization for $\regX$}
The goal of this section is to show $X^{\rmodel^{\BPHZ, \epsilon}} = X^{\rmodel^{\can, \epsilon}} - c_{\epsilon}$
(Proposition \ref{prop:bphz_vs_can_for_X}). To this end, our first goal is to obtain
the basis expansion for modelled distributions
$\tau^{\rkernel, \rmodel} \in \cT_-$, which will be given in Lemma \ref{lem:tau_K_expansion}.
\begin{lemma}\label{lem:Delta_to_T_0}
  For every $\tau_1, \tau_2 \in \cT_-$ with $\abs{\tau_1}_+, \abs{\tau_2}_+ < -1$ and $i, j \in \{1, \ldots, d\}$, one has
  \begin{equation*}
    \Delta^{\circ}_+ [\rint_i(\tau_1)] = \rint_i(\tau_1) \otimes \unit_+, \quad
    \Delta^{\circ}_+ [\rint_i(\tau_1) \rint_j(\tau_2)] = [\rint_i(\tau_1) \rint_j(\tau_2)] \otimes \unit_+.
  \end{equation*}
  In particular, the constant map $x \mapsto \rint_i(\tau_1) \rint_j(\tau_2)$
  belongs to $\mdspace^{\infty}_{\abs{\tau_1} + \abs{\tau_2} + 2}(\rspace, \rmodel)$ for any model $\rmodel = (\Pi, \Gamma)$
  and
  \begin{equation*}
    \reconst [ \rint_i(\tau_1) \rint_j(\tau_2) ] = \Pi_x[\rint_i(\tau_1) \rint_j(\tau_2)],
  \end{equation*}
  where the right-hand side is independent of $x$.
\end{lemma}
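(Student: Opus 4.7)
The main claim to establish is the first coaction identity $\Delta^{\circ}_+[\rint_i(\tau_1)] = \rint_i(\tau_1) \otimes \unit_+$; the product identity and the two consequences follow fairly routinely from it.

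First I would compute the degrees. Using Definition~\ref{def:degree}, $|\rint| = 2$ and a Taylor index $i \in \{1,\ldots,d\}$ contributes $-1$, so $|\rint_i(\tau_1)|_+ = |\tau_1|_+ + 1$ and $|\rint_i(\tau_1)\rint_j(\tau_2)|_+ = |\tau_1|_+ + |\tau_2|_+ + 2$. Under $|\tau_1|_+, |\tau_2|_+ < -1$ these are all strictly negative (more precisely, $|\rint_i(\tau_k)|_+ < 0$ for $k=1,2$).

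Second, I would establish the first coaction identity using the recursive formula for $\Delta^{\circ}_+$ from \cite[Proposition~4.17]{bruned_2019}. Applied to a planted tree, this recursion expresses
\begin{equation*}
\Delta^{\circ}_+ \rint_i(\tau_1) = (\rint_i \otimes \id)\Delta^{\circ}_+ \tau_1 + \sum_{l \in \N_0^d} \frac{X^l}{l!} \otimes (\hat{\rint}_{i+l} \otimes \id)\Delta^{\circ}_+ \tau_1,
\end{equation*}
where on the right only terms landing in $\rspace \otimes \rspace_+$ survive; in particular each planted factor $\hat{\rint}_{i+l}(\sigma)$ that appears must satisfy $|\hat{\rint}_{i+l}(\sigma)|_+ = |\sigma|_+ + 2 - 1 - |l| > 0$. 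By induction on the structure of $\tau_1 \in \cT$ (basis $\tau_1 = \Xi$, with $\Delta^{\circ}_+ \Xi = \Xi \otimes \unit_+$; step using Lemma~\ref{lem:product_is_regular} and the analogous recursion applied to each factor of $\nabla\rint(\sigma_1)\cdot\nabla\rint(\sigma_2)$), every non-diagonal summand of $\Delta^{\circ}_+ \tau_1$ has first factor of degree strictly less than $|\tau_1|_+$, because $\rspace_+$ has no generators of non-positive degree apart from $\unit_+$. Hence every $\sigma$ appearing in a planted symbol in the coaction of $\rint_i(\tau_1)$ has $|\sigma|_+ \leq |\tau_1|_+ < -1$, so $|\sigma|_+ + 1 - |l| < 0$ for all $l \in \N_0^d$, and the extraction sum is empty. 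The only surviving term is $\rint_i(\tau_1) \otimes \unit_+$.

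Third, the product identity follows immediately from multiplicativity: $\rint_i(\tau_1)\rint_j(\tau_2) = \rint_i(\tau_1) \rprod \rint_j(\tau_2) \in \rbasis(\rspace)$, so Lemma~\ref{lem:product_is_regular} gives
\begin{equation*}
\Delta^{\circ}_+[\rint_i(\tau_1)\rint_j(\tau_2)] = \Delta^{\circ}_+\rint_i(\tau_1) \cdot \Delta^{\circ}_+\rint_j(\tau_2) = [\rint_i(\tau_1)\rint_j(\tau_2)] \otimes (\unit_+ \cdot \unit_+),
\end{equation*}
which equals $[\rint_i(\tau_1)\rint_j(\tau_2)] \otimes \unit_+$. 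For the consequences, set $\tau := \rint_i(\tau_1)\rint_j(\tau_2)$. Since the structure group acts on $\tau$ through $\Delta^{\circ}_+$, the identity $\Delta^{\circ}_+\tau = \tau \otimes \unit_+$ implies $\Gamma_{yx}\tau = \tau$ for every $x,y \in \R^d$. The constant map $f(x) = \tau$ therefore satisfies $f(y) - \Gamma_{yx}f(x) = 0$ identically, so $f \in \mdspace^{\gamma}(\rspace,\rmodel)$ for every $\gamma > 0$, and since $\tau \in \rspace_{|\tau_1|_+ + |\tau_2|_+ + 2}$, we obtain $f \in \mdspace^{\infty}_{|\tau_1|_+ + |\tau_2|_+ + 2}(\rspace,\rmodel)$. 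The reconstruction identity is then the content of \eqref{eq:modelled_dist_approximation}: the bound $|\langle \reconst f - \Pi_x f(x), \phi^{\lambda}_x\rangle| \lesssim \lambda^{\gamma}$ for arbitrarily large $\gamma$ forces $\reconst f = \Pi_x f(x)$ as distributions, and independence of $x$ follows from $\Pi_y \Gamma_{yx} = \Pi_x$ combined with $\Gamma_{yx}\tau = \tau$.

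\textbf{Main obstacle.} The delicate step is the inductive verification in the second paragraph: one must check that in the BHZ recursion the only contributions to $\Delta^{\circ}_+ \rint_i(\tau_1)$ beyond the diagonal term are products of planted symbols $\hat{\rint}_k(\sigma)$ built from subtrees $\sigma$ of $\tau_1$, and that $|\sigma|_+ < -1$ in each such $\sigma$. The careful bookkeeping of edge decorations and Taylor indices in the coaction formula of Proposition~4.17 of \cite{bruned_2019}, and in particular the positivity-of-degree constraint that defines $\rspace_+$, is what forces the sum to collapse to a single term.
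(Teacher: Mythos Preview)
Your approach is essentially the paper's: induction via the recursive formula \cite[Proposition~4.17]{bruned_2019}, emptiness of the extraction sum from the degree constraint $|\tau_1|_+ + 1 - |l| < 0$, and multiplicativity (Lemma~\ref{lem:product_is_regular}) for the product identity; the consequences are derived the same way. Two small points: the recursion you wrote has a spurious extra $\Delta^{\circ}_+$ in the second sum---the correct form is $\sum_{l:\,|\tau_1|_++1-|l|>0}\frac{X^l}{l!}\otimes\hat{\rint}_{e_i+l}(\tau_1)$ with $\sigma=\tau_1$ only---and the detour through ``non-diagonal summands of $\Delta^{\circ}_+\tau_1$'' is unnecessary once the induction directly yields $\Delta^{\circ}_+\tau_1=\tau_1\otimes\unit_+$, which is what you need for the first term $(\rint_i\otimes\id)\Delta^{\circ}_+\tau_1$ to collapse.
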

\begin{proof}
  In view of the recursive formula
  \cite[Proposition~4.17]{bruned_2019}, one can prove the claim by induction on $\abs{\cdot}_+$.
  Indeed, suppose one is going to prove $\Delta^{\circ}_+ \tau = \tau \otimes \punit$,
  where $\tau = \rint_i(\tau_1) \rint_j(\tau_2)$ and $\Delta^{\circ}_+ \tau_k = \tau_k \otimes \punit$.
  By Lemma \ref{lem:product_is_regular},
  $\Delta^{\circ}_+ \tau = \Delta^{\circ}_+[\rint_i(\tau_1)] \Delta^{\circ}_+[\rint_j(\tau_2)]$.
  Therefore, it suffices to show
  $\Delta^{\circ}_+[\rint_i(\tau_1)] = [\rint_i(\tau_1)] \otimes \punit$.
  By \cite[Proposition~4.17]{bruned_2019}, one has
  \begin{equation*}
    \Delta^{\circ}_+ \rint_i(\tau_1) = (\rint_i \otimes \Id) \Delta \tau_1 +
    \sum_{k: \abs{\tau}_+ + 1 - \abs{k} > 0} \frac{X^k}{k!} \otimes \hat{\rint}_{e_i + k}(\tau_1).
  \end{equation*}
  It remains to observe that $(\rint_i \otimes \Id) \Delta \tau_1 = [\rint_i(\tau_1)] \otimes \punit$ by hypothesis of the induction
  and that the set over which $k$ ranges is empty.
\end{proof}
\begin{definition}\label{def:remove_cT}
  We use some notations from Section~\ref{subsec:terminologies_reg_str}.
  Let $\tau \in \rbasis(\cT)$ and let $e$ be an edge of $\tau$ with $\ft(e) = \rint$.
  By removing the edge $e$, we
  obtain a decorated forest with two connected components. We denote by
  \begin{equation*}
    \Remove(\tau;e)
  \end{equation*}
  the component containing the root of $\tau$, with decoration inherited from $\tau$.
  For instance,
  \begin{equation*}
    \Remove(
    \begin{smalldrawtree}
      \node[black node]{} [grow'=up]
        child {node[black node]{}
          child {node[black node]{}
            child {node[noise node]{}}
            child {node[noise node]{}}
          }
          child {node[black node]{}
            child {node[noise node]{} edge from parent[black]}
            child {node[noise node]{} edge from parent[black]}
            edge from parent[purple]
          }
        }
        child {node[black node]{}
          child {node[noise node]{} }
          child {node[black node]{}
            child {node[noise node]{}}
            child {node[noise node]{} }
          }
        };
    \end{smalldrawtree};
    \begin{drawtree}
      \node[]{} [grow'=up]
        child {node[]{}
          edge from parent[purple]
        };
    \end{drawtree}
    )
    =
    \begin{smalldrawtree}
      \node[black node]{} [grow'=up]
        child {node[black node]{}
          child {node[black node]{}
            child {node[noise node]{}}
            child {node[noise node]{}}
          }
        }
        child {node[black node]{}
          child {node[noise node]{} }
          child {node[black node]{}
            child {node[noise node]{}}
            child {node[noise node]{}}
          }
        };
    \end{smalldrawtree},
  \end{equation*}
  where \begin{tikzpicture}\draw[fill=white,line width=1.2pt]  circle(0.5ex);\end{tikzpicture} represents the noise $\Xi$.
  We set
  \begin{align*}
    \Remove(\rbasis(\cT)) &\defby \set{\Remove(\tau; e) \given \tau \in \rbasis(\cT), e \in E_{\tau} \text{ with }
    \ft(e) = \rint}, \\
    \Remove^{\fn}(\rbasis(\cT)) &\defby \set{(T, 0)^{\fn, 0}_{\fe} \given (T, 0)^{0, 0}_{\fe} \in \Remove(\rbasis(\cT))}.
  \end{align*}
\end{definition}
\begin{lemma}\label{lem:tau_K_expansion}
  Suppose $\rmodel = (\Pi, \Gamma)$ is a model realizing $K$.
  Then, one has a claim for $\tau \in \cT_-$ as follows.
  \begin{enumerate}[leftmargin=*]
    \item If $\tau = \Xi$ or $\tau = \nabla \rint(\tau_1) \cdot \nabla \rint(\tau_2)$ with
    $\abs{\tau_1}_+, \abs{\tau_2}_+ < -1$, then $\tau^{\rkernel, \rmodel} = \tau$.
    \item If $\tau = \nabla \rint(\tau_1) \cdot \nabla \rint(\tau_2)$ with $\abs{\tau_1}_+ > -1$ and
    $\abs{\tau_2}_+ < -1$, then one has the expansion
    \begin{equation}\label{eq:tau_K_expansion}
       \tau^{\rkernel, \rmodel} (x) = \tau + \sum_{\sigma \in \fV(\tau)} a_{\tau, \sigma}^{\rmodel}(x) \sigma,
    \end{equation}
    with the following properties:
    \begin{itemize}[leftmargin=*]
      \item $\fV(\tau)$ is a finite subset of $\Remove^{\fn}(\rbasis(\cT))$ that is independent of $\rmodel$,
      \item one has
      \begin{align*}
      \MoveEqLeft[2]
        a_{\tau, \sigma}^{\rmodel}(x) \\
        = &\sum_{\substack{j \in \{1, \ldots, d\}, n \in \N_0,
        \rho \in \cT_-, \\
        l_1, \ldots, l_n \in \N_0^d, \sigma_1, \ldots, \sigma_n \in \Remove^{\fn}(\rbasis(\cT)) \\
        \abs{\sigma_k}_+ + 2 - l_k > 0,
        -1 < \abs{\rho}_+ < \abs{\tau}_+},
        }
        \hspace{-2cm} c_{\tau, \sigma, \rho}^{l_1, \ldots, l_n, \sigma_1, \ldots, \sigma_n}(\cP)
        [\partial_j K \conv \Pi_x \rho^{\rkernel, \rmodel}(x)]
        \prod_{k=1}^n [\partial^{l_k} K \conv \Pi_x \sigma_k](x)\\
        +&\sum_{\substack{n \in \N_0,
        \rho \in \cT_-, \\
        l, l_1, \ldots, l_n \in \N_0^d, \sigma_1, \ldots, \sigma_n \in \Remove^{\fn}(\rbasis(\cT)) \\
        \abs{\sigma_k}_+ + 2 - l_k > 0,
        -1 < \abs{\rho}_+ < \abs{\tau}_+},
        }
        \hspace{-2cm}
        c_{\tau, \sigma, \rho,l}^{l_1, \ldots, l_n, \sigma_1, \ldots, \sigma_n}(\reconst)
        [\partial^l K \conv (\reconst^{\rmodel} \rho^{\rkernel, \rmodel} - \Pi_x \rho^{\rkernel, \rmodel}(x))](x) \\
        &\hspace{8cm} \times\prod_{k=1}^n [\partial^{l_k} K \conv \Pi_x \sigma_k](x),
      \end{align*}
      where the sum is actually finite and the constants
      \begin{equation*}
      c_{\tau, \sigma, \rho}^{l_1, \ldots, l_n, \sigma_1, \ldots, \sigma_n}(\cP)
      \quad \text{and} \quad
      c_{\tau, \sigma, \rho, l}^{l_1, \ldots, l_n, \sigma_1, \ldots, \sigma_n}(\reconst)
      \end{equation*}
      are independent of $\rmodel$.
    \end{itemize}
  \end{enumerate}
\end{lemma}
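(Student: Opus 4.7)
The plan is to prove the claim by induction on the number of edges of $\tau$. The base case $\tau = \Xi$ is immediate, since by construction $\Xi^{\rkernel, \rmodel} = \Xi$, placing us in case (i). For the inductive step, let $\tau = \nabla\rint(\tau_1)\cdot\nabla\rint(\tau_2) \in \cT_-$, and unwind
\begin{equation*}
  \tau^{\rkernel}(x) = \sum_{i=1}^d \rderivative_i[\rkernel\tau_1^{\rkernel}](x) \rprod \rderivative_i[\rkernel\tau_2^{\rkernel}](x)
\end{equation*}
using $\rkernel f = \rint f + \cJ(x) f + \cN f$ from Definition~\ref{def:kernel}. The key observation is that $\rderivative_i$ annihilates the $X^0 = \unit$ component of the polynomial parts produced by $\cJ$ and $\cN$, while sending $X^k$ to $k_i X^{k - e_i}$ and $\rint(\,\cdot\,) \mapsto \rint_i(\,\cdot\,)$; consequently only polynomial terms of positive order survive differentiation.

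For case (i), since the degrees of elements of $\cT$ lie in $\{-2 + k\delta : k\in\N\}$, the hypothesis $|\tau_j|_+ < -1$ forces both sub-children in any decomposition $\tau_j = \nabla\rint(\sigma_1)\cdot\nabla\rint(\sigma_2)$ to satisfy $|\sigma_k|_+ < -1$, so the inductive hypothesis supplies $\tau_j^{\rkernel} = \tau_j$. The bound $|\tau_j|_+ + 2 < 1$ collapses $\cJ(x)\tau_j$ to its $k = 0$ scalar term; Lemma~\ref{lem:Delta_to_T_0} then ensures $\Pi_x\tau_j$ is independent of $x$ and equals $\reconst^{\rmodel}\tau_j$, so $\cN\tau_j$ vanishes. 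Thus $\rkernel\tau_j(x) = \rint(\tau_j) + c_j(x)\unit$, and $\rderivative_i[\rkernel\tau_j] = \rint_i(\tau_j)$. Multiplying and summing gives $\tau^{\rkernel} = \sum_i \rint_i(\tau_1)\rint_i(\tau_2) = \tau$.

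For case (ii), the same argument applied to $\tau_2$ gives $\tau_2^{\rkernel} = \tau_2$ and $\rderivative_i[\rkernel\tau_2] = \rint_i(\tau_2)$, while for $\tau_1$ the inductive hypothesis provides either $\tau_1^{\rkernel} = \tau_1$ or the expansion \eqref{eq:tau_K_expansion}. Because $|\tau_1|_+ + 2 > 1$, the sum defining $\cJ(x)\tau_1^{\rkernel}$ now contains nontrivial $X^k$ terms with $k_i > 0$, and $\cN\tau_1^{\rkernel}(x)$ contributes polynomial tails $X^k [D^k K * (\reconst^{\rmodel}\tau_1^{\rkernel} - \Pi_x \tau_1^{\rkernel})](x)$. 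Applying $\rderivative_i$ then multiplying by $\rint_i(\tau_2)$ yields a dominant tree term $\rint_i(\tau_1)\rint_i(\tau_2)$ contributing $\tau$, plus correction terms of the form (polynomial decoration at the root) $\times$ $\rint_i(\tau_2)$, which is exactly a tree in $\Remove^{\fn}(\rbasis(\cT))$: the $\rint_i(\tau_1)$ branch has been ``removed'' and replaced by a decoration $\fn$ at the remaining root. The two families of scalar prefactors match the two sums in the statement, the first coming from $\cJ$ (giving the $\partial_j K * \Pi_x\rho^{\rkernel}$ factor) and the second from $\cN$ (giving the $\partial^l K * (\reconst \rho^{\rkernel} - \Pi_x \rho^{\rkernel})$ factor), with $\rho \in \cT_-$ of strictly smaller degree obtained from the inductive expansion of $\tau_1^{\rkernel}$. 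Recursively substituting the inductive expansion inside $\cJ$ and $\cN$ produces the iterated product $\prod_{k=1}^n [\partial^{l_k} K * \Pi_x \sigma_k](x)$ with $\sigma_k \in \Remove^{\fn}(\rbasis(\cT))$ and constraints $|\sigma_k|_+ + 2 - l_k > 0$ coming from the truncation thresholds in $\cJ$ and $\cN$.

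The main obstacle is the combinatorial bookkeeping in case (ii): writing $\rderivative_i \cN$ explicitly and showing that, when one recursively substitutes the induction expansion of $\tau_1^{\rkernel}$ and expands $\rkernel$ by its definition, the resulting formal polynomial-tree products align exactly with the two-term structure of $a_{\tau,\sigma}^{\rmodel}(x)$. In particular, one must verify that the ``polynomial-at-root'' trees which appear after differentiation and taking the $\rprod$-product really belong to $\Remove^{\fn}(\rbasis(\cT))$, and that the surviving $\rho \in \cT_-$ in the scalar coefficients always satisfies $-1 < |\rho|_+ < |\tau|_+$ (so that further iteration of the expansion terminates). The delicate point is the $\cN$-contribution, where the coefficient is not a convolution of $K$ with a single $\Pi_x$-evaluation but with the reconstruction remainder $\reconst^{\rmodel}\rho^{\rkernel,\rmodel} - \Pi_x\rho^{\rkernel,\rmodel}$; identifying $\rho$ correctly requires matching the telescoping between $\reconst$ and $\Pi_x$ in the recursive definition of $\rkernel$ with the $\reconst^{\rmodel}$-term in the statement.
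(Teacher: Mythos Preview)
Your approach is correct and coincides with the paper's: both argue by induction (the paper on $\abs{\cdot}_+$, you on the number of edges, which are equivalent here), reduce $\rderivative_i[\rkernel\tau_2^{\rkernel}]$ to $\rint_i(\tau_2)$ via part (a), and then expand $\rderivative_i[\rkernel\tau_1^{\rkernel}]$ using $\rkernel = \rint + \cJ + \cN$. The ``main obstacle'' you flag is exactly what the paper settles in one displayed formula: it writes out $\rderivative_i\rkernel\tau_1^{\rkernel}(x)$ explicitly as $\rint_i\tau_1 + \sum_\sigma a_\sigma(x)\rint_i(\sigma) + [\partial_i K \conv \Pi_x\tau_1](x)\unit + \sum_{\sigma,l} a_\sigma(x)[\partial^{e_i+l}K\conv\Pi_x\sigma](x)\,X^l/l! + \sum_l [\partial^{e_i+l}K\conv(\reconst\tau_1^{\rkernel}-\Pi_x\tau_1^{\rkernel})](x)\,X^l/l!$, then observes that after multiplying by $\rint_i(\tau_2)$ the resulting trees $\rint_i(\sigma)\rint_i(\tau_2)$ and $X^l\rint_i(\tau_2)$ lie in $\Remove^{\fn}(\rbasis(\cT))$, and closes the induction via $\abs{\tau_1}_+ < \abs{\tau}_+$ (so that $\rho = \tau_1$ satisfies the stated constraint). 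The $\cN$-contribution you single out is handled directly by the last sum in that formula, giving the second family of coefficients with $\rho=\tau_1$.
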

\begin{proof}
  To see the claim (a), if $\abs{\tau}_+ < -1$,  thanks to Lemma \ref{lem:Delta_to_T_0}, the identity \eqref{eq:def_of_curl_K}
  becomes
  \begin{equation*}
    \rkernel \tau = \rint \tau + (K \conv \reconst \tau)(x) \unit
  \end{equation*}
  and hence $\rderivative_i \rkernel \tau = \rint_i \tau$.
  The claim (b) seems complicated but can be proven easily by induction. Suppose that one has
  $\tau = \nabla \rint(\tau_1) \cdot \nabla \rint(\tau_2)$ such that $\tau_1$  has
  the expansions of the form \eqref{eq:tau_K_expansion} and $\tau_2^{\rkernel} = \tau_2$. Furthermore, one has
  $-1 < \abs{\tau_1}_+ < 0$ since $\abs{\tau}_+ < 0$.
  Therefore, one has
  \begin{equation}\label{eq:tau_1_K_expansion}
    \tau_1^{\rkernel} = \tau_1 + \sum_{ \sigma \in \Remove^{\fn}(\rbasis(\cT)) } a_{\sigma} \sigma, \quad \tau_2^{\rkernel} = \tau_2
  \end{equation}
  where $a_{\sigma}$ has the desired property.
  By the definition \eqref{eq:def_of_curl_K} of $\rkernel$ , one has
  \begin{multline*}
    \rderivative_i \rkernel \tau^{\rkernel}_1 (x)
    = \rint_i \tau_1 + \sum_{ \sigma \in \Remove^{\fn}(\rbasis(\cT)) } a_{\sigma}(x) \rint_i(\sigma)
    +[\partial_i K \conv \Pi_x \tau_1](x) \unit \\
    + \sum_{\substack{ \sigma \in \Remove^{\fn}(\rbasis(\cT)) , l \in\N_0^d \\ \abs{\sigma} + 1 -  |l|  > 0} } a_{\sigma}(x) [\partial^{\bm{e}_i + l} K \conv \Pi_x \sigma](x) \frac{X^l}{l!}
    + \sum_{\abs{l} < \gamma_{\tau_1} + 1} [\partial^{\bm{e}_i + l} K \conv (\reconst \tau_1^{\rkernel} -
    \Pi_x \tau_1^{\rkernel})](x) \frac{X^l}{l!},
  \end{multline*}
  where $\gamma_{\tau_1}$ is chosen so that $\tau_1^{\rkernel} \in \mdspace^{\gamma_{\tau_1}}(\rspace, \rmodel)$,
  see Remark \ref{rem:tau_K_and_tau}.
  Since $\rderivative_i \rkernel \tau_2^{\rkernel} = \rint_i \tau_2$ as shown in the part (a), one has
  \begin{equation*}
    \rint_i(\sigma) \rint_i(\tau_2), X^l \rint_i(\tau_2) \in \Remove^{\fn}(\rbasis(\cT)).
  \end{equation*}
  Since $\abs{\tau_1}_+ < \abs{\tau}_+$, we
  complete the induction.
\end{proof}

We recall an explicit formula of the BPHZ realization.
\begin{definition}[{\cite[Theorem~6.18]{bruned_2019}}]
  Let $\hat{\rspace}_-$ be the free algebra generated by $\rspace$ under the forest product. (In fact, recalling $H^R_1$ from
  Definition~\ref{def:basis_of_H_R}, we have $\hat{\rspace}_- = H^R_1$.) We define the algebra homomorphism $g_{\epsilon}^-:\hat{\rspace}_- \to \R$ 
  characterized by
  \begin{equation*}
    g_{\epsilon}^- (\rinjection_{\circ} \tau) \defby \expect[\rreal^{\can, \epsilon} \tau (0)],
  \end{equation*}
  where $\rinjection_{\circ}: \rspace \to \hat{\rspace}_-$ is the natural injection. Then, we have
  \begin{equation}\label{eq:bphz_realization}
    \rreal^{\BPHZ, \epsilon} = (g_{\epsilon}^- \hat{\rantipode}_- \otimes \rreal^{\can, \epsilon} \Delta^{\circ}_-).
  \end{equation}
\end{definition}

In view of the identity \eqref{eq:bphz_realization} and
Lemma \ref{lem:tau_K_expansion}, we need to understand $(g^-_{\epsilon} \hat{\rantipode}_- \otimes \ecanonicalmodel)
\Delta_-^{\circ} \tau$ for $\tau \in \cT_-$ and $\tau \in \Remove^{\fn}(\rbasis(\cT))$.
As one can easily guess from the definition of $g^-_{\epsilon}$, it is necessary to estimate
$\expect[\ecanonicalmodel \tau(0)]$ for such $\tau$.
The following simple lemma is a consequence of the symmetry of the noise $\xi$.
\begin{lemma}\label{lem:expectation_remove_cT}
  For $\tau \in \Remove(\rbasis(\cT))$, one has $\expect[\ecanonicalmodel \tau(0)] = 0$.
\end{lemma}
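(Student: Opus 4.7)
The plan is to prove the vanishing via a parity argument under the global reflection $x \mapsto -x$, exploiting the combinatorial structure of $\Remove(\rbasis(\cT))$ together with the symmetry of the noise and of the kernel.

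A quick induction on the construction of $\rbasis(\cT)$ shows that every $\tau \in \rbasis(\cT)$ has an even number of edges of type $\rint$: the base case $\Xi$ contains none, and the product rule $(\tau_1, \tau_2) \mapsto \rint_i(\tau_1) \rint_i(\tau_2)$ adds exactly two such edges. Consequently, any element of $\Remove(\rbasis(\cT))$, being obtained by deleting a single $\rint$-edge, has an \emph{odd} number of $\rint$-edges, each of which carries a first-order derivative (since by construction of $\rbasis(\cT)$ every such edge has decoration $\fe(e) = \boldsymbol{e}_j$ for some $j \in \{1, \dots, d\}$).

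Next, I will unfold the recursive definition of $\ecanonicalmodel$ to obtain the integral representation
\[
  \expect[\ecanonicalmodel \tau(0)] = \int_{(\R^d)^I} \prod_{\substack{e=(v,w) \in E_\tau \\ \ft(e) = \rint}} \partial^{\fe(e)} K(x_v - x_w) \cdot \expect\Bigl[\prod_{\ell \text{ leaf of } \tau} \xi_\epsilon(x_{p(\ell)})\Bigr] \prod_{v \in I} dx_v,
\]
where $I$ denotes the set of internal nodes of $\tau$ other than the root $\rho_\tau$ (whose position is pinned at $0$) and $p(\ell)$ is the parent of the leaf $\ell$. Applying the substitution $x_v \mapsto -x_v$ simultaneously to every $v \in I$, three symmetries cooperate: (i) choosing the admissible kernel $K$ to be even (possible because the Green's function $G$ is radial and Hairer's construction of admissible kernels can be made to respect this symmetry) yields $\partial_j K(-z) = -\partial_j K(z)$, so each derivative factor contributes a sign $-1$; (ii) since $\xi$ is a centred Gaussian with covariance satisfying $\gamma(-z) = \gamma(z)$ and $\rho$ is symmetric (Assumption~\ref{assump:base}), one has $\xi_\epsilon(-\cdot) \stackrel{d}{=} \xi_\epsilon(\cdot)$, and the noise expectation is invariant; (iii) Lebesgue measure is invariant. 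The integral is therefore equal to $(-1)^N$ times itself, where $N$ is the odd number of $\rint$-edges from the first step, forcing it to vanish.

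The main subtlety, slight though it is, is to verify that the auxiliary hypothesis $K(-x) = K(x)$ is compatible with the admissibility conditions of Definition~\ref{def:admissible_kernel} and with the identities used elsewhere in the appendix; the radial symmetry of $G$ and the flexibility in Hairer's construction (see \cite[Lemma 5.5]{hairer_theory_2014}) make this unproblematic, and the renormalization constants and limits computed later depend only on $K$ being admissible, not on its specific form. The integral representation itself is absolutely convergent because $\xi_\epsilon$ is smooth, so $\ecanonicalmodel \tau$ is pointwise well-defined and smooth, and each $\partial^{\fe(e)} K$ is bounded and compactly supported.
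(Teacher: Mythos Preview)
Your proof is correct and follows essentially the same approach as the paper: both arguments exploit the reflection symmetry $\xi_\epsilon(-\cdot) \stackrel{d}{=} \xi_\epsilon(\cdot)$ together with the evenness of $K$ (hence oddness of $\partial_j K$) and the observation that elements of $\Remove(\rbasis(\cT))$ carry an odd number of first-order $\rint$-edges. The paper phrases the sign computation via the auxiliary ``minus'' realization $\rreal^{\text{minus}}$ for $\xi_\epsilon(-\cdot)$ rather than through an explicit integral formula, but the content is the same, and your remark about arranging $K$ to be even is in fact slightly more careful than the paper, which simply uses $K=K(-\cdot)$ without comment.
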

\begin{proof}
  Let $\tau = (T, 0)^{0,0}_{\fe} \in \Remove(\rbasis(\cT))$.
    Let
    $\rreal^{\text{minus}}$ be the canonical realization for $\xi_{\epsilon}(-\cdot)$.
    Since $\xi \dequal \xi(- \cdot)$, one has $\rreal^{\text{minus}} \sigma \dequal \ecanonicalmodel \sigma$ for every
    $\sigma \in \rspace$. If we set
    \begin{equation*}
      n(T) \defby
       \# \set{e \in E_{T} \given \ft(e) = \rint},
    \end{equation*}
    by using the identity
    \begin{equation*}
      \partial_i K \conv [f(-\cdot)] = - [\partial_i K \conv f](- \cdot),
    \end{equation*}
    where the fact $K = K(-\cdot)$ is used,
    one has $\rreal^{\text{minus}} \tau = (-1)^{n(T)} \ecanonicalmodel \tau$.
    However, since $\tau \in \Remove(\rbasis(\cT))$, $n(T)$ is odd. Therefore, one
    has
    \begin{equation*}
      \rreal^{\text{minus}} \tau  \dequal \ecanonicalmodel \tau \quad \text{and} \quad
      \rreal^{\text{minus}} \tau = - \ecanonicalmodel \tau,
    \end{equation*}
    and concludes $\expect[\ecanonicalmodel \tau(0)] = 0$.
\end{proof}
\begin{lemma}\label{lem:Delta_minus_expansion}
  For $\tau = (F, \hat{F})^{\fn, \fo}_{\fe} \in \rbasis(\cT) \cup \Remove^{\fn}(\rbasis(\cT))$ and $x \in \R^d$, one has
  \begin{equation*}
    \Delta^{\circ}_- \tau = \tau \otimes \unit + \unit_- \otimes \tau + \ker(g^-_{\epsilon} \hat{\rantipode}_-
    \otimes \Pi^{\can, \epsilon}_x) \cap \ker(g^-_{\epsilon} \hat{\rantipode}_-
    \otimes \ecanonicalmodel).
  \end{equation*}
\end{lemma}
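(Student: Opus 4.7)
The plan is to unravel $\Delta^\circ_-$ via the explicit coproduct formula and identify the claimed decomposition by a combinatorial inspection, then check the vanishing on the remainder by means of the symmetry of the noise combined with the recursive formula for $\hat{\rantipode}_-$.

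First I would apply Proposition~\ref{prop:def_of_renormalization_structures} to write $\Delta^\circ_- \tau = (\rproj_1^C \otimes \Id)(\rcont_1 \otimes \rcont)\Delta_1 \tau$, and then expand $\Delta_1 \tau$ using the explicit formula \eqref{eq:coproduct_formula} as a sum over admissible subforests $A \in \fU_1(\tau)$ and decorations $(\fn_A, \epsilon^F_A)$. Two extremal terms are immediately identified: the empty subforest $A = \emptyset$ with all extra decorations zero produces $\unit_- \otimes \tau$, while taking $A = F$ with trivial extra decorations gives $\tau \otimes \unit$ (the latter only contributing when $\tau \in \rspace_-$, otherwise $\rproj_1^C$ kills the term). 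Setting $R \defby \Delta^\circ_- \tau - \tau \otimes \unit - \unit_- \otimes \tau$, the task reduces to showing $R$ lies in the claimed kernel intersection.

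Since $g^-_\epsilon \hat{\rantipode}_-$ acts only on the first tensor factor, and since $\Pi^{\can,\epsilon}_x$ differs from $\ecanonicalmodel$ only by the recentering induced by $\Gamma^{\can}_{x0}$ (which modifies only the second factor), it suffices to prove that $g^-_\epsilon \hat{\rantipode}_-(A') = 0$ for the first tensor factor $A'$ of every summand in $R$. The key structural claim is that for $\tau \in \rbasis(\cT) \cup \Remove^{\fn}(\rbasis(\cT))$, the decorated forest $A'$ arising from any nontrivial, proper subforest $A$ either contains a connected component whose underlying undecorated tree lies in $\Remove(\rbasis(\cT))$, or carries a nonzero polynomial decoration $\fn_A + \pi(\epsilon^F_A - \fe\indic_A)$ at the root of some component. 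In the first case, that component's expectation under $\rreal^{\can,\epsilon}$ at the origin is zero by Lemma~\ref{lem:expectation_remove_cT}; in the second case, evaluating the canonical realization at $x=0$ introduces a factor $0^k$ with $k \neq 0$, which vanishes. Either way, at least one factor of $g^-_\epsilon$ applied to a component vanishes, so by the recursive formula \eqref{eq:antipode_minus_recursive} and induction on $\abs{\cdot}_-$, $g^-_\epsilon \hat{\rantipode}_-(A')=0$ since $g^-_\epsilon$ is an algebra morphism.

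The proof of the structural claim relies on direct inspection of the recursive construction of $\rbasis(\cT)$: every leaf is attached by a $\Xi$-edge and every internal node carries paired $\rint_i(\cdot)\rint_i(\cdot)$ children, so that any nonempty proper subforest cuts at least one $\rint$-edge and leaves a component that, after projection, is precisely a $\Remove$-type tree with an odd number of noise leaves; the reflection symmetry $\xi \dequal \xi(-\cdot)$ exploited in Lemma~\ref{lem:expectation_remove_cT} then forces the expectation to vanish. For $\tau \in \Remove^{\fn}(\rbasis(\cT))$ the same argument applies because the defining node decoration $\fn$ is supported at the root of the $\Remove$-component, and cutting any further edge preserves the odd-parity property on a subcomponent. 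The main obstacle will be carefully tracking how the new decorations $\fn_A$ and $\epsilon^F_A$ redistribute between the two tensor factors under $\rcont_1 \otimes \rcont$, and verifying case-by-case that every subforest appearing in $R$ falls into one of the two vanishing cases above; in particular, one must confirm that extremal subforests with $A = F$ but nontrivial $(\fn_A,\epsilon^F_A)$ produce first factors killed by $g^-_\epsilon$ via nonzero root polynomial decorations, so they contribute to $R$ rather than to the $\tau \otimes \unit$ term.
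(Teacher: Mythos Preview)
Your overall strategy—expand $\Delta^\circ_-\tau$ via the coproduct formula, isolate the two extremal terms, and argue the remainder lies in the kernel—matches the paper's. The gap is in your reduction step: it is \emph{not} true that one can always kill the first tensor factor under $g^-_\epsilon\hat{\rantipode}_-$, and your structural claim about that factor is false.

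Take $\tau=\rint_i(\sigma)\rint_i(\Xi)\in\rbasis(\cT)$ with $\sigma=\rint_j(\Xi)\rint_j(\Xi)\in\rbasis(\cT)$, and choose the subforest $A=\sigma$ with $\fn_A=0$, $\epsilon^F_A=0$. The first tensor factor is then $\tau_1=\sigma$, which lies in $\rbasis(\cT)$, not in $\Remove(\rbasis(\cT))$, and carries no polynomial decoration—so neither branch of your dichotomy applies. Worse, $g^-_\epsilon\hat{\rantipode}_-(\sigma)$ is (up to sign) the renormalization constant $\expect[\abs{\partial_j K\conv\xi_\epsilon}^2(0)]$, which is nonzero. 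Your odd-parity heuristic fails here because $A$ is a complete $\rbasis(\cT)$-subtree: the only $\rint$-edge cut is the boundary edge joining $A$ to the root of $\tau$, and that cut leaves $A$ itself untouched with an even number of noise leaves. The paper's proof handles exactly this situation (its case~1) by showing the \emph{second} factor vanishes: after contracting $A$, the tree $\tau_2$ has the contracted node as a leaf reached by an $\rint_i$-edge, and $\ecanonicalmodel\rint_i(\bullet)=\partial_iK\conv1=0$ (likewise for $\Pi^{\can,\epsilon}_x$). So the six-case analysis in the paper genuinely needs to alternate between killing the first and the second factor; your argument must be reorganized to allow for this.

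Two smaller points. The decoration you wrote, $\fn_A+\pi(\epsilon^F_A-\fe\indic_A)$, is the $\fo$-label of the \emph{second} tensor factor in~\eqref{eq:coproduct_formula}, not a decoration on the first. And several of the paper's cases (3--5) dispose of terms not by ``evaluation at $0$ gives $0^k$'' but by showing the first factor has $\abs{\cdot}_->0$ or is of the form $\rint_i(\cdot)$, so that the projection $\rproj_1^C$ onto $\rspace_-$ annihilates it outright.
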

\begin{proof}
  Recall from Definition~\ref{def:terminology_tree}-(a) that edges are oriented.
  We call an edge $e = (a, b)$ a leaf if $b$ is not followed by any edge.
  We call a node $a$ of $F$ true if there exists an edge $e = (a, b)$ such that $\ft(e) = \rint$.
  We denote by $N^{\text{true}}$ the set of all true nodes of $F$. For a subforest $G$ of $F$, we set
  \begin{equation*}
    N_G^j \defby \set{a \in N_G \cap N^{\text{true}} \given \text{ there exist exactly $j$ outgoing edges in $G$ at $a$}}.
  \end{equation*}

  Recalling the coproduct formula \eqref{eq:coproduct_formula}, one has
  \begin{multline*}
    \Delta^{\circ}_- \tau = \tau \otimes \rrootred_{\abs{\tau}_+} \unit + \unit_- \otimes \tau \\
    + \sum_{G \subseteq F, G \neq \emptyset} \sum_{\fn_G \neq \fn, \epsilon^F_G} \frac{1}{\epsilon^F_G !}
    \binom{\fn}{\fn_G}
    (G, 0)^{\fn_G + \pi \epsilon^F_G, 0}_{\fe} \otimes \rcont (F, \indic_G)^{\fn - \fn_G, \pi(\epsilon^F_A - \fe \indic_G)}_{\fe \indic_{
    E_F \setminus E_G
    } + \epsilon^F_G},
  \end{multline*}
  where $\rrootred_{\alpha}$ is defined in Definition~\ref{def:canonical_model}.
  However, note that $\ecanonicalmodel \rrootred_{\alpha} \unit = \ecanonicalmodel \unit$.
  We fix $G \neq \emptyset$, $\fn_G \neq \fn$ and $\epsilon^F_G$ and set
  \begin{equation*}
    \tau_1 \defby (G, 0)^{\fn_G + \pi \epsilon^F_G, 0}_{\fe}, \quad \tau_2 \defby \rcont (F, \indic_G)^{
    \fn- \fn_G, \pi(\epsilon^F_A - \fe \indic_G)}_{\fe \indic_{
    E_F \setminus E_G
    }+ \epsilon^F_G}
  \end{equation*}
  We will prove $(g^-_{\epsilon} \hat{\rantipode}_- \otimes \Pi^{\can, \epsilon}_x) (\tau_1 \otimes \tau_2) = 0$ by
  considering various cases, which will complete the proof.
  When a case is studied, we exclude all cases considered before.

  \begin{enumerate}[1., leftmargin=*]
    \item Suppose that $G \neq F$ and that a connected component $T$ of $G$ satisfies $N_T^0  = \emptyset$ and
    $N_T^1 = N_F^1 \cap N_G$. Then,
    the forest $\tau_2$ contains a leaf $(a, \rho_T)$ of edge type $\rint$ and hence
    $\Pi^{\can, \epsilon}_x \tau_2 = \ecanonicalmodel \tau_2 = 0$.
    \item Suppose $G$ contains a leaf of edge type $\rint$. Then, in view of the recursive formula \eqref{eq:antipode_minus_recursive},
    this is also the case for each forest appearing in $\hat{\rantipode}_- \tau_1$ and hence
    $g^-_{\epsilon} \hat{\rantipode}_- \tau_1 = 0$.
    \item Suppose $N_G^0 \neq \emptyset$. If the case 2 is excluded, then a connected component of $\tau_1$ is
    of the form $\bullet^{\fn_1, 0}$ and hence $\tau_1 = 0$ (as an element of $\rspace_-$).
    \item Suppose $\tau_1$ contains a connected component $\tau_3 = (T,0)^{\fn, 0}_{\fe}$ such that $\# N^1_T \geq 2$.
    Let $a \in N^1_T$.
    \begin{itemize}
      \item If $a$ is the root of $T$, then $\tau_3 = \rint_i (\tau_4)$ and hence $\tau_1 = 0$ (as an element of $\rspace_-$).
      \item If $a$ is not the root of $T$, one can merge two consecutive edges $(a_1, a)$ and $(a, a_2)$ into
      a single edge $(a_1, a_2)$ to obtain a new tree $\tau_5 \in \rtree_{\circ}$ with
      $\abs{\tau_3}_- = \abs{\tau_5}_- + 1$. Since $\abs{\sigma}_- \geq -2 + \delta$ for every $\sigma \in \rtree_{\circ}$,
      if $\#(N_T^1 \setminus \{\rho_T\}) \geq 2$, then $\abs{\tau_3}_- > 0$ and hence
      $\tau_1 = 0$ (as an element of $\rspace_-$).
    \end{itemize}
    \item Suppose that $\tau_1$ contains a connected component $\tau_6 = (T_6, 0)^{\fn_6, 0}_{\fe}$ such that
    $N_{T_6}^0 = N_{T_6}^1 = \emptyset$. Then, $T_1 = T_6 = F$ and $\tau_1 \in \rbasis(\cT)$.
    However, this implies $\fn = \fn_G = 0$, which is excluded.
    \item Therefore, it remains to consider the case where every connnected component $\tau_7 = (T_7, 0)^{\fn_7, 0}_{\fe}$
    of $\tau_1$ satisfies $\# N_{T_7}^1 = 1$ and $N^0_{T_7} = \emptyset$ and all leaves of $\tau_7$ are of type $\Xi$,
    namely $\tau_7 \in \Remove^{\fn}(\rbasis(\cT))$.
    If $\fn_7 \neq 0$ on $N_{T_7}$, then $\abs{\tau_7}_- > 0$. Thus, we suppose $\fn_7 = 0$.
    We will show
    $g^-_{\epsilon} \hat{\rantipode}_- \tau_7 = 0$, which implies $g^-_{\epsilon} \hat{\rantipode}_- \tau_1 = 0$ since
    the character $g^-_{\epsilon} \hat{\rantipode}_-$ is multiplicative.
    To apply the recursive formula \eqref{eq:antipode_minus_recursive}, consider the expansion
    \begin{equation*}
      \hat{\Delta}_- \tau_7 - \tau_7 \otimes \unit_- = \unit \otimes \tau_7 + \sum_{\tau_8} c_{\tau_8} \tau_8 \otimes \tau_9.
    \end{equation*}
    Then, one has
    \begin{equation*}
      g^-_{\epsilon} \hat{\rantipode}_- \tau_7 = - \expect[\ecanonicalmodel \tau_7 (0)]
      - \sum_{\tau_8} c_{\tau_8} \times \big(g^-_{\epsilon} \hat{\rantipode}_- \tau_8\big) \times
      \expect[\ecanonicalmodel \tau_9(0)].
    \end{equation*}
    By the same reasoning as before, one can suppose that every component $\tau_{10} = (T_{10}, 0)^{0,0}_{\fe}$ of $\tau_8$
    belongs to $\Remove(\rbasis(\cT))$.
    However, since $T_{10}$ has a strictly smaller number of edges than
    $T_7$ does, one can assume $g^-_{\epsilon} \hat{\rantipode}_- \tau_8 = 0$ by induction.
    Therefore, it remains to show $\expect[\ecanonicalmodel \tau_7 (0)] = 0$.
    But this was shown in Lemma \ref{lem:expectation_remove_cT}.
    \qedhere
  \end{enumerate}
\end{proof}
\begin{corollary}\label{cor:bphz_character_for_cT}
  If $\tau \in \Remove(\rbasis(\cT))$, then $g^-_{\epsilon} \hat{\rantipode}_- \tau = 0$.
  If $\tau \in \cT_-$, then
  \begin{equation*}
    g^-_{\epsilon} \hat{\rantipode}_- \tau = - \expect[\ecanonicalmodel \tau(0)].
  \end{equation*}
\end{corollary}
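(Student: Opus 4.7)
\medskip

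\noindent\textbf{Proof plan.}
The two assertions are closely linked: the second one is an immediate consequence of Lemma~\ref{lem:Delta_minus_expansion} together with the defining characterization of the BPHZ realization, while the first is essentially a restatement of what was already established as Case~6 in the proof of Lemma~\ref{lem:Delta_minus_expansion}. The plan is to handle the second statement first (as it is shorter and uses the full strength of Lemma~\ref{lem:Delta_minus_expansion}) and then record the first as a direct induction.

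For the second statement, fix $\tau\in\cT_-$. By linearity in $\tau$, we may reduce to $\tau\in\rbasis(\cT_-)\subseteq\rbasis(\cT)$. Lemma~\ref{lem:Delta_minus_expansion} gives the expansion
\[
\Delta^{\circ}_{-}\tau=\tau\otimes\unit+\unit_{-}\otimes\tau+R,
\qquad R\in\ker\bigl(g^{-}_{\epsilon}\hat{\rantipode}_{-}\otimes\ecanonicalmodel\bigr).
\]
Applying the character $g^{-}_{\epsilon}\hat{\rantipode}_{-}\otimes\ecanonicalmodel$ and using the definition of $\ebphzmodel$ (see \eqref{eq:bphz_realization}), together with $\ecanonicalmodel\unit=1$ and $g^{-}_{\epsilon}\hat{\rantipode}_{-}(\unit_{-})=1$ (the latter because $\hat{\rantipode}_{-}$ is the algebra morphism fixing units, and $g^{-}_{\epsilon}$ is multiplicative), yields the pointwise identity
\[
\ebphzmodel\tau=g^{-}_{\epsilon}\hat{\rantipode}_{-}\tau+\ecanonicalmodel\tau.
\]
Taking expectation at $0$ and invoking the defining property of the BPHZ realization (namely $\expect[\ebphzmodel\tau(0)]=0$ whenever $|\tau|_{+}<0$, which holds because $\tau\in\cT_-$) immediately gives $g^{-}_{\epsilon}\hat{\rantipode}_{-}\tau=-\expect[\ecanonicalmodel\tau(0)]$.

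For the first statement, let $\tau\in\Remove(\rbasis(\cT))$ and argue by induction on the number of edges of $\tau$. The recursive formula \eqref{eq:antipode_minus_recursive} combined with the multiplicativity of $g^{-}_{\epsilon}$ gives
\[
g^{-}_{\epsilon}\hat{\rantipode}_{-}\tau
=-\expect[\ecanonicalmodel\tau(0)]
-\sum c_{\sigma_{1}}\,\bigl(g^{-}_{\epsilon}\hat{\rantipode}_{-}\sigma_{1}\bigr)\,\expect[\ecanonicalmodel\sigma_{2}(0)],
\]
where $\hat{\Delta}_{-}\tau-\tau\otimes\unit_{-}=\unit_{-}\otimes\tau+\sum c_{\sigma_{1}}\sigma_{1}\otimes\sigma_{2}$. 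The isolated term vanishes by Lemma~\ref{lem:expectation_remove_cT}. For each remaining $(\sigma_{1},\sigma_{2})$, the case analysis carried out in Cases~1--5 of the proof of Lemma~\ref{lem:Delta_minus_expansion} (which applies verbatim since it depends only on the graph-combinatorial structure of the coproduct $\hat{\Delta}_-$ on elements of $\rbasis(\cT)\cup\Remove^{\fn}(\rbasis(\cT))$) shows that either $\sigma_{1}=0$ in $\rspace_-$ or every connected component of $\sigma_{1}$ lies in $\Remove(\rbasis(\cT))$ and has strictly fewer edges than $\tau$. The induction hypothesis then forces $g^{-}_{\epsilon}\hat{\rantipode}_{-}\sigma_{1}=0$ on each such component (using again that $g^{-}_{\epsilon}\hat{\rantipode}_{-}$ is multiplicative across the forest product), which closes the induction.

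\medskip

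\noindent\textbf{Main obstacle.} There is essentially no new analytic or combinatorial input beyond what has already appeared: the principal difficulty is purely bookkeeping, namely verifying that the combinatorial Cases~1--5 of Lemma~\ref{lem:Delta_minus_expansion} genuinely cover all terms produced by $\hat{\Delta}_{-}$ on elements of $\Remove(\rbasis(\cT))$, so that the induction on the number of edges closes. Since the proof of Lemma~\ref{lem:Delta_minus_expansion} has already performed this analysis in detail, the corollary amounts to packaging those intermediate observations into a clean self-standing statement.
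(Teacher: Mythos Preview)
Your proposal is correct and follows essentially the same approach as the paper. The paper's proof is terser: for the first claim it simply points to Case~6 in the proof of Lemma~\ref{lem:Delta_minus_expansion} (which is precisely the induction you spell out), and for the second claim it applies Lemma~\ref{lem:Delta_minus_expansion} and the BPHZ vanishing property exactly as you do; your reduction to $\rbasis(\cT_-)$ by linearity and your explicit reproduction of the Case~6 induction are faithful expansions of what the paper leaves implicit.
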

\begin{proof}
  The claim for $\tau \in \Remove(\rbasis(\cT))$ is proved in the proof of Lemma \ref{lem:Delta_minus_expansion},
  see the case 6. If $\tau \in \cT_-$, by Lemma \ref{lem:Delta_minus_expansion} one has
  \begin{equation*}
    \ebphzmodel \tau = \ecanonicalmodel \tau + g^-_{\epsilon} \hat{\rantipode} \tau.
  \end{equation*}
  However, since $\abs{\tau}_- < 0$, one has $\expect[\ebphzmodel \tau(0)] = 0$ by definition,
  which completes the proof.
\end{proof}
\begin{proposition}\label{prop:bphz_vs_can_for_X}
  For $\tau \in \cT_-$, one has
  \begin{align}
    \Pi_x^{\rmodel^{\BPHZ, \epsilon}} \tau^{\rkernel, \rmodel^{\BPHZ, \epsilon}}(x) &=
    \Pi_x^{\rmodel^{\can, \epsilon}} \tau^{\rkernel, \rmodel^{\can, \epsilon}}(x)
    - \expect[\ecanonicalmodel \tau (0)], \quad x \in \R^d,  \label{eq:bphz_vs_can_for_X_realization}\\
    \reconst^{\rmodel^{\BPHZ,\epsilon}} \tau^{\rkernel, \rmodel^{\BPHZ,\epsilon}} &=
    \reconst^{\rmodel^{\can,\epsilon}} \tau^{\rkernel, \rmodel^{\can,\epsilon}} - \expect[\ecanonicalmodel \tau (0)] \notag.
  \end{align}
  In particular,
  \begin{equation*}
    X^{\rmodel^{\BPHZ, \epsilon}} = X^{\rmodel^{\can, \epsilon}} - c_{\epsilon}.
  \end{equation*}
  where
  \begin{equation}\label{eq:def_of_c_epsilon}
    c_{\epsilon} \defby \sum_{\tau \in \cT_-} c(\tau) \expect[\ecanonicalmodel \tau (0)].
  \end{equation}
\end{proposition}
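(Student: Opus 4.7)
The proof proceeds by induction on $\abs{\tau}_+$ for $\tau \in \cT_-$, establishing the first (distributional) identity. The second identity for $\reconst^{\rmodel^{\BPHZ,\epsilon}} \tau^{\rkernel, \rmodel^{\BPHZ,\epsilon}}$ is a consequence via the reconstruction theorem, because $\gamma_\tau > 2$ forces $(\reconst^{\rmodel}\tau^{\rkernel,\rmodel} - \Pi_x^{\rmodel}\tau^{\rkernel,\rmodel}(x))(\phi_x^\lambda) = O(\lambda^{\gamma_\tau})$, which vanishes as $\lambda \downarrow 0$; hence a constant difference at the level of $\Pi_x$ persists at the level of $\reconst$. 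The closing statement $X^{\rmodel^{\BPHZ,\epsilon}} = X^{\rmodel^{\can,\epsilon}} - c_\epsilon$ then follows by summing over $\tau \in \cT_-$ with weights $c(\tau)$ as in Definition~\ref{def:def_of_X_W_N_Y_N_for_model}.

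The base case covers $\tau = \Xi$ and $\tau = \nabla\rint(\tau_1)\cdot\nabla\rint(\tau_2)$ with $\abs{\tau_1}_+, \abs{\tau_2}_+ < -1$. Here Lemma~\ref{lem:tau_K_expansion}(a) gives $\tau^{\rkernel,\rmodel} = \tau$ as a constant modelled distribution and Lemma~\ref{lem:Delta_to_T_0} gives $\Delta^{\circ}_+ \tau = \tau \otimes \unit_+$, whence $\Pi_x^{\rmodel}\tau = \rreal^{\rmodel}\tau$. Combining \eqref{eq:bphz_realization} with Lemma~\ref{lem:Delta_minus_expansion} and Corollary~\ref{cor:bphz_character_for_cT} immediately yields $\rreal^{\BPHZ,\epsilon}\tau = \rreal^{\can,\epsilon}\tau - \expect[\ecanonicalmodel\tau(0)]$, giving the first identity in these base cases.

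For the inductive step with $\tau = \nabla\rint(\tau_1)\cdot\nabla\rint(\tau_2) \in \cT_-$ and, without loss of generality, $\abs{\tau_1}_+ > -1$, I use Lemma~\ref{lem:tau_K_expansion}(b) to expand
\[
\tau^{\rkernel,\rmodel}(x) = \tau + \sum_{\sigma \in \fV(\tau)} a^{\rmodel}_{\tau,\sigma}(x)\,\sigma,
\]
with $\fV(\tau) \subseteq \Remove^{\fn}(\rbasis(\cT))$ independent of $\rmodel$. Applying $\Pi_x^{\rmodel}$ to both sides and comparing the two models reduces the problem to three subtasks: (a) the coefficients agree, $a^{\BPHZ}_{\tau,\sigma}(x) = a^{\can}_{\tau,\sigma}(x)$; (b) $\Pi_x^{\BPHZ}\sigma = \Pi_x^{\can}\sigma$ for every $\sigma \in \Remove^{\fn}(\rbasis(\cT))$; and (c) $\Pi_x^{\BPHZ}\tau - \Pi_x^{\can}\tau = -\expect[\ecanonicalmodel\tau(0)]$ as constant distributions in $y$. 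Subtask~(a) reduces to the observation that each building block of the coefficients --- namely $\partial_j K \conv \Pi_x\rho^{\rkernel,\rmodel}(x)$, $\partial^l K \conv \Pi_x\sigma_k(x)$, and $\partial^l K \conv (\reconst^{\rmodel}\rho^{\rkernel,\rmodel} - \Pi_x\rho^{\rkernel,\rmodel}(x))(x)$ for $\rho \in \cT_-$ with $-1 < \abs{\rho}_+ < \abs{\tau}_+$ and $\sigma_k \in \Remove^{\fn}(\rbasis(\cT))$ --- is, by the inductive hypothesis and subtask~(b), the same up to an additive constant in $y$, and such constants are annihilated by $\partial^m K \conv$ since $K$ is compactly supported and smooth, giving $\int \partial^m K = 0$ for every multi-index $m \neq 0$. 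One also needs that $\reconst^{\BPHZ}\rho^{\rkernel,\BPHZ} - \reconst^{\can}\rho^{\rkernel,\can}$ and $\Pi_x^{\BPHZ}\rho^{\rkernel,\BPHZ}(x) - \Pi_x^{\can}\rho^{\rkernel,\can}(x)$ coincide as the same constant, so that the third type of building block is insensitive to the model. Subtask~(c) follows by expanding $\Delta^{\circ}_+ \tau$ via the recursive formula \cite[Proposition~4.17]{bruned_2019}: the polynomial correction terms $X^k \otimes \hat{\rint}_{k+e_i}(\tau_j)$ cancel in the BPHZ/canonical difference thanks to \eqref{eq:bphz_realization}, Lemma~\ref{lem:Delta_minus_expansion}, and Corollary~\ref{cor:bphz_character_for_cT}, leaving precisely $-\expect[\ecanonicalmodel\tau(0)]$.

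The main obstacle is subtask~(b): extending the identity $\Pi_x^{\BPHZ}\sigma = \Pi_x^{\can}\sigma$ to all of $\Remove^{\fn}(\rbasis(\cT))$. Corollary~\ref{cor:bphz_character_for_cT} provides the vanishing of $g^{-}_{\epsilon}\hat{\rantipode}_-$ only on $\Remove(\rbasis(\cT))$ (the case $\fn = 0$). For $\fn \neq 0$ one argues that a nonzero polynomial decoration raises $\abs{\cdot}_-$ by a positive amount, so that such an element either falls outside $\rspace_-$ (so that the corresponding first tensor factor in $\Delta^{\circ}_-$ vanishes under the projection to $\rspace_-$) or produces contributions that are killed upon convolution with $\partial^m K$. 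An alternative is to proceed by a parallel induction on the structure of $\sigma$, peeling off outer $\rint$-edges: the admissibility condition yields $\Pi_x\rint_k(\mu) = \partial^k K \conv \Pi_x\mu$ minus a polynomial counterterm whose coefficients involve $\partial^{k+l} K$ evaluated at $x$, so that inductive agreement on the subtrees of $\sigma$ together with $\int \partial^m K = 0$ propagates the identity up to $\sigma$.
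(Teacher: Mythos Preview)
Your overall strategy---induction on $\abs{\tau}_+$ using the basis expansion from Lemma~\ref{lem:tau_K_expansion}, together with Lemma~\ref{lem:Delta_minus_expansion} and Corollary~\ref{cor:bphz_character_for_cT}---matches the paper's, and your reduction of the second identity to the first is fine (the paper simply notes $\reconst^{\#}\tau^{\rkernel,\#}(x) = [\Pi_x^{\#}\tau^{\rkernel,\#}(x)](x)$ for smooth models, which is more direct than your $\gamma_\tau > 2$ argument but equivalent). The induction showing $a_{\tau,\sigma}^{\BPHZ}=a_{\tau,\sigma}^{\can}$ is also the same.

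Where you diverge is in subtasks~(b) and~(c). You attempt to compare $\Pi_x^{\BPHZ}$ and $\Pi_x^{\can}$ through the \emph{positive} coaction $\Delta^{\circ}_+$ (via $\Pi_x = (\rreal\otimes f_x)\Delta^{\circ}_+$), which forces you to track how the recentering character $f_x = g_x^+(\rreal)\hat{\rantipode}_+$ changes between the two realizations. That is doable in principle, but your sketch does not carry it out, and your ``alternative'' for~(b) via peeling off $\rint$-edges would be laborious. The paper bypasses all of this by invoking the identity
\[
\Pi_x^{\BPHZ} \;=\; (g^-_{\epsilon}\hat{\rantipode}_- \otimes \Pi^{\can}_x)\,\Delta^{\circ}_-
\]
from \cite[Theorem~6.16]{bruned_2019}, which compares the two models directly at the level of $\Pi_x$ via the \emph{negative} coaction. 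Applying this to the whole expansion $\tau^{\rkernel,\BPHZ}(x)=\tau+\sum_{\sigma}a^{\can}_{\tau,\sigma}(x)\sigma$, and using that Lemma~\ref{lem:Delta_minus_expansion} already covers both $\tau\in\rbasis(\cT)$ and $\sigma\in\Remove^{\fn}(\rbasis(\cT))$, one obtains in a single line
\[
\Pi_x^{\BPHZ}\tau^{\rkernel,\BPHZ}(x)
= g^-_{\epsilon}\hat{\rantipode}_-\tau + \Pi^{\can}_x\tau + \sum_{\sigma}a^{\can}_{\tau,\sigma}(x)\,\Pi^{\can}_x\sigma
= -\expect[\ecanonicalmodel\tau(0)] + \Pi^{\can}_x\tau^{\rkernel,\can}(x),
\]
where the last equality uses Corollary~\ref{cor:bphz_character_for_cT}. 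This resolves your~(b) and~(c) simultaneously: the $\sigma\otimes\unit$ contributions vanish because either $\sigma\notin\rspace_-$ (your degree observation for $\fn\neq 0$) or $g^-_{\epsilon}\hat{\rantipode}_-\sigma=0$ by the corollary, and no separate treatment of $\Remove^{\fn}$ versus $\Remove$ is needed.
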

\begin{proof}
  To simplify notation, we write $\reconst^{\BPHZ} \defby \reconst^{\rmodel^{\BPHZ, \epsilon}}$ here, for instance.
  Since
  \begin{equation*}
    \reconst^{\#} \tau^{\rkernel, \#} (x) = [\Pi^{\#}_x \tau^{\rkernel, \#}(x)](x), \quad \# \in \{\can, \BPHZ\},
  \end{equation*}
  it suffices to prove \eqref{eq:bphz_vs_can_for_X_realization}.
  By Lemma \ref{lem:tau_K_expansion}, one has the expansion
  \begin{equation*}
    \tau^{\rkernel, \BPHZ}(x) = \tau + \sum_{\sigma} a_{\tau, \sigma}^{\BPHZ}(x) \sigma.
  \end{equation*}
  In the expression of $a_{\tau, \sigma}^{\BPHZ}$ given in Lemma \ref{lem:tau_K_expansion}, every $\rho$
  in the sum satisfies $\abs{\rho}_+ < \abs{\tau}_+$. Therefore, one can assume
  $a_{\sigma}^{\BPHZ} = a_{\sigma}^{\can}$ by induction.
  By Lemma \ref{lem:Delta_minus_expansion} and Corollary \ref{cor:bphz_character_for_cT},
  \begin{equation*}
    \Delta^{\circ}_- \tau^{\rkernel, \BPHZ}(x) = \tau \otimes \unit + \unit_- \otimes \tau
    + \sum_{\sigma} a_{\sigma}^{\can}(x) \unit_- \otimes \sigma
    + \ker(g^-_{\epsilon} \hat{\rantipode}_- \otimes \Pi^{\can}_x).
  \end{equation*}
  Furthermore, by \cite[Theorem 6.16]{bruned_2019}, one has
  \begin{equation*}
    \Pi_x^{\BPHZ} = (g^-_{\epsilon} \hat{\rantipode}_- \otimes \Pi^{\can}_{x}) \Delta^{\circ}_-.
  \end{equation*}
  Therefore,
  \begin{align*}
    \Pi^{\BPHZ}_x \tau^{\rkernel, \BPHZ}(x)
    &= g^-_{\epsilon} \hat{\rantipode}_- \tau + \Pi^{\can}_x \tau
    + \sum_{\sigma} a_{\sigma}^{\can}(x) \Pi^{\can}_x \sigma \\
    &= -\expect[\ecanonicalmodel \tau(0)] + \Pi^{\can}_x \tau^{\rkernel, \can}(x),
  \end{align*}
  where we applied Corollary \ref{cor:bphz_character_for_cT} to get the last equality.
\end{proof}
\subsection{BPHZ renormalization for $\regY_{N}$}
The goal of this section is to compare $\regY_{N}^{\rmodel^{\can, \epsilon}}$ and
$\regY_{N}^{\rmodel^{\BPHZ, \epsilon}}$,
as we did for $\regX$ in the previous section. Again, we need to obtain the basis expansion for $\regY_N$.
\begin{lemma}\label{lem:expansion_for_cY}
  Let $\tau_1, \tau_2 \in \cT_-$, $i \in \{1, \ldots, d\}$ and $N \in \N$.
  Let $\rmodel$ be a model realizing $K$.
  Assume $\abs{\tau_1}_+ + \abs{\tau_2}_+ > -2$.
  Then, for $x \in \R^d$, one has
  \begin{multline*}
    \rproj_{<\delta} \big\{F(\regW_{N}^{\rmodel})(x) \rprod
    \rderivative_i[\rkernel \tau_1^{\rkernel, \rmodel}](x) \rprod
    \rderivative_i[\rkernel \tau_2^{\rkernel, \rmodel}](x)\big\} \\
    = \rproj_{<\delta}  \Big\{  \sum_{k \in \N_0} \frac{D^k F(W_{N}^{\rmodel}(x))}{k!}
    \Big(\sum_{\tau \in \cT_-} \rint \tau \Big)^{\rprod k} \rprod
    \rderivative_i[\rkernel \tau_1^{\rkernel, \rmodel}](x) \rprod
    \rderivative_i[\rkernel \tau_2^{\rkernel, \rmodel}](x) \Big\}
  \end{multline*}
  and
  \begin{multline*}
    \rproj_{<\delta} \big\{F(\regW_{N}^{\rmodel})(x) \rprod
    \rderivative_i [\rkernel^{\rmodel} \regX^{\rmodel}](x)
    \star R_2[\partial_i \{H_N \conv (\reconst^{\rmodel} \regX^{\rmodel})\} ](x) \big\} \\
    = \rproj_{< \delta}  \Big\{  \sum_{k \in \N_0}
    \frac{D^k F(W_{N}^{\rmodel}(x))}{k!} \partial_i[H_N \conv (X^{\rmodel})](x)
    \Big(\sum_{\tau \in \cT_-} \rint \tau \Big)^{\rprod k}
    \rprod \rderivative_i [\rkernel^{\rmodel} \regX^{\rmodel}](x)  \Big\} .
  \end{multline*}
\end{lemma}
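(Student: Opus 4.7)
}
My plan is to expand $F(\regW_N^{\rmodel})(x) = F^{\rprod}(\regW_N^{\rmodel}(x))$ using the Taylor-like definition in Definition~\ref{def:function_and_modelled_distritbusion}, identify precisely which components of $\regW_N^{\rmodel}(x) - W_N^{\rmodel}(x)\unit$ survive the projection $\rproj_{<\delta}$, and then reduce the problem to a counting of degrees of elements in $\rbasis(\rspace)$.

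The first step is a careful decomposition of $\regW_N^{\rmodel}(x) - W_N^{\rmodel}(x)\unit$. From the definition $\regW_N^{\rmodel} = \rproj_{<2}[\rint\regX^{\rmodel} + \cJ(x)\regX^{\rmodel} + \cN_2 \regX^{\rmodel} + R_4[H_N \conv X^{\rmodel}]]$, the degree-$0$ component collapses to $(K \conv X^{\rmodel})(x)\unit + (H_N\conv X^{\rmodel})(x)\unit = W_N^{\rmodel}(x)\unit$, whereas the remaining piece splits into
\begin{equation*}
  \regW_N^{\rmodel}(x) - W_N^{\rmodel}(x)\unit \;=\; S\;+\;E(x), \qquad S \defby \sum_{\tau\in\cT_-} c(\tau)\rint\tau,
\end{equation*}
where $E(x)$ collects (i) the polynomial terms $X^j$, $|j|=1$, coming from $\cJ(x)$, $\cN_2$ and $R_4[H_N\conv X^{\rmodel}]$, and (ii) the ``higher-order'' integrated terms $\sum_\tau c(\tau)\rint(\tau^{\rkernel,\rmodel}-\tau) = \sum_{\tau,\sigma} c(\tau) a_{\tau,\sigma}^{\rmodel}(x) \rint\sigma$ with $\sigma \in \Remove^{\fn}(\rbasis(\cT))$ as in Lemma~\ref{lem:tau_K_expansion}. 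The key point is that every basis vector appearing in $E(x)$ has degree $\ge 1$: polynomials have degree exactly $1$, while the integrated error terms have $|\rint\sigma|_+ = |\sigma|_+ + 2 > |\rint\tau|_+ > 1+\delta$, the last inequality being forced by Lemma~\ref{lem:tau_K_expansion}(b) which applies only to $\tau = \nabla\rint(\tau_1)\cdot\nabla\rint(\tau_2)$ with $|\tau_1|_+ > -1$, giving $|\tau|_+ > -1+\delta$.

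The second step is to insert the expansion $F^{\rprod}(\regW_N^{\rmodel}(x)) = \sum_{k\ge 0}\tfrac{D^k F(W_N^{\rmodel}(x))}{k!}(S+E(x))^{\rprod k}$ and compare with the right-hand side by expanding $(S+E)^{\rprod k} - S^{\rprod k}$ into terms containing at least one factor of $E$. For the first identity, the auxiliary factor $\rderivative_i[\rkernel \tau_1^{\rkernel,\rmodel}]\rprod\rderivative_i[\rkernel\tau_2^{\rkernel,\rmodel}]$ has minimal component $\rint_i\tau_1\rprod\rint_i\tau_2$ of degree $|\tau_1|_+ + |\tau_2|_+ + 2 > 0$; each term of $(S+E)^{\rprod k}-S^{\rprod k}$ has degree $\ge 1$ (contributed by the forced $E$-factor), so the total degree of the projected term is strictly larger than $1 > \delta$, and $\rproj_{<\delta}$ annihilates it. For the second identity, I will first separate $R_2[\partial_i(H_N\conv X^{\rmodel})](x) = \partial_i(H_N\conv X^{\rmodel})(x)\unit + Q(x)$, where $Q(x)$ is a polynomial of degree $1$; since $\rderivative_i[\rkernel\regX^{\rmodel}]$ has minimal degree $-1+\delta$ (from $\rint_i\Xi$), the $Q$ contribution has degree $\ge \delta$, which $\rproj_{<\delta}$ kills, and then the same Taylor-versus-$S$ argument applies because $1 + (-1+\delta) = \delta$.

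The main obstacle I anticipate is verifying cleanly that the products $\rprod$ and the derivatives $\rderivative_i$ are well-defined and regular on the relevant subspaces, in particular checking that Lemma~\ref{lem:product_is_regular} applies to each cross-term in the above expansion. Assuming this bookkeeping, all degree inequalities above are genuinely strict thanks to the hypothesis $|\tau_1|_+ + |\tau_2|_+ > -2$ (first identity) and the form of $R_2$ (second identity), so the lemma follows by linearity of $\rproj_{<\delta}$.
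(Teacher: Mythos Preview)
Your approach matches the paper's: split $\regW_N^{\rmodel}(x)$ into $W_N^{\rmodel}(x)\unit$, the sum $S$ of integrated trees, and a remainder $E(x)\in\bigoplus_{\alpha\geq 1}\rspace_\alpha$, Taylor-expand $F^{\rprod}$, and kill the cross-terms involving $E$ by degree counting under $\rproj_{<\delta}$.

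One small inaccuracy in your first-identity count: the auxiliary factor $\rderivative_i[\rkernel\tau_1^{\rkernel}]\rprod\rderivative_i[\rkernel\tau_2^{\rkernel}]$ need not have minimal degree $>0$. In the case of Lemma~\ref{lem:tau_K_expansion}(b), say $|\tau_1|_+>-1$ and $|\tau_2|_+<-1$, the factor $\rderivative_i[\rkernel\tau_1^{\rkernel}]$ carries a $\unit$ component (coming from $\rderivative_i$ applied to the $X^{e_i}$ term produced by $\cJ(x)$), so the product contains $\rint_i\tau_2$ of degree $|\tau_2|_+ + 1 \in (-1+\delta,0)$. The correct uniform lower bound is $-1+\delta$, which is exactly what the paper asserts; combined with the $E$-contribution of degree $\geq 1$ this still gives total degree $\geq \delta$, so your conclusion survives unchanged. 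Your treatment of the second identity (splitting off the degree-$1$ part of $R_2$, then using the $-1+\delta$ bound on $\rderivative_i[\rkernel\regX]$) is already at the right level of sharpness.
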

\begin{proof}
  By Lemma \ref{lem:tau_K_expansion}, one has
  \begin{equation*}
    \regW_{N}^{\rmodel}(x) = \sum_{\tau \in \cT_-} \rint \tau + W_{N}^{\rmodel}(x) \unit +
    \regW_{N}^{\rmodel, +}(x),
  \end{equation*}
  where $\regW_{N}^{\rmodel, +}(x) \in \oplus_{\alpha \geq 1} \rspace_{\alpha}$.
  Recalling Definition \ref{def:function_and_modelled_distritbusion}, one has
  \begin{equation*}
    F(\regW_{N}^{\rmodel})(x) =
    \sum_{k \in \N_0} \frac{D^k F(W_{N}^{\rmodel}(x))}{k!} \Big(
    \sum_{\tau \in \cT_-} \rint \tau + \regW_{N}^{\rmodel, +}(x) \Big)^{\rprod k}.
  \end{equation*}
  Since Lemma \ref{lem:tau_K_expansion} implies that
  \begin{equation*}
    \rderivative_i[\rkernel \tau_1^{\rkernel, \rmodel}](x) \rprod
    \rderivative_i[\rkernel \tau_2^{\rkernel, \rmodel}](x)
  \end{equation*}
  is $\oplus_{\alpha \geq -1 + \delta} \rspace_{\alpha}$-valued, one can ignore the contribution from
  $\regW_{N}^{\rmodel, +}(x)$ when the projection $\rproj_{< \delta}$ is applied.
  This observation proves the claimed identities.
\end{proof}
\begin{lemma}\label{lem:BPHZ_vs_can_for_Y}
  Let $N \in \N$.   To simplify notation, we write $X^{\BPHZ, \epsilon} \defby X^{\rmodel^{\BPHZ, \epsilon}}$ here, for instance.
  Then, one has
  \begin{multline*}
    \reconst^{{\BPHZ, \epsilon}} \regY_{N}^{{\BPHZ, \epsilon}}
    = F(W_{N}^{{\BPHZ, \epsilon}}) \\
    \times
    \Big\{
    \sum_{\substack{\tau_1, \tau_2 \in \cT_-, \\
    \abs{\tau_1}_+ + \abs{\tau_2}_+ > -2}}
     c(\tau_1) c(\tau_2) \nabla(K \conv \reconst^{{\can, \epsilon}} \tau_1^{\rkernel, {\can, \epsilon}}) \cdot
     \nabla (K \conv \reconst^{{\can, \epsilon}} \tau_2^{\rkernel, {\can, \epsilon}}) \\
    +2   \nabla [K \conv X^{{\can, \epsilon}}] \cdot
     \nabla [H_N \conv X^{\can, \epsilon}] \Big\}
  \end{multline*}
\end{lemma}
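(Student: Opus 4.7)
My plan exploits that for $\epsilon>0$ both $\rmodel^{\BPHZ,\epsilon}$ and $\rmodel^{\can,\epsilon}$ are smooth admissible models, so that $\Pi^{\rmodel}_x \sigma$ is a smooth function for each basis element $\sigma$ and the reconstruction admits the pointwise description $\reconst^{\rmodel} f(x) = [\Pi^{\rmodel}_x f(x)](x)$. Because the analytic bound forces $\abs{[\Pi^{\rmodel}_x \sigma](y)} \lesssim \abs{y-x}^{\abs{\sigma}_+}$, hence $[\Pi^{\rmodel}_x \sigma](x) = 0$ whenever $\abs{\sigma}_+ > 0$, the pointwise value $\reconst^{\rmodel} f(x)$ is insensitive to the truncation $\rproj_{<\delta}$ for any $\delta > 0$; combined with the multiplicativity $\Pi^{\rmodel}_x(\tau \rprod \sigma) = \Pi^{\rmodel}_x \tau \cdot \Pi^{\rmodel}_x \sigma$ (Lemma~\ref{lem:product_is_regular}) this will yield the reconstruction identities
\begin{gather*}
\reconst^{\rmodel}(f_1 \rprod f_2) = \reconst^{\rmodel} f_1 \cdot \reconst^{\rmodel} f_2, \quad
\reconst^{\rmodel} F^{\rprod}(f) = F(\reconst^{\rmodel} f), \quad
\reconst^{\rmodel} R_{\beta}[g] = g, \\
\reconst^{\rmodel} \cK f = K \conv \reconst^{\rmodel} f, \quad
\reconst^{\rmodel} \rderivative_i f = \partial_i \reconst^{\rmodel} f,
\end{gather*}
valid for any smooth admissible model $\rmodel$.

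Next, I will apply these identities to the expression for $\regY_N^{\BPHZ,\epsilon}$ from Definition~\ref{def:def_of_modelled_distributions_for_AH}. Using that $\reconst^{\BPHZ,\epsilon} \regX^{\BPHZ,\epsilon} = X^{\BPHZ,\epsilon}$ and $\reconst^{\BPHZ,\epsilon} \regW_N^{\BPHZ,\epsilon} = G_N \conv X^{\BPHZ,\epsilon} = W_N^{\BPHZ,\epsilon}$, this will produce
\begin{multline*}
\reconst^{\BPHZ,\epsilon} \regY_N^{\BPHZ,\epsilon}
= F(W_N^{\BPHZ,\epsilon}) \Big\{
\sum_{\substack{\tau_1, \tau_2 \in \cT_-\\ \abs{\tau_1}_+ + \abs{\tau_2}_+ > -2}} c(\tau_1) c(\tau_2)
\nabla(K \conv \reconst^{\BPHZ,\epsilon} \tau_1^{\rkernel,\BPHZ,\epsilon}) \cdot \nabla(K \conv \reconst^{\BPHZ,\epsilon} \tau_2^{\rkernel,\BPHZ,\epsilon}) \\
+ 2 \nabla[K \conv X^{\BPHZ,\epsilon}] \cdot \nabla[H_N \conv X^{\BPHZ,\epsilon}] \Big\}.
\end{multline*}

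To conclude I will invoke Proposition~\ref{prop:bphz_vs_can_for_X}: both $\reconst^{\BPHZ,\epsilon} \tau^{\rkernel,\BPHZ,\epsilon} - \reconst^{\can,\epsilon} \tau^{\rkernel,\can,\epsilon}$ and $X^{\BPHZ,\epsilon} - X^{\can,\epsilon}$ are additive constants. Since $K$ is compactly supported and $H_N \in \cS(\R^d)$ by Lemma~\ref{lem:H_N_is_Schwarz}, convolving either kernel against a constant returns a constant, which is annihilated by the outer $\nabla$. Replacing the BPHZ reconstructions and $X^{\BPHZ,\epsilon}$ by their canonical counterparts inside every gradient term therefore yields the claimed identity.

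The hard part will be a clean verification of the reconstruction identities in the smooth-model setting, most notably $\reconst^{\rmodel} F^{\rprod}(\regW_N) = F(W_N)$: at the abstract level $F^{\rprod}(\regW_N)$ is an infinite formal series, but the truncation $\rproj_{<\delta}$ (together with the bookkeeping in Lemma~\ref{lem:expansion_for_cY}) turns it into a finite sum, and the pointwise description of reconstruction for smooth models then makes the identity essentially immediate.
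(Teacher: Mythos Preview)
Your argument has a gap at the very first step: you claim that $\Pi^{\rmodel}_x(\tau \rprod \sigma) = \Pi^{\rmodel}_x \tau \cdot \Pi^{\rmodel}_x \sigma$ holds for \emph{any} smooth admissible model, citing Lemma~\ref{lem:product_is_regular}. That lemma only asserts multiplicativity of the coaction $\Delta^{\circ}_+$; since $\Pi_x = (\rreal \otimes f_x)\Delta^{\circ}_+$ with $f_x$ a character, multiplicativity of $\Pi_x$ on a tree product would require multiplicativity of the underlying realization $\rreal$. The canonical realization $\rreal^{\can,\epsilon}$ is multiplicative by construction, but the BPHZ realization $\rreal^{\BPHZ,\epsilon} = (g^-_{\epsilon}\hat{\rantipode}_- \otimes \rreal^{\can,\epsilon})\Delta^{\circ}_-$ is not. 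For a concrete failure, take $\tau = \rint_i(\Xi)\rint_i(\Xi)\in\rbasis(\cT)$ with $\abs{\tau}_+<0$: the BPHZ characterization forces $\expect[\rreal^{\BPHZ,\epsilon}\tau(0)]=0$, whereas $(\rreal^{\BPHZ,\epsilon}\rint_i(\Xi))^2 = (\partial_i K\conv\xi_\epsilon)^2$ has nonzero mean; so $\rreal^{\BPHZ,\epsilon}$ (and hence $\Pi^{\BPHZ,\epsilon}_x$) is not multiplicative on this product. Consequently your identities $\reconst^{\BPHZ,\epsilon}(f_1\rprod f_2) = \reconst^{\BPHZ,\epsilon}f_1\cdot\reconst^{\BPHZ,\epsilon}f_2$ and $\reconst^{\BPHZ,\epsilon} F^{\rprod}(f) = F(\reconst^{\BPHZ,\epsilon}f)$ are unjustified, and the displayed formula for $\reconst^{\BPHZ,\epsilon}\regY_N^{\BPHZ,\epsilon}$ in terms of BPHZ data does not follow.

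The paper does not assume such multiplicativity. Instead, after reducing via Lemmas~\ref{lem:tau_K_expansion} and~\ref{lem:expansion_for_cY} (together with Proposition~\ref{prop:bphz_vs_can_for_X}) to the basis elements $\rint(\tau_1)\cdots\rint(\tau_n)\rint_i(\tau_{n+1})$ and $\rint(\tau_1)\cdots\rint(\tau_n)\rint_i(\tau_{n+1})\rint_i(\tau_{n+2})$ (with $\tau_1,\ldots,\tau_{n+1}\in\cT_-$ and $\tau_{n+2}\in\Remove(\rbasis(\cT))$) that actually occur in $\regY_N^{\BPHZ,\epsilon}$, it proves directly that $\Pi^{\BPHZ,\epsilon}_x = \Pi^{\can,\epsilon}_x$ on these specific trees. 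This requires computing $\Delta^{\circ}_-$ on them (mirroring the case analysis in the proof of Lemma~\ref{lem:Delta_minus_expansion}) and showing, via the parity argument of Lemma~\ref{lem:expectation_remove_cT}, that all BPHZ characters $g^-_{\epsilon}\hat{\rantipode}_-[\cdots]$ appearing there vanish. That algebraic verification is the essential content of the lemma, and your proposal skips it entirely.
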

\begin{proof}
  To simplify notation, we also write $\Pi_x^{\BPHZ, \epsilon} \defby \Pi_x^{\rmodel^{\BPHZ, \epsilon}}$ here, for instance.
  One has
  \begin{equation*}
    \reconst^{\BPHZ, \epsilon} \regY_{N}^{\BPHZ, \epsilon}(x)
  = [\Pi_x^{\BPHZ, \epsilon} \regY_{N}^{\BPHZ, \epsilon}(x)](x).
  \end{equation*}
  In view of Lemma \ref{lem:tau_K_expansion}, Proposition \ref{prop:bphz_vs_can_for_X} and
  Lemma \ref{lem:expansion_for_cY}, it suffices to show
  \begin{align}
    \Pi^{\BPHZ, \epsilon}_x [\rint(\tau_1) \cdots \rint(\tau_n) \rint_i(\tau_{n+1})]
    &= \Pi^{\can, \epsilon}_x [\rint(\tau_1) \cdots \rint(\tau_n) \rint_i(\tau_{n+1})] \notag , \\
    \label{eq:pi_BPHZ_vs_can}
    \Pi^{\BPHZ, \epsilon}_x [\rint(\tau_1) \cdots \rint(\tau_n) \rint_i(\tau_{n+1}) \rint_i(\tau_{n+2})]
    &= \Pi^{\can, \epsilon}_x [\rint(\tau_1) \cdots \rint(\tau_n) \rint_i(\tau_{n+1}) \rint_i(\tau_{n+2})],
  \end{align}
  for $\tau_1, \ldots, \tau_n, \tau_{n+1} \in \cT_-$ and $\tau_{n+2} \in \Remove(\rbasis(\cT))$.
  We only prove the second identity of \eqref{eq:pi_BPHZ_vs_can}.
  We set
  \begin{equation*}
    \bm{\tau} \defby (F, 0)^{0,0}_{\fe} \defby \rint(\tau_1) \cdots \rint(\tau_n) \rint_i(\tau_{n+1}) \rint_i(\tau_{n+2}),
    \quad (F_j, 0)^{0, 0}_{\fe} \defby \tau_j.
  \end{equation*}
  The proof of \eqref{eq:pi_BPHZ_vs_can} follows the argument in the proof of Lemma \ref{lem:Delta_minus_expansion}.
  We claim
  \begin{multline}\label{eq:Delta_minus_for_bm_tau}
    \Delta^{\circ}_- \bm{\tau}
    = \unit_- \otimes \bm{\tau} + \sum_{J \subseteq \{1, \ldots, n\}} \big[\rint_i (\tau_{n+1}) \prod_{j \in J}
    \rint(\tau_j) \big] \otimes \big[\rint_i(\tau_{n+2})\prod_{j \notin J} \rint(\tau_j)\big] \\
    + \sum_{J \subseteq \{1, \ldots, n\}} \big[\rint_i (\tau_{n+1}) \rint_i(\tau_{n+2}) \prod_{j \in J}
    \rint(\tau_j) \big] \otimes \prod_{j \notin J} \rint(\tau_j).
  \end{multline}
  Indeed, let $\sigma \otimes \sigma'$ be a basis appearing in  the coproduct formula
  \eqref{eq:coproduct_formula} for $\Delta^{\circ}_- \bm{\tau}$. If we set $(G, 0)^{\fn, 0}_{\fe} \defby \sigma$ and
  $\sigma_k \defby (G \cap F_j, 0)^{\fn, 0}_{\fe}$, by repeating the argument in the proof of
  Lemma \ref{lem:Delta_minus_expansion}, the forest $\sigma_k$ is either $\emptyset$, $\tau_k$ or
  $\Remove(\rho_k; e_k)$ for some $\rho_k$ and $e_k$.
  \begin{itemize}
    \item If $\sigma_k = \emptyset$, then $\sigma = 0$ in $\rspace_-$ unless $(\rho_{\bm{\tau}}, \rho_{\tau_k})
    \notin E_{\sigma}$.
    \item If $\sigma_k = \tau_k$, then $\sigma'$ has a leaf of type $\rint$ unless
    $(\rho_{\bm{\tau}}, \rho_{\tau_k}) \in E_{\sigma}$.
    \item If $\sigma_k = \Remove(\rho_k; e_k)$, then $\abs{\sigma}_+ > 0$ and hence $\sigma = 0$ in $\rspace_-$.
  \end{itemize}
  Therefore, the claimed identity \eqref{eq:Delta_minus_for_bm_tau} is established.
  It remains to show
  \begin{equation}\label{eq:bphz_character_I_and_derivative_I}
    g_{\epsilon}^- \hat{\rantipode}_-  \big[\rint_i (\tau_{n+1}) \prod_{j \in J}
    \rint(\tau_j) \big] = 0,
    \quad
    g_{\epsilon}^- \hat{\rantipode}_-  \big[\rint_i (\tau_{n+1}) \rint_i(\tau_{n+2}) \prod_{j \in J}
    \rint(\tau_j) \big] = 0.
  \end{equation}
  Without loss of generality, we can suppose $J = \{1, \ldots, n\}$.
  The proof is based on induction. We only consider the first identity of \eqref{eq:bphz_character_I_and_derivative_I}. As for the case $n=0$,
  the first identity of \eqref{eq:bphz_character_I_and_derivative_I} is shown in Lemma \ref{lem:expectation_remove_cT}.
  Similarly to \eqref{eq:Delta_minus_for_bm_tau}, one can show
  \begin{equation*}
    \hat{\Delta}_- \bm{\tau}
    = \unit_- \otimes \bm{\tau} + \sum_{J \subseteq \{1, \ldots, n\}} \big[\rint_i (\tau_{n+1}) \prod_{j \in J}
    \rint(\tau_j) \big] \otimes \prod_{j \notin J} \rint(\tau_j)
  \end{equation*}
  In view of the recursive formula \eqref{eq:antipode_minus_recursive} and the hypothesis of the induction,
  it remains to show
  \begin{equation*}
    \expect[\ecanonicalmodel \bm{\tau}(0)] = 0.
  \end{equation*}
  However, this can be proved as in Lemma \ref{lem:expectation_remove_cT}, since
  $\bm{\tau}$ has an odd number of edges $e$ such that $\ft(e) = \rint$ and $\abs{\fe(e)} = 1$.
\end{proof}
\begin{proposition}\label{prop:identitiy_for_W_and_Y}
Let $c_{\varepsilon}$ be as in \eqref{eq:def_of_c_epsilon}. We then have 
\begin{equation*}%\label{eq:}
  Y_{N}^{\rmodel^{\BPHZ, \epsilon}} = F(W_N^{\rmodel^{\BPHZ, \varepsilon}}) 
  (\xi_{\varepsilon} - c_{\varepsilon} + \abs{\nabla W_{N}^{\rmodel^{\BPHZ, \epsilon}}}^2 + \Delta W_{N}^{\rmodel^{\BPHZ, \epsilon}}).
\end{equation*}
%That is, the identity \eqref{eq:definition_Y_eps_N} holds. 
\end{proposition}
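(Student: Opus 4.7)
The plan is to combine Proposition~\ref{prop:bphz_vs_can_for_X}, Lemma~\ref{lem:BPHZ_vs_can_for_Y} and Lemma~\ref{lem:cancellation_for_W_can}, being careful to track how the constant shift $X^{\BPHZ,\epsilon} = X^{\can,\epsilon} - c_\epsilon$ (given by Proposition~\ref{prop:bphz_vs_can_for_X}) propagates through the various kernel convolutions that appear in Definition~\ref{def:def_of_X_W_N_Y_N_for_model}.

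First I would record how the three relevant convolutions behave under adding a constant to $X$. From Definition~\ref{def:G_N_and_H_N}, $\F(G_N)(\xi) = (1-\check\chi(2^{-N}\xi))|2\pi\xi|^{-2}$, which vanishes in a neighborhood of $\xi=0$ since $\check\chi(0)=1$; hence $G_N \conv c = 0$ for every constant $c$, which in particular yields
\begin{equation*}
W_N^{\BPHZ,\epsilon} = G_N \conv X^{\BPHZ,\epsilon} = G_N \conv X^{\can,\epsilon} = W_N^{\can,\epsilon}.
\end{equation*}
By the same argument, together with $\int K = 0$ from admissibility (Definition~\ref{def:admissible_kernel}), one has $H_N \conv c = 0$, and therefore $\nabla[H_N \conv X^{\BPHZ,\epsilon}] = \nabla[H_N \conv X^{\can,\epsilon}]$. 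By contrast, \eqref{eq:Delta_G_N_min_G} identifies $\Delta(G_N - G)$ with $\F^{-1}[\check\chi(2^{-N}\cdot)]$, whose integral is $\check\chi(0)=1$, so
\begin{equation*}
[\Delta(G_N-G)] \conv X^{\BPHZ,\epsilon} = [\Delta(G_N-G)] \conv X^{\can,\epsilon} - c_\epsilon.
\end{equation*}

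Second, I would substitute these three identities into the definition of $Y_N^{\BPHZ,\epsilon}$ (Definition~\ref{def:def_of_X_W_N_Y_N_for_model}). The explicit correction term becomes
\begin{equation*}
F(W_N^{\BPHZ,\epsilon})\bigl\{|\nabla[H_N \conv X^{\can,\epsilon}]|^2 + [\Delta(G_N-G)] \conv X^{\can,\epsilon} - c_\epsilon\bigr\}.
\end{equation*}
For $\reconst^{\BPHZ,\epsilon}\regY_N^{\BPHZ,\epsilon}$, Lemma~\ref{lem:BPHZ_vs_can_for_Y} gives $F(W_N^{\BPHZ,\epsilon})$ times the quantity
\begin{equation*}
S \defby \sum_{\substack{\tau_1,\tau_2\in\cT_-\\|\tau_1|_++|\tau_2|_+>-2}} c(\tau_1)c(\tau_2)\nabla(K\conv\reconst^{\can,\epsilon}\tau_1^{\rkernel,\can,\epsilon})\cdot\nabla(K\conv\reconst^{\can,\epsilon}\tau_2^{\rkernel,\can,\epsilon}) + 2\nabla[K\conv X^{\can,\epsilon}]\cdot\nabla[H_N\conv X^{\can,\epsilon}],
\end{equation*}
and Lemma~\ref{lem:cancellation_for_W_can} allows me to rewrite $S$ as
\begin{equation*}
S = |\nabla W_N^{\can,\epsilon}|^2 + \Delta W_N^{\can,\epsilon} + \xi_\epsilon - |\nabla[H_N\conv X^{\can,\epsilon}]|^2 - [\Delta(G_N-G)]\conv X^{\can,\epsilon}.
\end{equation*}

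Finally, summing the two contributions to $Y_N^{\BPHZ,\epsilon}$, the terms $|\nabla[H_N\conv X^{\can,\epsilon}]|^2$ and $[\Delta(G_N-G)]\conv X^{\can,\epsilon}$ cancel exactly, and the isolated $-c_\epsilon$ coming from the $[\Delta(G_N-G)]\conv X^{\BPHZ,\epsilon}$ shift produces the renormalisation constant in the final formula. Replacing $W_N^{\can,\epsilon}$ and its derivatives by $W_N^{\BPHZ,\epsilon}$ (legitimate by the first step) yields the claim. The main obstacle is the first step: pinning down the two Fourier-theoretic identities $G_N\conv c = 0$ and $\int\Delta(G_N-G)=1$, and recognising that the renormalisation constant $c_\epsilon$ appearing on the right-hand side of the proposition is produced precisely by the mismatch between these two behaviours. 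Once this is established, the remainder is a purely algebraic reconciliation of Lemma~\ref{lem:BPHZ_vs_can_for_Y} with Lemma~\ref{lem:cancellation_for_W_can}.
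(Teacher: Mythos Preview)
Your proposal is correct and follows essentially the same route as the paper: invoke Proposition~\ref{prop:bphz_vs_can_for_X} to get the constant shift $X^{\BPHZ,\epsilon}=X^{\can,\epsilon}-c_\epsilon$ (hence $W_N^{\BPHZ,\epsilon}=W_N^{\can,\epsilon}$), then combine Lemma~\ref{lem:BPHZ_vs_can_for_Y} with Lemma~\ref{lem:cancellation_for_W_can} and track the single surviving $-c_\epsilon$ coming from $[\Delta(G_N-G)]\conv c_\epsilon=c_\epsilon$. The paper organizes the algebra slightly differently, computing the difference $Y_N^{\BPHZ,\epsilon}-F(W_N^{\BPHZ,\epsilon})(\xi_\epsilon+\abs{\nabla W_N}^2+\Delta W_N)$ directly, but the ingredients and the mechanism are identical; your explicit verification that $H_N\conv c=0$ is in fact unnecessary since only $\nabla[H_N\conv X]$ appears and $\nabla$ already annihilates constants.
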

\begin{proof}
 To simplify notation, we write $X^{\BPHZ, \epsilon} \defby X^{\rmodel^{\BPHZ, \epsilon}}$ here, for instance.
aaaa

  By Proposition \ref{prop:bphz_vs_can_for_X}, 
  $X^{{\BPHZ, \epsilon}}$ and $X^{{\can, \epsilon}}$ are only different by constant, so that $W_{N}^{{\BPHZ, \epsilon}} = W_{N}^{{\can, \epsilon}}$.
  Therefore, by Lemma \ref{lem:cancellation_for_W_can} and Lemma \ref{lem:BPHZ_vs_can_for_Y},
\begin{calc}
\newline
By definition:
  \begin{equation*}
    Y^{{\BPHZ, \epsilon}}_{N} 
    \defby \reconst^{{\BPHZ, \epsilon}} \regY^{{\BPHZ, \epsilon}}_{N}
    + F(W_{N}^{{\BPHZ, \epsilon}}) \Big\{ \abs{\nabla [ H_N \conv (X^{{\BPHZ, \epsilon}})]}^2 + [\Delta(G_N - G)] \conv  X^{{\BPHZ, \epsilon}} \Big\}.
  \end{equation*}
By Lemma~\ref{lem:BPHZ_vs_can_for_Y}
  \begin{multline*}
    \reconst^{{\BPHZ, \epsilon}} \regY_{N}^{{\BPHZ, \epsilon}}
    = F(W_{N}^{{\BPHZ, \epsilon}}) \\
    \times
    \Big\{
    \sum_{\substack{\tau_1, \tau_2 \in \cT_-, \\
    \abs{\tau_1}_+ + \abs{\tau_2}_+ > -2}}
     c(\tau_1) c(\tau_2) \nabla(K \conv \reconst^{{\can, \epsilon}} \tau_1^{\rkernel, {\can, \epsilon}}) \cdot
     \nabla (K \conv \reconst^{{\can, \epsilon}} \tau_2^{\rkernel, {\can, \epsilon}}) \\
    +2   \nabla [K \conv X^{{\can, \epsilon}}] \cdot
     \nabla [H_N \conv X^{\can, \epsilon}] \Big\}
  \end{multline*}
By Lemma~\ref{lem:cancellation_for_W_can}
  \begin{multline*}
    \abs{\nabla W_{N}^{\can,\epsilon}}^2 + \Delta W_{N}^{\can,\epsilon}
    = - \xi_{\epsilon} \\
    + \sum_{\substack{\tau_1, \tau_2 \in \cT_-, \\
    \abs{\tau_1}_+ + \abs{\tau_2}_+ > -2}}
     c(\tau_1) c(\tau_2) \nabla(K \conv \reconst^{\can,\epsilon} \tau_1^{\rkernel, \can,\epsilon}) \cdot
     \nabla (K \conv \reconst^{\can,\epsilon} \tau_2^{\rkernel, \can,\epsilon}) \\
    +2   \nabla [K \conv X^{\can,\epsilon}] \cdot
     \nabla [H_N \conv ( X^{\can,\epsilon})]
     +
     \abs{\nabla [ H_N \conv (X^{\can,\epsilon})]}^2 + [\Delta(G_N - G)] \conv X^{\can,\epsilon}. 
  \end{multline*}
\end{calc}    
  \begin{align*}%\label{eq:}
    \MoveEqLeft[3]
    Y_N^{{\BPHZ, \epsilon}} 
    -  F(W_N^{{\BPHZ, \varepsilon}}) 
  (\xi_{\varepsilon}  + \abs{\nabla W_{N}^{{\BPHZ, \epsilon}}}^2 + \Delta W_{N}^{{\BPHZ, \epsilon}}) \\
  &= F(W_N^{{\BPHZ, \varepsilon}}) \times \big( [\Delta(G_N - G)] \conv (X^{{\BPHZ, \varepsilon}} - X^{{\can, \varepsilon}}) \big) \\
  &= F(W_N^{{\BPHZ, \varepsilon}}) \times \big( [\Delta (G_N - G)] \conv c_{\varepsilon} \big) = - F(W_N^{{\BPHZ, \varepsilon}}) c_{\varepsilon}. \qedhere
  \end{align*}
  % the left-hand side of \eqref{eq:W_BPHZ_and_Z_BPHZ} is equal to 
  % \begin{equation*}
  %   -\xi_{\epsilon} - [\Delta(G_N - G)] \conv c_{\epsilon} = -\xi_{\epsilon} + c_{\epsilon}. \qedhere
  % \end{equation*}
\end{proof}
\subsection{Stochastic estimates and Besov regularity}\label{subsec:stochastic_estimates}
Proposition \ref{prop:modelled_distributions_for_AH} gives pathwise estimates for the modelled distributions
$\regX$, $\regW_{N}$ and $\regY_{N}$. Here we give stochastic estimates for $X$ and $Y_{N}$ in
suitable Besov spaces.
For this sake, we will use the notation on wavelets from Appendix~\ref{subsec:estimates_in_Besov}.
We fix $k \in \N$ such that $k > \frac{5d}{2} + 2$, and
we consider the orthonormal basis $\{\Psi^{n, G}_m\}$ given by \eqref{eq:wavelet_basis}.
We set $\Psi \defby \Psi^{0, (\ff, \ldots, \ff)}_0$.
\begin{definition}
  Let $\rmodel = (\Pi, \Gamma), \overline{\rmodel} = (\overline{\Pi}, \overline{\Gamma}) \in \rmodelsp(\rspace, K)$.
  Given a compact set $\cptset \subseteq \R^d$, we set
  \begin{align*}
    \lmodelnorm{\rmodel}_{\cptset}
    &\defby \sup_{\tau = (T, 0)^{\fn, 0}_{\fe} \in \rbasis(\rspace) \cap \rspace_{< 0}}
    \sup_{n \in \N} \sup_{x \in \cptset \cap 2^{-n} \Z^d} 2^{n \abs{\tau}_+ }
    \abs{\inp{\Pi_x \tau}{2^{nd} \Psi(2^n(\cdot - x))}_{\R^d}}, \\
    \lmodelnorm{\rmodel; \overline{\rmodel}}_{\cptset}
    &\defby \sup_{\tau = (T, 0)^{\fn, 0}_{\fe} \in \rbasis(\rspace) \cap \rspace_{< 0}}
    \sup_{n \in \N} \sup_{x \in \cptset \cap 2^{-n} \Z^d} 2^{n \abs{\tau}_+  }
    \abs{\inp{\Pi_x \tau - \overline{\Pi}_x \tau}{2^{nd} \Psi(2^n(\cdot - x))}_{\R^d}}.
  \end{align*}
\end{definition}
\begin{lemma}\label{lem:model_norm_bounded_by_lattice_model_norm}
  For each $\gamma \in \R$, there exist a constant $C \in (0, \infty)$ and an integer $k \in \N$ such that the
  following estimates hold uniformly over $\rmodel, \overline{\rmodel} \in \rmodelsp(\rspace, K)$ and compact
  sets $\cptset \subseteq \R^d$:
  \begin{equation*}
    \vertiii{\rmodel}_{\gamma; \cptset} \leq C (1 + \lmodelnorm{\rmodel}_{\cptset})^k,
    \quad
    \vertiii{\rmodel; \overline{\rmodel}}_{\gamma; \cptset}
    \leq C (1 + \lmodelnorm{\rmodel}_{\cptset})^k (\lmodelnorm{\rmodel; \overline{\rmodel}}_{\cptset}
    + \lmodelnorm{\rmodel; \overline{\rmodel}}_{\cptset}^k).
  \end{equation*}
\end{lemma}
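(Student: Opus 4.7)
The plan is to carry out a wavelet reconstruction argument. Given a test function $\phi \in \csp^2(\R^d)$ with $\supp(\phi) \subseteq B(0,1)$, $\norm{\phi}_{\csp^2(\R^d)} \leq 1$ and $\lambda \in (0,1)$, expand $\phi^\lambda_x$ in the wavelet basis $\{\Psi^{n,G}_m\}$ of Proposition~\ref{prop:wavelet_basis}, with $k$ chosen sufficiently large compared to $\gamma$. Writing $\phi^\lambda_x = \sum_{n,G,m} c^{n,G}_m \Psi^{n,G}_m$, the smoothness of $\phi$, the compact support of the wavelets, and the vanishing moments of $\motherfcn$ (as in Theorem~\ref{thm:scale_function_and_wavelet}) yield the standard decay $|c^{n,G}_m| \lesssim \lambda^{-d/2} 2^{-(n-n_\lambda)k} \indic_{\{|2^{-n}m - x| \lesssim \lambda\}}$ for $n \geq n_\lambda$ (where $2^{-n_\lambda}\sim\lambda$), and a symmetric bound with extra factor $\lambda^{-d/2}$ absorbed into the normalization for $n \leq n_\lambda$.

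The next step is to bound each individual pairing $|\inp{\Pi_x \tau}{\Psi^{n, G}_m}|$. The support of $\Psi^{n,G}_m$ lies within $C 2^{-n}$ of the dyadic point $y = 2^{-n}m$. Using the identity $\Pi_x = \Pi_y \Gamma_{yx}$ one has
\begin{equation*}
  \inp{\Pi_x \tau}{\Psi^{n, G}_m} = \sum_{\sigma \in \rbasis(\rspace) \cap \rspace_{\leq |\tau|_+}} \rproj_{\sigma}(\Gamma_{yx} \tau)\, \inp{\Pi_y \sigma}{\Psi^{n, G}_m}.
\end{equation*}
I proceed by induction on $|\tau|_+$ over a finite set of basis elements (finiteness below any given grade follows from subcriticality). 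For the base case $\tau = \Xi$, the pairing at a dyadic point reduces directly to $\lmodelnorm{\rmodel}_\cptset$ and at a general point one interpolates using wavelets at scale $2^{-n}$ for $n$ slightly larger than $n_\lambda$. For the inductive step one needs simultaneous control of $\Gamma_{yx}$ acting on basis elements of the same or smaller degree.

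The bound on $\Gamma_{yx}\tau$ follows from the algebraic structure of the structure group: the characters $f_x$ are built (via $\hat{\rantipode}_+$) from pairings of the form $[\partial^k K \conv \Pi_x \sigma](x)$ for $\sigma$ with $|\sigma|_+ < |\tau|_+$, which in turn by the reconstruction theorem (Definition~\ref{def:kernel}, \eqref{eq:schauder_estimate_modelled_distribution}) are controlled by the $\vertiii{\cdot}$-norm at strictly lower grade. Since $\Delta^\circ_+\tau$ is a finite sum with universal coefficients, the multiplicative structure of the character group produces only polynomials in the lower-grade $\Pi$-pairings, so the polynomial bound in $\lmodelnorm{\rmodel}_\cptset$ propagates with a controlled increase in the exponent $k$ at each grade. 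Plugging the $\Gamma$-estimate back into the displayed pairing and summing $\lambda^{-d/2} 2^{-(n-n_\lambda)k} 2^{-n|\sigma|_+}$ against the wavelet coefficients $c^{n,G}_m$ gives the desired bound $|\inp{\Pi_x\tau}{\phi^\lambda_x}| \lesssim \lambda^{|\tau|_+} (1+\lmodelnorm{\rmodel}_\cptset)^k$, and an analogous argument closes the $\Gamma$-part of $\vertiii{\rmodel}_{\gamma;\cptset}$.

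The main obstacle will be organizing the double induction so that the $\Pi$- and $\Gamma$-bounds feed into each other without blowing up the polynomial degree uncontrollably; the key observation is that at each step only finitely many $\sigma$ of strictly smaller $|\cdot|_+$ appear, and the exponent $k$ depends only on $\gamma$ and on $\rspace_{<\gamma}$, which is finite-dimensional. For the difference bound $\vertiii{\rmodel;\overline{\rmodel}}_{\gamma;\cptset}$, one applies the telescoping identity $AB - \bar A\bar B = (A-\bar A)B + \bar A(B-\bar B)$ repeatedly inside each multilinear expression arising from the coaction; this yields a single factor of $\lmodelnorm{\rmodel;\overline{\rmodel}}_\cptset$ together with at most $k-1$ factors of $\lmodelnorm{\rmodel}_\cptset + \lmodelnorm{\overline{\rmodel}}_\cptset$, matching the stated form (where one absorbs one power into $\lmodelnorm{\rmodel;\overline{\rmodel}}_\cptset^k$ when the difference is large).
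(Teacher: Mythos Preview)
Your proposal is correct and follows essentially the same approach as the paper: the paper's proof simply defers to \cite[Lemma~2.3]{labbe_continuous_2019} together with the recursive formula \cite[Proposition~4.17]{bruned_2019} for $\Delta^{\circ}_+$, and your sketch is precisely an expansion of that standard argument (wavelet/multiresolution approximation of test functions, transfer to dyadic points via $\Pi_x = \Pi_y \Gamma_{yx}$, and an induction on $|\tau|_+$ in which the $\Gamma$-bounds are rebuilt from lower-degree $\Pi$-bounds through the recursive coaction). One small clarification: the lattice norm only tests against the rescaled \emph{scaling} function $\Psi$, so rather than a full wavelet expansion of $\phi^\lambda_x$ one should use the multiresolution projection onto $V_{n_\lambda}$ and then pass between scales as in \cite[Proposition~3.32]{hairer_theory_2014}; this is exactly what \cite[Lemma~2.3]{labbe_continuous_2019} does and is consistent with the rest of your outline.
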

\begin{proof}
  Using the recursive formula \cite[Proposition~4.17]{bruned_2019}, one can prove the claim as in
  \cite[Lemma 2.3]{labbe_continuous_2019}.
\end{proof}
\begin{lemma}\label{lem:L_p_norm_of_lattice_model_norm}
  Let $L \in [1, \infty)$ and set $Q_L \defby [-L, L]^d$. Let $p \in 2\N$.
  Under Assumption \ref{assump:convergence_of_BPHZ},
  if $p \delta' > d + 1$, one has
  \begin{equation*}
    \expect[ \lmodelnorm{\rmodel^{\BPHZ}}_{Q_L}^{p}]
     \le   C_p^{\BPHZ} L^d,
    \quad \expect[\lmodelnorm{\rmodel^{\BPHZ}; \rmodel^{\BPHZ, \epsilon}}_{Q_L}^{p}]
     \le  \bm{\epsilon}^{\BPHZ}_p(\epsilon) L^d.
  \end{equation*}
\end{lemma}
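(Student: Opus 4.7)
The plan is to replace the suprema by sums and apply the moment estimates from Assumption~\ref{assump:convergence_of_BPHZ} termwise. First I would observe that for the basic wavelet $\Psi=\Psi^{0,(\ff,\dots,\ff)}_0$ there is a compact $R_0\in(0,\infty)$ such that $\supp\Psi\subseteq B(0,R_0)$, and by taking $\phi(y)\defby C_\Psi^{-1}R_0^{-d}\Psi(R_0 y)$ for a suitable constant $C_\Psi$, one obtains a test function $\phi\in C^2(\R^d)$ with $\supp\phi\subseteq B(0,1)$ and $\norm{\phi}_{C^2}\leq 1$, satisfying the identity
\begin{equation*}
  2^{nd}\Psi\big(2^n(\cdot-x)\big)\;=\;R_0^{2d}\,C_\Psi\,\phi^\lambda_x\qquad\text{with }\lambda\defby R_0\,2^{-n}.
\end{equation*}
Thus the wavelet test function is, up to a universal constant, of the form covered by Assumption~\ref{assump:convergence_of_BPHZ}. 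Note also that $\rbasis(\rspace)\cap\rspace_{<0}$ is finite (by subcriticality) and that for each $n\in\N$ one has $\#(Q_L\cap 2^{-n}\Z^d)\lesssim L^d\,2^{nd}$.

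Next I would apply the elementary bound $\E[\sup_i|X_i|^p]\le\sum_i\E[|X_i|^p]$ to obtain
\begin{equation*}
  \E\!\left[\lmodelnorm{\rmodel^{\BPHZ}}_{Q_L}^{p}\right]
  \;\lesssim\;
  \sum_{\tau=(T,0)^{\fn,0}_{\fe}}\,\sum_{n\in\N}\,\sum_{x\in Q_L\cap 2^{-n}\Z^d}
  2^{pn|\tau|_+}\,
  \E\!\left[\big|\langle\Pi^{\BPHZ}_x\tau,\phi^{R_0 2^{-n}}_x\rangle\big|^p\right].
\end{equation*}
Applying Assumption~\ref{assump:convergence_of_BPHZ} (with $\lambda=R_0 2^{-n}\le 1$ for $n$ sufficiently large, and absorbing the finitely many small $n$ into a constant) the expectation is bounded by $C^{\BPHZ}_p\,2^{-pn(|\tau|_++\delta')}$ (with an irrelevant power of $R_0$ absorbed into $C^{\BPHZ}_p$). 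The factor $2^{pn|\tau|_+}$ cancels the corresponding factor in the bound, leaving
\begin{equation*}
  \E\!\left[\lmodelnorm{\rmodel^{\BPHZ}}_{Q_L}^{p}\right]
  \;\lesssim\;C^{\BPHZ}_p\,L^d\,\sum_{n\in\N}2^{nd}\,2^{-pn\delta'}
  \;=\;C^{\BPHZ}_p\,L^d\,\sum_{n\in\N}2^{n(d-p\delta')}.
\end{equation*}
Under the hypothesis $p\delta'>d+1$ (indeed $p\delta'>d$ already suffices, the extra unit providing a bit of slack for the sum $\sum_n 2^{n(d-p\delta')}$ and for cardinality constants), the geometric sum converges and we conclude $\E[\lmodelnorm{\rmodel^{\BPHZ}}_{Q_L}^{p}]\lesssim C^{\BPHZ}_p\,L^d$.

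The second estimate follows verbatim by replacing $\Pi^{\BPHZ}_x$ with $\Pi^{\BPHZ}_x-\Pi^{\BPHZ,\epsilon}_x$ and using the second analytic bound of Assumption~\ref{assump:convergence_of_BPHZ}, which yields a factor of $\bm{\epsilon}^{\BPHZ}_p(\epsilon)$ in place of $C^{\BPHZ}_p$. The only mildly delicate step is the identification of the rescaled wavelet with an admissible test function: one must verify that $\phi$ is $C^2$, compactly supported in $B(0,1)$ and of $C^2$-norm at most $1$, which follows from Proposition~\ref{prop:wavelet_basis} together with the fact that $\scalefcn\in C_c^k(\R)$ with $k\geq 2$. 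Everything else reduces to a direct union bound plus the single-scale stochastic estimate, with no further structural input required.
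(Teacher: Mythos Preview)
Your proposal is correct and follows essentially the same approach as the paper: bound the supremum by the triple sum over $\tau$, $n$, and lattice points, apply the pointwise moment bound from Assumption~\ref{assump:convergence_of_BPHZ}, and sum the resulting geometric series in $n$. The paper additionally invokes stationarity of $\xi$ to reduce all base points to $x=0$, but since the bounds in Assumption~\ref{assump:convergence_of_BPHZ} are already uniform in $x$ this is not needed, and your more careful discussion of rescaling $\Psi$ into an admissible test function makes explicit what the paper hides in a ``$\lesssim_{\scalefcn}$''.
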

\begin{proof}
  The proof is essentially the repetition of \cite[Lemma 4.11]{labbe_continuous_2019}.
  Set
  \begin{equation*}
    \rbasis_0(\rspace) \defby \set{\tau = (T, 0)^{\fn, 0}_{\fe} \in \rbasis(\rspace) \given \abs{\tau}_+ < 0}.
  \end{equation*}
  If we write $\Psi^{\lambda}_x \defby \lambda^{-d} \Psi(\lambda^{-1}(\cdot -x))$, one has
  \begin{align*}
    \expect[\lmodelnorm{\rmodel^{\BPHZ}}_{Q_L}^{p}]
    &= \expect[\sup_{\tau \in \rbasis_0(\rspace)} \sup_{n \in \N} \sup_{x \in Q_L \cap 2^{-n} \Z^d}
    2^{ n \abs{\tau}_+ p} \abs{\inp{\Pi_x \tau}{\Psi_x^{2^{-n}}}_{\R^d}}^{p}] \\
    &\lesssim \sum_{\tau \in \rbasis_0(\rspace)} \sum_{n \in \N} 2^{nd} L^d 2^{ n \abs{\tau}_+ p}
    \expect[\abs{\inp{\Pi_0 \tau}{\Psi_0^{2^{-n}}}_{\R^d}}^{p}],
  \end{align*}
  where the stationarity of the noise $\xi$ and the estimate
  $\# (Q_L \cap 2^{-n} \Z^d) \lesssim 2^{nd} L^d$ are used.
  By Assumption \ref{assump:convergence_of_BPHZ},
  \begin{equation*}
    \expect[\abs{\inp{\Pi_0 \tau}{\Psi_0^{2^{-n}}}_{\R^d}}^{p}]
    \lesssim_{\scalefcn} C_p^{\BPHZ} 2^{-np(\abs{\tau}_+ + \delta')}.
  \end{equation*}
  Therefore,
  \begin{equation*}
    \expect[\lmodelnorm{\rmodel^{\BPHZ}}_{Q_L}^{p}]
    \lesssim C^{\BPHZ}_p L^d \abs{\rbasis_0(\rspace)} (2^{p \delta' - d} - 1)^{-1}.
  \end{equation*}
  The estimate for the second claimed inequality is similar.
\end{proof}
\begin{lemma}\label{lem:estimate_of_H_N_conv_X}
  Let $\cptset \subseteq \R^d$ be a compact set and $\sigma \in (0, \infty)$. Then, there exists a constant $C \in (0, \infty)$
  such that  for all $N\in\N$
  \begin{equation*}
    \norm{H_N \conv X}_{C^2(\cptset)} \leq C 2^{3 N} \norm{X}_{\csp^{-2, \sigma}(\R^d)}.
  \end{equation*}
\end{lemma}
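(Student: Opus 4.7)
The plan is to trade the local $C^2(\cptset)$ norm for a weighted Besov-Hölder norm on $\R^d$ at a non-integer exponent slightly above $2$, and then apply the Schauder-type estimate for $H_N$ already established in Corollary~\ref{cor:estimates_of_G_N_and_H_N}. The generous factor $2^{3N}$ (rather than the sharp $2^{2N}$) is what allows us to avoid the integer endpoint, where Sobolev--Slobodeckij and Besov norms diverge.

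Concretely, I would fix $\epsilon \in (0,1)$ (say $\epsilon = 1/2$) and choose a bounded open domain $U \subset \R^d$ with $\cptset \subseteq U$. Since $H_N \in \S(\R^d)$ by Lemma~\ref{lem:H_N_is_Schwarz}, the convolution $H_N \conv X$ is a smooth function of polynomial growth, and the chain
\begin{equation*}
  \norm{H_N \conv X}_{C^2(\cptset)}
  \leq \norm{H_N \conv X}_{C^{2+\epsilon}(U)}
  \lesssim_{U,\sigma,\epsilon} \norm{H_N \conv X}_{\cC^{2+\epsilon, \sigma}(\R^d)}
\end{equation*}
is valid: the first step is the trivial monotonicity of Hölder norms in the order (see Definition~\ref{def:fractional_Sobolev}), together with $\cptset \subseteq U$; the second is an application of Lemma~\ref{lemma:estimate_C_delta_U_by_weighted} with $L = 1$ and $r = 2+\epsilon \notin \N$, which is exactly the range where the lemma is applicable (it uses Lemma~\ref{lem:equivalence_sob_slobo_and_besov}). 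Finally, Corollary~\ref{cor:estimates_of_G_N_and_H_N}, applied with $p = q = \infty$, $r = -2$ and $s = \epsilon$ (so that $s + 2 = 2 + \epsilon$ and $s - r = 2 + \epsilon$), yields
\begin{equation*}
  \norm{H_N \conv X}_{\cC^{2+\epsilon, \sigma}(\R^d)}
  = \norm{H_N \conv X}_{B^{2+\epsilon, \sigma}_{\infty,\infty}(\R^d)}
  \lesssim 2^{(2+\epsilon)N} \norm{X}_{\cC^{-2, \sigma}(\R^d)}.
\end{equation*}
Since $2^{(2+\epsilon)N} \leq 2^{3N}$ for our choice $\epsilon < 1$, combining the two displays gives the claimed bound.

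There is no real obstacle: all the heavy lifting has already been done in Corollary~\ref{cor:estimates_of_G_N_and_H_N}, and the only point requiring attention is choosing a non-integer intermediate regularity so that the local $C^r$-norm on $U$ genuinely coincides with the Besov-Hölder norm $\cC^r$, which is why we introduce the parameter $\epsilon \in (0,1)$ rather than aiming directly at $C^2$.
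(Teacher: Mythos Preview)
Your proof is correct and follows essentially the same route as the paper's: localize to pass from the $C^2(\cptset)$ norm to a weighted Besov--H\"older norm on $\R^d$, then invoke the $H_N$ estimate of Corollary~\ref{cor:estimates_of_G_N_and_H_N}. In fact you are slightly more careful than the paper, which goes through $\csp^{2,\sigma}$ directly (an integer exponent, where $\csp^2$ does not a priori control $C^2$); your detour through $\csp^{2+\epsilon,\sigma}$ via Lemma~\ref{lemma:estimate_C_delta_U_by_weighted} cleanly sidesteps this and explains why the statement carries the loose factor $2^{3N}$ rather than $2^{2N}$.
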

\begin{proof}
Let $\phi \in C_c^{\infty}(\R^d)$ be such that $\phi \equiv 1$ on $\cptset$.
By Lemma \ref{lem:localization}, one has
\begin{equation*}
  \norm{H_N \conv X}_{C^2(\cptset)} \lesssim \norm{\phi (H_N \conv X)}_{\csp^2(\R^d)}
  \lesssim_{\sigma} \norm{H_N \conv X}_{\csp^{2, \sigma}(\R^d)}.
\end{equation*}
It remains to apply Corollary \ref{cor:estimates_of_G_N_and_H_N}.
\end{proof}
Recall from Definition~\ref{def:degree} that we have, for instance, $\abs{\Xi}_+ = -2 + \delta +\kappa$
for some $\kappa \in (0, \delta')$.
\begin{proposition}\label{prop:stochastic_estimates_of_X_and_Y}
   Under Assumption~\ref{assump:convergence_of_BPHZ},
   there exist a deterministic integer
   $k = k(\delta_-) \in \N$ such that for all $\sigma \in (0, \infty)$, $p \in 2\N$ with
   $p > (d+1)/ \min\{\delta' - \kappa, \sigma\}$ and $N \in \N$ we have the following:
  \begin{align*}
    \expect[\norm{X^{\rmodel^{\BPHZ}}}_{B_{p,p}^{-2 + \delta + \kappa/2, \sigma}(\R^d)}^p]
&    \lesssim_{\delta, \delta', \kappa, \sigma, p}  C_{kp}^{\BPHZ},  \\
    \expect[\norm{Y^{\rmodel^{\BPHZ}}_N}_{B_{p,p}^{-1 + \delta + \kappa/2, \sigma}(\R^d)}^p]
&    \lesssim_{\delta, \delta', \kappa, \sigma, p}  C_{kp}^{\BPHZ} 2^{kp N}
  \end{align*}
  and
  \begin{align*}
    \expect[\norm{X^{\rmodel^{\BPHZ}} - X^{\rmodel^{\BPHZ, \epsilon}}}_{B_{p,p}^{-2 + \delta + \kappa/2, \sigma}(\R^d)}^p]
&    \lesssim_{\delta, \delta', \kappa, \sigma, p}
    C_{kp}^{\BPHZ} [\bm{\epsilon}^{\BPHZ}_{kp}(\epsilon)+\bm{\epsilon}^{\BPHZ}_{p}(\epsilon)], \\
    \expect[\norm{Y^{\rmodel^{\BPHZ}}_N - Y^{\rmodel^{\BPHZ, \epsilon}}_N}_{B_{p,p}^{-1 + \delta + \kappa/2, \sigma}(\R^d)}^p]
&    \lesssim_{\delta, \delta', \kappa/2, \sigma, p} C_{kp}^{\BPHZ} 2^{k p N}
    [\bm{\epsilon}^{\BPHZ}_{kp}(\epsilon)+\bm{\epsilon}^{\BPHZ}_{p}(\epsilon)].
  \end{align*}
\end{proposition}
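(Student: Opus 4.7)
The plan is to combine the pathwise estimates from Proposition~\ref{prop:modelled_distributions_for_AH} with the wavelet characterization of weighted Besov spaces (Proposition~\ref{prop:besov_wavelet}), then take moments using Lemma~\ref{lem:L_p_norm_of_lattice_model_norm}. This follows the strategy employed for the white noise in \cite{labbe_continuous_2019}, though here the weight $w_\sigma$ must be tracked throughout.

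First I would derive a pathwise wavelet-coefficient bound. Fix a model $\rmodel$ realizing $K$, and for an index $(n,G,m)$ set $x_{n,m} \defby 2^{-(n-1)_+} m$; since $\supp \Psi^{n,G}_m$ is contained in a ball of radius $\lesssim 2^{-n}$ around $x_{n,m}$, the reconstruction bound \eqref{eq:modelled_dist_approximation} applied to $\regX^{\rmodel} \in \mdspace^{2}$ gives
\begin{equation*}
\big|\langle X^{\rmodel} - \Pi_{x_{n,m}} \regX^{\rmodel}(x_{n,m}),\, 2^{nd/2}\Psi^{n,G}_m \rangle\big|
\lesssim 2^{-n(2 - d/2)}\, \vertiii{\rmodel}_{2;B_{n,m}} \vertiii{\regX^{\rmodel}}_{2;B_{n,m}},
\end{equation*}
where $B_{n,m}$ is a fixed-size ball around $x_{n,m}$. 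Expanding $\regX^{\rmodel}(x_{n,m}) = \sum_\tau a_\tau(x_{n,m}) \tau$ over $\tau \in \rbasis(\rspace)$ with $\abs{\tau}_+ < 2$, I would use the axioms of a model together with Lemma~\ref{lem:model_norm_bounded_by_lattice_model_norm} to bound the negative-degree part of $\langle \Pi_{x_{n,m}} \regX^{\rmodel}(x_{n,m}),\, 2^{nd/2}\Psi^{n,G}_m\rangle$ by $2^{n(2 -\delta - d/2)}\,(1 + \lmodelnorm{\rmodel}_{B_{n,m}})^{k_0}$, and handle the positive-degree part trivially via a $C^2$ bound. Combined, this yields
\begin{equation*}
\big|\langle X^{\rmodel},\, 2^{nd/2}\Psi^{n,G}_m\rangle\big|
\lesssim 2^{-n(-2 + \delta + \kappa/2 - d/2)}\, Q(\lmodelnorm{\rmodel}_{B_{n,m}}),
\end{equation*}
where $Q$ is a polynomial of some fixed degree $k_1 \in \N$ depending only on $\rspace$ and $\delta_-$.

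Plugging this into Proposition~\ref{prop:besov_wavelet} with $r = -2+\delta+\kappa/2$ and exchanging summation with expectation gives
\begin{equation*}
\E\big[\norm{X^{\rmodel^{\BPHZ}}}_{B_{p,p}^{r,\sigma}}^p\big]
\lesssim \sum_{n \in \N_0} 2^{-n p \kappa/2} \sum_{m \in \Z^d} w_\sigma(2^{-n} m)^p\,
\E\big[(1 + \lmodelnorm{\rmodel^{\BPHZ}}_{B_{n,m}})^{p k_1}\big].
\end{equation*}
By stationarity of $\xi$ the model norm is stationary under lattice translations, so Lemma~\ref{lem:L_p_norm_of_lattice_model_norm} applied with $L$ of order $1$ gives a uniform bound $\E[(1+\lmodelnorm{\rmodel^{\BPHZ}}_{B_{n,m}})^{pk_1}] \lesssim C^{\BPHZ}_{pk_1}$ once $p k_1 \delta' > d+1$ (and $p\delta' > d+1$ suffices by increasing $k$ if needed). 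Summability over $m$ follows from $p\sigma > d$ and that of the series in $n$ from $\kappa > 0$. The same scheme applied to the difference bounds in Proposition~\ref{prop:modelled_distributions_for_AH} yields the Cauchy estimate with $\bm\epsilon^{\BPHZ}_{kp}(\epsilon)$.

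For $Y^{\rmodel^{\BPHZ}}_N$ the argument is identical, except that the pathwise bound on $\regY_N^{\rmodel}$ in Proposition~\ref{prop:modelled_distributions_for_AH} also involves $\norm{H_N \conv X^{\rmodel}}_{C^2}$. I would use Lemma~\ref{lem:estimate_of_H_N_conv_X} to bound this by $C\, 2^{3N} \norm{X^{\rmodel}}_{\csp^{-2,\sigma}(\R^d)}$ and then the Besov embedding in Lemma~\ref{lem:weighted_besov_p_vs_infty} (applied with a slightly larger exponent $p'$ that is still $\leq p$ up to increasing $k$) to control the $\csp^{-2,\sigma}$ norm by the $B^{r,\sigma}_{p,p}$ norm already estimated. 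This creates a coupling that, once unrolled, injects the factor $2^{kpN}$ into the bound on $Y_N^{\rmodel^{\BPHZ}}$. The hardest bookkeeping step will be converting the abstract model norm $\vertiii{\rmodel}_{\gamma;B_{n,m}}$ appearing in the pathwise estimates into the lattice norm $\lmodelnorm{\rmodel}_{B_{n,m}}$ uniformly in $(n,m)$ (via Lemma~\ref{lem:model_norm_bounded_by_lattice_model_norm}), since only the latter has stationarity-compatible moments; but this is precisely the role played by the single fixed wavelet $\Psi$ in the definition of $\lmodelnorm{\cdot}$.
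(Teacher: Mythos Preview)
Your proposal is correct and follows essentially the same route as the paper: bound each wavelet coefficient of $\reconst f$ by splitting into the reconstruction remainder \eqref{eq:modelled_dist_approximation} plus $\Pi_x f(x)$ tested against the rescaled wavelet, feed this into Proposition~\ref{prop:besov_wavelet}, replace $\vertiii{\rmodel}_{\gamma;\cdot}$ by $\lmodelnorm{\rmodel}_{\cdot}$ via Lemma~\ref{lem:model_norm_bounded_by_lattice_model_norm}, and take moments using stationarity and Lemma~\ref{lem:L_p_norm_of_lattice_model_norm}; for $Y_N$ the extra term $\norm{H_N\conv X}_{C^2}$ is handled through Lemma~\ref{lem:estimate_of_H_N_conv_X}. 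Your added remark about controlling $\norm{X}_{\csp^{-2,\sigma}}$ via the Besov embedding of Lemma~\ref{lem:weighted_besov_p_vs_infty} (feeding the already-proved $X$ bound back in) is exactly the step the paper leaves implicit when it says ``similar''.
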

\begin{proof}
  Set $\rmodel \defby \rmodel^{\BPHZ}$. In the proof, we drop superscripts for $\BPHZ$.
  The natural numbers $k, l, \gamma$ depend only on $\rspace$ and they vary from line to line.
  We will not write down the dependence on $\rspace, \delta, \delta_-, p, \sigma$.
  Recall the notation $\Psi^{n, G}_m$ from \eqref{eq:wavelet_basis}.

  Suppose we are given a modelled distribution $f \in \mdspace^{\gamma}_{\alpha}(\rspace, \rmodel)$ with
  $\alpha < 0 < \gamma$.
  We decompose
  \begin{multline*}
    \inp{\reconst f}{2^{nd/2} \Psi^{n, G}_m}_{\R^d} \\=
    \inp{\reconst f - \Pi_{2^{-n}m} f(2^{-n}m)}{2^{nd/2} \Psi^{n, G}_m}_{\R^d}
    + \inp{\Pi_{2^{-n}m} f(2^{-n} m)}{2^{nd/2} \Psi^{n, G}_m}_{\R^d}.
  \end{multline*}
  Using \eqref{eq:modelled_dist_approximation}, the first term is bounded by a constant times
  \begin{equation*}
    2^{-n \gamma} \vertiii{f}_{\gamma; B(2^{-n}m, l)} \vertiii{\rmodel}_{\gamma; B(2^{-n}m, l)}.
  \end{equation*}
  To estimate the second term, consider the basis expansion
  \begin{equation*}
    f(x) = \sum_{\sigma} a_{\sigma}(x) \sigma.
  \end{equation*}
  One has $\abs{ a_\sigma (2^{-n} m)} \leq \vertiii{f}_{\gamma; B(2^{-n} m, l)}$ and
  \begin{equation*}
    \abs{\inp{\Pi_{2^{-n}m} \sigma}{2^{nd/2}\Psi^{n, G}_m}_{\R^d}}
    \lesssim 2^{-n \alpha} \vertiii{\rmodel}_{\gamma; B(2^{-n}m, l)}.
  \end{equation*}
  Therefore,
  \begin{equation}\label{eq:modelled_dist_tested_agianst_wavelet}
    \abs{\inp{\reconst f}{2^{nd/2} \Psi^{n, G}_m}_{\R^d}} \lesssim 2^{-n \alpha}
    \vertiii{f}_{\gamma; B(2^{-n} m, l)} \vertiii{\rmodel}_{\gamma; B(2^{-n}m, l)}.
  \end{equation}

  Applying the estimate \eqref{eq:modelled_dist_tested_agianst_wavelet} to $\regX$ and $\regY_N$,
  by Proposition~\ref{prop:besov_wavelet}, we get
  \begin{multline*}
    \norm{X}_{B_{p, p}^{-2 + \delta+\kappa/2, \sigma}(\R^d)}^p \\
    \lesssim \sum_{n \in \N_0} 2^{-n(d+\kappa/2)} \sum_{G \in G^n, m \in \Z^d}
    w_{\sigma}(2^{-n}m)^p
    \vertiii{\regX}_{2; B(2^{-n} m, l)}^p \vertiii{\rmodel}_{2; B(2^{-n}m, l)}^p,
  \end{multline*}
  \begin{multline*}
    \norm{Y_N}_{B_{p, p}^{-1+\delta+\kappa/2, \sigma}(\R^d)}^p \\
    \lesssim \sum_{n \in \N_0} 2^{-n(d+\kappa/2)} \sum_{G \in G^n, m \in \Z^d}
    w_{\sigma}(2^{-n}m)^p
    \vertiii{\regY_N}_{\delta; B(2^{-n} m, l)}^p \vertiii{\rmodel}_{\delta; B(2^{-n}m, l)}^p.
  \end{multline*}
  To estimate $\norm{X}_{B_{p, p}^{-2+\delta+\kappa/2, \sigma}(\R^d)}$, we use Lemma \ref{prop:modelled_distributions_for_AH}
  and stationarity to obtain
  \begin{equation*}
    \expect[\norm{X}_{B_{p, p}^{-2+\delta+\kappa/2, \sigma}(\R^d)}^p]
    \lesssim
    \sum_{n \in \N_0} 2^{-n(d + (\delta - \delta_-))} \sum_{G \in G^n, m \in \Z^d}
    w_{\sigma}(2^{-n}m)^p
    \expect[(1 + \vertiii{\rmodel}_{\gamma;B(0, l)})^{kp}].
  \end{equation*}
  Since
  \begin{equation*}
    \sum_{m \in \Z^d} w_{\sigma}(2^{-n}m)^p
    \lesssim \int_{\R^d} (1 + \abs{2^{-n} x}^2)^{-\frac{p \sigma}{2}} dx = 2^{nd} \norm{w_{\sigma}}_{L^p(\R^d)}^p,
  \end{equation*}
  and by Lemma \ref{lem:model_norm_bounded_by_lattice_model_norm} and by Lemma \ref{lem:L_p_norm_of_lattice_model_norm}
  \begin{equation*}
    \expect[(1 + \vertiii{\rmodel}_{\gamma;B(0, l)})^{kp}]
    \lesssim C_{k' p}^{\BPHZ}
  \end{equation*}
  for some $k' \in \N$, we conclude
  \begin{equation*}
    \expect[\norm{X}_{B_{p, p}^{-2+\delta+\kappa/2, \sigma}(\R^d)}^p]
    \lesssim C_{kp}^{\BPHZ}.
  \end{equation*}
  The estimate of $Y_N$ is similar by using Lemma \ref{lem:estimate_of_H_N_conv_X}.
  The estimates of the differences can be proved similarly by using
  \cite[(3.4)]{hairer_theory_2014}.
\end{proof}

%\begin{corollary}\label{cor:convergence_of_X_and_Y_N}
%  Impose Assumption \ref{assump:convergence_of_BPHZ}. 
%  Let $\sigma \in (0, \infty)$, $p \in [1, \infty)$
%  and $N \in \N$. Then, as $\epsilon \downarrow 0$,
%  $(X^{\rmodel^{\BPHZ, \epsilon}})_{\epsilon \in (0, 1)}$
%  converges in $L^p(\P)$ to $X^{\rmodel^{\BPHZ}}$
%  in $\csp^{-2 + \delta, \sigma}(\R^d)$, and
%  $(Y^{\rmodel^{\BPHZ, \epsilon}}_N)_{\epsilon \in (0, 1)}$
%  converges in $L^p(\P)$ to $Y_N^{\rmodel^{\BPHZ}}$
%  in $\csp^{-1 + \delta, \sigma}(\R^d)$.
%  Furtheremore, there exists a deterministic $k = k(\delta) \in \N$, independent of $\sigma$ and $N$, such that
%  \begin{equation}\label{eq:bound_on_Y_N_BPHZ}
%    \sup_{N \in \N} 2^{-k N} \norm{Y_N^{\rmodel^{\BPHZ}}}_{\csp^{-1 + \delta, \sigma}(\R^d)} \in L^p(\P).
%  \end{equation}
%  Therefore, combined with Proposition~\ref{prop:stochastic_estimates_of_X_and_Y}, Assumption~\ref{assump:all_three_assump}~\ref{item:construction_assump} holds.
%\end{corollary}

\change{
\begin{proof}[Proof of Theorem~\ref{thm:convergence_X_Y_BPHZ}]
The claim on the convergence follows from Proposition \ref{prop:stochastic_estimates_of_X_and_Y} and by applying Besov embeddings: 
Observe that $W_{N}^{\rmodel^{\BPHZ, \epsilon}} = G_N * X^{\rmodel^{\BPHZ, \epsilon}} = G_N * X^\epsilon = W_N^\epsilon$  (see Definition~\ref{def:def_of_X_W_N_Y_N_for_model}) and 
$Y_N^\epsilon = Y_{N}^{\rmodel^{\BPHZ, \epsilon}}$
by Proposition~\ref{prop:identitiy_for_W_and_Y}.

  To show \eqref{eqn:def_a_AH}, let $p \in 2\N$ be such that $d/p < \kappa/2$.
  By Proposition \ref{prop:stochastic_estimates_of_X_and_Y} and the Besov embedding,
  for some $k' \in \N$,
  \begin{equation*}
    \expect[\norm{Y_N^{\rmodel^{\BPHZ}}}_{\csp^{-1 + \delta, \sigma}(\R^d)}^p]
    \lesssim_{p, \delta, \sigma}
    \expect[\norm{Y_N^{\rmodel^{\BPHZ}}}_{B^{-1 + \delta+d/p, \sigma}_{p, p}(\R^d)}^p]
    \lesssim_{p, \delta, \kappa, \sigma} 2^{p k' N}.
  \end{equation*}
  Therefore, if $k > k'$,
  \begin{equation*}
    \sum_{N \in \N} 2^{-kp N} \expect[\norm{Y_N^{\rmodel^{\BPHZ}}}_{\csp^{-1 + \delta, \sigma}(\R^d)}^p] < \infty.
    \qedhere
  \end{equation*}
\end{proof}
}%endchange

%\begin{calc}
%\section{Calculation example align}
%
%Bla bla
%
%\begin{align*}
%f(x)
%& = x^2 + \mbox{bla bla} \\
%\cand
%\begin{calc} =
%x^2 + \mbox{calculations to be made invisible}
%\end{calc}
%\cnewline
%& = x^2
%\end{align*}
%
%\end{calc}

\printbibliography[heading=bibintoc]
\end{document}